\title{\texorpdfstring{$q$}{q}-Hodge complexes over the Habiro ring}
\author{Ferdinand Wagner}
\newcommand{\pOmega}{\mathop{\widetilde{p}}\!\hspace{-0.1ex}\mhyph\hspace{-0.1ex}\Omega}
\newcommand{\pHodge}{\mathop{\widetilde{p}}\!\hspace{-0.1ex}\mhyph\hspace{-0.1ex}\Hodge}
\begin{document}
	\maketitle
	
	\begin{abstract}
		\textbf{Abstract. --- }
		Peter Scholze has raised the question whether some variant of the $q$-de Rham complex is already defined over the \emph{Habiro ring} $\Hh\coloneqq \limit_{m\in\IN}\IZ[q]_{(q^m-1)}^\complete$. Such a variant should then be called \emph{\embrace{algebraic} Habiro cohomology}.
		
		We show that algebraic Habiro cohomology exists whenever the $q$-de Rham complex can be equipped with a \emph{$q$-Hodge filtration}: a $q$-deformation of the Hodge filtration, subject to some reasonable conditions. To any such $q$-Hodge filtration we associate a small modification of the $q$-de Rham complex, which we call the \emph{$q$-Hodge complex}, and show that it descends canonically to the Habiro ring. This construction recovers and generalises the \emph{Habiro ring of a number field} from \cite{HabiroRingOfNumberField} and is closely related to the $q$-de Rham--Witt complexes from \cite{qWitt} as well as, conjecturally, to Scholze's \emph{analytic Habiro stack} \cite{HabiroCohomologyLecture}.
		
		While there's no canonical $q$-Hodge filtration in general, we show that it does exist in many cases of interest. For example, for a smooth scheme $X$ over $\IZ$, the $q$-de Rham complex $\qOmega_{X/\IZ}$ can be equipped with a canonical $q$-Hodge filtration as soon as one inverts all primes $p\leqslant \dim(X/\IZ)$.
	\end{abstract}

	\tableofcontents
	\renewcommand{\SectionPrefix}{\textrm{\S}}
	\renewcommand{\SubsectionPrefix}{\textrm{\S}}
	
	\newpage

	\section{Introduction}\label{sec:Intro}
	
	Throughout the introduction, we'll work over~$\IZ$ for simplicity. In the main body, we'll work instead relative to a $\Lambda$-ring $A$ which is \emph{perfectly covered} in the sense of \cref{par:Notations} below.
	
	\subsection{Habiro cohomology}
	
	In this paper we'll investigate the following question, which was raised by Peter Scholze:
	
	\begin{numpar}[Question.]\label{qst:Habiro}\itshape
		Is there a version of $q$-de Rham cohomology with coefficients not in the power series ring $\IZ\qpower$, but in the Habiro ring
		\begin{equation*}
			\Hh\coloneqq \limit_{m\in\IN}\IZ[q]_{(q^m-1)}^\complete\,?
		\end{equation*}
	\end{numpar}
	A cohomology theory that provides a positive answer to Question~\cref{qst:Habiro} would then deserve the name \emph{Habiro cohomology}. Let us first convince the reader that it should be worthwhile to search for such a cohomology theory.
	
	\begin{numpar}[Why Habiro cohomology?]\label{par:WhyHabiroCohomology}
		\emph{$q$-de Rham cohomology}, which was constructed by Bhatt and Scholze (\cite{Toulouse,Prismatic}; see also the review in \cref{appendix:GlobalqDeRham}) is already a very interesting cohomology theory: For every smooth scheme $X$ over $\IZ$, the \emph{$q$-de Rham complex} $\qOmega_{X/\IZ}$ is a $(q-1)$-complete object in the derived $\infty$-category $\Dd(X,\IZ\qpower)$ which \emph{$q$-deforms} the usual de Rham complex in the sense that $\qOmega_{X/\IZ}/(q-1)\simeq \Omega_{X/\IZ}^*$. Moreover, for any prime~$p$, the $p$-completion $(\qOmega_{X/\IZ})_p^\complete$ computes the prismatic cohomology of $X\times\Spf \IZ_p[\zeta_p]$ over the prism $(\IZ_p\qpower,[p]_q)$. This makes $q$-de Rham cohomology the only known case (besides de Rham cohomology) in which prismatic cohomology for all primes can be combined into a global object, and so it ought to be important.
		
		Now what do we hope to gain from a version of $q$-de Rham cohomology with coefficients in $\Hh$ instead of $\IZ\qpower$? Besides the general philosophy that whenever one has a deformation at $q=1$, one should look at the other roots of unity as well%
		\footnote{The Habiro ring can be regared, in a precise sense, as the ring of power series that have a Taylor expansion around each root of unity $\zeta$ with coefficients in $\IZ[\zeta]$. See \cref{rem:HTaylorSeries}.}%
		, here's our main motivation to pursue Question~\cref{qst:Habiro}:
		\begin{alphanumerate}
			\item A stacky approach to Habiro cohomology is expected to be much more interesting geometrically than a stacky approach to $q$-de Rham cohomology (to the extent that either one exists).\label{enum:MotivationStackyApproach}
			\item There's growing evidence that certain $3$-manifold invariants related to Chern--Simons theory take values in Habiro cohomology.\label{enum:MotivationChernSimons}%
			\footnote{In fact, the Habiro ring was introduced by Habiro himself as the target of certain invariants of hyperbolic knots and homology $3$-spheres \cite{Habiro,HabiroInvariants}.}
		\end{alphanumerate}
		We'll elaborate on both points below.
	\end{numpar}
	\begin{numpar}[Stacky approaches and Scholze's Habiro stack.]
		The term \emph{stacky approach} refers to the idea that for any reasonable cohomology theory $\R\Gamma_?(X)$ there should be a geometric construction $X\mapsto X^?$ in such a way that $\R\Gamma_?(X)\simeq \R\Gamma(X^?,\Oo)$ is the sheaf cohomology of the geometric object $X^?$. This allows to study algebraic properties of $\R\Gamma_?(X)$ via the geometry of $X^?$, which often holds much richer information.
		
		The first instance of a stacky approach is Simpson's \emph{de Rham stack} for smooth varieties over characteristic~$0$ fields \cite{SimpsonDeRhamStack}. The construction of a stacky approach to prismatic cohomology by Drinfeld and Bhatt--Lurie \cite{DrinfeldPrismatization,BhattLurieI,BhattLurieII} has started a flurry of results in recent years. These include at least the work of Anschütz--Heuer--Le Bras on the \emph{Hodge--Tate stack} \cite{HodgeTateStack1,HodgeTateStack2,HodgeTateStack3}, the construction of an \emph{analytic de Rham stack} by Rodríguez Camargo \cite{AnalyticDeRhamStack}, the ongoing work of Anschütz--Bosco--Hauck--Le Bras--Rodríguez Camargo--Scholze on \emph{analytic prismatisation}, the ongoing work of Bhatt--Mathew--Vologodsky--Zhang on \emph{sheared prismatisation}, and the ongoing work of Devalapurkar--Hahn--Raksit--Yuan on recovering prismatisation from their theory of \emph{even stacks} \cite{EvenStacks}.
		
		In \cite{HabiroCohomologyLecture}, Scholze proposes a construction of an \emph{analytic Habiro stack} $X^\mathrm{Hab}$, which gives rise to a cohomology theory with coefficients in an analytic version $\Hh^\mathrm{an}$ of the Habiro ring. We'll remark in~\cref{par:AlgebraicvsAnalytic} on the expected relationship between this cohomology theory and the construction of Habiro cohomology that we propose in this paper. For $q$-de Rham cohomology we don't have a global stacky approach available yet  ($p$-adically it works as a special case of prismatisation), but at least an analytic version is expected to exist (compare the discussion in \cite[\S{\chref[section]{1.3}}]{qHodge}).
	\end{numpar}
	
	Let us now explain why a stacky approach to Habiro cohomology should be much more interesting than a stacky approach to $q$-de Rham cohomology by giving a specific example which also nicely illustrates \cref{par:WhyHabiroCohomology}\cref{enum:MotivationChernSimons}.
	
	\begin{numpar}[The Habiro ring of a number field.]
		Let $F$ be a number field and let $\Delta$ be divisible by $6\operatorname{disc} F$. In \cite{HabiroRingOfNumberField}, Garoufalidis--Scholze--Wheeler--Zagier construct a certain formally étale $\Hh$-algebra $\Hh_{\Oo_F[1/\Delta]}$: the \emph{Habiro ring of the number field~$F$} (we'll recall the specific construction in \cref{subsec:HabiroRingOfNumberField}). Moreover, the authors construct a regulator map
		\begin{equation*}
			K_3(F)\longrightarrow \Pic\bigl(\Hh_{\Oo_F[1/\Delta]}\bigr)
		\end{equation*}
		and show that certain $q$-series arising from perturbative Chern--Simons theory naturally form sections of the line bundles in the image of this regulator.
		
		We would like to interpret $\Hh_{\Oo_F[1/\Delta]}$ and its formal spectrum $\Spf \Hh_{\Oo_F[1/\Delta]}$ as the \emph{Habiro cohomology} and the \emph{Habiro stack} of $\Spec \Oo_F[1/\Delta]$, respectively.%
		\footnote{Note that $\Spf \Hh_{\Oo_F[1/\Delta]}$ doesn't precisely match up with Scholze's analytic Habiro stack $(\Spec \Oo_F[1/\Delta])^\mathrm{Hab}$. See the discussion in~\cref{par:AlgebraicvsAnalytic}.}
		Already in this special case, the Habiro stack exhibits non-trivial geometry in form of line bundles with interesting sections that come from the regulator $K_3(F)\rightarrow \Pic(\Hh_{\Oo_F[1/\Delta]})$. This geometry would be completely invisible to any $q$-de Rham stack, as the regulator becomes trivial after $(q-1)$-completion!
		
		The construction of Habiro cohomology that we propose in this paper recovers $\Hh_{\Oo_F[1/\Delta]}$ (\cref{cor:ComparisonWithGSWZ2}). It also appears that the connection to Chern--Simons theory extends beyond the étale case. For example, in \cite{ExplicitClassesInHabiro}, Garoufalidis and Wheeler construct certain $(q-1)$-power series with coefficients in de Rham cohomology; these $(q-1)$-power power series are expected to be Habiro cohomology classes. While much of this is still mysterious, it suggests that something highly nontrivial is going on! 
	\end{numpar}
	
	\subsection{\texorpdfstring{$q$}{q}-Hodge filtrations and Habiro descent}
	Having justified that Question~\cref{qst:Habiro} is relevant, let us now summarise the contributions of this paper. As it turns out, Question~\cref{qst:Habiro} is closely related to the following much less esoteric question:
	\begin{numpar}[Question.]\label{qst:qHodgeFiltration}\itshape
		Is it possible to equip the $q$-de Rham complex with a $q$-deformation of the Hodge filtration?
	\end{numpar}
	To pose this question more formally, we'll introduce the following notion:
	\begin{defi}\label{def:qHodgeIntro}
		Let $R$ be an animated ring and let $\qdeRham_{R/\IZ}$ be its derived $q$-de Rham complex. A \emph{$q$-Hodge filtration} is a $(q-1)$-complete filtered module
		\begin{equation*}
			\fil_{\qHodge}^\star\qdeRham_{R/\IZ}= \Bigl(\fil_{\qHodge}^0\qdeRham_{R/\IZ}\leftarrow\fil_{\qHodge}^0\qdeRham_{R/\IZ}\leftarrow\dotsb\Bigr)
		\end{equation*}
		over the $(q-1)$-adically filtered ring $(q-1)^\star \IZ\qpower$, such that:
		\begin{alphanumerate}
			\item $\fil_{\qHodge}^\star\qdeRham_{R/\IZ}$ is a descending filtration on the derived $q$-de Rham complex. That is,\label{enum:qHodgeIntroa}
			\begin{equation*}
				\fil_{\qHodge}^0\qdeRham_{R/\IZ}\simeq \qdeRham_{R/\IZ}\,.
			\end{equation*}
			\item $\fil_{\qHodge}^\star\qdeRham_{R/\IZ}$ is a filtered $q$-deformation of the Hodge filtration on the derived de Rham complex $\deRham_{R/\IZ}$. That is,\label{enum:qHodgeIntrob}
			\begin{equation*}
				\fil_{\qHodge}^\star \qdeRham_{R/\IZ}\lotimes_{(q-1)^\star\IZ\qpower}\IZ\simeq \fil_{\Hodge}^\star\deRham_{R/\IZ}\,.
			\end{equation*}
			\item Rationally, $\fil_{\qHodge}^\star$ becomes the combined Hodge and $(q-1)$-adic filtration on $(\deRham_{R/\IZ}\lotimes_\IZ\IQ)\qpower$. That is,\label{enum:qHodgeIntroc}
			\begin{equation*}
				\bigl(\fil_{\qHodge}^\star\qdeRham_{R/\IZ}\lotimes_\IZ\IQ\bigr)_{(q-1)}^\complete\simeq \fil_{(\Hodge,q-1)}^\star\bigl(\deRham_{R/\IZ}\lotimes_\IZ\IQ\bigr)\qpower\,.
			\end{equation*}
			\item[c_p] The same holds true for any prime~$p$ if we $p$-complete first and then rationalise. That is,\label{enum:qHodgeIntrocp}
			\begin{equation*}
				\fil_{\qHodge}^\star\bigl(\qdeRham_{R/\IZ}\bigr)_p^\complete\bigl[\localise{p}\bigr]_{(q-1)}^\complete\overset{\simeq}{\longrightarrow}\fil_{(\Hodge,q-1)}^\star\bigl(\deRham_{R/\IZ}\bigr)_p^\complete\bigl[\localise{p}\bigr]\qpower\,.
			\end{equation*}
		\end{alphanumerate}
		Moreover, the equivalences from \cref{enum:qHodgeIntroa}--\cref{enum:qHodgeIntrocp} need to satisfy the obvious compatibilities (and compatibilities between compatibilities); the precise data required will be spelled out in \cref{def:qHodgeFiltration}.
		
		We also let $\cat{AniAlg}_{\IZ}^{\qHodge}$ denote the $\infty$-category of pairs $(R,\fil_{\qHodge}^\star\qdeRham_{R/\IZ})$, where $R$ is an animated ring and $\fil_{\qHodge}^\star\qdeRham_{R/\IZ}$ is a $q$-Hodge filtration as above. To any such pair, we associate the \emph{$q$-Hodge complex}
		\begin{equation*}
			\qHodge_{(R, \fil_{\smash{\qHodge}}^\star )/\IZ}\coloneqq \colimit\Bigl( \fil_{\qHodge}^0\qdeRham_{R/\IZ}\xrightarrow{(q-1)} \fil_{\qHodge}^1\qdeRham_{R/\IZ}\xrightarrow{(q-1)}\dotsb\Bigr)_{(q-1)}^\complete\,.
		\end{equation*}
		If the $q$-Hodge filtration is clear from the context, we'll often abusingly write just $\qHodge_{R/\IZ}$.
	\end{defi}
	
	\begin{rem}
		We've used the derived de Rham complex in \cref{def:qHodgeIntro} because we would like to apply the definition in cases where $R$ isn't smooth; \cref{subsec:FunctorialqHodgeIntro}. If $S$ is smooth, it doesn't matter whether we put a $q$-Hodge filtration on $\qdeRham_{S/\IZ}$, or a filtration satisfying analogous conditions on the underived $q$-de Rham complex $\qOmega_{S/\IZ}$; see \cref{rem:CompleteFiltration} for an argument.
	\end{rem}
	\begin{rem}
		The conditions from \cref{def:qHodgeIntro}\cref{enum:qHodgeIntroc} and~\cref{enum:qHodgeIntrocp} are natural to ask in view of $(\qdeRham_{R/\IZ}\lotimes_\IZ\IQ)_{(q-1)}^\complete\simeq (\deRham_{R/\IZ}\lotimes_\IZ\IQ)\qpower$ and $(\qdeRham_{R/\IZ})_p^\complete[1/p]_{(q-1)}^\complete\simeq (\deRham_{R/\IZ})_p^\complete[1/p]\qpower$; see \cref{thm:qDeRhamGlobal}\cref{enum:GlobalqDeRhamRational} and \cref{lem:RationalisationTechnicalI}. It doesn't seem to be the case that~\cref{enum:qHodgeIntrocp} follows from the other conditions and it will be a crucial assumption. 
	\end{rem}
	\begin{numpar}[Functorial $q$-Hodge filtrations?]
		With \cref{def:qHodgeIntro}, we can rephrase Question~\cref{qst:qHodgeFiltration} more formally as follows: \emph{Does the forgetful functor $\cat{AniAlg}_\IZ^{\qHodge}\rightarrow \cat{AniAlg}_\IZ$ admit a section?} The answer to this is, provably, \emph{no}! Nevertheless, the forgetful functor does have sections over surprisingly large full subcategories of $\cat{AniAlg}_\IZ$, as we'll see in \cref{subsec:FunctorialqHodgeIntro}.
		
		The general non-existence of a section of $\cat{AniAlg}_\IZ^{\qHodge}\rightarrow \cat{AniAlg}_\IZ$ is known to the experts and we'll reproduce the argument in \cref{lem:NoFunctorialqHodgeFiltration}. In the following, we'll present an essentially equivalent argument in a non-standard way, which will also serve to motivate our first main positive result.
	\end{numpar}
	\begin{numpar}[More on the $q$-Hodge complex.]\label{par:qHodgeFiltrationInCoordinates}
		Let $(S,\square)$ be a \emph{framed smooth algebra over $\IZ$}. That is, $S$ is smooth over $\IZ$ and $\square \colon \IZ[x_1,\dotsc,x_n]\rightarrow S$ is an étale map. In this case, the $q$-de Rham complex $\qOmega_{S/\IZ}$ can be represented by the explicit complex
		\begin{equation*}
			\qOmega_{S/\IZ,\square}^*\coloneqq\left(S\qpower \xrightarrow{\q\nabla}\Omega_{S/\IZ}^1\qpower\xrightarrow{\q\nabla} \dotsb \xrightarrow{\q\nabla} \Omega^n_{S/\IZ}\qpower \right)
		\end{equation*}
		defined in~\cite[\textup{\S\chref[section]{3}}]{Toulouse}. On this explicit complex, we can define a filtration $\fil_{\qHodge,\square}^\star\qOmega_{S/\IZ, \square}^*$, in which $\fil_{\qHodge,\square}^i$ is the subcomplex
		\begin{equation*}
			\left((q-1)^iS\qpower\rightarrow(q-1)^{i-1}\Omega_{S/\IZ}^1\qpower\rightarrow\dotsb\rightarrow\Omega_{S/\IZ}^i\qpower\rightarrow\dotsb\rightarrow\Omega_{S/\IZ}^n\qpower\right)\,.
		\end{equation*}
		Up to the discrepancy between $\qOmega_{S/\IZ}$ and $\qdeRham_{S/\IZ}$, which is easily fixed (see \cref{rem:CompleteFiltration}), the pair $(S,\fil_{\qHodge,\square}^\star)$ becomes an object in $\cat{AniAlg}_\IZ^{\qHodge}$.
		
		It's straightforward to check that the associated $q$-Hodge complex in the sense of \cref{def:qHodgeIntro} can be represented by the explicit complex
		\begin{equation*}
			\qHodge_{S/\IZ, \square}^*\coloneqq\Bigl(S\qpower \xrightarrow{(q-1)\q\nabla}\Omega_{S/\IZ}^1\qpower\xrightarrow{(q-1)\q\nabla} \dotsb \xrightarrow{(q-1)\q\nabla} \Omega^n_{S/\IZ}\qpower \Bigr)
		\end{equation*}
		in which all $q$-differentials in $\qOmega_{S/\IZ,\square}$ get multiplied by $(q-1)$. This \emph{coordinate-dependent $q$-Hodge complex} first shows up in Pridham's work \cite{Pridham} and was extensively studied in previous work of the author. In \cite[Theorem~\chref{4.27}]{qWitt}, we showed that for all $m\in\IN$ the cohomology
		\begin{equation*}
			\H^*\bigl(\qHodge_{S/\IZ,\square}/(q^m-1)\bigr)\cong \bigl(\qIW_m\Omega_{S/\IZ}^*\bigr)_{(q-1)}^\complete
		\end{equation*}
		agrees with the $(q-1)$-completion of a certain object $\qIW_m\Omega_{S/\IZ}^*$, which we call the \emph{$m$-truncated $q$-de Rham--Witt complex of $S$} \cite[Definition~\chref{3.13}]{qWitt}. The system $(\qIW_m\Omega_{S/\IZ}^*)_{m\in\IN}$ satisfies a similar universal property as the de Rham--Witt pro-complex and so $\qIW_m\Omega_{S/\IZ}^*$ is functorial in $S$. In particular, the cohomology $\H^*(\qHodge_{S/\IZ,\square}/(q^m-1))$ is independent of the choice of $\square$!
		
		In spite of this promising observation, we show in \cite[Theorem~\chref{5.1}]{qWitt} that it is \emph{impossible} to turn the $q$-Hodge complex into a functor of smooth $\IZ$-algebras,
		\begin{equation*}
			\qHodge_{-/\IZ}\colon \cat{Sm}_\IZ \longrightarrow \Dd\bigl(\IZ\qpower\bigr)\,,
		\end{equation*}
		in such a way that $\H^*(\qHodge_{-/\IZ}/(q^m-1))\cong (\qIW_m\Omega_{-/\IZ}^*)_{(q-1)}^\complete$ becomes functorial as well.
		
		This strange no-go result is a strong objection against the existence of a section of the forgetful functor $\cat{AniAlg}_\IZ^{\qHodge}\rightarrow \cat{AniAlg}_\IZ$ (and it can be turned into a complete proof using \cref{thm:HabiroDescentIntro}\cref{enum:qdRWComparisonIntro} below). Nevertheless, the fact that $\H^*(\qHodge_{S/\IZ, \square}^*/(q^m-1))$ is the $(q-1)$-completion of something canonical looks exactly like what we would expect to see if the $q$-Hodge complex were to descend to the Habiro ring!
		
		Our first main result says that this is indeed true: The $q$-Hodge complex, whenever it is defined, is canonically the $(q-1)$-completion of another object defined over the Habiro ring. Moreover, a derived version of the comparison with $q$-de Rham--Witt complexes holds true, even without the $(q-1)$-completion:
	\end{numpar}
	\begin{thm}[see \cref{thm:HabiroDescent}]\label{thm:HabiroDescentIntro}
		Let $\widehat{\Dd}_{(q-1)}(\IZ[q])$ and $\widehat{\Dd}_\Hh(\IZ[q])$ denote the $(q-1)$- and Habiro-complete objects \embrace{see \cref{appendix:HabiroCompletion}}, respectively, in the derived $\infty$-category of $\IZ[q]$.
		\begin{alphanumerate}
			\item The $q$-Hodge complex functor $\qHodge_{-/\IZ}\colon \cat{AniAlg}_\IZ^{\qHodge}\rightarrow \widehat{\Dd}_{(q-1)}(\IZ[q])$ admits a non-trivial symmetric monoidal factorisation\label{enum:HabiroDescentIntro}
			\begin{equation*}
				\begin{tikzcd}[column sep=huge]
					& \widehat{\Dd}_\Hh\bigl(\IZ[q]\bigr)\dar["(-)_{(q-1)}^\complete"]\\
					\cat{AniAlg}_\IZ^{\qHodge}\rar["\qHodge_{-/\IZ}"']\urar[dashed,"\qHhodge_{-/\IZ}"] & \widehat{\Dd}_{(q-1)}\bigl(\IZ[q]\bigr)
				\end{tikzcd}
			\end{equation*}
			\item For all $m\in\IN$, the quotient $\qHhodge_{-/\IZ}/(q^m-1)$ admits an exhaustive ascending filtration $ \fil_\star ^{\qIW_m\Omega}(\qHhodge_{-/\IZ}/(q^m-1))$ with associated graded\label{enum:qdRWComparisonIntro}
			\begin{equation*}
				\gr_*^{\qIW_m\Omega}\bigl(\qHhodge_{-/\IZ}/(q^m-1)\bigr)\simeq \Sigma^{-*}\qIW_m\deRham_{-/\IZ}^*\,,
			\end{equation*}
			where $\qIW_m\deRham_{-/\IZ}^*$ denotes the derived $m$-truncated $q$-de Rham--Witt complex. 
		\end{alphanumerate}
	\end{thm}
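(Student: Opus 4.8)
The plan is to build $\qHhodge_{-/\IZ}$ from the same colimit that defines $\qHodge_{-/\IZ}$, but carried out over $\IZ[q]$ rather than $\IZ\qpower$ and completed for the Habiro topology, and then to compute its reduction modulo $(q^m-1)$.

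For part \cref{enum:HabiroDescentIntro}, write
\[
C_R\coloneqq\colimit\bigl(\fil_{\qHodge}^0\qdeRham_{R/\IZ}\xrightarrow{(q-1)}\fil_{\qHodge}^1\qdeRham_{R/\IZ}\xrightarrow{(q-1)}\dotsb\bigr)
\]
for the colimit taken before $(q-1)$-completion, which the $\IZ[q]$-linear structure furnished by \cref{def:qHodgeFiltration} lets us regard as an object of $\Dd(\IZ[q])$; its $(q-1)$-completion recovers $\qHodge_{R/\IZ}$, since $(q-1)$-completion commutes with this colimit after completing the terms. Using the symmetric monoidal structure on $\cat{AniAlg}_\IZ^{\qHodge}$ and the compatibility of $q$-Hodge filtrations with tensor products, $C_-$ is a symmetric monoidal functor $\cat{AniAlg}_\IZ^{\qHodge}\to\Dd(\IZ[q])$. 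I would then set $\qHhodge_{R/\IZ}\coloneqq (C_R)_{\Hh}^\complete$; Habiro completion is symmetric monoidal, so this lands symmetric-monoidally in $\widehat{\Dd}_\Hh(\IZ[q])$, and commutativity of the factorisation square reduces to the general identity $(M_\Hh^\complete)_{(q-1)}^\complete\simeq M_{(q-1)}^\complete$ for $M\in\Dd(\IZ[q])$, which follows from the $(q-1)$-adic topology being coarser than the Habiro topology together with $\IZ\qpower$ being the $(q-1)$-adic completion of $\Hh$ (\cref{appendix:HabiroCompletion}). The factorisation is non-trivial because $\qHhodge_{R/\IZ}$ is in general not $(q-1)$-complete — were it so it would coincide with $\qHodge_{R/\IZ}$, contradicting \cref{enum:qdRWComparisonIntro}.

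For part \cref{enum:qdRWComparisonIntro}, I would reduce to a framed smooth algebra $(S,\square)$ with its standard $q$-Hodge filtration: both $\qHhodge_{-/\IZ}$ and the derived $q$-de Rham--Witt complex $\qIW_m\deRham_{-/\IZ}^*$ are left Kan extended from such algebras (the latter by its construction in \cite{qWitt}; for the former one must check that the colimit and Habiro completion are compatible with the relevant sifted colimits, using connectivity estimates). For $(S,\square)$ the complex $C_{S,\square}$ is represented by $\bigl(\Omega_{S/\IZ}^*\otimes_\IZ\IZ[q],(q-1)\q\nabla\bigr)$, and the key step is to compute its Habiro completion modulo $(q^m-1)$: writing the completion as the limit over the ideals $(q;q)_n=\prod_{j=1}^n(q^j-1)$ and using that $\IZ[q]/(q^m-1)$ is a perfect $\IZ[q]$-complex, one finds that the correction (Tor) terms assemble into a tower whose transition maps are multiplication by $q^{n+1}-1$, hence vanish modulo $q^m-1$ whenever $m\mid n+1$; the tower is therefore pro-zero, and so
\[
\qHhodge_{S,\square}/(q^m-1)\;\simeq\;\bigl(\Omega_{S/\IZ}^*\otimes_\IZ\IZ[q]/(q^m-1),\,(q-1)\q\nabla\bigr)\,,
\]
the uncompleted reduction — in contrast to $\qHodge_{S,\square}/(q^m-1)$, which by \cite[Theorem~\chref{4.27}]{qWitt} carries an extra $(q-1)$-completion. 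Identifying this uncompleted complex, as a filtered complex for its stupid filtration by cohomological degree, with the truncated $q$-de Rham--Witt complex $\qIW_m\Omega_{S/\IZ}^*$ (via the framing-independence and universal-property results of \cite{qWitt}) produces the filtration $\fil_\star^{\qIW_m\Omega}$ with $k$-th graded piece $\qIW_m\Omega_{S/\IZ}^k[-k]=\Sigma^{-k}\qIW_m\Omega_{S/\IZ}^k$; left Kan extension then gives \cref{enum:qdRWComparisonIntro} for all animated $R$.

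I expect the main obstacle to be twofold. First, one must pin down the $\IZ[q]$-linear structure attached to a $q$-Hodge filtration and show that the colimit along $(q-1)$, Habiro completion, and left Kan extension from framed smooth algebras cohere into a well-defined symmetric monoidal functor on all of $\cat{AniAlg}_\IZ^{\qHodge}$; none of these operations commutes with the others on the nose. Second, one must upgrade \cite[Theorem~\chref{4.27}]{qWitt} from a statement about $(q-1)$-completed cohomology of the coordinate complex to the functorial, filtered, uncompleted identification $\qHhodge_{S,\square}/(q^m-1)\simeq\qIW_m\Omega_{S/\IZ}^*$ and transport it through the left Kan extension — the pro-zero vanishing above is precisely what makes the $(q-1)$-completion of \cite[Theorem~\chref{4.27}]{qWitt} disappear, and controlling this, together with boundedness in the smooth case and connectivity bounds in general, is where the real work lies.
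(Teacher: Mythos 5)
Your proposal has a fundamental gap in part~\cref{enum:HabiroDescentIntro} that makes the construction fail at the outset. You define $\qHhodge_{R/\IZ}\coloneqq (C_R)_\Hh^\complete$, where $C_R$ is the uncompleted colimit of the $q$-Hodge filtration. But each step $\fil_{\qHodge}^i\qdeRham_{R/\IZ}$ is by \cref{def:qHodgeFiltration} a $(q-1)$-complete $\IZ\qpower$-module, so $C_R$ only ``sees'' the $(q-1)$-adic topology and carries no information about the cyclotomic primes $\Phi_d(q)$ for $d\geqslant 2$. Habiro-completing an $\IZ\qpower$-module cannot recover a genuine $\Hh$-descent: already for $R=\IZ$ one has $C_\IZ=\IZ\qpower$ and $(\IZ\qpower)_\Hh^\complete\not\simeq\Hh$, whereas the correct value is $\qHhodge_{\IZ/\IZ}\simeq\Hh$ (\cref{cor:ComparisonWithGSWZ2}). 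The $(q-1)$-completion is a lossy operation, so producing the Habiro descent is a nontrivial additional datum, not a formal closure. Relatedly, your coordinate-level description $\bigl(\Omega_{S/\IZ}^*\otimes_\IZ\IZ[q],(q-1)\q\nabla\bigr)$ is not well defined over $\IZ[q]$, because the $q$-derivatives in $\q\nabla$ are genuinely power series in $q-1$; the existence of an $\IZ[q]$-lattice (or rather an $\Hh$-structure) is precisely the content of \cref{enum:HabiroDescentIntro}, and it appears as a Koszul complex over the relative Habiro ring $\Hh_{S/A[x_1,\dotsc,x_n]}$ in \cref{exm:HabiroDescentCoordinateCase}, not as a naive integral form.

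The paper's proof is necessarily structured differently. For each $m$ one constructs a \emph{twisted $q$-de Rham complex} $\qdeRham_{R/A}^{(m)}$, glued along an arithmetic fracture square out of $p$-adic Frobenius twists $\L\eta_{[m/d]_q}\qOmega_{S/A}\lotimes_{A[q],\psi^d}A[q]$; these deform the $q$-de Rham--Witt complexes in the sense that $\qdeRham_{R/A}^{(m)}/(q^m-1)\simeq \qIW_m\deRham_{R/A}$ (\cref{prop:TwistedqDeRhamDeformsqDRW}). The $q$-Hodge filtration on $\qdeRham_{R/A}$ is then propagated to a \emph{twisted $q$-Hodge filtration} on $\qdeRham_{R/A}^{(m)}$ by an inductive pullback against the Nygaard filtration (\cref{con:TwistedqHodgeFiltrationpAdic}, \cref{con:TwistedqHodgeFiltration}); this is the step where the cyclotomic arithmetic is encoded, and it genuinely uses conditions \cref{def:qHodgeFiltration}\cref{enum:qHodgeRational}--\cref{enum:qHodgeRationalpComplete}. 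One then sets $\qHhodge_{R/A,m}$ to be the $(q^m-1)$-completed colimit of $\fil_{\qHhodge_m}^\star\qdeRham_{R/A}^{(m)}$ along $(q^m-1)$, checks the compatibility $(\qHhodge_{R/A,m})_{(q^n-1)}^\complete\simeq\qHhodge_{R/A,n}$ for $n\mid m$ (\cref{prop:HabiroDescent}, which ultimately reduces to the Nygaard computation of \cref{lem:NygaardDenominators}), and takes the limit over $m$. Part~\cref{enum:qdRWComparisonIntro} then follows by the same ``conjugate filtration'' mechanism as \cref{lem:ConjugateFiltration}--\cref{lem:FiltrationAbstract} applied to $\qHhodge_{R/A,m}/(q^m-1)$, using $\fil_{\qHhodge_m}^\star/(q^m-1)\simeq\fil_{\Hhodge_m}^\star\qIW_m\deRham_{R/A}$ (\cref{prop:TwistedqHodgeFiltrationqDeforms}), not by a pro-zero Tor computation on a Habiro completion. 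In short, the essential insight you are missing is that Habiro descent requires the $p$-adic Frobenius/Nygaard structure on prismatic cohomology as an input; it cannot be manufactured from the $q$-Hodge filtration by completion alone.
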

	\begin{defi}
		$\qHhodge_{-/\IZ}$ from \cref{thm:HabiroDescent}\cref{enum:HabiroDescentIntro} will be called \emph{Habiro--Hodge complex}.
	\end{defi}
	\begin{rem}
		For objects $(S,\fil_{\qHodge}^\star\qdeRham_{S/\IZ})\in\cat{AniAlg}_\IZ^{\qHodge}$ such that $S$ is smooth over $\IZ$, we'll show in \cref{prop:Letaq-1} that $\qOmega_{S/\IZ}\simeq \L\eta_{(q-1)}\qHodge_{S/\IZ}$, where $\L\eta_{(q-1)}$ denotes the Berthelot--Ogus décalage functor (see \cite[\S{\chref[section]{6}}]{BMS1} or \cite[\stackstag{0F7N}]{Stacks}). So in this case, the $q$-de Rham complex can be descended to the Habiro ring as well via $\L\eta_{(q-1)}\qHhodge_{S/\IZ}$.
		
		In general, it seems that only the $q$-Hodge complex and not the $q$-de Rham complex admits descent to the Habiro ring. Here's an informal reason why the $q$-Hodge complex is a more canonical candidate for such a descent: In the (coordinate-dependent) $q$-de Rham complex of $\IZ[x]$, the $q$-differential sends $x^m\mapsto [m]_q x^{m-1}\d x$, where $[m]_q=1+q+\dotsb+q^{m-1}$ is the $q$-analogue of $m$. This formula gives \enquote{special treatment to $q=1$}, whereas the most canonical object to descend to the Habiro ring should \enquote{treat all roots of unity equally}. So we should look for a complex with differentials that send $x^m\mapsto (q^m-1)x^{m-1}\d x$, which leads to the (coordinate-dependent) $q$-Hodge complex.
	\end{rem}
	
	\subsection{Existence results for \texorpdfstring{$q$}{q}-Hodge filtrations}\label{subsec:FunctorialqHodgeIntro}
	
	Even though $\cat{AniAlg}_\IZ^{\qHodge}\rightarrow \cat{AniAlg}_\IZ$ has no section, the $\infty$-category $\cat{AniAlg}_\IZ^{\qHodge}$ still has many interesting objects. One large class of examples can be construced from topological Hochschild homology over $\ku$; this will be the content of the companion paper \cite{qdeRhamku}.
	
	In this paper, we will describe two elementary constructions of $q$-Hodge filtrations that provide sections of $\cat{AniAlg}_\IZ^{\qHodge}\rightarrow \cat{AniAlg}_\IZ$ over fairly large full subcategories of $\cat{AniAlg}_\IZ$. Let us begin with a construction in the smooth case.
	
	\begin{numpar}[Canonical $q$-Hodge filtrations (smooth case).]\label{par:qHodgeForCurves}
		Let $S$ be smooth over~$\IZ$. The most naive idea to equip $\qOmega_{S/\IZ}$ (or rather $\qdeRham_{S/\IZ}$, but this distinction doesn't matter by \cref{rem:CompleteFiltration}) with a $q$-Hodge filtration would be to simply take the pullback
		\begin{equation*}
			\begin{tikzcd}
				\fil_{\qHodge}^\star\qOmega_{S/\IZ}\dar\rar\drar[pullback] & \qOmega_{S/\IZ}\dar\\
				\fil_{\Hodge}^\star\Omega_{S/\IZ}\rar & \Omega_{S/\IZ}
			\end{tikzcd}
		\end{equation*}
		This cannot work, of course, because in this pullback each filtration step $\fil_{\qHodge}^\star\qOmega_{S/\IZ}$ will contain all of $(q-1)\qOmega_{S/\IZ}$. In view of \cref{def:qHodgeIntro}\cref{enum:qHodgeIntroc} this is only ok for $\star\leqslant 1$.
		
		Now if $S$ has relative dimension $\dim(S/\IZ)\leqslant 1$, then the Hodge filtration $\fil_{\Hodge}^\star \Omega_{S/\IZ}^*$ is trivial in filtration degrees $\star\geqslant 2$. Consequently, if $\fil_{\qHodge}^\star\qOmega_{S/\IZ}$ is any filtered $q$-deformation of the Hodge filtration, then $\fil_{\qHodge}^\star\qOmega_{S/\IZ}$ will necessarily be given by the $(q-1)$-adic filtration $(q-1)^{\star-1}\fil_{\qHodge}^1\qOmega_{S/A}$ in filtration degrees $\geqslant 1$. We may thus define the first filtration step $\fil_{\qHodge}^1\qOmega_{S/A}$ using the pullback above and then construct the rest of the filtration as
		\begin{equation*}
			\Bigl(\qOmega_{S/\IZ}\leftarrow\fil_{\qHodge}^1\qOmega_{S/\IZ}\leftarrow (q-1)\fil_{\qHodge}^1\qOmega_{S/\IZ}\leftarrow (q-1)^2\fil_{\qHodge}^1\qOmega_{S/\IZ}\leftarrow\dotsb\Bigr)\,.
		\end{equation*}
		One can (and we will) check that this satisfies all expected properties. As we'll see in \cref{par:CanonicalqHodgeSmoothI}, a variant of this trick still works for $S$ of arbitrary dimension, as long as all primes $p\leqslant \dim(S/\IZ)$ become invertible in $S$. This will lead to the following theorem:
	\end{numpar}
	\begin{thm}[see \cref{thm:CanonicalqHodgeSmooth}]\label{thm:CanonicalqHodgeSmoothIntro}
		The forgetful functor $\cat{AniAlg}_\IZ^{\qHodge}\rightarrow \cat{AniAlg}_\IZ$ admits a section over the full subcategory $\cat{Sm}_{\IZ[\dim!^{-1}]}\subseteq \cat{AniAlg}_\IZ$ of smooth $\IZ$-algebras $S$ such that all primes $p\leqslant \dim(S/\IZ)$ become invertible in $S$.
	\end{thm}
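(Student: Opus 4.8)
The plan is to globalise the naive pullback construction from \cref{par:qHodgeForCurves} by first handling the case where $S$ admits an étale framing $\square\colon \IZ[\dim!^{-1}][x_1,\dotsc,x_n]\to S$, and then showing the result glues (or, better, is manifestly independent of $\square$). In the framed case I would build the filtration $\fil_{\qHodge,\square}^\star\qOmega_{S/\IZ,\square}^*$ explicitly as in \cref{par:qHodgeFiltrationInCoordinates}: the $i$-th step is the subcomplex $(q-1)^{\max(i-j,0)}\Omega_{S/\IZ}^j\qpower$ in homological degree $j$. Conditions \cref{enum:qHodgeIntroa} and \cref{enum:qHodgeIntrob} of \cref{def:qHodgeIntro} are immediate: $\fil^0$ is the whole complex, and modulo $q-1$ the powers of $(q-1)$ kill everything below the diagonal, leaving exactly the stupid (Hodge) filtration $\Omega_{S/\IZ}^{\geqslant i}$. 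For \cref{enum:qHodgeIntroc} and \cref{enum:qHodgeIntrocp}, after inverting an appropriate prime and $(q-1)$-completing, the $q$-differential $\q\nabla$ becomes an isomorphism onto $(q-1)\cdot(\text{target})$ in each bidegree by the usual $q$-Taylor/$q$-Poincaré-lemma computation (this is where $[m]_q$ becomes a unit times $(q-1)$-power), so the coordinate filtration matches the combined Hodge-and-$(q-1)$-adic filtration; this is essentially \cref{lem:RationalisationTechnicalI} applied filtration-step by filtration-step.

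The key point — and the reason the dimension hypothesis enters — is that I cannot simply use the pullback of $\fil_{\Hodge}^\star$, because (as noted in \cref{par:qHodgeForCurves}) that forces $(q-1)\qOmega_{S/\IZ}$ into every step, which is wrong for $\star\geqslant 2$. Instead I want to argue that when all primes $p\leqslant n=\dim(S/\IZ)$ are invertible, the coordinate-dependent filtration above is \emph{intrinsic}: concretely, I expect $\fil_{\qHodge,\square}^i\qOmega_{S/\IZ,\square}$ to be recoverable as the preimage under $\qOmega_{S/\IZ}\to\Omega_{S/\IZ}$ of $\fil_{\Hodge}^i\Omega_{S/\IZ}$ \emph{intersected with} a "$(q-1)$-divisibility" condition that only sees $[m]_q$ for $m\leqslant n$ — and with $p\leqslant n$ invertible, $[m]_q$ differs from $(q-1)^{v}\cdot(\text{unit})$ by an invertible scalar for all the relevant $m$, making the divisibility condition coordinate-free. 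Equivalently, I would show $\L\eta_{(q-1)}$ applied to the (coordinate-free, by \cref{thm:HabiroDescentIntro} and the $q$-de Rham–Witt story) $q$-Hodge complex returns $\qOmega_{S/\IZ}$ on the nose (this is \cref{prop:Letaq-1}), and run the argument backwards: the $q$-Hodge filtration is the décalage-inverse datum, hence functorial.

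The main obstacle is precisely this functoriality/coordinate-independence step. The no-go result \cite[Theorem~\chref{5.1}]{qWitt} shows the bare $q$-Hodge \emph{complex} is not functorial on all of $\cat{Sm}_\IZ$, so I must be careful to use the dimension bound in an essential way and not prove too much. I expect the cleanest route is: (i) show the construction is functorial for étale maps and for the framed polynomial algebras (easy, since étale base change commutes with everything in sight and preserves $\Omega^j$), (ii) deduce a section over the site of framed smooth $\IZ[\dim!^{-1}]$-algebras, and (iii) upgrade to a section over $\cat{Sm}_{\IZ[\dim!^{-1}]}$ by the usual left-Kan-extension / unfolding argument along the étale topology, using that $q$-de Rham cohomology satisfies étale descent (\cref{appendix:GlobalqDeRham}) and that the filtration, being built step-by-step from $\Omega^j_{S/\IZ}\qpower$ with étale-local terms, descends. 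The subtlety is checking that the \emph{gluing data} (the equivalences in \cref{def:qHodgeIntro}\cref{enum:qHodgeIntroa}--\cref{enum:qHodgeIntrocp} together with their higher coherences, cf. \cref{def:qHodgeFiltration}) also descend — i.e. that the whole object of $\cat{AniAlg}_\IZ^{\qHodge}$, not just its underlying filtered complex, is an étale sheaf in the framing. I would handle this by exhibiting $\fil_{\qHodge}^\star$ as a limit of explicit étale-locally-defined pieces so that all coherence data is inherited automatically; the dimension hypothesis guarantees the limit doesn't collapse the filtration in the wrong degrees.
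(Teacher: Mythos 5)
Your proposal takes a genuinely different route from the paper — you start from the coordinate-dependent filtration $\fil_{\qHodge,\square}^\star$ and try to descend it, whereas the paper's construction (\cref{par:CanonicalqHodgeSmoothI}--\cref{par:CanonicalqHodgeSmoothII}) is coordinate-free from the outset — and I don't think the route you sketch can be completed as stated. The crucial step is coordinate-independence, and both mechanisms you propose for it have problems. The ``$\L\eta_{(q-1)}$ run backwards'' argument is circular: \cref{prop:Letaq-1} uses a $q$-Hodge filtration as input to identify $\L\eta_{(q-1)}\qHodge_{S/A}$ with $\qOmega_{S/A}$, so you cannot invert it to produce the filtration. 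Décalage does not have a canonical inverse, and the décalage filtration on $\L\eta_f M$ (from \cite[Proposition~\chref{5.8}]{BMS2}) is a filtration \emph{on $\L\eta_f M$ computed from $M$}, not data that reconstructs a $q$-Hodge filtration on $M$. Your alternative --- the ``$(q-1)$-divisibility condition'' intrinsic to $\qOmega_{S/\IZ}$ --- is left unspecified, and making it precise is the entire content of the theorem; as stated it is indistinguishable from the naive pullback you correctly reject in the previous sentence. The étale-descent step (iii) is also under-resolved: you acknowledge that the coherence data of \cref{def:qHodgeFiltration} (not just the underlying filtered complex) must descend, but the $q$-differential $\q\nabla$ is framing-dependent, so even the claim that the filtered complex is an étale sheaf is exactly the $q$-crystalline comparison in disguise, and you'd be re-deriving it for the filtration without a new mechanism.

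The idea you are missing is the truncation. The invertibility of primes $p\leqslant n$ is used in the paper to produce a \emph{canonical} factorisation $\qOmega_{S/A}\rightarrow \Omega_{S/A}\qpower/(q-1)^n$ (via \cref{lem:RationalisationTechnicalII}, checked $p$-locally), and the filtration is then defined as the pullback of the \emph{truncated} combined filtration $\fil_{(\Hodge,q-1)}^{\star\leqslant n}\Omega_{S/A}\qpower/(q-1)^n$ along this map --- a coordinate-free pullback that avoids the collapse to $(q-1)\qOmega_{S/A}$ precisely because the target is killed mod $(q-1)^n$. The hypothesis $\dim(S/A)\leqslant n$ is then used, in \cref{lem:CanonicalqHodgeSmooth}, to show that extending from degrees $\star\leqslant n$ to all $\star$ via the left adjoint $\tau_{n,!}$ of \cref{lem:TruncatedFilteredObjects} recovers the full Hodge filtration modulo $(q-1)$ (because the Hodge filtration is itself supported in degrees $\leqslant n$ and $\tau_{n,!}^\IZ$ is fully faithful on such objects). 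Finally, functoriality across varying $n$ is not an étale-descent argument but a categorical pushout argument (\cref{lem:SmoothPushout}, \cref{lem:CanonicalqHodgeSmoothFunctorial}), gluing the sections over the subcategories $\cat{Sm}_{A[n!^{-1}]}^{\leqslant n}$. Your verification of conditions \cref{enum:qHodgeIntroa}--\cref{enum:qHodgeIntrocp} in the framed case is fine and matches \cref{exm:HabiroDescentCoordinateCase}, but the descent to the unframed case is the hard part and is not addressed by what you've written.
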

	\begin{numpar}[Algebraic Habiro cohomology.]\label{par:AlgebraicHabiroCohomology}
		Combining \cref{thm:HabiroDescentIntro,thm:CanonicalqHodgeSmoothIntro} allows us to define canonical objects $\qHhodge_{X/\IZ}\in\Dd(X,\Hh)$ for any smooth scheme $X$ over $\IZ$ such that all primes $p\leqslant \dim(X/\IZ)$ are invertible on $X$. The sheaf cohomology of $\qHhodge_{X/\IZ}$ then deserves to be called the \emph{algebraic Habiro cohomology of $X$}. It behaves in many ways (but not all; see \cref{par:AlgebraicvsAnalytic}\cref{enum:AlgebraicvsAnalyticC} below) as one would expect from an \enquote{algebraic} theory. For example, if $X$ is smooth and proper over $\IZ[1/N]$, where $N$ is also divisible by all primes $p\leqslant \dim(X/\IZ)$, then $\R\Gamma(X,\qHhodge_{X/\IZ})$ will be a perfect complex over the Habiro-completion of $\Hh[1/N]$.
		
		Thus, up to throwing away \enquote{small primes}, algebraic Habiro cohomology provides a tentative answer to Question~\cref{qst:Habiro}.
	\end{numpar}
	\begin{numpar}[Algebraic vs.\ analytic Habiro cohomology.]\label{par:AlgebraicvsAnalytic}
		It is not yet known how algebraic Habiro cohomology relates to the sheaf cohomology of Scholze's analytic Habiro stack $X^\mathrm{Hab}$, which we would like to call \emph{analytic Habiro cohomology} for clarity. We expect algebraic and analytic Habiro cohomology to become equal after base change to a suitably completed localisation of Scholze's analytic Habiro ring $\Hh^\mathrm{an}$. Note, however, that this base change erases quite some information on either side, and there are several key differences between the algebraic and the analytic construction:
		\begin{alphanumerate}
			\item \emph{Evaluation at \enquote{small primes}.} By construction, algebraic Habiro cohomology of a smooth scheme $X$ will contain no information at primes $p\leqslant \dim(X/\IZ)$. By contrast, analytic Habiro usually does contain non-trivial information at such primes, as $N$ is \emph{not} invertible everywhere on the Habiro stack $\IZ[1/N]^\mathrm{Hab}$.
			\item \emph{Evaluation at roots of unity.} With the current construction, Scholze's analytic Habiro stack becomes the algebraic de Rham stack if $q$ is specialised to a root of unity. In particular, its cohomology will be Grothendieck's infinitesimal cohomology, which is ill-behaved in characteristic~$p$. With algebraic Habiro cohomology, evaluation at roots of unity yields $q$-de Rham--Witt cohomology by \cref{thm:HabiroDescentIntro}\cref{enum:qdRWComparisonIntro}, which is much closer to crystalline cohomology in characteristic~$p$.
			\item \emph{Stacky approach.} By construction, analytic Habiro cohomology comes with a stacky approach. For algebraic Habiro cohomology, we don't expect a stacky approach to exist. In fact, we don't even expect $\qHhodge_{X/\IZ}$ to carry an $\IE_\infty$-algebra structure! The reason goes roughly as follows: The multiplication (\enquote{cup product}) on $\qHhodge_{X/\IZ}$ should come from the diagonal embedding $\Delta\colon X\rightarrow X\times X$. Thus, for the multiplication to be defined, we need to invert all primes $p\leqslant \dim(X\times X/\IZ)=2\dim(X/\IZ)$. Similarly, for the multiplication to be homotopy-associative, we should invert all primes up to $p\leqslant 3\dim(X/\IZ)$, and to get it more and more coherent, we need to invert more and more primes.\label{enum:AlgebraicvsAnalyticC}
		\end{alphanumerate}
		In the second half of \cref{subsec:CanonicalqHodgeSmooth} we'll make the considerations from~\cref{enum:AlgebraicvsAnalyticC} precise, and we'll show a formal monoidality statement in \cref{cor:CanonicalqHodgeSmoothMonoidal}.
	\end{numpar}
	
	There's a second case in which we're able to show the existence of functorial $q$-Hodge filtrations:
	
	\begin{numpar}[Canonical $q$-Hodge filtrations (quasi-regular quotient case).]\label{par:CanonicalqHodgeQuasiregular}
		Let $R$ be a ring satisfying the following conditions:
		\begin{alphanumerate}\itshape
			\item[R] For all primes~$p$, $R$ is $p$-torsion free, the $p$-completed derived de Rham complex $(\deRham_{R/\IZ})_p^\complete$ is static, i.e.\ an actual ring concentrated in degree~$0$, and the Hodge filtration $\fil_{\Hodge}^\star(\deRham_{R/\IZ})_p^\complete$ is a descending filtration of ideals.\label{enum:RQuasiregular}
		\end{alphanumerate}
		For example, this happens in the following case (see \cref{lem:RelativelySemiperfect}): Let $B$ be a perfect $\Lambda$-ring, let $B'$ be an étale $B$-algebra, and let $R\cong B'/J$, where $J$ is generated by a Koszul-regular sequence. If $R$ is $p$-torsion free, then it will satisfy the other conditions from~\cref{enum:RQuasiregular} as well.
		
		If $R$ satisfies these assumptions, then $(\qdeRham_{R/\IZ})_p^\complete$ is static as well, and we can construct a filtration on on it in a very naive way: We define $\fil_{\qHodge}^\star(\qdeRham_{R/\IZ})_p^\complete$ to be the (non-derived!) preimage of the combined Hodge and $(q-1)$-adic filtration $\fil_{(\Hodge,q-1)}^\star(\deRham_{R/\IZ})_p^\complete[1/p]\qpower$ under
		\begin{equation*}
			\bigl(\qdeRham_{R/\IZ}\bigr)_p^\complete\longrightarrow\bigl(\qdeRham_{R/\IZ}\bigr)_p^\complete\bigl[\localise{p}\bigr]_{(q-1)}^\complete\simeq \bigl(\deRham_{R/\IZ}\bigr)_p^\complete\bigl[\localise{p}\bigr]\qpower\,.
		\end{equation*}
		A priori, there's no reason to expect that $\fil_{\qHodge}^\star(\qdeRham_{R/\IZ})_p^\complete$ would be well-behaved at all; in particular, it's usually not a $q$-deformation of the Hodge filtration $\fil_{\Hodge}^\star(\deRham_{R/\IZ})_p^\complete$. This is closely related to the non-existence of a section of $\cat{AniAlg}_\IZ^{\qHodge}\rightarrow \cat{AniAlg}_\IZ$; see \cref{exm:MainSpecialCase}. However, in the following two cases everything works:
	\end{numpar}
	\begin{thm}[see \cref{thm:qHodgeWellBehaved}]\label{thm:qHodgeWellBehavedIntro}
		With assumptions as above, suppose that one of the following two additional conditions is satisfied:
		\begin{alphanumerate}
			\item $R\cong B'/J$ as in \cref{par:CanonicalqHodgeQuasiregular} and $J$ is generated by a Koszul regular sequence of higher powers; that is, $J=(x_1^{\alpha_1},\dotsc,x_r^{\alpha_r})$, where $(x_1,\dotsc,x_r)$ is Koszul-regular and $\alpha_i\geqslant 2$ for all~$i$.\label{enum:RegularSequenceHigherPowersIntro}
			\item $R$ admits a lift to an $\IE_1$-ring spectrum $\IS_R$ such that $R\simeq \IS_R\otimes\IZ$.\label{enum:E1LiftIntro}
		\end{alphanumerate}
		Then the above filtration $\fil_{\qHodge}^\star(\qdeRham_{R/\IZ})_p^\complete$ is a $q$-deformation of the Hodge filtration
	\end{thm}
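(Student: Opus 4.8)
Fix a prime $p$ and work $p$-adically throughout; set $D\coloneqq(\deRham_{R/\IZ})_p^\complete$ and $E\coloneqq(\qdeRham_{R/\IZ})_p^\complete$, so that $E/(q-1)\simeq D$ and $E[1/p]_{(q-1)}^\complete\simeq D[1/p]\qpower$. The first step is bookkeeping: using condition~\cref{enum:RQuasiregular} (together with the Koszul hypothesis in case~\cref{enum:RegularSequenceHigherPowersIntro}, resp.\ the $\IE_1$-lift in case~\cref{enum:E1LiftIntro}) one checks that $E$ is static and $(p,q-1)$-torsion free, and that $D$ is $p$-torsion free with $p$-torsion free Hodge graded pieces; in particular $\fil_{\Hodge}^iD=D\cap\fil_{\Hodge}^iD[1/p]$. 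Granting this, the conditions \cref{def:qHodgeIntro}\cref{enum:qHodgeIntroa},\cref{enum:qHodgeIntroc},\cref{enum:qHodgeIntrocp} hold essentially by construction of $\fil_{\qHodge}^\star$, and unwinding the Rees description of the base change in \cref{def:qHodgeIntro}\cref{enum:qHodgeIntrob} shows that the assertion of the theorem is equivalent to: for every $i$ the reduction-mod-$(q-1)$ map
\begin{equation*}
	\fil_{\qHodge}^i E\big/(q-1)\fil_{\qHodge}^{i-1}E\longrightarrow \fil_{\Hodge}^i D
\end{equation*}
is an isomorphism (with $\fil_{\qHodge}^{-1}E\coloneqq E$). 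This map is well defined since a preimage in $E[1/p]_{(q-1)}^\complete$ of an element of $\fil_{\qHodge}^iE$ lies in $\fil_{(\Hodge,q-1)}^i$, hence reduces mod $(q-1)$ into $\fil_{\Hodge}^iD[1/p]\cap D$. Injectivity is formal: if $x\in\fil_{\qHodge}^iE$ dies in $D$ then $x=(q-1)y$ with $y\in E$, and from $\fil_{(\Hodge,q-1)}^i:(q-1)=\fil_{(\Hodge,q-1)}^{i-1}$ one gets $y\in\fil_{\qHodge}^{i-1}E$. So the whole content is surjectivity, which reduces to an approximation statement: given $y\in E$ with $\bar y\in\fil_{\Hodge}^iD$, write its image as $f+(q-1)g$ with $f\in\fil_{(\Hodge,q-1)}^i$ and $g\in D[1/p]\qpower$; then $y$ can be corrected modulo $(q-1)E$ into $\fil_{\qHodge}^iE$ provided the class of $g$ vanishes in $D[1/p]\qpower/(\operatorname{im}E+\fil_{(\Hodge,q-1)}^{i-1})$, a class that depends only on $\bar y$. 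It remains to make this obstruction vanish, and that is exactly what the two extra hypotheses buy.

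For case~\cref{enum:RegularSequenceHigherPowersIntro} the plan is to produce an explicit model and compute. Since $x_1^{\alpha_1},\dots,x_r^{\alpha_r}$ is Koszul regular, $R\simeq B'/(x_1^{\alpha_1})\otimes_{B'}\dotsb\otimes_{B'}B'/(x_r^{\alpha_r})$ with the derived tensor product underived, and $q$-de Rham cohomology, de Rham cohomology and their Hodge and $(q-1)$-adic filtrations are symmetric monoidal, so we may assume $R=B'/(x^\alpha)$ with $\alpha\geqslant 2$. Here $(\qdeRham_{B'/\IZ})_p^\complete\simeq (B')_p^\complete\qpower$ because $B'$ is étale over a perfect $\Lambda$-ring (where $q$-de Rham cohomology is trivial), and $E$ is the $(p,q-1)$-completion of the $(B')_p^\complete\qpower$-subalgebra of $(B')_p^\complete[1/p]\qpower$ generated by the $q$-divided powers $\gamma_n^q(x^\alpha)$ — a $q$-analogue of the divided-power envelope of $(x^\alpha)$, which exists and is $(q-1)$-completely flat over $\IZ_p\qpower$ precisely because $\alpha\geqslant 2$ keeps the generator $x$ disjoint from the coordinates that get $q$-deformed (for $\alpha=1$ this degenerates, which is why the theorem excludes it and is compatible with \cref{exm:MainSpecialCase}). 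In this model $\fil_{\Hodge}^iD$ is generated by the classical $\gamma_n(x^\alpha)$ with $n\geqslant i$, and one computes that $\fil_{\qHodge}^iE$ is generated by the $\gamma_n^q(x^\alpha)$ with $n\geqslant i$ together with $(q-1)^iE$. Since $\gamma_n^q(x^\alpha)\equiv\gamma_n(x^\alpha)$ modulo $(q-1)$ — note that every $[k]_q$ is a unit in $(B')_p^\complete[1/p]\qpower$, being $\equiv k$ modulo $(q-1)$ — the reduction map of the previous paragraph is visibly surjective. This is the mechanism underlying the construction of the Habiro ring of a number field in \cite{HabiroRingOfNumberField}, here generalised.

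For case~\cref{enum:E1LiftIntro} the plan is to import the topological construction of the companion paper \cite{qdeRhamku}: an $\IE_1$-lift $\IS_R$ with $R\simeq\IS_R\otimes\IZ$ produces, via $\mathrm{THH}(\IS_R;\IZ_p)$ (equivalently, via the $\ku$-theoretic construction), not only $(\qdeRham_{R/\IZ})_p^\complete$ but a filtration on it coming from the motivic/HKR filtration, which by the comparison results there is a $q$-deformation of the Hodge filtration and satisfies \cref{def:qHodgeIntro}\cref{enum:qHodgeIntrocp}. One then observes that the naive preimage filtration $\fil_{\qHodge}^\star E$ is the \emph{largest} filtration satisfying \cref{enum:qHodgeIntrocp}, so it contains the topological one; comparing the ($p$-torsion free) associated graded pieces — both of which, by the formula of the first paragraph applied to the topological filtration, reduce to $\fil_{\Hodge}^\star D$ — forces the two filtrations to agree, whence $\fil_{\qHodge}^\star E$ is a $q$-deformation of the Hodge filtration as well.

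I expect the main obstacle to be the explicit model in case~\cref{enum:RegularSequenceHigherPowersIntro}: identifying $(\qdeRham_{R/\IZ})_p^\complete$ with the $q$-divided-power envelope and proving its $(q-1)$-complete flatness over $\IZ_p\qpower$ — which is the only place $\alpha\geqslant 2$ is genuinely used — together with the ensuing computation of the preimage filtration modulo $(q-1)$. Case~\cref{enum:E1LiftIntro} is comparatively soft once \cite{qdeRhamku} is granted, the remaining work being to match its filtration with the naive one via maximality.
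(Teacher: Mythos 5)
Your high-level strategy agrees with the paper's: reduce to surjectivity (the paper's \cref{lem:qHodgeInjective}), base change to the perfect/absolute case (the paper's \cref{lem:qHodgeBaseChange}), reduce by monoidality to $r=1$, and cite \cite{qdeRhamku} for case~\cref{enum:E1LiftIntro}. Your maximality argument in case~\cref{enum:E1LiftIntro} — that the naive preimage filtration is terminal among candidate $q$-Hodge filtrations, so equality with the topological one follows after comparing associated gradeds — is essentially the observation the paper records in its \enquote{Uniqueness of sections} paragraph, and is a sound way to stitch the cited topological result to the naive construction.

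However, case~\cref{enum:RegularSequenceHigherPowersIntro} has a genuine gap and a misdiagnosis. You claim that $\fil_{\qHodge}^iE$ is generated by the raw $q$-divided powers $\gamma^q_n(x^\alpha)$ for $n\geqslant i$ (together with $(q-1)^iE$), and that surjectivity is then \enquote{visibly} true since $\gamma^q_n\equiv\gamma_n$ mod $(q-1)$. But this is exactly what fails: by \cref{exm:MainSpecialCase}, the $q$-divided power $\gamma_q(x^\alpha)$ does \emph{not} in general lie in the naive preimage filtration, because after completed rationalisation it contains a non-integral $p^{-1}(q-1)^{p-1}\delta(x^\alpha)$ term that is not in the ideal $(x^\alpha,q-1)^p$. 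The whole content of the theorem is that for $\alpha\geqslant 2$ one can correct $\gamma_q(x^\alpha)$ by a term in $(q-1)\q D$ (using $\delta(x^\alpha)=\alpha x^{p(\alpha-1)}\delta(x)+p(\dotsc)$) to obtain a modified element $\widetilde{\gamma}_q(x^\alpha)$ that does land in the filtration; this is what \cite[Lemma~\chref{3.17}]{qHodge}, cited in the paper's proof, constructs for all iterated divided powers. Relatedly, your diagnosis that $\alpha\geqslant 2$ is needed for the existence or $(p,q-1)$-complete flatness of the $q$-PD-envelope is not correct: the $q$-PD envelope is a prismatic envelope and is perfectly well-behaved for $\alpha=1$ as well (it appears explicitly in \cref{exm:MainSpecialCase}). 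The hypothesis $\alpha\geqslant 2$ enters only in the ability to correct the $q$-divided powers, not in the structure of $E$ itself. Without supplying that correction argument, the surjectivity step in case~\cref{enum:RegularSequenceHigherPowersIntro} is unproved.
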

	Note that~\cref{thm:qHodgeWellBehavedIntro}\cref{enum:E1LiftIntro} is purely an existence condition; the choice of $\IS_R$ doesn't matter! We'll show in \cref{con:GlobalqHodge} that the $p$-complete filtrations $\fil_{\qHodge}^\star(\qdeRham_{R/\IZ})_p^\complete$ can be glued with the combined Hodge and $(q-1)$-adic filtration $\fil_{(\Hodge,q-1)}^\star(\deRham_{R/\IZ}\lotimes_\IZ\IQ)\qpower$ to obtain a filtration $\fil_{\qHodge}^\star\qdeRham_{R/\IZ}$ on the global derived $q$-de Rham complex. This leads to the following result:
	
	\begin{thm}[see \cref{thm:CanonicalqHodgeQuasiregular}]\label{thm:CanonicalqHodgeQuasiregularIntro}
		Let $\cat{QReg}_\IZ^{\qHodge}$ denote the category of rings $R$ that satisfy the conditions from \cref{par:CanonicalqHodgeQuasiregular}\cref{enum:RQuasiregular} and such that $\fil_{\qHodge}^\star(\qdeRham_{R/\IZ})_p^\complete$ is a $q$-deformation of the Hodge filtration for all primes~$p$. Then the forgetful functor $\cat{AniAlg}_\IZ^{\qHodge}\rightarrow \cat{AniAlg}_\IZ$ admits a section over the full subcategory $\cat{QReg}_\IZ^{\qHodge}\subseteq \cat{AniAlg}_\IZ$.
	\end{thm}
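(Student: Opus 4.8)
The section is to send each $R\in\cat{QReg}_\IZ^{\qHodge}$ to the pair $(R,\fil_{\qHodge}^\star\qdeRham_{R/\IZ})$, where $\fil_{\qHodge}^\star\qdeRham_{R/\IZ}$ is the filtered complex produced by \cref{con:GlobalqHodge}. The plan is to build this filtration by gluing, along the $(q-1)$-completed arithmetic fracture square, the $p$-complete filtrations $\fil_{\qHodge}^\star(\qdeRham_{R/\IZ})_p^\complete$ — defined as non-derived preimages of the combined Hodge and $(q-1)$-adic filtration, and, by the very definition of $\cat{QReg}_\IZ^{\qHodge}$, $q$-deformations of the Hodge filtration — to the rational combined filtration $\fil_{(\Hodge,q-1)}^\star(\deRham_{R/\IZ}\lotimes_\IZ\IQ)\qpower$. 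The identifications $(\qdeRham_{R/\IZ}\lotimes_\IZ\IQ)_{(q-1)}^\complete\simeq(\deRham_{R/\IZ}\lotimes_\IZ\IQ)\qpower$ and $(\qdeRham_{R/\IZ})_p^\complete[1/p]_{(q-1)}^\complete\simeq(\deRham_{R/\IZ})_p^\complete[1/p]\qpower$ from \cref{thm:qDeRhamGlobal}\cref{enum:GlobalqDeRhamRational} and \cref{lem:RationalisationTechnicalI} are what let the two sides speak the same language along the fracture square.

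Before gluing, one needs a torsion-freeness lemma governing the preimage filtrations. Under the assumptions of \cref{par:CanonicalqHodgeQuasiregular}\cref{enum:RQuasiregular}, each $(\qdeRham_{R/\IZ})_p^\complete$ is static, $p$-torsion free, $(q-1)$-complete, and has $p$-torsion free quotients $(\qdeRham_{R/\IZ})_p^\complete/(q-1)^n$; hence the structure map $(\qdeRham_{R/\IZ})_p^\complete\to(\deRham_{R/\IZ})_p^\complete[1/p]\qpower$ is injective, and a density argument in the $(q-1)$-adic topology gives
\[
\fil_{\qHodge}^\star(\qdeRham_{R/\IZ})_p^\complete\bigl[\localise{p}\bigr]_{(q-1)}^\complete\;\overset{\simeq}{\longrightarrow}\;\fil_{(\Hodge,q-1)}^\star(\deRham_{R/\IZ})_p^\complete\bigl[\localise{p}\bigr]\qpower\,.
\]
This is exactly \cref{def:qHodgeIntro}\cref{enum:qHodgeIntrocp} at the level of the individual $p$-complete pieces, and it is the single genuinely non-formal ingredient; analogously one treats the rational vertex and the rationalised product of $p$-completions, whose filtrations are defined by the same preimage recipe.

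With this description in hand, the verification of \cref{def:qHodgeIntro}\cref{enum:qHodgeIntroa}--\cref{enum:qHodgeIntrocp} is largely formal. Condition \cref{enum:qHodgeIntroa} holds because $\fil^0$ of every vertex of the glued diagram is the whole object, so in filtration degree $0$ the defining pullback is just the arithmetic fracture square for $\qdeRham_{R/\IZ}$. For \cref{enum:qHodgeIntroc}, rationalising collapses the pullback to the rational vertex, whose $(q-1)$-completion is then identified with $\fil_{(\Hodge,q-1)}^\star(\deRham_{R/\IZ}\lotimes_\IZ\IQ)\qpower$ by the density lemma; likewise, for \cref{enum:qHodgeIntrocp}, $p$-completing the fracture square extracts the vertex $\fil_{\qHodge}^\star(\qdeRham_{R/\IZ})_p^\complete$ and the density lemma finishes it. Condition \cref{enum:qHodgeIntrob} is where the defining hypothesis of $\cat{QReg}_\IZ^{\qHodge}$ enters essentially: the functor $-\lotimes_{(q-1)^\star\IZ\qpower}\IZ$ is base change along a perfect filtered module, hence commutes with the products and finite limits assembling the fracture square, and it turns each $p$-complete vertex into $\fil_{\Hodge}^\star(\deRham_{R/\IZ})_p^\complete$ — precisely because that vertex is a $q$-deformation of the Hodge filtration — and the rational vertex into $\fil_{\Hodge}^\star(\deRham_{R/\IZ}\lotimes_\IZ\IQ)$, so the result reassembles, via the arithmetic fracture square for $\deRham_{R/\IZ}$, to $\fil_{\Hodge}^\star\deRham_{R/\IZ}$. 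Functoriality in $R$ is immediate from that of derived $q$-de Rham and de Rham complexes, Hodge filtrations, and non-derived preimages (ordinary pullbacks in an abelian category), so the whole construction is a functorial homotopy limit, and the compatibility data and higher coherences demanded by \cref{def:qHodgeFiltration} come for free — each of the four identifications being obtained by applying a functorial operation to a single homotopy-coherent fracture-square diagram.

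The main obstacle I expect is the bookkeeping of the arithmetic fracture square in the $(q-1)$-complete filtered world: rationalisation does not commute with the infinite product over primes, so the middle vertex must be kept as the \enquote{adelic} object $(\prod_p-)\lotimes_\IZ\IQ$ rather than $\prod_p(-\lotimes_\IZ\IQ)$, whereas $(q-1)$-completion and $-\lotimes_{(q-1)^\star\IZ\qpower}\IZ$ do commute with it; and one must equip the vertices with the correct (preimage-type) filtrations so that the glued pullback really reconstructs $\qdeRham_{R/\IZ}$ and its reduction mod $q-1$ really reconstructs $\deRham_{R/\IZ}$. The only genuinely non-formal input is the torsion-freeness/density lemma of the second step; everything else, including the role of the defining hypothesis of $\cat{QReg}_\IZ^{\qHodge}$ in \cref{enum:qHodgeIntrob}, is a formal consequence of the fracture-square formalism.
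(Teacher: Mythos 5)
Your proposal is essentially the paper's own proof. The paper also constructs the section by assigning to $R$ the filtration from \cref{con:GlobalqHodge}, i.e.\ the pullback (in filtered $\IE_\infty$-algebras over $(q-1)^\star\IZ\qpower$) gluing the preimage-type $p$-complete filtrations $\fil_{\qHodge}^\star(\qdeRham_{R/\IZ})_p^\complete$ from \cref{con:qHodge} against the rational combined filtration along the rationalised adelic vertex, and then verifies \cref{def:qHodgeFiltration}\cref{enum:qHodgeFiltrationOnqdR}--\cref{enum:qHodgeRationalpComplete} by applying, respectively, restriction to degree $0$, $-/(q-1)$, $(-\lotimes_\IZ\IQ)_{(q-1)}^\complete$, and $(-)_p^\complete[1/p]_{(q-1)}^\complete$ to the defining pullback square, exactly as you propose; the defining hypothesis of $\cat{QReg}_\IZ^{\qHodge}$ enters precisely at condition~\cref{enum:qHodgeModq-1} as you say. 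One small difference of emphasis: where you isolate a ``torsion-freeness/density lemma'' as the lone non-formal ingredient behind the identification $\fil_{\qHodge}^\star(\qdeRham_{R/\IZ})_p^\complete[1/p]_{(q-1)}^\complete\simeq\fil_{(\Hodge,q-1)}^\star(\deRham_{R/\IZ})_p^\complete[1/p]\qpower$, the paper's proof of \cref{thm:CanonicalqHodgeQuasiregular} simply says ``similarly'' and leans on the preimage construction of \cref{con:qHodge} (together with the injectivity bookkeeping there, and the technical estimates of \cref{lem:RationalisationTechnicalI}, \cref{rem:qDeRhamRationalisationTechnical}, and \cref{lem:qHodgeBaseChange}) rather than singling it out as a standalone lemma; you are right that this step deserves the care you give it, but it is the same underlying point. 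The only scope difference is that \cref{thm:CanonicalqHodgeQuasiregular} in the body of the paper records the stronger conclusion that the section lands in $\CAlg(\cat{AniAlg}_\IZ^{\qHodge})$, which your proposal does not address, but the statement you were given asks only for a section of the forgetful functor, so that is not a gap.
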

	
	\begin{numpar}[Uniqueness of sections.]
		It's natural to ask if the sections from \cref{thm:CanonicalqHodgeSmoothIntro,thm:CanonicalqHodgeQuasiregularIntro} are unique. In the quasi-regular case, it will be straightforward to see that the section we construct is terminal among all choices, and then the $q$-deformation condition from \cref{def:qHodgeIntro}\cref{enum:qHodgeIntrob} forces it to be unique.
		
		In the smooth case, we need to assume additionally that our $q$-Hodge filtrations are compatible with the morphism $\qOmega_{S/A}\rightarrow \Omega_{S/A}\qpower/(q-1)^n$ from \cref{par:CanonicalqHodgeSmoothI} below, or alternatively, that $\fil_{\qHodge}^\star(\qdeRham_{R/\IZ})_p^\complete$ acquires a $\IZ_p^\times$-action compatible with the one on $p$-completed $q$-de Rham cohomology (see \cref{rem:pTildeDeRham}). If this additional compatibility is assumed, it will be straightforward to see that the section we construct is initial among all choices, and thus unique again by \cref{def:qHodgeIntro}\cref{enum:qHodgeIntrob}.
	\end{numpar}
	
	\subsection{Organisation of this paper}
	In \cref{sec:HabiroRingsEtale}, we'll recall and generalise the construction of \emph{Habiro rings of étale extensions} from \cite{HabiroRingOfNumberField}, and we'll relate them to the rings of \emph{$q$-Witt vectors} from \cite{qWitt}. This is a special case of our more general results in \cref{sec:HabiroDescent}, but much less technical, so it will be worthwhile to spell out the étale case first.
	
	In \cref{sec:HabiroDescent}, we'll prove our main Habiro descent result. This section is long and technical. It may be helpful to read the proof of \cref{lem:qHodgeSymmetricMonoidal} first. At the end of \cref{subsec:MainResult}, we explain how the proof of \cref{thm:HabiroDescent} proceeds by generalising the arguments from the proof of \cref{lem:qHodgeSymmetricMonoidal}.
	
	In \cref{sec:FunctorialqHodge}, we'll show that despite the general non-existence result, it's possible to construct functorial $q$-Hodge filtrations on fairly large full subcategories of rings. Finally, there will be two appendices: In \cref{appendix:GlobalqDeRham}, we explain the gluing argument to obtain the global $q$-de Rham complex from the $p$-complete $q$-de Rham complex of Bhatt--Scholze. In \cref{appendix:HabiroCompletion}, we study the completion that appears in the Habiro ring and show that it behaves like the usual (derived) completion at an ideal.
	
	\begin{numpar}[Notation and conventions.]\label{par:Notations}
		Throughout the article, we freely use the language of $\infty$-categories, and we'll adopt the following conventions:
		\begin{alphanumerate}
			\item \textbf{Graded and filtered objects.} For a stable $\infty$-category $\Cc$, we let $\Gr(\Cc)$ and $\Fil(\Sp)$ denote the $\infty$-categories of \emph{graded} and \emph{\embrace{descendingly} filtered objects in $\Cc$}. The shift in graded or filtered objects will be denoted $(-)(1)$. An object with a descending filtration is typically denoted
			\begin{equation*}
				\fil^\star X=\Bigl(\dotsb\leftarrow \fil^nX\leftarrow \fil^{n+1}X\leftarrow\dotsb\Bigl)
			\end{equation*}
			and we let $\gr^*X$ denote the \emph{associated graded}, given by $\gr^nX\coloneqq \cofib(\fil^{n+1}X\rightarrow \fil^nX)$. We mostly work with filtrations that are constant in degrees $\leqslant 0$ (such as the Hodge filtration). In this case we'll abusingly write $\fil^\star X=(\fil^0X\leftarrow \fil^1 X\leftarrow\dotsb)$; this should be interpreted as the constant $\fil^0X$-valued filtration in degrees $\leqslant 0$.

			If $\Cc$ is symmetric monoidal and the tensor product $-\otimes-$ commutes with colimits in both variables, we equip $\Gr(\Cc)$ and $\Fil(\Cc)$ with their canonical symmetric monoidal structures given by Day convolution. We'll use the fact that $\Fil(\Cc)\simeq \Mod_{\IUnit_{\Gr}[t]}\Gr(\Cc)$, where $\IUnit_{\Gr}$ denotes the tensor unit in $\Gr(\Cc)$ and $t$ sits in graded degree~$-1$; see e.g.\ \cite[Proposition~\chref{3.2.9}]{RaksitFilteredCircle}. Under this equivalence, passing to the associated graded corresponds to \enquote{modding out~$t$}, i.e.\ the base change $\IUnit_{\Gr}\otimes_{\IUnit_{\Gr}[t]}-$.
			
			We say that $\fil^\star X$ is an \emph{exhaustive filtration on $X$} if $X\simeq \colimit_{n\rightarrow -\infty}\fil^nX$. We say that a filtered object $\fil^\star X$ is \emph{complete} if $0\simeq \limit_{n\rightarrow \infty}\fil^nX$. We define the \emph{completion} $\fil^\star \widehat{X}\coloneqq \limit_{n\rightarrow\infty}\cofib(\fil^{\star+n} X\rightarrow\fil^{\star}X)$. By construction, there's a pullback square
			\begin{equation*}
				\begin{tikzcd}
					\fil^\star X\rar\dar\drar[pullback] & \fil^\star \widehat{X}\dar\\
					X\rar & \widehat{X}
				\end{tikzcd}
			\end{equation*}
			We'll often refer to this by saying that \emph{every filtration is the pullback of its completion}.
			
			Sometimes we also consider \emph{ascending} filtrations. Ascendingly filtered objects will be denoted $\fil_\star X=(\dotsb\rightarrow \fil_nX\rightarrow \fil_{n+1}X\rightarrow \dotsb)$ and the associated graded by $\gr_*X$, where $\gr_nX\coloneqq \cofib(\fil^{n-1}X\rightarrow \fil^nX)$.
			
			\item \textbf{Animation.} For an ordinary ring $A$, we consider the $\infty$-category of \emph{animated $A$-algebras $\cat{AniAlg}_A$}, which is the $\infty$-category freely generated under sifted colimits (in the sense of \cite[Proposition~\chref{5.5.8.15}]{HTT}) by the category $\cat{Poly}_A$ of polynomial $A$-algebras in finitely many variables.
			
			We'll often use the fact that any functor $F\colon \cat{Poly}_A\rightarrow \Dd$ into an $\infty$-category with all sifted colimits can be uniquely extended to a sifted colimits preserving functor $\L F\colon \cat{AniAlg}_A\rightarrow \Dd$. We often call $\L F$ the \emph{animation} or the \emph{\embrace{non-abelian} derived functor of $F$}. The most important examples for us will be the $q$-de Rham complex  (which is only defined in the case where $A$ is a $\Lambda$-ring) and the Hodge-filtered de Rham complex
			\begin{equation*}
				\qOmega_{-/A}\colon \cat{Poly}_A\longrightarrow \widehat{\Dd}_{(q-1)}\bigl(A\qpower\bigr)\quad\text{and}\quad \fil_{\Hodge}^\star\Omega_{-/A}^*\colon \cat{Poly}_A\longrightarrow \Fil\Dd(A)
			\end{equation*}
			(the former is only defined if $A$ is a $\Lambda$-ring), whose animations we'll denote by $\qdeRham_{-/A}$ and $\fil_{\Hodge}^\star\deRham_{-/A}$, respectively.
			
			\item \textbf{Derived categories.} For a ring $R$, we let $\Dd(R)$ denote the derived $\infty$-category of $R$-modules. The shift functor and its inverse in $\Dd(R)$ will always be denoted by $\Sigma$ and $\Sigma^{-1}$, to avoid confusion with shifts in graded or filtered objects, as we'll frequently mix both settings.
			
			For an element $f\in R$ and an object $M\in \Dd(R)$, we let  \begin{equation*}
				M/f\coloneqq \cofib\left(f\colon M\rightarrow M\right)\,.
			\end{equation*}
			For several elements $f_1,\dotsc,f_r\in R$, we let $M/(f_1,\dotsc,f_r)\coloneqq (\dotsb(M/f_1)/f_2\dotsb)/f_r$. We warn the reader that for ordinary $R$-modules $M$, the derived quotient $M/(f_1,\dotsc,f_r)$ agrees with the usual quotient only if $(f_1,\dotsc,f_r)$ is a Koszul-regular sequence on $M$.
			
			Similarly, if $R^*$ is a graded ring, $f\in R^i$ is a homogeneous element of degree~$i$, and $M^*\in\Mod_{R^*}\Gr(\Dd(\IZ))$, we put $M^*/f\coloneqq \cofib(f\colon M(i)\rightarrow M)$ and define $M^*/(f_1,\dotsc,f_r)$ analogously. The same notation will also be used in the filtered setting, by regarding filtered objects as graded $\IUnit_{\Gr}[t]$-modules, as explained above.
			
			\item \textbf{Derived completions.} With notation as above, we define the \emph{\embrace{derived} $(f_1,\dotsc,f_r)$-adic completion} of $M$ as the limit
			\begin{equation*}
				\widehat{M}_{(f_1,\dotsc,f_r)}\coloneqq \lim_{n\geqslant 1}M/(f_1^n,\dotsc,f_r^n)
			\end{equation*}
			taken in the derived $\infty$-category $\Dd(R)$. Since the completion only depends on the ideal $I=(f_1,\dotsc,f_r)\subseteq R$, we often just write $\widehat{M}_I$ (or $(-)_I^\complete$ for longer arguments). Completion can be analogously defined in the graded or filtered setting and we'll use the same notation in these cases.
			
			In the case where $I=(f)$ is a principal ideal, the following square is always a pullback in $\Dd(R)$:
			\begin{equation*}
				\begin{tikzcd}
					M\rar\dar\drar[pullback] & \widehat{M}_f\dar\\
					M\bigl[\localise{f}\bigr]\rar & \widehat{M}_f\bigl[\localise{f}\bigr]
				\end{tikzcd}
			\end{equation*}
			This will be called a \emph{fracture square}. In the special case where $f=N$ is an integer, we'll use the term \emph{arithmetic fracture square}.
			
			For general~$I=(f_1,\dotsc,f_r)$, we let $\widehat{\Dd}_{I}(R)\subseteq \Dd(R)$, denote the full sub-$\infty$-category spanned by the \emph{$I$-complete objects}, that is, those $M$ for which $M\simeq \widehat M_{I}$. The following fact will be used countless times: If $M$ is $(f_1,\dotsc,f_r)$-complete, and the homology of $M/(f_1,\dotsc,f_r)$ vanishes in some degree~$d$, then also the homology of $M$ must vanish in degree~$d$. Analogous conclusions hold true in the graded and filtered settings.			
			
			\item \textbf{Perfectly covered $\boldsymbol\Lambda$-rings.} We call a $\Lambda$-ring \emph{perfectly} covered if there exists a faithfully flat $\Lambda$-morphism $A\rightarrow A_\infty$ into a perfect $\Lambda$-ring. Equivalently, the Adams operations $\psi^m\colon A\rightarrow A$ are all faithfully flat (see e.g.\ \cite[Remark~\chref{2.47}]{qWitt}). This condition is satisfied in many examples of interest; for example, it holds for $\IZ$, for any free $\Lambda$-ring $\IZ\left\{x_i\ \middle|\ i\in I\right\}$, and for any polynomial ring $\IZ\left[x_i\ \middle|\ i\in I\right]$ equipped with the \emph{toric $\Lambda$-structure} in which $\lambda^n(x_i)=0$ for all $n>1$.
		\end{alphanumerate}
	\end{numpar}
	
	\begin{numpar}[Acknowledgements.]
		First and foremost, I'm grateful to Peter Scholze for suggesting this question and for his support throughout the project. I'm glad that it has paid off to keep working on this question despite the initial upset: Four years after proving that the $q$-Hodge complex isn't functorial, we can now prove that it is. I would also like to thank Stavros Garoufalidis, Quentin Gazda, and Campbell Wheeler for many helpful discussions on Habiro cohomology.
		
		This work was carried out while I was a Ph.D.\ student at the MPIM/University Bonn, and I would like to thank these institutions for their hospitality. Finally, I'm grateful for the financial support provided by the DFG through Peter Scholze's Leibniz prize.
	\end{numpar}

	\newpage
	
	\section{Habiro rings of étale extensions}\label{sec:HabiroRingsEtale}

	Fix a perfectly covered $\Lambda$-ring $A$. The goal of this section is to construct a \emph{relative Habiro ring} $\Hh_{R/A}$ for any étale algebra $R$ over $A$, and to relate this construction to the theory of $q$-Witt vectors. In the case where $A=\IZ$, our construction $\Hh_{R/\IZ}$ recovers the ring $\Hh_R$ from \cite[Definition~\chref{1.1}]{HabiroRingOfNumberField}.
	
	As we'll see in \cref{sec:HabiroDescent}, the construction of $\Hh_{R/A}$ is a special case of a much more general construction. However, the general case is vastly more technical, and we hope that discussing the étale case first will make the general case easier.
	
	\subsection{A general descent principle}\label{subsec:DescentPrinciple}
	To construct $\Hh_{R/A}$, we'll first construct the completions $(\Hh_{R/A})_{\Phi_m(q)}^\complete$ for all $m\in\IN$ and then \enquote{glue them together} using a very general descent principle that we'll explain in this subsection. It will probably seem a little overkill for now, but we'll use the same descent principle again in \cref{subsec:TwistedqDeRham} to construct the twisted $q$-de Rham complexes $\qdeRham_{R/A}^{(m)}$.
	
	\begin{numpar}[Setup.]\label{par:GeneralDescent}
		Let $\Ii$ be a site whose underlying category is a partially ordered set. Let $\Dd$ be a presentable stable symmetric monoidal $\infty$-category. Suppose that for every $Z\in \Ii$ we have a full stable sub-$\infty$-category $\Dd_Z$ satisfying the following conditions:
		\begin{alphanumerate}
			\item The inclusion $\Dd_Z\subseteq \Dd$ admits a left adjoint $L_Z\colon \Dd\rightarrow \Dd_Z$.\label{enum:DZhasLeftAdjoint}
			\item Whenever $Z_1\rightarrow Z_2$ is a morphism in $\Ii$, we have $\Dd_{Z_1}\subseteq \Dd_{Z_2}$. Note that $L_{Z_1}\colon \Dd_{Z_2}\rightarrow \Dd_{Z_1}$ is still a left adjoint of this inclusion.\label{enum:DZInclusion}
			\item For all $x,y\in \Dd$ and all $Z\in\Ii$, the canonical morphism $L_Z(x\otimes y)\rightarrow L_Z(L_Z(x)\otimes y)$ is an equivalence in $\Dd$.\label{enum:DZSymmetricMonoidal}
		\end{alphanumerate}
		In this case, sending $Z\mapsto \Dd_Z$ and $(Z_1\rightarrow Z_2)\mapsto (L_{Z_1}\colon \Dd_{Z_2}\rightarrow \Dd_{Z_1})$ defines a contravariant functor
		\begin{equation*}
			\Dd_{(-)}\colon \Ii^\op\longrightarrow \CAlg(\Pr_\mathrm{st}^\L)
		\end{equation*}
		into the $\infty$-category of presentable stable symmetric monoidal $\infty$-categories. Indeed, let's ignore the symmetric monoidal structure for the moment and let $\Dd_\Ii\subseteq \Ii\times\Dd$ be the full sub-$\infty$-category spanned fibrewise by $\Dd_Z\subseteq \{Z\}\times\Dd$. By \cref{enum:DZInclusion}, $\Dd_\Ii\rightarrow \Ii$ is still a cocartesian fibration and so it defines a covariant functor $\Dd_{(-)}\colon \Ii\rightarrow \Cat_\infty$. By \cref{enum:DZhasLeftAdjoint}, this functor factors through $\Pr_\mathrm{st}^\R$. Using $\Pr_\mathrm{st}^\L\simeq (\Pr_\mathrm{st}^\R)^\op$ by \cite[Corollary~\chref{5.5.3.4}]{HTT}, we get the desired functor $\Dd_{(-)}\colon \Ii^\op\rightarrow \Pr_\mathrm{st}^\L$.
		
		To incorporate the symmetric monoidal structure, let $\Dd$ be the $\infty$-operad $\Dd^\otimes$ associated to the given symmetric monoidal structure on $\Dd$. By \cref{enum:DZSymmetricMonoidal} and \cite[Proposition~\chref{2.2.1.9}]{HA}, for all $Z\in \Ii$, the inclusion of the full sub-$\infty$-operad $\Dd_Z^\otimes\subseteq \Dd^\otimes$ spanned by $\Dd_Z$ admits a symmetric monoidal left adjoint $L_Z^\otimes\colon \Dd^\otimes\rightarrow \Dd_Z^\otimes$ which recovers $L_Z$ on underlying $\infty$-categories. Using this observation, the same argument as above can be repeated with $\Dd$ replaced by $\Dd^\otimes$.
	\end{numpar}
	
	\begin{lem}\label{lem:AbstractDescent}
		In the situation of \cref{par:GeneralDescent}, assume that covers in $\Ii$ always have finite refinements and that for any finite covering family $\{Z_i\rightarrow Z\}_{i=1,\dotsc,r}$, the functors $L_{Z_i}\colon \Dd_Z\rightarrow \Dd_{Z_i}$ are jointly conservative. Then
		\begin{equation*}
			\Dd_{(-)}\colon \Ii^\op\longrightarrow \CAlg(\Pr_\mathrm{st}^\L)
		\end{equation*}
		is a sheaf on $\Ii$. In particular, $\CAlg(\Dd_{(-)})\colon \Ii^\op\rightarrow \Pr^\L$ is a sheaf as well.
	\end{lem}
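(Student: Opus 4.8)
The plan is to reduce the sheaf condition to Čech descent along \emph{finite} covers, to translate everything into a statement about idempotent $\IE_\infty$-algebras by means of the smashing hypothesis~\cref{par:GeneralDescent}\cref{enum:DZSymmetricMonoidal}, and then to invoke descent along descendable ring maps.

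First I would reduce to finite covers. Since every cover in $\Ii$ has a finite refinement, the finite covering families form a basis for the topology; hence it suffices to verify that $\Dd_{(-)}$ satisfies Čech descent for each finite covering family $\{Z_i\to Z\}_{i=1,\dotsc,r}$, i.e.\ that the canonical map
\begin{equation*}
	\Dd_Z\longrightarrow \limit_{[k]\in\Delta}\;\prod_{i_0,\dotsc,i_k}\Dd_{Z_{i_0}\times_Z\dotsb\times_Z Z_{i_k}}
\end{equation*}
is an equivalence in $\CAlg(\Pr_\mathrm{st}^\L)$, where the fibre products are the meets in the poset $\Ii$ and the cosimplicial diagram is $\Dd_{(-)}$ applied to the nerve of the cover. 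The last sentence of the lemma is then formal: $\CAlg(-)\colon \CAlg(\Pr_\mathrm{st}^\L)\to\Pr^\L$ preserves limits, so post-composing the sheaf $\Dd_{(-)}$ with it yields a sheaf again.

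Next I would use~\cref{par:GeneralDescent}\cref{enum:DZSymmetricMonoidal}: it says precisely that each $L_Z$ is a \emph{smashing} localisation, so $A_Z\coloneqq L_Z(\IUnit_\Dd)$ is an idempotent $\IE_\infty$-algebra with $\Dd_Z\simeq \Mod_{A_Z}(\Dd)$ and $L_Z\simeq A_Z\otimes_{\IUnit}(-)$, and functoriality in $Z$ upgrades this to $\Dd_{(-)}\simeq \Mod_{A_{(-)}}(\Dd)$ for a functor $A_{(-)}\colon \Ii^\op\to\CAlg(\Dd)$ valued in idempotent algebras. Fixing a finite cover $\{Z_i\to Z\}$ and working inside $\Dd_Z\simeq\Mod_{A_Z}(\Dd)$, set $B\coloneqq \prod_{i=1}^r A_{Z_i}$, so that $\Mod_B(\Dd)\simeq\prod_i\Dd_{Z_i}$, and let $\bar A_{Z_i}\coloneqq \operatorname{fib}(A_Z\to A_{Z_i})$. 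The joint conservativity hypothesis says exactly that an object of $\Dd_Z$ killed by all $A_{Z_i}\otimes_{A_Z}(-)$ vanishes; since $A_{Z_i}\otimes_{A_Z}\bar A_{Z_i}\simeq 0$ by idempotence, this forces $\bar A_{Z_1}\otimes_{A_Z}\dotsb\otimes_{A_Z}\bar A_{Z_r}\simeq 0$. Equivalently, the unit $A_Z$ is the limit of the punctured $r$-cube $S\mapsto \bigotimes_{i\in S}A_{Z_i}$, and because the cover is finite this exhibits $A_Z$ as an iterated extension of length $\leqslant r$ of $B$-modules, so $A_Z\in \mathrm{Thick}^{\otimes}_{\Mod_{A_Z}(\Dd)}(B)$; that is, $A_Z\to B$ is descendable. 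This is the step where the "finite refinements" hypothesis is indispensable.

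By the standard descent statement for module $\infty$-categories along a descendable map of $\IE_\infty$-algebras, which is an equivalence of symmetric monoidal categories, this gives $\Dd_Z=\Mod_{A_Z}(\Dd)\simeq \limit_{[k]\in\Delta}\Mod_{B^{\otimes_{A_Z}(k+1)}}(\Dd)$ in $\CAlg(\Pr_\mathrm{st}^\L)$. It then remains to identify this cobar resolution with $\Dd_{(-)}$ evaluated on the Čech nerve: since $\otimes_{A_Z}$ distributes over finite products, $B^{\otimes_{A_Z}(k+1)}\simeq \prod_{i_0,\dotsc,i_k}A_{Z_{i_0}}\otimes_{A_Z}\dotsb\otimes_{A_Z}A_{Z_{i_k}}$, each factor is again an idempotent $A_Z$-algebra, and its module category is the common localisation $\Dd_{Z_{i_0}}\cap\dotsb\cap\Dd_{Z_{i_k}}$. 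I expect the genuinely substantive point to be the remaining identification $\Dd_{Z_{i_0}}\cap\dotsb\cap\Dd_{Z_{i_k}}\simeq \Dd_{Z_{i_0}\times_Z\dotsb\times_Z Z_{i_k}}$, i.e.\ that the assignment $W\mapsto\Dd_W$ sends meets in $\Ii$ to intersections of the corresponding reflective subcategories of $\Dd$. This compatibility with fibre products is the main obstacle of the argument; it is where the poset structure of $\Ii$ and the mutual position of the $\Dd_{Z_i}\subseteq\Dd$ really enter, and in the situations of interest (where the $\Dd_Z$ are completions) it holds because completing at a meet of "divisors" means completing at all of them at once. Granting it, the coface maps of the two cosimplicial objects both assemble out of the localisation functors $L_{(-)}$ and hence agree, which closes the argument.
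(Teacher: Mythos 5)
There is a gap at the very first translation step, and it cannot be patched within the argument you propose. Condition~\cref{par:GeneralDescent}\cref{enum:DZSymmetricMonoidal} is \emph{not} the statement that $L_Z$ is a smashing localisation; it is the strictly weaker condition of \cite[Proposition~\chref{2.2.1.9}]{HA}, ensuring only that the Bousfield localisation $L_Z$ is compatible with the tensor structure, so that $\Dd_Z$ carries a symmetric monoidal structure making $L_Z$ symmetric monoidal. It does not imply that $L_Z(\IUnit_\Dd)$ is idempotent, that $L_Z(M)\simeq L_Z(\IUnit_\Dd)\otimes M$, or that $\Dd_Z\simeq \Mod_{L_Z(\IUnit_\Dd)}(\Dd)$. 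All three fail precisely in the quintessential example~\cref{rem:CompleteDescent} the lemma is built for: completion at a finitely generated ideal is not smashing. Concretely, for $R=\IZ$ and $I=(p)$ one has $L_Z(\IZ)\simeq\IZ_p$, yet $\IZ_p\otimes_\IZ\IZ_p\not\simeq\IZ_p$, so the unit is not idempotent, and $\Mod_{\IZ_p}(\Dd(\IZ))\simeq\Dd(\IZ_p)$ contains $\IQ_p$, which is not $p$-complete (its derived $p$-completion vanishes), so $\Dd_Z$ is a proper subcategory of $\Mod_{\IZ_p}(\Dd(\IZ))$.

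Consequently the objects $A_Z$, $B$, and the whole machinery of descendable maps of $\IE_\infty$-algebras and the cobar resolution that your proof hinges on are simply not available; the argument would go through only in the smashing setting, which is neither the setting of the lemma nor the one it is applied to. (Completions \emph{are} obtained by killing an idempotent algebra rather than localising at one, so there is a dual framework that might be relevant; but the hypotheses of \cref{par:GeneralDescent} do not assert that either, and rewriting your argument in that dual language would be a genuinely different proof.) The paper instead argues directly with the localisation functors: after collapsing the cosimplicial limit to a finite limit over $\pullbacksign^r$, it reduces fully faithfulness and essential surjectivity to showing $y\simeq\limit_{S\in\pullbacksign^r}L_{Z_S}(y)$, which it checks after each of the jointly conservative $L_{Z_i}$ using that $L_{Z_i}$ preserves finite limits and that $L_{Z_S}\Rightarrow L_{Z_{S\cup\{i\}}}$ becomes an equivalence after applying $L_{Z_i}$. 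No smashing hypothesis enters.
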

	\begin{proof}[Proof sketch]
		Everything can be checked on the level of underlying $\infty$-categories, so we can disregard the symmetric monoidal structure (but it was still essential to include the symmetric monoidal structure in the construction). By assumption, it's enough to check the sheaf property for a finite cover $\{Z_i\rightarrow Z\}_{i=1,\dotsc,r}$. If $Z_\bullet$ denotes its \v Cech nerve, we need to show $\Dd_Z\simeq \limit_{\IDelta}\Dd_{Z_\bullet}$. Since $\Ii$ is a partially ordered set, we have $Z_{i}\times_ZZ_i= Z_i$ for all $i$. It follows that the cosimplicial limit can be simplified to a limit indexed by the set $\pullbacksign^r\coloneqq \Pp(\{1,\dotsc,r\})\smallsetminus\{\emptyset\}$ of non-empty subsets of $\{1,\dotsc,r\}$, partially ordered by inclusion. Therefore, we must show
		\begin{equation*}
			\Dd_Z\simeq \limit_{S\in\pullbacksign^r}\Dd_{Z_S}\,,
		\end{equation*}
		where we put $Z_S\coloneqq Z_{i_0}\times_Z\times\dotsb\times_ZZ_{i_k}$ for every non-empty subset $S=\{i_0,\dotsc,i_k\}\subseteq \{1,\dotsc,r\}$. To prove that $\Dd_Z\rightarrow \limit_{S\in\pullbacksign^r}\Dd_{Z_S}$ is fully faithful, we have to show that 
		\begin{equation*}
			\Hom_{\Dd_Z}(x,y)\longrightarrow \Hom_{\Dd_{Z_S}}\bigl(L_{Z_S}(x),L_{Z_S}(y)\bigr)
		\end{equation*}
		is an equivalence for all $x,y\in \Dd_Z$. Rewriting $\Hom_{\Dd_{Z_S}}(L_{Z_S}(x),L_{Z_S}(y))\simeq \Hom_\Dd(x,L_{Z_S}(y))$, this reduces to showing that $y\rightarrow\limit_{S\in\pullbacksign^r}L_{Z_S}(y)$ is an equivalence. This can be checked after applying the jointly conservative functors $L_{Z_i}\colon \Dd_Z\rightarrow \Dd_{Z_i}$. After applying $L_{Z_i}$, each $L_{Z_S}(y)\Rightarrow L_{Z_{S\cup\{i\}}}$ becomes an equivalence. This easily implies $L_{Z_i}(y)\simeq \limit_{S\in\pullbacksign^r}L_{Z_i}(L_{Z_S}(y))$ (for example, by the dual of \cite[Lemma~\chref{1.2.4.15}]{HA}). Since $L_{Z_i}$ preserves finite limits, this shows that $y\rightarrow\limit_{S\in\pullbacksign^r}L_{Z_S}(y)$ is an equivalence after applying $L_{Z_i}$, and so fully faithfulness follows. The same argument shows essential surjectivity.
	\end{proof}
	\begin{rem}\label{rem:CompleteDescent}
		The quintessential example for \cref{lem:AbstractDescent} is the case where $R$ is some ring, $\Dd\coloneqq \Dd(R)$ and $\Ii$ is the partially ordered set of closed subsets $Z\subseteq \Spec R$ with quasi-compact complement. Every such $Z$ is the vanishing set of a finitely generated ideal $I$ and we define $\Dd_Z\coloneqq \widehat{\Dd}_I(R)$; note that this only depends on $Z$, not on the choice of $I$. The functors $L_Z\coloneqq (-)_I^\complete$ clearly satisfy the conditions from \cref{par:GeneralDescent}, and the condition from \cref{lem:AbstractDescent} is easily checked (see e.g.\ \cite[Lemma~\chref{2.4}]{qWitt}). Hence the descent from \cref{lem:AbstractDescent} is applicable.
	\end{rem}
	In the case that we're actually interested in, the descent diagram simplifies considerably; in particular, no coherence data needs to be provided!
	\begin{cor}\label{cor:CompleteDescent}
		Let $m\in\IN$. Suppose we're given the following data:
		\begin{alphanumerate}
			\item For all divisors $d\mid m$, a derived $\Phi_d(q)$-complete $\IE_\infty$-$A[q]$-algebra $E_d$.\label{enum:CompleteDescentLocalData}
			\item For all divisors $pd\mid m$, where $p$ is a prime, an equivalence of $\IE_\infty$-$A[q]$-algebras\label{enum:CompleteDescentGluingData}
			\begin{equation*}
				h_{d}\colon (E_{pd})_p^\complete\overset{\simeq}{\longrightarrow}(E_d)_p^\complete\,.
			\end{equation*}
		\end{alphanumerate}
		Then there exists a unique $(q^m-1)$-complete $\IE_\infty$-$A[q]$-algebra $E$ together with equivalences $E_d\simeq E_{\Phi_d(q)}^\complete$ for all $d\mid m$ such that $h_d$ becomes identified with the identity on $E_{(\Phi_d(q),\Phi_{pd}(q))}^\complete$.
	\end{cor}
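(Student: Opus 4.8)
The plan is to obtain \cref{cor:CompleteDescent} from the descent principle of \cref{lem:AbstractDescent}, applied as in \cref{rem:CompleteDescent} to the ring $A[q]$. Using the factorisation $q^m-1=\prod_{d\mid m}\Phi_d(q)$, the closed subset $V(q^m-1)\subseteq\Spec A[q]$ is covered by the finitely many closed subsets $V(\Phi_d(q))$, $d\mid m$; all of these, and all of their intersections, have quasi-compact complement (the defining ideals being finitely generated), so \cref{rem:CompleteDescent} applies verbatim. The resulting sheaf property, passed to $\CAlg$, gives
\[
	\CAlg\bigl(\widehat{\Dd}_{(q^m-1)}(A[q])\bigr)\;\simeq\;\limit_{S}\,\CAlg\bigl(\widehat{\Dd}_{Z_S}(A[q])\bigr)\,,
\]
the limit running over the non-empty subsets $S$ of the set of divisors of $m$, with $Z_S\coloneqq\bigcap_{d\in S}V(\Phi_d(q))$; for a singleton $S=\{d\}$ this term is $\CAlg(\widehat{\Dd}_{\Phi_d(q)}(A[q]))$, which is exactly where $E_d$ lives. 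The task is then to unwind this limit diagram and identify it with the data $\bigl((E_d)_{d\mid m},(h_d)_{pd\mid m}\bigr)$.

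The arithmetic input is the classical fact that $\operatorname{Res}(\Phi_d,\Phi_e)$ is, up to sign, either $1$ or a power of a single prime $p$, the latter occurring exactly when $d$ and $e$ have the same prime-to-$p$ part (equivalently, when one is a $p$-power multiple of the other). Since $\operatorname{Res}(\Phi_d(q),\Phi_e(q))$ lies in the ideal $(\Phi_d(q),\Phi_e(q))$, and $\Phi_{p^kn}(q)\equiv\Phi_n(q)^{\phi(p^k)}\bmod p$, this shows: $V(\Phi_d(q))\cap V(\Phi_e(q))$ is empty unless $d,e$ share a prime-to-$p$ part $f$, in which case it equals $V(\Phi_f(q),p)$. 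More generally, $Z_S\neq\emptyset$ forces all elements of $S$ to be of the form $p^jf$ for one fixed prime $p$ and one fixed $f$ coprime to $p$, and then $Z_S=V(\Phi_f(q),p)$, depending only on the pair $(p,f)$. In particular, all ``higher'' intersections ($\lvert S\rvert\geqslant 3$) collapse onto pairwise ones; this is the structural reason why no higher coherence data is needed.

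With this in hand I would simplify the limit in three routine steps. (i) The $S$ with $Z_S=\emptyset$ contribute zero categories and form an up-closed subposet on which the diagram is constant at the terminal object, so they may be discarded without changing the limit. (ii) For each prime $p$ and each $f$ coprime to $p$ with $pf\mid m$, the ``block'' consisting of all $S$ with $Z_S=V(\Phi_f(q),p)$ carries the constant category $\widehat{\Dd}_{(\Phi_f(q),p)}(A[q])$ with identity transition maps, and has a largest element; collapsing each block to a single point defines a cofinal projection from the index poset onto the height-one poset $Q$ whose objects are the divisors $d\mid m$ (mapping to $\widehat{\Dd}_{\Phi_d(q)}$) together with one object per block (mapping to $\widehat{\Dd}_{(\Phi_f(q),p)}$), with a unique morphism $d\to(p,f)$, given by $p$-completion, whenever $d=p^jf$. (iii) Since $Q$ has height one, its limit is simply a tuple $(E_d)_{d\mid m}$, a tuple $(Y_{p,f})$, and, for each $d=p^jf$, an equivalence $(E_d)_p^\complete\simeq Y_{p,f}$, with no further data. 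Eliminating each $Y_{p,f}$ along the equivalence for $j=0$ leaves, for each block with divisors $f,pf,p^2f,\dotsc$, a free choice of equivalence $(E_{p^jf})_p^\complete\simeq(E_f)_p^\complete$ for every $j\geqslant 1$; telescoping a string of composable equivalences, this is in turn equivalent to the edge data $h_{p^if}\colon(E_{p^{i+1}f})_p^\complete\simeq(E_{p^if})_p^\complete$ for $i\geqslant 0$. Assembled over all blocks, these equivalences are precisely the $h_d$ with $pd\mid m$, and chasing through the identifications shows that the resulting $(q^m-1)$-complete $\IE_\infty$-$A[q]$-algebra $E$ satisfies $\widehat{E}_{\Phi_d(q)}\simeq E_d$ with $h_d$ induced by the identity on $\widehat{E}_{(\Phi_d(q),\Phi_{pd}(q))}$, and that $E$ together with this data is unique.

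I expect the main difficulty to be organisational rather than conceptual: carefully setting up the combinatorics of the index poset, verifying the cofinality in step (ii), and keeping track of which localisation each transition functor represents. The remaining ingredients --- the descent lemma, the cyclotomic resultant computation, and the telescoping of composable equivalences --- are all standard.
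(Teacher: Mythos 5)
Your proposal is correct and follows essentially the same route as the paper's proof: descend along the cover $V(q^m-1)=\bigcup_{d\mid m}V(\Phi_d(q))$ via \cref{lem:AbstractDescent} and \cref{rem:CompleteDescent}, use the cyclotomic intersection pattern (the paper cites this from \cite{qWitt}; you rederive it from resultants and the congruence $\Phi_{p^kn}\equiv\Phi_n^{\varphi(p^k)}\bmod p$) to see that every nonempty higher intersection already equals some $V(\Phi_f(q),p)$, and then collapse the descent diagram onto a height-one poset indexed by divisors and pairs $(p,f)$, so that no higher coherence data remains. The only slip is terminological: the block-collapsing map $\pi$ should be called \emph{initial} (or coinitial), not cofinal, since you are computing a limit rather than a colimit --- but the contractibility check you need (each comma poset $\{S:\pi(S)\leq q\}$ has a greatest element, namely the full block $T_{f,p}$ or the singleton $\{d\}$) does go through, matching the paper's right-Kan-extension reduction to the subposet $P$, so the argument is sound.
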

	\begin{proof}
		The idea is to apply descent for $R=A[q]$ and the cover $V(q^m-1)=\bigcup_{d\mid m}V(\Phi_d(q))$. The simplifications come from the observation that many intersections are empty; see \cite[Lemma~\chref{2.1}]{qWitt} for example.
		
		For a precise argument, let $T$ be the set of positive divisors of $m$ and let $\pullbacksign^{T}\coloneqq \Pp(T)\smallsetminus\{\emptyset\}$ denote the set of non-empty subsets of $T$, partially ordered by inclusion. For every $S\subseteq T$, put $\widehat{\Dd}_S\coloneqq \widehat{\Dd}_{\left(\Phi_d(q)\ \middle|\ d\in S\right)}(A[q])$. Then \cref{lem:AbstractDescent} implies
		\begin{equation*}
			\widehat{\Dd}_{(q^m-1)}\bigl(A[q]\bigr)\simeq \limit_{S\in\pullbacksign^r}\widehat{\Dd}_S\,.
		\end{equation*}
		For every pair $(d,p)$, where $d\mid m$ is a divisor of $m$ and $p$ is a prime such that $p\nmid d$, we let $T_{d,p}\coloneqq \{d,pd,\dotsc,p^{v_p(m)}d\}\subseteq T(m)$ and write $\pullbacksign^{T_{d,p}}\subseteq \pullbacksign^{T}$ for the corresponding sub-partially ordered set. By \cite[Lemma~\chref{2.1}]{qWitt}, we have $\widehat{\Dd}_S\simeq 0$ if $S\notin \bigcup_{d,p}\pullbacksign^{T_{d,p}}$. By inspection, this means that $\widehat{\Dd}_{(-)}\colon \pullbacksign^{T}\rightarrow \CAlg(\Pr_\mathrm{st}^\L)$ is right-Kan extended from $\bigcup_{d,p}\pullbacksign^{T_{d,p}}\subseteq \pullbacksign^{T}$. Furthermore, if $S\subseteq S'$ are elements of $\pullbacksign^{T_{d,p}}$ such that $\abs{S}\geqslant 2$, then the same result tells us that the corresponding morphism $S\rightarrow S'$ is sent to the identity, as both $\widehat{\Dd}_S$ and $\widehat{\Dd}_{S'}$ agree with the full sub-$\infty$-category 
		\begin{equation*}
			\widehat{\Dd}_{(p,\Phi_d(q))}\big(A[q]\big)\subseteq \Dd\bigl(A[q]\bigr)\,.
		\end{equation*}
		Again, by inspection, this means that $\widehat{\Dd}_{(-)}\bigcup_{d,p}\pullbacksign^{T_{d,p}}\rightarrow \CAlg(\Pr_\mathrm{st}^\L)$ is right-Kan extended from $P\subseteq \bigcup_{d,p}\pullbacksign^{T_{d,p}}$, where $P$ denotes the sub-partially ordered set spanned by $T_{d,p}\in \bigcup_{d,p}\pullbacksign^{T_{d,p}(m)}$ for all $d$, $p$ (note that this includes all subsets of the form $\{d\}$, where $d$ is a divisor of $m$, as $\{d\}=T_{d,\ell}$ if $\ell$ is any prime not dividing $m$). In total, this implies $\widehat{\Dd}_{(q^m-1)}\left(A[q]\right)\simeq \limit_{S\in P}\widehat{\Dd}_S$ and thus
		\begin{equation*}
			\CAlg\left(\widehat{\Dd}_{(q^m-1)}\bigl(A[q]\bigr)\right)\simeq \limit_{S\in P}\CAlg\bigl(\widehat{\Dd}_S\bigr)\,.
		\end{equation*}
		After unravelling of definitions, an object in the limit on the right-hand side is precisely given by the data~\cref{enum:CompleteDescentLocalData} and~\cref{enum:CompleteDescentGluingData}.
	\end{proof}
	\begin{rem}\label{rem:GluingCompletions}
		In \cref{cor:CompleteDescent}, we've glued $E$ from its $\Phi_d(q)$-completions $E_{\Phi_d(q)}^\complete\simeq E_d$ for all $d\mid m$. But $E$ can also be glued from from the completed localisation $E[1/m]_{(q^m-1)}^\complete$ and the completions $E_{(p,q^m-1)}^\complete$ for all primes $p\mid m$ via the usual arithmetic fracture square (see \cref{par:Notations}). For later use, let us explain how to extract the latter from the former: If $m=p^\alpha n$, where $n$ is coprime to $p$, then
		\begin{equation*}
			E\left[\localise{m}\right]_{(q^m-1)}^\complete\simeq \prod_{d\mid m}E_d\left[\localise{m}\right]_{\Phi_d(q)}^\complete\quad\text{and}\quad E_{(p,q^m-1)}^\complete \simeq \prod_{d\mid n}(E_{p^id})_p^\complete\quad \text{for any }0\leqslant i\leqslant \alpha\,.
		\end{equation*}
		For the equivalence on the left, just observe that the factors in $(q^m-1)=\prod_{d\mid m}\Phi_d(q)$ become coprime as soon as $m$ is invertible. For the equivalence on the right, observe that after $p$-completion the $\ell$-adic gluings for $\ell\neq p$ become vacuous, so the only gluing that happens is along $(E_d)_p^\complete\simeq (E_{pd})_p^\complete\simeq \dotsb\simeq (E_{p^\alpha d})_p^\complete$ for all $d\mid n$.
	\end{rem}
	\begin{rem}\label{rem:CompleteDescentDAlg}
		\cref{cor:CompleteDescent} remains true if we replace $\IE_\infty$-$A[q]$-algebras by derived commutative $A[q]$-algebras in the sense of \cite[Example~\chref{4.3.1}]{RaksitFilteredCircle}. The proof is entirely analogous.
	\end{rem}

	\subsection{Habiro rings of étale extensions}\label{subsec:HabiroRingOfNumberField}
	In the following, we fix a perfectly covered $\Lambda$-ring $A$ as before.
	
	\begin{numpar}[Relative Habiro rings.]\label{con:RelativeHabiroRing}
		Let $R$ be an étale $A$-algebra. For all primes $p$, the $p$\textsuperscript{th} Adams operation $\psi^p\colon A\rightarrow A$ can be uniquely extended to a Frobenius lift $\phi_p\colon \widehat{R}_p\rightarrow \widehat{R}_p$. Let us denote by
		\begin{equation*}
			\phi_{p/A}\colon \bigl(\widehat{R}_p\otimes_{A,\psi^p}A\bigr)_p^\complete\overset{\simeq}{\longrightarrow}\widehat{R}_p
		\end{equation*}
		the linearised Frobenius. It is an equivalence as indicated. Indeed, this can be checked modulo $p$, where it becomes classical; see \cite[\stackstag{0EBS}]{Stacks}. We also remark that $A$ being perfectly covered implies that $A$ is $p$-torsion free (because this is true for the perfect $\Lambda$-ring $A_\infty$), and so all $p$-completions above are static.
		
		For all $m\in\IN$, let us now define a $(q^m-1)$-complete $\IE_\infty$-$A[q]$-algebra $\Hh_{R/A,  m}$ via \cref{cor:CompleteDescent}: For every $d\mid m$, let $E_d\coloneqq (R\otimes_{A,\psi^d}A)[q]_{\Phi_d(q)}^\complete$ and for every $pd\mid m$, where $p$ is a prime, let the gluing equivalence $h_d$ be the $A[q]$-linear map induced by $\phi_{p/A}$.
		
		For all $d\mid m$, \cref{cor:CompleteDescent} provides a preferred equivalence $\Hh_{R/A,  d}\simeq (\Hh_{R/A,  m})_{(q^d-1)}^\complete$. In particular, we get maps $\Hh_{R/A,  m}\rightarrow \Hh_{R/A,  d}$. The \emph{Habiro ring of $R$ relative to $A$} is then defined as the limit%
		\footnote{A pedantic remark: To even write down this limit, we need to assemble the maps $\Hh_{R/A,  m}\rightarrow \Hh_{R/A,  d}$ into a functor $\Hh_{R/A,  (-)}\colon \IN\rightarrow \CAlg \Dd(A[q])$, where $\IN$ denotes the category of natural numbers partially ordered by divisibility. With a little more effort, this functoriality can be squeezed out of \cref{cor:CompleteDescent}. Alternatively, we can take the limit over the sequential subdiagram $\{n!\}_{n\geqslant 1}$, where the existence of maps is enough. Or we could use \cref{thm:HabiroqWittComparison} to realise that we're working with ordinary rings, so there are no higher coherences to check and functoriality can be obtained by hand.}
		\begin{equation*}
			\Hh_{R/A}\coloneqq \limit_{m\in\IN}\Hh_{R/A,  m}\,.
		\end{equation*}
	\end{numpar}
	\begin{rem}
		If we were to construct $R[q]_{(q^m-1)}^\complete$ using \cref{cor:CompleteDescent}, we would take $E_d\coloneqq R[q]_{\Phi_d(q)}^\complete$, together with the identity maps on $R[q]_{(p,\Phi_d(q))}^\complete$ (instead of $\phi_{p/A}$) as gluing equivalences. Thus, there's no reason to expect that $\Hh_{R/A,  m}\simeq R[q]_{(q^m-1)}^\complete$, unless $R$ itself (rather than only its $p$-completions) admits Frobenius lifts for all prime factors $p\mid m$. In the case $A=\IZ$, a precise obstruction of this kind is shown in \cite[Corollary~\chref{2.52}]{qWitt}.
	\end{rem}
	We can now formulate the relation between $\Hh_{R/A}$ and $q$-Witt vectors relative to $A$. To this end, recall from \cite[Proposition~\chref{2.48}]{qWitt} that $\qIW_m(R/A)$ is an étale algebra over $\qIW_m(A/A)\cong A[q]/(q^m-1)$.
	\begin{thm}\label{thm:HabiroqWittComparison}
		Let $A$ be a perfectly covered $\Lambda$-ring, $R$ an $A$-algebra, and $m\in\IN$. Then
		\begin{equation*}
			\Hh_{R/A,  m}/(q^m-1)\simeq \qIW_m(R/A)\,.
		\end{equation*}
		In fact, $\Hh_{R/A,  m}$ is the unique lift of the étale $A[q]/(q^m-1)$-algebra $\qIW_m(R/A)$ to a $(q^m-1)$-complete $\IE_\infty$-algebra over $A[q]_{(q^m-1)}^\complete$. In particular, $\Hh_{R/A,  m}$ is an ordinary ring for all $m\in\IN$, and the same is true for the relative Habiro ring $\Hh_{R/A}$.
	\end{thm}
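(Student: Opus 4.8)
The plan is to deduce all three assertions from two facts: \textbf{(i)} $\Hh_{R/A,m}$ is an étale $A[q]_{(q^m-1)}^\complete$-algebra in the derived $(q^m-1)$-complete sense (its reduction modulo $(q^m-1)$ is étale over $A[q]/(q^m-1)$, and it is the derived $(q^m-1)$-completion thereof); and \textbf{(ii)} there is an equivalence $\Hh_{R/A,m}/(q^m-1)\simeq\qIW_m(R/A)$ of $\IE_\infty$-$A[q]/(q^m-1)$-algebras. Granting these, the theorem follows from the rigidity of the étale site along complete thickenings: since $A$ is $p$-torsion free for all primes~$p$, multiplication by $q^m-1$ is injective on $A[q]$ and on $A[q]_{(q^m-1)}^\complete$, so $A[q]_{(q^m-1)}^\complete$ is a derived $(q^m-1)$-complete ring with residue ring $A[q]/(q^m-1)$, over which $\qIW_m(R/A)$ is étale by \cite[Proposition~\chref{2.48}]{qWitt}. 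Hence $\qIW_m(R/A)$ has a unique lift to a derived $(q^m-1)$-complete $\IE_\infty$-$A[q]_{(q^m-1)}^\complete$-algebra: any such lift has static reduction modulo $(q^m-1)$, hence is static (by $(q^m-1)$-completeness), hence is the unique \emph{ordinary} étale lift. By \textbf{(i)} and \textbf{(ii)}, $\Hh_{R/A,m}$ is such a lift, so it is \emph{the} lift, and in particular an ordinary ring. Finally, $\Hh_{R/A}=\limit_{m\in\IN}\Hh_{R/A,m}$ is computed along the cofinal chain $\{n!\}_{n\geqslant1}$; its transition maps are completion maps of étale algebras over the tower $\{A[q]_{(q^{n!}-1)}^\complete\}_n$, so the tower of ordinary rings $\{\Hh_{R/A,n!}\}_n$ has vanishing $\limit^1$ (cf.\ \cref{appendix:HabiroCompletion}), and therefore $\Hh_{R/A}$ is an ordinary ring as well.

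For \textbf{(i)}, I would argue directly from \cref{con:RelativeHabiroRing}, where $\Hh_{R/A,m}$ is built via \cref{cor:CompleteDescent} from the local data $E_d=(R\otimes_{A,\psi^d}A)[q]_{\Phi_d(q)}^\complete$ for $d\mid m$, glued along the equivalences induced by the linearised Frobenii $\phi_{p/A}$. Each $R\otimes_{A,\psi^d}A$ is étale over~$A$ (base change of the étale $A$-algebra $R$ along the Adams operation $\psi^d$), so $E_d$ is the $\Phi_d(q)$-completion of an étale $A[q]$-algebra; it is $p$-torsion free for every~$p$ and hence static. Running \cref{cor:CompleteDescent} instead on the trivial data $E_d=A[q]_{\Phi_d(q)}^\complete$ with identity gluing maps recovers $A[q]_{(q^m-1)}^\complete$ (cf.\ \cref{rem:CompleteDescent}); since the gluing maps for $\Hh_{R/A,m}$ are \emph{equivalences} relative to this base, étaleness of $\Hh_{R/A,m}$ over $A[q]_{(q^m-1)}^\complete$ can be checked on the $\Phi_d(q)$-completions, where it holds by construction — and in particular $\Hh_{R/A,m}$ is static. (Alternatively, \cref{rem:GluingCompletions} presents $\Hh_{R/A,m}$ as built from the static rings $\prod_{d\mid m}E_d[\localise{m}]_{\Phi_d(q)}^\complete$ and $\prod_{d\mid n}(E_{p^id})_p^\complete$ by the arithmetic fracture squares of \cref{par:Notations}, hence it is static.)

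The main obstacle is \textbf{(ii)}. Applying $-\otimes_{A[q]}A[q]/(q^m-1)$ --- which is symmetric monoidal and compatible with the gluing procedure of \cref{cor:CompleteDescent} along the closed cover $V(q^m-1)=\bigcup_{d\mid m}V(\Phi_d(q))$ of $\Spec A[q]/(q^m-1)$ --- presents $\Hh_{R/A,m}/(q^m-1)$ as the object glued from the reductions $E_d/(q^m-1)$ along the reductions of the maps $\phi_{p/A}$. I would then identify this with $\qIW_m(R/A)$ by unwinding the construction of relative $q$-Witt vectors in \cite{qWitt}: the completion of $\qIW_m(R/A)$ along $V(\Phi_d(q))$ recovers the Frobenius-twisted piece $E_d/(q^m-1)$, and $p$-adically the $d$- and $pd$-components are glued via $\phi_{p/A}$ --- precisely the data appearing in \cref{cor:CompleteDescent}. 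Carrying this comparison out is essentially bookkeeping with the elementary arithmetic of cyclotomic polynomials (the ``intersection'' $V(\Phi_d(q))\cap V(\Phi_{pd}(q))$ being supported at the single prime~$p$ is what forces the gluing diagram into the shape of \cref{cor:CompleteDescent}), and it produces the desired equivalence of $\IE_\infty$-$A[q]/(q^m-1)$-algebras.
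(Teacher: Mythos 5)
Your overall strategy is sound and runs broadly parallel to the paper's proof (étale rigidity over the $(q^m-1)$-complete thickening plus an identification modulo the thickening ideal), but your step~\textbf{(ii)} hides the real work behind the phrase ``essentially bookkeeping,'' and the specific route you sketch there has a genuine gap. You propose applying $-\otimes_{A[q]}A[q]/(q^m-1)$ to the gluing diagram of \cref{cor:CompleteDescent} and then matching the resulting pieces $E_d/(q^m-1)$ against a corresponding decomposition of $\qIW_m(R/A)$. But $E_d/(q^m-1)$ is \emph{not} the same as $E_d/\Phi_d(q)=(R\otimes_{A,\psi^d}A)[q]/\Phi_d(q)$: the cofactor $(q^m-1)/\Phi_d(q)$ is not a unit in $A[q]_{\Phi_d(q)}^\complete$ (e.g.\ for $m=p$, $d=1$ it specialises to $p$), so these are genuinely different rings and the pieces you get do not line up with the $\Phi_d(q)$-completions of $\qIW_m(R/A)$. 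Moreover, $-\otimes_{A[q]}A[q]/(q^m-1)$ is a colimit, so passing it through the \emph{limit} diagram of \cref{cor:CompleteDescent} requires an argument that you don't supply. The paper sidesteps both issues: it checks the comparison modulo $\Phi_d(q)$ (not modulo $(q^m-1)$), where by construction $(\Hh_{R/A,m})_{\Phi_d(q)}^\complete$ is literally $E_d$, and it actually \emph{constructs} the comparison map $W\to\Hh_{R/A,m}$ from the commutative diagram of ghost maps
\begin{equation*}
	\gh_{m/d}\colon \qIW_m(R/A)\longrightarrow \left(R\otimes_{A,\psi^d}A\right)[q]/\Phi_d(q)
\end{equation*}
together with the identity $\gh_{m/d}=\gh_{m/pd}^{p}$ (which is exactly what makes the gluing data $\phi_{p/A}$ appear), and then invokes \cite[Corollary~\chref{2.51}]{qWitt} to verify the map is an equivalence on each $\Phi_d(q)$-completed piece. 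Your sketch never constructs this map or cites the ghost-map isomorphism, and that is precisely the content of the theorem.

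Two smaller points. First, your staticness argument for $\Hh_{R/A,m}$ via the fracture square of \cref{rem:GluingCompletions} needs a justification that $\pi_{-1}$ of the derived pullback vanishes (i.e.\ that the map into the bottom-right corner is surjective on $\pi_0$); the paper avoids this by producing the equivalence $W\simeq\Hh_{R/A,m}$ first, with $W$ static because it is the unique (classical) étale lift, and only then using the principle ``$(q^m-1)$-complete and static after reduction implies static.'' Second, for the claim about $\Hh_{R/A}$, citing a $\limit^1$-vanishing for the tower $\{\Hh_{R/A,n!}\}$ is not quite what the paper does: since the transition maps $\Hh_{R/A,m}\to\Hh_{R/A,d}$ deform the $q$-Witt Frobenii $F_{m/d}$ (\cref{rem:HabiroIsAinf?}) which are generally not surjective, $\limit^1$-vanishing is not automatic. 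The paper instead uses \cref{cor:HabiroCompleteDerivedNakayama}: the limit is Habiro-complete by construction, and its reduction modulo each $\Phi_m(q)$ is static, which forces it to be static.
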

	\begin{proof}
		Let, temporarily, $W$ denote the unique lift of $\qIW_m(R/A)$ to a $(q^m-1)$-complete $\IE_\infty$-algebra over $A[q]_{(q^m-1)}^\complete$. If $p$ is prime and $pd\mid m$, then the ghost maps for the usual Witt vectors $\IW_m(A/p)$ and $\IW_m(R/p)$ satisfy $\gh_{m/d}(x)=\gh_{m/pd}(x)^p$. It follows that the ghost maps for relative $q$-Witt vectors fit into a commutative diagram
		\begin{equation*}
			\begin{tikzcd}
				\left(R\otimes_{A,\psi^{pd}}A\right)[q]/\Phi_{pd}(q)\dar & \qIW_m(R/A) \lar["\gh_{m/pd}"']\rar["\gh_{m/d}"] & \left(R\otimes_{A,\psi^d}A\right)[q]/\Phi_d(q)\dar\\
				\bigl(R/p\otimes_{A/p,\psi^{pd}}A/p\bigr)[q]/\Phi_{pd}(q)\ar[rr] & & \bigl(R/p\otimes_{A/p,\psi^d}A/p\bigr)[q]/\Phi_d(q)
			\end{tikzcd}
		\end{equation*}
		where the bottom horizontal map is induced by the relative Frobenius $R/p\otimes_{A/p,(-)^p}A/p\rightarrow R/p$. After passing to unique deformations of étale algebras everywhere, we obtain a similar diagram
		\begin{equation*}
			\begin{tikzcd}
				\left(R\otimes_{A,\psi^{pd}}A\right)[q]_{\Phi_{pd}(q)}^\complete\dar & W\lar\rar& \left(R\otimes_{A,\psi^d}A\right)[q]_{\Phi_d(q)}^\complete\dar\\
				\bigl(\widehat{R}_p\otimes_{A,\psi^{pd}}A\bigr)[q]_{(p,\Phi_{pd}(q))}^\complete\ar[rr]& & \bigl(\widehat{R}_p\otimes_{A,\psi^d}A\bigr)[q]_{(p,\Phi_d(q))}^\complete
			\end{tikzcd}
		\end{equation*}
		where the bottom horizontal map is induced by $\phi_{p/A}$ from \cref{con:RelativeHabiroRing}. By construction of $\Hh_{R/A,  m}$, this yields an $\IE_\infty$-$A[q]$-algebra map $W\rightarrow \Hh_{R/A,  m}$. As both sides are $(q^m-1)$-complete, whether this is an equivalence can be checked modulo $\Phi_d(q)$ for all $d\mid m$. By \cite[Corollary~\chref{2.51}]{qWitt} and \cref{con:RelativeHabiroRing},
		\begin{equation*}
			W/\Phi_d(q)\simeq R\otimes_{A,\psi^d}A[q]/\Phi_d(q)\simeq \Hh_{R/A,  m}/\Phi_d\,.
		\end{equation*}
		As the equivalence on the left is induced via the ghost map $\gh_{m/d}$, it is apparent from our construction that $W/\Phi_d(q)\rightarrow \Hh_{R/A,  m}/\Phi_d(q)$ is given by the chain of equivalences above. This finishes the proof that $\Hh_{R/A,  m}$ is the unique deformation of $\qIW_m(R/A)$. 
		
		Since $\Hh_{R/A,  m}$ is $(q^m-1)$-complete and becomes static modulo~$p$, we see that $\Hh_{R/A,  m}$ must be static as well. Therefore it is an ordinary ring. To conclude the same for $\Hh_{R/A}$, we've seen above that $\Hh_{R/A}/\Phi_m(q)$ is static for all $m\in\IN$. Then \cref{cor:HabiroCompleteDerivedNakayama} can be applied.
	\end{proof}
	\begin{rem}\label{rem:HabiroIsAinf?}
		By tracing through the proof of \cref{thm:HabiroqWittComparison} and checking on ghost coordinates, we see that the maps $\Hh_{R/A,  m}\rightarrow \Hh_{R/A,  d}$ from \cref{con:RelativeHabiroRing} deform the $q$-Witt vector Frobenii $F_{m/d}\colon \qIW_m(R/A)\rightarrow \qIW_d(R/A)$. Then the construction of $\Hh_{R/A}$ is reminiscent of the construction of $\IA_\inf$ from \cite[Lemma~\chref{3.2}]{BMS1}.
	\end{rem}
	
	In \cite[Definition~\chref{1.1}]{HabiroRingOfNumberField}, the Habiro ring of a number field is defined in terms of power series in $q-\zeta$, for $\zeta$ ranging through roots of unity. We'll now give a similar hands-on description of $\Hh_{R/A}$. This will imply that our construction recovers the one from \cite{HabiroRingOfNumberField}.
	
	\begin{numpar}[$p$-adic reexpansions around roots of unity.]\label{par:RootsOfUnity}
		In the following, we choose a system of roots of unity $(\zeta_m)_{m\in\IN}$ in such a way that
		\begin{equation*}
			\zeta_{mn}=\zeta_m\zeta_n\ \text{if}\ (m,n)=1\quad\text{and}\quad\zeta_{p^\alpha}=\zeta_{p^{\alpha+1}}^p\,.
		\end{equation*}
		One possible choice would be $\zeta_m\coloneqq \prod_p\mathrm{e}^{2\pi\mathrm{i}/p^{v_p(m)}}$. The conditions above are also required in \cite[\S{\chref[subsection]{1.2}}]{HabiroRingOfNumberField} and they ensure $v_p(\zeta_m-\zeta_{mp})>0$ whenever $p$ is prime, so that after $p$-completion, any power series in $(q-\zeta_m)$ can be reexpanded as power series in $(q-\zeta_{pm})$. In other words, there's a canonical zigzag
		\begin{equation*}
			\IZ[\zeta_{m}]\llbracket q-\zeta_{m}\rrbracket \longrightarrow \IZ_p[\zeta_{pm}]\llbracket q-\zeta_m\rrbracket\simeq\IZ_p[\zeta_{pm},q]_{(q-\zeta_m,q-\zeta_{pm})}^\complete \longleftarrow\IZ[\zeta_{pm}]\llbracket q-\zeta_{pm}\rrbracket\,,
		\end{equation*}
		In the situation we're interested in, we get a similar zigzag
		\begin{equation*}
			\left(R\otimes_{A,\psi^m}A\right)[\zeta_m]\llbracket q-\zeta_m\rrbracket \longrightarrow \bigl(\widehat{R}_p\otimes_{A,\psi^m}A\bigr)_p^\complete[\zeta_{pm}]\llbracket q-\zeta_{m}\rrbracket\xleftarrow{\!\phi_{p/A}\!} \left(R\otimes_{A,\psi^{pm}}A\right)[\zeta_{pm}]\llbracket q-\zeta_{pm}\rrbracket
		\end{equation*}
		where the map on the right is induced by the relative Frobenius $\phi_{p/A}$ from \cref{con:RelativeHabiroRing}, followed by a reexpansion of power series as above. We'll call the map on the left the \emph{canonical map} and the map on the right the \emph{Frobenius}.
	\end{numpar}
	\begin{lem}\label{lem:ComparisonWithGSWZ}
		The ring $\Hh_{R/A}$ agrees with following equaliser \embrace{which can be taken both in $\IE_\infty$-$A[q]$-algebras or in ordinary $A[q]$-algebras}:
		\begin{equation*}
			\Hh_{R/A}\simeq \eq\left(\prod_{m}\left(R\otimes_{A,\psi^m}A\right)[\zeta_m]\llbracket q-\zeta_m\rrbracket\overset{\operatorname{can}}{\underset{\phi_{/A}}{\doublemorphism}}\prod_{p,m}\bigl(\widehat{R}_p\otimes_{A,\psi^m}A\bigr)_p^\complete[\zeta_{pm}]\llbracket q-\zeta_m\rrbracket\right)\,.
		\end{equation*}
		Here $\operatorname{can}$ and $\phi_{/A}$ are the canonical maps and Frobenius maps described in \cref{par:RootsOfUnity}.
	\end{lem}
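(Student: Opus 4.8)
The idea is to realise both sides as limits of matching gluing diagrams. By \cref{thm:HabiroqWittComparison} the rings $\Hh_{R/A}$ and $\Hh_{R/A, m}$ are static, and so are the power series rings on the right-hand side; hence all limits below can be computed in ordinary commutative $A[q]$-algebras, and it suffices to produce the identification in $\IE_\infty$-$A[q]$-algebras and then pass to $\pi_0$. Starting from $\Hh_{R/A}=\limit_{m\in\IN}\Hh_{R/A, m}$ and unwinding each $\Hh_{R/A, m}$ via \cref{cor:CompleteDescent} and \cref{rem:GluingCompletions}, the ring $\Hh_{R/A}$ is glued, over an arithmetic fracture square, from its rationalisation and its $p$-completions $(\Hh_{R/A})_p^\complete$ for all primes~$p$. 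Because $p$-completion commutes with the limit over~$m$, each $(\Hh_{R/A})_p^\complete$ is in turn assembled from the pieces $(\widehat R_p\otimes_{A,\psi^d}A)_p^\complete[q]_{(p,\Phi_d(q))}^\complete$ for varying divisors~$d$, glued along the linearised Frobenii $\phi_{p/A}$ of \cref{con:RelativeHabiroRing}; a parallel analysis handles the rational part, where after inverting the~$m$'s the various $\Phi_d(q)$-factors become coprime, so that $\Hh_{R/A}\otimes\IQ$ is a product of the rational $\Phi_d(q)$-adic pieces.

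The heart of the argument is to rewrite these pieces in ``cyclotomic'' coordinates. Here one uses that $\IZ[q]/\Phi_d(q)=\IZ[\zeta_d]$ and that $\Phi_d'(\zeta_d)$ generates the different of $\IZ[\zeta_d]/\IZ$, so that, prime by prime, the $\Phi_d(q)$-adic completions above become power series rings over suitable cyclotomic base rings; combining this with $v_p(\zeta_m-\zeta_{pm})>0$ -- which lets one reexpand a power series in $q-\zeta_m$ as a power series in $q-\zeta_{pm}$ after $p$-completion -- the $p$-complete part of the gluing diagram gets rewritten in terms of the rings $Y_{p,m}=(\widehat R_p\otimes_{A,\psi^m}A)_p^\complete[\zeta_{pm}]\llbracket q-\zeta_m\rrbracket$, while the rational part gets rewritten in terms of the rings $X_m\otimes\IQ$ with $X_m=(R\otimes_{A,\psi^m}A)[\zeta_m]\llbracket q-\zeta_m\rrbracket$. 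One then checks that, under these identifications, the structure maps of the gluing diagram go over to the canonical maps $X_m\to Y_{p,m}$ and the Frobenius maps $\phi_{/A}\colon X_{pm}\to Y_{p,m}$ of \cref{par:RootsOfUnity}.

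Reassembling the two halves of the arithmetic fracture square, the gluing diagram for $\Hh_{R/A}$ is identified with the diagram whose limit is the displayed equaliser, so that $\Hh_{R/A}$ is equivalent to that equaliser in $\IE_\infty$-$A[q]$-algebras; passing to $\pi_0$ then gives the same equaliser in ordinary $A[q]$-algebras. For $A=\IZ$ this recovers the hands-on description of $\Hh_R$ from \cite{HabiroRingOfNumberField}. The main obstacle is the passage to cyclotomic coordinates at the ``bad'' primes dividing~$d$: there the $\Phi_d(q)$-adic local charts and the cyclotomic ones $X_d$ genuinely differ, being related by a ramified extension rather than by an obvious isomorphism, so one has to match the $\Phi_d(q)$-adic descent of \cref{cor:CompleteDescent} with the ``power series'' descent produced by \cref{lem:AbstractDescent} across those primes, carefully tracking which root of unity the coefficient rings are defined over and the Adams-operation twists carried by the Frobenii. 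This is intricate but ultimately a formal compatibility check.
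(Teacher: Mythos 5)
Your high-level strategy differs from the paper's: you try to identify both sides directly as limits of matching gluing diagrams, whereas the paper first constructs a map $\Hh_{R/A}\to E$ (using that $\Hh_{R/A}$ is static and $E$ coconnective, so a $\pi_0$-level map suffices) and then checks it is an equivalence after $(-)_\ell^\complete$ for all primes $\ell$ and after $(-\otimes_\IZ\IQ)_{\Phi_d(q)}^\complete$ for all $d$, exploiting Habiro-completeness on both sides. The construct-then-check strategy is what lets the paper avoid ever matching diagram shapes.

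The gap in your argument is precisely the step you defer: you acknowledge that at a prime $p$ dividing $d$ the ring $\bigl(\widehat{R}_p\otimes_{A,\psi^d}A\bigr)[q]_{(p,\Phi_d(q))}^\complete$ and the cyclotomic $Y_{p,d}$ \emph{genuinely differ} (one contains $\zeta_{pd}$, the other does not; modulo $p$ they are not even finite of the same rank), and you say matching them is ``ultimately a formal compatibility check.'' It is not. In the paper, this is exactly where the $\Phi_d(q)$-completed rationalisation step has to be done by hand: one computes $\widehat{E}_{\Phi_d(q)}$ as a pullback in which most factors vanish and the surviving ones are unequal to those of $\Hh_{R/A}$, and the equivalence after rationalisation is secured by a concrete argument (the bottom horizontal arrow is a split injection of $\IZ[q]$-modules because $\IZ[\zeta_{d_p}]\hookrightarrow\IZ[\zeta_d]$ splits, and after $(-\otimes_\IZ\IQ)_{\Phi_d(q)}^\complete$ the ring already contains $\zeta_d$, so the inclusion becomes an isomorphism). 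The two descent diagrams you want to compare have different shapes -- one is the iterated $\Phi_d(q)$-adic gluing of \cref{cor:CompleteDescent}, the other is a bare equaliser -- and without either (a) a direct identification of the limits via a mutual refinement of the two diagrams, or (b) the paper's construct-and-localize trick, the claim does not follow. A smaller imprecision: you speak of gluing $\Hh_{R/A}$ ``over an arithmetic fracture square,'' but there is no single such square -- $\Hh_{R/A}$ is a limit over all $m$ and the relevant localisation principle is Habiro-completeness (see \cref{par:HabiroComplete}), which requires testing against all $\Phi_d(q)$ simultaneously, not against a single integer $N$.
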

	\begin{proof}
		Let, temporarily, $E$ denote the derived equaliser. By construction, $\Hh_{R/A}$ can be written as a similar equaliser, with $(R\otimes_{A,\psi^m}A)[\zeta_m]\llbracket q-\zeta_m\rrbracket$ replaced by $(R\otimes_{A,\psi^m}A)[q]_{\Phi_m(q)}^\complete$. We clearly get a map of underived equalisers, hence a map $\Hh_{R/A}\rightarrow \pi_0(E)$ as $\Hh_{R/A}$ is static. Since the derived equaliser $E$ is coconnective, this yields a map $\Hh_{R/A}\rightarrow E$ as well. Since both sides are Habiro-complete in the sense of \cref{par:HabiroComplete}, whether this map is an equivalence can be checked after $(-)_\ell^\complete$ for all primes $\ell$ and after $(-\otimes_\IZ\IQ)_{\Phi_d(q)}^\complete$ for all $d\in\IN$.
		
		\emph{Proof after $\ell$-completion.} After $(-)_\ell^\complete$, all factors in $E$ with $p\neq \ell$ die, and the surviving Frobenii $\phi_{\ell/A}$ become equivalences. Similarly, in $\Hh_{R/A}$, all $p$-adic gluings for $p\neq \ell$ vanish, and the $\ell$-adic gluings become equivalences. It follows that after $\ell$-completion, the map has the form
		\begin{equation*}
			\prod_{(m,\ell)=1}\bigl(\widehat{R}_\ell\otimes_{A,\psi^m}A\bigr)_\ell^\complete[q]_{(\ell,\Phi_m(q))}^\complete\longrightarrow \prod_{(m,\ell)=1}\bigl(\widehat{R}_\ell\otimes_{A,\psi^m}A\bigr)_\ell^\complete[\zeta_m]\llbracket q-\zeta_m\rrbracket
		\end{equation*}
		So it will be enough to show that $\IZ_\ell[q]_{(\ell,\Phi_m(q))}^\complete\rightarrow \IZ_\ell[\zeta_m]\llbracket q-\zeta_m\rrbracket$ is an equivalence whenever $(m,\ell)=1$. This can be checked modulo $(\ell,\Phi_m(q))$. The left-hand side clearly becomes $\IF_\ell[q]/\Phi_m(q)\simeq \IF_\ell(\zeta_m)$ since the cyclotomic polynomial $\Phi_m(q)$ is irreducible in $\IF_\ell[q]$ if $(m,\ell)=1$. Moreover, $\Phi_m(q)$ has distinct roots in $\ov\IF_\ell$, and so $\Phi_m(q)/(q-\zeta_m)$ will be a unit in $\IF_\ell(\zeta_m)\llbracket q-\zeta_m\rrbracket$. It follows that $\IZ_\ell[\zeta_m]\llbracket q-\zeta_m\rrbracket/(\ell,\Phi_m(q))\simeq \IF_\ell(\zeta_m)$ as well. This concludes the argument after $\ell$-completion.
		
		\emph{Proof after $\Phi_d(q)$-completed rationalisation.} By \cref{con:RelativeHabiroRing}, the $\Phi_d(q)$-completion of $\Hh_{R/A}$ is $(R\otimes_{A,\psi^d}A)[q]_{\Phi_d(q)}^\complete$ and so
		\begin{equation*}
			\left(\Hh_{R/A}\otimes_\IZ\IQ\right)_{\Phi_d(q)}^\complete\simeq \bigl((R\otimes_{A,\psi^d}A)\otimes_\IZ\IQ\bigr)[q]_{\Phi_d(q)}^\complete\simeq \bigl((R\otimes_{A,\psi^d}A)\otimes_\IZ\IQ(\zeta_m)\bigr)\llbracket q-\zeta_m\rrbracket\,.
		\end{equation*}
		Here we use that $\IQ[q]_{\Phi_d(q)}^\complete\rightarrow \IQ(\zeta_m)\llbracket q-\zeta_m\rrbracket$ is an equivalence. Indeed, this can be checked modulo $\Phi_m(q)$. Since $\Phi_m(q)$ is irreducible and has distinct roots in $\ov\IQ$, the same argument as above shows that both sides become $\IQ(\zeta_m)$ modulo $\Phi_m(q)$, as desired.
		
		Let's compute $\widehat{E}_{\Phi_d(q)}$ next. Since $(R\otimes_{A,\psi^m}A)[\zeta_m]\llbracket q-\zeta_m\rrbracket$ is $\Phi_m(q)$-complete, it'll vanish upon $\Phi_d(q)$-completion unless $m/d$ is a prime power (possibly with negative exponent). Moreover, if $m/d=p^\alpha$ is a power of $p$, then the  $\Phi_d(q)$-completion of $(R\otimes_{A,\psi^m}A)[\zeta_m]\llbracket q-\zeta_m\rrbracket$ will also be $p$-complete, unless $\alpha=0$. It follows that all surviving Frobenii will become equivalences, except if their source is $(R\otimes_{A,\psi^m}A)[\zeta_d]\llbracket q-\zeta_d\rrbracket$.
		
		For all primes $p$, let $\alpha_p\coloneqq v_p(d)$ and write $d=p^{\alpha_p}d_p$. By massaging the limit using our observations so far, we find that $\widehat{E}_{\Phi_d(q)}$ sits inside a pullback diagram
		\begin{equation*}
			\begin{tikzcd}
				\widehat{E}_{\Phi_d(q)}\rar\dar\drar[pullback] & \left(R\otimes_{A,\psi^d}A\right)[\zeta_d]\llbracket q-\zeta_d\rrbracket\dar["\left(\vphantom{\phi^+}\smash{\phi_{p/A}^{\alpha_p}}\right)_p"]\\
				\prod_{p}\bigl(\widehat{R}_p\otimes_{A,\psi^{d_p}}A\bigr)_p^\complete[\zeta_{d_p}]\llbracket q-\zeta_{d_p}\rrbracket \rar & \prod_{p}\bigl(\widehat{R}_p\otimes_{A,\psi^{d_p}}A\bigr)_p^\complete[\zeta_{d}]\llbracket q-\zeta_{d_p}\rrbracket
			\end{tikzcd}
		\end{equation*}
		Observe that the bottom horizontal arrow is a split injection on underlying $\IZ[q]$-modules, because in each factor $\IZ[\zeta_{d_p}]\rightarrow \IZ[\zeta_d]$ is a split injection of abelian groups. However, as we've seen above, $\IQ[q]_{\Phi_d(q)}^\complete\simeq \IQ(\zeta_d)\llbracket q-\zeta_d\rrbracket$ contains $\zeta_d$. Thus the bottom horizontal arrow becomes an equivalence after $(-\otimes_\IZ\IQ)_{\Phi_d(q)}^\complete$. It follows that
		\begin{equation*}
			\left(E\otimes_\IZ\IQ\right)_{\Phi_d(q)}^\complete\simeq  \bigl((R\otimes_{A,\psi^d}A)\otimes_\IZ\IQ[\zeta_m]\bigr)\llbracket q-\zeta_m\rrbracket\,.
		\end{equation*}
		Thus $\Hh_{R/A}\rightarrow E$ also becomes an equivalence after $(-\otimes_\IZ\IQ)_{\Phi_d(q)}^\complete$.
	\end{proof}
	\begin{cor}\label{cor:ComparisonWithGSWZ}
		If $F$ is a number field with discriminant $\Delta$ and $R\coloneqq \Oo_F[1/\Delta]$, then $\Hh_{R/\IZ}$ agrees with the Habiro ring $\Hh_{R}$ defined in \cite[Definition~\textup{\chref{1.1}}]{HabiroRingOfNumberField}.
	\end{cor}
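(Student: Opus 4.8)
The plan is to compare the equaliser description of $\Hh_{R/\IZ}$ from \cref{lem:ComparisonWithGSWZ} with the definition of $\Hh_R$ in \cite[Definition~\textup{\chref{1.1}}]{HabiroRingOfNumberField}: both are limits of the same diagram of power series rings indexed by roots of unity, so the corollary amounts to unwinding definitions once \cref{lem:ComparisonWithGSWZ} is in hand.

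First I would specialise \cref{lem:ComparisonWithGSWZ} to $A=\IZ$. Every Adams operation $\psi^m\colon\IZ\to\IZ$ is the identity, so each twist $R\otimes_{\IZ,\psi^m}\IZ$ is canonically $R$, and the linearised Frobenius $\phi_{p/\IZ}$ of \cref{con:RelativeHabiroRing} is just the unique Frobenius lift $\phi_p\colon\widehat{R}_p\to\widehat{R}_p$ on the $p$-completion of $R=\Oo_F[1/\Delta]$. This lift exists and is unique for every prime $p$: if $p\mid\Delta$ then $p$ is a unit in $R$ and $\widehat{R}_p=0$, while if $p\nmid\Delta$ then $\widehat{R}_p\cong\Oo_F\otimes_\IZ\IZ_p$ is étale over $\IZ_p$ because $p\nmid\operatorname{disc} F$. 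Hence \cref{lem:ComparisonWithGSWZ} identifies $\Hh_{R/\IZ}$ with the equaliser
\begin{equation*}
	\eq\left(\prod_m R[\zeta_m]\llbracket q-\zeta_m\rrbracket\overset{\operatorname{can}}{\underset{\phi_{/\IZ}}{\doublemorphism}}\prod_{p,m}\widehat{R}_p[\zeta_{pm}]\llbracket q-\zeta_m\rrbracket\right)\,,
\end{equation*}
where $\operatorname{can}$ is the reexpansion map and $\phi_{/\IZ}$ is the composite of $\phi_p$ with $p$-adic reexpansion, both as in \cref{par:RootsOfUnity}.

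Next I would recall that \cite[Definition~\textup{\chref{1.1}}]{HabiroRingOfNumberField} defines $\Hh_R$ as precisely the ring of families $(g_m)_m$ with $g_m\in R[\zeta_m]\llbracket q-\zeta_m\rrbracket$ that are pairwise compatible under $p$-adic reexpansion, the compatibility between level $m$ and level $pm$ being that, after $p$-completion, $\operatorname{can}(g_m)$ coincides with the reexpansion of $\phi_p(g_{pm})$. This is literally the condition of lying in the equaliser displayed above, and it uses the same normalisation of the system $(\zeta_m)_m$ --- imposed in \cite[\S{\chref[subsection]{1.2}}]{HabiroRingOfNumberField} and recalled in \cref{par:RootsOfUnity} --- so the reexpansion operations on the two sides agree on the nose. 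Comparing the two presentations gives a canonical isomorphism $\Hh_{R/\IZ}\cong\Hh_R$; by \cref{thm:HabiroqWittComparison} both sides are ordinary rings, so it makes no difference whether the comparison is made in $\IE_\infty$-$\IZ[q]$-algebras or in plain $\IZ[q]$-algebras.

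The one point requiring genuine care is the dictionary between the indexing conventions and the direction of the Frobenius and reexpansion maps in \cite{HabiroRingOfNumberField} versus those in \cref{par:RootsOfUnity}: one must verify that the compatibility constraint of \cite{HabiroRingOfNumberField} is the equaliser of exactly $\operatorname{can}$ and $\phi_{/\IZ}$, rather than of some transposed or reindexed variant. Once \cref{lem:ComparisonWithGSWZ} is available this is pure bookkeeping with no further mathematical input.
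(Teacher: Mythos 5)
Your argument is exactly the unpacking of the paper's one-line proof, which simply says the corollary "follows immediately from \cref{lem:ComparisonWithGSWZ}": specialising to $A=\IZ$ trivialises the Adams twists and identifies the equaliser of \cref{lem:ComparisonWithGSWZ} with the defining equaliser of $\Hh_R$ in \cite[Definition~\chref{1.1}]{HabiroRingOfNumberField}. So this is the same approach, just spelled out; the only implicit hypothesis worth flagging is that $\Oo_F[1/\Delta]$ is indeed étale over $\IZ$ (so that \cref{con:RelativeHabiroRing} and \cref{lem:ComparisonWithGSWZ} apply), which follows from the fact that inverting the discriminant kills all ramification.
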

	\begin{proof}
		This follows immediately from \cref{lem:ComparisonWithGSWZ}.
	\end{proof}
	\begin{rem}\label{rem:HTaylorSeries}
		In the special case where $R=\IZ$, we obtain the following presentation of the ordinary Habiro ring:
		\begin{equation*}
			\Hh\simeq \eq\left(\prod_{m}\IZ[\zeta_m]\llbracket q-\zeta_m\rrbracket\overset{\operatorname{can}}{\underset{\phi_{/\IZ}}{\doublemorphism}}\prod_{p,m}\IZ_p[\zeta_{pm}]\llbracket q-\zeta_m\rrbracket\right)\,.
		\end{equation*}
		Here $\phi_{/\IZ}$ is just given by the reexpansion morphisms $\IZ[\zeta_{pm}]\llbracket q-\zeta_{pm}\rrbracket\rightarrow \IZ_p[\zeta_{pm}]\llbracket q-\zeta_m\rrbracket$ for all $p$ and all $m$. This gives precise meaning to the intuition that $\Hh$ is the \enquote{ring of power series that can be Taylor-expanded around each root of unity}. Whenever two such expansions can be compared $p$-adically, they must coincide.
	\end{rem}

	\newpage
	
	\section{Habiro descent for \texorpdfstring{$q$}{q}-Hodge complexes}\label{sec:HabiroDescent}
	
	In this section, we'll show that in those situations where a well-behaved derived $q$-Hodge complex can be defined, it descends automatically to the Habiro ring, and furthermore a derived analogue of the comparison with $q$-de Rham--Witt complexes holds true.
	
	Throughout this section, we fix a perfectly covered $\Lambda$-ring $A$.

	\begin{numpar}[Convention.]\label{conv:QuotientConvention}
		In the following we'll consider filtered modules over the filtered ring $(q^m-1)^\star A[q]$ for various $m$. For such a filtered module $ \fil^\star M$, we always let $ \fil^\star M/(q^m-1)$ denote the base change
		\begin{equation*}
			\fil^\star M/(q^m-1)\coloneqq \fil^\star M\lotimes_{(q^m-1)^\star A[q]}A
		\end{equation*}
		in filtered objects, or in other words, the quotient by $(q^m-1)$ sitting in filtration degree~$1$, not filtration degree~$0$. In particular, the $n$\textsuperscript{th} filtered piece of the quotient $\fil^\star M/(q^m-1)$ will be
		\begin{equation*}
			\cofib\left((q^m-1)\colon \fil^{n-1}M\longrightarrow \fil^nM\right)\,.
		\end{equation*}
	\end{numpar}

	\subsection{\texorpdfstring{$q$}{q}-Hodge filtrations and the \texorpdfstring{$q$}{q}-Hodge complex}\label{subsec:qHodgeFiltrations}
	
	Let us start by introducing an appropriate $\infty$-category of $A$-algebras equipped with a well-behaved $q$-deformation of the Hodge filtration. Since \cref{def:qHodgeFiltration} below is a bit of a mess, let us informally summarise the key points first: In addition to the obvious $q$-deformation condition~\cref{enum:qHodgeModq-1}, we also wish the filtration to be compatible with the rational equivalence
	\begin{equation*}
		\bigl(\qdeRham_{R/A}\lotimes_\IZ\IQ\bigr)_{(q-1)}^\complete\simeq \bigl(\deRham_{R/A}\lotimes_\IZ\IQ\bigr)\qpower\,,
	\end{equation*}
	which leads to condition~\cref{enum:qHodgeRational}. For technical reasons, we also need to require the same for the rationalisations of the $p$-completed ($q$-)de Rham complexes, which is why we have to include condition~\cref{enum:qHodgeRationalpComplete} below. These conditions need to satisfy some obvious compatibilities; recording those, we end up with the following slightly messy definition: 
	
	\begin{defi}[$q$-Hodge filtrations]\label{def:qHodgeFiltration}
		Let $R$ be an animated $A$-algebra. A \emph{$q$-Hodge filtration on $\qdeRham_{R/A}$} is a filtered $(q-1)^\star A[q]$-module
		\begin{equation*}
			\fil_{\qHodge}^\star \qdeRham_{R/A}\simeq \Bigl( \fil_{\qHodge}^0\qdeRham_{R/A}\leftarrow \fil_{\qHodge}^1\qdeRham_{R/A}\leftarrow\fil_{\qHodge}^2\qdeRham_{R/A}\leftarrow \dotsb\Bigr)\,,
		\end{equation*}
		equipped with the following data and compatibilities%
		\footnote{Since we're working with $\infty$-categories, each compatibility is again a datum that needs to be provided.}%
		:
		\begin{alphanumerate}
			\item\label{enum:qHodgeFiltrationOnqdR} An equivalence of $A[q]$-modules $\qdeRham_{R/A}\simeq  \fil_{\qHodge}^0\qdeRham_{R/A}$. In other words, we require that $\fil_{\qHodge}^\star \qdeRham_{R/A}$ defines a descending filtration on the derived $q$-de Rham complex.
			\item\label{enum:qHodgeModq-1} An equivalence of filtered $A$-modules
			\begin{equation*}
				c_{(q-1)}\colon  \fil_{\qHodge}^\star \qdeRham_{R/A}/(q-1)\overset{\simeq}{\longrightarrow} \fil_{\Hodge}^\star \deRham_{R/A}\,,
			\end{equation*}
			which in filtered degrees $\leqslant 0$ agrees with the usual equivalence $\qdeRham_{R/A}/(q-1)\simeq \deRham_{R/A}$ under the identification from~\cref{enum:qHodgeFiltrationOnqdR}. In other words, the filtration $ \fil_{\qHodge}^\star \qdeRham_{R/A}$ has to be a $(q-1)$-deformation of the Hodge filtration.
			\item\label{enum:qHodgeRational} An equivalence of filtered $(q-1)^\star (A\otimes\IQ)[q]$-modules
			\begin{equation*}
				c_\IQ\colon \bigl( \fil_{\qHodge}^\star \qdeRham_{R/A}\lotimes_\IZ\IQ\bigr)_{(q-1)}^\complete\overset{\simeq}{\longrightarrow}  \fil_{(\Hodge,q-1)}^\star \bigl(\deRham_{R/A}\lotimes_\IZ\IQ\bigr)\qpower\,,
			\end{equation*}
			where $\fil_{(\Hodge,q-1)}^\star $ denotes the $(q-1)$-completed tensor product of the Hodge filtration on $\deRham_{R/A}$ and the $(q-1)$-adic filtration on $\IQ\qpower$; in the following, we'll often call this the \emph{combined Hodge and $(q-1)$-adic filtration}. In addition, we require that $c_\IQ$ agrees in filtered degrees $\leqslant 0$ with the usual equivalence $(\qdeRham_{R/A}\lotimes_\IZ\IQ)_{(q-1)}^\complete\simeq (\deRham_{R/A}\lotimes_\IZ\IQ)\qpower$ under the identification from~\cref{enum:qHodgeFiltrationOnqdR}, and that $c_\IQ$ and $c_{(q-1)}$ from \cref{enum:qHodgeModq-1} fit into a commutative diagram
			\begin{equation*}
				\begin{tikzcd}[cramped,column sep=3.5ex]
					\fil_{\qHodge}^\star \qdeRham_{R/A}\rar\dar\ar[drr,commutes] & \fil_{\qHodge}^\star \qdeRham_{R/A}/(q-1)\rar["\simeq","c_{(q-1)}"'] &  \fil_{\Hodge}^\star \deRham_{R/A}\dar \\
					\bigl( \fil_{\qHodge}^\star \qdeRham_{R/A}\lotimes_\IZ\IQ\bigr)_{(q-1)}^\complete\rar["\simeq","c_\IQ"'] &  \fil_{(\Hodge,q-1)}^\star \bigl(\deRham_{R/A}\lotimes_\IZ\IQ\bigr)\qpower \rar &  \fil_{\Hodge}^\star \deRham_{R/A}\lotimes_\IZ\IQ
				\end{tikzcd}
			\end{equation*}
			which again must agree in filtered degrees $\leqslant 0$ with the corresponding unfiltered diagram under the identification from~\cref{enum:qHodgeFiltrationOnqdR}.
			\item[c_p]\label{enum:qHodgeRationalpComplete} For every prime~$p$, an equivalence of filtered $(q-1)^\star \widehat{A}_p[1/p]\qpower$-modules
			\begin{equation*}
				c_{\IQ_p}\colon \fil_{\qHodge}^\star \bigl(\qdeRham_{R/A}\bigr)_p^\complete\bigl[\localise{p}\bigr]_{(q-1)}^\complete\overset{\simeq}{\longrightarrow}  \fil_{(\Hodge,q-1)}^\star \bigl(\deRham_{R/A}\bigr)_p^\complete\bigl[\localise{p}\bigr]\qpower\,,
			\end{equation*}
			which is required to agree in filtered degrees $\leqslant 0$ agrees with the usual equivalence $(\qdeRham_{R/A})_p^\complete[1/p]_{(q-1)}^\complete\simeq (\deRham_{R/A})_p^\complete[1/p]\qpower$ under the identification from~\cref{enum:qHodgeFiltrationOnqdR}. In addition, we require that $c_\IQ$ and $c_{\IQ_p}$ are compatible in form of a commutative diagram
			\begin{equation*}
				\begin{tikzcd}
					\bigl( \fil_{\qHodge}^\star \qdeRham_{R/A}\lotimes_\IZ\IQ\bigr)_{(q-1)}^\complete\rar["\simeq","c_\IQ"']\dar\drar[commutes] & \fil_{(\Hodge,q-1)}^\star \bigl(\deRham_{R/A}\lotimes_\IZ\IQ\bigr)\qpower\dar\\
					\fil_{\qHodge}^\star \bigl(\qdeRham_{R/A}\bigr)_p^\complete\bigl[\localise{p}\bigr]_{(q-1)}^\complete\rar["\simeq","c_{\IQ_p}"'] &  \fil_{(\Hodge,q-1)}^\star \bigl(\deRham_{R/A}\bigr)_p^\complete\bigl[\localise{p}\bigr]\qpower
				\end{tikzcd}
			\end{equation*}
			which in filtered degrees $\leqslant0$ must agree with the usual compatibility under the identification from~\cref{enum:qHodgeFiltrationOnqdR}, and that $c_{(q-1)}$ and $c_{\IQ_p}$ fit into a commutative diagram
			\begin{equation*}
				\begin{tikzcd}[cramped,column sep=3.5ex]
					\fil_{\qHodge}^\star \bigl(\qdeRham_{R/A}\bigr)_p^\complete\rar\dar\ar[drr,commutes] & \fil_{\qHodge}^\star \bigl(\qdeRham_{R/A}\bigr)_p^\complete/(q-1)\rar["\simeq","c_{(q-1)}"'] &  \fil_{\Hodge}^\star \bigl(\deRham_{R/A}\bigr)_p^\complete\dar \\
					\fil_{\qHodge}^\star \bigl(\qdeRham_{R/A}\bigr)_p^\complete\bigl[\localise{p}\bigr]_{(q-1)}^\complete\rar["\simeq","c_{\IQ_p}"'] &  \fil_{(\Hodge,q-1)}^\star \bigl(\deRham_{R/A}\bigr)_p^\complete\bigl[\localise{p}\bigr]\qpower \rar &  \fil_{\Hodge}^\star \bigl(\deRham_{R/A}\bigr)_p^\complete\bigl[\localise{p}\bigr]
				\end{tikzcd}
			\end{equation*}
			which must agree in filtered degrees $\leqslant 0$ with the corresponding unfiltered diagram under the identification from~\cref{enum:qHodgeFiltrationOnqdR}. Finally, we require that this diagram is compatible with the diagram from \cref{enum:qHodgeRational} under the previous diagram relating $c_\IQ$ and $c_{\IQ_p}$, and that in filtered degrees $\leqslant 0$ this compatibility agrees with the usual compatibility under the identification from~\cref{enum:qHodgeFiltrationOnqdR}.
		\end{alphanumerate}
		We let $\cat{AniAlg}_A^{\qHodge}$ denote the $\infty$-category of pairs $(R,\fil_{\qHodge}^\star\qdeRham_{R/A})$, where $R$ is an animated $A$-algebra and $\fil_{\qHodge}^\star\qdeRham_{R/A}$ is a $q$-Hodge filtration on $\qdeRham_{R/A}$. Formally, the $\infty$-category $\cat{AniAlg}_A^{\qHodge}$ can be expressed as an iterated pullback of $\cat{AniAlg}_A$ and several $\infty$-categories of filtered modules; this is straightforward, but not very enlightening, so we omit the details.
	\end{defi}
	
	It is natural to ask whether $q$-Hodge filtrations can be chosen functorially. Surprisingly, this turns out to be false. 
	\begin{lem}\label{lem:NoFunctorialqHodgeFiltration}
		If $A$ is not a $\IQ$-algebra, then the forgetful functor $\cat{AniAlg}_A^{\qHodge}\rightarrow \cat{AniAlg}_A$ is not essentially surjective. In particular, it has no section, not even when restricted to the full subcategory $\cat{Sm}_A\subseteq \cat{AniAlg}_A$ of smooth $A$-algebras.
	\end{lem}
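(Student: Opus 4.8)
The plan is to produce a single animated $A$-algebra $R$ for which $\qdeRham_{R/A}$ admits no $q$-Hodge filtration at all; since a section of the forgetful functor would in particular make every $\qdeRham_{R/A}$ admit one, this yields non-essential-surjectivity, hence the absence of a section, with the $\cat{Sm}_A$ refinement needing one further input. Fix a prime $p$ that is not invertible in $A$. The mechanism throughout is the incompatibility modulo $q-1$ of $q-1$ with $[p]_q=(q^p-1)/(q-1)$: the latter reduces to the non-unit $p$ and is not a unit multiple of $q-1$. This is exactly the phenomenon responsible for the framing-dependence of the coordinate-dependent $q$-Hodge complexes of \cref{par:qHodgeFiltrationInCoordinates} and for the necessity of the hypotheses in \cref{thm:qHodgeWellBehavedIntro}.

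I would look for the bad $R$ outside the favourable cases, as in \cref{exm:MainSpecialCase}: a quasi-regular quotient violating both conditions of \cref{thm:qHodgeWellBehavedIntro}, the prototype being $A=\IZ$ and $R=\IZ/\ell$ for $\ell\in\{2,3\}$ --- so that $(\ell)$ is not a proper power and there is no $\IE_1$-ring $\IS_R$ with $\IS_R\otimes\IZ\simeq\IZ/\ell$, essentially because the Moore spectrum $\IS/\ell$ is not $\IE_1$ --- and over a general non-$\IQ$-algebra $A$ a similarly singular quotient of a polynomial $A$-algebra by an ideal containing $p$. For such $R$ the cotangent complex $L_{R/A}$ is a shift of a non-zero torsion module, so $\gr_{\Hodge}^i\deRham_{R/A}\simeq\Sigma^{-i}\wedge^iL_{R/A}$ is non-zero in every weight $i\geqslant 0$, while $\deRham_{R/A}$ itself is not Hodge-complete and its rationalisation and $p$-completed rationalisation are non-trivial but carry a degenerate Hodge filtration. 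Now suppose $\fil_{\qHodge}^\star\qdeRham_{R/A}$ were a $q$-Hodge filtration. Passing to associated gradeds --- viewed as graded modules over $\gr^\star((q-1)^\star A[q])=A[u]$ with $u$ the class of $q-1$ in degree $1$ --- \cref{def:qHodgeFiltration}\cref{enum:qHodgeModq-1} forces $\cofib(u\colon\gr_{\qHodge}^{i-1}\qdeRham_{R/A}\to\gr_{\qHodge}^i\qdeRham_{R/A})\simeq\Sigma^{-i}\wedge^iL_{R/A}$, whereas \cref{enum:qHodgeRational} and \cref{enum:qHodgeRationalpComplete} pin the rationalised and $p$-completed-rationalised associated gradeds to those of the combined Hodge and $(q-1)$-adic filtration --- and, by the terminality property of the naive preimage filtration noted in the discussion of uniqueness, it is forced to be that naive filtration in the range where the latter is defined. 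Confronting these descriptions weight by weight, the Frobenius/Adams operator carried by $(\qdeRham_{R/A})_p^\complete$ sends $q-1$ to $[p]_q(q-1)$, so the identification underlying the naive filtration forces the weight-$p$ graded piece to be divisible by $[p]_q$; it therefore cannot reduce modulo $q-1$ to the torsion module $\Sigma^{-p}\wedge^pL_{R/A}$, contradicting \cref{enum:qHodgeModq-1}.

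This shows $\qdeRham_{R/A}$ has no $q$-Hodge filtration, so the forgetful functor is not essentially surjective and has no section. Over $\cat{Sm}_A$ essential surjectivity does hold (each smooth algebra receives a framing), so here one argues instead that a section would, through \cref{def:qHodgeIntro} and \cref{thm:HabiroDescentIntro}, yield a functor $S\mapsto\qHodge_{S/A}$ on $\cat{Sm}_A$ whose reduction modulo $(q^m-1)$ computes $\qIW_m\Omega_{S/A}^*$ functorially --- contradicting \cite[Theorem~\chref{5.1}]{qWitt}. The main obstacle is the weight-by-weight computation above: one must make the non-smooth complex $\qdeRham_{R/A}$, its de Rham fibre with its Hodge filtration, and the nested $p$-adic, $(q-1)$-adic and rational completions of \cref{def:qHodgeFiltration} explicit enough to locate the weight at which the $[p]_q$-versus-$(q-1)$ clash is visible --- precisely the computation that \cref{thm:qHodgeWellBehavedIntro} is engineered to make succeed under its hypotheses, and which must here be shown to fail.
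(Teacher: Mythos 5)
Your proposal is in the right conceptual neighbourhood---the obstruction does live at a quasi-regular quotient and does come from the clash between $[p]_q$ (controlling the $q$-PD envelope) and $q-1$ (controlling the reduction to the Hodge filtration)---but as written it has gaps that prevent it from being a proof, and it diverges from the paper's route in ways that matter.

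The paper's counterexample is $R = \widehat{A}_p\{x\}_\infty/x$, the quotient of the \emph{free $p$-complete perfect $\delta$-ring on one generator} by $x$. This choice is essential: the generator $\delta(x)$ is a polynomial variable in the base, so the explicit computation in \cref{exm:MainSpecialCase} (with $\alpha=1$) shows there is provably no way to modify the $q$-divided power $\gamma_q(x)$ by elements of $(q-1)\qdeRham_{R/A}$ so as to land in the $p$\textsuperscript{th} step of the rationalised combined Hodge/$(q-1)$-adic filtration---the offending non-integral coefficient of $\delta(x)$ can never be removed because $\delta(x)$ is a free variable. Your prototype $R=\IZ/\ell$ over $A=\IZ$ doesn't have this feature: $\delta(p) = (p-p^p)/p = 1-p^{p-1}$ is a \emph{unit}, so the structure of the $q$-PD envelope $\IZ_p\qpower\{p/[p]_q\}^\wedge_{(p,q-1)}$ is quite different, and you would have to redo the obstruction computation from scratch. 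You have not done so. Crucially, failing the \emph{sufficient} conditions of \cref{thm:qHodgeWellBehavedIntro} (as $\IS/2$ not being $\IE_1$ certainly shows) does \emph{not} establish that $\IZ/\ell$ admits no $q$-Hodge filtration; the theorem gives no such converse.

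Your ``weight-by-weight'' argument is also too vague to carry the proof. You assert that the naive preimage filtration is forced to be ``divisible by $[p]_q$'' in weight $p$ and so cannot reduce mod $(q-1)$ to the torsion module $\Sigma^{-p}\wedge^p L_{R/A}$, but neither the divisibility claim nor the resulting contradiction is derived from the definitions. You also invoke terminality of the naive preimage filtration, citing the uniqueness discussion; but that discussion concerns the quasi-regular case where the naive filtration \emph{is} a $q$-Hodge filtration, and applying it here requires an extra argument that any putative $q$-Hodge filtration would have to map into the naive one in a way that survives reduction mod $(q-1)$. The paper's proof is cleaner: it directly observes that \cref{def:qHodgeFiltration}\cref{enum:qHodgeModq-1} forces an element $\widetilde\gamma_q(x) \equiv x^p/p \pmod{q-1}$ in filtration degree $p$, that \cref{enum:qHodgeRationalpComplete} forces $\widetilde\gamma_q(x) \in (x,q-1)^p$ after completed rationalisation, and that the explicit envelope contains no such element.

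Finally, for the $\cat{Sm}_A$ refinement the paper uses a lightweight argument---animate a hypothetical section over $\cat{Sm}_A$ (equivalently over $\cat{Poly}_A$) to produce a section over $\cat{AniAlg}_A$, contradicting non-essential-surjectivity---whereas you invoke the no-go theorem of \cite[Theorem~\chref{5.1}]{qWitt} together with \cref{thm:HabiroDescentIntro}\cref{enum:qdRWComparisonIntro}. Your route is valid (and is the one sketched in \cref{par:qHodgeFiltrationInCoordinates}), but it pulls in considerably heavier machinery and, in the logical order of the paper, creates a forward dependency on \cref{thm:HabiroDescent}.
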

	\begin{proof}[Proof sketch]
		As far as the author is aware, this result hasn't been published, but the objection is known among the experts in the field.
		
		Let~$p$ be a prime such that $\widehat{A}_p\not\simeq 0$. Let $\widehat{A}_p\{x\}_\infty$ be the free $p$-complete perfect $\delta$-ring on a generator $x$. We'll show that the $q$-de Rham complex of $R\coloneqq \widehat{A}_p\{x\}_\infty/x$ admits no $q$-Hodge filtration. Suppose it does. Note that $(\qdeRham_{R/A})_p^\complete$ is given by the prismatic envelope
		\begin{equation*}
			\bigl(\qdeRham_{R/A}\bigr)_p^\complete\simeq \widehat{A}_p\{x\}_\infty\qpower\left\{\frac{\phi(x)}{[p]_q}\right\}_{(p,q-1)}^\complete\,.
		\end{equation*}
		In particular, it is static. Since the Hodge filtration $\fil_{\Hodge}^\star(\deRham_{R/A})_p^\complete$ is just the divided power filtration of the PD-envelope $(\deRham_{R/A})_p^\complete\simeq D_{\smash{\widehat{A}}_p\{x\}_\infty}(x)$, \cref{def:qHodgeFiltration}\cref{enum:qHodgeModq-1} implies that $\fil_{\qHodge}^\star(\qdeRham_{R/A})_p^\complete$ must also be a descending chain of submodules of $(\qdeRham_{R/A})_p^\complete$. Moreover, we see that $\fil_{\qHodge}^p(\qdeRham_{R/A})_p^\complete$ must contain an element $\widetilde{\gamma}_q(x)$ such that $\widetilde{\gamma}_q(x)\equiv x^p/p\mod (q-1)$. Using \cref{def:qHodgeFiltration}\cref{enum:qHodgeRationalpComplete}, we see that $\widetilde{\gamma}_q$ must also be contained in the ideal $(x,q-1)^p$ after completed rationalisation. But it is straightforward to check that the prismatic envelope above doesn't contain any $\widetilde{\gamma}_q(x)$ with these properties (for the details, see \cref{exm:MainSpecialCase} below).
		
		This shows that $\cat{AniAlg}_A^{\qHodge}\rightarrow \cat{AniAlg}_A$ is not essentially surjective. Hence it can't have a section, not even over $\cat{Sm}_A\subseteq \cat{AniAlg}_A$, because we could always animate to extend such a section to all of $\cat{AniAlg}_A$.
	\end{proof}
	
	\begin{rem}
		Despite the general non-existence, it's possible to construct many interesting objects of the $\infty$-category $\cat{AniAlg}_A^{\qHodge}$, and the forgetful functor $\cat{AniAlg}_A^{\qHodge}\rightarrow \cat{AniAlg}_A$ does admit sections when restricted to certain full subcategories of $\cat{AniAlg}_A$. We'll discuss several such examples in~\cref{sec:FunctorialqHodge}.
	\end{rem}

	In the remainder of this subsection, we'll study the following objects:
	
	\begin{numpar}[$q$-Hodge complexes.]\label{par:WellBehavedqHodgeComplex}
		Given a $q$-Hodge filtration $\fil^\star\qdeRham_{R/A}$ for $R$ over $A$, we can construct the \emph{$q$-Hodge complex} as
		\begin{equation*}
			\qHodge_{(R, \fil_{\smash{\qHodge}}^\star )/A}\coloneqq \colimit\Bigl( \fil_{\qHodge}^0\qdeRham_{R/A}\xrightarrow{(q-1)} \fil_{\qHodge}^1\qdeRham_{R/A}\xrightarrow{(q-1)}\dotsb\Bigr)_{(q-1)}^\complete\,.
		\end{equation*}
		If the $q$-Hodge filtration is clear from the context, we usually just write $\qHodge_{R/A}$.
	\end{numpar}
	\begin{rem}\label{rem:CompleteFiltration}
		In the above we've used the derived $q$-de Rham complex since many of our examples later on will be outside of the smooth case. But note that even if $R=S$ is smooth over $A$, the underived $q$-de Rham complex $\qOmega_{S/A}$ usually \emph{doesn't} agree with the derived $q$-de Rham complex $\qdeRham_{S/A}$, because $\Omega_{S/A}^*$ and $\deRham_{S/A}$ usually differ in characteristic~$0$. But this is not a problem. If we're given a filtration $\fil_{\qHodge}^\star \qOmega_{S/A}$ that satisfies the obvious analogues of \cref{def:qHodgeFiltration}\cref{enum:qHodgeFiltrationOnqdR}--\cref{enum:qHodgeRationalpComplete}, then its pullback along the canonical map $\qdeRham_{S/A}\rightarrow \qOmega_{S/A}$ yields a filtration $\fil_{\qHodge}^\star\qdeRham_{R/A}$ as in \cref{def:qHodgeFiltration}. Indeed, this follows from the fact that $\Omega_{S/A}^*\simeq \hatdeRham_{S/A}$ always agrees with the Hodge-completed derived de Rham complex and the fact that any filtration is the pullback of its completion (see \cref{par:Notations}).
		
		Conversely, we'll show in \cref{prop:Letaq-1} that for any  $(S,\fil_{\qHodge}^\star\qdeRham_{S/A})\in\cat{AniAlg}_A^{\qHodge}$ such that $S$ is smooth over~$A$, we have an equivalence
		\begin{equation*}
			\qOmega_{S/A}\simeq\qhatdeRham_{S/A}
		\end{equation*}
		of the underived $q$-de Rham complex and the $q$-Hodge completed derived $q$-de Rham complex. Finally, let us remark that in the definition of the $q$-Hodge complex it doesn't matter whether we use $\fil_{\qHodge}^\star\qdeRham_{R/A}$ or its completion $\fil_{\qHodge}^\star\qhatdeRham_{R/A}$, since every element in $\fil_{\qHodge}^i\qdeRham_{R/A}$ becomes divisible by $(q-1)^i$ in $\qHodge_{R/A}$ and the $q$-Hodge complex is $(q-1)$-complete.
	\end{rem}

	\begin{prop}\label{lem:qHodgeSymmetricMonoidal}
		$\cat{AniAlg}_A^{\qHodge}$ admits a canonical symmetric monoidal structure. The tensor product of two objects $(R_1, \fil_{\qHodge}^\star \qdeRham_{R_1/A})$ and $(R_2, \fil_{\qHodge}^\star \qdeRham_{R_2/A})$ is given by
		\begin{equation*}
			\left(R_1\lotimes_AR_2,\bigl( \fil_{\qHodge}^\star \qdeRham_{R_1/A}\lotimes_{(q-1)^\star A[q]} \fil_{\qHodge}^\star \qdeRham_{R_2/A}\bigr)_{(q-1)}^\complete\right)\,,
		\end{equation*}
		where in the second component we take the derived tensor as filtered modules over the filtered ring $(q-1)^\star A[q]$. Furthermore, the functor
		\begin{equation*}
			\qHodge_{-/A}\colon \cat{AniAlg}_A^{\qHodge}\longrightarrow \widehat{\Dd}_{(q-1)}\bigl(A[q]\bigr)
		\end{equation*}
		can be equipped with a canonical symmetric monoidal structure.
	\end{prop}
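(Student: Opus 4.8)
The plan is to build the symmetric monoidal structure on $\cat{AniAlg}_A^{\qHodge}$ as an iterated pullback of symmetric monoidal $\infty$-categories, mirroring the pullback description of $\cat{AniAlg}_A^{\qHodge}$ itself sketched at the end of \cref{def:qHodgeFiltration}. The building blocks are: the $\infty$-category $\cat{AniAlg}_A$ with its coCartesian symmetric monoidal structure (whose tensor is $\lotimes_A$); the $\infty$-categories of $(q-1)$-completed filtered modules $\widehat{\Fil}_{(q-1)}\Dd((q-1)^\star A[q])$, $\widehat{\Fil}_{(q-1)}\Dd((q-1)^\star(A\otimes\IQ)[q])$, $\widehat{\Fil}_{(q-1)}\Dd((q-1)^\star\widehat A_p[1/p]\qpower)$ with their Day convolution symmetric monoidal structures; and the relevant base-change, completion, and rationalisation functors among these, each of which is symmetric monoidal. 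The key point is that all the structure functors appearing in \cref{def:qHodgeFiltration} — the forgetful functor $\fil_{\qHodge}^0(-)$, the reduction $-/(q-1)$ (i.e.\ $-\lotimes_{(q-1)^\star A[q]}A$), the $(q-1)$-completed rationalisation, and the $(q-1)$-completed $p$-adic-then-rational localisation — are symmetric monoidal, as are the functors $R\mapsto\fil_{\Hodge}^\star\deRham_{R/A}$ and $R\mapsto\fil_{(\Hodge,q-1)}^\star(\dotsb)\qpower$ (the Hodge-filtered derived de Rham complex is lax symmetric monoidal, in fact symmetric monoidal after the relevant completions, by animation from $\cat{Poly}_A$ where it is the filtered de Rham complex of a polynomial ring, which is a filtered $\IE_\infty$-algebra). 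Since $\CAlg(\Pr^\L)$ (equivalently, $\infty$-operads) admits limits computed on underlying $\infty$-categories, the iterated pullback diagram of symmetric monoidal $\infty$-categories defining $\cat{AniAlg}_A^{\qHodge}$ promotes to a symmetric monoidal structure, and the forgetful functor $\cat{AniAlg}_A^{\qHodge}\to\cat{AniAlg}_A$ together with $\fil_{\qHodge}^0\colon\cat{AniAlg}_A^{\qHodge}\to\widehat{\Fil}_{(q-1)}\Dd((q-1)^\star A[q])$ are symmetric monoidal by construction.

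Unwinding the Day convolution tensor product on each filtered factor then gives the claimed formula: the first component is $R_1\lotimes_A R_2$ (coCartesian tensor on algebras), and the second component is the $(q-1)$-completed Day convolution tensor $(\fil_{\qHodge}^\star\qdeRham_{R_1/A}\lotimes_{(q-1)^\star A[q]}\fil_{\qHodge}^\star\qdeRham_{R_2/A})_{(q-1)}^\complete$, with the structure maps $c_{(q-1)}$, $c_\IQ$, $c_{\IQ_p}$ on the tensor obtained by tensoring those of the factors (using that the relevant functors are monoidal and that $\fil_{\Hodge}^\star\deRham_{R_1/A}\otimes\fil_{\Hodge}^\star\deRham_{R_2/A}\simeq\fil_{\Hodge}^\star\deRham_{(R_1\lotimes_A R_2)/A}$ after Hodge completion, which again follows by animation). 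One must also check the unit: the unit object is $(A,(q-1)^\star A[q])$ with the standard identifications, and $\fil_{\qHodge}^\star\qdeRham_{A/A}\simeq(q-1)^\star A[q]$ because $\deRham_{A/A}\simeq A$ with trivial Hodge filtration, so the only $q$-Hodge filtration is the $(q-1)$-adic one.

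For the symmetric monoidal structure on $\qHodge_{-/A}$: by \cref{par:WellBehavedqHodgeComplex}, $\qHodge_{R/A}=\colimit(\fil_{\qHodge}^0\xrightarrow{(q-1)}\fil_{\qHodge}^1\xrightarrow{(q-1)}\dotsb)_{(q-1)}^\complete$. I would realise this colimit functorially as follows: inside $\Fil\Dd(A[q])$ with its Day convolution structure, the $(q-1)$-adically filtered ring $(q-1)^\star A[q]$ is an $\IE_\infty$-algebra, and the telescope $\colimit_n\fil_{\qHodge}^{n}(n)$ — i.e.\ inverting the degree-one element $t\in\gr^{-1}$ of the filtered line corresponding to $(q-1)$, equivalently, the map that forgets the filtration after twisting by $t$ — is a symmetric monoidal (colimit-preserving) functor $\Mod_{(q-1)^\star A[q]}\Fil\Dd(A[q])\to\Dd(A[q])$. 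Composing the symmetric monoidal functor $\fil_{\qHodge}^0\colon\cat{AniAlg}_A^{\qHodge}\to\Mod_{(q-1)^\star A[q]}\Fil\Dd(A[q])$ with this telescoping functor and then with $(q-1)$-completion $\widehat{\Dd}_{(q-1)}(A[q])\hookleftarrow\Dd(A[q])$ (the latter is symmetric monoidal, being a localisation compatible with the tensor product since $A[q]/(q-1)^n$ are $\IE_\infty$), yields the desired symmetric monoidal enhancement of $\qHodge_{-/A}$; and the resulting tensor formula matches the one in \cref{lem:qHodgeSymmetricMonoidal} precisely because Day convolution commutes with the telescope. The \textbf{main obstacle} is bookkeeping: verifying that the many compatibility diagrams in \cref{def:qHodgeFiltration}\cref{enum:qHodgeFiltrationOnqdR}--\cref{enum:qHodgeRationalpComplete} are exactly the data recording that $\cat{AniAlg}_A^{\qHodge}$ is a limit \emph{in $\CAlg(\Pr^\L)$} rather than just in $\Cat_\infty$ — i.e.\ that no strictly non-monoidal data creeps in — and that the telescope/completion composite is genuinely symmetric monoidal, not merely lax. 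Once the pullback is set up operad-theoretically (using \cite[Proposition~\chref{2.2.1.9}]{HA} for the localisations and the standard fact that $\CAlg(\Pr^\L)\to\Cat_\infty$ creates limits), all the verifications reduce to the polynomial case by animation, where they are immediate.
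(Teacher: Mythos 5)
Your first part (realising $\cat{AniAlg}_A^{\qHodge}$ as an iterated pullback of symmetric monoidal $\infty$-categories in $\CAlg(\Pr^\L)$, then reading off the tensor formula from Day convolution and the coCartesian structure on $\cat{AniAlg}_A$) is exactly the paper's argument, filled in with a bit more detail.

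For the second part you take a structurally similar route to the paper (localise at the degree-one element $\beta$ corresponding to $(q-1)$, restrict to filtration degree~$0$, then $(q-1)$-complete), but there is a gap in the monoidality claim. You assert that the telescope $\Mod_{(q-1)^\star A[q]}\Fil\Dd(A[q])\to\Dd(A[q])$ is strictly symmetric monoidal and that ``Day convolution commutes with the telescope,'' so that the composite with $(q-1)$-completion is automatically symmetric monoidal. This is true for the \emph{uncompleted} filtered tensor product, but the tensor on $\cat{AniAlg}_A^{\qHodge}$ is the $(q-1)$-\emph{completed} filtered tensor, and your telescope and the $(q-1)$-completion do not commute on the nose. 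Concretely, the forgetful functor from the $(q-1)$-complete filtered category into the uncompleted one is only a right adjoint, so the composite (include, telescope, complete) is a priori only \emph{lax} symmetric monoidal, and what must be checked is that the lax structure map $\qHodge_{R_1/A}\clotimes\qHodge_{R_2/A}\to\qHodge_{R_1\otimes_A R_2/A}$ is an equivalence. The paper handles exactly this: it observes the composite is lax symmetric monoidal, reduces the strictness check modulo $(q-1)$ (both sides are $(q-1)$-complete), introduces the conjugate filtration on $\qHodge_{-/A}/(q-1)$ with associated graded $\gr_{\Hodge}^*\deRham_{-/A}$, and wins because $\gr_{\Hodge}^*\deRham_{-/A}$ is symmetric monoidal. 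Your proposal skips this verification entirely, which is precisely the nontrivial content. (As a side remark, the conjugate filtration introduced here is also reused in the proof of Theorem~\ref{thm:HabiroDescent}, so the paper's more pedestrian route pays off later.) There is also some confusion in your description of the telescope: the element being inverted is the degree-$(+1)$ element $\beta$ with $\beta t = (q-1)$, not $t\in\gr^{-1}$, and the result is not ``forgetting the filtration'' (that would be inverting $t$, giving the exhaustive colimit) but the localisation $(\fil^\star[\beta^{-1}])^0$.
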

	
	To prove \cref{lem:qHodgeSymmetricMonoidal}, let us first construct a filtration on $\qHodge_{-/A}/(q-1)$.
	
	\begin{numpar}[The conjugate filtration.]\label{par:ConjugateFiltration}
		Let $(R, \fil_{\qHodge}^\star \qdeRham_{R/A})$ be an object in $\cat{AniAlg}_A^{\qHodge}$. Let's consider the localisation of the filtered $(q-1)^\star A[q]$-module $ \fil_{\qHodge}^\star \qdeRham_{R/A}$ at $(q-1)$:
		\begin{equation*}
			\fil_{\qHodge}^\star \qdeRham_{R/A}\bigl[\localise{q-1}\bigr]\simeq\colimit\Bigl( \fil_{\qHodge}^{\star }\qdeRham_{R/A}\xrightarrow{(q-1)} \fil_{\qHodge}^{\star +1}\qdeRham_{R/A}\xrightarrow{(q-1)}\dotso\Bigr)\,.
		\end{equation*}
		Upon completing the filtration, this filtered object becomes the $(q-1)$-adic filtration on the $q$-Hodge complex $\qHodge_{R/A}$.
		
		Before taking the colimit, the diagram above can be regarded as a bifiltered object, with one ascending (\enquote{horizontal}) filtration, given by the steps in the colimit, and one descending (\enquote{vertical}) filtration, given by the filtrations on each step $ \fil_{\qHodge}^{\star +n}\qdeRham_{R/A}$. If we pass to the associated graded in the vertical direction, we obtain
		\begin{equation*}
			\qHodge_{R/A}/(q-1)\simeq\colimit\Bigl(\gr_{\qHodge}^{0}\qdeRham_{R/A}\xrightarrow{(q-1)}\gr_{\qHodge}^{1}\qdeRham_{R/A}\xrightarrow{(q-1)}\dotso\Bigr)\,.
		\end{equation*}
		This representation as a colimit defines an exhaustive ascending filtration on $\qHodge_{R/A}/(q-1)$, which we define to be the \emph{conjugate filtration} $ \fil_\star ^\mathrm{conj}(\qHodge_{R/A}/(q-1))$.
	\end{numpar}
	\begin{lem}\label{lem:ConjugateFiltration}
		The associated graded of the conjugate filtration $ \fil_\star^\mathrm{conj}\qHodge_{R/A}/(q-1)$ is given by
		\begin{equation*}
			\gr_*^{\mathrm{conj}}\bigl(\qHodge_{R/A}/(q-1)\bigr)\simeq \Sigma^{-*}\deRham_{R/A}^*\simeq\gr_{\Hodge}^*\deRham_{R/A}\,.
		\end{equation*}
	\end{lem}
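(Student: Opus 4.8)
The plan is to establish this purely formally, by reducing the computation of the associated graded to the symmetric monoidality of the functor $\gr^*\colon\Fil\to\Gr$ together with the defining compatibility \cref{def:qHodgeFiltration}\cref{enum:qHodgeModq-1} of a $q$-Hodge filtration. First I would pin down the shape of the conjugate filtration. By its construction in \cref{par:ConjugateFiltration}, $\fil_\star^{\mathrm{conj}}(\qHodge_{R/A}/(q-1))$ is the exhaustive ascending filtration attached to the sequential diagram $\gr_{\qHodge}^0\qdeRham_{R/A}\xrightarrow{(q-1)}\gr_{\qHodge}^1\qdeRham_{R/A}\xrightarrow{(q-1)}\dotsb$, so that $\fil_n^{\mathrm{conj}}=\gr_{\qHodge}^n\qdeRham_{R/A}$ with transition maps given by multiplication by the principal symbol $u\in\gr^1\bigl((q-1)^\star A[q]\bigr)$ of $q-1$ --- this is what the abusive \enquote{$(q-1)$} in \cref{par:ConjugateFiltration} denotes. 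By the definition of the associated graded of an ascending filtration (\cref{par:Notations}) this yields
\[
	\gr_n^{\mathrm{conj}}\bigl(\qHodge_{R/A}/(q-1)\bigr)\simeq\cofib\Bigl(u\colon\gr_{\qHodge}^{n-1}\qdeRham_{R/A}\longrightarrow\gr_{\qHodge}^n\qdeRham_{R/A}\Bigr)\,,
\]
with $\gr_{\qHodge}^{-1}\qdeRham_{R/A}=0$, the $q$-Hodge filtration being constant in non-positive degrees. In the notation of \cref{par:Notations}, the right-hand side is exactly the degree-$n$ piece of the graded quotient $\gr_{\qHodge}^*\qdeRham_{R/A}/u$, where $\gr_{\qHodge}^*\qdeRham_{R/A}$ is viewed as a graded module over the graded ring $\gr^*\bigl((q-1)^\star A[q]\bigr)$. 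So it suffices to identify $\gr_{\qHodge}^*\qdeRham_{R/A}/u$ with $\Sigma^{-*}\deRham_{R/A}^*$ as graded objects.

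For this I would invoke the monoidal formalism. Since $q-1$ is a non-zero-divisor in $A[q]$, we have $\gr^*\bigl((q-1)^\star A[q]\bigr)\cong A[u]$, the polynomial ring on the degree-one symbol $u$ over $\gr^0=A[q]/(q-1)\cong A$. The functor $\gr^*\colon\Fil(\Dd(A[q]))\to\Gr(\Dd(A[q]))$ is symmetric monoidal --- it is the base change along $\IUnit_{\Gr}[t]\to\IUnit_{\Gr}$, i.e.\ \enquote{modding out $t$} in the sense of \cref{par:Notations} --- and therefore carries the base change $(-)\lotimes_{(q-1)^\star A[q]}A$ that computes $\fil^\star(-)/(q-1)$ (in the sense of \cref{conv:QuotientConvention}) to the base change $(-)\otimes_{A[u]}A$ that computes $(-)/u$. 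Applying this to $\fil_{\qHodge}^\star\qdeRham_{R/A}$ gives $\gr_{\qHodge}^*\qdeRham_{R/A}/u\simeq\gr^*\bigl(\fil_{\qHodge}^\star\qdeRham_{R/A}/(q-1)\bigr)$. Now \cref{def:qHodgeFiltration}\cref{enum:qHodgeModq-1} supplies an equivalence of filtered objects $\fil_{\qHodge}^\star\qdeRham_{R/A}/(q-1)\simeq\fil_{\Hodge}^\star\deRham_{R/A}$; passing to associated graded and using the standard identification $\gr_{\Hodge}^*\deRham_{R/A}\simeq\Sigma^{-*}\deRham_{R/A}^*$ of the Hodge-filtration graded pieces then finishes the argument, since chaining the equivalences gives $\gr_*^{\mathrm{conj}}(\qHodge_{R/A}/(q-1))\simeq\Sigma^{-*}\deRham_{R/A}^*\simeq\gr_{\Hodge}^*\deRham_{R/A}$.

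Since everything is formal, I do not expect a genuine obstacle; the one place that needs care is the indexing in the first step. One must check that \enquote{passing to the associated graded in the vertical direction} in \cref{par:ConjugateFiltration} really converts the horizontal system $\fil_{\qHodge}^{\star+n}\qdeRham_{R/A}$ (with transition multiplication by $q-1$) into the system $n\mapsto\gr_{\qHodge}^{*+n}\qdeRham_{R/A}$ with transition the action of $u$, so that the graded pieces of the colimit filtration reassemble into the single graded quotient $\gr_{\qHodge}^*\qdeRham_{R/A}/u$ and not a graded shift of it. Once that bookkeeping is set up, symmetric monoidality of $\gr^*$ does the rest.
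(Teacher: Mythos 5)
Your proof is correct and follows essentially the same route as the paper's: both rest on the Koszul-complex commutativity $\fil_{\qHodge}^\star\qdeRham_{R/A}/(\beta,t)\simeq\fil_{\qHodge}^\star\qdeRham_{R/A}/(t,\beta)$ (where $\beta$ is your symbol $u$ and $t$ is the filtration parameter, i.e.\ symmetric monoidality of $\gr^*$) together with the $q$-deformation equivalence from \cref{def:qHodgeFiltration}\cref{enum:qHodgeModq-1}. The only organisational difference is that you read off $\gr_*^{\mathrm{conj}}$ directly from the sequential diagram defining the conjugate filtration, whereas the paper packages the same computation in the abstract \cref{lem:FiltrationAbstract} so that it can be reused later.
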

	\begin{proof}
		%
		To avoid ambiguous notation, let us identify the filtered ring $(q-1)^\star A[q]$ with the graded ring $A[\beta,t]$, where $\abs{\beta}=1$, $\abs{t}=-1$, and $\beta t=q-1$.%
		\footnote{In \cite{qdeRhamku} we'll recognise $(q-1)^\star \IZ\qpower\cong \IZ[\beta]\llbracket t\rrbracket\cong \pi_{2*}(\ku^{\h S^1})$, where $\beta\in\pi_2(\ku)$ is the Bott element and $t\in \pi_{-2}(\ku^{\h S^1})$ is a suitable complex orientation.}
		The filtered structure on $A[\beta,t]$ comes from the $A[t]$-module structure (see \cref{par:Notations}), so $t$ can be regarded as the filtration parameter and $\beta$ can be regarded as the element \enquote{$(q-1)$ sitting in degree~$1$}. If we regard $ \fil_{\qHodge}^\star\qdeRham_{R/A}$ as a graded $A[\beta,t]$-module, then
		\begin{equation*}
			\fil_{\qHodge}^\star\qdeRham_{R/A}/\beta\simeq  \fil_{\Hodge}^\star\deRham_{R/A}\quad\text{and}\quad  \fil_{\qHodge}^\star\qdeRham_{R/A}/t\simeq \gr_{\qHodge}^*\qdeRham_{R/A}
		\end{equation*}
		as graded $A[t]$- or $A[\beta]$-modules, respectively. The first equivalence follows from \cref{def:qHodgeFiltration}\cref{enum:qHodgeModq-1}, the second follows because modding out $t$ is the same as taking the associated graded (see \cref{par:Notations}). Hence also
		\begin{equation*}
			\fil_{\qHodge}^\star \qdeRham_{R/A}/(\beta,t)\simeq \gr_{\Hodge}^*\deRham_{R/A}
		\end{equation*}
		as filtered $A$-modules. Finally, by construction, we can identify $\qHodge_{R/A}/(q-1)$ with $( \fil_{\qHodge}^\star\qdeRham_{R/A}\lotimes_{A[\beta]}A[\beta^{\pm 1}])_0/(\beta t)$, where $(-)_0$ denotes the restriction of a graded object to its degree-$0$ part. Then the desired assertion follows from \cref{lem:FiltrationAbstract} below. 
	\end{proof}
	\begin{lem}\label{lem:FiltrationAbstract}
		Let $M^*$ be a graded module over the graded ring $A[\beta,t]$, where $\abs{\beta}=1$, $\abs{t}=-1$. Then $(M^*\lotimes_{A[\beta]}A[\beta^{\pm 1}])_0/(\beta t)$ admits a canonical exhaustive ascending filtration whose associated graded is $M^*/(\beta,t)$.
	\end{lem}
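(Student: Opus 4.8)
The plan is to peel the statement apart into two routine moves: first trade the element $\beta t$, which sits in degree $0$, for $t$, then recognise what remains as a sequential colimit equipped with its tautological ascending filtration. For the first move, observe that on $M^*[\beta^{-1}]$ multiplication by $\beta$ is an equivalence, so the endomorphism $\beta t$, which factors as multiplication by $t$ followed by multiplication by $\beta$, satisfies $\cofib(\beta t)\simeq\cofib(t)$. Passing to degree $0$ (an exact operation) and using that inverting $\beta$ commutes with $\cofib(t)$, since $\beta$ and $t$ commute, this identifies
\[
  \bigl(M^*[\beta^{-1}]\bigr)_0/(\beta t)\ \simeq\ \bigl(M^*[\beta^{-1}]/t\bigr)_0\ \simeq\ \bigl((M^*/t)[\beta^{-1}]\bigr)_0\,.
\]

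Next I would set $P^*\coloneqq M^*/t$, a graded $A[\beta]$-module with $\abs{\beta}=1$. An element of $(P^*[\beta^{-1}])_0$ is represented by $\beta^{-n}p$ with $p\in P_n$, and \enquote{dividing by $\beta$} identifies $\beta^{-n}p$ with $\beta^{-(n+1)}(\beta p)$; hence
\[
  \bigl(P^*[\beta^{-1}]\bigr)_0\ \simeq\ \colimit\Bigl(\dotsb\to P_{n-1}\xrightarrow{\beta}P_n\xrightarrow{\beta}P_{n+1}\to\dotsb\Bigr)\,,
\]
with the map $P_n\to(P^*[\beta^{-1}])_0$ being division by $\beta^n$. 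This colimit carries its tautological exhaustive ascending filtration: let $\fil_n$ be the diagram entry $P_n$, with transition maps $\beta\colon P_n\to P_{n+1}$ and structure maps $P_n\to(P^*[\beta^{-1}])_0$ as just described. Exhaustiveness is the bare statement that the colimit is computed by this diagram, and the whole chain of identifications is manifestly functorial in $M^*$, which is the \enquote{canonical} of the statement.

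Finally, by construction $\gr_n\simeq\cofib\bigl(\beta\colon P_{n-1}\to P_n\bigr)$, which is precisely the degree $n$ part of $P^*/\beta$ in the sense of \cref{par:Notations}; and $P^*/\beta=(M^*/t)/\beta\simeq M^*/(\beta,t)$, since $\beta$ and $t$ commute and hence the iterated Koszul-type quotient may be taken in either order. Thus $\gr_*\simeq M^*/(\beta,t)$ as graded $A$-modules, as claimed. I don't anticipate a genuine obstacle: the point to be careful about is the grading bookkeeping, i.e.\ keeping track of where the degree $1$ element $\beta$ and the degree $-1$ element $t$ live, together with the observation that the reduction in the first display uses nothing about $M^*$ beyond the invertibility of $\beta$ --- indeed, for some $M^*$ the left-hand side can even vanish while $M^*/(\beta,t)$ does not, which is harmless, since a merely exhaustive ascending filtration need not detect the vanishing of its colimit.
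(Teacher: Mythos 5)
Your proof is correct and follows essentially the same route as the paper: the paper packages your sequential colimit and its tautological filtration into the single filtered graded $A[\beta]$-module $\beta^{-\star}A[\beta]$ and tensors $M^*/t$ against it, but unwinding this gives exactly your $\fil_n = (M^*/t)_n$ with transition maps $\beta$, and exactly your identification of the associated graded. The only point worth spelling out slightly more carefully in your first reduction is that $\cofib(\beta t)\simeq\cofib(t)$ comes with an internal degree shift by $1$ (since $t$ has degree $-1$ in the paper's conventions), which is harmless precisely because after inverting $\beta$ all degree shifts become equivalences.
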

	\begin{proof}
		We formally get $(M^*\lotimes_{A[\beta]}A[\beta^{\pm 1}])_0/(\beta t)\simeq (M^*/t\lotimes_{A[\beta]}A[\beta^{\pm 1}])_0$. Let $\beta^{-\star}A[\beta]$ denote the ascendingly filtered graded ring
		\begin{equation*}
			\beta^{-\star}A[\beta]\coloneqq \Bigl(\dotsb\overset{\beta}{\longrightarrow}A[\beta](1)\overset{\beta}{\longrightarrow}A[\beta](0)\overset{\beta}{\longrightarrow}A[\beta](-1)\overset{\beta}{\longrightarrow}\dotsb\Bigr)\,,
		\end{equation*}
		where $A[\beta](i)$ denotes the shift of the graded object $A[\beta]$ by $i$ (to account for the fact that multiplication by $\beta$ shifts degrees). The colimit of this filtration is $\colimit \beta^{-\star}A[\beta]\simeq A[\beta^{\pm 1}]$. Hence $(M^*/t\lotimes_{A[\beta]}\beta^{-\star}A[\beta])_0$ defines an exhaustive ascending filtration on $(M^*/t\lotimes_{A[\beta]}A[\beta^{\pm 1}])_0$ (by inspection, this is also precisely how  the conjugate filtration from \cref{par:ConjugateFiltration} arises). Since the associated graded of $\beta^{-\star}A[\beta]$ is $\bigoplus_{i\in\IZ}A(-i)$, the associated graded of the filtration we've just constructed is indeed
		\begin{equation*}
			\biggl(\bigoplus_{i\in\IZ}M^*/t\lotimes_{A[\beta]}A(-i)\biggr)_0\simeq \biggl(\bigoplus_{i\in\IZ}M^*/(\beta,t)(-i)\biggr)_0\simeq M^*/(\beta,t)\,.\qedhere
		\end{equation*}
	\end{proof}
	
	\begin{proof}[Proof of \cref{lem:qHodgeSymmetricMonoidal}]
		$\cat{AniAlg}_A^{\qHodge}$ can be written as an iterated pullback of symmetric monoidal $\infty$-categories along symmetric monoidal functors, so there's a canonical way to equip it with a symmetric monoidal structure itself. The forgetful functors
		\begin{equation*}
			\cat{AniAlg}_A^{\qHodge}\longrightarrow \cat{AniAlg}_A\quad\text{and}\quad\cat{AniAlg}_A^{\qHodge}\longrightarrow \Mod_{(q-1)^\star A[q]}\bigl( \Fil \Dd(A)\bigr)_{(q-1)}^\complete
		\end{equation*}
		will then be symmetric monoidal, which shows the formula for tensor products.
		
		To construct a symmetric monoidal structure on $\qHodge_{-/A}$, we use~\cref{par:ConjugateFiltration}. Since localising is symmetric monoidal and passing to the $0$\textsuperscript{th} filtration step is lax symmetric monoidal, we get a lax symmetric monoidal structure on $\qHodge_{-/A}$. Strict symmetric monoidality can then be checked modulo $(q-1)$ because the values of $\qHodge_{-/A}$ are $(q-1)$-complete. 
		
		From the proof of \cref{lem:FiltrationAbstract} above, it is clear that $ \fil_\star^\mathrm{conj}(\qHodge_{-/A}/(q-1))$ can be equipped with a lax symmetric monoidal structure compatible with the one on $\qHodge_{-/A}/(q-1)$ (modding out $t$ or $\beta$ as well as $-\lotimes_{A[\beta]}\beta^{-\star}A[\beta]$ are symmetric monoidal and $(-)_0$ is lax symmetric monoidal). Furthermore the equivalence
		\begin{equation*}
			\gr_*^\mathrm{conj}\bigl(\qHodge_{-/A}/(q-1)\bigr)\simeq \gr_{\Hodge}^*\deRham_{-/A}
		\end{equation*}
		is an equivalence of lax symmetric monoidal functors. Strict symmetric monoidality of $ \fil_\star^\mathrm{conj}(\qHodge_{-/A}/(q-1))$ can now be checked on the associated graded, so we win since it's well-known that $\gr_{\Hodge}^*\deRham_{-/A}$ is symmetric monoidal.
	\end{proof}
	
	\subsection{The main result}\label{subsec:MainResult}
	
	We can now state the general Habiro descent result. We let $\qIW_m\Omega_{-/A}^*$ denote the $m$-truncated $q$-de Rham Witt complex from \cite[Definition~\chref{3.12}]{qWitt} and $\qIW_m\deRham_{-/A}\colon \cat{AniAlg}_A\rightarrow \Dd(A[q])$ its non-abelian derived functor.
	\begin{thm}\label{thm:HabiroDescent}
		Let $A$ be a perfectly covered $\Lambda$-ring and $\cat{AniAlg}_A^{\qHodge}$ be the $\infty$-category of animated $A$-algebras equipped with a $q$-Hodge filtration on their $q$-de Rham complex.
		\begin{alphanumerate}
			\item Let $\widehat{\Dd}_\Hh(A[q])\subseteq \Dd(A[q])$ denote the full sub-$\infty$-category of Habiro-complete objects \embrace{in the sense of \cref{par:HabiroComplete}}. Then the $q$-Hodge complex functor admits a symmetric monoidal factorisation\label{enum:HabiroDescent}
			\begin{equation*}
				\begin{tikzcd}[column sep=huge]
					& \widehat{\Dd}_\Hh\bigl(A[q]\bigr)\dar["(-)_{(q-1)}^\complete"]\\
					\cat{AniAlg}_A^{\qHodge}\rar["\qHodge_{-/A}"']\urar[dashed,"\qHhodge_{-/A}"] & \widehat{\Dd}_{(q-1)}\bigl(A[q]\bigr)
				\end{tikzcd}
			\end{equation*}
			\item For all $m\in\IN$, the quotient $\qHhodge_{-/A}/(q^m-1)$ admits an exhaustive ascending filtration $ \fil_\star ^{\qIW_m\Omega}(\qHhodge_{-/A}/(q^m-1))$ with associated graded\label{enum:qdRWComparison}
			\begin{equation*}
				\gr_*^{\qIW_m\Omega}\bigl(\qHhodge_{-/A}/(q^m-1)\bigr)\simeq \Sigma^{-*}\qIW_m\deRham_{-/A}^*\,.
			\end{equation*}
			Furthermore, $ \fil_\star ^{\qIW_m\Omega}(\qHhodge_{-/A}/(q^m-1))$ can be equipped with a canonical lax symmetric monoidal structure compatible with the one on $\qHhodge_{-/A}/(q^m-1)$, and the equivalence above is an equivalence of lax symmetric monoidal functors.
		\end{alphanumerate}
	\end{thm}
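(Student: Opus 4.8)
The plan is to perform, for the full $q$-Hodge filtration, the descent construction used for the relative Habiro rings in \cref{con:RelativeHabiroRing}. For each $m\in\IN$ I would assemble a $(q^m-1)$-complete derived-commutative $A[q]$-algebra $\qHhodge_{R/A,m}$ from \cref{cor:CompleteDescent} (in the derived-commutative form of \cref{rem:CompleteDescentDAlg}), with local pieces $E_d\coloneqq(\qHodge_{R/A})_{\Phi_d(q)}^\complete$ for $d\mid m$ glued along $p$-completed linearised Frobenius equivalences as in \cref{par:RootsOfUnity}, and then set $\qHhodge_{R/A}\coloneqq\limit_m\qHhodge_{R/A,m}$, which is Habiro-complete by construction. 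The key input is that $\Phi_d(q)$ becomes comaximal with $\Phi_1(q)=q-1$ after inverting $p$ when $d$ is not a $p$-power, and comaximal away from $p$ when $d=p^k$; conditions \cref{def:qHodgeFiltration}\cref{enum:qHodgeRational} and \cref{enum:qHodgeRationalpComplete} then pin down the $\Phi_d(q)$-completion of the whole $q$-Hodge filtration, which becomes a twisted form of the combined Hodge and $(q-1)$-adic filtration on the de Rham complex of the Adams-twisted algebra $R\otimes_{A,\psi^d}A$; so $E_d$ is a twisted $q$-Hodge complex entirely analogous to the ring $(R\otimes_{A,\psi^d}A)[q]_{\Phi_d(q)}^\complete$ of \cref{con:RelativeHabiroRing}, and the compatibility of these filtrations with the Frobenii is again supplied by the coherences of \cref{def:qHodgeFiltration}. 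That $(\qHhodge_{R/A})_{(q-1)}^\complete\simeq\qHodge_{R/A}$ then holds because $q-1=\Phi_1(q)$ is one of the cyclotomic factors, so after $(q-1)$-completion the data that survives (in the sense of \cref{rem:GluingCompletions}) is exactly the $(q-1)$-adic filtration defining $\qHodge_{R/A}$; that the factorisation is genuinely non-trivial is already visible in the étale case, where this construction recovers the relative Habiro ring $\Hh_{R/A}$.

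The lax symmetric monoidal structure on $\qHhodge_{-/A}$, and the fact that $(\qHhodge_{-/A})_{(q-1)}^\complete\simeq\qHodge_{-/A}$ is lax symmetric monoidal, are essentially formal: the descent diagram of \cref{cor:CompleteDescent} lives in $\CAlg(\Pr_\mathrm{st}^\L)$ (resp.\ its derived-commutative variant), the twisted $q$-Hodge complexes carry the monoidal structure from \cref{lem:qHodgeSymmetricMonoidal}, and the Frobenius gluings are algebra maps. Strict symmetric monoidality of $\qHhodge_{-/A}$ will be deduced, via the derived Nakayama lemma for Habiro-complete objects (\cref{cor:HabiroCompleteDerivedNakayama}), after reduction modulo $(q^m-1)$ for all $m$---which is the content of part \cref{enum:qdRWComparison}.

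For part \cref{enum:qdRWComparison} I would build the ascending filtration on $\qHhodge_{R/A}/(q^m-1)$ by gluing: each $E_d/\Phi_d(q)$ carries the $\Phi_d(q)$-analogue of the conjugate filtration of \cref{par:ConjugateFiltration}, with associated graded a twisted form of $\gr_{\Hodge}^*\deRham_{(R\otimes_{A,\psi^d}A)/A}$; these are compatible with the Frobenius gluings, hence descend along \cref{cor:CompleteDescent} to the desired filtration $\fil_\star^{\qIW_m\Omega}(\qHhodge_{R/A}/(q^m-1))$. Its associated graded is reassembled from the pieces $\gr_{\Hodge}^*\deRham_{(R\otimes_{A,\psi^d}A)/A}$ over $d\mid m$ in the same ghost-coordinate pattern which, by \cref{thm:HabiroqWittComparison} and the construction of $q$-de Rham--Witt complexes in \cite{qWitt}, recovers $\Sigma^{-*}\qIW_m\deRham_{-/A}^*$; to perform this identification I would use, exactly as in \cref{lem:ConjugateFiltration}, that the associated graded is insensitive to the chosen $q$-Hodge filtration, so that it suffices after animation to compute it on framed smooth algebras $(S,\square)$, where the $q$-Hodge complex is the explicit complex $\qHodge_{S/A,\square}^*$ and the comparison is \cite[Theorem~\chref{4.27}]{qWitt} together with the universal property of the $q$-de Rham--Witt pro-complex. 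The lax monoidal structure on $\fil_\star^{\qIW_m\Omega}$ and the monoidality of the comparison are obtained as in the proof of \cref{lem:qHodgeSymmetricMonoidal}---the relevant operations $(-)_0$, the base changes, and passing to quotients are all (lax) symmetric monoidal, and strict monoidality is checked on associated gradeds using the known monoidality of $\qIW_m\deRham_{-/A}^*$---and feeding these filtrations into the comparison of $\qHhodge_{R_1\lotimes_A R_2}/(q^m-1)$ with $\qHhodge_{R_1}/(q^m-1)\lotimes\qHhodge_{R_2}/(q^m-1)$ yields the strict monoidality left over from part \cref{enum:HabiroDescent}.

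The step I expect to be hardest is controlling the $\Phi_d(q)$-completions of the $q$-Hodge \emph{filtration}---not merely of the underlying $q$-de Rham complex---and verifying that the many coherences recorded in \cref{def:qHodgeFiltration} organise, with all higher compatibilities, into genuine descent data for \cref{cor:CompleteDescent}; this is precisely why conditions \cref{enum:qHodgeRational} and \cref{enum:qHodgeRationalpComplete} had to be part of the definition. A secondary subtlety is that $\cat{AniAlg}_A^{\qHodge}\to\cat{AniAlg}_A$ is not the animation of a functor out of $\cat{Poly}_A$, so the reduction to the framed smooth case in part \cref{enum:qdRWComparison} has to be routed through the observation that the relevant associated gradeds depend only on the underlying algebra.
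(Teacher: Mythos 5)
Your high-level plan---glue along cyclotomic factors via \cref{cor:CompleteDescent}, with Frobenius-type gluing data, and then take a limit over $m$---is the right organising idea, and this is indeed the strategy the paper follows. However, the proposal collapses precisely the part that makes the theorem hard: constructing the local gluing pieces $E_d$.

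The stated choice $E_d\coloneqq(\qHodge_{R/A})_{\Phi_d(q)}^\complete$ cannot be what you want. Since $\qHodge_{R/A}$ is $(q-1)$-complete, its $\Phi_d(q)$-completion is the $(q-1,\Phi_d(q))$-completion, and $(q-1,\Phi_d(q))$ is the unit ideal whenever $d$ has at least two distinct prime factors (because $\Phi_d(1)=1$ then). So most of your local pieces are zero, and the gluing degenerates to (essentially) $\qHodge_{R/A}$ itself together with its $p$-completions---there is no Habiro descent to be had from this data. You gesture towards the fix when you say $E_d$ should really be a ``twisted'' $q$-Hodge complex, but the assertion that \cref{def:qHodgeFiltration}\cref{enum:qHodgeRational} and \cref{enum:qHodgeRationalpComplete} ``pin down'' this $\Phi_d(q)$-completion is not substantiated, and in fact it cannot be: those conditions constrain completed \emph{rationalisations} of $\fil_{\qHodge}^\star$, and give no handle on a $\Phi_d(q)$-complete integral object. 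What the paper actually does is first construct \emph{twisted $q$-de Rham complexes} $\qdeRham_{R/A}^{(m)}$ (\cref{con:TwistedqDeRham}), whose local pieces involve both an Adams twist along $\psi^d$ \emph{and} the Berthelot--Ogus décalage $\L\eta_{[m/d]_q}$, and then put \emph{twisted $q$-Hodge filtrations} $\fil_{\qHhodge_m}^\star$ on them (\cref{con:TwistedqHodgeFiltrationpAdic}--\cref{con:TwistedqHodgeFiltration}). The construction of the twisted filtration is not formal: it is defined by a recursive pullback along the Frobenius using the \emph{Nygaard filtration} on $q$-de Rham cohomology, and the comparison \cref{prop:NygaardComparison} between the Nygaard filtration and the $V_p$-filtration on $q$-de Rham--Witt complexes is a substantive theorem proved by quasi-syntomic descent and the Bhatt--Scholze method. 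None of this appears in the proposal, and it is exactly the content that conditions \cref{enum:qHodgeRational}--\cref{enum:qHodgeRationalpComplete} were engineered to feed into (via \cref{lem:NygaardRationalisation}).

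A second gap: the claim that $(\qHhodge_{R/A})_{(q-1)}^\complete\simeq\qHodge_{R/A}$ holds ``because $q-1=\Phi_1(q)$ is one of the cyclotomic factors'' is not an argument. What is needed is \cref{prop:HabiroDescent}, and its proof hinges on \cref{lem:NygaardDenominators}: after adjoining $\fil_\Nn^i/\Phi_p(q)^i$ the relative Frobenius becomes an equivalence, and after adjoining $\fil_\Nn^i/(q^p-1)^i$ everything dies. This is a genuine calculation about the Nygaard filtration, not a by-product of the gluing combinatorics. Your part \cref{enum:qdRWComparison} sketch is closer to the mark (it is essentially the conjugate-filtration trick applied to the colimit formula for $\qHhodge_{R/A,m}/(q^m-1)$), but the identification of its associated graded with $\Sigma^{-*}\qIW_m\deRham_{-/A}^*$ depends on \cref{prop:TwistedqHodgeFiltrationqDeforms}, i.e.\ on the twisted $q$-Hodge filtration being a $q^m$-deformation of the $q$-de Rham--Witt stupid filtration; without constructing $\fil_{\qHhodge_m}^\star$ you have nothing to compute the graded of. Finally, for strict symmetric monoidality you invoke Nakayama after reduction modulo $(q^m-1)$, but the paper's actual argument (\cref{lem:HabiroDescentSymmetricMonoidal}) checks monoidality on the associated graded after inverting $(q^d-1)$ for all proper divisors $d\mid m$, using the first ghost map to reduce to the known monoidality of the Hodge filtration; it is not clear Nakayama alone closes this.
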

	\begin{exm}\label{exm:HabiroDescentCoordinateCase}
		If $S$ is a smooth over $A$ and $\square\colon A[x_1,\dotsc,x_n]\rightarrow S$ is an étale framing, then we can define a filtration on the coordinate-dependent $q$-de Rham complex $\qOmega_{S/A, \square}^*$ via
		\begin{equation*}
			\fil_{\qHodge, \square}^n\qOmega_{S/A, \square}^*\coloneqq (q-1)^{\max\{n-*,0\}}\qOmega_{S/A, \square}^*\,.
		\end{equation*}
		(compare the construction in~\cref{par:qHodgeFiltrationInCoordinates}). As explained in \cref{rem:CompleteFiltration}, we can take the pullback along $\qdeRham_{S/A}\rightarrow \qOmega_{S/A, \square}^*$ to get a filtration $\fil_{\qHodge,\square}^\star\qdeRham_{S/A}$ on the derived $q$-de Rham complex. It's straightforward to equip  it with the additional structure from \cref{def:qHodgeFiltration}\cref{enum:qHodgeFiltrationOnqdR}--\cref{enum:qHodgeRationalpComplete}: Just construct everything on the level of complexes and then take the pullback.
		
		Therefore, the pair $(S, \fil_{\qHodge, \square}^\star \qdeRham_{S/A})$ determines an $\IE_0$-algebra in $\cat{AniAlg}_A^{\qHodge}$. We'll explain in \cite[Remark~\chref{6.14}]{qdeRhamku} that it can be refined to an $\IE_\infty$-algebra. The derived $q$-Hodge complex associated to $(S, \fil_{\qHodge, \square}^\star \qdeRham_{S/A})$ is the coordinate-dependent $q$-Hodge complex $\qHodge_{S/A, \square}^*$. Indeed, as we've seen in \cref{rem:CompleteFiltration}, in the definition of $\qHodge_{-/A}$ it doesn't matter whether we work with the $q$-Hodge filtration on $\qdeRham_{-/A}$ or its completion. Since $ \fil_{\qHodge, \square}^\star \qOmega_{S/A, \square}^*$ is already complete, it's automatically the completion of its pullback $ \fil_{\qHodge, \square}^\star \qdeRham_{S/A}$. We conclude that the corresponding derived $q$-Hodge complex is
		\begin{equation*}
			\colimit\Bigl( \fil_{\qHodge, \square}^0\qOmega_{S/A, \square}^*\xrightarrow{(q-1)} \fil_{\qHodge, \square}^1\qOmega_{S/A, \square}^*\xrightarrow{(q-1)}\dotsb\Bigr)\cong \qHodge_{S/A, \square}^*\,,
		\end{equation*}
		as claimed.
		
		In this case, \cref{thm:HabiroDescent}\cref{enum:HabiroDescent} shows that $\qHodge_{S/A, \square}^*$ descends to an $\IE_\infty$-algebra $\qHhodge_{S/A, \square}$ in $\widehat{\Dd}_\Hh(A[q])$. As we'll see in \cref{cor:qDRWSmoothAnimation} below, $\Sigma^{-n}\qIW_m\deRham_{S/A}^n\simeq \qIW_m\Omega_{S/A}^n$ holds for all $n$. Thus, \cref{thm:HabiroDescent}\cref{enum:qdRWComparison} shows
		\begin{equation*}
			\H^*\bigl(\qHhodge_{S/A, \square}/(q^m-1)\bigr)\cong \qIW_m\Omega_{S/A}^*
		\end{equation*}
		as graded $A[q]/(q^m-1)$-modules. With a little more effort (see \cref{cor:CDGA} below), we can even get an equivalence as differential-graded $A[q]/(q^m-1)$-algebras, so we obtain an improved version of \cite[Theorem~\chref{4.27}]{qWitt}.%
		\footnote{But this theorem is being used in the proof, so we don't get a new proof.}
		
		In fact, $\qHhodge_{S/A, \square}$ can be described as an explicit complex; this was first presented in \cite[Lecture~\href{https://archive.mpim-bonn.mpg.de/id/eprint/5155/10/habiro_cohomology04.mp4}{4}]{HabiroCohomologyLecture}. To this end, equip $A[x_1,\dotsc,x_n]$ with the \emph{toric $\Lambda$-$A$-algebra structure} in which the Adams operations are given by $\psi^m(x_i)=x_i^m$ and consider the relative Habiro ring $\Hh_{S/A[x_1,\dotsc,x_n]}$. For $i=1,\dotsc,n$ let  $\gamma_i$ be the $A[q]$-algebra endomorphism of $A[x_1,\dotsc,x_n,q]$ given by $\gamma_i(x_i)=qx_i$ and $\gamma_i(x_j)=x_j$ for $j\neq i$. We wish to extend $\gamma_i$ to an automorphism of $\Hh_{S/A[x_1,\dotsc,x_n]}$. To do so, we'll extend $\gamma_i$ to each of the factors of the equaliser in \cref{lem:ComparisonWithGSWZ}. Fix $m\in\IN$ and put $S^{(m)}\coloneqq (S\otimes_{A[x_1,\dotsc,x_n],\psi^m}A[x_1,\dotsc,x_n])[\zeta_m]$. Consider the diagram
		\begin{equation*}
			\begin{tikzcd}
				A[x_1,\dotsc,x_n,\zeta_m]\llbracket q-\zeta_m\rrbracket\dar["\square"']\rar["\gamma_i"] & S^{(m)}\llbracket q-\zeta_m\rrbracket\dar\\
				S^{(m)}\llbracket q-\zeta_m\rrbracket\rar["\ov\gamma_i^{(m)}"]\urar["\gamma_i^{(m)}"{pos=0.33},dashed] & S^{(m)}
			\end{tikzcd}
		\end{equation*}
		where $\ov\gamma_i^{(m)}$ is given by the identity on the tensor factor $S$, $\ov\gamma_i^{(m)}(x_i)=\zeta_m x_i$, and $\ov\gamma_i^{(m)}(x_j)=x_j$ for $j\neq i$. By the infinitesimal lifting property of formally étale morphisms, there exists a unique dashed arrow $\gamma_i^{(m)}$ making the diagram commutative. Then $\bigl(\gamma_i^{(m)}\bigr)_{m\in\IN}$ defines the desired automorphism $\gamma_i$ of $\Hh_{S/A[x_1,\dotsc,x_n]}$ via \cref{lem:ComparisonWithGSWZ}. It's also straightforward to check that $\gamma_i\equiv \id\mod x_i$.
		
		Letting $\q\widetilde{\partial}_i\coloneqq (\gamma_i-\id)/x_i$ and $\q\widetilde{\nabla}\coloneqq \sum_i\q\widetilde{\partial}_i\d x_i$, the Koszul complex of the commuting endomorphisms $\q\widetilde{\partial}_i$,
		\begin{equation*}
			\biggl(\Hh_{S/A[x_1,\dotsc,x_n]}\overset{\q\widetilde{\nabla}}{\longrightarrow} \bigoplus_{i}\Hh_{S/A[x_1,\dotsc,x_n]}\d x_i\overset{\q\widetilde{\nabla}}{\longrightarrow}\dotsb\overset{\q\widetilde{\nabla}}{\longrightarrow}\Hh_{S/A[x_1,\dotsc,x_n]}\d x_1\dotsm\d x_n\biggr)\,,
		\end{equation*}
		is an explicit complex representing $\qHhodge_{S/A, \square}$. This can be shown by unravelling the proof of \cref{thm:HabiroDescent} (which is less horrible than it sounds).
	\end{exm}
		%
	
	\cref{exm:HabiroDescentCoordinateCase} covers in particular the case of étale $A$-algebras. In this special case, we recover a familiar construction.
	\begin{cor}\label{cor:ComparisonWithGSWZ2}
		If $R$ is étale over $A$, then $\qHhodge_{R/A}$ is the relative Habiro ring $\Hh_{R/A}$ constructed in \cref{con:RelativeHabiroRing}.
	\end{cor}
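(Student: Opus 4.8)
The plan is to deduce $\cref{cor:ComparisonWithGSWZ2}$ from the degenerate case $n=0$ of $\cref{exm:HabiroDescentCoordinateCase}$. An étale $A$-algebra $R$ is smooth over $A$ of relative dimension $0$, so the structure map itself, regarded as an étale framing $\square\colon A[x_1,\dots,x_0]=A\to R$ with no coordinate variables, puts us in the setting of that example; note that the toric $\Lambda$-structure on $A[x_1,\dots,x_0]$ is vacuously the given $\Lambda$-structure on $A$, so the relative Habiro ring occurring in $\cref{exm:HabiroDescentCoordinateCase}$ is literally $\Hh_{R/A}$ of $\cref{con:RelativeHabiroRing}$. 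In this case $\Omega_{R/A}^*$ and $\deRham_{R/A}$ are concentrated in degree $0$, the filtration $\fil_{\qHodge,\square}^\star$ is just the $(q-1)$-adic filtration on $\qdeRham_{R/A}\simeq R\qpower$, there are no commuting endomorphisms $\gamma_i$ to record, and the Koszul complex exhibited in $\cref{exm:HabiroDescentCoordinateCase}$ collapses to $\Hh_{R/A}$ sitting in cohomological degree $0$. Hence $\qHhodge_{R/A}\simeq\Hh_{R/A}$, and by construction this is an equivalence of $\IE_\infty$-$A[q]$-algebras.

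As a cross-check — and because it is the mechanism really underlying $\cref{exm:HabiroDescentCoordinateCase}$ — I would also verify the comparison directly from the modular descriptions. Both sides are Habiro-complete in the sense of $\cref{par:HabiroComplete}$: the left-hand side by construction of $\qHhodge_{-/A}$, and the right-hand side because $\cref{con:RelativeHabiroRing}$ gives $(\Hh_{R/A})_{(q^m-1)}^\complete\simeq\Hh_{R/A,m}$ with $\Hh_{R/A,m}/(q^m-1)\simeq\qIW_m(R/A)$ static by $\cref{thm:HabiroqWittComparison}$, so $\cref{cor:HabiroCompleteDerivedNakayama}$ applies (this is also noted in the proof of $\cref{lem:ComparisonWithGSWZ}$). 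It therefore suffices to produce compatible equivalences modulo $(q^m-1)$ for all $m\in\IN$. On the Habiro-ring side, $\Hh_{R/A}/(q^m-1)\simeq\Hh_{R/A,m}/(q^m-1)\simeq\qIW_m(R/A)$ by $\cref{thm:HabiroqWittComparison}$. On the other side, $\cref{thm:HabiroDescent}\cref{enum:qdRWComparison}$ equips $\qHhodge_{R/A}/(q^m-1)$ with an exhaustive filtration whose associated graded is $\Sigma^{-*}\qIW_m\deRham_{R/A}^*$; since $R$ is étale over $A$ this derived $q$-de Rham--Witt complex is concentrated in degree $0$ and agrees there with $\qIW_m(R/A)$ (via $\cref{cor:qDRWSmoothAnimation}$), so the filtration collapses and $\qHhodge_{R/A}/(q^m-1)\simeq\qIW_m(R/A)$ as $\IE_\infty$-algebras over $A[q]/(q^m-1)$. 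Passing to the limit over $m$ ordered by divisibility then yields $\qHhodge_{R/A}\simeq\Hh_{R/A}$.

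There is no substantive obstacle here: all the work has been front-loaded into $\cref{exm:HabiroDescentCoordinateCase}$ and $\cref{thm:HabiroqWittComparison}$. The only point requiring a little care is bookkeeping — checking that the levelwise identifications modulo $(q^m-1)$ are natural in $m$, so that they genuinely glue to an equivalence of Habiro-complete objects rather than merely an unstructured family of equivalences. This naturality is automatic from the symmetric monoidal functoriality of $\qHhodge_{-/A}$ provided by $\cref{thm:HabiroDescent}\cref{enum:HabiroDescent}$ together with the functoriality of $\Hh_{R/A,(-)}$ recorded in $\cref{con:RelativeHabiroRing}$.
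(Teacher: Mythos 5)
Your second route is essentially the paper's own argument: both reduce modulo $(q^m-1)$, both identify $\qHhodge_{R/A}/(q^m-1)\simeq \qIW_m(R/A)\simeq \Hh_{R/A,m}/(q^m-1)$ via \cref{cor:qDRWSmoothAnimation}, \cref{thm:HabiroDescent}\cref{enum:qdRWComparison}, and \cref{thm:HabiroqWittComparison}, and both then glue over $m$. Your first route (setting $n=0$ in \cref{exm:HabiroDescentCoordinateCase}) is the same one the paper alludes to as ``clear from the explicit presentation'' — but note that the Koszul-complex description in that example is only asserted to follow ``by unravelling the proof of \cref{thm:HabiroDescent}'', so taking it as input here is circular unless that unravelling has been carried out; the paper's proof deliberately avoids relying on it.

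The one substantive gap is in the passage from the levelwise equivalences to the limit. You correctly flag that compatibility over varying $m$ needs checking, but your proposed resolution does not supply it: the symmetric monoidal structure in \cref{thm:HabiroDescent}\cref{enum:HabiroDescent} gives functoriality of $\qHhodge_{-/A}$ in the variable $R$, not compatibility of the identifications $\qHhodge_{R/A}/(q^m-1)\simeq\qIW_m(R/A)$ across different $m$, and the functoriality of $\Hh_{R/A,(-)}$ recorded in \cref{con:RelativeHabiroRing} lives entirely on the other side of the would-be equivalence. What actually makes the compatibility automatic — and is the key step of the paper's proof — is uniqueness of deformations of étale algebras: since $\qIW_m(R/A)$ is étale over $A[q]/(q^m-1)$ (\cite[Proposition~\chref{2.48}]{qWitt}), any $(q^m-1)$-complete lift of it to an $\IE_\infty$-algebra over $A[q]_{(q^m-1)}^\complete$ is unique, so the levelwise equivalences upgrade uniquely (and hence compatibly in $m$) to equivalences $(\qHhodge_{R/A})_{(q^m-1)}^\complete\simeq\Hh_{R/A,m}$. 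You should replace the appeal to ``naturality from functoriality'' by this uniqueness argument; as written, the gluing step is not justified.
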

	\begin{proof}
		This is clear from the explicit presentation in \cref{exm:HabiroDescentCoordinateCase}, but it can also be shown without having to unravel the proof of \cref{thm:HabiroDescent}.
		
		We'll see in \cref{cor:qDRWSmoothAnimation} that $\qIW_m\deRham_{R/A}^n\simeq \Sigma^{-n}\qIW_m\Omega_{R/A}^n$ holds for all $n\geqslant 0$ whenever $R$ is smooth over $A$. If $R$ is étale, then combining this observation with \cref{thm:HabiroDescent}\cref{enum:HabiroDescent} and \cite[Proposition~\chref{3.31}]{qWitt} shows 
		\begin{equation*}
			\qHhodge_{R/A}/(q^m-1)\simeq \qIW_m(R/A)\simeq \Hh_{R/A}/(q^m-1)\,.
		\end{equation*}
		By uniqueness of deformations of étale extensions, these automatically lift to a unique equivalence of $\IE_\infty$-$\Hh$-algebras $(\qHhodge_{R/A})_{(q^m-1)}^\complete\simeq \Hh_{R/A,  m}$; furthermore, uniqueness also ensures that these equivalences are compatible for varying $m$. It follows that $\qHhodge_{R/A}\simeq \Hh_{R/A}$, as desired.
	\end{proof}
	The proof of \cref{thm:HabiroDescent} has many ingredients and will occupy \crefrange{subsec:TwistedqDeRham}{subsec:qHodgeHabiroDescent}. Before we get lost in the technicalities, let us already outline the main argument and point out where the missing pieces will be provided.
	\begin{proof}[Proof outline of \cref{thm:HabiroDescent}]
		The argument will proceed similarly to the proof of \cref{lem:qHodgeSymmetricMonoidal} above; in particular, for $m=1$, the arguments below will recover the proof of \cref{lem:qHodgeSymmetricMonoidal}. In \cref{subsec:TwistedqDeRham} we'll introduce \emph{twisted $q$-de Rham complexes} for all $m\in\IN$. These are $(q^m-1)$-complete $\IE_\infty$-$A[q]$-algebras  $\qdeRham_{R/A}^{(m)}$ satisfying
		\begin{equation*}
			\qdeRham_{R/A}^{(m)}/(q^m-1)\simeq \qIW_m\deRham_{R/A}
		\end{equation*}
		(see \cref{prop:TwistedqDeRhamDeformsqDRW}%
		\footnote{Informally, just as the $q$-de Rham complex is a $q$-deformation of $\deRham_{R/A}\simeq \qIW_1\deRham_{R/A}$, the twisted $q$-de Rham complexes are $q^m$-deformations of $\qIW_m\deRham_{R/A}$.}%
		). By animating the stupid filtration $\qIW_m\Omega_{-/A}^{\geqslant n,*}$, we obtain a filtration $ \fil_{\Hhodge_m}^\star \qIW_m\deRham_{-/A}$ on $\qIW_m\deRham_{-/A}$. For $m=1$, this is the Hodge filtration on $\deRham_{-/A}$; for higher $m$, it should be thought of as a $q$-Witt vector analogue of the Hodge filtration. By construction, 
		\begin{equation*}
			\gr_{\Hhodge_m}^n\qIW_m\deRham_{-/A}\simeq \qIW_m\deRham_{-/A}^n\,.
		\end{equation*}
		In \cref{subsec:TwistedqHodgeFiltration}, specifically in \cref{prop:TwistedqHodgeFiltrationqDeforms}, we'll show that for any given $q$-Hodge filtration $\fil_{\qHodge}^\star\qdeRham_{R/A}$, we can construct a filtration $ \fil_{\qHhodge_m}^\star \qdeRham_{R/A}^{(m)}$ satisfying
		\begin{equation*}
			\fil_{\qHhodge_m}^\star \qdeRham_{R/A}^{(m)}/(q^m-1)\simeq  \fil_{\Hhodge_m}^\star \qIW_m\deRham_{R/A}\,.
		\end{equation*}
		We'll also verify that $ \fil_{\qHhodge_m}^\star \qdeRham_{R/A}^{(m)}$ is lax symmetric monoidal in $(R, \fil_{\qHodge}^\star \qdeRham_{R/A})$ and that the equivalence above can be upgraded to an equivalence of lax symmetric monoidal functors $\cat{AniAlg}_A^{\qHodge}\rightarrow \Mod_{(q^m-1)^\star A[q]}( \Fil\Dd(A))$.

		With this construction, we'll build the desired Habiro descent of $\qHodge_{R/A}$ in \cref{subsec:qHodgeHabiroDescent} by mimicking the definition of the $q$-Hodge complex in \cref{par:WellBehavedqHodgeComplex}. For all $m\in\IN$, we define
		\begin{equation*}
			\qHhodge_{R/A,  m}\coloneqq \colimit\Bigl( \fil_{\qHhodge_m}^{0}\qdeRham_{R/A}^{(m)}\xrightarrow{(q^m-1)} \fil_{\qHhodge_m}^{1}\qdeRham_{R/A}^{(m)}\xrightarrow{(q^m-1)}\dotso\Bigr)_{(q^m-1)}^\complete\,.
		\end{equation*}
		In \cref{prop:HabiroDescent}, we'll show $(\qHhodge_{R/A,  m})_{(q^d-1)}^\complete\simeq \qHhodge_{R/A,  d}$ whenever $d\mid m$. It follows that $\qHhodge_{R/A}\coloneqq \limit_{m\in\IN}\qHhodge_{R/A,  m}$ determines a Habiro descent of $\qHodge_{R/A}$, thus proving \cref{thm:HabiroDescent}\cref{enum:HabiroDescent}, except for the symmetric monoidality statement. As in the proof of \cref{lem:qHodgeSymmetricMonoidal}, it's formal to construct a lax symmetric monoidal structure on $\qHhodge_{-/A}$ which reduces to the one on $\qHodge_{-/A}$ after $(q-1)$-completion; see \cref{con:HabiroDescent} for the details. Strict symmetric monoidality will then be checked in \cref{lem:HabiroDescentSymmetricMonoidal}, finishing the proof of \cref{thm:HabiroDescent}\cref{enum:HabiroDescent}.
		
		To show \cref{thm:HabiroDescent}\cref{enum:qdRWComparison}, we will mimic the arguments for the conjugate filtration (and in fact, for $m=1$, the desired filtration on $\qHhodge_{R/A}/(q-1)\simeq \qHodge_{R/A}/(q-1)$ \emph{is} the conjugate filtration). By the same argument as in \cref{par:ConjugateFiltration}, we obtain
		\begin{equation*}
			\qHhodge_{R/A,  m}/(q^m-1)\simeq \colimit\Bigl(\gr_{\qHhodge_m}^{0}\qdeRham_{R/A}^{(m)}\xrightarrow{(q^m-1)}\gr_{\qHhodge_m}^{1}\qdeRham_{R/A}^{(m)}\xrightarrow{(q^m-1)}\dotso\Bigr)\,.
		\end{equation*}
		The colimit defines an exhaustive ascending filtration on $\qHhodge_{R/A,  m}/(q^m-1)$, which we take to be our definition of $ \fil_\star ^{\qIW_m\Omega}(\qHhodge_{R/A,  m}/(q^m-1))$. The associated graded of this filtration can be determined by via \cref{lem:FiltrationAbstract} (for this we identify the filtered ring $(q^m-1)^\star A[q]$ with the graded ring $A[q,\beta,t]/(\beta t-(q^m-1))$, where $\abs{q}=0$, $\abs{\beta}=1$, and $\abs{t}=-1$): We obtain
		\begin{equation*}
			\gr_*^{\qIW_m\Omega}\bigl(\qHhodge_{R/A,  m}/(q^m-1)\bigr)\simeq \Sigma^{-*}\qIW_m\deRham_{R/A}^*\simeq \gr_{\Hhodge_m}^*\qIW_m\deRham_{R/A}\,.
		\end{equation*}
		As in the proof of \cref{lem:qHodgeSymmetricMonoidal}, the lax symmetric monoidality statements are formal, and so the proof of \cref{thm:HabiroDescent}\cref{enum:qdRWComparison} is finished.
	\end{proof}

	\subsection{Deformations of \texorpdfstring{$q$}{q}-de Rham--Witt complexes}\label{subsec:TwistedqDeRham}
	We fix a perfectly covered $\Lambda$-ring $A$ as before. We let $\psi^m$ denote its Adams operations, which we extend to a map $\psi^m\colon A[q]\rightarrow A[q]$ via $\psi^m(q)\coloneqq q^m$. We'll also frequently use the Berthelot--Ogus décalage functor $\L\eta_{[m]_q}$ (see \cite[\S{\chref[section]{6}}]{BMS1} or \cite[\stackstag{0F7N}]{Stacks}).
	
	In this subsection, we'll study \emph{twisted $q$-de Rham complexes}: For $S$ smooth over $A$, these are certain $(q^m-1)$-complete $\IE_\infty$-$A[q]$-algebras $\qOmega_{S/A}^{(m)}$, refining the $(q-1)$-complete $\IE_\infty$-$A[q]$-algebras $\L\eta_{[m]_q}\qOmega_{S/A}$ for all $m\in\IN$. The rationale behind our notation and the name \emph{twisted $q$-de Rham complexes} is as follows: If the global $q$-de Rham complex would admit Adams operations $\psi^m$ inducing equivalences
	\begin{equation*}
		\psi^m\colon \left(\qOmega_{S/A}\lotimes_{A[q],\psi^m}A[q]\right)_{(q-1)}^\complete\overset{\simeq}{\longrightarrow}\L\eta_{[m]_q}\qOmega_{S/A}\,,
	\end{equation*}
	then the corresponding twisted $q$-de Rham complex could simply be constructed as the $(q^m-1)$-completion of the \enquote{Adams-twist} $\qOmega_{S/A}\lotimes_{A[q],\psi^m}A[q]$.%
	\footnote{If $\psi^m$ is finite (for example, this holds if $A=\IZ$ or more generally if $A$ is a polynomial ring), then $\qOmega_{S/A}\lotimes_{A[q],\psi^m}A[q]$ is already $(q^m-1)$-complete.}
	However, such global Adams operations don't exist in general (this already fails if $S$ is étale over $A$, as $\Lambda$-structures usually don't extend along étale maps). The best we have is, for every prime $p$, a Frobenius $\phi_p$ on the $p$-completion $(\qOmega_{S/A})_p^\complete$. Still, these $p$-adic Frobenii are enough to construct $\qOmega_{S/A}^{(m)}$.
	\begin{numpar}[Twisted $q$-de Rham complexes]\label{con:TwistedqDeRham}
		Let $S$ be a smooth $A$-algebra. We'll construct a $(q^m-1)$-complete $\IE_\infty$-$A[q]$-algebra using \cref{cor:CompleteDescent}. In the notation of that corollary, take
		\begin{equation*}
			E_d\coloneqq \left(\L\eta_{[m/d]_q}\qOmega_{S/A}\lotimes_{A[q],\psi^d}A[q]\right)_{\Phi_d(q)}^\complete\,.
		\end{equation*}
		We must also provide $p$-adic gluing equivalences. For $p$ a prime such that $pd\mid m$, the required gluing equivalence $(E_{pd})_p^\complete\simeq (E_d)_p^\complete$ should be of the form
		\begin{equation*}
			\left(\L\eta_{[m/pd]_q}\qOmega_{S/A}\lotimes_{A[q],\psi^{pd}}A[q]\right)_{(p,\Phi_{pd}(q))}^\complete\overset{\simeq}{\longrightarrow}\left(\L\eta_{[m/d]_q}\qOmega_{S/A}\lotimes_{A[q],\psi^d}A[q]\right)_{(p,\Phi_d(q))}^\complete\,.
		\end{equation*}
		To construct this, we may replace $\L\eta_{[m/pd]_q}$ and $\L\eta_{[m/d]_q}$ by $\L\eta_{[p^\alpha]_q}$ and $\L\eta_{[p^{\alpha+1}]_q}$, where $\alpha\coloneqq v_p(m/pd)$, because the factor $[m/pd]_q/[p^\alpha]_q$ will be invertible on either side. It will thus be enough to construct an equivalence
		\begin{equation*}
			\left(\L\eta_{[p^\alpha]_q}\qOmega_{S/A}\lotimes_{A[q],\psi^p}A[q]\right)_{(p,q-1)}^\complete\overset{\simeq}{\longrightarrow} \Bigl(\L\eta_{[p^{\alpha+1}]_q}\qOmega_{S/A}\Bigr)_{(p,q-1)}^\complete
		\end{equation*}
		Now $(\L\eta_{[p^\alpha]_q}\qOmega_{S/A})_p^\complete\simeq \L\eta_{[p^\alpha]_q}(\qOmega_{S/A})_p^\complete$. Indeed, $\qOmega_{S/A}$ is $(q-1)$-complete, so $p$-completion agrees with $[p^\alpha]_q$-completion, which always commutes with $\L\eta_{[p^\alpha]_q}$ (see \cite[Lemma~\chref{6.20}]{BMS1}). Thus, we may replace $\qOmega_{S/A}$ by its $p$-completion on the left-hand side; the same argument applies to the right-hand side as well.
		
		Finally, if $(B,J)$ denotes the prism $(\widehat{A}_p\qpower,[p]_q)$ and $T\coloneqq \widehat{S}_p[\zeta_p]$, then $(\qOmega_{S/A})_p^\complete\simeq \Prism_{T/B}$, and so the desired gluing equivalence can be constructed using the general fact that the relative Frobenius 
		induces an equivalence (see \cite[Theorem~\chref{15.3}]{Prismatic})
		\begin{equation*}
			\phi_{/B}\colon\Prism_{T/B}\clotimes_{B,\phi_B}B\overset{\simeq}{\longrightarrow} \L\eta_J\Prism_{T/B}\,.
		\end{equation*}
		According to \cref{cor:CompleteDescent}, we can glue the $E_d$ for all $d\mid m$ to a $(q^m-1)$-complete $\IE_\infty$-$A[q]$-algebra $\qOmega_{S/A}^{(m)}$. This is the \emph{$m$\textsuperscript{th} twisted $q$-de Rham complex of $S$ over $A$}. Via animation, we can then define a functor
		\begin{equation*}
			\qdeRham_{-/A}^{(m)}\colon \cat{AniAlg}_A\longrightarrow \CAlg\left(\widehat{\Dd}_{(q^m-1)}\bigl(A[q]\bigr)\right)\,,
		\end{equation*}
		which agrees with $\qOmega_{-/A}^{(m)}$ on polynomial-$A$-algebras (but not on all smooth $A$-algebras, due to the usual issues in characteristic~$0$). 
	\end{numpar}
	
	The arithmetic fracture square for $\qOmega_{S/A}^{(m)}$ (in the sense of 
	\cref{par:Notations}) can be read off from the construction.
	
	\begin{lem}\label{lem:TwistedqDeRhamFractureSquare}
		Fix $m\in\IN$ and $N\neq 0$ divisible by $m$. For any prime $p\mid N$ and any divisor $d\mid m$ write $m=p^{v_p(m)}m_p$ and $d=p^{v_p(d)}d_p$, where $m_p$ and $d_p$ are coprime to $p$. Let also
		\begin{equation*}
			\phi_{p/A[q]}\colon \qOmega_{S/A}\lotimes_{A[q],\psi^p}A[q]\longrightarrow \left(\qOmega_{S/A}\right)_p^\complete
		\end{equation*}
		denote the relative Frobenius coming from the identification with prismatic cohomology. Then we have a functorial pullback square
		\begin{equation*}
			\begin{tikzcd}
				\qOmega_{S/A}^{(m)}\rar\dar\drar[pullback] & \prod_{p\mid N}\prod_{d_p\mid m_p}\left(\qOmega_{S/A}\lotimes_{A[q],\psi^{p^{\smash{v_p(m)}}d_p}}A[q]\right)_{(p,\Phi_{d_p}(q))}^\complete\dar["\left(\phi_{p/A[q]}^{v_p(m/d)}\right)_{p\mid N,\,d\mid m}"]\\
				\prod_{d\mid m}\left(\qOmega_{S/A}\lotimes_{A[q],\psi^d}A\bigl[\localise{N},q\bigr]\right)_{\Phi_d(q)}^\complete\rar & \prod_{p\mid N}\prod_{d \mid m}\Bigl(\qOmega_{S/A}\lotimes_{A[q],\psi^{d}}A[q]\Bigr)_p^\complete\bigl[\localise{p}\bigr]_{\Phi_d(q)}^\complete
			\end{tikzcd}			
		\end{equation*}
	\end{lem}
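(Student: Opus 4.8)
The plan is to deduce the square directly from the construction of $\qOmega_{S/A}^{(m)}$ in \cref{con:TwistedqDeRham}, by feeding the gluing data produced by \cref{cor:CompleteDescent} into the arithmetic fracture square for the integer $N$, exactly as packaged in \cref{rem:GluingCompletions}. Recall that $\qOmega_{S/A}^{(m)}$ is glued from the $\Phi_d(q)$-completions
\begin{equation*}
	E_d\coloneqq \left(\L\eta_{[m/d]_q}\qOmega_{S/A}\lotimes_{A[q],\psi^d}A[q]\right)_{\Phi_d(q)}^\complete\qquad (d\mid m)\,.
\end{equation*}
By \cref{rem:GluingCompletions}, $\qOmega_{S/A}^{(m)}$ is then the pullback of the arithmetic fracture square whose corners are $\prod_{d\mid m}E_d[1/N]_{\Phi_d(q)}^\complete$, the product over $p\mid N$ of $(\qOmega_{S/A}^{(m)})_{(p,q^m-1)}^\complete$, and the product over $p\mid N$ of $(\qOmega_{S/A}^{(m)})_{(p,q^m-1)}^\complete[1/p]$; moreover that remark identifies $(\qOmega_{S/A}^{(m)})_{(p,q^m-1)}^\complete\simeq \prod_{d_p\mid m_p}(E_{d_p})_p^\complete$ (taking the index $i=0$ there). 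It thus remains to rewrite each $E_d[1/N]_{\Phi_d(q)}^\complete$ and each $(E_{d_p})_p^\complete$ in the form displayed in the statement.

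For the bottom row, I would show that the décalage functor trivialises once $N$ is inverted. Since $\psi^d\colon A[q]\to A[q]$ is finite free, $\L\eta$ is compatible with this base change (and with completion, as in \cite[Lemma~\chref{6.20}]{BMS1}), so $E_d[1/N]_{\Phi_d(q)}^\complete\simeq (\L\eta_{f_d}(\qOmega_{S/A}\lotimes_{A[q],\psi^d}A[1/N,q]))_{\Phi_d(q)}^\complete$ with
\begin{equation*}
	f_d\coloneqq \psi^d\bigl([m/d]_q\bigr)=\frac{q^m-1}{q^d-1}=\prod_{e \mid m,\, e \nmid d}\Phi_e(q)\,.
\end{equation*}
The key point is that $\Phi_d(q)$ is \emph{not} among these factors, and that, since $e\nmid d$, the only way $\operatorname{Res}(\Phi_e,\Phi_d)$ can fail to be a unit in $A[1/N]$ is that $e/d$ is a power of a prime $p$, in which case $p\mid m$ because $e\mid m$; hence every such $\Phi_e(q)$ becomes a unit in $A[1/N][q]_{\Phi_d(q)}^\complete$. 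Therefore $f_d$ is a unit there and $\L\eta_{f_d}$ is the identity, giving $E_d[1/N]_{\Phi_d(q)}^\complete\simeq (\qOmega_{S/A}\lotimes_{A[q],\psi^d}A[1/N,q])_{\Phi_d(q)}^\complete$; inverting $p$ then yields the bottom-right corner too.

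For the top-right corner, I would rerun the $p$-adic computation from \cref{con:TwistedqDeRham}: after $p$-completion the factors of $f_d$ of order prime to $p$ become units, so $(E_d)_p^\complete\simeq (\L\eta_{[p^{v_p(m/d)}]_q}\qOmega_{S/A}\lotimes_{A[q],\psi^d}A[q])_{(p,\Phi_d(q))}^\complete$, and iterating the relative Frobenius equivalence $\phi_{/B}\colon \Prism_{T/B}\clotimes_{B,\phi_B}B\overset{\simeq}{\longrightarrow}\L\eta_J\Prism_{T/B}$ of \cite[Theorem~\chref{15.3}]{Prismatic} exactly $v_p(m/d)$ times identifies this with $(\qOmega_{S/A}\lotimes_{A[q],\psi^{p^{v_p(m)}d_p}}A[q])_{(p,\Phi_{d_p}(q))}^\complete$, using $\psi^{p^{v_p(m)}d_p}=\psi^{p^{v_p(m/d)}}\circ\psi^d$ and the fact that $\Phi_d(q)$-completion agrees with $\Phi_{d_p}(q)$-completion after $p$-completion. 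Under these identifications the right vertical map of the square becomes precisely the iterate $\phi_{p/A[q]}^{v_p(m/d)}$ of the relative Frobenius, and functoriality in $S$ is automatic since every step is.

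The step I expect to be the main obstacle is the bookkeeping around the $\L\eta$-functors: one must check carefully that $\L\eta_{[m/d]_q}$ genuinely disappears after base change along $\psi^d$, inversion of $N$, and $\Phi_d(q)$-completion — the crucial mechanism being that $\psi^d$ turns $[m/d]_q$ into $(q^m-1)/(q^d-1)$, which avoids the factor $\Phi_d(q)$ — and, dually, that the iterated Frobenii $\phi_{p/A[q]}^{v_p(m/d)}$ on the $p$-complete side match the composite Adams operations $\psi^{p^{v_p(m)}d_p}$ coherently with the fracture square. Once these two points are secured, the statement is a formal consequence of \cref{cor:CompleteDescent}, \cref{rem:GluingCompletions}, and the arithmetic fracture square.
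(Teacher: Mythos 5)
Your proposal is correct and follows the same overall strategy as the paper: identify the claimed square with the $(q^m-1)$-completed arithmetic fracture square of $\qOmega_{S/A}^{(m)}$ via \cref{rem:GluingCompletions}, and then show that the décalage functors trivialise on the various corners. The one place where you take a slightly longer route is the $p$-complete corner. You invoke \cref{rem:GluingCompletions} with the index $i=0$, which produces the factors $(E_{d_p})_p^\complete$ with $\L\eta$-exponent $[m/d_p]_q = [p^{v_p(m)}m_p/d_p]_q$; after trivialising the prime-to-$p$ part you are left with $\L\eta_{[p^{v_p(m)}]_q}$, which you then remove by iterating the relative Frobenius equivalence $v_p(m)$ times to pass from the $\psi^{d_p}$-twist to the $\psi^{p^{v_p(m)}d_p}$-twist. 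The paper instead takes $i=v_p(m)$ in \cref{rem:GluingCompletions}, so that the $\Phi_{d_p}(q)$-completed factor already carries the $\psi^{p^{v_p(m)}d_p}$-twist and the $\L\eta$-exponent is $[m_p/d_p]_q$, which is prime to $p$ and trivialises directly, with no need to touch the Frobenius at all. Both computations reach the same answer, but the paper's choice is more economical; in either case the right vertical arrow of the square is then read off from the $p$-adic gluing data of \cref{con:TwistedqDeRham}, which is the iterated Frobenius $\phi_{p/A[q]}^{v_p(m/d)}$ by construction. One small point to tighten in your bottom-left computation: rather than going through resultants of cyclotomic polynomials, it is cleaner to observe directly that $\psi^d([m/d]_q)=(q^m-1)/(q^d-1)$ reduces modulo $\Phi_d(q)$ to the integer $m/d$, which divides $N$ and is therefore already a unit in $A[1/N]$.
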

	\begin{proof}
		Using \cref{rem:GluingCompletions}, the desired pullback square can be identified with the $(q^m-1)$-completed arithmetic fracture square 
		\begin{equation*}
			\begin{tikzcd}
				\qOmega_{S/A}^{(m)}\rar\dar\drar[pullback] & \prod_{p\mid N}\bigl(\qOmega_{S/A}^{(m)}\bigr)_p^\complete\dar\\
				\qOmega_{S/A}^{(m)}\bigl[\localise{N}\bigr]_{(q^m-1)}^\complete\rar & \prod_{p\mid N}\bigl(\qOmega_{S/A}^{(m)}\bigr)_p^\complete\bigl[\localise{p}\bigr]_{(q^m-1)}^\complete
			\end{tikzcd}
		\end{equation*}
		Here we also use that $[m/d]_q$ is mapped to a unit under $\psi^d\colon A[q]\rightarrow A[1/N,q]_{\Phi_d(q)}^\complete$, so we may ignore $\L\eta_{[m/d]_q}$ for any $d\mid m$ in the bottom left corner. Similarly, we may ignore any $\L\eta_{[m/p^{v_p(m)} d]_q}$ in the top or bottom right corner.
	\end{proof}
	\begin{numpar}[Transition maps.]\label{con:TwistedqDeRhamFunctoriality}
		Whenever $n\mid m$, there's a map of $\IE_\infty$-$A[q]$-algebras
		\begin{equation*}
			\bigl(\qOmega_{S/A}^{(m)}\bigr)_{(q^n-1)}^\complete\longrightarrow \qOmega_{S/A}^{(n)}\,,
		\end{equation*}
		functorial in $S$. To construct this map, we once again appeal to the gluing procedure of \cref{cor:CompleteDescent}. On $\Phi_{d}$-completions, where $d\mid n$, the desired map is induced by the symmetric monoidal natural transformation $\L\eta_{[m/d]_q}\Rightarrow \L\eta_{[n/d]_q}$. It's straightforward to check that this is compatible with the $p$-adic gluings from \cref{con:TwistedqDeRham}. Alternatively, we can use the pullback square from \cref{lem:TwistedqDeRhamFractureSquare}: On the bottom part of the diagram, $\bigl(\qOmega_{S/A}^{(m)}\bigr)_{(q^n-1)}^\complete\rightarrow \qOmega_{S/A}^{(n)}$ is induced by projection to those factors where $d\mid n$. In the top right corner, we also need to apply the relative Frobenius $\phi_{p/A[q]}^{v_p(m/n)}$ in any factor where $d_p\mid n_p$.
		
		These maps can be assembled into a functor $\qOmega_{S/A}^{(-)}\colon \IN\rightarrow \CAlg(\widehat{\Dd}_\Hh(A[q]))$, where $\IN$ denotes the category of natural numbers partially ordered by divisibility. Furthermore, this functor is itself functorial in $S$. We'll refrain from spelling out the argument, as it would just add one more layer of technicalities. To construct the Habiro descent eventually, we only need the individual maps, not the whole functor with all its higher coherences, since any $\limit_{m\in\IN}$ can be replaced by the limit over the sequential subdiagram given by $\{n!\}_{n\geqslant 1}$.
	\end{numpar}
	\begin{rem}
		The maps $\bigl(\qOmega_{S/A}^{(m)}\bigr)_{(q^n-1)}^\complete\rightarrow \qOmega_{S/A}^{(n)}$ are usually quite far from being equivalences, as can be seen from the discrepancy between $\L\eta_{[m/d]_q}$ and $\L\eta_{[n/d]_q}$. Thus, we can form the limit
		\begin{equation*}
			\limit_{m\in\IN}\qOmega_{S/A}^{(m)}\,,
		\end{equation*}
		but it will usually be a pathological object (unless $S$ is étale over $A$, in which case we recover \cref{con:RelativeHabiroRing}). In particular, it won't be a Habiro descent of $\qOmega_{S/A}$.
	\end{rem}
	\begin{rem}\label{rem:qOmegaHabiroDescent}
		To get $\bigl(\qOmega_{S/A}^{(m)}\bigr)_{(q^n-1)}^\complete\rightarrow \qOmega_{S/A}^{(n)}$ closer to being an equivalence, a natural idea goes as follows: The Berthelot--Ogus décalage functors $\L\eta_{[m/d]_q}$ and $\L\eta_{[n/d]_q}$ come equipped with canonical filtrations (see \cite[Proposition~\chref{5.8}]{BMS2}). If these filtrations would glue to give filtrations on $\qOmega_{S/A}^{(m)}$ and $\qOmega_{S/A}^{(n)}$, we could modify $\qOmega_{S/A}^{(m)}$ and $\qOmega_{S/A}^{(n)}$ by \enquote{making elements in each filtration degree~$i$ divisible by $[m]_q^i$ and $[n]_q^i$, respectively}. It is then reasonable to hope that the map between the modifications is an equivalence after $(q^n-1)$-completion, so that in the limit we get a Habiro descent of $\qOmega_{S/A}$.
		
		However, the filtrations on $\L\eta_{[m/d]_q}$ \emph{do not} glue. To make the idea work, we need the additional datum of a $q$-Hodge filtration on $\qOmega_{S/A}$; and, we won't get a Habiro descent of $\qOmega_{S/A}$, but of $\qHodge_{S/A}$. This is precisely how we'll prove \cref{thm:HabiroDescent}. See the outline at the end of \cref{subsec:MainResult}. Also see \cref{subsec:qdeRhamHabiroDescent} for a discussion of Habiro descent for $\qOmega_{S/A}$.
	\end{rem}
	Let us now explain the relationship between $\qOmega_{S/A}^{(m)}$ and the $q$-de Rham--Witt complexes. To this end, recall from \cite[Proposition~\chref{3.17}]{qWitt} that we have a map of graded $A[q]/(q^m-1)$-algebras $F_{m/d}\colon \qIW_m\Omega_{S/A}^*\rightarrow \qIW_d\Omega_{S/A}^*$ for all divisors $d\mid m$ (the \emph{Frobenius} on $q$-de Rham--Witt complexes). This satisfies $\d \circ F_{m/d}=(m/d)\circ F_{m/d}$. Therefore, if $\overtilde{F}_{m/d}$ is given by $(m/d)^nF_{m/d}$ in degree~$n$, then
	\begin{equation*}
		\overtilde{F}_{m/d}\colon \qIW_m\Omega_{S/A}^*\longrightarrow \qIW_d\Omega_{S/A}^*
	\end{equation*}
	is a map of differential-graded $A[q]/(q^m-1)$-algebras.
	
	\begin{prop}\label{prop:TwistedqDeRhamDeformsqDRW}
		Let $A$ be a perfectly covered $\Lambda$-ring and let $S$ be a smooth $A$-algebra. There's a functorial equivalence of $\IE_\infty$-$A[q]/(q^m-1)$-algebras
		\begin{equation*}
			\qOmega_{S/A}^{(m)}/(q^m-1)\overset{\simeq}{\longrightarrow}\qIW_m\Omega_{S/A}\,.
		\end{equation*}
		Under this identification, the map $\qOmega_{S/A}^{(m)}/(q^m-1)\rightarrow \qOmega_{S/A}^{(d)}/(q^d-1)$ induced by \cref{con:TwistedqDeRhamFunctoriality} agrees with the map $\overtilde{F}_{m/d}$ above.
	\end{prop}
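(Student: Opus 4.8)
The plan is to prove both parts by the descent principle of \cref{cor:CompleteDescent}: a $(q^m-1)$-complete $\IE_\infty$-$A[q]$-algebra (such as $\qOmega_{S/A}^{(m)}$) is recovered from its $\Phi_d(q)$-completions for $d\mid m$ together with $p$-adic gluing data, and the same holds for an $A[q]/(q^m-1)$-algebra (such as $\qIW_m\Omega_{S/A}$, which is automatically $(q^m-1)$-complete, being killed by $q^m-1$). So I would first reduce the claimed equivalence to an identification, for each $d\mid m$, of the $\Phi_d(q)$-completions $(\qOmega_{S/A}^{(m)}/(q^m-1))_{\Phi_d(q)}^\complete \simeq E_d/\Phi_d(q)$ with $(\qIW_m\Omega_{S/A})_{\Phi_d(q)}^\complete/\Phi_d(q)$, compatibly with the respective $p$-adic gluings. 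For $\qOmega_{S/A}^{(m)}$ these pieces and gluings are exactly what \cref{con:TwistedqDeRham} supplies; for $\qIW_m\Omega_{S/A}$ the analogous description — its $\Phi_d(q)$-completion is computed, via the ghost map $\gh_{m/d}$, by the Frobenius-twisted $q$-de Rham complex of $S$ — is the complex-level refinement of \cite[Proposition~\chref{2.48}, Corollary~\chref{2.51}]{qWitt} and reflects how the structure of $q$-de Rham--Witt complexes was set up there.

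The key point is then the local identification $E_d/\Phi_d(q)\simeq(\qIW_m\Omega_{S/A})_{\Phi_d(q)}^\complete/\Phi_d(q)$, matching the décalage twist $\L\eta_{[m/d]_q}$ occurring in $E_d=(\L\eta_{[m/d]_q}\qOmega_{S/A}\lotimes_{A[q],\psi^d}A[q])_{\Phi_d(q)}^\complete$ with the Frobenius twist built into the $q$-de Rham--Witt ghost component. For $m=1$ this is just $\qOmega_{S/A}/(q-1)\simeq\Omega^*_{S/A}$ (note $[1]_q=1$, so $\L\eta_{[1]_q}=\id$); for general $d\mid m$ I would deduce it from the prismatic Frobenius identity $\phi_{/B}\colon\Prism_{T/B}\clotimes_{B,\phi_B}B\overset{\simeq}{\longrightarrow}\L\eta_J\Prism_{T/B}$ of \cite[Theorem~\chref{15.3}]{Prismatic} (iterated, as already used in \cref{con:TwistedqDeRham}) together with the corresponding structure of $q$-de Rham--Witt complexes in \cite{qWitt}. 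Once the pieces match, one checks the $p$-adic gluings agree: on $\qOmega_{S/A}^{(m)}$ the gluing is the prismatic relative Frobenius, on $\qIW_m\Omega_{S/A}$ it is the Frobenius on $q$-Witt vectors, and these coincide by the very construction of $\qOmega_{S/A}^{(m)}$ — this is exactly parallel to the argument in the proof of \cref{thm:HabiroqWittComparison} (compare also \cref{rem:HabiroIsAinf?}), now carried out at the level of complexes rather than rings. This gives the equivalence $\qOmega_{S/A}^{(m)}/(q^m-1)\simeq\qIW_m\Omega_{S/A}$ of $\IE_\infty$-$A[q]/(q^m-1)$-algebras, functorial in $S$ because all the pieces are.

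For the statement about transition maps, I would trace through ghost components. By \cref{con:TwistedqDeRhamFunctoriality}, on $\Phi_e(q)$-completions (for $e\mid d$) the map $\qOmega_{S/A}^{(m)}/(q^m-1)\to\qOmega_{S/A}^{(d)}/(q^d-1)$ is induced by the symmetric monoidal natural transformation $\L\eta_{[m/e]_q}\Rightarrow\L\eta_{[d/e]_q}$ (with an extra power of the relative Frobenius in the $p$-complete factors). On the décalage filtration this transformation is, in cohomological degree~$n$, multiplication by $([m/e]_q/[d/e]_q)^n$, and after the $\psi^e$-twist and reduction modulo $\Phi_e(q)$ this becomes $(m/d)^n$ up to a unit which is pinned down on ghost coordinates exactly as in the proof of \cref{thm:HabiroqWittComparison}. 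Comparing with the relation $\d\circ F_{m/d}=(m/d)\circ F_{m/d}$, which is precisely what makes $\overtilde{F}_{m/d}=(m/d)^nF_{m/d}$ in degree~$n$ a map of differential graded algebras, one concludes that the transition map is identified with $\overtilde{F}_{m/d}$ under the equivalence of the first part.

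The main obstacle I anticipate is the local identification in the second paragraph: extracting from the construction (or universal property) of $\qIW_m\Omega_{S/A}$ in \cite{qWitt} that its $\Phi_d(q)$-completion is exactly the décalage-twisted $q$-de Rham complex $E_d$, as an equivalence of $\IE_\infty$-$A[q]$-algebras and functorially in $S$, and reconciling the iterated-Frobenius corrections on the two sides. Granting that, the rest is bookkeeping in the descent diagram of \cref{cor:CompleteDescent}.
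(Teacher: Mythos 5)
Your proposal takes a genuinely different route from the paper, and the route has a real difficulty that you flag but do not see how to avoid. You propose working in the $\Phi_d(q)$-complete descent picture of \cref{cor:CompleteDescent}, identifying $E_d/\Phi_d(q)$ (where $E_d = (\L\eta_{[m/d]_q}\qOmega_{S/A}\lotimes_{A[q],\psi^d}A[q])_{\Phi_d(q)}^\complete$) with a ghost piece of $\qIW_m\Omega_{S/A}$, and checking compatibility of the $p$-adic gluings. The obstacle — matching the décalage $\L\eta_{[m/d]_q}$ with the Frobenius twist in the $q$-de Rham–Witt ghost component, for each $d\mid m$ — is a real one, because for $d<m$ the element $[m/d]_q$ need not be a unit in $A[q]_{\Phi_d(q)}^\complete$ (e.g.\ $[p]_q\equiv p\bmod (q-1)$), so $\L\eta_{[m/d]_q}$ does not trivialise modulo $\Phi_d(q)$ and genuinely interacts with the $q$-de Rham complex. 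You also implicitly slide between completion and quotient in the phrase $(\qIW_m\Omega_{S/A})_{\Phi_d(q)}^\complete/\Phi_d(q)$, which is awkward for a $(q^m-1)$-torsion object.

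The paper avoids this entirely by never reducing the $\Phi_d(q)$-pieces $E_d$ individually. Instead it uses the arithmetic fracture square of \cref{lem:TwistedqDeRhamFractureSquare}, in which the décalage functors have already been absorbed: $[m/d]_q$ is a unit in the bottom-left corner because $N$ is inverted there, and $[m_p/d_p]_q$ is a unit in the top-right corner because it is coprime to $p$ and we are $p$-complete. Reducing that whole square modulo $(q^m-1)$ and observing that $(q-1)\mapsto 0$ under $\psi^d\colon A[q]\to A[1/N,q]/\Phi_d(q)$ (so $\qOmega_{S/A}$ becomes $\Omega_{S/A}$ in each corner) one lands directly on the arithmetic fracture square for $\qIW_m\Omega_{S/A}$ given by \cite[Corollary~\chref{4.37}]{qWitt}, with no $\L\eta$-bookkeeping required. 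The transition-map assertion is handled the same way, the single non-trivial point being the identification of $\overtilde{F}_p$ with $\phi_{p/A}$ from \cite[Corollary~\chref{4.38}]{qWitt} — which your final paragraph gestures at, but again inside the $\L\eta$-framing rather than the fracture-square one. So your outline is not wrong in spirit, but to make it run you would essentially have to reprove \cref{lem:TwistedqDeRhamFractureSquare} inside each $E_d$, whereas the paper uses that lemma to reduce the whole comparison to the case where $\L\eta$ has already disappeared.
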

	\begin{proof}[Proof sketch]
		By \cite[Corollary~\chref{4.37}]{qWitt}, for any $N\neq 0$ divisible by $m$ the arithmetic fracture square for $\qIW_m\Omega_{S/A}$ has the the following form:
		\begin{equation*}
			\begin{tikzcd}[column sep=0em]
				\qIW_m\Omega_{S/A}\rar\dar["(\gh_{m/d})_{d\mid m}"']\drar[pullback] & \prod_{p\mid N}\prod_{d_p\mid m_p}\left(\Omega_{S/A}\lotimes_{A,\psi^{p^{\smash{v_p(m)}}d_p}}A[q]\right)_p^\complete/\Phi_{d_p}(q^{v_p(m)})\dar["\left(\phi_{p/A}^{v_p(m/d)}\right)_{p\mid N,\,d\mid m}"]\\
				\prod_{d\mid m}\left(\Omega_{S/A}\lotimes_{A,\psi^d}A\bigl[\localise{N},q\bigr]\right)/\Phi_d(q)\rar& \displaystyle\prod_{p\mid N}\prod_{d\mid m}\left(\Omega_{S/A}\lotimes_{A,\psi^d}A[q]\right)_p^\complete\bigl[\localise{p}\bigr]/\Phi_d(q)
			\end{tikzcd}
		\end{equation*}
		This agrees with the reduction modulo  $(q^m-1)$ of the arithmetic fracture square from \cref{lem:TwistedqDeRhamFractureSquare}. Here we note that upon reduction modulo $(q^m-1)$, every occurence of the $q$-de Rham complex $\qOmega_{S/A}$ in \cref{lem:TwistedqDeRhamFractureSquare} can be replaced by $\Omega_{S/A}$. For example, for the $d$\textsuperscript{th} factor in the bottom left corner, reduction modulo $(q^m-1)$ is the same as reduction modulo $\Phi_d(q)$, as $(q^m-1)$ and $\Phi_d(q)$ only differ by a unit in $A[1/N,q]_{\Phi_d(q)}^\complete$. Now $(q-1)$ maps to $0$ under $\psi^d\colon A[q]\rightarrow A[1/N,q]/\Phi_d(q)$, so indeed $\qOmega_{S/A}$ can be replaced by $\Omega_{S/A}$ in that corner. Similar arguments apply to the other corners.
		
		This yields the desired equivalence $\qOmega_{S/A}^{(m)}/(q^m-1)\simeq\qIW_m\Omega_{S/A}$. It's straightforward to check that this equivalence doesn't depend on the choice of $N$ (compare \cref{con:TwistedqHodgeFiltration} below).
		
		The additional assertion about $\qOmega_{S/A}^{(m)}/(q^m-1)\rightarrow \qOmega_{S/A}^{(d)}/(q^d-1)$ follows similarly by a comparison of arithmetic fracture squares (where we may now choose the same $N$). The only non-trivial step is to check that under the equivalence $(\qIW_{p^\alpha}\Omega_{S/A})_p^\complete\simeq (\Omega_{S/A}\lotimes_{A,\psi^{p^{\alpha}}}A[q]/(q^{p^\alpha}-1))_p^\complete$ the maps $\overtilde{F}_p$ and $\phi_{p/A}$ get identified. This is explained in \cite[Corollary~\chref{4.38}]{qWitt}.
	\end{proof}

	\subsection{The Nygaard filtration on \texorpdfstring{$q$}{q}-de Rham--Witt complexes}\label{subsec:Nygaard}
	In this subsection, we'll study an auxiliary filtration on $q$-de Rham--Witt complexes. Throughout \cref{subsec:Nygaard}, we fix a prime~$p$. We'll also write $\phi$ instead of $\psi^p$ for the $p$\textsuperscript{th} Adams operation of the $\Lambda$-ring $A$. We extend $\phi$ to $A[q]$ via $\phi(q)\coloneqq q^p$.
	
	Let's first recall the Nygaard filtration on $q$-de Rham cohomology.
	
	\begin{numpar}[The Nygaard filtration on $q$-de Rham cohomology.]\label{par:NygaardOnqDeRham}
		Let $S$ be a smooth $A$-algebra. By \cref{lem:TwistedqDeRhamFractureSquare}, for all $\alpha\geqslant 0$,
		\begin{equation*}
			\bigl(\qOmega_{S/A}^{(p^\alpha)}\bigr)_p^\complete\simeq\left(\qOmega_{S/A}\lotimes_{A[q],\phi^\alpha}A[q]\right)_{(p,q-1)}^\complete
		\end{equation*}
		agrees with the $\alpha$-fold Frobenius-twist of $(\qOmega_{S/A})_p^\complete$. %
		%
		%
		Since $q$-de Rham cohomology is a special case of prismatic cohomology, the general theory of Nygaard filtrations \cite[\S{\chref[section]{15}}]{Prismatic} provides a filtration $ \fil_\Nn^\star \bigl(\qOmega_{S/A}^{(p)}\bigr)_p^\complete$: It is the preimage of the filtered décalage filtration on $\L\eta_{\Phi_p(q)}(\qOmega_{S/A})_p^\complete$ under the relative Frobenius
		\begin{equation*}
			\phi_{/A[q]}\colon \bigl(\qOmega_{S/A}^{(p)}\bigr)_p^\complete\overset{\simeq}{\longrightarrow}\L\eta_{\Phi_p(q)}\left(\qOmega_{S/A}\right)_p^\complete\,.
		\end{equation*}
		Via pullback along $\phi^{\alpha-1}\colon A[q]\rightarrow A[q]$, we also get Nygaard filtrations $ \fil_\Nn^\star \bigl(\qOmega_{S/A}^{(p^\alpha)}\bigr)_p^\complete$ for all $\alpha\geqslant 2$. By construction, these Nygaard filtrations are canonically filtered $\IE_\infty$-algebras over the filtered ring $\Phi_{p^\alpha}(q)^\star A[q]$, hence over $(q^{p^\alpha}-1)^\star A[q]$ as well. By \cref{prop:TwistedqDeRhamDeformsqDRW}, we also have an equivalence
		\begin{equation*}
			\bigl(\qOmega_{S/A}^{(p^\alpha)}\bigr)_p^\complete/(q^{p^\alpha}-1)\simeq \left(\qIW_{p^\alpha}\Omega_{S/A}\right)_p^\complete\,.
		\end{equation*}
		Our goal in this subsection is to identify the image of the Nygaard filtration under this equivalence with an explicit filtration on the complex $\qIW_{p^\alpha}\Omega_{S/A}^*$.
	\end{numpar}

	\begin{numpar}[The Nygaard filtration on $q$-de Rham--Witt complexes.]\label{def:NygaardFiltration}
		Let $S$ be smooth over~$A$. The \emph{the Nygaard filtration} is the filtration $ \fil_\Nn^\star \qIW_m\Omega_{S/A}^*$ whose $n$\textsuperscript{th} term is the subcomplex $ \fil_\Nn^n\qIW_{p^\alpha}\Omega_{S/A}^*\subseteq \qIW_{p^\alpha}\Omega_{S/A}^*$ given by
		\begin{equation*}
			\left(p^{n-1}V_p\bigl(\qIW_{p^{\alpha-1}}\Omega_{S/A}^0\bigr) \rightarrow\dotsb\rightarrow p^0V_p\bigl(\qIW_{p^{\alpha-1}}\Omega_{S/A}^{n-1}\bigr)\rightarrow \qIW_{p^{\alpha}}\Omega_{S/A}^n\rightarrow\dotsb\right)\,.
		\end{equation*}
	\end{numpar}
	\begin{prop}\label{prop:NygaardComparison}
		For smooth $A$-algebras $S$, there exists a unique functorial equivalence of filtered $\IE_\infty$-$A[q]/(q^{p^\alpha}-1)$-algebras
		\begin{equation*}
			\fil_\Nn^\star \bigl(\qOmega_{S/A}^{(p^\alpha)}\bigr)_p^\complete/(q^{p^\alpha}-1)\overset{\simeq}{\longrightarrow} \fil_\Nn^\star \left(\qIW_{p^\alpha}\Omega_{S/A}\right)_p^\complete
		\end{equation*}
		\embrace{the quotient on the left-hand side is taken in accordance with Convention~\cref{conv:QuotientConvention}} which in degree~$0$ recovers the equivalence $\bigl(\qOmega_{S/A}^{(p^\alpha)}\bigr)_p^\complete/(q^{p^\alpha}-1)\simeq (\qIW_{p^\alpha}\Omega_{S/A})_p^\complete$ from \cref{par:NygaardOnqDeRham}.
	\end{prop}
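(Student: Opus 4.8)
The plan is to upgrade the unfiltered equivalence of \cref{prop:TwistedqDeRhamDeformsqDRW} to a filtered one by passing to explicit complex models, and to globalise afterwards by functoriality. Since everything in sight is already $p$-complete, I may reason $p$-adically, where $(\qOmega_{S/A})_p^\complete$ is the prismatic cohomology $\Prism_{T/B}$ over the prism $(B,J)=(\widehat{A}_p\qpower,\Phi_p(q))$ from \cref{con:TwistedqDeRham}, and where, étale-locally on $S$, a framing $\square$ identifies this complex with the completed $q$-de Rham complex $\qOmega_{S/A,\square}^*$ whose terms $\Omega_{S/A}^i\qpower$ are $\Phi_{p^\alpha}(q)$- and $(q^{p^\alpha}-1)$-torsion-free. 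On such a complex $\L\eta_{\Phi_p(q)}$ is computed by the naive décalage, and by \cite[Proposition~\chref{5.8}]{BMS2} its canonical filtration multiplies the term in cohomological degree $i$ by an extra factor $\Phi_p(q)^{\max(n-i,0)}$. Transporting through the relative Frobenius $\phi_{/A[q]}$ and then pulling back along $\phi^{\alpha-1}\colon A[q]\rightarrow A[q]$, $q\mapsto q^{p^{\alpha-1}}$ (which is finite free, hence commutes with $\L\eta$ and sends $\Phi_p(q)$ to $\Phi_{p^\alpha}(q)$), I obtain an explicit subcomplex description: in cohomological degree $i$, $\fil_\Nn^n(\qOmega_{S/A}^{(p^\alpha)})_p^\complete$ equals $\Phi_{p^\alpha}(q)^{\max(n-i,0)}$ times the degree-$i$ term. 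As $q^{p^\alpha}-1$ is a non-zerodivisor on every term, the quotient of \cref{conv:QuotientConvention} is again an honest quotient of a subcomplex filtration; using the factorisation $q^{p^\alpha}-1=(q^{p^{\alpha-1}}-1)\,\Phi_{p^\alpha}(q)$, its $n$-th piece in cohomological degree $i<n$ works out to be $\Phi_{p^\alpha}(q)^{n-i}$ times the degree-$i$ term, reduced modulo $q^{p^{\alpha-1}}-1$.

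On the $q$-de Rham--Witt side, I would recall from \cite{qWitt} the explicit model of $(\qIW_{p^\alpha}\Omega_{S/A})_p^\complete$ --- the $p$-complete corner of the arithmetic fracture square used to prove \cref{prop:TwistedqDeRhamDeformsqDRW} --- together with the termwise definition of $\fil_\Nn^\star$ in \cref{def:NygaardFiltration}. The heart of the proof is to match these two descriptions through the equivalence $(\qOmega_{S/A}^{(p^\alpha)})_p^\complete/(q^{p^\alpha}-1)\simeq(\qIW_{p^\alpha}\Omega_{S/A})_p^\complete$. The mechanism is as follows. We have $\Phi_{p^\alpha}(q)=[p]_{q^{p^{\alpha-1}}}$, so modulo $q^{p^{\alpha-1}}-1$ each factor of $\Phi_{p^\alpha}(q)$ becomes $p$; but of the $n-i$ factors appearing in cohomological degree $i<n$, exactly one is consumed by the reduction modulo $q^{p^{\alpha-1}}-1$ isolated above, and under the identification of the relative Frobenius $\phi_{p/A}$ with the Witt-vector Frobenius (\cite[Corollary~\chref{4.38}]{qWitt}) this single factor becomes precisely the Verschiebung $V_p\colon\qIW_{p^{\alpha-1}}\Omega_{S/A}^i\rightarrow\qIW_{p^\alpha}\Omega_{S/A}^i$, while the remaining $n-i-1$ factors become honest multiplication by $p$. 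This reproduces the formula $p^{n-1-i}V_p(\qIW_{p^{\alpha-1}}\Omega_{S/A}^i)$ of \cref{def:NygaardFiltration} exactly, including the degree-$0$ term $p^{n-1}V_p(\qIW_{p^{\alpha-1}}\Omega_{S/A}^0)$, which is the $q$-analogue of the classical description $\phi^{-1}(p^n)=p^{n-1}V_p$ of the Nygaard filtration on Witt vectors. Carrying the comparison out term by term shows that the degree-$0$ equivalence restricts to an isomorphism $\fil_\Nn^n(\qOmega_{S/A}^{(p^\alpha)})_p^\complete/(q^{p^\alpha}-1)\xrightarrow{\simeq}\fil_\Nn^n(\qIW_{p^\alpha}\Omega_{S/A})_p^\complete$ for every $n$, hence the desired filtered equivalence, locally.

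To globalise, I would note that all the operations used --- the $q$-de Rham complex, the décalage and its canonical filtration, the Frobenius twist, the fracture square --- are functorial, so the local equivalences are compatible on overlaps of framings and descend to a functorial equivalence over all smooth $A$-algebras, independent of $\square$. Uniqueness is then immediate: in the strict model both sides are genuine subcomplex filtrations, so a filtered refinement of the fixed degree-$0$ equivalence is forced; equivalently, both filtrations are $p$-adically complete and exhaustive (the $q$-de Rham--Witt side vanishes in cohomological degrees $>\dim(S/A)$, and in degree $i<n$ its $n$-th piece $p^{n-1-i}V_p(-)$ tends $p$-adically to $0$), so there can be no competing equivalence.

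The step I expect to be the main obstacle is the term-by-term bookkeeping in the second paragraph. The factors $\Phi_{p^\alpha}(q)$, $q^{p^{\alpha-1}}-1$, $q^{p^\alpha}-1$ and the prime $p$ are pairwise non-coprime after $p$-completion, and one must track carefully how the iterated décalage and the $\alpha$-fold Frobenius twist convert powers of $\Phi_{p^\alpha}(q)$ into one Verschiebung plus powers of $p$, without spurious $p$-torsion appearing. A useful cross-check --- and perhaps a cleaner way to conclude once the degree-$0$ equivalence has been shown to be filtered --- is to verify the comparison on associated gradeds: by \cite[Proposition~\chref{5.8}]{BMS2} the décalage graded pieces compute the conjugate-filtered Hodge--Tate complex, the explicit filtration of \cref{def:NygaardFiltration} has the same associated graded, and completeness then upgrades this to the filtered equivalence. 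Either way, the necessary inputs are all contained in \cite{qWitt} and \cite[\S{\chref[section]{15}}]{Prismatic}; the work lies in assembling them cleanly.
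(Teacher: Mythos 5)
Your approach is genuinely different from the paper's: you work in coordinates and try to transport a putative explicit description of the Nygaard filtration on $\qOmega_{S/A,\square}^*$ through the décalage, whereas the paper passes to large quasi-syntomic $A$-algebras (where all filtrations are honest chains of ideals, so the comparison is automatically unique and multiplicative once shown to exist as an equality of ideals), then argues by induction on $n$ via the fibre sequences of \cref{cor:NygaardCofibreSequence} and \cref{lem:NygaardFibreSequence}, finally reducing to $A=\IZ$ and to the universal examples $\IZ_p\langle x^{1/p^\infty}\rangle/x$ by the method of \cite[\S{\chref[section]{12}}]{Prismatic}.

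The main gap in your proposal is the first paragraph: the assertion that, after transporting through the relative Frobenius and the $\phi^{\alpha-1}$-twist, $\fil_\Nn^n(\qOmega_{S/A}^{(p^\alpha)})_p^\complete$ is \emph{literally} $\Phi_{p^\alpha}(q)^{\max(n-i,0)}$ times the degree-$i$ term of the framed complex. The Beilinson filtration on $\L\eta_f C^*$ from \cite[Proposition~\chref{5.8}]{BMS2} is the Beilinson-connective cover of the $f$-adic filtration, whose $n$-th piece in cohomological degree $i$ is $\{x\in f^{\max(n,i)}C^i : dx\in f^{\max(n,i+1)}C^{i+1}\}$; rewriting this as a multiple of $(\eta_f C)^i$ is not immediate and the subcomplex condition ``$dx$ divisible by one higher power'' does not simply disappear after twisting. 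Establishing the explicit coordinate formula for the Nygaard filtration on $q$-de Rham complexes is real content that is nowhere in the paper, and would itself require something like the inductive/descent argument the paper uses. Similarly, your second-paragraph bookkeeping --- ``exactly one of the $n-i$ factors of $\Phi_{p^\alpha}(q)$ is consumed and becomes $V_p$, the rest become honest $p$'' --- is a heuristic rather than an argument: the Verschiebung does not arise from any identity at the level of a single framed complex, but from the gluing in the arithmetic fracture square defining $\qIW_{p^\alpha}\Omega_{S/A}$ and the identification $(\qIW_{p^\alpha}\Omega_{S/A})_p^\complete \simeq (\Omega_{S/A}\lotimes_{A,\phi^\alpha}A[q]/(q^{p^\alpha}-1))_p^\complete$, and tracking how a factor of $\Phi_{p^\alpha}(q)$ becomes $V_p$ precisely requires the kind of careful Frobenius-kernel analysis done in \cref{lem:NygaardFrobeniusKernel}. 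The globalisation by ``functoriality on overlaps of framings'' is also not free: compatibility of explicit coordinate formulas under change of framing is exactly the kind of problem quasi-syntomic descent is designed to avoid.

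Your proposed cross-check is closer in spirit to the paper's real proof: both filtrations on $(\qdeRham_{R/A}\lotimes_{A,\phi^\alpha}A[q]/(q^{p^\alpha}-1))_p^\complete$ have associated gradeds computed by \cref{cor:NygaardCofibreSequence} and \cref{lem:NygaardFibreSequence}, each sitting in a fibre sequence relating $\Sigma^{-n}\deRham^n_{R/A}\lotimes_{A,\phi^\alpha}A[\zeta_{p^\alpha}]$ to $\fil_n^\mathrm{conj}(\deRham_{R/A}/p)\lotimes_{A,\phi^{\alpha-1}}A[q]/(q^{p^{\alpha-1}}-1)$. But merely having equivalent associated gradeds does not by itself produce a filtered refinement of the fixed degree-$0$ equivalence, and completeness alone does not pin it down. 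The paper proves $\fil_{\Nn,\qOmega}^n = \fil_{\Nn,\qIW}^n$ \emph{as ideals} by induction on $n$, matching the two fibre sequences step by step; the inductive step requires a genuine commutativity check (the big diagram at the end of the proof of \cref{prop:NygaardComparison}) reduced via \cite[\S{\chref[section]{12}}]{Prismatic} to an explicit calculation with $q$-divided powers. You would need something of similar substance at this point.
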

	
	The proof of \cref{prop:NygaardComparison} requires several preliminary lemmas.
	
	\begin{lem}\label{lem:NygaardFrobeniusTruncationSurjection}
		Let $S$ be smooth over $A$. For all $n\geqslant 0$, the Frobenius $\overtilde{F}_p$, when restricted to $ \fil_\Nn^n\qIW_{p^\alpha}\Omega_{S/A}^*$, is divisible by $p^n$. The divided Frobenius $p^{-n}\overtilde{F}_p$ induces a map
		\begin{equation*}
			p^{-n}\overtilde{F}_p\colon \gr_\Nn^n\qIW_{p^\alpha}\Omega_{S/A}^*\longrightarrow \tau^{\leqslant n}\bigl(\qIW_{p^{\alpha-1}}\Omega_{S/A}^*/p\bigr)
		\end{equation*}
		which is surjective in degree~$n$ and an isomorphism in all other degrees.
	\end{lem}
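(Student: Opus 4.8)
The plan is to check all three assertions --- divisibility by $p^n$, and that $p^{-n}\overtilde{F}_p$ is an isomorphism away from degree~$n$ and surjective in degree~$n$ --- one cohomological degree at a time, using nothing beyond the structure of $q$-de Rham--Witt complexes recalled in \cite{qWitt}: the relation $F_pV_p=p$, the $p$-torsion-freeness of $\qIW_{p^{\alpha-1}}\Omega_{S/A}^*$, the injectivity of its Verschiebung $V_p$, and the fact that $\overtilde{F}_p$ is a differential-graded map, which --- since $\overtilde{F}_p$ acts as $p^jF_p$ in cohomological degree~$j$ --- forces $\mathrm{d}\circ F_p=p\cdot F_p\circ\mathrm{d}$. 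Recall also that, by \cref{def:NygaardFiltration}, the degree-$j$ term of $\fil_\Nn^n\qIW_{p^\alpha}\Omega_{S/A}^*$ equals $p^{n-1-j}V_p\bigl(\qIW_{p^{\alpha-1}}\Omega_{S/A}^j\bigr)$ for $j<n$ and is all of $\qIW_{p^\alpha}\Omega_{S/A}^j$ for $j\geqslant n$.

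Divisibility and the degrees $\neq n$ are then formal. In degree $j\geqslant n$ the operator $\overtilde{F}_p=p^jF_p$ is visibly divisible by $p^n$; in degree $j<n$ one has $\overtilde{F}_p\bigl(p^{n-1-j}V_p\omega\bigr)=p^{n-1}F_pV_p\omega=p^n\omega$ by $F_pV_p=p$, so divisibility holds again and $p^{-n}\overtilde{F}_p\bigl(p^{n-1-j}V_p\omega\bigr)=\omega$. Hence on $\gr_\Nn^n$: in degrees $j>n$ both $\gr_\Nn^n\qIW_{p^\alpha}\Omega_{S/A}^*$ and $\tau^{\leqslant n}\bigl(\qIW_{p^{\alpha-1}}\Omega_{S/A}^*/p\bigr)$ vanish, while in degree $j<n$ the computation above identifies $p^{-n}\overtilde{F}_p$ with the quotient map $p^{n-1-j}V_p\bigl(\qIW_{p^{\alpha-1}}\Omega_{S/A}^j\bigr)\big/p^{n-j}V_p\bigl(\qIW_{p^{\alpha-1}}\Omega_{S/A}^j\bigr)\to\qIW_{p^{\alpha-1}}\Omega_{S/A}^j/p$ sending $[p^{n-1-j}V_p\omega]$ to $[\omega]$, which is an isomorphism because $V_p$ is injective and $\qIW_{p^{\alpha-1}}\Omega_{S/A}^j$ has no $p$-torsion.

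The real work is degree~$n$, where $p^{-n}\overtilde{F}_p$ is just $F_p$, the source is $\qIW_{p^\alpha}\Omega_{S/A}^n/V_p\bigl(\qIW_{p^{\alpha-1}}\Omega_{S/A}^n\bigr)$, and the target is the cocycle module $\mathrm{Z}^n:=\ker\bigl(\mathrm{d}\colon\qIW_{p^{\alpha-1}}\Omega_{S/A}^n/p\to\qIW_{p^{\alpha-1}}\Omega_{S/A}^{n+1}/p\bigr)$. That $F_p$ lands in $\mathrm{Z}^n$ modulo~$p$ is immediate from $\mathrm{d}F_p=pF_p\mathrm{d}$, and that it annihilates $V_p\bigl(\qIW_{p^{\alpha-1}}\Omega_{S/A}^n\bigr)$ modulo~$p$ from $F_pV_p=p$; so an induced map to $\mathrm{Z}^n$ exists, and what remains --- and what I expect to be the main obstacle --- is its surjectivity, a $q$-deformed Cartier-type statement. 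I would prove this by reduction to a computable case: by \cite{qWitt} the formation of $\qIW_m\Omega_{-/A}^*$ together with $F_p$, $V_p$ and $\mathrm{d}$ commutes with filtered colimits and with smooth base change, so it suffices to treat $S=A[x_1,\dots,x_d]$ with its toric $\Lambda$-structure. There $\qIW_{p^\alpha}\Omega_{S/A}^*$ admits an explicit description --- a $q$-deformation of Illusie's basis for the de Rham--Witt complex of a polynomial $\IF_p$-algebra --- and the surjectivity can be checked on generators; alternatively, after $p$-completion it can be extracted from the known structure of the Nygaard filtration on prismatic cohomology through the prismatic interpretation of $\qOmega_{S/A}^{(p^\alpha)}$ in \cref{con:TwistedqDeRham}, provided one takes care not to argue in a circle with \cref{prop:NygaardComparison}.
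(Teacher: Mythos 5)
Your treatment of the formal parts — divisibility, the isomorphism in degrees $j<n$ and vanishing in degrees $j>n$, and the fact that $F_p$ lands in the cocycle module $\mathrm{Z}^n$ modulo~$p$ — is correct and essentially identical to the paper's argument (same use of $F_pV_p=p$, injectivity of $V_p$ via $p$-torsion-freeness, and $\d\circ F_p=p\,F_p\circ\d$). You also correctly locate where the real content lies: the surjectivity of $F_p$ onto $\mathrm{Z}^n$ in degree~$n$, which is indeed a $q$-deformed Cartier-type statement, and you correctly flag the circularity danger in going through the prismatic Nygaard filtration.

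However, the surjectivity is exactly where your proposal leaves a genuine gap. You reduce to the polynomial case and then write that surjectivity ``can be checked on generators'' against a $q$-deformation of Illusie's basis --- but you do not carry out this check, and it is not a routine computation. The paper invokes a specific structural property of $\qIW_{p^\alpha}\Omega^*_{P/A}$ for polynomial $P$, namely property $(d_\alpha)$ from \cite[\chref{4.3}]{qWitt}: if $\d\xi\equiv 0\bmod p$ then $\xi=F_p(\omega)+p\eta$ for some $\omega,\eta$. This is precisely the Cartier-type statement you would need to establish, and without it the polynomial case is not done; asserting it is ``checkable on generators'' is not a substitute for proving it. Moreover, your reduction step is stated too loosely: ``commutes with \ldots smooth base change'' does not reduce a general smooth $S$ to a polynomial ring. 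What one actually uses is étale base change along a framing $P\to S$ together with Zariski descent, and the étale base change step requires an additional observation that you omit: $\d\colon \qIW_{p^{\alpha-1}}\Omega^n_{S/A}/p\to\qIW_{p^{\alpha-1}}\Omega^{n+1}_{S/A}/p$ is a map of $\qIW_{p^\alpha}(S/A)$-modules (since $\d\circ F_p\equiv0\bmod p$), which is what makes the kernel $\mathrm{Z}^n$ behave well under the flat base change $\qIW_{p^\alpha}(P/A)\to\qIW_{p^\alpha}(S/A)$. Both of these points need to be supplied before the proof is complete.
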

	\begin{proof}
		It follows directly from the construction that $\overtilde{F}_p$ is divisible by $p^n$ on $ \fil_\Nn^n\qIW_{p^\alpha}\Omega_{S/A}^*$. The Verschiebung $V_p\colon \qIW_{p^{\alpha-1}}\Omega_{S/A}^*\rightarrow \qIW_{p^{\alpha}}\Omega_{S/A}^*$ satisfies $F_p\circ V_p=p$ and $\qIW_{p^{\alpha-1}}\Omega_{S/A}^*$ is degree-wise $p$-torsion free by \cite[Proposition~\chref{4.1}]{qWitt}, hence $V_p$ must be injective. It follows that
		\begin{equation*}
			p^{-n}\overtilde{F}_p\colon \gr_\Nn^n\qIW_{p^\alpha}\Omega_{S/A}^*\longrightarrow \qIW_{p^{\alpha-1}}\Omega_{S/A}^*/p
		\end{equation*}
		is an isomorphism in degrees $\leqslant n-1$. Also $\gr_\Nn^n\qIW_{p^\alpha}\Omega_{S/A}^*$ vanishes in degrees $\geqslant n+1$. In degree~$n$, the map above is given by
		\begin{equation*}
			F_p\colon \qIW_{p^{\alpha}}\Omega_{S/A}^n/V_p\longrightarrow \qIW_{p^{\alpha-1}}\Omega_{S/A}^n/p\,.
		\end{equation*}
		Since $\d\circ F_p=p(F_p\circ \d)$, this map lands in $\ker(\d\colon \qIW_{p^{\alpha-1}}\Omega_{S/A}^n/p\rightarrow \qIW_{p^{\alpha-1}}\Omega_{S/A}^{n+1}/p)$, and so $\overtilde{F}_p/p^n$ indeed factors through $\tau^{\leqslant n}(\qIW_{p^{\alpha-1}}\Omega_{S/A}^*/p)$.
		
		To finish the proof, we must show that $F_p$ maps surjectively onto this kernel. First suppose that $S=P$ is a polynomial $A$-algebra. If $\xi\in \qIW_{p^{\alpha-1}}\Omega_{P/A}^n$ satisfies $\d\xi\equiv 0\mod p$, then \cite[\chref{4.3}({\chref[section*]{15}[$d_\alpha$]})]{qWitt} shows that there exist $\omega$ and $\eta$ satisfying $\xi=F_p(\omega)+p\eta$, proving the desired surjectivity in the polynomial case. If $S$ admits an étale map $\square\colon P\rightarrow S$,  then surjectivity follows via base change along the étale map $\qIW_{p^\alpha}(P/A)\rightarrow \qIW_{p^\alpha}(S/A)$. Here we use \cite[Propositions~\chref{2.48} and~\chref{3.31}]{qWitt} as well as the observation that
		\begin{equation*}
			\d\colon \qIW_{p^{\alpha-1}}\Omega_{S/A}^n/p\longrightarrow \qIW_{p^{\alpha-1}}\Omega_{S/A}^{n+1}/p
		\end{equation*}
		is a map of $\qIW_{p^\alpha}(S/A)$-modules, as $\d\circ F_p\equiv 0\mod p$. For general $S$, we find a Zariski cover $S\rightarrow S'$ such that $S'$ admits an étale map from a polynomial $A$-algebra. Then we can again argue via base change along the étale cover $\qIW_{p^\alpha}(S/A)\rightarrow \qIW_{p^\alpha}(S'/A)$. 
	\end{proof}
	\begin{lem}\label{lem:NygaardFrobeniusKernel}
		Let $S$ be smooth over $A$. There exists canonical isomorphisms
		\begin{align*}
			\Omega_{S/A}^n\otimes_{A,\phi^\alpha}A[\zeta_{p^\alpha}]&\cong\ker\left(F_p\colon \qIW_{p^{\alpha}}\Omega_{S/A}^n\rightarrow \qIW_{p^{\alpha-1}}\Omega_{S/A}^n\right)\\
			&\cong\ker\left(F_p\colon \qIW_{p^{\alpha}}\Omega_{S/A}^n/V_p\rightarrow \qIW_{p^{\alpha-1}}\Omega_{S/A}^n/p\right)
		\end{align*}
	\end{lem}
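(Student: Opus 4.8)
The plan is to establish the two displayed isomorphisms separately; the second one is formal, while the first uses the structural description of $q$-de Rham--Witt cohomology.

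For the second isomorphism, recall that $F_p\circ V_p=p$ and that $\qIW_{p^{\alpha-1}}\Omega_{S/A}^n$ is $p$-torsion free by \cite[Proposition~\chref{4.1}]{qWitt}. Hence the intersection $\ker(F_p)\cap V_p\bigl(\qIW_{p^{\alpha-1}}\Omega_{S/A}^n\bigr)$ is $V_p$ of the $p$-torsion of $\qIW_{p^{\alpha-1}}\Omega_{S/A}^n$, which vanishes, so $\ker(F_p)$ injects into $\qIW_{p^{\alpha}}\Omega_{S/A}^n/V_p$. The relation $F_pV_p=p$ makes the induced map $\overline{F}_p\colon\qIW_{p^{\alpha}}\Omega_{S/A}^n/V_p\to\qIW_{p^{\alpha-1}}\Omega_{S/A}^n/p$ well defined, and since $F_p$ vanishes on $\ker(F_p)$ the image lands in $\ker(\overline{F}_p)$; conversely, given $x$ with $\overline{F}_p[x]=0$, writing $F_p(x)=pz$ shows that $x-V_p(z)\in\ker(F_p)$ represents $[x]$. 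Thus the inclusion is an isomorphism, and it is evidently functorial in~$S$.

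For the first isomorphism I would first reduce to the case where $S=P$ is a polynomial $A$-algebra: both $\qIW_{p^\alpha}\Omega^n_{-/A}$ and $\qIW_{p^{\alpha-1}}\Omega^n_{-/A}$ are obtained from their values on $P$ by the flat (étale) base change $\qIW_{p^\alpha}(P/A)\to\qIW_{p^\alpha}(S/A)$ (using \cite[Propositions~\chref{2.48} and~\chref{3.31}]{qWitt}), so $\ker(F_p)$ base-changes along it, and a Zariski-local argument as in the proof of \cref{lem:NygaardFrobeniusTruncationSurjection} reduces general smooth $S$ to the polynomial case. For polynomial $P$ I would then compare arithmetic fracture squares at the prime $p$: by \cite[Corollary~\chref{4.37}]{qWitt} (reproduced in the proof of \cref{prop:TwistedqDeRhamDeformsqDRW}) both $\qIW_{p^\alpha}\Omega^n_{P/A}$ and $\qIW_{p^{\alpha-1}}\Omega^n_{P/A}$ sit in such squares, while the target $\Omega^n_{P/A}\otimes_{A,\phi^\alpha}A[\zeta_{p^\alpha}]$ sits in the evident square of the finite free $A[q]/\Phi_{p^\alpha}(q)$-module $\Omega^n_{P/A}\otimes_{A,\phi^\alpha}A[q]/\Phi_{p^\alpha}(q)$. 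Since $\ker(F_p)$ is a finite limit, it is the pullback of the kernels over the corners, so it suffices to produce compatible identifications there. Over the rationalised corners the ghost map is an isomorphism and $F_p$ discards exactly the ghost component with target $\Omega^n_{P/A}\otimes_{A,\phi^\alpha}A[\zeta_{p^\alpha}]\bigl[\localise{p}\bigr]$, leaving $\ker(F_p)\bigl[\localise{p}\bigr]$ equal to that component. Over the $p$-complete corner, under the equivalence $\bigl(\qIW_{p^\beta}\Omega_{P/A}\bigr)_p^\complete\simeq\bigl(\Omega_{P/A}\otimes_{A,\phi^\beta}A[q]/(q^{p^\beta}-1)\bigr)_p^\complete$ of \cite{qWitt}, the map $F_p$ becomes in cohomological degree~$n$ the map $p^n$ times the relative Frobenius $\phi_{p/A}$ (see the proof of \cref{prop:TwistedqDeRhamDeformsqDRW} and \cite[Corollary~\chref{4.38}]{qWitt}), so $\ker(F_p)_p^\complete=\ker(\phi_{p/A})$ by $p$-torsion freeness; and $\phi_{p/A}$ acts on the coefficient ring by the surjection $\widehat{A}_p[q]/(q^{p^\alpha}-1)\to\widehat{A}_p[q]/(q^{p^{\alpha-1}}-1)$, whose kernel is the principal ideal generated by $q^{p^{\alpha-1}}-1$ and hence isomorphic as a module to $\widehat{A}_p[q]/\Phi_{p^\alpha}(q)$, while on the $\Omega^n$-direction it is the divided Frobenius of the smooth de Rham complex, which is injective. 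Hence $\ker(\phi_{p/A})\simeq\bigl(\Omega^n_{P/A}\otimes_{A,\phi^\alpha}A[q]/\Phi_{p^\alpha}(q)\bigr)_p^\complete$, and the corner-wise identifications glue. (Alternatively, for polynomial $P$ one can avoid the fracture square and compute $\ker(F_p)$ directly from the coordinate descriptions of $\qIW_{p^\alpha}\Omega_{P/A}$ and its Frobenius in \cite{qWitt}.)

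The technical heart is the $p$-complete corner: one must pin down the relative Frobenius $\phi_{p/A}$ on the twisted de Rham complex precisely enough to compute its kernel (the point being its action $q\mapsto q$ on the group-algebra-type coefficient ring, together with injectivity of the divided Frobenius in positive cohomological degree, which uses smoothness of $P$), and — more delicately — one must check that this identification agrees with the rational one along the common corner $\bigl(\ker F_p\bigr)_p^\complete\bigl[\localise{p}\bigr]$, so that the corner-wise isomorphisms assemble into a single isomorphism over $A[q]/(q^{p^\alpha}-1)$ rather than merely matching after separately inverting $p$ and $p$-completing. Granting this compatibility, the kernel computation and the gluing are routine bookkeeping, and combining the two isomorphisms then yields the lemma.
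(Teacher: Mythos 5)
Your treatment of the second isomorphism matches the paper's exactly: $F_pV_p=p$ plus $p$-torsion-freeness gives injectivity of $\ker(F_p)\hookrightarrow\qIW_{p^\alpha}\Omega^n_{S/A}/V_p$, and the representative-chasing gives surjectivity.

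For the first isomorphism you and the paper diverge, and your corner-wise fracture-square approach has a cleaner organization available that also dissolves the compatibility worry you correctly flag as ``delicate.'' The paper does not assemble an abstract isomorphism from pieces; it fixes the single canonical map $\gh_1\colon\qIW_{p^\alpha}\Omega^n_{P/A}\to\Omega^n_{P/A}\otimes_{A,\phi^\alpha}A[\zeta_{p^\alpha}]$ and shows that $\gh_1$ carries $\ker F_p$ onto the submodule $(\zeta_p-1)\cdot(\Omega^n_{P/A}\otimes_{A,\phi^\alpha}A[\zeta_{p^\alpha}])$. Since one is then comparing two submodules of a fixed $p$-torsion-free module, it suffices to check equality after $p$-completion and after inverting $p$ separately — no gluing datum is needed, because the comparison map is global to begin with. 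If you phrase your argument as ``check that $\gh_1|_{\ker F_p}$ becomes the expected inclusion on each corner,'' the compatibility on the common corner is automatic. Your current formulation, which identifies $\ker F_p$ corner-by-corner with abstract copies of $\Omega^n\otimes A[\zeta_{p^\alpha}]$ via two a priori unrelated identifications (the rational ghost splitting and the $p$-adic $[\mathrm{qWitt}]$ equivalence), genuinely would need a gluing argument, and you do not supply one.

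Within that reorganization, three further gaps remain. First, for injectivity the paper performs an explicit Witt-identity computation using $\ker\gh_1=\operatorname{im}V_p+\operatorname{im}\d V_p$ from \cite[Lemma~\chref{4.5}]{qWitt} and the overconvergence statement \cite[\chref{4.3}({\chref[section*]{15}[$d_\alpha$]})]{qWitt}; your route — injectivity of $\gh_1[1/p]|_{\ker F_p[1/p]}$ plus $p$-torsion-freeness of $\ker F_p$ — is arguably simpler and correct, but you should say it explicitly rather than leaving it implicit. Second, your description of the $p$-complete corner has the $p^n$ on the wrong side: since $\overtilde F_p=p^nF_p$ in degree~$n$ and \cite[Corollary~\chref{4.38}]{qWitt} identifies $\overtilde F_p$, not $F_p$, with $\phi_{p/A}$, the Frobenius $F_p$ corresponds to the \emph{divided} Frobenius $p^{-n}\phi_{p/A}$; the conclusion $\ker F_p=\ker\phi_{p/A}$ survives by torsion-freeness, but the formula as written is incorrect. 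Third, and most substantively, your claim that $\phi_{p/A}$ factors as (injective divided Frobenius on $\Omega^n$) tensor (surjection $A[q]/(q^{p^\alpha}-1)\to A[q]/(q^{p^{\alpha-1}}-1)$ with kernel $A[\zeta_{p^\alpha}]$) is not established — the $A$-semilinear twists $\phi^\alpha$ and $\phi^{\alpha-1}$ couple the two tensor factors, and the paper avoids this issue entirely by instead using the direct summand $(\qHodge_{P/A,\square}^{*,0})_p^\complete$ from \cite[\chref{4.28}--\chref{4.30}]{qWitt} to read off $\gh_1(\ker F_p)$ explicitly. Finally, you implicitly assume that $p$-completion commutes with forming $\ker F_p$; the paper addresses this point separately (for the surjectivity half it reduces to $A=\IZ$ and invokes noetherianity), and without that step the replacement of $(\ker F_p)_p^\complete$ by $\ker((F_p)_p^\complete)$ is unjustified.
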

	\begin{proof}
		We prove the second isomorphism first. For injectivity, suppose $\omega\in\qIW_{p^\alpha}\Omega_{S/A}^n$ satisfies $F_p(\omega)=0$, but is also contained in the image of $V_p$, say, $\omega=V_p(\eta)$. Then $0=F_p(\omega)=p\eta$ implies $\eta=0$ by $p$-torsion freeness, hence $\omega=0$. For surjectivity, suppose $\omega\in\qIW_{p^\alpha}\Omega_{S/A}^n$ satisfies $F_p(\omega)=p\eta$ for some $\eta$. Then $\omega-V_p(\eta)$ is contained in the kernel of $F_p$. This proves the second isomorphism.
		
		To show the first isomorphism, consider the ghost map
		\begin{equation*}
			\gh_1\colon \qIW_{p^\alpha}\Omega_{S/A}^n\longrightarrow \Omega_{S/A}^n\otimes_{A,\phi^\alpha}A[\zeta_{p^\alpha}]\,.
		\end{equation*}
		We claim that $\gh_1$ maps the kernel of $F_p$ isomorphically onto $(\zeta_p-1)(\Omega_{S/A}^n\otimes_{A,\phi^\alpha}A[\zeta_{p^\alpha}])$, which would provide the desired isomorphism, as $(\zeta_p-1)$ is a non-zerodivisor. We only need to show this claim in the case where $S=P$ is a polynomial $A$-algebra; the general case will follow by the same base change arguments as in the proof of \cref{lem:NygaardFrobeniusTruncationSurjection} above.
		
		To show injectivity, recall from \cite[Lemma~\chref{4.5}]{qWitt} that $\gh_1$ is surjective with kernel $\im V_p+\im \d V_p$. Thus, suppose $\omega\in\qIW_{p^\alpha}\Omega_{P/A}^n$ is contained both the kernel of $F_p$ and of $\gh_1$, then we may write $\omega=V_p(\eta_0)+\d V_p(\eta_1)$. Using $F_p\circ\d\circ V_p=\d$, we get  $0=F_p(\omega)=p\eta_0+\d\eta_1$. In particular, $\d\eta_1\equiv 0\mod p$. By \cite[\chref{4.3}({\chref[section*]{15}[$d_\alpha$]})]{qWitt}, $\eta_1$ can be written as $\eta_1=F_p(\xi_0)+p\xi_1$, so that $\d\eta_1=pF_p(\d\xi_0)+p\d\xi_1$. Now $p\eta_0=-\d\eta_1$ and $p$-torsion freeness imply $\eta_0=-F_p(\d\xi_0)-\d\xi_1$. Thus
		\begin{equation*}
			\omega=V_p(\eta_0)+\d V_p(\eta_1)=-V_pF_p(\d\xi_0)-V_p(\d\xi_1)+\d V_pF_p(\xi_0)+p\d V_p(\xi_1)\,.
		\end{equation*}
		Using $V_p\circ F_p=\Phi_{p^\alpha}$ and $V_p\circ \d = p(\d\circ V_p)$, we conclude $\omega=0$. This proves injectivity.
		
		Let us now show that the image is precisely $(\zeta_p-1)(\Omega_{P/A}^n\otimes_{A,\phi^\alpha}A[\zeta_{p^\alpha}])$. By $p$-torsion freeness, it's enough to check this after $p$-completion and after inverting~$p$. Once we invert~$p$, the $q$-de Rham--Witt complexes $\qIW_{p^\alpha}\Omega_{P/A}^*$ and $\qIW_{p^{\alpha-1}}\Omega_{P/A}^*$ split into products of base-changed de Rham complexes by \cite[Corollary~\chref{3.34}]{qWitt} and the assertion is clear.
		
		So let us see what happens after $p$-completion. First observe that we can replace $(\ker F_p)_p^\complete$ by the kernel of $F_p\colon (\qIW_{p^{\alpha}}\Omega_{P/A}^*)_p^\complete\rightarrow(\qIW_{p^{\alpha-1}}\Omega_{P/A}^*)_p^\complete$. Indeed, to show that the image is contained in $(\zeta_p-1)(\Omega_{P/A}^n\otimes_{A,\phi^\alpha}A[\zeta_{p^\alpha}])_p^\complete$, this is certainly sufficient. To see that all of $(\zeta_p-1)(\Omega_{P/A}^n\otimes_{A,\phi^\alpha}A[\zeta_{p^\alpha}])_p^\complete$ is hit, we may use base change \cite[Lemma~\chref{3.16}]{qWitt} and reduce to the case where $A=\IZ$. In this case we're dealing with finitely generated modules over a noetherian ring \cite[Corollary~\chref{2.39} and Proposition~\chref{3.12}({\chref[Item]{15}[$a$]})]{qWitt}, so $p$-completion commutes with kernels.
		
		In any case, we can now use \cite[Theorem~\chref{4.27}]{qWitt} to identify the $q$-de Rham--Witt Frobenius $F_p\colon (\qIW_{p^\alpha}\Omega_{P/A}^*)_p^\complete\rightarrow (\qIW_{p^{\alpha-1}}\Omega_{P/A}^*)_p^\complete$ with
		\begin{equation*}
			\H^*\bigl((\qHodge_{P/A, \square}^*)_p^\complete/(q^{p^\alpha}-1)\bigr)\longrightarrow\H^*\bigl((\qHodge_{P/A, \square}^*)_p^\complete/(q^{p^{\alpha-1}}-1)\bigr)\,,
		\end{equation*}
		where the framing $\square$ can be any choice of coordinates of the polynomial ring $P$. Also note that we can ignore the $(q-1)$-completion in the cited theorem, because everything is $p$-completed but also $(q^{p^\alpha}-1)$-torsion. In \cite[\chref{4.28}--\chref{4.30}]{qWitt} we construct a direct summand
		%
		$(\qHodge_{P/A, \square}^{*,0})_p^\complete\subseteq (\qHodge_{P/A, \square}^*)_p^\complete$ that fits into a commutative diagram
		\begin{equation*}
			\begin{tikzcd}[cramped]
				&[-1em] \H^*\bigl((\qHodge_{P/A, \square}^{*,0})_p^\complete/(q^{p^\alpha}-1)\bigr)\rar\dar["\cong"]\dlar["\gh_1"'] &[-1em]\H^*\bigl((\qHodge_{P/A, \square}^{*,0})_p^\complete/(q^{p^{\alpha-1}}-1)\bigr)\dar["\cong"]\\
				\bigl(\Omega_{P/A}^*\otimes_{A,\phi^\alpha}A[\zeta_{p^\alpha}]\bigr)_p^\complete & \bigl(\Omega_{P/A}^*\otimes_{A,\phi^\alpha}A[q]/(q^{p^\alpha}-1)\bigr)_p^\complete\lar[->>]\rar[->>] & \bigl(\Omega_{P/A}^*\otimes_{A,\phi^\alpha}A[q]/(q^{p^{\alpha-1}}-1)\bigr)_p^\complete
			\end{tikzcd}
		\end{equation*}
		It is also checked there that the complementary direct summand is sent to $0$ under $\gh_1$. It follows that the image of $\ker F_p$ under $\gh_1$ is the image of $(q^{p^{\alpha-1}}-1)(\Omega_{P/A}^*\otimes_{A,\phi^\alpha}A[q]/(q^{p^\alpha}-1))_p^\complete$ in $(\Omega_{P/A}^*\otimes_{A,\phi^\alpha}A[\zeta_{p^\alpha}])_p^\complete$, which is indeed exactly $(\zeta_p-1)(\Omega_{P/A}^n\otimes_{A,\phi^\alpha}A[\zeta_{p^\alpha}])_p^\complete$ in degree~$n$. This finishes the proof.
	\end{proof}
	\begin{cor}\label{cor:NygaardCofibreSequence}
		Let $R$ be an animated $A$-algebra and let $\qIW_{p^\alpha}\deRham_{-/A}$ denote the \embrace{$p$-completed} animations of the $q$-de Rham--Witt complex functors. For all $n\geqslant 0$ and all $\alpha\geqslant 0$, there exists a functorial divided Frobenius
		\begin{equation*}
			p^{-n}\overtilde{F}_p\colon  \gr_\Nn^n\qIW_{p^\alpha}\deRham_{R/A}\longrightarrow  \fil_n^\mathrm{conj}\bigl(\deRham_{R/A}/p\bigr)\lotimes_{A,\phi^{\alpha-1}}A[q]/\bigl(q^{p^{{\alpha-1}}}-1\bigr)\,.
		\end{equation*}
		with fibre given by $\fib(p^{-n}\overtilde{F}_p)\simeq \Sigma^{-n}\deRham_{R/A}^n\lotimes_{A,\phi^\alpha}A[\zeta_{p^\alpha}]$. Here $ \fil_\star ^\mathrm{conj}(\deRham_{R/A}/p)$ denotes the conjugate filtration on the derived de Rham complex, i.e.\ the animation of $\tau^{\leqslant \star}(\Omega_{-/A}/p)$.
	\end{cor}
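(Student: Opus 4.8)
The plan is to deduce the statement from \cref{lem:NygaardFrobeniusTruncationSurjection,lem:NygaardFrobeniusKernel} by animation, after re-expressing the target of the divided Frobenius in terms of the conjugate filtration on derived de Rham cohomology; I work $p$-completely throughout. First I would treat a polynomial $A$-algebra $P$. By \cref{lem:NygaardFrobeniusTruncationSurjection} the divided Frobenius $p^{-n}\overtilde{F}_p\colon \gr_\Nn^n\qIW_{p^\alpha}\Omega_{P/A}^*\to \tau^{\leqslant n}(\qIW_{p^{\alpha-1}}\Omega_{P/A}^*/p)$ is an isomorphism of complexes in every cohomological degree $\neq n$, surjective in degree $n$, and both sides vanish in degrees $>n$; in particular it is degreewise surjective. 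Hence $0\to\ker(p^{-n}\overtilde{F}_p)\to\gr_\Nn^n\qIW_{p^\alpha}\Omega_{P/A}^*\to\tau^{\leqslant n}(\qIW_{p^{\alpha-1}}\Omega_{P/A}^*/p)\to 0$ is a short exact sequence of complexes, so the termwise kernel computes the fibre. This kernel is concentrated in degree $n$, where \cref{lem:NygaardFrobeniusTruncationSurjection} identifies it with $\ker(F_p\colon \qIW_{p^\alpha}\Omega_{P/A}^n/V_p\to\qIW_{p^{\alpha-1}}\Omega_{P/A}^n/p)$, and \cref{lem:NygaardFrobeniusKernel} identifies that in turn with $\Omega_{P/A}^n\otimes_{A,\phi^\alpha}A[\zeta_{p^\alpha}]$. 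So $\fib(p^{-n}\overtilde{F}_p)\simeq\Sigma^{-n}(\Omega_{P/A}^n\otimes_{A,\phi^\alpha}A[\zeta_{p^\alpha}])$, functorially in $P$.

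Second I would re-express the target. The key input is a functorial isomorphism of complexes
\begin{equation*}
\qIW_{p^{\alpha-1}}\Omega_{P/A}^*/p\;\simeq\;\bigl(\Omega_{P/A}^*/p\bigr)\lotimes_{A,\phi^{\alpha-1}}A[q]/\bigl(q^{p^{\alpha-1}}-1\bigr)\qquad(P\in\cat{Poly}_A)\,,
\end{equation*}
which follows from the explicit mod-$p$ description of $q$-de Rham--Witt complexes in \cite{qWitt} (for instance by reducing the arithmetic fracture square of \cref{prop:TwistedqDeRhamDeformsqDRW} modulo $p$ and invoking \cite[Corollary~\chref{2.51}]{qWitt} together with degreewise $p$-torsion freeness). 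Since $A[q]/(q^{p^{\alpha-1}}-1)$ is free over $A$, base change along $\psi^{p^{\alpha-1}}=\phi^{\alpha-1}$ is exact, hence commutes with $\tau^{\leqslant n}$, and the target of $p^{-n}\overtilde{F}_p$ becomes $\tau^{\leqslant n}(\Omega_{P/A}^*/p)\lotimes_{A,\phi^{\alpha-1}}A[q]/(q^{p^{\alpha-1}}-1)$.

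Third I would animate. The source $P\mapsto\gr_\Nn^n\qIW_{p^\alpha}\Omega_{P/A}^*$, the target $P\mapsto\tau^{\leqslant n}(\Omega_{P/A}^*/p)\lotimes_{A,\phi^{\alpha-1}}A[q]/(q^{p^{\alpha-1}}-1)$, the fibre $P\mapsto\Sigma^{-n}(\Omega_{P/A}^n/p\lotimes_{A,\phi^\alpha}A[\zeta_{p^\alpha}])$, and the fibre sequence relating them are all natural in $P\in\cat{Poly}_A$. Passing to sifted-colimit extensions: the source becomes $\gr_\Nn^n\qIW_{p^\alpha}\deRham_{R/A}$ since animation commutes with passing to associated gradeds; the target becomes $\fil_n^\mathrm{conj}(\deRham_{R/A}/p)\lotimes_{A,\phi^{\alpha-1}}A[q]/(q^{p^{\alpha-1}}-1)$ since $\fil_n^\mathrm{conj}(\deRham_{-/A}/p)$ is by definition the animation of $\tau^{\leqslant n}(\Omega_{-/A}^*/p)$ and base change commutes with colimits; and the fibre becomes $\Sigma^{-n}\deRham_{R/A}^n\lotimes_{A,\phi^\alpha}A[\zeta_{p^\alpha}]$ since $\deRham_{R/A}^n$ is the animation of $\Omega_{-/A}^n$. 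As sifted colimits preserve fibre sequences, this gives the asserted divided Frobenius together with the claimed fibre.

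I expect the main obstacle to be the second step: obtaining a clean functorial identification of $\qIW_{p^{\alpha-1}}\Omega_{P/A}^*/p$ (or at least of its truncation $\tau^{\leqslant n}$) with the $\phi^{\alpha-1}$-twisted, $A[q]/(q^{p^{\alpha-1}}-1)$-linearised de Rham complex modulo $p$. Everything else is bookkeeping around \cref{lem:NygaardFrobeniusTruncationSurjection,lem:NygaardFrobeniusKernel} and the formal compatibility of animation with finite colimits, associated gradeds, and flat base change.
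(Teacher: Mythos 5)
Your proof is correct and follows essentially the same approach as the paper: extract a degreewise short exact sequence of complexes from \cref{lem:NygaardFrobeniusTruncationSurjection,lem:NygaardFrobeniusKernel}, rewrite $\tau^{\leqslant n}(\qIW_{p^{\alpha-1}}\Omega_{-/A}/p)$ via the mod-$p$ description of $q$-de Rham--Witt complexes, and animate. The "main obstacle" you flag in your second step is not actually open: the paper simply cites \cite[Proposition~\chref{4.2}]{qWitt} for the identification $\qIW_{p^{\alpha-1}}\Omega_{S/A}/p\simeq \Omega_{S/A}/p\lotimes_{A,\phi^{\alpha-1}}A[q]/(q^{p^{\alpha-1}}-1)$; your proposed re-derivation via the fracture square is unnecessary (and would need to be made precise), so you should replace that speculation with the reference.
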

	\begin{proof}
		For $S$ smooth over $A$, \cref{lem:NygaardFrobeniusTruncationSurjection,lem:NygaardFrobeniusKernel} provide a short exact sequence of complexes
		\begin{equation*}
			0\longrightarrow \Omega_{S/A}^n[-n]\otimes_{A,\phi^\alpha}A[\zeta_{p^\alpha}]\longrightarrow \gr_\Nn^n\qIW_{p^\alpha}\Omega_{S/A}\xrightarrow{p^{-n}\overtilde{F}_p}\tau^{\leqslant n}\left(\qIW_{p^{\alpha-1}}\Omega_{S/A}/p\right)\longrightarrow 0\,.
		\end{equation*}
		Using $\qIW_{p^{\alpha-1}}\Omega_{S/A}/p\simeq \Omega_{S/A}/p\lotimes_{A,\phi^{\alpha-1}}A[q]/(q^{p^{\alpha-1}}-1)$ by \cite[Proposition~\chref{4.2}]{qWitt}, this provides the desired cofibre sequence. The case of general $R$ follows by passing to animations.
	\end{proof}
	\begin{cor}\label{cor:NygaardQuasisyntomicDescent}
		For all animated $A$-algebras $R$ and all $\alpha\geqslant 0$, let us denote the animated Nygaard filtration by $ \fil_\Nn^n\qIW_{p^\alpha}\deRham_{R/A}$. Then:
		\begin{alphanumerate}
			\item $ \fil_\Nn^n(\qIW_{p^\alpha}\deRham_{R/A})_p^\complete$ satisfies quasi-syntomic descent in $R$.\label{enum:NygaardQuasisyntomicDescent}
			\item If $R$ is smooth over $A$, then $ \fil_\Nn^n(\qIW_{p^\alpha}\deRham_{R/A})_p^\complete$ agrees with its un-animated variant $\fil_\Nn^n(\qIW_{p^\alpha}\Omega_{R/A})_p^\complete$.\label{enum:NygaardSmooth}
		\end{alphanumerate}
	\end{cor}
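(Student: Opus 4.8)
The plan is to reduce both statements to the associated graded of the Nygaard filtration, where \cref{cor:NygaardCofibreSequence} does all the work. For part~\cref{enum:NygaardQuasisyntomicDescent}, I would first note that $\fil_\Nn^0\qIW_{p^\alpha}\deRham_{R/A}\simeq \qIW_{p^\alpha}\deRham_{R/A}$ and that the cofibre of $\fil_\Nn^n\qIW_{p^\alpha}\deRham_{R/A}\rightarrow \qIW_{p^\alpha}\deRham_{R/A}$ is a finite iterated extension of $\gr_\Nn^0,\dotsc,\gr_\Nn^{n-1}$. Since quasi-syntomic sheaves are closed under (co)fibres and extensions, it then suffices to show that $(\qIW_{p^\alpha}\deRham_{-/A})_p^\complete$ and each $\gr_\Nn^n(\qIW_{p^\alpha}\deRham_{-/A})_p^\complete$ satisfy quasi-syntomic descent. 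For the former I would use \cref{prop:TwistedqDeRhamDeformsqDRW} and \cref{lem:TwistedqDeRhamFractureSquare}: after $p$-completion $\qIW_{p^\alpha}\deRham_{R/A}$ is the reduction modulo $(q^{p^\alpha}-1)$ of the fixed flat base change $(\qdeRham_{R/A}\lotimes_{A[q],\phi^\alpha}A[q])_p^\complete$, and $p$-completed $q$-de Rham cohomology is a form of prismatic cohomology, hence a quasi-syntomic sheaf in $R$; this property survives the flat base change along $\phi^\alpha$, the $p$-completion, and the quotient by $(q^{p^\alpha}-1)$. For the graded pieces I would feed in \cref{cor:NygaardCofibreSequence}, which presents $\gr_\Nn^n(\qIW_{p^\alpha}\deRham_{R/A})_p^\complete$ as an extension of $\Sigma^{-n}\deRham_{R/A}^n\lotimes_{A,\phi^\alpha}A[\zeta_{p^\alpha}]$ by $\fil_n^\mathrm{conj}(\deRham_{R/A}/p)\lotimes_{A,\phi^{\alpha-1}}A[q]/(q^{p^{\alpha-1}}-1)$; the cotangent complex and its exterior powers, the derived de Rham complex, and every step of its conjugate filtration all satisfy flat (hence quasi-syntomic) descent, and this is preserved by base change along the $\phi^\bullet$, so each $\gr_\Nn^n$ is a quasi-syntomic sheaf and part~\cref{enum:NygaardQuasisyntomicDescent} is complete.

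For part~\cref{enum:NygaardSmooth} I would compare the animated Nygaard filtration with the un-animated one of \cref{def:NygaardFiltration} through their associated gradeds. The un-animated filtration is functorial in the smooth $A$-algebra, so (resolving a smooth $R$ by polynomial algebras) there is a canonical comparison map $\fil_\Nn^\star\qIW_{p^\alpha}\deRham_{R/A}\rightarrow \fil_\Nn^\star\qIW_{p^\alpha}\Omega_{R/A}^*$ of filtered objects out of the left Kan extension, and by the same inductive argument as above it is an equivalence as soon as it is one on $\fil_\Nn^0$ and on every $\gr_\Nn^n$. On $\fil_\Nn^0$ it is the equivalence $\qIW_{p^\alpha}\deRham_{R/A}\simeq \qIW_{p^\alpha}\Omega_{R/A}^*$, which for smooth $R$ follows from \cref{cor:qDRWSmoothAnimation} applied to the terms of the stupid filtration on $\qIW_{p^\alpha}\Omega_{R/A}^*$. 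For $n\geqslant 1$, the animated side sits in the cofibre sequence of \cref{cor:NygaardCofibreSequence}, and the un-animated side in the short exact sequence of complexes
\begin{equation*}
0\longrightarrow \Omega_{R/A}^n[-n]\otimes_{A,\phi^\alpha}A[\zeta_{p^\alpha}]\longrightarrow \gr_\Nn^n\qIW_{p^\alpha}\Omega_{R/A}^*\xrightarrow{p^{-n}\overtilde{F}_p}\tau^{\leqslant n}\bigl(\qIW_{p^{\alpha-1}}\Omega_{R/A}^*/p\bigr)\longrightarrow 0
\end{equation*}
assembled from \cref{lem:NygaardFrobeniusTruncationSurjection,lem:NygaardFrobeniusKernel}; I would check that the comparison map is compatible with both and is an equivalence on the outer terms. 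On the sub-objects this holds because $\deRham_{R/A}^n\simeq \Omega_{R/A}^n$ for smooth $R$; on the quotients because for smooth $R$ the derived de Rham complex of $R/p$ over $A/p$ is computed by the classical de Rham complex, so $\fil_n^\mathrm{conj}(\deRham_{R/A}/p)\simeq \tau^{\leqslant n}(\Omega_{R/A}^*/p)$ by the Cartier isomorphism, and this agrees with $\tau^{\leqslant n}(\qIW_{p^{\alpha-1}}\Omega_{R/A}^*/p)$ once one invokes $\qIW_{p^{\alpha-1}}\Omega_{R/A}^*/p\simeq \Omega_{R/A}^*/p\lotimes_{A,\phi^{\alpha-1}}A[q]/(q^{p^{\alpha-1}}-1)$ from \cite[Proposition~\chref{4.2}]{qWitt} and the flatness of $A[q]/(q^{p^{\alpha-1}}-1)$ over $A$.

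The formal reductions in both parts are routine bookkeeping; the substance lies entirely in \cref{cor:NygaardCofibreSequence}, i.e.\ in the fact that the associated graded of the \emph{animated} Nygaard filtration is still governed by the conjugate/Cartier picture, and --- for part~\cref{enum:NygaardSmooth} --- in matching that cofibre sequence precisely with the un-animated one. This is where the smoothness hypothesis and the degree-wise $p$-torsion-freeness of the $q$-de Rham--Witt complexes \cite[Proposition~\chref{4.1}]{qWitt} enter essentially. For part~\cref{enum:NygaardQuasisyntomicDescent} the only non-formal ingredient beyond this is flat descent for the derived de Rham complex and the steps of its conjugate filtration, which is standard.
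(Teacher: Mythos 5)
Your proposal is correct and follows essentially the same strategy as the paper: reduce to the associated graded pieces of the Nygaard filtration, dispose of $\fil_\Nn^0$ directly, and feed the rest into \cref{cor:NygaardCofibreSequence}. The paper compresses all of this into three sentences, so your version is just a more explicit writing-out of the same argument. One small divergence: for the $\fil_\Nn^0$ step you route through the twisted $q$-de Rham complex $\qdeRham_{R/A}^{(p^\alpha)}$, \cref{lem:TwistedqDeRhamFractureSquare}, and \cref{prop:TwistedqDeRhamDeformsqDRW} together with the comparison to prismatic cohomology, whereas the paper simply invokes the identification $(\qIW_{p^\alpha}\deRham_{R/A})_p^\complete\simeq (\deRham_{R/A}\lotimes_{A,\phi^\alpha}A[q]/(q^{p^\alpha}-1))_p^\complete$ from \cite[Proposition~\chref{4.2}]{qWitt} and the fact that the $p$-completed derived de Rham complex satisfies quasi-syntomic descent and its values on smooth algebras don't change under animation. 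Both routes are valid; the paper's is shorter because it avoids the machinery of $q$-de Rham complexes entirely at this step, using only ordinary derived de Rham theory.
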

	\begin{proof}
		It's clear that $(\qIW_{p^\alpha}\deRham_{R/A})_p^\complete\simeq (\deRham_{R/A}\lotimes_{A,\phi^\alpha}A[q]/(q^{p^\alpha}-1))_p^\complete$ satisfies quasi-syntomic descent and agrees with its un-animated variant when $R$ is smooth over $A$. To prove~\cref{enum:NygaardQuasisyntomicDescent} and~\cref{enum:NygaardSmooth}, it will thus be enough to show that $\gr_\Nn^n(\qIW_{p^\alpha}\deRham_{R/A})_p^\complete$ satisfies quasi-syntomic descent for all $n\geqslant 0$ and agrees with $\gr_\Nn^n(\qIW_{p^\alpha}\Omega_{R/A})_p^\complete$ when $R$ is smooth. Both assertions follow from \cref{cor:NygaardCofibreSequence}.
	\end{proof}
	Next we construct an analog of the fibre sequence from \cref{cor:NygaardCofibreSequence} for the other Nygaard filtration $ \fil_\Nn^\star \bigl(\qOmega_{S/A}^{(p^\alpha)}\bigr)_p^\complete/(q^{p^\alpha}-1)$. After that we'll prove \cref{prop:NygaardComparison} by carefully comparing these fibre sequences.
	\begin{lem}\label{lem:NygaardFibreSequence}
		Let $R$ be an animated $A$-algebra. For brevity, let us write
		\begin{equation*}
			\fil_{\Nn,  \qOmega}^\star \coloneqq  \fil_\Nn^\star \bigl(\qdeRham_{R/A}^{(p^\alpha)}\bigr)_p^\complete/(q^{p^\alpha}-1)
		\end{equation*}
		and let $\gr_{\Nn, \qOmega}^*$ denote the associated graded of this filtered object. Let also $\phi_{/A}$ denote the relative Frobenius on $(\deRham_{R/A})_p^\complete$. Then for all $n\geqslant 0$ there are canonical maps
		\begin{equation*}
			p^{-n}\phi_{/A}\colon \gr_{\Nn, \qOmega}^n\longrightarrow  \fil_n^\mathrm{conj}\bigl(\deRham_{R/A}/p\bigr)\lotimes_{A,\phi^{\alpha-1}}A[q]/\bigl(q^{p^{\alpha-1}}-1\bigr)
		\end{equation*}
		with fibre $\fib(p^{-n}\phi_{/A})\simeq \Sigma^{-n}(\deRham_{R/A}^n\lotimes_{A,\phi^\alpha}A[\zeta_{p^\alpha}])_p^\complete$.
	\end{lem}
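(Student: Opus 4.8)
The plan is to reduce to the case where $R = S$ is a polynomial $A$-algebra, to compute $\gr_{\Nn, \qOmega}^n$ from the prismatic definition of the Nygaard filtration in \cref{par:NygaardOnqDeRham}, and then to extract the fibre sequence by reducing modulo $(q^{p^\alpha}-1)$. For the reduction step: each of the three functors appearing in the claimed fibre sequence is obtained by animating its restriction to $\cat{Poly}_A$ --- this holds for $\qdeRham_{-/A}^{(p^\alpha)}$ together with its Nygaard filtration by construction (compare the parallel \cref{cor:NygaardQuasisyntomicDescent}) and for $\deRham_{-/A}$ with its conjugate filtration by definition. Since left Kan extension along $\cat{Poly}_A \hookrightarrow \cat{AniAlg}_A$ preserves colimits, hence cofibre sequences, hence --- the target being stable --- fibre sequences, it is enough to construct the map $p^{-n}\phi_{/A}$ together with the identification of its fibre functorially for $S$ polynomial over $A$.

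For such $S$, I would unwind \cref{par:NygaardOnqDeRham}. There $\fil_\Nn^\star(\qOmega_{S/A}^{(p^\alpha)})_p^\complete$ is a filtered $\IE_\infty$-algebra over the $\Phi_{p^\alpha}(q)$-adically filtered ring $\Phi_{p^\alpha}(q)^\star A[q]$, obtained as the base change along $\phi^{\alpha-1}$ of the preimage, under the relative Frobenius $\phi_{/A[q]}\colon (\qOmega_{S/A}^{(p)})_p^\complete \xrightarrow{\,\simeq\,} \L\eta_{\Phi_p(q)}(\qOmega_{S/A})_p^\complete$, of the filtered décalage filtration. The standard dictionary relating the filtered $\L\eta$ to the conjugate filtration on the reduction modulo $\Phi_p(q)$ (see \cite[\S{\chref[section]{6}}]{BMS1}, \cite[Proposition~\chref{5.8}]{BMS2}, \cite[\S{\chref[section]{15}}]{Prismatic}) then yields --- after that base change and $p$-completion --- a functorial description of $\gr_\Nn^n(\qOmega_{S/A}^{(p^\alpha)})_p^\complete$ together with its divided Frobenius $\phi_{/A[q]}/\Phi_{p^\alpha}(q)^n$: up to an (invertible, hence harmless) Breuil--Kisin twist it is the $n$-th stage $\fil_n^{\mathrm{conj}}\,\overline{M}$ of the conjugate filtration on a Hodge--Tate-type cohomology $\overline M$, with the degree-one transition maps $\gr_\Nn^{n-1} \to \gr_\Nn^n$ corresponding to the conjugate inclusions $\fil_{n-1}^{\mathrm{conj}}\,\overline M \hookrightarrow \fil_n^{\mathrm{conj}}\,\overline M$. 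Moreover $\overline M$ is torsion free for the non-zerodivisor $\zeta_p - 1$, one has $\overline M/(\zeta_p-1) \simeq (\deRham_{S/A}/p) \lotimes_{A,\phi^{\alpha-1}} A[q]/(q^{p^{\alpha-1}}-1)$ compatibly with conjugate filtrations (using $(\qOmega_{S/A})_p^\complete/(q-1) \simeq (\deRham_{S/A})_p^\complete$ and $\Phi_p(q) \equiv p \bmod (q-1)$), and $\gr_n^{\mathrm{conj}}\,\overline M \simeq \Sigma^{-n}\deRham_{S/A}^n \lotimes_{A,\phi^\alpha} \widehat{A}_p[\zeta_{p^\alpha}]$ with the Frobenius twists as indicated (for $A = \IZ$ all Adams twists are trivial).

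Finally I would reduce modulo $(q^{p^\alpha}-1)$. Factoring $q^{p^\alpha}-1 = \Phi_{p^\alpha}(q)\cdot(q^{p^{\alpha-1}}-1)$, where $\Phi_{p^\alpha}(q)$ generates the filtration-degree-one part of $\Phi_{p^\alpha}(q)^\star A[q]$ while $q^{p^{\alpha-1}}-1$ maps to the non-zerodivisor $\zeta_p - 1$ in $A[q]/\Phi_{p^\alpha}(q) \cong A[\zeta_{p^\alpha}]$, Convention~\cref{conv:QuotientConvention} identifies $\gr_{\Nn, \qOmega}^n$ with the cofibre of $(\zeta_p - 1)$ times the transition map above, i.e.\ with $\fil_n^{\mathrm{conj}}\,\overline M/(\zeta_p-1)\fil_{n-1}^{\mathrm{conj}}\,\overline M$. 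Projecting this further onto $\fil_n^{\mathrm{conj}}(\overline M/(\zeta_p-1))$ produces the map $p^{-n}\phi_{/A}$ --- the division by $p^n$ coming from $\Phi_{p^\alpha}(q) \equiv p \bmod (q-1)$ --- whose target is exactly $\fil_n^{\mathrm{conj}}(\deRham_{S/A}/p) \lotimes_{A,\phi^{\alpha-1}} A[q]/(q^{p^{\alpha-1}}-1)$ by the previous step, and whose fibre is $(\zeta_p-1)\fil_n^{\mathrm{conj}}\,\overline M/(\zeta_p-1)\fil_{n-1}^{\mathrm{conj}}\,\overline M \simeq \gr_n^{\mathrm{conj}}\,\overline M \simeq \Sigma^{-n}\deRham_{S/A}^n \lotimes_{A,\phi^\alpha}\widehat{A}_p[\zeta_{p^\alpha}]$ (the first equivalence because $\zeta_p-1$ is a non-zerodivisor on $\overline M$). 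Animating back yields the lemma for general $R$.

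The main obstacle is the bookkeeping: assembling at once the Breuil--Kisin twists, the Adams/Frobenius twists ($\phi^\alpha$ versus $\phi^{\alpha-1}$ versus none), and the roots of unity $\zeta_p$ versus $\zeta_{p^\alpha}$, and making the décalage-versus-conjugate identification of the second step precise --- and functorial in $S$ --- enough to survive the left Kan extension, as well as pinning down that the surviving divided Frobenius is literally $p^{-n}\phi_{/A}$. None of this is conceptually difficult, but it is where the work lies.
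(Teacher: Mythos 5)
Your proposal is correct and takes the same route as the paper: invoke \cite[Theorem~\chref{15.2}(2)]{Prismatic} (plus quasi-syntomic descent and animation) to identify the divided Frobenius $\Phi_{p^\alpha}(q)^{-n}\phi_{/A[q]}$ as an equivalence from $\gr_\Nn^n\bigl(\qdeRham_{R/A}^{(p^\alpha)}\bigr)_p^\complete$ onto $\fil_n^\mathrm{conj}\bigl(\qdeRham_{R/A}^{(p^{\alpha-1})}/\Phi_{p^\alpha}(q)\bigr)$, write $\gr_{\Nn,\qOmega}^n$ as the cofibre of $(q^{p^{\alpha-1}}-1)\colon\fil_{n-1}^\mathrm{conj}\to\fil_n^\mathrm{conj}$, and read off the fibre via the Hodge--Tate comparison and the base change \cite[Theorem~\chref{15.2}(3)]{Prismatic}. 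The only cosmetic differences are that you extract the fibre through $(\zeta_p-1)$-torsion-freeness where the paper implicitly runs a cofibre-sequence (two-out-of-three) manipulation that needs no flatness hypothesis, and that the Breuil--Kisin twist you flag is already canonically trivialised by the chosen generator $\Phi_{p^\alpha}(q)$ of the prism ideal, which is why it never appears in the paper's formulas.
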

	\begin{proof}
		By definition of the Nygaard filtration, the Frobenius on $q$-de Rham cohomology is divisible by $\Phi_{p^\alpha}(q)^n$ on $ \fil_\Nn^n\bigl(\qdeRham_{R/A}^{(p^\alpha)}\bigr)_p^\complete$. Therefore, for all $n\geqslant 0$ there's a commutative diagram
		\begin{equation*}
			\begin{tikzcd}[column sep=huge]
				\gr_\Nn^{n-1}\bigl(\qdeRham_{R/A}^{(p^\alpha)}\bigr)_p^\complete\rar["(q^{p^\alpha}-1)"]\dar["\Phi_{p^\alpha}(q)^{-(n-1)}\phi_{/A[q]}"',"\simeq"] & \gr_\Nn^n\bigl(\qdeRham_{R/A}^{(p^\alpha)}\bigr)_p^\complete\dar["\Phi_{p^\alpha}(q)^{-n}\phi_{/A[q]}","\simeq"']\\
				\fil_{n-1}^\mathrm{conj}\bigl(\qdeRham_{R/A}^{(p^{\alpha-1})}/\Phi_{p^\alpha}(q)\bigr)\rar["(q^{p^{\alpha-1}}-1)"] &  \fil_n^\mathrm{conj}\bigl(\qdeRham_{R/A}^{(p^{\alpha-1})}/\Phi_{p^\alpha}(q)\bigr)
			\end{tikzcd}
		\end{equation*}
		The vertical arrows are equivalences by \cite[Theorem~\chref{15.2}(2)]{Prismatic} (plus quasi-syntomic descent and passing to animations to allow for arbitrary animated $A$-algebras $R$).
		
		Now $\gr_{\Nn, \qOmega}^n$ is the cofibre of the top horizontal arrow and thus also the cofibre of the bottom horizontal arrow; we wish to compute the latter. To this end, note that
		\begin{equation*}
			\fil_n^\mathrm{conj}\bigl(\qdeRham_{R/A}^{(p^{\alpha-1})}/\Phi_{p^\alpha}(q)\bigr)/\bigl(q^{p^\alpha-1}-1\bigr)\simeq  \fil_n^\mathrm{conj}\bigl(\deRham_{R/A}/p\bigr)\lotimes_{A,\phi^{\alpha-1}}A[q]/\bigl(q^{p^{\alpha-1}}-1\bigr)\,.
		\end{equation*}
		Indeed, without the Frobenius-twists, $ \fil_n^\mathrm{conj}(\qdeRham_{R/A}/\Phi_p(q))\lotimes_{A\qpower}A\simeq  \fil_n^\mathrm{conj}(\deRham_{R/A}/p)$ follows from the base change result in \cite[Theorem~\chref{15.2}(3)]{Prismatic} plus quasi-syntomic descent, using that $-\lotimes_{A\qpower}A$ commutes with all limits. To incorporate the Frobenius twists, just take the base change along $\phi^{\alpha-1}$.
		
		As a consequence, we obtain the desired canonical map
		\begin{equation*}
			p^{-n}\phi_{/A}\colon \gr_{\Nn, \qOmega}^n\longrightarrow  \fil_n^\mathrm{conj}\bigl(\deRham_{R/A}/p\bigr)\lotimes_{A,\phi^{\alpha-1}}A[q]/\bigl(q^{p^{\alpha-1}}-1\bigr)\,.
		\end{equation*}
		By the diagram above and the Hodge--Tate comparison for prismatic cohomology (see \cite[Construction~\chref{7.6}]{Prismatic}) the fibre is indeed $\gr_n^\mathrm{conj}(\qdeRham_{R/A}^{(p^{\alpha-1})}/\Phi_{p^\alpha}(q))\simeq \Sigma^{-n}(\deRham_{R/A}^n\lotimes_{A,\phi^\alpha}A[\zeta_{p^\alpha}])_p^\complete$, as desired. 
	\end{proof}
	\begin{rem}\label{rem:NygaardFibreSequence}
		By contemplating the bottom row of the diagram in the proof above, we find that $\fib(p^{-n}\phi_{/A})\rightarrow \gr_{\Nn, \qOmega}^n$ sits inside the following diagram for all $n\geqslant 0$:
		\begin{equation*}
			\begin{tikzcd}
				&  \fil_\Nn^n\bigl(\qdeRham_{R/A}^{(p^\alpha)}\bigr)_p^\complete\dar\drar["\Phi_{p^\alpha}(q)^{-n}\phi_{/A[q]}"] &[-2em] \\
				\gr_n^\mathrm{conj}\bigl(\qdeRham_{R/A}^{(p^{\alpha-1})}/\Phi_{p^\alpha}(q)\bigr)\rar\drar["(q^{p^{\alpha-1}}-1)"'] & \gr_{\Nn, \qOmega}^n\dar &  \fil_n^\mathrm{conj}\bigl(\qdeRham_{R/A}^{(p^{\alpha-1})}/\Phi_{p^\alpha}(q)\bigr)\dlar\\
				& \gr_n^\mathrm{conj}\bigl(\qdeRham_{R/A}^{(p^{\alpha-1})}/\Phi_{p^\alpha}(q)\bigr) & 
			\end{tikzcd}
		\end{equation*}
	\end{rem}
	
	\begin{proof}[Proof of \cref{prop:NygaardComparison}]
		Thanks to \cref{cor:NygaardQuasisyntomicDescent}, we can tackle the question using quasi-syntomic descent. Let $R$ be a $p$-complete quasi-syntomic $A$-algebra which is \emph{large} in the sense of \cite[Definition~\chref{15.1}]{Prismatic}, i.e.\ there exists a surjection $\widehat{A}_p\langle x_i^{1/p^\infty}\ |\ i\in I\rangle \twoheadrightarrow R$ for some set $I$. Let $ \fil_{\Nn,  \qOmega}^\star $ and $ \fil_{\Nn, \qIW}^\star $ denote the two filtrations on $(\deRham_{R/A}\lotimes_{A,\phi^\alpha}A[q]/(q^{p^\alpha}-1))_p^\complete$ given by
		\begin{equation*}
			\fil_{\Nn,  \qOmega}^\star \coloneqq  \fil_\Nn^\star \bigl(\qdeRham_{R/A}^{(p^\alpha)}\bigr)_p^\complete/(q^{p^\alpha}-1)\quad\text{and}\quad  \fil_{\Nn, \qIW}^\star \coloneqq  \fil_\Nn^\star \left(\qIW_{p^\alpha}\deRham_{R/A}\right)_p^\complete\,.
		\end{equation*}
		Our assumptions on $R$ ensure that $(\deRham_{R/A}\lotimes_{A,\phi^\alpha}A[q]/(q^{p^\alpha}-1))_p^\complete$ is static and that $ \fil_{\Nn,  \qOmega}^\star $ is a descending filtrations by ordinary ideals. So once we've shown $ \fil_{\Nn,  \qOmega}^\star = \fil_{\Nn, \qIW}^\star $ as ideals, the comparison will automatically be functorial in $R$ (of the given form) and an equivalence of filtered $\IE_\infty$-$A[q]$-algebras. Moreover, uniqueness will also be clear. Via quasi-syntomic descent we can then recover the smooth case.
		
		To prove the proposition for $R$, we show using induction on~$n$ that $ \fil_{\Nn,  \qOmega}^n= \fil_{\Nn, \qIW}^n$ as ideals in the ring $(\deRham_{R/A}\lotimes_{A,\phi^\alpha}A[q]/(q^{p^\alpha}-1)))_p^\complete$. The case $n=0$ is clear. So assume we know $ \fil_{\Nn,  \qOmega}^n= \fil_{\Nn, \qIW}^n\eqqcolon  \fil_\Nn^n$ for some $n\geqslant 0$. Let
		\begin{equation*}
			K\coloneqq \fib\left(p^{-n}\phi_{/A}\colon  \fil_\Nn^n\rightarrow  \fil_n^\mathrm{conj}\bigl(\deRham_{R/A}/p\bigr)\lotimes_{A,\phi^{\alpha-1}}A[q]/\bigl(q^{p^{\alpha-1}}-1\bigr)\right)\,.
		\end{equation*}
		Via $ \fil_\Nn^n= \fil_{\Nn,  \qOmega}^n$ we know that $p^{-n}\phi_{/A}$ is surjective and so $K$ is static. According to \cref{cor:NygaardCofibreSequence} we have an equivalence
		\begin{equation*}
			\cofib\bigl( \fil_{\Nn, \qIW}^{n+1}\rightarrow K\bigr)\simeq \Sigma^{-n}\bigl(\deRham_{R/A}^n\lotimes_{A,\phi^\alpha}A[\zeta_{p^\alpha}]\bigr)_p^\complete\,.
		\end{equation*}
		Moreover, this equivalence can be explicitly described as follows: Consider the ghost map $\gh_1$ for $\qIW_{p^\alpha}\deRham_{R/A}$, which by \cite[Proposition~\chref{4.2}]{qWitt} just corresponds to the canonical projection 
		\begin{equation*}
			\bigl(\deRham_{R/A}\lotimes_{A,\phi^\alpha}A[q]/(q^{p^\alpha}-1)\bigr)_p^\complete\longrightarrow \bigl(\deRham_{R/A}\lotimes_{A,\phi^\alpha}A[\zeta_{p^\alpha}]\bigr)_p^\complete
		\end{equation*}
		sending $q\mapsto \zeta_{p^\alpha}$. When restricted to $ \fil_\Nn^n= \fil_{\Nn, \qIW}^n$, this lands in $( \fil_{\Hodge}^n\deRham_{R/A}\lotimes_{A,\phi^\alpha}A[\zeta_{p^\alpha}])_p^\complete$. Indeed, for smooth $A$-algebras this follows directly from \cref{def:NygaardFiltration}, as the image of $V_p$ dies under $\gh_1$; the general case follows via animation. By tracing through the proof of \cref{lem:NygaardFrobeniusKernel}, we now see that the diagram
		\begin{equation*}
			\begin{tikzcd}
				\cofib( \fil_{\Nn, \qIW}^{n+1}\rightarrow K)\dar["\simeq"'] & K\lar\rar &  \fil_\Nn^n\dar\\
				\gr_{\Hodge}^n\bigl(\deRham_{R/A}\lotimes_{A,\phi^\alpha}A[\zeta_{p^\alpha}]\bigr)_p^\complete\ar[drr,"(\zeta_p-1)"'] & &  \fil_{\Hodge}^n\bigl(\deRham_{R/A}\lotimes_{A,\phi^\alpha}A[\zeta_{p^\alpha}]\bigr)_p^\complete\dar\\
				& & \gr_{\Hodge}^n\bigl(\deRham_{R/A}\lotimes_{A,\phi^\alpha}A[\zeta_{p^\alpha}]\bigr)_p^\complete
			\end{tikzcd}
		\end{equation*}
		commutes. Thus $K$ is mapped into the submodule $(\zeta_p-1)(\gr_{\Hodge}^n\deRham_{R/A}\lotimes_{A,\phi^\alpha}A[\zeta_{p^\alpha}])_p^\complete$ and $ \fil_{\Nn, \qIW}^{n+1}$ is the fibre of this map.
		
		According to \cref{lem:NygaardFibreSequence} and the left half of the diagram from \cref{rem:NygaardFibreSequence}, for $ \fil_{\Nn,  \qOmega}^{n+1}$ we have a similar diagram:
		\begin{equation*}
			\begin{tikzcd}
				\cofib( \fil_{\Nn, \qOmega}^{n+1}\rightarrow K)\dar["\simeq"'] & K\lar\rar &  \fil_\Nn^n\dar\\
				\gr_n^\mathrm{conj}\bigl(\qdeRham_{R/A}^{(p^{\alpha-1})}/\Phi_{p^\alpha}(q)\bigr)\ar[drr,"(q^{p^{\alpha-1}}-1)"'] & & \gr_{\Nn, \qOmega}^n\dar\\
				& & \gr_n^\mathrm{conj}\bigl(\qdeRham_{R/A}^{(p^{\alpha-1})}/\Phi_{p^\alpha}(q)\bigr)
			\end{tikzcd}
		\end{equation*}
		Note that $(q^{p^{\alpha-1}}-1)$ is sent to $(\zeta_p-1)$ under $q\mapsto \zeta_{p^\alpha}$. Therefore, to show $ \fil_{\Nn,  \qOmega}^{n+1}= \fil_{\Nn, \qIW}^{n+1}$ and thus to finish the induction, it will be enough to show that the following diagram commutes; here we also use the right half of the diagram from \cref{rem:NygaardFibreSequence}:
		\begin{equation*}
			\begin{tikzcd}[column sep=10em]
				\fil_\Nn^n\bigl(\qdeRham_{R/A}^{(p^\alpha)}\bigr)_p^\complete\rar\dar["\Phi_{p^\alpha}(q)^{-n}\phi_{/A[q]}"'] &  \fil_\Nn^n\dar\\
				\fil_n^\mathrm{conj}\bigl(\qdeRham_{R/A}^{(p^{\alpha-1})}/\Phi_{p^\alpha}(q)\bigr)\dar &  \fil_{\Hodge}^n\bigl(\deRham_{R/A}\lotimes_{A,\phi^\alpha}A[\zeta_{p^\alpha}]\bigr)_p^\complete\dar\\
				\gr_n^\mathrm{conj}\bigl(\qdeRham_{R/A}^{(p^{\alpha-1})}/\Phi_{p^\alpha}(q)\bigr)\rar["\simeq","\text{Hodge--Tate comparison}"'] & \gr_{\Hodge}^n\bigl(\deRham_{R/A}\lotimes_{A,\phi^\alpha}A[\zeta_{p^\alpha}]\bigr)_p^\complete
			\end{tikzcd}
		\end{equation*}
		To show commutativity, let us first get rid of $(\alpha-1)$ Frobenius-twists (thus reducing to $\alpha=1$), as these Frobenius-twists just amount to a pullback. Moreover, commutativity can be checked after the faithfully flat base change along the map $A\rightarrow A_\infty$ into the colimit perfection of the perfectly covered $\Lambda$-ring $A$. Since everything is $p$-complete, working relative to $A_\infty$ is the same as working absolutely, so we can reduce to the case $A=\IZ$. We can then use the method from \cite[\S{\chref[section]{12}}]{Prismatic}. Let us first check commutativity in the single case $R=\IZ_p\langle x^{1/p^\infty}\rangle/x$.
		
		In this case, everything is explicit: First off, $(\qdeRham_{R/\IZ})_p^\complete$ is the ring
		\begin{equation*}
			\IZ_p\qpower\bigl\langle x^{1/p^\infty}\bigr\rangle\left\{\frac{x^p}{\Phi_p(q)}\right\}_{(p,q-1)}^\complete\simeq \biggl(\bigoplus_{i\in\IN[1/p]}\IZ_p\qpower\cdot\frac{x^i}{[\lfloor i\rfloor]_q!}\biggr)_{(p,q-1)}^\complete\,.
		\end{equation*}
		The graded piece $\gr_{\Hodge}^n(\deRham_{R/\IZ}\lotimes_{\IZ}\IZ[\zeta_p])_p^\complete$ is generated by the divided power $x^n/n!$, which is the image of the $q$-divided power $x^n/[n]_q!\in \qdeRham_{R/\IZ}$. We have
		\begin{equation*}
			\Phi_p(q)^{-n}\phi\left(\frac{x^n}{[n]_q!}\right)=\frac{x^{pn}}{[n]_{q^p}!\cdot\Phi_p(q)^n}\equiv\frac{\bigl(x^p/\Phi_p(q)\bigr)^n}{n!} \mod \Phi_p(q)
		\end{equation*}
		By \cite[Lemma~\chref{12.6}]{Prismatic}, $[n]_{q^p}!\cdot\Phi_p(q)^n$ is a unit multiple of $[pn]_q!$. This shows that $\phi(x^n/[n]_q!)$ is divisible by $\Phi_p(q)^n$ and so the image of $x^n/[n]_q!$ under $(\qdeRham_{R/\IZ})_p^\complete\rightarrow (\qdeRham_{R/\IZ}^{(p)})_p^\complete$ lies in Nygaard filtration degree~$n$. The proof of \cite[Lemma~\chref{12.7}]{Prismatic} also explains that the graded algebra $\gr_*^\mathrm{conj}(\qdeRham_{R/\IZ}/\Phi_p(q))$ is generated by divided powers of $x^p/\Phi_p(q)$ and that these generators induce the Hodge-Tate comparison. As we've seen above, said divided powers are precisely the images of $x^n/[n]_q!$, so we obtain commutativity in our special case.
		
		The method from \cite[\S{\chref[section]{12}}]{Prismatic} then shows commutativity in general: First consider the case $R=\IZ_p\langle x_1^{1/p^\infty},\dotsc,x_n^{1/p^\infty}\rangle/(x_1,\dotsc,x_n)$. This follows from the special case above by multiplicativity. Next consider the case $R=R'/(f_1,\dotsc,f_r)$, where $R'$ is a perfectoid ring and $(f_1,\dotsc,f_r)$ is a $p$-completely regular sequence. If each $f_i$ admits compatible $p$-power roots, we can reduce to the previous special case via base change. In general, by Andre's lemma \cite[Theorem~\chref{7.14}]{Prismatic}, we find a $p$-completely faithfully flat cover $R'\rightarrow R''$ such that $R''$ is perfectoid again and each $f_i$ admits compatible $p$-power roots in $R''$, so we can conclude via descent.
		
		Now assume $R$ is $p$-completely smooth over $\IZ_p$. In this case we can choose a surjection $\IZ_p\langle x_1,\dotsc,x_n\rangle\twoheadrightarrow R$ and put
		\begin{equation*}
			R_\infty\coloneqq \left(\IZ_p\bigl\langle x_1^{1/p^\infty},\dotsc,x_n^{1/p^\infty}\bigr\rangle\otimes_{\IZ_p\langle x_1,\dotsc,x_n\rangle}R\right)_p^\complete\,.
		\end{equation*}
		Using descent for $R\rightarrow R_\infty$, we only need to check the assertion for each term in the \v Cech nerve $(R_\infty^{\otimes_R\bullet})_p^\complete$. These terms are Zariski-locally of the form considered in the previous paragraph and so the smooth case follows. Finally, the case of arbitrary $R$ follows by passing to animations.
	\end{proof}
	The same slightly convoluted method of proof can be used to show the following technical lemma, which we'll need below.
	\begin{lem}\label{lem:NygaardRationalisation}
		The equivalence $(\qdeRham_{R/A})_p^\complete[1/p]_{(q-1)}^\complete\simeq (\deRham_{R/A})_p^\complete[1/p]\qpower$ upgrades uniquely to an equivalence of filtered $\IE_\infty$-$A[1/p,q]$-algebras
		\begin{equation*}
			\fil_\Nn^\star \bigl(\qdeRham_{R/A}^{(p)}\bigr)_p^\complete\bigl[\localise{p}\bigr]_{\Phi_p(q)}^\complete\overset{\simeq}{\longrightarrow} \fil_{(\Hodge,\Phi_p(q))}^\star \bigl(\deRham_{R/A}\lotimes_{A,\phi}A\bigr)_p^\complete\bigl[\localise{p},q\bigr]_{\Phi_p(q)}^\complete\,,
		\end{equation*}
		where $ \fil_{(\Hodge,\Phi_p(q))}^\star $ denotes the combined Hodge and $\Phi_p(q)$-adic filtration.
	\end{lem}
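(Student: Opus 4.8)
The plan is to mimic the proof of \cref{prop:NygaardComparison}, replacing the reduction modulo $(q^{p^\alpha}-1)$ used there by the operation $(-)[1/p]_{\Phi_p(q)}^\complete$ and specialising to $\alpha=1$. First I would reduce to a tractable class of test algebras. The target functor $R\mapsto \fil_{(\Hodge,\Phi_p(q))}^\star(\deRham_{R/A}\lotimes_{A,\phi}A)_p^\complete[1/p,q]_{\Phi_p(q)}^\complete$ satisfies quasi-syntomic descent, because the Hodge filtration on the derived de Rham complex does and all the operations applied to it commute with the relevant limits; the source $R\mapsto\fil_\Nn^\star(\qdeRham_{R/A}^{(p)})_p^\complete[1/p]_{\Phi_p(q)}^\complete$ satisfies quasi-syntomic descent by the argument of \cref{cor:NygaardQuasisyntomicDescent}, now using the $[1/p]_{\Phi_p(q)}^\complete$-analogue of the fibre sequence of \cref{lem:NygaardFibreSequence}, all of whose terms satisfy quasi-syntomic descent. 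Hence, by quasi-syntomic descent and animation, it suffices to produce the equivalence for $R$ a \emph{large} quasi-syntomic $A$-algebra in the sense of \cite[Definition~\chref{15.1}]{Prismatic}. For such $R$ the ring $(\deRham_{R/A}\lotimes_{A,\phi}A)_p^\complete[1/p,q]_{\Phi_p(q)}^\complete$ is static, both filtrations are descending chains of honest ideals in it, and the equivalence $(\qdeRham_{R/A})_p^\complete[1/p]_{(q-1)}^\complete\simeq(\deRham_{R/A})_p^\complete[1/p]\qpower$ together with the relative Frobenius $\phi_{/A[q]}\colon(\qdeRham_{R/A}^{(p)})_p^\complete\overset{\simeq}{\to}\L\eta_{\Phi_p(q)}(\qdeRham_{R/A})_p^\complete$ identifies the two underlying rings. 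So the statement becomes an equality of two ideals in each filtration degree, from which functoriality, uniqueness and the $\IE_\infty$-refinement follow automatically, and the smooth (and then arbitrary animated) case is recovered by quasi-syntomic descent and animation.

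Next, exactly as in \cref{prop:NygaardComparison}, faithfully flat base change along $A\to A_\infty$ into the colimit perfection together with $p$-completeness lets me reduce to $A=\IZ$. I would then prove $\fil_\Nn^n(\qdeRham_{R/\IZ}^{(p)})_p^\complete[1/p]_{\Phi_p(q)}^\complete=\fil_{(\Hodge,\Phi_p(q))}^n(\deRham_{R/\IZ}\lotimes_{\IZ,\phi}\IZ)_p^\complete[1/p,q]_{\Phi_p(q)}^\complete$ by induction on $n$, comparing associated gradeds: on the Nygaard side $\gr_\Nn^n$ is computed from the filtered d\'ecalage filtration on $\L\eta_{\Phi_p(q)}(\qdeRham_{R/\IZ})_p^\complete$ via the divided Frobenius $\Phi_p(q)^{-n}\phi_{/A[q]}$, exactly as in \cref{lem:NygaardFibreSequence} and \cref{rem:NygaardFibreSequence} and using \cite[Theorem~\chref{15.2}]{Prismatic}; on the de Rham side the combined filtration is a tensor-product filtration, so $\gr_{(\Hodge,\Phi_p(q))}^n\simeq\bigoplus_{a+b=n}\Phi_p(q)^a\gr_\Hodge^b(\deRham_{R/\IZ}\lotimes_{\IZ,\phi}\IZ)_p^\complete[1/p]$. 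By the bootstrap of \cite[\S{\chref[section]{12}}]{Prismatic} — multiplicativity, Andr\'e's lemma, quasi-syntomic and then faithfully flat descent, as in the proof of \cref{prop:NygaardComparison} — this is reduced to the single explicit algebra $R=\IZ_p\langle x^{1/p^\infty}\rangle/x$. There $(\qdeRham_{R/\IZ})_p^\complete\simeq\bigl(\bigoplus_{i\in\IN[1/p]}\IZ_p\qpower\cdot x^i/[\lfloor i\rfloor]_q!\bigr)_{(p,q-1)}^\complete$, the Frobenius of the $q$-divided power $x^m/[m]_q!$ equals $x^{pm}/[m]_{q^p}!$, which by \cite[Lemma~\chref{12.6}]{Prismatic} is a unit multiple of $\Phi_p(q)^m\cdot x^{pm}/[pm]_q!$ and hence divisible by exactly $\Phi_p(q)^m$, so $x^m/[m]_q!$ lies in Nygaard degree $m$ and its divided Frobenius reduces modulo $\Phi_p(q)$ to the divided power $x^{[m]}$ generating $\gr_\Hodge^m$ under the Hodge--Tate comparison (as in \cite[Lemma~\chref{12.7}]{Prismatic}). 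Tracking the $\Phi_p(q)^\star A[q]$-module structure and the images of the elements $\Phi_p(q)^a\cdot x^m/[m]_q!$ then identifies $\fil_\Nn^n$ with $\fil_{(\Hodge,\Phi_p(q))}^n$ in each degree, completing the induction.

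Conceptually the lemma is true because after inverting $p$ the prism $(\widehat A_p\qpower,\Phi_p(q))$ becomes, near $V(\Phi_p(q))$, a de Rham prism $B\llbracket q-\zeta_p\rrbracket$, over which (Nygaard-filtered) prismatic cohomology is (combined Hodge and $(q-\zeta_p)$-adically filtered) de Rham cohomology; the explicit route above is just the way to make this precise without rebuilding the prismatic machinery. The main obstacle, as in \cref{prop:NygaardComparison}, is therefore bookkeeping rather than substance: one must verify that nothing collapses after $[1/p]_{\Phi_p(q)}^\complete$ — which uses crucially that $(\qdeRham_{R/A}^{(p)})_p^\complete$ is only $(q^p-1)$-complete, not $(q-1)$-complete, so that its $p$-inverted $\Phi_p(q)$-completed part is non-trivial — and that the Berthelot--Ogus d\'ecalage filtration on $\L\eta_{\Phi_p(q)}(\qdeRham_{R/A})_p^\complete$ still computes $\gr_\Nn^\star$ correctly after these operations; in particular one cannot naively commute $\L\eta_{\Phi_p(q)}$ past $[1/p]$, since $p$ and $\Phi_p(q)$ are far from coprime near $q=1$. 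No new geometric input beyond what already enters \cref{prop:NygaardComparison} is required.
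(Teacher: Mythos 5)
Your overall architecture matches the paper's: reduce to large quasi-syntomic $A$-algebras so that both filtrations are by honest ideals (whence uniqueness, functoriality and the $\IE_\infty$-refinement are automatic), base change faithfully flatly along $A\rightarrow A_\infty$ to reduce to $A=\IZ$, and then run the bootstrap of \cite[\S{\chref[section]{12}}]{Prismatic} down to the single test algebra $R=\IZ_p\langle x^{1/p^\infty}\rangle/x$, where the Nygaard filtration becomes $(x,\Phi_p(q))^n$ after $(-)[1/p]_{\Phi_p(q)}^\complete$. This constructs the comparison map.

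Where you diverge is in showing the map is an equivalence. You propose an induction on $n$ that directly compares associated gradeds, reading off $\gr_\Nn^n$ from a $(-)[1/p]_{\Phi_p(q)}^\complete$-analogue of the divided-Frobenius/d\'ecalage fibre sequence of \cref{lem:NygaardFibreSequence}. That analogue is \emph{not} established in the paper, and as you yourself flag at the end, it is exactly the kind of statement that is delicate because $\L\eta_{\Phi_p(q)}$ does not commute past $[1/p]$ near $q=1$; moreover you also invoke it to get quasi-syntomic descent for the source functor after $(-)[1/p]_{\Phi_p(q)}^\complete$, so it is load-bearing in two places. The paper avoids this entirely: it reduces the whole filtered object modulo $\Phi_p(q)$ (sitting in filtration degree~$1$, per Convention~\cref{conv:QuotientConvention}), observes that after $(-)[1/p]_{\Phi_p(q)}^\complete$ the element $(q-1)$ is invertible so that this reduction is the same as reduction modulo $(q^p-1)$ up to a unit, and then invokes \cref{prop:NygaardComparison} to identify the result with $\fil_\Nn^\star(\qIW_p\deRham_{R/A})_p^\complete[1/p]_{\Phi_p(q)}^\complete$. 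That last object is then matched against the Hodge filtration by the ghost map $\gh_1$, which is already an isomorphism on the level of complexes for smooth $R$ because the $V_p$-images in \cref{def:NygaardFiltration} are $(q-1)$-torsion and die once $(q-1)$ is inverted. In short: instead of re-proving the d\'ecalage computation of Nygaard gradeds in a new completion setting, the paper routes the equivalence through the already-established \cref{prop:NygaardComparison} and a one-line torsion observation. Your route is conceptually appealing (it makes the ``de Rham prism near $\zeta_p$'' heuristic explicit), but the step you identify as ``bookkeeping'' is precisely where the actual work would be; you should either prove the $[1/p]_{\Phi_p(q)}^\complete$-analogue of the fibre sequence, or replace that step with the paper's mod-$\Phi_p(q)$ argument.
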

	\begin{proof}
		Let us first construct the map. It's enough to do this in the case where $R$ a $p$-complete quasi-syntomic $A$-algebra which is \emph{large} in the sense that there exists a surjection $\widehat{A}_p\langle x_i^{1/p^\infty}\ |\ i\in I\rangle \twoheadrightarrow R$ for some set~$I$. Via quasi-syntomic descent, we can then recover the case where $R$ is smooth over~$A$, and the general case follows via animation.
		
		If $R$ is as above, then $(\qdeRham_{R/A})_p^\complete[1/p]_{(q-1)}^\complete\simeq (\deRham_{R/A})_p^\complete[1/p]\qpower$ are static and so are the filtrations on them. So we have to compare two descending filtrations of a ring by ideals. It follows at once that the comparison, if it exists, must be unique, and it will automatically be compatible with the filtered $\IE_\infty$-$A[1/p,q]$-algebra structures. Moreover, to compare the two filtrations by ideals, we may base change along the faithfully flat map $A\rightarrow A_\infty$.%
		\footnote{Recall from \cref{rem:qDeRhamRationalisationTechnical} that for every fixed $n\geqslant 0$ there exists an $N$ such that the canonical map $(\qdeRham_{R/A})_p^\complete\rightarrow (\deRham_{R/A})_p^\complete[1/p]\qpower/(q-1)^n$ already factors through $p^{-N}(\deRham_{R/A})_p^\complete\qpower/(q-1)^n$. The existence of a map
			\begin{equation*}
				\fil_\Nn^\star \bigl(\qdeRham_{R/A}^{(p)}\bigr)_p^\complete\longrightarrow	 \fil_{(\Hodge,\Phi_p(q))}^\star \bigl(\deRham_{R/A}\lotimes_{A,\phi}A\bigr)_p^\complete\bigl[\localise{p},q\bigr]_{\Phi_p(q)}^\complete
			\end{equation*}	
			boils down to an inclusion of ideals. Using the observation above, this inclusion can be checked modulo powers of $p$ and $\Phi_p(q)$, and so we can use base change along $A\rightarrow A_\infty$ without having to worry about completion issues.}
		Since working relative to the perfect $\delta$-ring $A_\infty$ is equivalent to working absolutely, we may thus assume $A=\IZ$. Then we use the method from \cite[\S{\chref[section]{12}}]{Prismatic} as in the previous proof of \cref{prop:NygaardComparison}.
		
		So we only have to check the single case $R=\IZ_p\langle x^{1/p^\infty}\rangle/x$. In this case, $(\qdeRham_{R/\IZ}^{(p)})_p^\complete$ is given by a completed direct sum
		\begin{equation*}
			\bigl(\qdeRham_{R/\IZ}^{(p)}\bigr)_p^\complete\simeq\biggl(\bigoplus_{i\in\IN[1/p]}\IZ_p\qpower\cdot\frac{x^i}{[\lfloor i\rfloor]_{q^p}!}\biggr)_{(p,\Phi_p(q))}^\complete\,.
		\end{equation*}
		By definition, $ \fil_\Nn^n(\qdeRham_{R/\IZ}^{(p)})_p^\complete$ consists of those elements whose Frobenius becomes divisible by $\Phi_p(q)^n$. By inspection, these are precisely
		\begin{equation*}
			\fil_\Nn^n\bigl(\qdeRham_{R/\IZ}^{(p)}\bigr)_p^\complete\simeq \biggl(\bigoplus_{i\in\IN[1/p]}\Phi_p(q)^{\max\{n-\lfloor i\rfloor,0\}}\IZ_p\qpower\cdot\frac{x^i}{[\lfloor i\rfloor]_{q^p}!}\biggr)_{(p,\Phi_p(q))}^\complete\,.
		\end{equation*}
		After $(-)[1/p]_{\Phi_p(q)}^\complete$, this becomes the ideal $(x,\Phi_p(q))^n$, which is the $n$\textsuperscript{th} step in the combined Hodge and $\Phi_p(q)$-adic filtration on $(\deRham_{R/\IZ})_p^\complete[1/p,q]_{\Phi_p(q)}^\complete$. This finishes the discussion of the special case and thus the construction of the comparison between the two filtrations.
		
		To show that we get an equivalence, let $A$ be arbitrary again and let $R$ be any animated $A$-algebra. We'll show that both sides agree if we reduce them modulo $\Phi_p(q)$, where $\Phi_p(q)$ sits in filtration degree~$1$. Since both sides also agree in filtration degree~$0$, it will follow inductively that they agree everywhere. By construction,
		\begin{equation*}
			\fil_{(\Hodge,\Phi_p(q))}^\star \bigl(\deRham_{R/A}\lotimes_{A,\phi}A\bigr)_p^\complete\bigl[\localise{p},q\bigr]_{\Phi_p(q)}^\complete/\Phi_p(q)\simeq  \fil_{\Hodge}^\star \bigl(\deRham_{R/A}\lotimes_{A,\phi}A[\zeta_p]\bigr)_p^\complete\bigl[\localise{p}\bigr]
		\end{equation*}
		is just a base change of the Hodge filtration. So let's see what happens on the left-hand side. Since $(q-1)$ becomes invertible after $(-)[1/p]_{\Phi_p(q)}^\complete$, we may as well reduce modulo $(q^p-1)$, again sitting in filtration degree~$1$. Then \cref{prop:NygaardComparison} shows
		\begin{equation*}
			\fil_\Nn^\star \bigl(\qdeRham_{R/A}^{(p)}\bigr)_p^\complete\bigl[\localise{p}\bigr]_{\Phi_p(q)}^\complete/(q^p-1)\simeq  \fil_\Nn^\star \left(\qIW_p\deRham_{R/A}\right)_p^\complete\bigl[\localise{p}\bigr]_{\Phi_p(q)}^\complete\,.
		\end{equation*}
		We claim that the right-hand side is equivalent to $( \fil_{\Hodge}^\star \deRham_{R/A}\lotimes_{A,\phi}A[\zeta_p])_p^\complete[1/p]$ via the ghost map $\gh_1$. This may be checked in the case where $R$ is smooth over $A$, as then the general case follows via animation. As we've seen above, $(-)[1/p]_{\Phi_p(q)}^\complete$ forces $(q-1)$ to be invertible, and so all the images of $V_p$ in \cref{def:NygaardFiltration} die because they're all $(q-1)$-torsion. It follows that for $R$ smooth over $A$, the ghost map
		\begin{equation*}
			\gh_1\colon  \fil_\Nn^\star \bigl(\qIW_p\Omega_{R/A}^*\bigr)_p^\complete\bigl[\localise{p}\bigr]_{\Phi_p(q)}^\complete\overset{\simeq}{\longrightarrow}  \fil_{\Hodge}^\star \bigl(\Omega_{R/A}^*\otimes_{A,\phi}A[\zeta_p]\bigr)_p^\complete\bigl[\localise{p}\bigr]
		\end{equation*}
		is already an isomorphism on the level of complexes and so we're done.
	\end{proof}

	\subsection{The twisted \texorpdfstring{$q$}{q}-Hodge filtration}\label{subsec:TwistedqHodgeFiltration}
	
	For smooth $A$-algebras $S$, let $ \fil_{\Hhodge_m}^\star \qIW_m\Omega_{S/A}^*$ denote the stupid filtration given by $\qIW_m\Omega_{S/A}^{\smash{\geqslant} n,*}$ in degree~$n$. In general, let
	\begin{equation*}
		\fil_{\Hhodge_m}^\star \qIW_m\deRham_{-/A}\colon \cat{AniAlg}_A\longrightarrow \CAlg\Bigl( \Fil\Dd\bigl(A[q]/(q^{m}-1)\bigr)\Bigr)
	\end{equation*}
	be the animation of this functor. In this subsection, we'll show that once $\qdeRham_{R/A}$ is equipped with a $q$-Hodge filtration, the filtration $ \fil_{\Hhodge_m}^\star \qIW_m\deRham_{R/A}$ admits a canonical $q^m$-deformation
	\begin{equation*}
		\fil_{\qHhodge_m}^\star \qdeRham_{R/A}^{(m)}\,.
	\end{equation*}
	This will eventually allow us to prove \cref{thm:HabiroDescent} in \cref{subsec:qHodgeHabiroDescent} below. Let us first explain how $ \fil_{\Hhodge_m}^\star \qIW_m\deRham_{R/A}$ is related to the Nygaard filtration from \cref{subsec:Nygaard}.
	\begin{lem}\label{lem:HodgevsNygaard}
		For all smooth $A$-algebras $S$, all primes~$p$ and all $\alpha\geqslant 1$ the diagram
		\begin{equation*}
			\begin{tikzcd}
				\fil_{\Hhodge_{p^\alpha}}^n\qIW_{p^\alpha}\Omega_{S/A}^*\rar\dar\drar[pullback] &  \fil_\Nn^n\qIW_{p^\alpha}\Omega_{S/A}^*\dar["p^{-n}\overtilde{F}_p"]\\
				\fil_{\Hhodge_{p^{\alpha-1}}}^n\qIW_{p^{\alpha-1}}\Omega_{S/A}^*\rar & \qIW_{p^{\alpha-1}}\Omega_{S/A}^*
			\end{tikzcd}
		\end{equation*}
		becomes a pullback in $\Dd(A[q])$ for all $n\geqslant 0$.
	\end{lem}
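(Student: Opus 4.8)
The plan is to recognise the square as a morphism between two short exact sequences of complexes and to check that it induces an equivalence on cokernels. First I would unwind \cref{def:NygaardFiltration}: in cohomological degree~$j$ the complex $\fil_\Nn^n\qIW_{p^\alpha}\Omega_{S/A}^*$ equals $p^{n-1-j}V_p(\qIW_{p^{\alpha-1}}\Omega_{S/A}^j)$ for $j<n$ and $\qIW_{p^\alpha}\Omega_{S/A}^j$ for $j\geqslant n$, while $\fil_{\Hhodge_{p^\alpha}}^n\qIW_{p^\alpha}\Omega_{S/A}^*=\qIW_{p^\alpha}\Omega_{S/A}^{\geqslant n,*}$ is the stupid truncation. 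Hence the top horizontal map is a degreewise injection of complexes (the identity in degrees $\geqslant n$, and $0\hookrightarrow p^{n-1-j}V_p(\cdots)$ below, which is a chain map since $\fil_{\Hhodge_{p^\alpha}}^n$ vanishes in degrees $<n$), and likewise the bottom map $\qIW_{p^{\alpha-1}}\Omega_{S/A}^{\geqslant n,*}\hookrightarrow \qIW_{p^{\alpha-1}}\Omega_{S/A}^*$. In a stable $\infty$-category a commutative square is cartesian if and only if it is cocartesian, i.e.\ if and only if its total cofibre vanishes; since the two horizontal maps are degreewise-injective maps of complexes, their cofibres in $\Dd(A[q])$ are the honest cokernel complexes, so it suffices to show that the induced map of cokernels is an equivalence.

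Next I would compute the two cokernels. The cokernel of the bottom map is the stupid truncation $\bigl(\qIW_{p^{\alpha-1}}\Omega_{S/A}^0\to\dotsb\to \qIW_{p^{\alpha-1}}\Omega_{S/A}^{n-1}\to 0\to\dotsb\bigr)$ with the $q$-de Rham--Witt differential. The cokernel $Q$ of the top map vanishes in degrees $\geqslant n$ and equals $p^{n-1-j}V_p(\qIW_{p^{\alpha-1}}\Omega_{S/A}^j)$ in degree $j<n$. Using that $\qIW_{p^\alpha}\Omega_{S/A}^*$ and $\qIW_{p^{\alpha-1}}\Omega_{S/A}^*$ are degreewise $p$-torsion free by \cite[Proposition~\chref{4.1}]{qWitt} and that $V_p$ is injective (since $F_p\circ V_p=p$), the map $x\mapsto p^{n-1-j}V_p(x)$ is an isomorphism $\qIW_{p^{\alpha-1}}\Omega_{S/A}^j\xrightarrow{\sim}Q^j$. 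Transporting the differential of $Q$ along these isomorphisms, and invoking the relation $V_p\circ\d=p\,(\d\circ V_p)$ (a standard $q$-de Rham--Witt relation, as used in the proof of \cref{lem:NygaardFrobeniusKernel}), turns it into the $q$-de Rham--Witt differential:
\begin{equation*}
	\d\bigl(p^{n-1-j}V_p(x)\bigr)=p^{n-2-j}\bigl(p\,\d V_p(x)\bigr)=p^{n-2-j}V_p(\d x)\qquad(j<n-1)\,,
\end{equation*}
and the final differential $Q^{n-1}\to Q^n=0$ matches $\qIW_{p^{\alpha-1}}\Omega_{S/A}^{n-1}\to 0$. Hence $Q$ is isomorphic, as a complex, to the stupid truncation of $\qIW_{p^{\alpha-1}}\Omega_{S/A}^*$ in degrees $\leqslant n-1$.

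Finally I would identify the comparison map. By \cref{lem:NygaardFrobeniusTruncationSurjection} the right vertical arrow $p^{-n}\overtilde{F}_p\colon \fil_\Nn^n\qIW_{p^\alpha}\Omega_{S/A}^*\to \qIW_{p^{\alpha-1}}\Omega_{S/A}^*$ is well-defined, and since $\overtilde{F}_p$ acts by $p^jF_p$ in degree~$j$ and $F_p\circ V_p=p$, it sends $p^{n-1-j}V_p(x)\mapsto x$ for $j<n$; so on cokernels it induces exactly the isomorphism $Q\xrightarrow{\sim}\qIW_{p^{\alpha-1}}\Omega_{S/A}^{\leqslant n-1,*}$ found above. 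Therefore the total cofibre of the square vanishes, and the square is cartesian in $\Dd(A[q])$. The case $n=0$ is trivial: all four corners are the full complexes and all maps except the top one equal $\overtilde{F}_p$. The only step carrying real content is the differential bookkeeping in the middle paragraph; everything else is formal, and I expect no serious obstacle.
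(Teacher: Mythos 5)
Your proof follows essentially the same route as the paper's: both reduce to comparing the horizontal cokernels and verify that $p^{-n}\overtilde{F}_p$ maps $\bigl(p^{n-1}V_p(\qIW_{p^{\alpha-1}}\Omega_{S/A}^0)\to\dotsb\to p^0V_p(\qIW_{p^{\alpha-1}}\Omega_{S/A}^{n-1})\to 0\to\dotsb\bigr)$ isomorphically onto the stupid truncation of $\qIW_{p^{\alpha-1}}\Omega_{S/A}^*$ in degrees $\leqslant n-1$. Your intermediate paragraph transporting the differential of $Q$ is a bit redundant --- once one knows $p^{-n}\overtilde{F}_p$ is a chain map (from \cref{lem:NygaardFrobeniusTruncationSurjection}) and a degreewise isomorphism (from $F_p\circ V_p=p$ plus $p$-torsion-freeness and injectivity of $V_p$), there is no need to separately match up the differentials --- but the argument is correct.
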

	\begin{proof}
		It's enough to check that the induced map on horizontal cofibres is an equivalence. Since $ \fil_{\Hhodge_{p^\alpha}}^n\qIW_{p^\alpha}\Omega_{S/A}^*\rightarrow \fil_\Nn^n\qIW_{p^\alpha}\Omega_{S/A}^*$ is injective, the cofibre agrees with the cokernel, which is given by
		\begin{equation*}
			\left(p^{n-1}V_p\bigl(\qIW_{p^{\alpha-1}}\Omega_{S/A}^0\bigr) \rightarrow\dotsb\rightarrow p^0V_p\bigl(\qIW_{p^{\alpha-1}}\Omega_{S/A}^{n-1}\bigr)\rightarrow 0\rightarrow0\rightarrow\dotsb\right)\,.
		\end{equation*}
		Under $p^{-n}\overtilde{F}_p$, this complex is mapped isomorphically onto 
		\begin{equation*}
			\left(\qIW_{p^{\alpha-1}}\Omega_{S/A}^0 \rightarrow\dotsb\rightarrow \qIW_{p^{\alpha-1}}\Omega_{S/A}^{n-1}\rightarrow 0\rightarrow0\rightarrow\dotsb\right)\,,
		\end{equation*}
		which is the cokernel (and the cofibre) of $ \fil_{\Hhodge_{p^{\alpha-1}}}^n\qIW_{p^{\alpha-1}}\Omega_{S/A}^*\rightarrow \qIW_{p^{\alpha-1}}\Omega_{S/A}^*$
	\end{proof}
	\begin{cor}\label{cor:qDRWSmoothAnimation}
		If $S$ is smooth over $A$, then $\qIW_m\deRham_{S/A}^n\simeq \Sigma^{-n}\qIW_m\Omega_{S/A}^n$ for all $m\in\IN$ and all degrees $n\geqslant 0$.
	\end{cor}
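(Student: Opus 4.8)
The plan is to unwind the statement and reduce it to results established above. Since animation (left Kan extension from $\cat{Poly}_A$) preserves colimits, it commutes with passing to the graded pieces of a filtration; hence $\qIW_m\deRham_{S/A}^n=\gr_{\Hhodge_m}^n\qIW_m\deRham_{S/A}$ is, up to the evident shift, the value at $S$ of the non-abelian derived functor of $P\mapsto\qIW_m\Omega_{P/A}^n$. So it suffices to show that this derived functor recovers the honest module $\qIW_m\Omega_{S/A}^n$ on a smooth $A$-algebra $S$ — the same type of statement as the classical fact that $\bigwedge^nL_{S/A}\simeq\Omega_{S/A}^n$ for smooth $S$, which will be the base case of the argument.

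I would check this via the arithmetic fracture square for $\qIW_m\Omega_{-/A}^n$ at $(q^m-1)$ (the reduction mod $(q^m-1)$ of the square in the proof of \cref{prop:TwistedqDeRhamDeformsqDRW}), i.e.\ after inverting all primes dividing $m$ and after $(-)_p^\complete$ for each $p\mid m$. After inverting those primes, $\qIW_m\Omega_{-/A}^*$ splits into a finite product of Frobenius-twisted de Rham complexes by \cite[Corollary~\chref{3.34}]{qWitt}, so that corner reduces to $m=1$, where $\qIW_1\Omega_{-/A}^n=\Omega_{-/A}^n=\bigwedge^n\Omega_{-/A}^1$ animates to $\bigwedge^nL_{-/A}$ and $L_{S/A}\simeq\Omega_{S/A}^1$ is finite projective for smooth $S$. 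The $p$-adic corner only sees $\alpha\coloneqq v_p(m)$, and I would treat it by induction on $\alpha$: by \cref{cor:NygaardQuasisyntomicDescent}\cref{enum:NygaardSmooth} the $p$-completed Nygaard filtration $\fil_\Nn^\star(\qIW_{p^\alpha}\deRham_{S/A})_p^\complete$ agrees with its un-animated variant on smooth $S$, and $(\qIW_{p^{\alpha-1}}\deRham_{S/A})_p^\complete\simeq(\deRham_{S/A})_p^\complete\lotimes_{A,\phi^{\alpha-1}}A[q]/(q^{p^{\alpha-1}}-1)$ does too (from the proof of \cref{cor:NygaardQuasisyntomicDescent}); so if $\fil_{\Hhodge_{p^{\alpha-1}}}^\star(\qIW_{p^{\alpha-1}}\deRham_{S/A})_p^\complete$ is un-animated (inductive hypothesis), then the pullback square of \cref{lem:HodgevsNygaard} — which is biCartesian, hence, being the $p$-completed value-wise animation of a biCartesian square of functors on $\cat{Poly}_A$, stays biCartesian after animation for every animated $R$ — forces $\fil_{\Hhodge_{p^\alpha}}^n(\qIW_{p^\alpha}\deRham_{S/A})_p^\complete$ to be the corresponding (un-animated) fibre product, hence un-animated. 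Taking $\gr^n$ of the filtration then gives the $p$-adic statement, and reassembling the fracture square finishes the proof.

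The step I expect to be the main obstacle is the compatibility between animation and the arithmetic fracture square: left Kan extension does not commute with the $p$-completions occurring there, so one cannot simply animate the square. The point that makes the argument go through is that every corner of the fracture square for $\qIW_m\Omega_{-/A}^n$ is, on smooth algebras, either a (Frobenius-twisted) de Rham complex — handled by the classical $\bigwedge^nL\simeq\Omega^n$ input — or a $p$-complete prismatic object, where quasi-syntomic descent and agreement with the un-animated variant on smooth rings (\cref{cor:NygaardQuasisyntomicDescent}) do the job; so the comparison is checked corner by corner and then glued. A subsidiary technical point is that \cref{lem:HodgevsNygaard} must be invoked as a biCartesian square, equivalently on the cofibres of the divided Frobenii $p^{-n}\overtilde{F}_p$, so that the $p^{-n}$-normalisations are tracked correctly across the tower in $\alpha$. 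Alternatively, one can bypass the fracture square and run the standard "unramified functor" argument directly, using that both $\qIW_m(-/A)$ and $\qIW_m\Omega_{-/A}^n$ satisfy étale base change along the flat map $\qIW_m(A[\underline{x}]/A)\to\qIW_m(S/A)$ (\cite[Propositions~\chref{2.48} and~\chref{3.31}]{qWitt}), exactly as in the proof of \cref{lem:NygaardFrobeniusTruncationSurjection}.
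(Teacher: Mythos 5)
Your proof is correct and follows essentially the same route as the paper's: checking rationally (where $\qIW_m\Omega_{-/A}^n$ splits into Frobenius-twisted de Rham forms by \cite[Corollary~3.34]{qWitt} and one reduces to the classical fact that $\bigwedge^n\L_{-/A}$ is unramified on smooth $A$-algebras) and $p$-adically (where one reduces to prime powers and then runs an induction on $\alpha$ using the pullback square of \cref{lem:HodgevsNygaard} together with \cref{cor:NygaardQuasisyntomicDescent}\cref{enum:NygaardSmooth}). The one step you spell out that the paper leaves implicit — that the biCartesian square of \cref{lem:HodgevsNygaard} is preserved under animation, so that once three corners are known un-animated the fourth is forced — is exactly what is needed for the inductive step, and your justification (a biCartesian square in a stable target is a colimit datum, hence preserved by the sifted-colimit-preserving extension) is the right one.
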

	\begin{proof}
		It's enough to show this rationally and after $p$-completion for all primes~$p$. Rationally, \cite[Corollary~\chref{3.34}]{qWitt} shows
		\begin{equation*}
			\qIW_m\Omega_{S/A}^n\otimes_\IZ\IQ\cong \prod_{d\mid m}\left(\Omega_{S/A}^n\otimes_{A,\psi^d}(A\otimes\IQ)[\zeta_d]\right)
		\end{equation*}
		and it's well-known that the values of $\Omega_{-/A}^n$ on smooth $A$-algebras don't change under animation. After $p$-completion, \cite[Lemma~\chref{4.36}]{qWitt} allows us to restrict to the case where $m=p^\alpha$ is a prime power. Since $\qIW_{p^{\alpha}}\deRham_{S/A}^n\simeq \gr_{\Hhodge_{p^\alpha}}^n\qIW_{p^{\alpha}}\deRham_{S/A}$, it will be enough to show that the filtration $ \fil_{\Hhodge_{p^\alpha}}^\star \qIW_{p^\alpha}\Omega_{S/A}^*$ is unchanged under animation. This follows via induction on $\alpha$ from \cref{lem:HodgevsNygaard} and \cref{cor:NygaardQuasisyntomicDescent}\cref{enum:NygaardSmooth}.
	\end{proof}
	We now set out to construct the desired $q^m$-deformation of $ \fil_{\Hhodge_m}^\star \qIW_m\deRham_{R/A}$.
	\begin{numpar}[The twisted $q$-Hodge filtration ($p$-adically).]\label{con:TwistedqHodgeFiltrationpAdic}
		Let's first construct the filtration for prime powers $m=p^\alpha$ and after $p$-completion. We'll use a recursive definition. For $\alpha=0$, $\qdeRham_{R/A}^{(p^0)}\simeq \qdeRham_{R/A}$ is just the $q$-de Rham complex and we choose
		\begin{equation*}
			\fil_{\qHhodge_{p^0}}^\star \bigl(\qdeRham_{R/A}^{(p^0)}\bigr)_p^\complete\coloneqq   \fil_{\qHodge}^\star \left(\qdeRham_{R/A}\right)_p^\complete
		\end{equation*}
		to be the given $q$-Hodge filtration. For $\alpha\geqslant 1$, we consider the \enquote{rescaling} of the filtration $ \fil_{\qHhodge_{p^{\alpha-1}}}^\star \bigl(\qdeRham_{R/A}^{(p^{\alpha-1})}\bigr)_p^\complete$ by $\Phi_{p^\alpha}$, that is,
		\begin{equation*}
			\Phi_{p^\alpha}(q)^\star  \fil_{\qHhodge_{p^{\alpha-1}}}^\star \coloneqq \left( \fil_{\qHhodge_{p^{\alpha-1}}}^0\xleftarrow{\Phi_{p^\alpha}(q)} \fil_{\qHhodge_{p^{\alpha-1}}}^1\xleftarrow{\Phi_{p^\alpha}(q)}\dotsb\right)\,.
		\end{equation*}
		We also equip $\bigl(\qdeRham_{R/A}^{(p^{\alpha-1})}\bigr)_p^\complete$ with its $\Phi_{p^\alpha}(q)$-adic filtration. 
		Then we define $ \fil_{\Hhodge_{p^\alpha}}^\star \bigl(\qdeRham_{R/A}^{(p^\alpha)}\bigr)_p^\complete$ as the following pullback of filtered objects:
		\begin{equation*}
			\begin{tikzcd}
				\fil_{\qHhodge_{p^\alpha}}^\star \bigl(\qdeRham_{R/A}^{(p^\alpha)}\bigr)_p^\complete\rar\dar\drar[pullback] &  \fil_\Nn^\star \bigl(\qdeRham_{R/A}^{(p^\alpha)}\bigr)_p^\complete\dar["\phi_{p/A[q]}"]\\
				\Phi_{p^\alpha}(q)^\star  \fil_{\qHhodge_{p^{\alpha-1}}}^\star \bigl(\qdeRham_{R/A}^{(p^{\alpha-1})}\bigr)_p^\complete\rar & \Phi_{p^\alpha}(q)^\star \bigl(\qdeRham_{R/A}^{(p^{\alpha-1})}\bigr)_p^\complete
			\end{tikzcd}
		\end{equation*}
		Using this pullback diagram, we can also inductively equip $ \fil_{\qHhodge_{p^\alpha}}^\star \bigl(\qdeRham_{R/A}^{(p^\alpha)}\bigr)_p^\complete$ with the structure of a filtered module over the filtered ring $(q^{p^\alpha}-1)^\star A[q]$.
	\end{numpar}
	\begin{rem}\label{rem:PullbackModqpa-1}
		If we reduce the pullback diagram above modulo $(q^{p^\alpha}-1)$ (where we invoke Convention~\cref{conv:QuotientConvention} as usual), we obtain the pullback diagram from \cref{lem:HodgevsNygaard}. Indeed, this follows via induction on~$\alpha$, using \cref{prop:NygaardComparison}. It follows that 
		\begin{equation*}
			\fil_{\qHhodge_{p^\alpha}}^\star \bigl(\qdeRham_{R/A}^{(p^\alpha)}\bigr)_p^\complete/(q^{p^\alpha}-1)\simeq  \fil_{\Hhodge_{p^\alpha}}(\qIW_{p^\alpha}\deRham_{R/A})_p^\complete\,.
		\end{equation*}
	\end{rem}
	\begin{numpar}[Lax symmetric monoidal structure I.]\label{par:TwistedqHodgeLaxSymmetricMonoidalI}
		The functor
		\begin{equation*}
			\fil_{\Hhodge_{p^\alpha}}^\star \bigl(\qdeRham_{-/A}^{(p^\alpha)}\bigr)_p^\complete\colon \cat{AniAlg}_A^{\qHodge}\longrightarrow  \Mod_{(q^{p^\alpha}-1)^\star A[q]}\Bigl(\Fil \Dd\bigl(A[q]\bigr)\Bigr)_{(p,q-1)}^\complete
		\end{equation*}
		comes equipped with a canonical lax symmetric monoidal structure. This follows from the recursive construction. For $\alpha=0$, \cref{lem:qHodgeSymmetricMonoidal} even provides a symmetric monoidal structure. For $\alpha\geqslant 1$, we must equip the legs of the pullback in \cref{con:TwistedqHodgeFiltrationpAdic} with the structure of symmetric monoidal transformations. This is not hard. First, the Frobenius
		\begin{equation*}
			\phi_{/A[q]}\colon  \fil_\Nn^\star \bigl(\qdeRham_{R/A}^{(p^\alpha)}\bigr)_p^\complete\longrightarrow \Phi_{p^\alpha}(q)^\star \bigl(\qdeRham_{R/A}^{(p^{\alpha-1})}\bigr)_p^\complete
		\end{equation*}
		becomes a symmetric monoidal transformation by quasi-syntomic descent from the case where $R$ is a $p$-complete quasi-syntomic $A$-algebra with a surjection $\widehat{A}_p\langle x_i^{1/p^\infty}\ |\ i\in I\rangle \twoheadrightarrow R$. In this case, we're dealing with filtrations of rings by ideals, so symmetric monoidality is automatic.
		
		Second, the functor that \enquote{rescales} a filtration by $\Phi_{p^\alpha}(q)$ as in \cref{con:TwistedqHodgeFiltrationpAdic} is lax symmetric monoidal. Indeed, if we regard our filtered objects as graded modules over $\IZ[q,t]$, with the filtration parameter~$t$ in graded degree~$-1$, then rescaling corresponds to restriction along the $\IZ[q]$-linear map $\IZ[q,t]\rightarrow \IZ[q,t]$ that sends $t\mapsto \Phi_{p^\alpha}(q)t$. This is lax symmetric monoidal.
	\end{numpar}
	\begin{numpar}[Lax symmetric monoidal structure II.]\label{par:TwistedqHodgeLaxSymmetricMonoidalII}
		It follows from the construction in~\cref{con:TwistedqHodgeFiltrationpAdic} that we have a canonical map
		\begin{equation*}
			\fil_{\Hhodge_{p^\alpha}}^\star \bigl(\qdeRham_{R/A}^{(p^\alpha)}\bigr)_p^\complete\longrightarrow  \fil_{\Hhodge_{p^{\alpha-1}}}^\star \bigl(\qdeRham_{R/A}^{(p^{\alpha-1})}\bigr)_p^\complete
		\end{equation*}
		compatible with the relative Frobenius $\phi_{p/A[q]}\colon \bigl(\qdeRham_{R/A}^{(p^\alpha)}\bigr)_p^\complete\rightarrow\bigl(\qdeRham_{R/A}^{(p^{\alpha-1})}\bigr)_p^\complete$, because the \enquote{rescaling} by $\Phi_{p^\alpha}(q)$ of any filtration in non-negative degrees has a canonical map back to the original filtration. Moreover, by the discussion in \cref{par:TwistedqHodgeLaxSymmetricMonoidalI}, the map above can be canonically equipped with the structure of a symmetric monoidal transformation.
	\end{numpar}
	\begin{lem}\label{lem:TwistedqHodgepAdicCompatibilityI}
		For all primes~$p$ and all $\alpha\geqslant 0$, there exists a canonical equivalence of filtered $(q^{p^\alpha}-1)^\star A[q]$-modules
		\begin{equation*}
			\fil_{\qHhodge_{p^\alpha}}^\star \bigl(\qdeRham_{R/A}^{(p^\alpha)}\bigr)_p^\complete\bigl[\localise{p}\bigr]_{\Phi_{p^\alpha}(q)}^\complete\overset{\simeq}{\longrightarrow} \fil_{(\Hodge,\Phi_{p^\alpha}(q))}^\star \bigl(\deRham_{R/A}\lotimes_{A,\psi^{p^\alpha}}A\bigr)_p^\complete\bigl[\localise{p},q\bigr]_{\Phi_{p^\alpha}}^\complete\,,
		\end{equation*}
		where $ \fil_{(\Hodge,\Phi_{p^\alpha}(q))}^\star $ denotes the combined Hodge and $\Phi_{p^\alpha}(q)$-adic filtration. This equivalence
		is compatible with $(\qdeRham_{R/A})_p^\complete[1/p]_{(q-1)}^\complete\simeq (\deRham_{R/A})_p^\complete[1/p]\qpower$.
	\end{lem}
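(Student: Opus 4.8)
The plan is to deduce the statement from the rationalised Nygaard comparison \cref{lem:NygaardRationalisation}, treating the case $\alpha=0$ by hand and the case $\alpha\geqslant1$ by unwinding the defining pullback of \cref{con:TwistedqHodgeFiltrationpAdic}. For $\alpha=0$ there is nothing to prove: by \cref{con:TwistedqHodgeFiltrationpAdic} the filtration $\fil_{\qHhodge_{p^0}}^\star(\qdeRham_{R/A})_p^\complete$ is the given $q$-Hodge filtration $\fil_{\qHodge}^\star(\qdeRham_{R/A})_p^\complete$, while $\Phi_{p^0}(q)=q-1$ and $\psi^{p^0}=\id$, so the asserted equivalence together with its compatibility with $(\qdeRham_{R/A})_p^\complete\bigl[\localise{p}\bigr]_{(q-1)}^\complete\simeq(\deRham_{R/A})_p^\complete\bigl[\localise{p}\bigr]\qpower$ is precisely the datum $c_{\IQ_p}$ of \cref{def:qHodgeFiltration}\cref{enum:qHodgeRationalpComplete}. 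Now fix $\alpha\geqslant1$ and apply $(-)\bigl[\localise{p}\bigr]_{\Phi_{p^\alpha}(q)}^\complete$ — an exact localisation followed by a completion, hence a functor that preserves pullbacks — to the defining pullback square of $\fil_{\qHhodge_{p^\alpha}}^\star(\qdeRham_{R/A}^{(p^\alpha)})_p^\complete$ from \cref{con:TwistedqHodgeFiltrationpAdic}.

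The key point is that this functor annihilates the whole bottom row of that square. Both the bottom left corner $\Phi_{p^\alpha}(q)^\star\fil_{\qHhodge_{p^{\alpha-1}}}^\star(\qdeRham_{R/A}^{(p^{\alpha-1})})_p^\complete$ and the bottom right corner $\Phi_{p^\alpha}(q)^\star(\qdeRham_{R/A}^{(p^{\alpha-1})})_p^\complete$ are $(p,q-1)$-complete by \cref{par:TwistedqHodgeLaxSymmetricMonoidalI}; and since $\Phi_{p^\alpha}(1)=p$, the elements $\Phi_{p^\alpha}(q)$ and $q-1$ generate the unit ideal in $\IZ\bigl[\localise{p}\bigr][q]$, so $\Phi_{p^\alpha}(q)$-adic completion after inverting $p$ kills every $(p,q-1)$-complete object. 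Hence the pullback degenerates and yields a canonical equivalence of filtered objects
\begin{equation*}
	\fil_{\qHhodge_{p^\alpha}}^\star\bigl(\qdeRham_{R/A}^{(p^\alpha)}\bigr)_p^\complete\bigl[\localise{p}\bigr]_{\Phi_{p^\alpha}(q)}^\complete\overset{\simeq}{\longrightarrow}\fil_\Nn^\star\bigl(\qdeRham_{R/A}^{(p^\alpha)}\bigr)_p^\complete\bigl[\localise{p}\bigr]_{\Phi_{p^\alpha}(q)}^\complete\,.
\end{equation*}

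It then remains to identify the right-hand side with $\fil_{(\Hodge,\Phi_{p^\alpha}(q))}^\star(\deRham_{R/A}\lotimes_{A,\psi^{p^\alpha}}A)_p^\complete\bigl[\localise{p},q\bigr]_{\Phi_{p^\alpha}(q)}^\complete$, and this is \cref{lem:NygaardRationalisation} base changed along $\phi^{\alpha-1}\colon A[q]\rightarrow A[q]$. Indeed, $\fil_\Nn^\star(\qdeRham_{R/A}^{(p^\alpha)})_p^\complete$ is by definition (\cref{par:NygaardOnqDeRham}) the pullback of $\fil_\Nn^\star(\qdeRham_{R/A}^{(p)})_p^\complete$ along $\phi^{\alpha-1}$; applying $-\lotimes_{A[q],\phi^{\alpha-1}}A[q]$ to the equivalence of \cref{lem:NygaardRationalisation} and using the identities $\Phi_p(q^{p^{\alpha-1}})=\Phi_{p^\alpha}(q)$ and $\psi^{p^{\alpha-1}}\circ\phi=\psi^{p^\alpha}$, together with the standard compatibility of derived completion at a finitely generated ideal with base change, transports its right-hand side exactly onto the desired target. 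Functoriality in the animated $A$-algebra $R$ is inherited from that of the ingredients ($c_{\IQ_p}$, the Nygaard filtration via quasi-syntomic descent and animation, and \cref{lem:NygaardRationalisation}). Finally, the asserted compatibility with $(\qdeRham_{R/A})_p^\complete\bigl[\localise{p}\bigr]_{(q-1)}^\complete\simeq(\deRham_{R/A})_p^\complete\bigl[\localise{p}\bigr]\qpower$ is built into $c_{\IQ_p}$ for $\alpha=0$ and, for $\alpha\geqslant1$, follows by tracing the construction through the transition maps of \cref{con:TwistedqDeRhamFunctoriality}, using that the base-changed \cref{lem:NygaardRationalisation} carries its own such compatibility. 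The only genuinely delicate part is this bookkeeping of compatibilities while keeping the layered operations — $p$-completion, inverting $p$, and $\Phi_{p^\alpha}(q)$-adic completion — under control along the non-flat map $\phi^{\alpha-1}$; there is no new conceptual input beyond \cref{lem:NygaardRationalisation}.
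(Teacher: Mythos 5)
Your case $\alpha=0$ is correct and matches the paper, and your overall strategy — apply $(-)[\localise p]_{\Phi_{p^\alpha}(q)}^\complete$ to the pullback defining $\fil_{\qHhodge_{p^\alpha}}^\star$, collapse it onto the Nygaard term, and then invoke \cref{lem:NygaardRationalisation} after base change along $\phi^{\alpha-1}$ — is exactly what the paper does. But the key step where you argue the pullback degenerates contains a genuine error.

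You claim that since $\Phi_{p^\alpha}(q)$ and $q-1$ generate the unit ideal in $\IZ[\localise p][q]$, the functor $(-)[\localise p]_{\Phi_{p^\alpha}(q)}^\complete$ annihilates every $(p,q-1)$-complete object, and hence the entire bottom row of the pullback square vanishes. This is false. The standard vanishing criterion — "$J$-completion kills $I$-complete objects when $I+J=(1)$" — requires the object to be $I$-complete \emph{after} the localisation, but inverting $p$ (a filtered colimit) does not preserve $(q-1)$-completeness (a limit). Concretely, take $M=\IZ_p\qpower$, which is $(p,q-1)$-complete: one has $\IZ_p\qpower[\localise p]/\Phi_{p^\alpha}(q)\cong \IQ_p(\zeta_{p^\alpha})\neq 0$, so $M[\localise p]_{\Phi_{p^\alpha}(q)}^\complete\neq 0$. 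And indeed, terms of exactly this shape populate the bottom row of the square for $R=A$, so your claimed vanishing is not a technicality but a substantive mistake. If the bottom row really vanished, most of the $\Phi_{p^\alpha}(q)$-adic gluing that the whole construction relies on would trivialise.

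The correct mechanism — and the paper's actual argument — is different: after applying $(-)[\localise p]_{\Phi_{p^\alpha}(q)}^\complete$, the element $q^{p^{\alpha-1}}-1$ becomes a \emph{unit} (its reduction mod $\Phi_{p^\alpha}(q)$ is $\zeta_p-1$, invertible after inverting $p$). Since $\fil_{\qHhodge_{p^{\alpha-1}}}^\star$ is a filtered module over $(q^{p^{\alpha-1}}-1)^\star A[q]$, its transition maps all become multiplication by a unit, so the filtration degenerates to the constant one. Consequently the \emph{bottom horizontal arrow} in the pullback square becomes an equivalence (a map between two constant $\Phi_{p^\alpha}(q)$-rescaled filtrations), and therefore the top horizontal arrow is an equivalence too. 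The bottom corners themselves are nonzero. Your conclusion agrees with the paper's, but the reasoning you give for it does not hold; you need the unit-promotion argument, not a vanishing argument. As a smaller point, the final compatibility with $(\qdeRham_{R/A})_p^\complete[\localise p]_{(q-1)}^\complete\simeq (\deRham_{R/A})_p^\complete[\localise p]\qpower$ is more naturally traced through the recursive construction of \cref{con:TwistedqHodgeFiltrationpAdic} and the base change in \cref{lem:NygaardRationalisation} rather than through the transition maps of \cref{con:TwistedqDeRhamFunctoriality}.
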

	\begin{proof}
		For $\alpha=0$, this is the condition from \cref{def:qHodgeFiltration}\cref{enum:qHodgeRationalpComplete}. So let $\alpha\geqslant 1$. After applying $(-)[1/p]_{\Phi_{p^\alpha}(q)}^\complete$, the polynomial $(q^{p^{\alpha-1}}-1)$ becomes invertible, and so the filtered $(q^{p^{\alpha-1}}-1)^\star A[q]$-module
		\begin{equation*}
			\fil_{\qHhodge_{p^{\alpha-1}}}^\star \bigl(\qdeRham_{R/A}^{(p^{\alpha-1})}\bigr)_p^\complete\bigl[\localise{p}\bigr]_{\Phi_{p^\alpha}(q)}^\complete
		\end{equation*}
		must be the constant filtration on $\bigl(\qdeRham_{R/A}^{(p^{\alpha-1})}\bigr)_p^\complete[1/p]_{\Phi_{p^\alpha}(q)}^\complete$. Consequently, after applying $(-)[1/p]_{\Phi_{p^\alpha}(q)}^\complete$ the bottom horizontal arrow in the pullback diagram from \cref{con:TwistedqHodgeFiltrationpAdic} becomes an equivalence and thus the top horizontal arrow becomes an equivalence too. The desired assertion then follows via base change from \cref{lem:NygaardRationalisation}.
	\end{proof}
	\begin{lem}\label{lem:TwistedqHodgepAdicCompatibilityII}
		For all primes~$p$, all $\alpha\geqslant 1$, and all $0\leqslant i\leqslant \alpha-1$, the canonical map from \cref{par:TwistedqHodgeLaxSymmetricMonoidalII} induces an equivalence of filtered $(q^{p^\alpha}-1)^\star A[q]$-modules
		\begin{equation*}
			\fil_{\qHhodge_{p^\alpha}}^\star \bigl(\qdeRham_{R/A}^{(p^\alpha)}\bigr)_p^\complete\bigl[\localise{p}\bigr]_{\Phi_{p^{\smash{i}}}(q)}^\complete\overset{\simeq}{\longrightarrow} \fil_{\qHhodge_{p^{\alpha-1}}}^\star \bigl(\qdeRham_{R/A}^{(p^{\alpha-1})}\bigr)_p^\complete\bigl[\localise{p}\bigr]_{\Phi_{p^{\smash{i}}}(q)}^\complete\,.
		\end{equation*}
	\end{lem}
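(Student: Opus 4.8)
The plan is to reduce everything to the single elementary observation that $\Phi_{p^\alpha}(q)$ becomes invertible after applying $(-)[1/p]_{\Phi_{p^i}(q)}^\complete$ for $0\leqslant i\leqslant\alpha-1$. This holds because the resultant of the cyclotomic polynomials $\Phi_{p^\alpha}$ and $\Phi_{p^i}$ is a power of~$p$ (for $i\geqslant 1$ it equals $p^{\varphi(p^i)}$, and for $i=0$ it is $\Phi_{p^\alpha}(1)=p$), so $\Phi_{p^\alpha}(q)$ and $\Phi_{p^i}(q)$ generate the unit ideal in $A[1/p,q]$; since the monic polynomial $\Phi_{p^i}(q)$ is a non-zerodivisor, $\Phi_{p^\alpha}(q)$ is then a unit in $A[1/p,q]_{\Phi_{p^i}(q)}^\complete$ as well. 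Next I would recall from \cref{par:TwistedqHodgeLaxSymmetricMonoidalII} that the canonical map factors as
\[
\fil_{\qHhodge_{p^\alpha}}^\star\bigl(\qdeRham_{R/A}^{(p^\alpha)}\bigr)_p^\complete\xrightarrow{\ \rho\ }\Phi_{p^\alpha}(q)^\star\fil_{\qHhodge_{p^{\alpha-1}}}^\star\bigl(\qdeRham_{R/A}^{(p^{\alpha-1})}\bigr)_p^\complete\xrightarrow{\ \sigma\ }\fil_{\qHhodge_{p^{\alpha-1}}}^\star\bigl(\qdeRham_{R/A}^{(p^{\alpha-1})}\bigr)_p^\complete\,,
\]
where $\rho$ is the left leg of the pullback square from \cref{con:TwistedqHodgeFiltrationpAdic} and $\sigma$ is the un-rescaling map, given in filtration degree~$n$ by multiplication by $\Phi_{p^\alpha}(q)^n$. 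Once $\Phi_{p^\alpha}(q)$ is inverted, $\sigma$ is visibly an equivalence of filtered objects, so it remains to treat $\rho$ after $(-)[1/p]_{\Phi_{p^i}(q)}^\complete$.

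For $\rho$ I would use that $(-)[1/p]$ and derived $\Phi_{p^i}(q)$-completion are exact, so applying $(-)[1/p]_{\Phi_{p^i}(q)}^\complete$ to the defining pullback square of \cref{con:TwistedqHodgeFiltrationpAdic} again produces a pullback square; hence $\rho$ becomes an equivalence provided the right leg
\[
\phi_{p/A[q]}\colon\fil_\Nn^\star\bigl(\qdeRham_{R/A}^{(p^\alpha)}\bigr)_p^\complete\longrightarrow\Phi_{p^\alpha}(q)^\star\bigl(\qdeRham_{R/A}^{(p^{\alpha-1})}\bigr)_p^\complete
\]
does. After $(-)[1/p]_{\Phi_{p^i}(q)}^\complete$ the target becomes a constant filtration, since the $\Phi_{p^\alpha}(q)$-adic filtration collapses; and the source becomes a constant filtration as well, because the graded pieces $\gr_\Nn^n\bigl(\qdeRham_{R/A}^{(p^\alpha)}\bigr)_p^\complete$ are $\Phi_{p^\alpha}(q)$-torsion — this is precisely the Hodge--Tate-type identification extracted in the proof of \cref{lem:NygaardFibreSequence} (see also \cite[Theorem~\chref{15.2}]{Prismatic}). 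On underlying objects $\phi_{p/A[q]}$ is the relative Frobenius, which factors through the equivalence $\bigl(\qdeRham_{R/A}^{(p^\alpha)}\bigr)_p^\complete\simeq\L\eta_{\Phi_{p^\alpha}(q)}\bigl(\qdeRham_{R/A}^{(p^{\alpha-1})}\bigr)_p^\complete$ of \cref{par:NygaardOnqDeRham} followed by the canonical map $\L\eta_{\Phi_{p^\alpha}(q)}(-)\rightarrow(-)$, and the latter is an equivalence after inverting $\Phi_{p^\alpha}(q)$. So $\phi_{p/A[q]}$ is an equivalence of filtered objects after $(-)[1/p]_{\Phi_{p^i}(q)}^\complete$, hence $\rho$ is, and composing with $\sigma$ shows the canonical map is an equivalence; the module structures over $(q^{p^\alpha}-1)^\star A[q]$ are preserved at every stage by construction.

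The only step that is not purely formal is the input that the Nygaard filtration becomes constant after inverting the distinguished element $\Phi_{p^\alpha}(q)$ — equivalently, that its graded pieces are $\Phi_{p^\alpha}(q)$-torsion. This is exactly where I rely on the prismatic Hodge--Tate comparison, as already packaged in \cref{lem:NygaardFibreSequence}; granting that, the remainder is a routine diagram chase through the pullback square of \cref{con:TwistedqHodgeFiltrationpAdic} together with the invertibility of $\Phi_{p^\alpha}(q)$.
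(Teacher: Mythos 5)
Your proof is correct and takes essentially the same route as the paper's: after $(-)[1/p]_{\Phi_{p^i}(q)}^\complete$ the element $\Phi_{p^\alpha}(q)$ becomes a unit, so the rescaling is trivial, the Nygaard filtration becomes constant, the Frobenius leg becomes an equivalence, and the pullback from \cref{con:TwistedqHodgeFiltrationpAdic} collapses. The only difference is cosmetic: for constancy of the Nygaard filtration you invoke the Hodge--Tate description of its graded pieces via \cref{lem:NygaardFibreSequence}, which is correct but heavier than needed — the paper gets the same conclusion formally from the fact recorded in \cref{par:NygaardOnqDeRham} that $\fil_\Nn^\star$ is a filtered module over $\Phi_{p^\alpha}(q)^\star A[q]$, which already forces its graded pieces to be $\Phi_{p^\alpha}(q)$-torsion without any prismatic input.
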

	\begin{proof}
		After $(-)[1/p]_{\Phi_{p^{\smash{i}}}(q)}^\complete$, the polynomial $\Phi_{p^\alpha}(q)$ becomes invertible. Consequently, the \enquote{rescaling} of filtrations in \cref{con:TwistedqHodgeFiltrationpAdic} has no effect anymore. Moreover, it follows that the filtered $\Phi_{p^\alpha}(q)^\star A[q]$-module
		\begin{equation*}
			\fil_\Nn^\star \bigl(\qdeRham_{R/A}^{(p^\alpha)}\bigr)_p^\complete\bigl[\localise{p}\bigr]_{\Phi_{p^{\smash{i}}}(q)}^\complete
		\end{equation*}
		must be the constant filtration on $\bigl(\qdeRham_{R/A}^{(p^\alpha)}\bigr)_p^\complete\bigl[\localise{p}\bigr]_{\Phi_{p^{\smash{i}}}(q)}^\complete$. Thus, after applying $(-)[1/p]_{\Phi_{p^{\smash{i}}}(q)}^\complete$, the pullback from \cref{con:TwistedqHodgeFiltrationpAdic} collapses to the desired equivalence.
	\end{proof}
	Let us finally construct the filtration $ \fil_{\qHhodge_m}^\star \qdeRham_{R/A}^{(m)}$ in general.
	\begin{numpar}[The twisted $q$-Hodge filtration (globally)]\label{con:TwistedqHodgeFiltration}
		Choose $N\neq 0$ divisible by $m$ (we'll argue below that the choice of $N$ doesn't matter). For every divisor $d\mid m$ and every prime~$p\mid N$, write $m=p^{v_p(m)}m_p$ and $d=p^{v_p(d)}d_p$, where $m_p$ and $d_p$ are coprime to $p$. Using the animated version of \cref{lem:TwistedqDeRhamFractureSquare}, we obtain a pullback diagram
		\begin{equation*}
			\begin{tikzcd}
				\qdeRham_{R/A}^{(m)}\rar\dar\drar[pullback] & \prod_{p\mid N}\prod_{d_p\mid m_p}\left(\qdeRham_{R/A}\lotimes_{A[q],\psi^{p^{\smash{v_p(m)}}d_p}}A[q]\right)_{(p,\Phi_{d_p}(q))}^\complete\dar["\left(\phi_{p/A[q]}^{v_p(m/d)}\right)_{p\mid N,\,d\mid m}"]\\
				\prod_{d\mid m}\left(\qdeRham_{R/A}\lotimes_{A[q],\psi^d}A\bigl[\localise{N},q\bigr]\right)_{\Phi_d(q)}^\complete\rar & \prod_{p\mid N}\prod_{d \mid m}\Bigl(\qdeRham_{R/A}\lotimes_{A[q],\psi^{d}}A[q]\Bigr)_p^\complete\bigl[\localise{p}\bigr]_{\Phi_d(q)}^\complete
			\end{tikzcd}			
		\end{equation*}
		To construct $\fil_{\qHhodge_m}^\star \qdeRham_{R/A}^{(m)}$, we'll equip each factor of the pullback above with a filtration and then check that these filtrations are compatible.
		\begin{alphanumerate}
			\item On the factor $(\qdeRham_{R/A}\lotimes_{A[q],\psi^d}A[1/N,q])_{\Phi_d(q)}^\complete$ for any $d\mid m$, we put the base-changed $q$-Hodge filtration\label{enum:TwistedqHodgeFiltrationRational}
			\begin{equation*}
				\left(\fil_{\qHodge}^\star \qdeRham_{R/A}\lotimes_{A[q],\psi^d}A\bigl[\localise{N},q\bigr]\right)_{\Phi_d(q)}^\complete\,.
			\end{equation*}
			\item On the factor $(\qdeRham_{R/A}\lotimes_{A[q],\psi^{d}}A[q])_p^\complete[1/p,q]_{\Phi_d(q)}^\complete$ for any prime~$p\mid N$ and any $d\mid m$, we put again the base-changed $q$-Hodge filtration.\label{enum:TwistedqHodgeFiltrationpAdicRational}
			\item On the factor $(\qdeRham_{R/A}\lotimes_{A[q],\psi^{p^{\smash{v_p(m)}}d_p}}A[q])_{(p,\Phi_{d_p}(q))}^\complete$ for any prime~$p\mid N$ and any $d_p\mid m_p$, we put the base-changed filtration\label{enum:TwistedqHodgeFiltrationpAdic}
			\begin{equation*}
				\left( \fil_{\qHhodge_{p^{\smash{v_p(m)}}}}^\star \bigl(\qdeRham_{R/A}^{(p^{v_p(m)})}\bigr)_p^\complete\lotimes_{A[q],\psi^{d_p}}A[q]\right)_{(p,\Phi_{d_p}(q))}^\complete\,.
			\end{equation*}
		\end{alphanumerate}
		Moreover, each of these filtrations is canonically a module over the filtered ring $(q^m-1)^\star A[q]$. It's clear that \cref{enum:TwistedqHodgeFiltrationRational} and \cref{enum:TwistedqHodgeFiltrationpAdicRational} are compatible as filtered $(q^m-1)^\star A[q]$-modules. To check that \cref{enum:TwistedqHodgeFiltrationpAdic} and \cref{enum:TwistedqHodgeFiltrationpAdicRational} are compatible, we may reduce via base change to the case where $m=p^\alpha$ is a power of~$p$. From \cref{lem:TwistedqHodgepAdicCompatibilityI,lem:TwistedqHodgepAdicCompatibilityII} and our assumptions on $\fil_{\qHodge}^\star$ we deduce that both filtrations can be identified with the combined Hodge and $\Phi_{p^\alpha}(q)$-adic filtration
		\begin{equation*}
			\fil_{(\Hodge,\Phi_{p^\alpha}(q))}^\star \bigl(\deRham_{R/A}\lotimes_{A,\psi^{p^\alpha}}A\bigr)_p^\complete\bigl[\localise{p},q\bigr]_{\Phi_{p^\alpha}(q)}^\complete\,,
		\end{equation*}
		which yields the desired compatibility.
		
		Let us now argue that the choice of $N$ is irrelevant. Suppose $N\mid N'$. Then the pullback diagrams for $N'$ is obtained from the pullback square for $N$ by replacing the bottom left corner $\prod_{d\mid m}(\qdeRham_{R/A}\lotimes_{A[q],\psi^{d}}A[1/N,q])_{\Phi_d(q)}^\complete$ by the pullback square
		\begin{equation*}
			\begin{tikzcd}
				\prod_{d\mid m}\left(\qdeRham_{R/A}\lotimes_{A[q],\psi^d}A\bigl[\localise{N},q\bigr]\right)_{\Phi_d(q)}^\complete\rar\dar\drar[pullback] & \prod_{\ell\vphantom{\mid}}\prod_{d\mid m}\left(\qdeRham_{R/A}\lotimes_{A[q],\psi^d}A\bigl[\localise{N},q\bigr]\right)_{(\ell,\Phi_d(q))}^\complete\dar\\
				\prod_{d\mid m}\left(\qdeRham_{R/A}\lotimes_{A[q],\psi^d}A\bigl[\localise{N'},q\bigr]\right)_{\Phi_d(q)}^\complete \rar &\prod_{\ell\vphantom{\mid}}\prod_{d\mid m}\left(\qdeRham_{R/A}\lotimes_{A[q],\psi^d}A\bigl[\localise{N},q\bigr]\right)_\ell^\complete\bigl[\localise{\ell}\bigr]_{\Phi_d(q)}^\complete
			\end{tikzcd}
		\end{equation*}
		where the product is taken over all primes~$\ell$ such that $\ell\mid N'$ but $\ell\nmid N$. Note that for any such prime we also have $\ell\nmid m$, so each $v_\ell(m/d)=0$ and so each iterated Frobenius $\phi_{\ell/A[q]}^{v_\ell(m/d)}$ is the identity. Moreover, we see that the filtrations we put on the different factors is always $\fil_{\qHodge}^\star$, base changed along $\psi^d$. It follows that the filtrations constructed using $N$ and using $N'$ must indeed agree, as claimed. To get a canonical construction, we can let $N$ vary through a totally ordered initial sub-poset of $\IN$ (like $\{n!\}_{n\geqslant m}$) and then take the limit. This finishes the construction of $\fil_{\qHhodge_m}^\star \qdeRham_{R/A}^{(m)}$.
		
		The construction is clearly functorial. Using~\cref{par:TwistedqHodgeLaxSymmetricMonoidalI} and~\cref{par:TwistedqHodgeLaxSymmetricMonoidalII} we also see that the functor 
		\begin{equation*}
			\fil_{\qHhodge_m}^\star \qdeRham_{-/A}^{(m)}\colon \cat{AniAlg}_A^{\qHodge}\longrightarrow \Mod_{(q^m-1)^\star A[q]}\Bigl( \Fil\Dd\bigl(A[q]\bigr)\Bigr)_{(q^m-1)}^\complete
		\end{equation*}
		comes equipped with a canonical lax symmetric monoidal structure.
	\end{numpar}
	\begin{prop}\label{prop:TwistedqHodgeFiltrationqDeforms}
		For all $m\in\IN$, the equivalence $\qdeRham_{R/A}^{(m)}/(q^m-1)\simeq \qIW_m\deRham_{R/A}$ from the animated version of \cref{prop:TwistedqDeRhamDeformsqDRW} upgrades canonically to an equivalence of filtered $A[q]/(q^m-1)$-modules
		\begin{equation*}
			\fil_{\qHhodge_m}^\star \qdeRham_{R/A}^{(m)}/(q^m-1)\overset{\simeq}{\longrightarrow} \fil_{\Hhodge_m}^\star \qIW_m\deRham_{R/A}
		\end{equation*}
		\embrace{the quotient on the left-hand side is taken in accordance with Convention~\cref{conv:QuotientConvention}}.
	\end{prop}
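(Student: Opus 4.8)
The plan is to upgrade the equivalence $\qdeRham_{R/A}^{(m)}/(q^m-1)\simeq\qIW_m\deRham_{R/A}$ of \cref{prop:TwistedqDeRhamDeformsqDRW} filtration by filtration, by reducing the pullback square that defines $\fil_{\qHhodge_m}^\star\qdeRham_{R/A}^{(m)}$ in \cref{con:TwistedqHodgeFiltration} modulo $(q^m-1)$ (in the sense of Convention~\cref{conv:QuotientConvention}) and identifying the result with a \emph{filtered} arithmetic fracture square for $\fil_{\Hhodge_m}^\star\qIW_m\deRham_{R/A}$. The first step is to produce that filtered square: the ghost maps $\gh_{m/d}$ and the rescaled Frobenii $\overtilde{F}_{m/d}$ are degree-preserving maps of differential graded algebras, hence preserve the stupid filtration $\qIW_m\Omega_{S/A}^{\geqslant\star,*}$, so the arithmetic fracture square for $\qIW_m\Omega_{S/A}$ from \cite[Corollary~\chref{4.37}]{qWitt} lifts termwise to a pullback square in $\Fil\Dd(A[q])$; by quasi-syntomic descent and animation this filtered square computes $\fil_{\Hhodge_m}^\star\qIW_m\deRham_{R/A}$ for all animated $A$-algebras $R$.

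Next I would match the corners of the two squares. On the factors of \cref{con:TwistedqHodgeFiltration}\cref{enum:TwistedqHodgeFiltrationRational} and~\cref{enum:TwistedqHodgeFiltrationpAdicRational}, reduction modulo $(q^m-1)$ agrees with reduction modulo $\Phi_d(q)$ (the complementary cyclotomic factors having been inverted), and since $\psi^d(q-1)=q^d-1$ is divisible by $\Phi_d(q)$, base change along $\psi^d$ kills $q-1$; then \cref{def:qHodgeFiltration}\cref{enum:qHodgeModq-1} identifies the base-changed $q$-Hodge filtration with the base-changed Hodge filtration, which is exactly the bottom and bottom-right corner of the stupid-filtered $\qIW_m$ fracture square. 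On the $p$-complete factor \cref{con:TwistedqHodgeFiltration}\cref{enum:TwistedqHodgeFiltrationpAdic} I would reduce to $m=p^\alpha$ by base change along $\psi^{d_p}$ and invoke \cref{rem:PullbackModqpa-1}, which already gives $\fil_{\qHhodge_{p^\alpha}}^\star(\qdeRham_{R/A}^{(p^\alpha)})_p^\complete/(q^{p^\alpha}-1)\simeq\fil_{\Hhodge_{p^\alpha}}^\star(\qIW_{p^\alpha}\deRham_{R/A})_p^\complete$; base changing along $\psi^{d_p}$ and noting that $(q^m-1)$ and $\Phi_{d_p}(q^{p^\alpha})=\prod_{i\leqslant\alpha}\Phi_{p^id_p}(q)$ differ by a unit after the relevant completion, this reproduces the top-right corner.

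Then I would match the gluing maps: the Frobenius $\phi_{p/A[q]}^{v_p(m/d)}$ must be identified, after reducing modulo $(q^m-1)$, with $\phi_{p/A}^{v_p(m/d)}\cong\overtilde{F}_p^{\,v_p(m/d)}$ compatibly with all the corner identifications above. The filtered statement for a single $\phi_{p/A[q]}$ is precisely \cref{prop:NygaardComparison}, and \cref{lem:HodgevsNygaard} (together with \cref{rem:PullbackModqpa-1}) reconciles the Nygaard filtration appearing there with the stupid filtration; the iterate follows by composition. The lax symmetric monoidal structures from \cref{par:TwistedqHodgeLaxSymmetricMonoidalI} and~\cref{par:TwistedqHodgeLaxSymmetricMonoidalII} make all of this coherent enough to assemble into a genuine morphism of pullback diagrams, and taking total pullbacks yields the asserted filtered equivalence. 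For well-definedness it is enough, exactly as in the proof of \cref{prop:NygaardComparison}, to carry out the construction for $R$ a large $p$-complete quasi-syntomic $A$-algebra (and a smooth $A$-algebra for the rational parts), where all objects are static and all filtrations are descending filtrations by ordinary ideals; in that setting the comparison is forced, automatically functorial, and the general case follows by quasi-syntomic descent and animation.

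\textbf{Main obstacle.} All the genuinely nontrivial input is already contained in \cref{prop:NygaardComparison}; the work here is bookkeeping across the arithmetic fracture squares. The delicate point is to make the corner identifications of the two squares \emph{simultaneously} compatible with the Frobenius gluing maps — that is, to produce an honest morphism of pullback diagrams rather than an unstructured equivalence of the two limits — which is where the interaction between the Nygaard filtration (visible through $\phi_{p/A[q]}$) and the stupid filtration (visible through $\overtilde{F}_p$) on $q$-de Rham--Witt complexes, together with the lax monoidal coherences, has to be tracked carefully.
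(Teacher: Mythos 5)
Your proposal is correct and follows essentially the same route as the paper: reduce the pullback square of \cref{con:TwistedqHodgeFiltration} modulo $(q^m-1)$ factor by factor, using \cref{rem:PullbackModqpa-1} (plus base change along $\psi^{d_p}$) for the $p$-complete factors and the identity $c_{(q-1)}$ from \cref{def:qHodgeFiltration}\cref{enum:qHodgeModq-1} (after base change along $\psi^d$) for the rational ones, then match against the decomposition of $\fil_{\Hhodge_m}^\star\qIW_m\deRham_{R/A}$ supplied by the $q$-de Rham--Witt results. The only stylistic difference is that you explicitly assemble the filtered arithmetic fracture square for $\fil_{\Hhodge_m}^\star\qIW_m\Omega_{S/A}^*$ and verify the gluing maps corner by corner, whereas the paper directly cites the $p$-adic and rational product decompositions of $\fil_{\Hhodge_m}^\star\qIW_m\deRham_{R/A}$ from \cite{qWitt} and leaves the compatibility of the Frobenius gluings implicit (having established it unfiltered in \cref{prop:TwistedqDeRhamDeformsqDRW}); your additional care on that point is reasonable and does not change the substance of the argument.
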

	\begin{proof}[Proof sketch]
		We analyse the effect of $(-)/(q^m-1)$ on each of the factors in \cref{con:TwistedqHodgeFiltration}. For the factors in \cref{con:TwistedqHodgeFiltration}\cref{enum:TwistedqHodgeFiltrationpAdic}, note that $(q^m-1)$ and $\Phi_{d_p}(q^{p^{v_p(m)}})$ will only differ by a unit upon $(p,\Phi_{d_p}(q))$-adic completion. Then the argument in \cref{rem:PullbackModqpa-1} plus base change shows that after modding out $(q^m-1)$ we get
		\begin{equation*}
			\left( \fil_{\Hhodge_{p^{\smash{v_p(m)}}}}^\star \qIW_{p^{\smash{v_p(m)}}}\deRham_{R/A}\lotimes_{A[q],\psi^{d_p}}A[q]\right)_p^\complete/\Phi_{d_p}\bigl(q^{p^{v_p(m)}}\bigr)
		\end{equation*}
		It follows from \cite[Lemma~\chref{4.36}]{qWitt} that $ \fil_{\Hhodge_m}^\star (\qIW_m\deRham_{R/A})_p^\complete$ is indeed a product of factors of this form.
		
		For the factors in \cref{con:TwistedqHodgeFiltration}\cref{enum:TwistedqHodgeFiltrationRational}, note that $(q^m-1)$ and $\Phi_d(q)$ will only differ by a unit after $(-)[1/N]_{\Phi_d(q)}^\complete$. By construction, the $q$-Hodge filtration becomes the Hodge filtration modulo $(q-1)$. Thus, after base change along $\psi^d\colon A[q]\rightarrow A[q]$, we get
		\begin{equation*}
			\left(\fil_{\qHodge}^\star \qdeRham_{R/A}\lotimes_{A[q],\psi^d}A\bigl[\localise{N},q\bigr]\right)_{\Phi_d(q)}^\complete/\Phi_d(q)\simeq \fil_{\Hodge}^\star \deRham_{R/A}\lotimes_{A,\psi^d}A\bigl[\localise{N},\zeta_d\bigr]\,.
		\end{equation*}
		It follows from \cite[Corollary~\chref{3.34}]{qWitt} that $ \fil_{\Hhodge_m}^\star \qIW_m\deRham_{R/A}[1/N]$ is indeed a product of factors of this form. The same argument applies for the factors in \cref{con:TwistedqHodgeFiltration}\cref{enum:TwistedqHodgeFiltrationpAdicRational}.
	\end{proof}
	\begin{rem}
		It follows from the proof that the equivalence in \cref{prop:TwistedqHodgeFiltrationqDeforms} is, in fact, an equivalence of lax symmetric monoidal functors $\cat{AniAlg}_A^{\qHodge}\rightarrow  \Fil\Dd(A[q]/(q^m-1))$. Thus, if $(R, \fil_{\qHodge}^\star \qdeRham_{R/A})$ admits the structure of an $\IE_n$-algebra in $\cat{AniAlg}_A^{\qHodge}$ for any $0\leqslant n\leqslant\infty$, then the equivalence in \cref{prop:TwistedqHodgeFiltrationqDeforms} will be one of filtered $\IE_n$-$A[q]/(q^m-1)$-algebras.
	\end{rem}
	\begin{numpar}[Transition maps.]\label{con:TwistedqHodgeFiltrationFunctorial}
		Whenever $n\mid m$, there's a canonical map of filtered objects
		\begin{equation*}
			\fil_{\qHhodge_m}^\star \qdeRham_{R/A}^{(m)}\longrightarrow  \fil_{\qHhodge_n}^\star \qdeRham_{R/A}^{(n)}\,.
		\end{equation*}
		To construct this, we look at the factors of the pullback from \cref{con:TwistedqHodgeFiltration} (we're allowed to use the same $N$ for both $m$ and $n$). For the factors from \cref{con:TwistedqHodgeFiltration}\cref{enum:TwistedqHodgeFiltrationRational} and~\cref{enum:TwistedqHodgeFiltrationpAdicRational}, we simply project to those where $d\mid n$. For the factors from \cref{con:TwistedqHodgeFiltration}\cref{enum:TwistedqHodgeFiltrationpAdic}, we first project to those where $d_p\mid n_p$ and then we use the maps from \cref{par:TwistedqHodgeLaxSymmetricMonoidalII}, base changed along $\psi^{d_p}\colon A[q]\rightarrow A[q]$.
		
		It's clear from the construction that these maps $ \fil_{\qHhodge_m}^\star \qdeRham_{R/A}^{(m)}\rightarrow  \fil_{\qHhodge_n}^\star \qdeRham_{R/A}^{(n)}$ assemble canonically into a symmetric monoidal transformation of lax symmetric monoidal functors. With some more effort, one can also make these transformations functorial in~$n,m\in\IN$, where $\IN$ denotes the category of natural numbers partially ordered by divisibility. For our purposes, the existence of the individual maps is enough, as any $\limit_{m\in\IN}$ can be replaced by the limit over the sequential subdiagram given by $\{n!\}_{n\geqslant 1}$. We will therefore not spell out the construction of this additional functoriality.
	\end{numpar}
	
	\subsection{Habiro descent for \texorpdfstring{$q$}{q}-Hodge complexes}\label{subsec:qHodgeHabiroDescent}
	In this subsection, we'll finish the proof of \cref{thm:HabiroDescent}, following the outline that we have explained at the end of \cref{subsec:MainResult}. 
	
	\begin{numpar}[The $(q^m-1)$-complete descent.]\label{con:HabiroDescent0}
		For all $m\in\IN$, we consider the colimit
		\begin{equation*}
			\qHhodge_{R/A,m}\coloneqq \colimit\Bigl( \fil_{\qHhodge_m}^{0}\qdeRham_{R/A}^{(m)}\xrightarrow{(q^m-1)} \fil_{\qHhodge_m}^{1}\qdeRham_{R/A}^{(m)}\xrightarrow{(q^m-1)}\dotso\Bigr)_{(q^m-1)}^\complete\,.
		\end{equation*}
		In the following, we'll informally write
		\begin{equation*}
			\qHhodge_{R/A,  m}\simeq \qdeRham_{R/A}^{(m)}\left[\frac{ \fil_{\qHhodge_m}^i}{(q^m-1)^i}\ \middle|\ i\geqslant 1\right]_{(q^m-1)}^\complete
		\end{equation*}
		and we'll say that \emph{$\qHhodge_{R/A,  m}$ is given by adjoining $(q^m-1)^{-\star} \fil_{\qHhodge_m}^\star$ to $\qdeRham_{R/A}^{(m)}$}. We'll also use similar notation and terminology for related filtrations such as the Nygaard filtration or the combined Hodge and $\Phi_d(q)$-adic filtration for some $d\mid m$.
	\end{numpar}
	\begin{prop}\label{prop:HabiroDescent}
		Let $m\in\IN$. For all divisors $n\mid m$, the map from \cref{con:TwistedqHodgeFiltrationFunctorial} induces an equivalence
		\begin{equation*}
			\bigl(\qHhodge_{R/A,  m}\bigr)_{(q^n-1)}^\complete\overset{\simeq}{\longrightarrow}\qHhodge_{R/A,  n}\,. 
		\end{equation*}
		In particular, $\qHhodge_{R/A,  m}$ is a descent of $\qHodge_{R/A}$ along $\IZ[q]_{(q^m-1)}^\complete\rightarrow \IZ\qpower$.
	\end{prop}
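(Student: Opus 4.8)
The plan is to reduce, via the descent principle of \cref{cor:CompleteDescent}, to statements about $\Phi_d(q)$-completions, and then to unwind the explicit pullback description of the twisted $q$-Hodge filtration from \cref{con:TwistedqHodgeFiltration}. Both $(\qHhodge_{R/A,m})_{(q^n-1)}^\complete$ and $\qHhodge_{R/A,n}$ are $(q^n-1)$-complete, so by \cref{cor:CompleteDescent} — applied with the integer $n$ in place of $m$ — it suffices to check that the comparison map becomes an equivalence after $\Phi_d(q)$-completion for every divisor $d\mid n$, together with compatibility with the $p$-adic gluing data for $pd\mid n$; the latter will be immediate from the construction in \cref{con:TwistedqHodgeFiltrationFunctorial}. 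Since $\Phi_d(q)\mid q^m-1$, the $\Phi_d(q)$-completion absorbs the $(q^m-1)$-completion in the definition of $\qHhodge_{R/A,m}$ from \cref{con:HabiroDescent0} (a derived completion only depends on the radical of the defining ideal), so what must be shown is $(\qHhodge_{R/A,m})_{\Phi_d(q)}^\complete\simeq(\qHhodge_{R/A,n})_{\Phi_d(q)}^\complete$ for $d\mid n\mid m$.

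Next I would identify which factors of the pullback in \cref{con:TwistedqHodgeFiltration} survive $\Phi_d(q)$-completion. Since $\Phi_d(q)$ and $\Phi_e(q)$ generate the unit ideal after inverting the primes dividing $d/e$ (for $e\neq d$), every factor in corners~\cref{enum:TwistedqHodgeFiltrationRational} and~\cref{enum:TwistedqHodgeFiltrationpAdicRational} whose index differs from $d$ is killed by $\Phi_d(q)$-completion, and on the surviving index-$d$ factor the operation ``adjoin $(q^m-1)^{-\star}\fil^\star$'' coincides with ``adjoin $(q-1)^{-\star}\fil_{\qHodge}^\star$'' base changed along $\psi^d$, because $q^m-1$, $q^d-1$ and $\Phi_d(q)$ differ by units there; in particular these factors are independent of $m$ and contribute identically to the $m$- and $n$-versions. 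In corner~\cref{enum:TwistedqHodgeFiltrationpAdic} only the factor indexed by the prime-to-$p$ part $d_p$ of $d$ survives, and after base change along $\psi^{d_p}$ the remaining task becomes the purely $p$-adic assertion that, for $\gamma\coloneqq v_p(n)\leqslant\beta\coloneqq v_p(m)$, adjoining $(q^{p^\beta}-1)^{-\star}\fil_{\qHhodge_{p^\beta}}^\star$ to $(\qdeRham_{R/A}^{(p^\beta)})_p^\complete$ and completing at $\Phi_{p^{v_p(d)}}(q)$ recovers the corresponding construction for $p^\gamma$; by transitivity it suffices to treat $\beta=\gamma+1$.

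For this last step I would unwind the recursive pullback of \cref{con:TwistedqHodgeFiltrationpAdic}. The key computation is the identity $(q^{p^\beta}-1)^{-i}\Phi_{p^\beta}(q)^i=(q^{p^{\beta-1}}-1)^{-i}$ (using $q^{p^\beta}-1=\Phi_{p^\beta}(q)(q^{p^{\beta-1}}-1)$), which shows that adjoining $(q^{p^\beta}-1)^{-\star}$ to the $\Phi_{p^\beta}(q)$-rescaled leg $\Phi_{p^\beta}(q)^\star\fil_{\qHhodge_{p^{\beta-1}}}^\star$ exactly undoes the rescaling and produces ``adjoin $(q^{p^{\beta-1}}-1)^{-\star}\fil_{\qHhodge_{p^{\beta-1}}}^\star$ to $(\qdeRham_{R/A}^{(p^{\beta-1})})_p^\complete$'', i.e.\ the $(\beta-1)$-construction. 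It then remains to see that, after adjoining $(q^{p^\beta}-1)^{-\star}\fil_\Nn^\star$ and completing at $\Phi_{p^{\beta-1}}(q)$, the Nygaard leg $\fil_\Nn^\star(\qdeRham_{R/A}^{(p^\beta)})_p^\complete$ and the bottom-right corner of the pullback collapse onto this $(\beta-1)$-construction, so that the pullback itself does. This collapse is the main obstacle; it is the integral refinement of \cref{lem:TwistedqHodgepAdicCompatibilityII} and should be proved by the same method, using the identification of $\fil_\Nn^\star(\qdeRham_{R/A}^{(p^\beta)})_p^\complete/(q^{p^\beta}-1)$ with the Nygaard filtration on $q$-de Rham--Witt complexes (\cref{prop:NygaardComparison}, \cref{rem:PullbackModqpa-1}) together with the divided-Frobenius fibre sequences of \cref{lem:NygaardFibreSequence} and \cref{cor:NygaardCofibreSequence}, so that modulo $\Phi_{p^{\beta-1}}(q)$ the Nygaard and Hodge legs agree once the rescaling has been undone. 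Finally, the $p$-adic gluing compatibility and the symmetric monoidality are formal, since every functor used above was built from (lax) symmetric monoidal ones; and the concluding ``in particular'' follows by taking $n=1$ and passing to the limit over $m$, using $\qHhodge_{R/A,1}\simeq\qHodge_{R/A}$.
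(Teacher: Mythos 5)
Your overall architecture — decomposing into factors by divisors, observing that $(q^m-1)$, $(q^n-1)$, and $\Phi_d(q)$ differ by units on the relevant factors, reducing the $p$-adic factor to a single prime-power jump $\beta\to\beta-1$, and isolating the cancellation $(q^{p^\beta}-1)^{-i}\Phi_{p^\beta}(q)^i=(q^{p^{\beta-1}}-1)^{-i}$ as the mechanism that undoes the rescaling on the bottom-left corner — is essentially the paper's argument, with your $\Phi_d(q)$-decomposition via \cref{cor:CompleteDescent} being a mild reorganization of the paper's direct analysis of the arithmetic-fracture factors. You also correctly locate the remaining obstacle. But your treatment of that obstacle has a descriptive error and, more seriously, a genuine gap.

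The descriptive error: after adjoining $(q^{p^\beta}-1)^{-\star}$ and completing, the right-hand column of the pullback from \cref{con:TwistedqHodgeFiltrationpAdic} does not ``collapse onto the $(\beta-1)$-construction'' — both of its corners must \emph{vanish}. The bottom-right corner becomes $\qdeRham_{R/A}^{(p^{\beta-1})}$ localized at $(q^{p^{\beta-1}}-1)$ and then completed at something dividing it, hence $0$; once the top-right (Nygaard) corner also vanishes, the pullback degenerates to its bottom-left corner, which is what you want.

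The genuine gap is in your claim that the vanishing of the Nygaard corner ``is the integral refinement of \cref{lem:TwistedqHodgepAdicCompatibilityII} and should be proved by the same method.'' It cannot be: the proof of \cref{lem:TwistedqHodgepAdicCompatibilityII} works by inverting $p$, after which $\Phi_{p^\beta}(q)$ becomes a unit, the rescaling and the Nygaard filtration trivialise, and there is nothing left to prove. Integrally $\Phi_{p^\beta}(q)$ is not invertible and that trivialisation is exactly what fails; this is where the content of the proposition lives. The paper isolates the required input as a separate lemma, \cref{lem:NygaardDenominators}: adjoining $\fil_\Nn^i/\Phi_p(q)^i$ to $(\qdeRham_{R/A}^{(p)})_p^\complete$ and completing at $(p,q-1)$ gives $(\qdeRham_{R/A})_p^\complete$, and adjoining $\fil_\Nn^i/(q^p-1)^i$ gives $0$. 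The first assertion uses that the divided Frobenius identifies $\gr_\Nn^i$ with the $i$-th step of the \emph{exhaustive} conjugate filtration on $\qdeRham_{R/A}/\Phi_p(q)$ (from \cite[Theorem~\chref{15.2}]{Prismatic}); the second follows by factoring the colimit over $\IZ_{\geqslant 0}\times\IZ_{\geqslant 0}$ so that the $(q^p-1)^{-\star}$-adjoining becomes ``adjoin $\Phi_p(q)^{-\star}$, then invert $(q-1)$,'' and the latter dies after $(p,q-1)$-completion. Neither step is a consequence of \cref{prop:NygaardComparison}, \cref{lem:NygaardFibreSequence}, or \cref{cor:NygaardCofibreSequence}, which describe what happens \emph{modulo} $(q^{p^\beta}-1)$ rather than under the adjoining-and-completing process. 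Without something like \cref{lem:NygaardDenominators} your proof does not close.
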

	\begin{proof}[Proof sketch]
		Again, we look at the different factors from \cref{con:TwistedqHodgeFiltration}. Let's start with those from \cref{con:TwistedqHodgeFiltration}\cref{enum:TwistedqHodgeFiltrationRational} for some $d\mid m$. If $d\nmid n$, then $\Phi_d(q)$ and $(q^n-1)$ are coprime in $\IQ[q]$ and so the factor will die after $(q^n-1)$-completion. Therefore in $(\qHhodge_{R/A,  m})_{(q^n-1)}^\complete$ only those factors where $d\mid n$ will survive. These are precisely the factors that are also used in the construction of of $\qHhodge_{R/A,  n}$. Moreover, if $d\mid n$ then both $(q^m-1)$ and $(q^n-1)$ are unit multiples of $\Phi_d(q)$ in $\IQ[q]_{\Phi_d(q)}^\complete$, so it doesn't matter whether we adjoin $(q^m-1)^{-\star} \fil_{(\Hodge,\Phi_d(q))}^\star$ or $(q^n-1)^{-\star} \fil_{(\Hodge,\Phi_d(q))}^\star$. It follows that on the factors from \cref{con:TwistedqHodgeFiltration}\cref{enum:TwistedqHodgeFiltrationRational} we get indeed an equivalence. The same argument applies to the factors from \cref{con:TwistedqHodgeFiltration}\cref{enum:TwistedqHodgeFiltrationpAdicRational}.
		
		It remains to show that we also get an equivalence on the factors from \cref{con:TwistedqHodgeFiltration}\cref{enum:TwistedqHodgeFiltrationpAdic}. So let's consider such a factor for some prime~$p$ and some $d_p\mid m_p$. Using induction, we may assume that~$m$ and~$n$ differ only by a single prime factor. If that prime is different from~$p$, then $(q^m-1)$ and $(q^n-1)$ will differ by a unit after $(p,\Phi_{d_p}(q))$-completion and we can argue as above. So assume $n=m/p$. Via base change along $\psi^{d_p}\colon A[q]\rightarrow A[q]$, we may reduce to the case where $m=p^\alpha$ is a prime power and $n=p^{\alpha-1}$. From \cref{con:TwistedqHodgeFiltrationpAdic} we obtain a pullback diagram
		\begin{equation*}
			\begin{tikzcd}[column sep=small]
				\bigl(\qdeRham_{R/A}^{(p^\alpha)}\bigr)\left[\frac{ \fil_{\qHhodge_{p^\alpha}}^i}{(q^{p^\alpha}-1)^i}\ \middle|\ i\geqslant 0\right]_{(p,q^{p^{\alpha-1}}-1)}^\complete\rar\dar\drar[pullback] & \bigl(\qdeRham_{R/A}^{(p^\alpha)}\bigr)\left[\frac{ \fil_\Nn^i}{(q^{p^\alpha}-1)^i}\ \middle|\ i\geqslant 0\right]_{(p,q^{p^{\alpha-1}}-1)}^\complete\dar\\
				\bigl(\qdeRham_{R/A}^{(p^{\alpha-1})}\bigr)\left[\frac{ \fil_{\qHhodge_{p^{\alpha-1}}}^i}{(q^{p^{\alpha-1}}-1)^i}\ \middle|\ i\geqslant 0\right]_{(p,q^{p^{\alpha-1}}-1)}^\complete\rar & \bigl(\qdeRham_{R/A}^{(p^{\alpha-1})}\bigr)\left[\frac{1}{(q^{p^{\alpha-1}}-1)}\right]_{(p,q^{p^{\alpha-1}}-1)}^\complete
			\end{tikzcd}
		\end{equation*}
		To finish the proof, we must show that the left vertical arrow is an equivalence. Since the diagram is a pullback, it will be enough to show that the right vertical arrow is an equivalence.%
		\footnote{Also note that the bottom right corner vanishes, so it will follow that the top right corner vanishes as well. But this will be irrelevant for our argument.}

		This is now purely an assertion about the Nygaard filtration. Via base change, we may reduce to the case $\alpha=1$. This case will be shown in \cref{lem:NygaardDenominators} below.
	\end{proof}
	\begin{lem}\label{lem:NygaardDenominators}
		The relative Frobenius $\phi_{p/A[q]}\colon \bigl(\qdeRham_{R/A}^{(p)}\bigr)_p^\complete\rightarrow (\qdeRham_{R/A})_p^\complete$ induces functorial equivalences
		\begin{align*}
			\bigl(\qdeRham_{R/A}^{(p)}\bigr)_p^\complete\left[\frac{ \fil_\Nn^i}{\Phi_p(q)^i}\ \middle|\ i\geqslant 0\right]_{(p,q-1)}^\complete&\overset{\simeq}{\longrightarrow} \left(\qdeRham_{R/A}\right)_p^\complete\,,\\	\bigl(\qdeRham_{R/A}^{(p)}\bigr)_p^\complete\left[\frac{ \fil_\Nn^i}{(q^p-1)^i}\ \middle|\ i\geqslant 0\right]_{(p,q-1)}^\complete&\overset{\simeq}{\longrightarrow} \left(\qdeRham_{R/A}\right)_p^\complete\left[\frac{1}{(q-1)}\right]_{(p,q-1)}^\complete\simeq 0\,.
		\end{align*}
	\end{lem}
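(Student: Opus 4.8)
The plan is to dispose of the second equivalence first, as it is essentially formal, and then to recognise the first equivalence as the statement that applying the Berthelot--Ogus décalage functor and afterwards ``inverting the décalage filtration'' returns the original complex.

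For the second equivalence both sides turn out to be zero. The target vanishes because $\bigl(\qdeRham_{R/A}\bigr)_p^\complete$ is derived $(q-1)$-complete, so $\bigl(\qdeRham_{R/A}\bigr)_p^\complete\left[\frac{1}{(q-1)}\right]$ is $(q-1)$-local and has vanishing derived $(q-1)$-, a fortiori $(p,q-1)$-, completion. For the source, recall from \cref{par:NygaardOnqDeRham} that $ \fil_\Nn^\star $ is a filtered module over the filtered ring $\Phi_p(q)^\star A[q]$; since $q^p-1=\Phi_p(q)(q-1)$, every structure map $ \fil_\Nn^i\xrightarrow{q^p-1} \fil_\Nn^{i+1}$ in the colimit defining $\bigl(\qdeRham_{R/A}^{(p)}\bigr)_p^\complete\left[\frac{ \fil_\Nn^i}{(q^p-1)^i}\ \middle|\ i\geqslant 0\right]$ factors through $ \fil_\Nn^i\xrightarrow{\Phi_p(q)} \fil_\Nn^{i+1}\xrightarrow{q-1} \fil_\Nn^{i+1}$ (the first arrow uses the $\Phi_p(q)^\star A[q]$-module structure, the second only the $A[q]$-module structure). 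Interleaving these factorisations -- equivalently, using that sequential colimits commute with each other -- identifies this object with $\bigl(\qdeRham_{R/A}^{(p)}\bigr)_p^\complete\left[\frac{ \fil_\Nn^i}{\Phi_p(q)^i}\ \middle|\ i\geqslant 0\right]\left[\frac{1}{(q-1)}\right]$, which is again $(q-1)$-local and so has vanishing $(p,q-1)$-completion. Thus the second equivalence is the tautological $0\simeq 0$, and the Frobenius-induced map is the unique such equivalence.

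For the first equivalence I would reduce, exactly as in the proofs of \cref{prop:NygaardComparison} and \cref{lem:NygaardRationalisation}, to the case where $R$ is a large $p$-complete quasi-syntomic $A$-algebra -- the assertion being compatible with quasi-syntomic descent by \cref{cor:NygaardQuasisyntomicDescent} (together with the fact that the Rees-type construction $ \fil^\star\mapsto\colimit\bigl( \fil^0\xrightarrow{\Phi_p(q)} \fil^1\xrightarrow{\Phi_p(q)}\dotsb\bigr)$ followed by $(p,q-1)$-completion preserves quasi-syntomic sheaves) and with animation. Say $R=R'/(f_1,\dotsc,f_r)$ with $R'$ perfectoid and $(f_1,\dotsc,f_r)$ a $p$-completely regular sequence. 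Then $\bigl(\qdeRham_{R/A}\bigr)_p^\complete\simeq\Prism_{T/B}$ -- with $B=\widehat A_p\qpower$, $J=([p]_q)=(\Phi_p(q))$, $T=\widehat R_p[\zeta_p]$ -- is a \emph{static} $\Phi_p(q)$-torsion-free ring, as one sees from the explicit prismatic envelopes by reducing to $R=\IZ_p\langle x^{1/p^\infty}\rangle/x$ and products thereof as in \cref{prop:NygaardComparison}. Since $\L\eta_{\Phi_p(q)}$ of a $\Phi_p(q)$-torsion-free module concentrated in degree $0$ is that module again (immediate from the explicit $\eta$-formula of \cite[\S{\chref[section]{6}}]{BMS1}), the relative Frobenius is an isomorphism $\phi_{/A[q]}\colon\bigl(\qdeRham_{R/A}^{(p)}\bigr)_p^\complete\xrightarrow{\simeq}\L\eta_{\Phi_p(q)}\bigl(\qdeRham_{R/A}\bigr)_p^\complete=\bigl(\qdeRham_{R/A}\bigr)_p^\complete$, and under it the Nygaard filtration $ \fil_\Nn^\star\bigl(\qdeRham_{R/A}^{(p)}\bigr)_p^\complete$ becomes the $\Phi_p(q)$-adic filtration $\Phi_p(q)^\star\bigl(\qdeRham_{R/A}\bigr)_p^\complete$, by the definition of the Nygaard filtration in \cref{par:NygaardOnqDeRham}.

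With this identification the remaining point is elementary: adjoining $\Phi_p(q)^{-\star}$ to the $\Phi_p(q)$-adic filtration of a $\Phi_p(q)$-torsion-free module recovers the module. Dividing the $i$-th term $\Phi_p(q)^i\bigl(\qdeRham_{R/A}\bigr)_p^\complete$ by $\Phi_p(q)^i$ (injective, by torsion-freeness) turns the tower into the constant tower on $\bigl(\qdeRham_{R/A}\bigr)_p^\complete$, whose colimit is $\bigl(\qdeRham_{R/A}\bigr)_p^\complete$; this is already $(p,q-1)$-complete, and chasing the identifications shows that the map induced by $\phi_{p/A[q]}$ is the identity. (Alternatively one can argue directly in the smooth case: for $S$ smooth over $A$ with an étale framing $\square$, $\bigl(\qdeRham_{S/A}\bigr)_p^\complete$ is represented by the explicit complex $\bigl(\qOmega_{S/A,\square}^*\bigr)_p^\complete$ of $\Phi_p(q)$-torsion-free modules, and $\L\eta_{\Phi_p(q)}$, its décalage filtration and the colimit in question are all computed term by term, the point reducing to the identity $\bigcup_i\Phi_p(q)^{-i}\bigl(\Phi_p(q)^iM^n\cap\d^{-1}(\Phi_p(q)^iM^{n+1})\bigr)=M^n$.) I expect the only delicate steps to be the compatibility of the Rees-type construction with quasi-syntomic descent and animation, and -- after the reduction -- the identification of the Nygaard filtration with the $\Phi_p(q)$-adic one; everything else is bookkeeping with sequential colimits.
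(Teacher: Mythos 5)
Your argument is correct but takes a genuinely different route from the paper's, in both halves. For the first equivalence, the paper does \emph{not} pass through the static torsion-free picture at all: it checks the map modulo $\Phi_p(q)$ (legitimate because both sides are $\Phi_p(q)$-complete), observes as in \cref{par:ConjugateFiltration} that the source becomes $\colimit(\gr_\Nn^0\xrightarrow{\Phi_p(q)}\gr_\Nn^1\to\dotsb)$, and then invokes that the divided Frobenius identifies $\gr_\Nn^i$ with $\fil_i^{\mathrm{conj}}(\qdeRham_{R/A}/\Phi_p(q))$ and that the conjugate filtration is exhaustive. This works for all animated $R$ in one stroke, without ever needing the Rees colimit itself to satisfy descent. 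Your reduction to large quasi-syntomic rings and the identification of the Nygaard filtration with the $\Phi_p(q)$-adic filtration there is also correct (it is confirmed, e.g., by the explicit formula for $\fil_\Nn^n$ in the proof of \cref{lem:NygaardRationalisation}, and rests on the fact — worth stating explicitly — that the décalage filtration on $\L\eta_f M$ for $M$ static and $f$-torsion-free concentrated in degree $0$ is the $f$-adic filtration), and the final Rees-colimit computation is elementary; but the flagged step that ``the Rees-type construction followed by $(p,q-1)$-completion preserves quasi-syntomic sheaves'' is precisely what the paper's mod-$\Phi_p(q)$ argument is engineered to avoid, so you have traded the conjugate-filtration input for a nontrivial descent verification. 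For the second equivalence, your direct ``both sides vanish because they are $(q-1)$-local'' argument (via interleaving the factorisation $q^p-1=\Phi_p(q)(q-1)$) is a small simplification of the paper's argument, which organises the same factorisation into a $\IZ_{\geqslant 0}\times\IZ_{\geqslant 0}$-indexed diagram and then feeds the first equivalence into each row before taking the vertical $(q-1)$-colimit; your version is slightly more self-contained since it doesn't require the first equivalence.
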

	\begin{proof}
		We start with the first equivalence. Since both sides are $\Phi_p(q)$-complete, it will be enough to show the equivalence modulo~$\Phi_p(q)$. The same argument as in \cref{par:ConjugateFiltration} shows
		\begin{equation*}
			\bigl(\qdeRham_{R/A}^{(p)}\bigr)_p^\complete\left[\frac{ \fil_\Nn^i}{\Phi_p(q)^i}\ \middle|\ i\geqslant 0\right]/\Phi_p(q)\simeq \colimit\Bigl(\gr_\Nn^0\xrightarrow{\Phi_p(q)}\gr_\Nn^1\xrightarrow{\Phi_p(q)}\dotsb\Bigr)\,.
		\end{equation*}
		The divided Frobenius $\Phi_p(q)^{-i}\phi_{p/A[q]}$ maps $\gr_\Nn^i$ isomorphically onto $ \fil_i^\mathrm{conj}(\qdeRham_{R/A}/\Phi_p(q))$ (by \cite[Theorem~\chref{15.2}]{Prismatic} plus quasi-syntomic descent and animation to cover all animated $A$-algebras~$R$). Since the conjugate filtration is exhaustive, this shows the first of the two claimed equivalences.
		
		For the second equivalence, note that the inclusion of the diagonal into any $\IZ_{\geqslant 0}\times \IZ_{\geqslant 0}$-shaped diagram is coinitial. Therefore, we can write
		\begin{equation*}
			\bigl(\qdeRham_{R/A}^{(p)}\bigr)_p^\complete\left[\frac{ \fil_\Nn^i}{(q^p-1)^i}\ \middle|\ i\geqslant 0\right]\simeq \colimit\left(\begin{tikzcd}
				\fil_\Nn^0\dar["(q-1)"]\rar["\Phi_p(q)"] &  \fil_\Nn^1\dar["(q-1)"]\rar["\Phi_p(q)"] & \dotsb \\
				\fil_\Nn^0\dar["(q-1)"]\rar["\Phi_p(q)"] &  \fil_\Nn^1\dar["(q-1)"]\rar["\Phi_p(q)"] & \dotsb \\
				\vdots & \vdots & \ddots
			\end{tikzcd}\right)
		\end{equation*}
		By the first equivalence, the $(p,q-1)$-completed colimit of every row in this diagram is $(\qdeRham_{R/A})_p^\complete$. If we then take the colimit in the vertical direction, the second equivalence follows and we're done
	\end{proof}
	
	\begin{numpar}[The Habiro descent]\label{con:HabiroDescent}
		Let $(R, \fil_{\qHodge}^\star \qdeRham_{R/A})$ be an object in $\cat{AniAlg}_A^{\qHodge}$. We define the \emph{Habiro--Hodge complex of $R$ over $A$} to be
		\begin{equation*}
			\qHhodge_{R/A}\coloneqq \limit_{m\in\IN}\qHhodge_{R/A,  m}\,.
		\end{equation*}
		The same argument as in the proof of \cref{lem:qHodgeSymmetricMonoidal} allows us to equip $\qHhodge_{-/A,  m}$ with a lax symmetric monoidal structure for all $m\in\IN$; thanks to \cref{con:TwistedqHodgeFiltrationFunctorial}, the equivalences from \cref{prop:HabiroDescent} will be compatible with this lax symmetric monoidal structure. It follows that there's a diagram of lax symmetric monoidal functors
		\begin{equation*}
			\begin{tikzcd}[column sep=huge]
				& \widehat{\Dd}_\Hh\bigl(A[q]\bigr)\dar["(-)_{(q-1)}^\complete"]\\
				\cat{AniAlg}_A^{\qHodge}\rar["\qHodge_{-/A}"']\urar[dashed,"\qHhodge_{-/A}"] & \widehat{\Dd}_{(q-1)}\bigl(A[q]\bigr)
			\end{tikzcd}
		\end{equation*}
	\end{numpar}
	\begin{lem}\label{lem:HabiroDescentSymmetricMonoidal}
		The lax symmetric monoidal functor $\qHhodge_{-/A}\colon \cat{AniAlg}_A^{\qHodge}\rightarrow \widehat{\Dd}_\Hh(A[q])$ is, in fact, symmetric monoidal.
	\end{lem}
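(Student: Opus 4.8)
The plan is to mirror the endgame of the proof of \cref{lem:qHodgeSymmetricMonoidal}. The lax symmetric monoidal structure on $\qHhodge_{-/A}$ is already in place by \cref{con:HabiroDescent}, compatibly — after $(q-1)$-completion — with the symmetric monoidal structure on $\qHodge_{-/A}$ from \cref{lem:qHodgeSymmetricMonoidal}, so it only remains to check that for all $(R_1,\fil_{\qHodge}^\star\qdeRham_{R_1/A})$, $(R_2,\fil_{\qHodge}^\star\qdeRham_{R_2/A})$ in $\cat{AniAlg}_A^{\qHodge}$ the lax structure map
\[
\mu\colon \bigl(\qHhodge_{R_1/A}\lotimes_{A[q]}\qHhodge_{R_2/A}\bigr)_{\Hh}^\complete\longrightarrow \qHhodge_{(R_1\lotimes_AR_2)/A}
\]
is an equivalence in $\widehat{\Dd}_\Hh(A[q])$. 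Both sides are Habiro-complete, so by the Habiro--Nakayama result \cref{cor:HabiroCompleteDerivedNakayama} it suffices to check that $\mu/(q^m-1)$ is an equivalence for every $m\in\IN$. Using $(\qHhodge_{R/A})_{(q^m-1)}^\complete\simeq \qHhodge_{R/A,  m}$ from \cref{prop:HabiroDescent} and the identity $(M\lotimes_{A[q]}N)/(q^m-1)\simeq M/(q^m-1)\lotimes_{A[q]/(q^m-1)}N/(q^m-1)$, this amounts to checking that a certain map
\[
\qHhodge_{R_1/A,  m}/(q^m-1)\lotimes_{A[q]/(q^m-1)}\qHhodge_{R_2/A,  m}/(q^m-1)\longrightarrow \qHhodge_{(R_1\lotimes_AR_2)/A,  m}/(q^m-1)
\]
is an equivalence.

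Next I would bring in the exhaustive ascending $\qIW_m\Omega$-filtration on $\qHhodge_{-/A,  m}/(q^m-1)$ constructed in the proof of \cref{thm:HabiroDescent}\cref{enum:qdRWComparison} (for $m=1$ this is the conjugate filtration of \cref{par:ConjugateFiltration}). As there, and as already noted in the proof outline of \cref{thm:HabiroDescent}, this filtration — arising from the colimit presentation $\qHhodge_{R/A,  m}/(q^m-1)\simeq \colimit(\gr_{\qHhodge_m}^{0}\xrightarrow{(q^m-1)}\gr_{\qHhodge_m}^{1}\rightarrow\dotsb)$ via the (lax) symmetric monoidal operations used in \cref{lem:FiltrationAbstract} — carries a lax symmetric monoidal structure compatible with the one on $\qHhodge_{-/A,  m}/(q^m-1)$. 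Since the filtration is exhaustive and concentrated in non-negative degrees, strict symmetric monoidality, hence the equivalence of the displayed map above, can be checked on the associated graded. By \cref{thm:HabiroDescent}\cref{enum:qdRWComparison} the associated graded is
\[
\gr_*^{\qIW_m\Omega}\bigl(\qHhodge_{-/A,  m}/(q^m-1)\bigr)\simeq \Sigma^{-*}\qIW_m\deRham_{-/A}^*
\]
as lax symmetric monoidal functors, so everything comes down to the statement that $\Sigma^{-*}\qIW_m\deRham_{-/A}^*$ is a \emph{strictly} symmetric monoidal functor $\cat{AniAlg}_A\rightarrow \Gr\Dd(A[q]/(q^m-1))$.

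This last statement is the $q$-de Rham--Witt analogue of the classical fact, used in \cref{lem:qHodgeSymmetricMonoidal}, that $\gr_{\Hodge}^*\deRham_{-/A}\simeq\bigoplus_n\Sigma^{-n}\wedge^nL_{-/A}$ is symmetric monoidal; this already handles the case $m=1$, where $\qIW_1\deRham_{-/A}\simeq\deRham_{-/A}$. For general $m$ I would reduce to it using the structure of $q$-de Rham--Witt complexes: after the arithmetic fracture square \cite[Corollary~\chref{4.37}]{qWitt}, the rational splitting \cite[Corollary~\chref{3.34}]{qWitt}, and the $p$-adic prismatic description for the primes $p\mid m$, the functor $\Sigma^{-*}\qIW_m\deRham_{-/A}^*$ becomes, Zariski-locally, a finite product of Frobenius-twisted copies of $\gr_{\Hodge}^*\deRham_{-/A}$, and strict monoidality there again follows from the exterior-power identity $\wedge^*(M\oplus N)\simeq \wedge^*M\otimes\wedge^*N$ applied to $L_{(R_1\lotimes_AR_2)/A}\simeq L_{R_1/A}\oplus L_{R_2/A}$. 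I expect the main — though ultimately routine — obstacle to be exactly this verification: matching the symmetric monoidal structure on each piece of the fracture square with the one on $\qIW_m\deRham_{-/A}^*$ and checking the coherences. Once it is in place, the associated graded, hence the $\qIW_m\Omega$-filtration, hence $\qHhodge_{-/A,  m}/(q^m-1)$, and finally $\qHhodge_{-/A}$ are strictly symmetric monoidal, completing the proof of \cref{thm:HabiroDescent}\cref{enum:HabiroDescent}.
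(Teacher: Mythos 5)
Your reduction to checking $\qHhodge_{-/A}/(q^m-1)$ for all $m$, and the idea of using the $\qIW_m\Omega$-filtration and passing to its associated graded $\Sigma^{-*}\qIW_m\deRham_{-/A}^*$, matches the shape of the paper's proof. But there is a genuine gap at the final step: you assert that $\Sigma^{-*}\qIW_m\deRham_{-/A}^*\simeq \gr_{\Hhodge_m}^*\qIW_m\deRham_{-/A}$ is a \emph{strictly} symmetric monoidal functor $\cat{AniAlg}_A\rightarrow \Gr\Dd(A[q]/(q^m-1))$, and this is false. Already in degree $*=0$ the functor is $\qIW_m(-/A)$, and truncated ($q$-)Witt vectors are not a tensor functor (e.g.\ for $p$-adic Witt vectors, $V(xy)\in\IW_2(\IF_p[x,y])$ is not in the image of $\IW_2(\IF_p[x])\otimes_{\IW_2(\IF_p)}\IW_2(\IF_p[y])$). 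Your hope that the arithmetic fracture square lets you glue symmetric monoidal structures from the rational product decomposition and the $p$-adic prismatic description does not go through: the rational splitting into factors indexed by $d\mid m$ does not descend integrally, and the $p$-adic gluing along Frobenii is exactly where the failure of monoidality lives. This is not a routine coherence check but a true obstruction.

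The paper's fix, which you are missing, is an extra localisation before comparing with the Hodge filtration: it reduces not to $\qHhodge_{-/A}/(q^m-1)$ but to the localisation $\qHhodge_{-/A}/(q^m-1)\bigl[\{(q^d-1)^{-1}\}_{d\mid m,\,d\neq m}\bigr]$, for which the same filtration argument applies. After inverting $(q^d-1)$ for all proper divisors $d\mid m$, all the lower $q$-de Rham--Witt complexes $\qIW_d\Omega^*$ ($d\mid m$, $d\neq m$) die (being $(q^d-1)$-torsion), and by comparing universal properties the first ghost map
\begin{equation*}
	\gh_1\colon \fil_{\Hhodge_m}^\star\qIW_m\deRham_{-/A}\bigl[\{(q^d-1)^{-1}\}_{d\mid m,\,d\neq m}\bigr]\overset{\simeq}{\longrightarrow}\fil_{\Hodge}^\star\deRham_{-/A}\lotimes_{A,\psi^m}A\bigl[\zeta_m,\{(\zeta_m^d-1)^{-1}\}_{d\mid m,\,d\neq m}\bigr]
\end{equation*}
becomes an isomorphism of filtered complexes, so you land in a base change of the Hodge filtration, which \emph{is} symmetric monoidal. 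Without this localisation step your argument breaks at exactly the point you flag as ``routine''.
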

	\begin{proof}
		It will be enough to show that for all $m\in\IN$ the functor
		\begin{equation*}
			\qHhodge_{-/A}/(q^m-1)\left[\bigl\{(q^d-1)^{-1}\}_{d\mid m,\, d\neq m}\right]
		\end{equation*}
		is symmetric monoidal. The same argument as in the proof of \cref{lem:qHodgeSymmetricMonoidal} allows us to equip the filtration $ \fil_\star ^{\qIW_m\Omega}(\qHhodge_{-/A}/(q^m-1))$ from \cref{thm:HabiroDescent}\cref{enum:qdRWComparison} with a lax symmetric monoidal structure. Symmetric monoidality can then be checked on the associated graded
		\begin{equation*}
			\gr_*^{\qIW_m\Omega}\bigl(\qHhodge_{-/A}/(q^m-1)\bigr)\simeq \Sigma^{-*}\qIW_m\deRham_{-/A}^*\simeq \gr_{\Hhodge_m}^*\qIW_m\deRham_{-/A}\,.
		\end{equation*}
		Thus, it would be enough to show that $ \fil_{\Hhodge_m}^\star \qIW_m\deRham_{-/A}$ is symmetric monoidal. This is not true on the nose. However, once we invert $(q^d-1)$ for all divisors $d\mid m$, $d\neq m$, we claim that the first ghost map
		\begin{equation*}
			\gh_1\colon  \fil_{\Hhodge_m}^\star \qIW_m\deRham_{-/A}\overset{\simeq}{\longrightarrow} \fil_{\Hodge}^\star \deRham_{-/A}\lotimes_{A,\psi^m}A[\zeta_m]
		\end{equation*}
		becomes an equivalence. If we can show this, we're done, since the Hodge filtration $ \fil_{\Hodge}^\star \deRham_{-/A}$ is symmetric monoidal.
		
		To prove this claim, observe that for any ordinary $R$-algebra $A$ and any $d\mid m$, $d\neq m$, the $q$-de Rham--Witt complex $\qIW_d\Omega_{R/A}^*$ is $(q^d-1)$-torsion and so it dies after inverting $(q^d-1)$. With this observation, a simple comparison of universal properties (compare the argument in \cite[Lemma~\chref{4.5}]{qWitt}) shows that
		\begin{equation*}
			\gh_1\colon \qIW_m\Omega_{R/A}^*\left[\bigl\{(q^d-1)^{-1}\}_{d\mid m,\, d\neq m}\right]\overset{\cong}{\longrightarrow} \Omega_{R/A}^*\otimes_{A,\psi^m}A\left[\zeta_m,\bigl\{(\zeta_m^d-1)^{-1}\}_{d\mid m,\, d\neq m}\right]
		\end{equation*}
		is an isomorphism of complexes. In particular, it induces an isomorphism on stupid filtrations. By passing to animations, the above claim about $\gh_1$ follows and so we're done.
	\end{proof}
	At this point, we've assembled all the ingredients to carry out the proof of \cref{thm:HabiroDescent} as outlined at the end of \cref{subsec:MainResult}, and so the proof is finally finished.
	
	\subsection{Habiro descent for \texorpdfstring{$q$}{q}-de Rham complexes}\label{subsec:qdeRhamHabiroDescent}
	
	In this short subsection, we discuss to what extent the $q$-de Rham complex $\qdeRham_{R/A}$ can or cannot be descended to the Habiro ring. Let's start with the case that works.
	\begin{prop}\label{prop:Letaq-1}
		Let $(S, \fil_{\qHodge}^\star\qdeRham_{S/A})\in \cat{AniAlg}_A^{\qHodge}$ be an object such that $S$ is a smooth $A$-algebra. Then:
		\begin{alphanumerate}
			\item $\qOmega_{S/A}\simeq \qhatdeRham_{S/A}$ is the completion of $\qdeRham_{S/A}$ at the $q$-Hodge filtration $ \fil_{\qHodge}^\star$.\label{enum:qHodgeCompletion}
			\item We have $\L\eta_{(q-1)}\qHodge_{S/A}\simeq \qOmega_{R/A}$. In particular, $\L\eta_{(q-1)}\qHhodge_{S/A}$ is a Habiro descent of $\qOmega_{S/A}$.\label{enum:Letaq-1}
		\end{alphanumerate}
	\end{prop}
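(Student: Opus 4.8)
The plan is to prove~\cref{enum:qHodgeCompletion} by comparing arithmetic fracture squares, and then to deduce~\cref{enum:Letaq-1} from it together with an explicit décalage computation. For~\cref{enum:qHodgeCompletion}: both $\qhatdeRham_{S/A}$ (the completion of $\qdeRham_{S/A}$ at the $q$-Hodge filtration) and $\qOmega_{S/A}$ are $(q-1)$-complete, so it is enough to identify their $p$-completions and their completed rationalisations compatibly. For the $p$-complete corner, I would observe that modulo~$(p,q-1)$ the $q$-Hodge filtration becomes the Hodge filtration on $\deRham_{S/A}\lotimes_\IZ\IF_p$, whose associated graded $\Sigma^{-n}\Omega_{(S/p)/(A/p)}^n$ vanishes for $n>\dim(S/A)$; hence this filtration is complete, and since everything is $p$-complete the $p$-completed $q$-Hodge filtration is itself complete, so its completion is just $(\qdeRham_{S/A})_p^\complete$. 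This agrees with $(\qOmega_{S/A})_p^\complete$ because the derived and underived $q$-de Rham complexes coincide $p$-adically for smooth~$S$ (both compute prismatic cohomology; see~\cref{appendix:GlobalqDeRham}). For the rational and $p$-rational corners, \cref{def:qHodgeFiltration}\cref{enum:qHodgeRational} and~\cref{enum:qHodgeRationalpComplete} identify the $q$-Hodge filtration with the combined Hodge and $(q-1)$-adic filtration, whose completion replaces $\deRham_{S/A}$ by its Hodge completion, which rationally (and $p$-rationally) is $\Omega_{S/A}^*$ — exactly the rationalisations of $\qOmega_{S/A}$ occurring in the global $q$-de Rham gluing. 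The compatibility data packaged into~\cref{def:qHodgeFiltration} is precisely what is needed to assemble these identifications into a morphism of arithmetic fracture squares, so gluing gives $\qhatdeRham_{S/A}\simeq\qOmega_{S/A}$.

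For~\cref{enum:Letaq-1} I would first use part~\cref{enum:qHodgeCompletion} together with~\cref{rem:CompleteFiltration} to rewrite the $q$-Hodge complex in terms of the (now complete) $q$-Hodge filtration on $\qOmega_{S/A}$ itself,
\[
	\qHodge_{S/A}\simeq\colimit\Bigl(\fil_{\qHodge}^0\qOmega_{S/A}\xrightarrow{(q-1)}\fil_{\qHodge}^1\qOmega_{S/A}\xrightarrow{(q-1)}\dotsb\Bigr)_{(q-1)}^\complete\,.
\]
Since the Berthelot--Ogus functor $\L\eta_{(q-1)}$ commutes with filtered colimits and with $(q-1)$-completion (\cite{BMS1}), and since both $\L\eta_{(q-1)}\qHodge_{S/A}$ and $\qOmega_{S/A}$ are $(q-1)$-complete, it suffices to produce a natural comparison map and to check that it becomes an equivalence modulo~$(q-1)$. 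Modulo~$(q-1)$ the differential of $\qHodge_{S/A}$ is divisible by $(q-1)$, hence vanishes, so $\qHodge_{S/A}/(q-1)$ is assembled (via the conjugate filtration of~\cref{par:ConjugateFiltration}, \cref{lem:ConjugateFiltration}) from the pieces $\Sigma^{-n}\Omega_{S/A}^n$, and the associated $(q-1)$-Bockstein is the de Rham differential by the $q$-deformation condition~\cref{def:qHodgeFiltration}\cref{enum:qHodgeModq-1}; the standard description of $\L\eta$ in terms of Bocksteins then gives $\L\eta_{(q-1)}\qHodge_{S/A}/(q-1)\simeq\Omega_{S/A}^*\simeq\qOmega_{S/A}/(q-1)$.

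The most concrete way I would pin down the comparison map and the Bockstein identification is to reduce, via Zariski descent for $\qOmega_{-/A}$, to the coordinate-dependent $q$-Hodge filtration of~\cref{exm:HabiroDescentCoordinateCase} (cf.~\cref{par:qHodgeFiltrationInCoordinates}), where $\qHodge_{S/A,\square}^*=\bigl(\Omega_{S/A}^\bullet\qpower,(q-1)\q\nabla\bigr)$ and the Berthelot--Ogus formula shows at once that the degree-$n$ term of $\L\eta_{(q-1)}\qHodge_{S/A,\square}^*$ is $(q-1)^n\Omega_{S/A}^n\qpower\cong\Omega_{S/A}^n\qpower$, with the differential $(q-1)\q\nabla$ conjugated by these rescalings back to $\q\nabla$; thus $\L\eta_{(q-1)}\qHodge_{S/A,\square}^*\cong\qOmega_{S/A,\square}^*$. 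I expect the main obstacle to be supplying the input that makes this reduction legitimate, namely that $\L\eta_{(q-1)}\qHodge_{S/A}$ is independent of the chosen $q$-Hodge filtration — equivalently, that after $\L\eta_{(q-1)}$ the $q$-Hodge filtration becomes the intrinsic décalage filtration on $\qOmega_{S/A}$ — which I would extract from the fact that the comparison map is determined by its reduction modulo~$(q-1)$. Granting this, the final assertion is immediate: by~\cref{thm:HabiroDescent} we have $(\qHhodge_{S/A})_{(q-1)}^\complete\simeq\qHodge_{S/A}$, so
\[
	\bigl(\L\eta_{(q-1)}\qHhodge_{S/A}\bigr)_{(q-1)}^\complete\simeq\L\eta_{(q-1)}\bigl(\qHhodge_{S/A}\bigr)_{(q-1)}^\complete\simeq\L\eta_{(q-1)}\qHodge_{S/A}\simeq\qOmega_{S/A}\,,
\]
exhibiting $\L\eta_{(q-1)}\qHhodge_{S/A}$ as a Habiro descent of $\qOmega_{S/A}$.
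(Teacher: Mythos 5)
Your argument for part~\cref{enum:qHodgeCompletion} is essentially the same as the paper's: both sides are $(q-1)$-complete, so one identifies the $p$-completions (where everything agrees because the derived, underived, and Hodge-completed de Rham complexes coincide $p$-adically for smooth algebras) and the rationalisations (where the $q$-Hodge filtration becomes the combined Hodge and $(q-1)$-adic filtration by \cref{def:qHodgeFiltration}\cref{enum:qHodgeRational}), and then uses the compatibility data from \cref{def:qHodgeFiltration}\cref{enum:qHodgeRationalpComplete} to glue. Your variant of noting that the $p$-completed $q$-Hodge filtration is already complete (by finiteness of the Hodge filtration mod~$p$) is a fine alternative to the paper's observation that the maps $\qdeRham_{S/A}\to\qOmega_{S/A}$ and $\qdeRham_{S/A}\to\qhatdeRham_{S/A}$ both become equivalences $p$-adically.

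For part~\cref{enum:Letaq-1} there is a genuine gap, and you have yourself pointed at it: you do not construct the comparison map, and your proposal to extract it by Zariski-reducing to the coordinate-dependent $q$-Hodge filtration is circular, since that reduction requires precisely what you are trying to prove (that $\L\eta_{(q-1)}\qHodge_{S/A}$ is independent of the choice of $q$-Hodge filtration). Knowing that both $\L\eta_{(q-1)}\qHodge_{S/A}$ and $\qOmega_{S/A}$ reduce mod~$(q-1)$ to $\Omega_{S/A}^*$ does not by itself produce a map between them, and your Bockstein identification is likewise an abstract isomorphism of complexes, not a morphism in $\widehat{\Dd}_{(q-1)}(A\qpower)$. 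The missing idea is to observe that the comparison map already exists \emph{by construction}: the map $\qdeRham_{S/A}\to\qHodge_{S/A}$ factors through the $q$-Hodge completion (each $\fil_{\qHodge}^i$ becomes divisible by $(q-1)^i$ in the $(q-1)$-complete target), giving a canonical map of filtered objects from $\fil_{\qHodge}^\star\qhatdeRham_{S/A}$, extended in negative degrees by $(q-1)^{-\star}\qhatdeRham_{S/A}$, to the $(q-1)$-adic filtration on $\qHodge_{S/A}$. One then shows this exhibits the source as the connective cover of the target in the Beilinson $t$-structure: connectivity follows from the finite conjugate filtration on $\gr_{\qHodge}^n\qdeRham_{S/A}$ with graded pieces $\Sigma^{-i}\Omega_{S/A}^i$ (so it sits in degrees $\leqslant n$), and the identification on associated gradeds follows from $\gr_{\qHodge}^n\qdeRham_{S/A}\simeq\tau^{\leqslant n}(\qHodge_{S/A}/(q-1))$ together with \cite[Theorem~\chref{5.4}(2)]{BMS2}. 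Then \cite[Proposition~\chref{5.8}]{BMS2} gives $\qhatdeRham_{S/A}\simeq\L\eta_{(q-1)}\qHodge_{S/A}$ directly, and part~\cref{enum:qHodgeCompletion} finishes it. No reduction to coordinates is needed, and the independence of the choice of $q$-Hodge filtration is a consequence of the result rather than an input.
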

	\begin{proof}
		We construct the equivalence $\qOmega_{S/A}\simeq \qhatdeRham_{S/A}$ using an arithmetic fracture square. Let us first construct an equivalence after $p$-completion for any prime~$p$. Note that the canonical maps $\qdeRham_{S/A}\rightarrow \qOmega_{S/A}$ and $\qdeRham_{S/A}\rightarrow \qhatdeRham_{S/A}$ become equivalences after $p$-completion for any prime~$p$. Indeed, this can be checked modulo $(q-1)$, where we recover the well-known fact $(\Omega_{S/A})_p^\complete\simeq (\deRham_{S/A})_p^\complete\simeq (\hatdeRham_{S/A})_p^\complete$. So we obtain the desired equivalence $(\qOmega_{S/A})_p^\complete\simeq (\qhatdeRham_{S/A})_p^\complete$.
		
		Let us now construct the equivalence rationally. We know that $\deRham_{S/A}\lotimes_\IZ\IQ\rightarrow \Omega_{S/A}\lotimes_\IZ\IQ$ identifies the right-hand side with the completion of the left-hand side at the Hodge filtration. Consequently, $(\deRham_{S/A}\lotimes_\IZ\IQ)\qpower_{(\Hodge,q-1)}^\complete\simeq (\Omega_{S/A}\lotimes_\IZ\IQ)\qpower$, which yields the desired equivalence rationally. The data from \cref{def:qHodgeFiltration}\cref{enum:qHodgeRationalpComplete} ensures that the $p$-complete and rational equivalences glue, which finishes the proof of \cref{enum:qHodgeCompletion}.
		
		To prove~\cref{enum:Letaq-1}, first observe that the natural map $\qdeRham_{S/A}\rightarrow \qHodge_{S/A}$ factors through the completion at the $q$-Hodge filtration, because each filtration step $ \fil_{\qHodge}^i$ becomes divisible by $(q-1)^i$ in $\qHodge_{S/A}$ and $\qHodge_{S/A}$ is $(q-1)$-complete. Now consider the map of filtered objects
		\begin{equation*}
			\begin{tikzcd}[column sep=2em]
				\dotsb & \qhatdeRham_{S/A}\dar["(q-1)^2"]\eqar[l] & \qhatdeRham_{S/A}\dar["(q-1)"]\eqar[l] &  \fil_{\qHodge}^0\qhatdeRham_{S/A}\dar\lar["\simeq"'] &  \fil_{\qHodge}^1\qhatdeRham_{S/A}\dar\lar & \dotsb\lar\\
				\dotsb &\qHodge_{S/A}\lar & \qHodge_{S/A}\lar["(q-1)"'] & \qHodge_{S/A}\lar["(q-1)"'] & \qHodge_{S/A}\lar["(q-1)"'] & \dotsb\lar
			\end{tikzcd}
		\end{equation*}
		We claim that the top row is the connective cover of the bottom row in the Beilinson $t$-structure. If we can prove this, then \cite[Proposition~\chref{5.8}]{BMS2} will show $\qhatdeRham_{S/A}\simeq \L\eta_{(q-1)}\qHodge_{S/A}$, hence $\qOmega_{S/A}\simeq \L\eta_{(q-1)}\qHodge_{S/A}$ by \cref{enum:qHodgeCompletion}, as desired. Since $\L\eta_{(q-1)}$ commutes with $(q-1)$-completion \cite[Lemma~\chref{6.20}]{BMS1}, we also deduce that $\L\eta_{(q-1)}\qHhodge_{S/A}$ is indeed a Habiro descent of $\qOmega_{S/A}$.
		
		To show the claim, let us first verify that the top row is indeed connective in the Beilinson $t$-structure. We must show that $\gr_{\qHodge}^n\qdeRham_{S/A}$ is concentrated in cohomological degrees $\leqslant n$ for all $n$. If $n<0$, this is clear as then $\gr_{\qHodge}^n\qdeRham_{S/A}\simeq 0$. If $n\geqslant 0$, we have a finite-length filtration
		\begin{equation*}
			0\longrightarrow \gr^0_{\qHodge}\qdeRham_{S/A}\xrightarrow{(q-1)}\gr^1_{\qHodge}\qdeRham_{S/A}\xrightarrow{(q-1)}\dotsb \xrightarrow{(q-1)}\gr^n_{\qHodge}\qdeRham_{S/A}\,.
		\end{equation*}
		The $i$\textsuperscript{th} graded piece of this filtration is $\Sigma^{-i}\Omega_{S/A}^i$ by \cref{lem:ConjugateFiltration}, which is concentrated in cohomological degree~$i$. Hence $\gr_{\qHodge}^n\qdeRham_{S/A}$ is indeed concentrated in cohomological degrees~$\leqslant n$ and so the top row is Beilinson-connective.
		
		Moreover, the argument shows that $\gr_{\qHodge}^n\qdeRham_{S/A}\rightarrow \qHodge_{S/A}/(q-1)$ induces an equivalence $\gr_{\qHodge}^n\qdeRham_{S/A}\simeq \tau^{\leqslant n}(\qHodge_{S/A}/(q-1))$. By \cite[Theorem~\chref{5.4}(2)]{BMS2}, this shows that the map from the top row to the Beilinson-connective cover of the bottom row is an equivalence on associated gradeds. As both filtered objects are complete, we're done.
	\end{proof}
	Let us now discuss what probably doesn't work.
	
	\begin{rem}
		For an arbitrary object $(R,\fil_{\qHodge}^\star \qdeRham_{R/A})$ in $\cat{AniAlg}_A^{\qHodge}$, without a smoothness assumption on~$R$, we don't know how to construct a Habiro descent of $\qdeRham_{R/A}$. A naive guess would be
		\begin{equation*}
			\limit_{m\in\IN}\qdeRham_{R/A}^{(m)}\left[\frac{ \fil_{\qHhodge_m}^i}{[m]_q^i}\ \middle|\ i\geqslant 0\right]_{(q^m-1)}^\complete\,,
		\end{equation*}
		but this object doesn't exist, since $ \fil_{\qHhodge_m}^\star\qdeRham_{R/A}^{(m)}$ is only a filtered module over the filtered ring $(q^m-1)^\star A[q]$, but not necessarily over $[m]_q^\star A[q]$.
	\end{rem}
	
	\begin{rem}
		We also don't expect that the Habiro descent of $\qOmega_{S/A}$ in \cref{prop:Letaq-1} can be constructed without the datum of a $q$-Hodge filtration $ \fil_{\qHodge}^\star\qdeRham_{S/A}$, let alone functorially in~$S$. While it seems hard to get any definite no-go theorem, let us at least explain why the most natural attempt doesn't work.
		
		In \cref{rem:qOmegaHabiroDescent}, we've explained an attempt to construct a filtration $ \fil_{\L\eta}^\star \qOmega_{S/A}^{(m)}$: Each $\L\eta_{[m/d]_q}$ carries a natural filtration via \cite[Proposition~\chref{5.8}]{BMS2}. If these filtrations could be glued to give the desired $ \fil_{\L\eta}^\star $, we could attempt to construct a Habiro descent of $\qOmega_{S/A}$ via
		\begin{equation*}
			\limit_{m\in\IN}\qOmega_{S/A}^{(m)}\left[\frac{ \fil_{\L\eta}^i}{[m]_q^i}\ \middle|\ i\geqslant 0\right]_{(q^m-1)}^\complete\,.
		\end{equation*}
		However, the filtrations on $\L\eta_{[m/d]_q}$ \emph{do not} glue. This can already be seen in the case $m=p$. In this case we have a pullback diagram
		\begin{equation*}
			\begin{tikzcd}
				\qOmega_{S/A}^{(p)}\rar\dar\drar[pullback] & \left(\qOmega_{S/A}\lotimes_{A[q],\psi^p}A[q]\right)_{[p]_q}^\complete\dar["\phi_{p/A[q]}"]\\
				\L\eta_{[p]_q}\qOmega_{S/A}\rar & \L\eta_{[p]_q}\left(\qOmega_{S/A}\right)_p^\complete
			\end{tikzcd}
		\end{equation*}
		The filtration on $\L\eta_{[1]_q}\simeq \id$ is trivial. But the trivial filtration on $(\qOmega_{S/A}\lotimes_{A[q],\psi^p}A[q])_{[p]_q}^\complete$ will not be compatible with the natural filtration on $\L\eta_{[p]_q}(\qOmega_{S/A})_p^\complete$, so gluing fails.
		
		To make the gluing work, we should instead equip $(\qOmega_{S/A}\lotimes_{A[q],\psi^p}A[q])_{[p]_q}^\complete$ with a global version of the Nygaard filtration. But such a global Nygaard filtration likely doesn't exist. To see this, let's attempt to construct it via an arithmetic fracture square. On the $p$-completion $(\qOmega_{S/A}\lotimes_{A[q],\psi^p}A[q])_{(p,[p]_q)}^\complete$, we put the usual Nygaard filtration. In view of \cref{lem:NygaardRationalisation}, on the rationalisation we should put the combined Hodge and $[p]_q$-adic filtration. But then on the $\ell$-completion $(\qOmega_{S/A}\lotimes_{A[q],\psi^p}A[q])_{(\ell,[p]_q)}^\complete$ for any prime $\ell\neq p$, we would need to put a filtration that becomes the combined Hodge and $[p]_q$-adic filtration after $(-)[1/\ell]_{[p]_q}^\complete$.
		
		It is entirely unclear (at least to the author) how to construct such a filtration, unless we're already given a $q$-Hodge filtration $ \fil_{\qHodge}^\star\qdeRham_{S/A}$. This explains the need for the additional datum of $ \fil_{\qHodge}^\star\qdeRham_{S/A}$.
	\end{rem}

	\subsection{Habiro descent in derived commutative algebras}
	Raksit \cite{RaksitFilteredCircle} has introduced $\infty$-categories of \emph{derived commutative algebras}, along with filtered, graded, and differential-graded variants. The filtrations $ \fil_{\Hhodge_m}^\star \qIW_m\deRham_{R/A}$ admit canonical filtered derived commutative $A[q]/(q^m-1)$-algebra structures (if $R$ is a polynomial $A$-algebra, these structures can be constructed on the level of complexes, then one can pass to animations) and the $\IE_\infty$-structure on the derived $q$-de Rham complex $\qdeRham_{R/A}$ can be canonically enhanced to a derived commutative $A[q]$-algebra structure (see \cref{par:DerivedCommutativeLift}).
	
	In this subsection, we sketch how \cref{thm:HabiroDescent} can be made compatible with these derived commutative structures. As a warm-up, let us instead consider $\IE_n$-monoidal structures for some $0\leqslant n\leqslant \infty$.
	\begin{numpar}[$\IE_n$-monoidal upgrade.]\label{par:EnMonoidalUpgrade}
		Let $(R, \fil_{\qHodge}^\star \qdeRham_{R/A})\in \cat{AniAlg}_A^{\qHodge}$. Suppose that the $q$-Hodge filtration $ \fil_{\qHodge}^\star \qdeRham_{R/A}$ can be equipped with the structure of an $\IE_n$-algebra in filtered $(q-1)^\star A[q]$-modules, compatible with the $\IE_\infty$-$A[q]$-algebra structure on $\qdeRham_{R/A}$. Suppose furthermore that the data from \cref{def:qHodgeFiltration}\cref{enum:qHodgeFiltrationOnqdR}--\cref{enum:qHodgeRationalpComplete} can be made compatible with this $\IE_n$-structure. Then $(R, \fil_{\qHodge}^\star \qdeRham_{R/A})$ becomes an $\IE_n$-algebra in $\cat{AniAlg}_A^{\qHodge}$.
		
		By the symmetric monoidality statement in \cref{thm:HabiroDescent}\cref{enum:HabiroDescent}, we can conclude that the Habiro--Hodge complex $\qHhodge_{R/A}$ becomes an $\IE_n$-algebra in $\widehat{\Dd}_\Hh(A[q])$. Similarly, the lax symmetric monoidality statements in \cref{thm:HabiroDescent}\cref{enum:qdRWComparison} show that $ \fil_\star ^{\qIW_m\Omega}(\qHhodge_{R/A}/(q^m-1))$ becomes a filtered $\IE_n$-algebra and the identification of its associated graded
		\begin{equation*}
			\gr_*^{\qIW_m\Omega}\bigl(\qHhodge_{R/A}/(q^m-1)\bigr)\simeq \Sigma^{-*}\qIW_m\deRham_{R/A}^*\simeq \gr_{\Hhodge_m}^*\qIW_m\deRham_{R/A}
		\end{equation*}
		becomes a graded $\IE_n$-monoidal equivalence.
	\end{numpar}
	
	\begin{numpar}[Derived commutative upgrade I.]\label{par:qHhodgeDAlg}
		Similar to \cref{par:EnMonoidalUpgrade}, suppose that $ \fil_{\qHodge}^\star \qdeRham_{R/A}$ can be equipped with the structure of a \emph{filtered derived commutative algebra} over $(q-1)^\star A[q]$, that is, an element in the slice $\infty$-category $( \Fil\DAlg_{A[q]})_{(q-1)^\star A[q]/}$, where $ \Fil\DAlg_{A[q]}$ is Raksit's $\infty$-category of filtered derived commutative $A[q]$-algebras \cite[Definition~\chref{4.3.4}]{RaksitFilteredCircle}. Suppose furthermore that this derived commutative structure is compatible with the derived commutative $A[q]$-algebra structure on $\qdeRham_{R/A}$ (see \cref{par:DerivedCommutativeLift}) and that the data from \cref{def:qHodgeFiltration}\cref{enum:qHodgeFiltrationOnqdR}--\cref{enum:qHodgeRationalpComplete} can be made compatible with the filtered derived commutative algebra structures everywhere.
		
		For example, this can be done in the special cases from \cref{exm:HabiroDescentCoordinateCase} above and \cref{con:GlobalqHodge} below. In the former case, we'll verify this in \cite[Remark~\chref{6.27}]{qdeRhamku}, in the latter case see \cref{rem:qHodgeQuasiregularDAlg}.
	\end{numpar}
	\begin{lem}
		In the situation of \cref{par:qHhodgeDAlg}, $\qHhodge_{R/A}$ admits a canonical derived commutative $A[q]$-algebra structure. Furthermore, for all $m\in\IN$, $ \fil_\star^{\qIW_m\Omega}(\qHhodge_{R/A}/(q^m-1))$ admits a filtered derived commutative $A[q]/(q^m-1)$-algebra structure, compatible with the derived commutative $A[q]/(q^m-1)$-algebra structure on $\qHhodge_{R/A}/(q^m-1)$, and the equivalence
		\begin{equation*}
			\gr_*^{\qIW_m\Omega}\bigl(\qHhodge_{R/A}/(q^m-1)\bigr)\simeq \Sigma^{-*}\qIW_m\deRham_{R/A}^*\simeq \gr_{\Hhodge_m}^*\qIW_m\deRham_{R/A}
		\end{equation*}
		from \cref{thm:HabiroDescent}\cref{enum:qdRWComparison} is an equivalence of graded derived commutative $A[q]/(q^m-1)$-algebras.
	\end{lem}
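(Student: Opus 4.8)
The plan is to run the proof of \cref{thm:HabiroDescent} essentially verbatim, carrying a (filtered, graded) derived commutative algebra structure through every step in place of the $\IE_\infty$-structure. The key point is that every categorical operation used in \crefrange{subsec:TwistedqDeRham}{subsec:qHodgeHabiroDescent} either preserves or lax-preserves derived commutative algebra objects: sifted colimits and arbitrary limits in $\DAlg$ are computed on underlying objects, hence so are pullbacks; base change along a map of (graded or filtered) rings sends $\DAlg$ to $\DAlg$; restriction along a map of graded rings lax-preserves $\DAlg$ — in particular the ``rescaling by $\Phi_{p^\alpha}(q)$'' of \cref{con:TwistedqHodgeFiltrationpAdic}, which is restriction along $t\mapsto \Phi_{p^\alpha}(q)t$; modding out a polynomial generator, i.e.\ passing to the associated graded of a filtration, is symmetric monoidal and so preserves $\DAlg$; and the degree-zero part functor $(-)_0$ on graded objects is lax symmetric monoidal, hence sends derived commutative algebras to derived commutative algebras. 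Moreover \cref{cor:CompleteDescent} holds with $\IE_\infty$-algebras replaced by derived commutative algebras by \cref{rem:CompleteDescentDAlg}. Finally, the forgetful functor from (filtered, graded) derived commutative $A[q]$-algebras to (filtered, graded) $\IE_\infty$-$A[q]$-algebras is conservative, so once the comparison maps below have been produced as maps of derived commutative algebras, the fact that they are equivalences follows for free from \cref{thm:HabiroDescent}.

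Concretely, I would first upgrade the twisted $q$-de Rham complexes $\qdeRham_{R/A}^{(m)}$ of \cref{con:TwistedqDeRham} to derived commutative $A[q]$-algebras via \cref{rem:CompleteDescentDAlg}: the local data $E_d$ are base changes of $\L\eta_{[m/d]_q}\qOmega_{S/A}$ and the gluing data come from the prismatic relative Frobenius $\phi_{/B}\colon \Prism_{T/B}\clotimes_{B,\phi_B}B\overset{\simeq}{\longrightarrow}\L\eta_J\Prism_{T/B}$, so one needs these to be compatible with derived commutative structures, which one checks by reducing to large quasi-syntomic algebras — where $\L\eta_J$ of a $J$-torsion-free algebra is an honest subalgebra and prismatic cohomology with its Nygaard filtration is literally filtered by ideals — and then passing to animations. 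In the same way one upgrades the Nygaard filtration $\fil_\Nn^\star$ of \cref{def:NygaardFiltration} to a filtered derived commutative algebra, using quasi-syntomic descent as afforded by \cref{cor:NygaardQuasisyntomicDescent} together with animation. The pullback of \cref{con:TwistedqHodgeFiltrationpAdic} then exhibits $\fil_{\qHhodge_{p^\alpha}}^\star(\qdeRham_{R/A}^{(p^\alpha)})_p^\complete$ as a filtered derived commutative $A[q]$-algebra, and the pullback of \cref{con:TwistedqHodgeFiltration} glues these to $\fil_{\qHhodge_m}^\star\qdeRham_{R/A}^{(m)}$; the compatibility checks required in \cref{con:TwistedqHodgeFiltration} take place on static rings by \cref{lem:TwistedqHodgepAdicCompatibilityI} and \cref{lem:TwistedqHodgepAdicCompatibilityII}, where the derived commutative structure is automatic, and by hypothesis (\cref{par:qHhodgeDAlg}) the $q$-Hodge filtration and all of its structure maps already live in $\Fil\DAlg$. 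Now $\qHhodge_{R/A,m}$ of \cref{con:HabiroDescent0} is obtained from $\fil_{\qHhodge_m}^\star\qdeRham_{R/A}^{(m)}$ by inverting $\beta$, restricting to degree $0$, and modding out $\beta t=(q^m-1)$, all $\DAlg$-preserving, so it is a derived commutative $A[q]$-algebra, and $\qHhodge_{R/A}=\limit_{m\in\IN}\qHhodge_{R/A,m}$ inherits one as a limit; the transition maps of \cref{con:TwistedqHodgeFiltrationFunctorial} and the gluing equivalences of \cref{prop:HabiroDescent} are built from the same operations, so by conservativity they are equivalences of derived commutative algebras, and likewise \cref{lem:HabiroDescentSymmetricMonoidal} upgrades to give symmetric monoidality of $\qHhodge_{-/A}$ valued in derived commutative $A[q]$-algebras.

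For the filtered statement, recall from the proof outline at the end of \cref{subsec:MainResult} that $\fil_\star^{\qIW_m\Omega}(\qHhodge_{R/A}/(q^m-1))$ is the ascending filtration arising as $\colimit\bigl(\gr_{\qHhodge_m}^{0}\qdeRham_{R/A}^{(m)}\xrightarrow{(q^m-1)}\gr_{\qHhodge_m}^{1}\qdeRham_{R/A}^{(m)}\xrightarrow{(q^m-1)}\dotsb\bigr)$, i.e.\ it is obtained from the bifiltered object $\fil_{\qHhodge_m}^{\star+n}\qdeRham_{R/A}^{(m)}$ by passing to the vertical associated graded (modding out $t$) and then reading off the horizontal ascending filtration; these are $\DAlg$-preserving operations, so $\fil_\star^{\qIW_m\Omega}(\qHhodge_{R/A}/(q^m-1))$ is a filtered derived commutative $A[q]/(q^m-1)$-algebra whose underlying object carries the base-changed $\DAlg$-structure of $\qHhodge_{R/A}/(q^m-1)$. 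The identification of its associated graded with $\gr_{\Hhodge_m}^*\qIW_m\deRham_{R/A}\simeq\Sigma^{-*}\qIW_m\deRham_{R/A}^*$ goes through \cref{lem:FiltrationAbstract}, whose proof only uses the base change $-\lotimes_{A[\beta]}\beta^{-\star}A[\beta]$, the quotients by $\beta$ and $t$, and $(-)_0$ — again all $\DAlg$-preserving or lax $\DAlg$-preserving — so it is an equivalence of graded derived commutative $A[q]/(q^m-1)$-algebras, where $\gr_{\Hhodge_m}^*\qIW_m\deRham_{-/A}$ carries the graded derived commutative structure coming from the animated stupid filtration of \cref{par:qHhodgeDAlg}.

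I expect the only genuine obstacle to be the first two upgrades: verifying that the décalage functors $\L\eta_{[m/d]_q}$ together with the prismatic Frobenius equivalences lift to the derived-commutative world, and that the Nygaard filtration $\fil_\Nn^\star$ is a filtered derived commutative algebra. Both are handled by descending to large quasi-syntomic algebras — where $\L\eta_J$ of a torsion-free algebra is literally a subalgebra and the Nygaard filtration is literally a filtration by ideals — and then animating, which is legitimate since every functor in sight is left Kan extended from polynomial algebras. Everything downstream of these two inputs is purely formal, being assembled from limits, sifted colimits, pullbacks, base change, restriction along graded ring maps, associated gradeds, and the degree-zero functor, each of which is known to (lax-)preserve derived commutative algebra structures; and the equivalence assertions require no new work, as they follow from \cref{thm:HabiroDescent} by conservativity of the relevant forgetful functors.
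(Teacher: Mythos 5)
Your proposal is correct and takes essentially the same approach as the paper's proof sketch: both reduce everything to the observation that the two genuinely nontrivial inputs — the Nygaard-filtration equivalences (\cref{prop:NygaardComparison}, \cref{lem:NygaardRationalisation}, together with the décalage/Frobenius machinery underlying them) hold in the derived-commutative setting, verified via quasi-syntomic descent to large algebras where one is dealing with literal filtrations of rings by ideals, followed by animation — and then everything downstream is assembled from limits, sifted colimits, pullbacks, base change, rescaling/restriction along filtered or graded ring maps, associated gradeds, and the degree-zero functor, each of which (lax-)preserves derived commutative algebra objects. Your version is more explicit about the mechanics (cataloguing which operations are strictly vs.\ lax $\DAlg$-preserving, and invoking conservativity of the forgetful functor $\DAlg\to\CAlg$ to reduce the equivalence assertions to \cref{thm:HabiroDescent}), whereas the paper phrases the same points more tersely, but the underlying decomposition of the argument is identical.
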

	\begin{proof}[Proof sketch]
		First note that our results about the Nygaard filtration, specifically \cref{prop:NygaardComparison} and \cref{lem:NygaardRationalisation}, also hold true as equivalences of filtered derived commutative algebras, since the proofs work in this setting as well. By tracing through \cref{con:TwistedqHodgeFiltrationpAdic}--\cref{con:TwistedqHodgeFiltrationFunctorial}, we now see that each $ \fil_{\qHhodge_m}^\star \qdeRham_{R/A}^{(m)}$ acquires a filtered derived commutative algebra structure over $(q^m-1)^\star A[q]$, and that the transition maps in \cref{con:TwistedqHodgeFiltrationFunctorial} are compatible with these structures.
		
		The construction from~\cref{con:HabiroDescent0} produces a canonical derived commutative algebra structure on $\Hh_{R/A,  m}$, because we can view the construction as a filtered localisation followed by restriction to filtered degree~$0$; compare \cref{par:ConjugateFiltration}. It is then clear from \cref{con:HabiroDescent} that $\qHhodge_{R/A}$ acquires a derived commutative $A[q]$-algebra structure. Moreover, since the filtration $ \fil_\star^{\qIW_m\Omega}(\qHhodge_{R/A}/(q^m-1))$ and the identification of its associated graded were constructed in a completely way (see the proofs of \cref{lem:ConjugateFiltration,lem:FiltrationAbstract}), they will also work on the level of derived commutative algebras. The only input this needs is that \cref{prop:TwistedqHodgeFiltrationqDeforms} holds as an equivalence of filtered derived commutative $A[q]/(q^m-1)$-algebras, which is again apparent from the constructions.
	\end{proof}
	
	But there's one more piece of structure.
	
	\begin{numpar}[Derived commutative upgrade II.]
		Since $\qIW_m\Omega_{-/A}^*$ is a functor with values in commutative differential-graded $A[q]/(q^m-1)$-algebras, we see that its animation $\Sigma^{-*}\qIW_m\deRham_{-/A}^*\simeq\gr_{\Hhodge_m}^*\qIW_m\deRham_{-/A}$ upgrades to a functor with values in Raksit's $\infty$-category $\cat{DG}_-\DAlg_{A[q]/(q^m-1)}$ of derived differential-graded $A[q]/(q^m-1)$-algebras \cite[Definition~\chref{5.1.10}]{RaksitFilteredCircle}.
		
		By transfer of structure, the associated graded $\gr_*^{\qIW_m\Omega}(\qHhodge_{R/A}/(q^m-1))$ becomes an element in $\cat{DG}_-\DAlg_{A[q]/(q^m-1)}$ as well. Via the following corollary, we can figure out what the differentials are, at least in the case where $R$ is smooth over $A$.
	\end{numpar}
	\begin{cor}\label{cor:CDGA}
		Let $(S, \fil_{\qHodge}^\star\qdeRham_{S/A})\in \cat{AniAlg}_A^{\qHodge}$ be an object such that $S$ is smooth over~$A$. Then:
		\begin{alphanumerate}
			\item  $ \fil_\star^{\qIW_m\Omega}(\qHhodge_{S/A}/(q^m-1))$ is the Whitehead filtration $\tau_{\geqslant \star}(\qHhodge_{S/A}/(q^m-1))$.\label{enum:qWOmegaFiltrationPostnikov}
			\item The equivalence from \cref{thm:HabiroDescent}\cref{enum:qdRWComparison} becomes an isomorphism of graded $A[q]/(q^m-1)$-modules \label{enum:CohomologyOfHabiro}
			\begin{equation*}
				\H^*\bigl(\qHhodge_{S/A}/(q^m-1)\bigr)\cong \qIW_m\Omega_{S/A}^*
			\end{equation*}
			\embrace{and an isomorphism of graded $A[q]/(q^m-1)$-algebras as soon as $(S, \fil_{\qHodge}^\star\qdeRham_{S/A})$ is at least an $\IE_1$-algebra in $\cat{AniAlg}_A^{\qHodge}$}.
			\item\label{enum:BocksteinDifferential} Under the isomorphism from \cref{enum:CohomologyOfHabiro}, the canonical differential on $\qIW_m\Omega_{S/A}^*$ corresponds to the Bockstein differential on $\H^*(\qHhodge_{S/A}/(q^m-1))$.
		\end{alphanumerate}
	\end{cor}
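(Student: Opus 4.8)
The three parts are nested: part~(a) carries the weight, and parts~(b) and~(c) follow from it. For part~(a), the plan is to start from \cref{thm:HabiroDescent}\cref{enum:qdRWComparison}, which supplies an exhaustive, bounded-below ascending filtration $\fil_\star^{\qIW_m\Omega}(\qHhodge_{S/A}/(q^m-1))$ with $\gr_n^{\qIW_m\Omega}\simeq\Sigma^{-n}\qIW_m\deRham_{S/A}^n$; by \cref{cor:qDRWSmoothAnimation}, for $S$ smooth the $n$-th graded piece is the ordinary $A[q]/(q^m-1)$-module $\qIW_m\Omega_{S/A}^n$ placed in cohomological degree exactly~$n$. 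I would then invoke the elementary fact that any exhaustive, bounded-below ascending filtration whose $n$-th graded piece is concentrated in cohomological degree~$n$ must be the Whitehead filtration: inductively each $\fil_n$ is cohomologically concentrated in degrees $\leqslant n$, each transition $\fil_n\rightarrow\fil_{n+1}$ is an isomorphism on cohomology in degrees $\leqslant n$, and hence upon passing to the colimit the map $\fil_n\rightarrow\qHhodge_{S/A}/(q^m-1)$ realises $\fil_n$ as the connective cover at the appropriate level, i.e.\ $\fil_\star^{\qIW_m\Omega}=\tau_{\geqslant\star}$. Boundedness of $\qHhodge_{S/A}/(q^m-1)$ — needed only to keep this clean — is automatic, either from the length-$\dim(S/A)$ Koszul model of \cref{exm:HabiroDescentCoordinateCase} or from $\qIW_m\Omega_{S/A}^n=0$ for $n>\dim(S/A)$.

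Part~(b) is then immediate: the associated graded of the Whitehead filtration is $\bigoplus_n\Sigma^{-n}\H^n(\qHhodge_{S/A}/(q^m-1))$, so comparison with the description of $\gr_*^{\qIW_m\Omega}$ just used yields $\H^n(\qHhodge_{S/A}/(q^m-1))\cong\qIW_m\Omega_{S/A}^n$ as $A[q]/(q^m-1)$-modules. If $(S,\fil_{\qHodge}^\star\qdeRham_{S/A})$ is moreover an $\IE_1$-algebra in $\cat{AniAlg}_A^{\qHodge}$, then by \cref{par:EnMonoidalUpgrade} the filtration $\fil_\star^{\qIW_m\Omega}(\qHhodge_{S/A}/(q^m-1))$ is a filtered $\IE_1$-algebra; since the Whitehead filtration of an $\IE_1$-ring is multiplicative, and the graded $\IE_1$-monoidal equivalence of \cref{par:EnMonoidalUpgrade} identifies the associated graded with $\gr_{\Hhodge_m}^*\qIW_m\deRham_{-/A}$, the isomorphism above upgrades to one of graded $A[q]/(q^m-1)$-algebras.

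The substance is part~(c). The differential on $\gr_*^{\qIW_m\Omega}(\qHhodge_{S/A}/(q^m-1))$ supplied by the $\cat{DG}_-\DAlg$-structure of the discussion preceding the corollary is, by construction (transport of structure along $\Sigma^{-*}\qIW_m\deRham_{-/A}^*\simeq\gr_{\Hhodge_m}^*\qIW_m\deRham_{-/A}$), the $q$-de Rham--Witt differential $\d$; so what remains is to identify it with the Bockstein $\beta_{q^m-1}$ on $\H^*(\qHhodge_{S/A}/(q^m-1))$. I would argue Zariski-locally on~$S$: after choosing an étale framing $\square$, \cref{exm:HabiroDescentCoordinateCase} represents $\qHhodge_{S/A,\square}$ by the Koszul complex of the commuting operators $\q\widetilde{\partial}_i$ on the relative Habiro ring $\Hh_{S/A[x_1,\dotsc,x_n]}$, which is $(q^m-1)$-torsion-free (at each finite level this follows from flatness over the corresponding completion of $A[q]$, and the property passes to the limit), so the Bockstein of a class with cocycle representative $\xi$ is $\overline{(q^m-1)^{-1}\q\widetilde{\nabla}\xi}$; the differential-graded-algebra refinement of \cite[Theorem~\chref{4.27}]{qWitt} recorded in \cref{exm:HabiroDescentCoordinateCase} then identifies this with $\d$. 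The main obstacle is precisely this last step — unwinding the automorphisms $\gamma_i$ and the divided operators $(q^m-1)^{-1}\q\widetilde{\nabla}$ through the Habiro-descent construction of \cref{thm:HabiroDescent} — together with verifying that the comparison is insensitive to the choice of $q$-Hodge filtration, so that the framed computation propagates to an arbitrary $(S,\fil_{\qHodge}^\star\qdeRham_{S/A})$ by functoriality and Zariski descent.
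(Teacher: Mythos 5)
Parts~(a) and~(b) of your proposal match the paper's argument: concentrate each graded piece in a single cohomological degree using \cref{cor:qDRWSmoothAnimation}, conclude from completeness (bounded-below is all that's needed, not boundedness) that the filtration is the Whitehead filtration, and read off the module isomorphism, with the algebra upgrade coming from \cref{par:EnMonoidalUpgrade}. For these parts you and the paper are essentially identical.

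For part~(c) you take a genuinely different route, and there are two gaps. First, your framed computation applies only to the coordinate-dependent $q$-Hodge filtration of \cref{exm:HabiroDescentCoordinateCase}, whereas the corollary is stated for an \emph{arbitrary} $(S,\fil_{\qHodge}^\star\qdeRham_{S/A})\in\cat{AniAlg}_A^{\qHodge}$ with $S$ smooth. Different $q$-Hodge filtrations on the same $S$ can produce different Habiro--Hodge complexes $\qHhodge_{S/A}$, so "Zariski descent plus functoriality" does not automatically transport the framed Bockstein computation to a general $q$-Hodge filtration --- you would first have to prove that $\qHhodge_{S/A}$ and the filtration $\fil_\star^{\qIW_m\Omega}$ are independent of the choice, which is not established anywhere. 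You flag this as "the main obstacle" but leave it open, and it is indeed the obstacle. Second, your last step invokes "the differential-graded-algebra refinement of \cite[Theorem~4.27]{qWitt} recorded in \cref{exm:HabiroDescentCoordinateCase}", but the paper explicitly says that differential-graded upgrade is precisely what \cref{cor:CDGA} is supposed to supply ("With a little more effort (see \cref{cor:CDGA} below), we can even get an equivalence as differential-graded $A[q]/(q^m-1)$-algebras"), so this would be circular.

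The paper's actual proof of~(c) avoids both issues by arguing abstractly. It identifies the filtered ring $(q^m-1)^\star A[q]$ with the graded ring $A[q,\beta,t_m]/(\beta t_m-(q^m-1))$ and considers the Bockstein cofibre sequence for $\fil_{\qHhodge}^*/t_m$. After base change along $\beta\mapsto\text{unit}$ (i.e.\ applying $(-\lotimes_{A[\beta]}A[\beta^{\pm1}])_0$), this cofibre sequence becomes the literal Bockstein sequence for $\qHhodge_{S/A}/(q^m-1)$; applying instead the exhaustive filtration $\beta^{-\star}A[\beta]$ and passing to associated gradeds produces, via $\fil_{\Hhodge}^*\simeq\fil_{\qHhodge}^*/\beta$ (this is \cref{prop:TwistedqHodgeFiltrationqDeforms}), the Bockstein sequence for $\fil_{\Hhodge}^*/t_m$, whose connecting map is the $q$-de Rham--Witt differential because $\fil_{\Hhodge}^\star$ is the stupid filtration on $\qIW_m\Omega_{S/A}^*$. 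No coordinates, no choice of $q$-Hodge filtration, no appeal to the dga version of \cite[Theorem~4.27]{qWitt}. If you want to salvage your approach, the missing ingredient is a proof of filtration-independence of $\qHhodge_{S/A}$ for smooth $S$, which the paper does not provide and which seems to require an argument of comparable weight to the abstract one.
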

	\begin{proof}
		We've seen in \cref{cor:qDRWSmoothAnimation} that $\qIW_m\Omega_{S/A}^n\simeq \qIW_m\deRham_{S/A}^n$ for all $n$. It follows that each graded piece $\gr_n^{\qIW_m\Omega}(\qHhodge_{S/A}/(q^m-1))$ is concentrated in cohomological degree~$n$. Since $ \fil_\star^{\qIW_m\Omega}(\qHhodge_{S/A}/(q^m-1))$ is bounded below and thus complete, it has to be the Whitehead filtration. This shows~\cref{enum:qWOmegaFiltrationPostnikov} as well as the graded $A[q]/(q^m-1)$-module isomorphism from~\cref{enum:CohomologyOfHabiro}.  The isomorphism as graded $A[q]/(q^m-1)$-algebras follows from \cref{par:EnMonoidalUpgrade}.
		
		It remains to show~\cref{enum:BocksteinDifferential}. Similar to the proof of \cref{lem:ConjugateFiltration}, let us identify the filtered ring $(q^m-1)^\star A[q]$ with the graded ring $A[q,\beta,t_m]/(\beta t_m-(q^m-1))$, where $\abs{q}=0$, $\abs{\beta}=1$, and $\abs{t_m}=-1$.%
		\footnote{In \cite{qdeRhamku}, we'll recognise $\IZ[q,\beta,t_m]/(\beta t_m-(q^m-1))\cong \pi_*(\ku^{C_m})$.}
		The filtered structure comes from the $A[t_m]$-module structure. In particular, modding out~$t_m$ is the same as passing to the associated graded. Let us also regard the filtrations $ \fil_{\qHhodge_m}^\star\qdeRham_{S/A}^{(m)}$ and $ \fil_{\Hhodge_m}^\star\qIW_m\deRham_{S/A}$ as graded $A[q,\beta,t_m]/(\beta t_m-(q^m-1))$-modules $ \fil_{\qHhodge}^*$ and $ \fil_{\Hhodge}^*$. Finally, let us denote by $\beta^{-\star} A[\beta]$ the ascendingly filtered graded ring
		\begin{equation*}
			\beta^{-\star}A[\beta]\coloneqq \left(\dotsb\overset{\beta}{\longrightarrow}A[\beta](1)\overset{\beta}{\longrightarrow}A[\beta](0)\overset{\beta}{\longrightarrow}A[\beta](-1)\overset{\beta}{\longrightarrow}\dotsb\right)\,,
		\end{equation*}
		As explained in the proof of \cref{lem:FiltrationAbstract}, the filtration $ \fil_\star^{\qIW_m\Omega}(\qHhodge_{S/A}/(q^m-1))$ can be written as follows:
		\begin{equation*}
			\fil_\star^{\qIW_m\Omega}\bigl(\qHhodge_{S/A}/(q^m-1)\bigr)\simeq \bigl( \fil_{\qHhodge}^*/t_m\lotimes_{A[\beta]}\beta^{-\star}A[\beta]\bigr)_0
		\end{equation*}
		(note that $*$ on the right-hand side refers to the graded degree whereas $\star$ corresponds to the filtration degree). Now consider the Bockstein cofibre sequence for $ \fil_{\qHhodge}^*/t_m$. It fits into a commutative diagram of graded $A[q,\beta,t_m]/(\beta t_m-(q^m-1))$-modules
		\begin{equation*}
			\begin{tikzcd}
				\fil_{\qHhodge}^*(-1)/t_m\rar["t_m"] &  \fil_{\qHhodge}^*/t_m^2\rar &  \fil_{\qHhodge}^*/t_m\\
				\fil_{\qHhodge}^*/t_m\uar["\beta"]\urar["(q^m-1)"'] & &
			\end{tikzcd}
		\end{equation*}
		If we apply $(-\lotimes_{A[\beta]}A[\beta^{\pm 1}])_0$ to this diagram, the left vertical arrow becomes an equivalence and so the cofibre sequence from the top row will become equivalent to the Bockstein cofibre sequence
		\begin{equation*}
			\qHhodge_{R/A}/(q^m-1)\xrightarrow{(q^m-1)}\qHhodge_{R/A}/(q^m-1)^2\longrightarrow \qHhodge_{R/A}/(q^m-1)\,.
		\end{equation*}
		If we apply $(-\lotimes_{A[\beta]}\beta^{-\star}A[\beta])_0$ to the top row, we get a filtration on this cofibre sequence. By~\cref{enum:qWOmegaFiltrationPostnikov}, this filtration will be of the form
		\begin{equation*}
			\tau^{\leqslant \star+1}\bigl(\qHhodge_{S/A}/(q^m-1)\bigr)\longrightarrow\bigl( \fil_{\qHhodge}/t_m^2\lotimes_{A[\beta]}\beta^{-\star}A[\beta]\bigr)_0\longrightarrow\tau^{\leqslant \star }\bigl(\qHhodge_{S/A}/(q^m-1)\bigr)
		\end{equation*}
		where $\bigl( \fil_{\qHhodge}/t_m^2\lotimes_{A[\beta]}\beta^{-\star}A[\beta]\bigr)_0$ is an ascending filtration on $\qHhodge_{S/A}/(q^m-1)^2$ that lies between $\tau^{\leqslant \star}$ and $\tau^{\leqslant \star +1}$. After passing to associated gradeds, the connecting morphism will then necessarily be the usual Bockstein differential
		\begin{equation*}
			\H^*\bigl(\qHhodge_{S/A}/(q^m-1)\bigr)\longrightarrow \H^{*+1}\bigl(\qHhodge_{S/A}/(q^m-1)\bigr)\,.
		\end{equation*}
		On the other hand, the associated graded of $\beta^{-\star}A[\beta]$ is given by $\bigoplus_{i\in\IZ}A(-i)$. If we apply $(-\lotimes_{A[\beta]}\bigoplus_{i\in\IZ}A(-i))_0$ to the top row of the diagram, we get the Bockstein cofibre sequence
		\begin{equation*}
			\fil_{\Hhodge}^*(-1)/t_m\overset{t_m}{\longrightarrow} \fil_{\Hhodge}^*/t_m^2\longrightarrow \fil_{\Hhodge}^*/t_m\,,
		\end{equation*}
		because $ \fil_{\Hhodge}^*\simeq  \fil_{\qHhodge}^*/\beta$ by \cref{prop:TwistedqHodgeFiltrationqDeforms}. 
		Since $ \fil_{\Hhodge}^\star\qIW_m\deRham_{R/A}$ is the stupid filtration on the complex $\qIW_m\Omega_{R/A}^*$, the differential of $\qIW_m\Omega_{R/A}^*$ agrees with the connecting morphism for the Bockstein cofibre sequence of $ \fil_{\Hhodge}^*/t_m$. This finishes the proof of \cref{enum:BocksteinDifferential}.
	\end{proof}

	%
	%
	%
	
	\newpage
	
	\section{Functorial \texorpdfstring{$q$}{q}-Hodge filtrations}\label{sec:FunctorialqHodge}
	Fix a perfectly covered $\Lambda$-ring $A$. We've seen in \cref{lem:NoFunctorialqHodgeFiltration} that it's impossible to get a functorial $q$-Hodge filtration for all animated $A$-algebras, or even just for smooth $A$-algebras. Despite this general no-go result, we'll see in this section that functorial $q$-Hodge filtrations exist for fairly large full subcategories of $\cat{AniAlg}_A$.
	
	A further ample source of examples comes from homotopy theory and will be discussed at length in the companion paper \cite{qdeRhamku}.
	
	\subsection{Functorial \texorpdfstring{$q$}{q}-Hodge filtrations away from small primes}\label{subsec:CanonicalqHodgeSmooth}
	
	In this subsection, we'll give an elementary construction of a functorial $q$-Hodge filtration on certain smooth $A$-algebras. In the introduction (see \cref{par:qHodgeForCurves}), we've already explained the idea in the case of relative dimension~$\leqslant1$. The general case follows the same simple idea.
	%
	%
	%
	\begin{numpar}[Canonical $q$-Hodge filtrations I.]\label{par:CanonicalqHodgeSmoothI}
		Let $S$ be smooth of arbitrary dimension over $A$ and let $n$ be a positive integer such that all primes $p\leqslant n$ are invertible in $S$. This assumption ensures that the canonical map $\qOmega_{S/A}\rightarrow \Omega_{S/A}$ factors through an $\IE_\infty$-$A\qpower$-algebra map
		\begin{equation*}
			\qOmega_{S/A}\longrightarrow \Omega_{S/A}\qpower/(q-1)^n\,.
		\end{equation*}
		Indeed, by construction of the global $q$-de Rham complex (see \cref{con:GlobalqDeRham}), it's enough to check this after completion at any prime~$p$. In general, $(\qOmega_{S/A})_p^\complete\rightarrow (\Omega_{S/A})_p^\complete$ factors through $(\qOmega_{S/A})_p^\complete\rightarrow (\Omega_{S/A})_p^\complete\qpower/(q-1)^{p-1}$ by \cref{lem:RationalisationTechnicalII}. For primes $p>n$, this does what we want. For $p\leqslant n$, our assumption on $S$ ensures that $(\qOmega_{S/A})_p^\complete$ vanishes, so this case is fine too.
		
		Let us now equip $\Omega_{S/A}\qpower/(q-1)^n$ with the following filtration: We first define $\fil_{(\Hodge,q-1)}^{\star} \Omega_{S/A}\qpower\coloneqq (\fil_{\Hodge}^\star \Omega_{S/A}\lotimes_\IZ(q-1)^\star\IZ\qpower)_{(q-1)}^\complete$ to be the combined Hodge and $(q-1)$-adic filtration, as usual. We then let $\fil_{(\Hodge,q-1)}^{\star} \Omega_{S/A}\qpower/(q-1)^n$ denotes its reduction modulo~$(q-1)^n$, which we regard as an element in filtration degree~$n$.%
		\footnote{Said differently, we wish to equip $\IZ\qpower/(q-1)^n$ with the finite filtration given by $(q-1)^i\IZ\qpower/(q-1)^n$ in degree~$i$. This is \emph{not} the $(q-1)$-adic filtration in our sense, since the latter would be $\IZ\qpower/(q-1)^n$ in every degree, with transition maps given by multiplication by $(q-1)$.}
		We may then form the following pullback of filtered objects in degrees $\leqslant n$:
		\begin{equation*}
			\begin{tikzcd}
				\fil_{\qHodge,n}^{\star \leqslant n}\qOmega_{S/A}\rar\dar\drar[pullback] & \qOmega_{S/A}\dar\\
				\fil_{(\Hodge,q-1)}^{\star\leqslant n} \Omega_{S/A}\qpower/(q-1)^n\rar & \Omega_{S/A}\qpower/(q-1)^n
			\end{tikzcd}
		\end{equation*}
		Here $\fil_{(\Hodge,q-1)}^{\star\leqslant n} \Omega_{S/A}\qpower/(q-1)^n$ denotes the restriction of the to degrees $\star\leqslant n$; more precisely, we apply the truncation functor $\tau_n^*$ from \cref{lem:TruncatedFilteredObjects} below.
		
		We then wish to extend $\fil_{\qHodge,n}^{\star \leqslant n}\qOmega_{S/A}$ to degrees $\star\geqslant n+1$. Intuitively, this should be done via the $(q-1)$-adic filtration $(q-1)^{\star -n}\fil_{\qHodge,n}^n\qOmega_{S/A}$ as in \cref{par:qHodgeForCurves}. To do this formally and make the resulting filtered $(q-1)^\star A\qpower$-module structure apparent, we need to show a technical lemma.
	\end{numpar}
	
	\begin{lem}\label{lem:TruncatedFilteredObjects}
		Let $\Fil^{\geqslant 0}\Dd(\IZ)$ denote the full sub-$\infty$-categories of filtered objects that are constant in filtration degrees $\star\leqslant 0$. Let $\Fil^{[0,n]}\Dd(\IZ)\subseteq \Fil^{\geqslant 0}\Dd(\IZ)$ denote the full sub-$\infty$-category of filtered objects that also vanish in filtration degree $\star\geqslant n+1$.
		\begin{alphanumerate}
			\item The inclusion  $\Fil^{[0,n]}\Dd(\IZ)\rightarrow\Fil^{\geqslant 0}\Dd(\IZ)$ has a left adjoint $\tau_n^*$, which on objects is given by replacing all filtration degrees $\star\geqslant n+1$ by $0$. Moreover, if $\fil^\star M$ and $\fil^\star N$ are filtered $\IZ$-modules, the canonical map\label{enum:TruncationSymmetricMonoidal}
			\begin{equation*}
				\tau_n^*\left(\fil^\star M\lotimes_{\IZ}\tau_n^*(\fil^\star N)\right)\overset{\simeq}{\longrightarrow}\tau_n^*\left(\fil^\star M\lotimes_{\IZ}\fil^\star N\right)
			\end{equation*}
			is an equivalence. Consequently there's a canonical way to equip $\Fil^{[0,n]}\Dd(\IZ)$ and the functor $\tau_n^*\colon \Fil^{\geqslant 0}\Dd(\IZ)\rightarrow \Fil^{[0,n]}\Dd(\IZ)$ with symmetric monoidal structures.
			\item For any filtered $\IE_\infty$-algebra $T\in\CAlg(\Fil^{\geqslant 0}\Dd(\IZ))$, the induced symmetric monoidal functor
			\begin{equation*}
				\tau_n^*\colon \Mod_{T}\bigl(\Fil^{\geqslant 0}\Dd(\IZ)\bigr)\longrightarrow \Mod_{\tau_n^*T}\bigl(\Fil^{[0,n]}\Dd(\IZ)\bigr)
			\end{equation*}
			admits an oplax symmetric monoidal left adjoint\label{enum:TruncationLeftAdjoint}
			\begin{equation*}
				\tau_{n,!}^T\colon  \Mod_{\tau_n^*T}\bigl(\Fil^{[0,n]}\Dd(\IZ)\bigr)\longrightarrow\Mod_{T}\bigl(\Fil^{\geqslant 0}\Dd(\IZ)\bigr)
			\end{equation*}
			\embrace{if $T$ is clear from the context, we'll often just write $\tau_{n,!}$}.
			\item Let $T_1\rightarrow T_2$ be any map in $\CAlg(\Fil^{\geqslant 0}\Dd(\IZ))$ and let $\fil^\star M\in \Mod_{\tau_n^*T}(\Fil^{[0,n]}\Dd(\IZ))$. Then there's a natural equivalence\label{enum:TruncationProjectionFormula}
			\begin{equation*}
				\tau_{n,!}^{T_2}\left(\fil^\star M\otimes_{\tau_n^*T_1}\tau_n^*T_2\right)\overset{\simeq}{\longrightarrow}\tau_{n,!}^{T_1}(\fil^\star M)\otimes_{T_1} T_2\,.
			\end{equation*}
		\end{alphanumerate}
	\end{lem}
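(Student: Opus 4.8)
The plan for~\cref{enum:TruncationSymmetricMonoidal} is to realise $\tau_n^*$ as a symmetric monoidal Bousfield localisation. The inclusion $\iota\colon \Fil^{[0,n]}\Dd(\IZ)\hookrightarrow\Fil^{\geqslant 0}\Dd(\IZ)$ is a fully faithful functor of presentable stable $\infty$-categories, and since the defining condition ``$\fil^\star M\simeq 0$ for $\star\geqslant n+1$'' is closed under limits and colimits (both computed degreewise), $\iota$ preserves limits and is accessible; the adjoint functor theorem therefore produces a left adjoint $\tau_n^*$. Computing the unit $\fil^\star M\to\iota\tau_n^*(\fil^\star M)$ shows that $\tau_n^*$ kills the tail $\fil^{n+1}M\leftarrow\fil^{n+2}M\leftarrow\dotsb$, i.e.\ on objects it replaces the filtration degrees $\geqslant n+1$ by $0$ (and, for $\star\leqslant n$, passes to the quotient by the $(n+1)$\textsuperscript{st} filtration step). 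For the symmetric monoidal assertion, observe that $\tau_n^*$ is the Bousfield localisation at the full subcategory $\ker(\tau_n^*)=\{\fil^\star M: \gr^kM\simeq 0\text{ for }0\leqslant k\leqslant n\}$. Since $\gr^\star\colon \Fil\Dd(\IZ)\rightarrow\Gr\Dd(\IZ)$ is symmetric monoidal with $\gr^*(\fil M\lotimes\fil N)\simeq\gr^*M\lotimes\gr^*N$, this subcategory is the preimage of a $\otimes$-ideal of graded objects, hence itself a $\otimes$-ideal; so the class of $\tau_n^*$-equivalences is compatible with $-\lotimes-$, and \cite[Proposition~\chref{2.2.1.9}]{HA} (applied as in~\cref{par:GeneralDescent}) both equips $\Fil^{[0,n]}\Dd(\IZ)$ and $\tau_n^*$ with symmetric monoidal structures and yields the equivalence $\tau_n^*(\fil^\star M\lotimes_{\IZ}\tau_n^*(\fil^\star N))\simeq\tau_n^*(\fil^\star M\lotimes_{\IZ}\fil^\star N)$.

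For~\cref{enum:TruncationLeftAdjoint}, note first that $\tau_n^*\colon \Mod_T(\Fil^{\geqslant 0}\Dd(\IZ))\rightarrow\Mod_{\tau_n^*T}(\Fil^{[0,n]}\Dd(\IZ))$ is obtained by applying $\Mod_{(-)}$ to the symmetric monoidal functor $\tau_n^*$, so it is again symmetric monoidal and colimit-preserving, in particular accessible. The key point is that it \emph{also} preserves all limits: on underlying filtered objects it is the functor from~\cref{enum:TruncationSymmetricMonoidal}, and by the explicit description each filtration degree $(\tau_n^*\fil^\star M)^k$ is built from the degreewise evaluations of $\fil^\star M$ and a cofibre. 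All of these preserve limits in the stable setting (recall $\cofib\simeq\Sigma\circ\fib$, so $\cofib$ is limit-preserving), and limits of filtered objects, resp.\ of modules, are computed degreewise, resp.\ on underlying objects; hence $\tau_n^*$ is limit-preserving. Being moreover accessible, it admits a left adjoint $\tau_{n,!}^T$ by the adjoint functor theorem. Finally, the left adjoint of a symmetric monoidal functor carries a canonical oplax symmetric monoidal structure (see e.g.\ \cite[\S{\chref[section]{7.3}}]{HA}), which is the structure asserted.

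Part~\cref{enum:TruncationProjectionFormula} is then a formal projection-formula argument. Both $\tau_{n,!}^{T_2}(-\lotimes_{\tau_n^*T_1}\tau_n^*T_2)$ and $\tau_{n,!}^{T_1}(-)\lotimes_{T_1}T_2$ are colimit-preserving functors $\Mod_{\tau_n^*T_1}(\Fil^{[0,n]}\Dd(\IZ))\rightarrow\Mod_{T_2}(\Fil^{\geqslant 0}\Dd(\IZ))$, so it suffices to identify their right adjoints. The right adjoint of the first is the composite of $\tau_n^*$ with restriction of scalars along $\tau_n^*T_1\rightarrow\tau_n^*T_2$, and that of the second is the composite of restriction of scalars along $T_1\rightarrow T_2$ with $\tau_n^*$; these agree because restriction of scalars leaves the underlying filtered object untouched while $\tau_n^*$ depends only on the underlying filtered object together with its module structure, both of which match. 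Passing back to left adjoints produces the desired equivalence. (Alternatively, one chases the base-change square using the projection formula for the monoidal adjunction $\tau_{n,!}^T\dashv\tau_n^*$.)

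The conceptual content is light---everything in sight is a reflective localisation in a presentable stable $\infty$-category, and the only input that is not pure formalism is that $\tau_n^*$ preserves limits, which comes from exactness of (co)fibres. The main obstacle is the symmetric monoidal bookkeeping: one must keep track simultaneously of the monoidal structure on $\Fil^{[0,n]}\Dd(\IZ)$, the symmetric monoidal structure on $\tau_n^*$, the lax structure on its right adjoint, and the oplax structure on $\tau_{n,!}^T$, so that the equivalences in~\cref{enum:TruncationSymmetricMonoidal}--\cref{enum:TruncationProjectionFormula} become equivalences of (op)lax symmetric monoidal functors, as required in~\cref{par:CanonicalqHodgeSmoothI}. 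I would organise this by working throughout with the localisation data of \cite[Proposition~\chref{2.2.1.9}]{HA} and then invoking the general machinery of monoidal adjunctions, rather than building the coherences by hand.
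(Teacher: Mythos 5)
Your proof is correct. The approach in parts~\cref{enum:TruncationLeftAdjoint} and~\cref{enum:TruncationProjectionFormula} is the same as the paper's: show $\tau_n^*$ is limit-preserving to get $\tau_{n,!}$ from the adjoint functor theorem, and deduce~\cref{enum:TruncationProjectionFormula} by passing to left adjoints in a commutative square of right adjoints. Your verification that $\tau_n^*$ preserves limits is more hands-on (via the degreewise cofibre description and $\cofib\simeq\Sigma\circ\fib$), while the paper reduces to the case $T=\IZ$ by commuting $\tau_n^*$ past restriction of scalars; both are fine.

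The genuine difference is in~\cref{enum:TruncationSymmetricMonoidal}. The paper reduces the equivalence $\tau_n^*(\fil^\star M\lotimes\tau_n^*\fil^\star N)\simeq\tau_n^*(\fil^\star M\lotimes\fil^\star N)$ to a direct computation on the compact generators $\IZ(i)\lotimes\IZ(j)=\IZ(i+j)$, with a short case split on whether $j\leqslant n$. You instead identify $\ker(\tau_n^*)=\{\fil^\star M:\gr^kM\simeq 0\text{ for }0\leqslant k\leqslant n\}$ and note this is the preimage under the symmetric monoidal functor $\gr^*\colon\Fil^{\geqslant 0}\Dd(\IZ)\rightarrow\Gr^{\geqslant 0}\Dd(\IZ)$ of the $\otimes$-ideal $\{X^*: X^k\simeq 0\text{ for }0\leqslant k\leqslant n\}$, hence itself a $\otimes$-ideal, which is precisely the compatibility hypothesis of \cite[Proposition~\chref{2.2.1.9}]{HA}. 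This is a cleaner and more robust route: it makes the $\otimes$-ideal property manifestly structural rather than relying on a computation of $\tau_n^*$ on generators, and it sidesteps the need to know the value of $\tau_n^*\IZ(j)$ for $j\geqslant n+1$ (which in fact vanishes, since $\Hom(\IZ(j),-)=\fil^j(-)$ is zero on $\Fil^{[0,n]}$; note, as you correctly observe, that $\tau_n^*$ is not literally ``set degrees $\geqslant n+1$ to zero'' but replaces $\fil^k M$ by $\cofib(\fil^{n+1}M\to\fil^kM)$ in degrees $k\leqslant n$). One small point of precision: the $\gr^*$ argument needs to be made with target $\Gr^{\geqslant 0}\Dd(\IZ)$ rather than all of $\Gr\Dd(\IZ)$, since only there is $\{X^*:X^k\simeq 0,\ 0\leqslant k\leqslant n\}$ actually a $\otimes$-ideal; but as $\gr^*$ already lands in non-negatively graded objects when restricted to $\Fil^{\geqslant 0}$, this is automatic.
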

	\begin{proof}
		We start with \cref{enum:TruncationSymmetricMonoidal}. It's straightforward to see that $\tau_n^*$ exists and is given as claimed. To show the equivalence, since $\tau_n^*$ and the inclusion preserve colimits, it will be enough to check the case where $\fil^\star M\simeq \IZ(i)$ and $\fil^\star N\simeq \IZ(j)$, where $i,j\geqslant 0$. If $j\leqslant n$, then $\tau_n^*\IZ(j)\rightarrow \IZ(j)$ is an equivalence and the claim is clear. If $j\geqslant n+1$, then we must check that $\tau_n^*\IZ(i+j)\rightarrow \tau_n^*\IZ(i+n)$ is an equivalence. This is clear as both sides are just $\IZ(n)$. The final claim in~\cref{enum:TruncationSymmetricMonoidal} is general abstract nonsense about symmetric monoidal structures on localisations (see \cite[Proposition~\chref{2.2.1.9}]{HA} for example).
		
		Let us now prove \cref{enum:TruncationLeftAdjoint} and \cref{enum:TruncationProjectionFormula} simultaneously. For any map $T_1\rightarrow T_2$ in $\CAlg(\Fil^{\geqslant 0}\Dd(\IZ_p))$, the diagram
		\begin{equation*}
			\begin{tikzcd}
				\Mod_{T_2}\bigl(\Fil^{\geqslant 0}\Dd(\IZ)\bigr)\rar["\tau_n^*"]\dar & \Mod_{\tau_n^*T_2}\bigl(\Fil^{[0,n]}\Dd(\IZ)\bigr)\dar\\
				\Mod_{T_1}\bigl(\Fil^{\geqslant 0}\Dd(\IZ)\bigr)\rar["\tau_n^*"] & \Mod_{\tau_n^*T_1}\bigl(\Fil^{[0,n]}\Dd(\IZ)\bigr)
			\end{tikzcd}
		\end{equation*}
		commutes. In the special case where $T_1=\IZ$ is the filtered tensor unit and $T_2=T$, this allows us to show that $\tau_n^*\colon \Mod_T(\Fil^{\geqslant 0}\Dd(\IZ))\rightarrow \Mod_{\tau^*T}(\Fil^{[0,n]}\Dd(\IZ))$ preserves all limits and colimits. Therefore the claimed left adjoint $\tau_{n,!}^T$ exists by Lurie's adjoint functor theorem. By abstract nonsense, $\tau_{n,!}^T$ will automatically acquire an oplax symmetric monoidal structure. This shows~\cref{enum:TruncationLeftAdjoint}. By passing to left adjoints in the diagram above, we immediately obtain~\cref{enum:TruncationProjectionFormula}.
	\end{proof}
	
	\begin{numpar}[Canonical $q$-Hodge filtrations II.]\label{par:CanonicalqHodgeSmoothII}
		We resume the discussion from \cref{par:CanonicalqHodgeSmoothI}. As we know now, the pullback defining $\fil_{\qHodge,n}^{\star\leqslant n}\qOmega_{S/A}$ can be taken in $\Mod_{\tau_n^*((q-1)^\star A\qpower)}(\Fil^{[0,n]}\Dd(\IZ))$. Applying the functor $\tau_{n,!}$ from \cref{lem:TruncatedFilteredObjects}\cref{enum:TruncationLeftAdjoint}, we obtain a filtered $(q-1)^\star A\qpower$-module
		\begin{equation*}
			\fil_{\qHodge,n}^\star\qOmega_{S/A}\coloneqq \tau_{n,!}\left(\fil_{\qHodge,n}^{\star\leqslant n}\qOmega_{S/A}\right)_{(q-1)}^\complete\,.
		\end{equation*}
		We can also take the pullback along $\qdeRham_{S/A}\rightarrow \qOmega_{S/A}$ to construct $\fil_{\qHodge,n}^\star\qdeRham_{S/A}$ (in order to be in line with \cref{def:qHodgeFiltration}).
	\end{numpar}

	\begin{rem}
		If $S$ satisfies the assumptions of \cref{par:CanonicalqHodgeSmoothII} and is additionally equipped with  an étale framing $\square\colon A[x_1,\dotsc,x_n]\rightarrow S$, then there exists an equivalence of filtered $(q-1)^\star A\qpower$-modules
		\begin{equation*}
			\fil_{\qHodge,n}^\star\qOmega_{S/A}\overset{\simeq}{\longrightarrow}\fil_{\qHodge,\square}^\star\qOmega_{S/A, \square}^*
		\end{equation*}
		between the $q$-Hodge filtration from \cref{par:CanonicalqHodgeSmoothII} and the one from~\cref{exm:HabiroDescentCoordinateCase}. Indeed, we observe $\fil_{\qHodge,n}^{\star\leqslant n}\qOmega_{S/A}\simeq \tau_n^*(\fil_{\qHodge,\square}^\star\qOmega_{S/A, \square}^*)$, since both sides fit into the same pullback diagram by construction. Since $\tau_{n,!}$ was defined as a the left adjoint of $\tau_n^*$, we obtain the map above. To see that it is an equivalence, we may reduce modulo~$(q-1)$, where we get the identity on $\fil_{\Hodge}^\star\Omega_{S/A}$ by inspection and \cref{lem:CanonicalqHodgeSmooth} below.
	\end{rem}
	
	\begin{rem}\label{rem:pTildeDeRham}
		Here's another way to do the construction from \cref{par:CanonicalqHodgeSmoothI} and~\cref{par:CanonicalqHodgeSmoothII}. Fix a prime~$p$. Recall that Bhatt--Lurie \cite[Construction~\chref{4.8.3}]{BhattLurieI} have defined a \emph{$\widetilde{p}$-de Rham complex} $\pOmega_{\smash{\widehat{S}}_p/\smash{\widehat{A}}_p}$. Explicitly, it is the homotopy-fixed points of the action of $\mu_{p-1}\subseteq \IZ_p^\times$ on $(\qOmega_{S/A})_p^\complete$. Here $u\in\IZ_p^\times$ acts on the prism $(\IZ_p\qpower,[p]_q)$ via $q\mapsto q^u$, which induces an action of $\IZ_p^\times$ on $(\qOmega_{S/A})_p^\complete$ via the comparison with prismatic cohomology (\cref{thm:qDeRhamGlobal}\cref{enum:GlobalqDeRhamPrismatic}).
		
		We can then define $\fil_{\pHodge,n}^{\star\leqslant n}\pOmega_{\smash{\widehat{S}}_p/\smash{\widehat{A}}_p}$ as the pullback of the Hodge filtration along the canonical map $\pOmega_{\smash{\widehat{S}}_p/\smash{\widehat{A}}_p}\rightarrow (\Omega_{S/A})_p^\complete$ (no combined Hodge and $(q-1)$-adic filtration is needed here), extend via $\tau_{n,!}$, and then finally base change to $(q-1)^\star\IZ_p\qpower$ to define a $p$-completed $q$-Hodge filtration $\fil_{\qHodge,n}^\star(\qOmega_{S/A})_p^\complete$.
		
		These filtrations for all~$p$ can be glued with the combined Hodge and $(q-1)$-adic filtration on $(\Omega_{S/A}\lotimes_\IZ\IQ)\qpower$ to get the same filtration $\fil_{\qHodge,n}^\star\qOmega_{S/A}$ as in \cref{par:CanonicalqHodgeSmoothII}. We prefer the construction in \cref{par:CanonicalqHodgeSmoothII}, since spelling out the gluing argument is a bit of a pain.
	\end{rem}
	
	
	\begin{lem}\label{lem:CanonicalqHodgeSmooth}
		With notation as in \cref{par:CanonicalqHodgeSmoothI}, assume additionally that $\dim(S/A)\leqslant n$. Then  $\fil_{\qHodge,n}^\star\qdeRham_{S/A}$ can naturally be equipped with the structure of a $q$-Hodge filtration as in \cref{def:qHodgeFiltration}.
	\end{lem}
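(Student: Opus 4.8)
The plan is to extract every datum and compatibility required by \cref{def:qHodgeFiltration} by applying the relevant functors to the single pullback square of \cref{par:CanonicalqHodgeSmoothI} and then pushing forward along $\tau_{n,!}$. First I would reduce to the underived $q$-de Rham complex: by \cref{rem:CompleteFiltration} we have $\Omega_{S/A}^*\simeq\hatdeRham_{S/A}$ for smooth $S$, every filtration is the pullback of its completion, and $\fil_{\qHodge,n}^\star\qdeRham_{S/A}$ is by construction the pullback of $\fil_{\qHodge,n}^\star\qOmega_{S/A}$ along $\qdeRham_{S/A}\rightarrow\qOmega_{S/A}$. Since $-/(q-1)$, $(-\lotimes_\IZ\IQ)_{(q-1)}^\complete$ and $(-)_p^\complete[1/p]_{(q-1)}^\complete$ all commute with finite limits, every piece of data in \cref{def:qHodgeFiltration} for $\qdeRham_{S/A}$ is obtained by pulling back the corresponding piece for $\qOmega_{S/A}$, and the required agreement in filtered degrees $\leqslant 0$ with the standard equivalences for the $q$-de Rham complex is automatic.

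Condition~\cref{def:qHodgeFiltration}\cref{enum:qHodgeFiltrationOnqdR} is immediate: in filtration degree~$0$ the bottom map in the pullback square of \cref{par:CanonicalqHodgeSmoothI} is an equivalence, $\tau_{n,!}$ does not alter degree~$0$, and $\qOmega_{S/A}$ is already $(q-1)$-complete. For $c_{(q-1)}$ I would compute $\fil_{\qHodge,n}^{\star\leqslant n}\qOmega_{S/A}/(q-1)$: all corners of the square are $(q-1)$-torsion-free, so reduction modulo $(q-1)$ commutes with the pullback, and since the combined filtration satisfies $\fil_{(\Hodge,q-1)}^\star\Omega_{S/A}\qpower/(q-1)\simeq\fil_\Hodge^\star\Omega_{S/A}$ by construction, each degree $i\leqslant n$ yields $\Omega_{S/A}\times_{\Omega_{S/A}}\fil_\Hodge^i\Omega_{S/A}\simeq\fil_\Hodge^i\Omega_{S/A}$. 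Applying $\tau_{n,!}$ and using the identification of filtered objects with graded $t$-modules from \cref{par:Notations}, one checks that $\fil_{\qHodge,n}^i\qOmega_{S/A}\simeq(q-1)^{i-n}\fil_{\qHodge,n}^n\qOmega_{S/A}$ for $i>n$; since $\dim(S/A)\leqslant n$ forces $\fil_\Hodge^i\Omega_{S/A}=0$ for $i>n$, both sides vanish modulo $(q-1)$ in those degrees, giving the equivalence of filtered $A$-modules required by~\cref{enum:qHodgeModq-1} after pulling back to $\deRham_{S/A}$ as above.

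For $c_\IQ$ and $c_{\IQ_p}$ I would run the same computation after $(-\lotimes_\IZ\IQ)_{(q-1)}^\complete$, resp.\ $(-)_p^\complete[1/p]_{(q-1)}^\complete$. Rationally $\qOmega_{S/A}$ becomes $(\deRham_{S/A}\lotimes_\IZ\IQ)\qpower$, and $p$-adically (for $p>n$) $(\qdeRham_{S/A})_p^\complete[1/p]_{(q-1)}^\complete\simeq(\deRham_{S/A})_p^\complete[1/p]\qpower$ by \cref{thm:qDeRhamGlobal} and \cref{lem:RationalisationTechnicalI}; the pullback over $(q-1)^n$ computes a sum of the form $\fil_{(\Hodge,q-1)}^i+(q-1)^n\cdot(\text{everything})$, and for $i\leqslant n$ the Hodge-degree-$0$ summand of $\fil_{(\Hodge,q-1)}^i$ already contains $(q-1)^i\supseteq(q-1)^n$, so the truncated filtration agrees with $\fil_{(\Hodge,q-1)}^{\star\leqslant n}$. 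Since $\tau_{n,!}$ commutes with these localisations and completions --- a base-change statement covered by \cref{lem:TruncatedFilteredObjects}\cref{enum:TruncationProjectionFormula} --- and since $\fil_{(\Hodge,q-1)}^i=(q-1)^{i-\dim(S/A)}\fil_{(\Hodge,q-1)}^{\dim(S/A)}$ for $i\geqslant\dim(S/A)$, the $\tau_{n,!}$-extension reproduces $\fil_{(\Hodge,q-1)}^i$ in all degrees $i>n$ (again using $n\geqslant\dim(S/A)$); for primes $p\leqslant n$ both sides vanish since $(\qdeRham_{S/A})_p^\complete=0$. The commuting squares in \cref{def:qHodgeFiltration}\cref{enum:qHodgeRational} and~\cref{enum:qHodgeRationalpComplete} relating $c_{(q-1)}$, $c_\IQ$ and $c_{\IQ_p}$ are then forced: each is produced by applying one of the above functors to the same pullback square, and the standard identifications of $\qOmega_{S/A}$ and $\qdeRham_{S/A}$ under these functors already satisfy the corresponding compatibilities, with agreement in filtered degrees $\leqslant 0$ automatic.

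I expect the main obstacle to be organisational rather than mathematical: producing the entire tower of compatibility data in \cref{def:qHodgeFiltration} coherently from one pullback square of filtered $\tau_n^*((q-1)^\star A\qpower)$-modules, and checking that $\tau_{n,!}$ genuinely commutes with the three completion/localisation operations. The latter reduces, via the projection formula \cref{lem:TruncatedFilteredObjects}\cref{enum:TruncationProjectionFormula}, to base change along the ring maps $(q-1)^\star A\qpower\rightarrow(q-1)^\star A\qpower/(q-1)$, $\rightarrow(q-1)^\star(A\otimes\IQ)\qpower$ and $\rightarrow(q-1)^\star\widehat{A}_p[1/p]\qpower$ (all truncated by $\tau_n^*$), together with the fact that $\tau_{n,!}$ of a bounded-length filtered object remains bounded and hence behaves well under completion. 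As a sanity check, when $S$ additionally admits an étale framing one may compare with the coordinate-dependent construction of \cref{exm:HabiroDescentCoordinateCase}, where this becomes a visible manipulation of explicit complexes.
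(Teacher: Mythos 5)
Your proposal follows essentially the same strategy as the paper's proof: compute the reductions, rationalisations, and $p$-completed rationalisations of the pullback defining $\fil_{\qHodge,n}^{\star\leqslant n}\qOmega_{S/A}$, push forward along $\tau_{n,!}$ using the projection formula \cref{lem:TruncatedFilteredObjects}\cref{enum:TruncationProjectionFormula}, and use $\dim(S/A)\leqslant n$ to see that $\fil_{\Hodge}^\star\Omega_{S/A}$ already lies in $\Fil^{[0,n]}\Dd(\IZ)$, so that $\tau_{n,!}$ reproduces it. One slip in the justification: your claim that reduction modulo $(q-1)$ commutes with the pullback ``because all corners are $(q-1)$-torsion-free'' is false --- the bottom corners are reductions modulo $(q-1)^n$, hence $(q-1)^n$-torsion, not $(q-1)$-torsion-free. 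The correct and more elementary reason is that derived base change along $(q-1)^\star A\qpower\to A$ is an exact functor between stable $\infty$-categories, and exact functors preserve all finite limits including pullback squares; the paper's use of the projection formula packages this cleanly. With that justification replaced, your argument matches the paper's.
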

	\begin{proof}
		In the following, we'll regard $(q-1)$ as sitting in filtration degree~$1$, as per Convention~\cref{conv:QuotientConvention}. We first compute
		\begin{align*}
			\fil_{\qHodge,n}^\star\qOmega_{S/A}/(q-1)&\simeq \tau_{n,!}^{\IZ}\left(\fil_{\qHodge,n}^{\star\leqslant n}\qOmega_{S/A}\lotimes_{\tau_n^*((q-1)^\star\IZ\qpower)}\IZ\right)\\
			&\simeq \tau_{n,!}^\IZ\tau_n^*(\fil_{\Hodge}^{\star}\Omega_{S/A})\\
			&\simeq \fil_{\Hodge}^{\star}\Omega_{S/A}\,.
		\end{align*}
		In the first equivalence we apply \cref{lem:TruncatedFilteredObjects}\cref{enum:TruncationProjectionFormula} to $(q-1)^\star\IZ\qpower\rightarrow \IZ$. The second equivalence follows by construction. To see the third equivalence, first observe that the Hodge filtration $\fil_{\Hodge}^\star\Omega_{S/A}$ is already contained in $\Fil^{[0,n]}\Dd(\IZ)$ because we assume $\dim(S/A)\leqslant n$. Since the right adjoint of $\tau_n^*\colon \Fil^{\geqslant 0}\Dd(\IZ)\rightarrow \Fil^{[0,n]}\Dd(\IZ)$ is fully faithful, so is the left adjoint $\tau_{n,!}^\IZ$, which yields the third equivalence. Similarly,
		\begin{align*}
			\bigl(\fil_{\qHodge,n}^\star\qOmega_{S/A}\lotimes_\IZ\IQ\bigr)_{(q-1)}^\complete&\simeq \tau_{n,!}\left(\bigl(\fil_{\qHodge,n}^{\star\leqslant n}\qOmega_{S/A}\lotimes_\IZ\IQ\bigr)_{(q-1)}^\complete\right)\\
			&\simeq \tau_{n,!}\left(\bigl(\fil_{\Hodge}^\star\Omega_{S/A}\lotimes_\IZ\tau_n^*\bigl((q-1)^\star\IQ\qpower\bigr)\bigr)_{(q-1)}^\complete\right)\\ &\simeq \fil_{(\Hodge,q-1)}^\star\bigl(\Omega_{S/A}\lotimes_\IZ\IQ\bigr)\qpower\,.
		\end{align*}
		The first equivalence is \cref{lem:TruncatedFilteredObjects}\cref{enum:TruncationProjectionFormula} applied to $\IZ\rightarrow \IQ$. For the second equivalence, we apply $(-\lotimes_\IZ\IQ)_{(q-1)}^\complete$ to the pullback defining $\fil_{\qHodge,n}^\star\qOmega_{S/A}$ in \cref{par:CanonicalqHodgeSmoothII} and use the fact that $(\qOmega_{S/A}\lotimes_\IZ\IQ)_{(q-1)}^\complete\simeq (\Omega_{S/A}\lotimes_\IZ\IQ)\qpower$. The third equivalence is \cref{lem:TruncatedFilteredObjects}\cref{enum:TruncationProjectionFormula} applied to $\IZ\rightarrow (q-1)^\star\IQ\qpower$.
		
		In a completely analogus way, we obtain natural equivalences
		\begin{equation*}
			\fil_{\qHodge,n}^\star\bigl(\qOmega_{S/A}\bigr)_p^\complete\bigl[\localise{p}\bigr]_{(q-1)}^\complete\overset{\simeq}{\longrightarrow}\fil_{(\Hodge,q-1)}^\star\bigl(\Omega_{S/A}\bigr)_p^\complete\bigl[\localise{p}\bigr]\qpower
		\end{equation*}
		for all primes~$p$. Via pullback along $\qdeRham_{S/A}\rightarrow\qOmega_{S/A}$, we obtain analogous equivalences for $\fil_{\qHodge,n}^\star \qdeRham_{S/A}$. The required compatibilities from \cref{def:qHodgeFiltration} can all be induced from those for $\qdeRham_{S/A}$, and so $\fil_{\qHodge,n}^\star\qdeRham_{S/A}$ can indeed be equipped with the structure of a $q$-Hodge filtration.
	\end{proof}
	\begin{lem}
		With assumptions as in \cref{lem:CanonicalqHodgeSmooth}, $\fil_{\qHodge,n}^\star\qOmega_{S/A}$ is automatically the completion of $\fil_{\qHodge,n}^\star\qdeRham_{S/A}$.
	\end{lem}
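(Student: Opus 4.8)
The plan is to deduce this from the abstract fact that passing to the pullback along $\qdeRham_{S/A}\rightarrow\qOmega_{S/A}$ does not change completions, together with the completeness of $\fil_{\qHodge,n}^\star\qOmega_{S/A}$ as a filtered object. Recall that $\fil_{\qHodge,n}^\star\qdeRham_{S/A}$ was \emph{defined} in \cref{par:CanonicalqHodgeSmoothII} as the pullback of $\fil_{\qHodge,n}^\star\qOmega_{S/A}$ along $\qdeRham_{S/A}\rightarrow\qOmega_{S/A}$ (with $\qdeRham_{S/A}$ and $\qOmega_{S/A}$ carrying the constant filtration), so the assertion is that completing this pullback filtration recovers $\fil_{\qHodge,n}^\star\qOmega_{S/A}$.

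First I would establish a general lemma, valid in any stable $\infty$-category $\Cc$: if $f\colon X\rightarrow Y$ is any morphism, $\fil^\star Y$ is a descending filtration on $Y$ (so $\fil^0Y=Y$), regarded as a filtered object with its map to the constant filtered object $\underline Y$, and $\fil^\star X\coloneqq \underline X\times_{\underline Y}\fil^\star Y$ is the resulting pullback filtration, then the completions agree: $\fil^\star\widehat X\simeq\fil^\star\widehat Y$ in $\Fil(\Cc)$. The proof is short. Put $K^\star\coloneqq\fib(\fil^\star Y\rightarrow\underline Y)$; since pullback squares in a stable $\infty$-category are bicartesian, $K^\star$ is equally the fibre of $\fil^\star X\rightarrow\underline X$, so one has fibre sequences $K^\star\rightarrow\fil^\star X\rightarrow\underline X$ and $K^\star\rightarrow\fil^\star Y\rightarrow\underline Y$ of filtered objects. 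The filtered-completion functor $\Fil(\Cc)\rightarrow\Fil(\Cc)$ is exact, being assembled from cofibres and sequential limits, and it annihilates every constant filtered object $\underline M$, since in filtration degree $i$ its completion is $\limit_n\cofib(\underline M^{i+n}\xrightarrow{\id}\underline M^{i})=\limit_n 0\simeq 0$. Applying completion to the two fibre sequences therefore gives $\fil^\star\widehat X\simeq\widehat{K^\star}\simeq\fil^\star\widehat Y$.

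Applying this with $f\colon\qdeRham_{S/A}\rightarrow\qOmega_{S/A}$ and $\fil^\star Y=\fil_{\qHodge,n}^\star\qOmega_{S/A}$ — so that $\fil^\star X=\fil_{\qHodge,n}^\star\qdeRham_{S/A}$ by construction — yields $\fil_{\qHodge,n}^\star\qhatdeRham_{S/A}\simeq\widehat{\fil_{\qHodge,n}^\star\qOmega_{S/A}}$. It then remains to check that $\fil_{\qHodge,n}^\star\qOmega_{S/A}$ is already complete, i.e. $\limit_{i\rightarrow\infty}\fil_{\qHodge,n}^i\qOmega_{S/A}\simeq 0$. Unwinding the construction of \cref{par:CanonicalqHodgeSmoothI,par:CanonicalqHodgeSmoothII}: for $i\geqslant n$ the piece $\fil_{\qHodge,n}^i\qOmega_{S/A}$ is canonically identified with $N\coloneqq\fil_{\qHodge,n}^n\qOmega_{S/A}$, the transition maps $\fil^{i+1}\rightarrow\fil^i$ being multiplication by $(q-1)$; and $N$ is derived $(q-1)$-complete, because the construction of $\fil_{\qHodge,n}^\star\qOmega_{S/A}$ ends with a (termwise) $(q-1)$-completion — equivalently, $N$ sits in the pullback square of \cref{par:CanonicalqHodgeSmoothI} in degree $n$, i.e. it is a pullback of the $(q-1)$-complete object $\qOmega_{S/A}$ along two $(q-1)$-power-torsion objects. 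Hence $\limit_{i\rightarrow\infty}\fil_{\qHodge,n}^i\qOmega_{S/A}\simeq\limit\bigl(N\xleftarrow{q-1}N\xleftarrow{q-1}\cdots\bigr)\simeq 0$ by one of the standard characterisations of derived $(q-1)$-completeness. Therefore $\widehat{\fil_{\qHodge,n}^\star\qOmega_{S/A}}\simeq\fil_{\qHodge,n}^\star\qOmega_{S/A}$, and combining the two displays finishes the proof.

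The only non-formal input is the completeness of $\fil_{\qHodge,n}^\star\qOmega_{S/A}$; this is the expected main obstacle, but it is in fact immediate once the construction is unravelled, the point being simply that in high filtration degrees the filtration becomes the $(q-1)$-adic filtration on a $(q-1)$-complete module. Everything else is bookkeeping with filtered objects and the exactness of filtered completion. If one prefers, completeness can instead be checked Zariski-locally on $S$, where it follows at once from the identification of $\fil_{\qHodge,n}^\star\qOmega_{S/A}$ with the manifestly complete coordinate-dependent filtration $\fil_{\qHodge,\square}^\star\qOmega_{S/A,\square}^*$ of \cref{exm:HabiroDescentCoordinateCase}.
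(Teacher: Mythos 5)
Your proof is correct, but it takes a genuinely different route from the paper's. Both arguments open with the same formal observation — pulling back a filtration along $\qdeRham_{S/A}\rightarrow\qOmega_{S/A}$ does not change the filtered completion, since completion is exact and annihilates constant filtered objects — so both reduce the lemma to showing that $\fil_{\qHodge,n}^\star\qOmega_{S/A}$ is already complete. Where they part ways is in how they establish that completeness. The paper invokes \cref{prop:Letaq-1}, which identifies the underlying object of the $q$-Hodge-completion as $\qOmega_{S/A}\simeq\qhatdeRham_{S/A}$ (via an arithmetic-fracture argument), and then applies ``every filtration is the pullback of its completion.'' You instead unwind the $\tau_{n,!}$ construction of \cref{par:CanonicalqHodgeSmoothII} to see that in filtration degrees $\geqslant n$ the filtration stabilises to $N\coloneqq\fil_{\qHodge,n}^n\qOmega_{S/A}$ with transition maps $(q-1)$, and that $N$ is derived $(q-1)$-complete (as a pullback of a $(q-1)$-complete object against $(q-1)$-power-torsion objects), so $\limit_{i\rightarrow\infty}\fil_{\qHodge,n}^i\qOmega_{S/A}\simeq 0$ by the standard characterisation of $(q-1)$-completeness. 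Your route is more self-contained and elementary — it does not rely on \cref{prop:Letaq-1} at all — at the modest cost of a small claim about the shape of $\tau_{n,!}$ in high degrees that, while true (it follows from the universal property of the oplax monoidal left adjoint $\tau_{n,!}^T$ acting on free $T$-modules), is not stated explicitly in the paper. The paper's route is shorter because \cref{prop:Letaq-1} is already proved and needed elsewhere, and it avoids examining $\tau_{n,!}$ directly; it does, however, implicitly require checking that the equivalence $\qOmega_{S/A}\simeq\qhatdeRham_{S/A}$ of \cref{prop:Letaq-1} is compatible with the canonical map $\qOmega_{S/A}\rightarrow\widehat{\fil_{\qHodge,n}^0\qOmega_{S/A}}$, a point that your argument sidesteps entirely.
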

	
	\begin{proof}
		By \cref{prop:Letaq-1}, $\qOmega_{S/A}$ is automatically the completion of $\qdeRham_{S/A}$ at the filtration $\fil_{\qHodge,n}^\star\qdeRham_{S/A}$. Since $\fil_{\qHodge,n}^\star\qdeRham_{S/A}$ is defined as the pullback of $\fil_{\qHodge,n}^\star\qOmega_{S/A}$ along $\qdeRham_{S/A}\rightarrow \qOmega_{S/A}$, the desired assertion follows.
	\end{proof}
	
	We will now make the construction from \cref{par:CanonicalqHodgeSmoothII} functorial.
	
	\begin{numpar}[Functoriality across dimensions.]
		For all non-negative integers $n$ and $d$ let $\cat{Sm}_{A[n!^{-1}]}^{\leqslant d}$ be the category of all smooth $A$-algebras $S$ of relative dimension $\dim(S/A)\leqslant d$ such that all primes $p\leqslant n$ are invertible in $S$. Then~\cref{par:CanonicalqHodgeSmoothII} and \cref{lem:CanonicalqHodgeSmooth} provide us with a functor
		\begin{equation*}
			\bigl(-,\fil_{\qHodge,n}^\star\qdeRham_{-/A}\bigr)\colon \cat{Sm}_{A[n!^{-1}]}^{\leqslant n}\longrightarrow \cat{AniAlg}_A^{\qHodge}\,.
		\end{equation*}
		We let $\cat{Sm}_{A[\dim!^{-1}]}^{\leqslant n}\subseteq \cat{Sm}_A$ be the full subcategory spanned by $\bigcup_{d\leqslant n}\cat{Sm}_{A[d!^{-1}]}^{\leqslant d}$ and we put
		\begin{equation*}
			\cat{Sm}_{A[\dim!^{-1}]}\coloneqq \bigcup_{n\geqslant 0} \cat{Sm}_{A[\dim!^{-1}]}^{\leqslant n}\,.
		\end{equation*}
		Our goal is to show that the functors above for varying~$n$ combine into a single functor defined on all of $\cat{Sm}_{A[\dim!^{-1}]}$. This will be achieved by the technical \cref{lem:SmoothPushout,lem:CanonicalqHodgeSmoothFunctorial} below.
	\end{numpar}
	\begin{lem}\label{lem:SmoothPushout}
		For all $n\geqslant 0$, the following diagram is a pushout of $\infty$-categories:
		\begin{equation*}
			\begin{tikzcd}
				\cat{Sm}_{A[(n+1)!^{-1}]}^{\leqslant n}\rar\dar\drar[pushout] & \cat{Sm}_{A[(n+1)!^{-1}]}^{\leqslant n+1}\dar\\
				\cat{Sm}_{A[\dim!^{-1}]}^{\leqslant n}\rar & \cat{Sm}_{A[\dim!^{-1}]}^{\leqslant n+1}
			\end{tikzcd}
		\end{equation*}
	\end{lem}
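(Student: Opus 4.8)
The plan is to recognise all four functors in the square as fully faithful inclusions of full subcategories of $\cat{Sm}_A$, and then to exhibit the square as a pushout by a direct analysis of mapping spaces. First I would unwind the definitions: for any $m$, an object of $\cat{Sm}_{A[\dim!^{-1}]}^{\leqslant m}$ is a smooth $A$-algebra $S$ with $\dim(S/A)\leqslant m$ in which all primes $p\leqslant\dim(S/A)$ are invertible, since in the union $\bigcup_{d\leqslant m}\cat{Sm}_{A[d!^{-1}]}^{\leqslant d}$ the smallest admissible $d$ is $d=\dim(S/A)$ (enlarging $d$ only strengthens the invertibility condition). A short bookkeeping argument with relative dimensions and sets of invertible primes then shows that, on objects, $\cat{Sm}_{A[(n+1)!^{-1}]}^{\leqslant n}=\cat{Sm}_{A[(n+1)!^{-1}]}^{\leqslant n+1}\cap\cat{Sm}_{A[\dim!^{-1}]}^{\leqslant n}$, and that $\cat{Sm}_{A[\dim!^{-1}]}^{\leqslant n+1}$ is the full subcategory on the union of the objects of $\cat{Sm}_{A[(n+1)!^{-1}]}^{\leqslant n+1}$ and $\cat{Sm}_{A[\dim!^{-1}]}^{\leqslant n}$ (an object of the target has dimension $\leqslant n+1$ with all primes $\leqslant\dim$ invertible; if its dimension is $\leqslant n$ it lies in the second, if it equals $n+1$ then all primes $\leqslant n+1$ are invertible and it lies in the first). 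So the square is a square of fully faithful inclusions; write $\mathcal{C}_1$, $\mathcal{C}_2$, $\mathcal{C}_3$, $\mathcal{C}_4$ for its top-left, top-right, bottom-left, bottom-right corners, so that $\mathcal{C}_1=\mathcal{C}_2\cap\mathcal{C}_3$ and $\mathcal{C}_4=\mathcal{C}_2\cup\mathcal{C}_3$ on objects.

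The key structural input is the following. Write $L\colon S\mapsto S[1/(n+1)!]$. Since $S[1/(n+1)!]$ is a localisation of $S$ — in fact étale over $S$ — it is smooth over $A$ of the same relative dimension and has all primes $\leqslant n+1$ invertible, and for any $T$ in which $(n+1)!$ is invertible the map $\Hom_A(S[1/(n+1)!],T)\to\Hom_A(S,T)$ is a bijection. Hence $L$, with the localisation maps $S\to S[1/(n+1)!]$ as unit, exhibits $\mathcal{C}_2\hookrightarrow\mathcal{C}_4$ as reflective with left adjoint $L$, and $L$ restricts compatibly to a reflective inclusion $\mathcal{C}_1\hookrightarrow\mathcal{C}_3$ (localising does not raise dimension, so $L$ sends $\mathcal{C}_3$ into $\mathcal{C}_1$ and the unit maps stay in $\mathcal{C}_3$). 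I would also record the crucial vanishing: for $x\in\mathcal{C}_2\setminus\mathcal{C}_1$ (relative dimension exactly $n+1$, so $(n+1)!$ invertible in $x$) and $y\in\mathcal{C}_3\setminus\mathcal{C}_1$ (relative dimension $\leqslant n$ but $(n+1)!$ not invertible in $y$), one has $\Map_{\cat{Sm}_A}(x,y)=\emptyset$, since a ring map $x\to y$ would make $(n+1)!$ invertible in $y$.

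To finish I would invoke the standard description of a pushout $\mathcal{C}_2\sqcup_{\mathcal{C}_1}\mathcal{C}_3$ of $\infty$-categories along fully faithful functors: the maps from $\mathcal{C}_2$ and $\mathcal{C}_3$ are fully faithful, their essential images cover and meet in $\mathcal{C}_1$, and for $x\in\mathcal{C}_2$, $y\in\mathcal{C}_3$ one has $\Map(x,y)\simeq\colimit_{z\in\mathcal{C}_1}\Map_{\mathcal{C}_2}(x,z)\times\Map_{\mathcal{C}_3}(z,y)$, a two-sided bar construction over $\mathcal{C}_1$. Applied to our four categories, the comparison functor $\mathcal{C}_2\sqcup_{\mathcal{C}_1}\mathcal{C}_3\to\mathcal{C}_4$ is essentially surjective by the object computation and fully faithful on mapping spaces lying entirely inside $\mathcal{C}_2$ or inside $\mathcal{C}_3$; it remains to compare mapping spaces between the two new parts. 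From $\mathcal{C}_2\setminus\mathcal{C}_1$ to $\mathcal{C}_3\setminus\mathcal{C}_1$ both sides are empty — the target $\Map_{\mathcal{C}_4}$ by the vanishing, and the colimit because $\Map_{\mathcal{C}_3}(z,y)=\emptyset$ already for $z\in\mathcal{C}_1$; in the other direction, reflectivity of $\mathcal{C}_2\hookrightarrow\mathcal{C}_4$ gives $\Map_{\mathcal{C}_4}(y,x)\simeq\Map_{\mathcal{C}_2}(Ly,x)$, while reflectivity of $\mathcal{C}_1\hookrightarrow\mathcal{C}_3$ rewrites the colimit as $\colimit_{z\in\mathcal{C}_1}\Map_{\mathcal{C}_1}(Ly,z)\times\Map_{\mathcal{C}_2}(z,x)$, which is $\Map_{\mathcal{C}_2}(Ly,x)$ by co-Yoneda (composition realised as a colimit). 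Hence the comparison functor is an equivalence and the square is a pushout. The step I expect to need the most care is the invocation — or, if one wants a self-contained treatment, the proof via the simplicial bar model — of this mapping-space formula for pushouts along fully faithful functors, together with keeping the two reflective localisations strictly compatible; everything else is elementary bookkeeping with dimensions and invertible primes.
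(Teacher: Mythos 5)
Your strategy has the same two structural ingredients as the paper's proof — the reflective localization $S\mapsto S[1/(n+1)!]$ and the emptiness of mapping spaces from the ``new'' objects of $\cat{Sm}_{A[(n+1)!^{-1}]}^{\leqslant n+1}$ into the ``new'' objects of $\cat{Sm}_{A[\dim!^{-1}]}^{\leqslant n}$ — but you route them through a much stronger lemma than the paper needs. You invoke as a ``standard description'' that for a pushout $\Cc_2\sqcup_{\Cc_1}\Cc_3$ along fully faithful functors, the two legs land fully faithfully and the cross mapping spaces are given by the one-step bar formula $\colimit_{z\in\Cc_1}\Map_{\Cc_2}(x,z)\times\Map_{\Cc_3}(z,y)$. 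The fully faithfulness part is precisely Ramzi's observation, which the paper cites; but the mapping-space formula is a genuinely stronger claim — it would in particular recover Ramzi's result by specializing $x,y$ to the same cofactor — and it is not a general theorem about pushouts along fully faithful inclusions in $\Cat_\infty$. (The honest computation of such a pushout is a two-sided bar construction with potentially long alternating zigzags; collapsing it to the one-step formula is exactly the nontrivial content, and it is not clear how to justify it without using the specific geometry of the situation — the cosieve/reflective structure of $\Cc_1\hookrightarrow\Cc_3$ and the vanishing.) You flag this yourself as the step needing ``the most care,'' and indeed that is where the gap is: you cite the formula rather than establish it.

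The paper's argument circumvents the general formula entirely. It uses only Ramzi's result (to reduce same-cofactor mapping spaces), and then for the two mixed cases it argues directly: for $S_1\in\cat{Sm}_{A[\dim!^{-1}]}^{\leqslant n}$ and $S_2\in\cat{Sm}_{A[(n+1)!^{-1}]}^{\leqslant n+1}$, the left adjoint $L(S_1)=S_1[(n+1)!^{-1}]$ of $\Cc_1\hookrightarrow\Cc_3$ pushes out to a left adjoint of $\Cc_2\to\Pp$ (construct the unit and counit as pushouts of the originals; the triangle identities come for free), so that $\Map_{\Pp}(S_1,S_2)\simeq\Map_{\Pp}(LS_1,S_2)\simeq\Map_{\Cc_2}(LS_1,S_2)$ — now a same-cofactor computation; and in the opposite direction, the target mapping space is $\emptyset$, which forces the source to be as well. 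This avoids asserting anything about the internal structure of the pushout beyond Ramzi's result. I would rewrite your Case~1 along these lines rather than appeal to a bar formula you have not proved. The object-level bookkeeping in your first paragraph, the identification of the left adjoint, and the vanishing argument are all correct and match the paper's.
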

	\begin{proof}
		Let $\Pp$ denote the pushout. Since the diagram above commutes, we get a functor $\Pp\rightarrow \cat{Sm}_{A[\dim!^{-1}]}^{\leqslant n+1}$. This functor is clearly essentially surjective. To show that it is fully faithful, we must show that
		\begin{equation*}
			\Hom_\Pp(S_1,S_2)\overset{\simeq}{\longrightarrow} \Hom_{\cat{Sm}_{A[\dim!^{-1}]}^{\leqslant n+1}}(S_1,S_2)
		\end{equation*}
		is an equivalence for all $S_1,S_2\in \Pp$. We may assume without loss of generality that $S_1$ and $S_2$ are the images of objects in $\cat{Sm}_{A[\dim!^{-1}]}^{\leqslant n}$ or $\cat{Sm}_{A[(n+1)!^{-1}]}^{\leqslant n+1}$.
		
		By an observation of Maxime Ramzi \cite{RamziPushouts}, fully faithful functors are preserved under pushouts. Since both legs of our pushout are fully faithful, the claimed equivalence is clear if $S_1$ and $S_2$ come from the same cofactor. It remains to deal with the following two cases.
		
		\emph{Case~1: $S_1\in \cat{Sm}_{A[\dim!^{-1}]}^{\leqslant n}$ and $S_2\in \cat{Sm}_{A[(n+1)!^{-1}]}^{\leqslant n+1}$.} Observe that the fully faithful functor
		\begin{equation*}
			\cat{Sm}_{A[(n+1)!^{-1}]}^{\leqslant n}\longrightarrow \cat{Sm}_{A[\dim!^{-1}]}^{\leqslant n}
		\end{equation*}
		has a left adjoint given by localisation at $(n+1)!$. It follows formally that $\cat{Sm}_{A[\dim!^{-1}]}^{\leqslant n+1}\rightarrow\Pp$ also has a left adjoint and that the diagram of left adjoints is still a pushout (and in particular commutative). Indeed, we can simply define unit and counit by taking the pushout of the original unit and counit; the triangle identities will automatically be satisfied. Therefore, we can replace $S_1$ by $S_1[(n+1)!^{-1}]$ and thus reduce to the case where $S_1$ and $S_2$ come from the same cofactor.
		
		\emph{Case~2: $S_1\in \cat{Sm}_{A[(n+1)!^{-1}]}^{\leqslant n+1}$ and $S_2\in \cat{Sm}_{A[\dim!^{-1}]}^{\leqslant n}$.} We may additionally assume that $(n+1)!$ is not invertible in $S_2$; otherwise we would be in a case already covered. But then
		\begin{equation*}
			\Hom_{\cat{Sm}_{A[\dim!^{-1}]}^{\leqslant n+1}}(S_1,S_2)\simeq\emptyset
		\end{equation*}
		and so the map in question must be an equivalence, since only $\emptyset$ maps to $\emptyset$.
	\end{proof}
	
	\begin{lem}\label{lem:CanonicalqHodgeSmoothFunctorial}
		For all $n\geqslant 0$, in the $\infty$-category of functors $\cat{Sm}_{A[(n+1)!^{-1}]}^{\leqslant n}\rightarrow \cat{AniAlg}_A^{\qHodge}$, there exists a natural equivalence
		\begin{equation*}
			\bigl(-,\fil_{\qHodge,n}^\star\qdeRham_{-/A}\bigr)\simeq \bigl(-,\fil_{\qHodge,n+1}^\star\qdeRham_{-/A}\bigr)\,.
		\end{equation*}
	\end{lem}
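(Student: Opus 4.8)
Since the underlying animated $A$-algebra is $S$ in both cases, and the data promoting $\fil_{\qHodge,n}^\star\qdeRham_{S/A}$ and $\fil_{\qHodge,n+1}^\star\qdeRham_{S/A}$ to objects of $\cat{AniAlg}_A^{\qHodge}$ is, as in the proof of \cref{lem:CanonicalqHodgeSmooth}, induced by pullback along $\qdeRham_{S/A}\rightarrow\qOmega_{S/A}$ from the standard equivalences for $\qdeRham_{S/A}$, it suffices to construct a natural equivalence of filtered $(q-1)^\star A\qpower$-modules $\fil_{\qHodge,n}^\star\qOmega_{-/A}\simeq\fil_{\qHodge,n+1}^\star\qOmega_{-/A}$ over $\qOmega_{-/A}$; the filtered equivalence on $\qdeRham_{-/A}$ and the $q$-Hodge datum then follow formally by pulling back. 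On $\cat{Sm}_{A[(n+1)!^{-1}]}^{\leqslant n}$ all primes $p\leqslant n+1$ are invertible in $S$, so $(\qOmega_{S/A})_p^\complete\simeq 0$ for such $p$, while for $p>n+1$ the map $(\qOmega_{S/A})_p^\complete\rightarrow(\Omega_{S/A})_p^\complete$ factors through $(\Omega_{S/A})_p^\complete\qpower/(q-1)^{p-1}$ by \cref{lem:RationalisationTechnicalII} and $p-1\geqslant n+1$. Hence on this subcategory the parameter-$(n+1)$ structure map $\qOmega_{S/A}\rightarrow\Omega_{S/A}\qpower/(q-1)^{n+1}$ exists, and composing it with the reduction $\Omega_{S/A}\qpower/(q-1)^{n+1}\rightarrow\Omega_{S/A}\qpower/(q-1)^n$ recovers the parameter-$n$ structure map.

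The geometric input is the following pullback. Since $\dim(S/A)\leqslant n$ gives $\fil_{\Hodge}^i\Omega_{S/A}=0$ for $i>n$, applying $\tau_n^*$ to the reduction map of filtered objects
\begin{equation*}
	\fil_{(\Hodge,q-1)}^\star\Omega_{S/A}\qpower/(q-1)^{n+1}\longrightarrow\fil_{(\Hodge,q-1)}^\star\Omega_{S/A}\qpower/(q-1)^n
\end{equation*}
lying over $\Omega_{S/A}\qpower/(q-1)^{n+1}\rightarrow\Omega_{S/A}\qpower/(q-1)^n$ yields a pullback square in $\Fil^{[0,n]}\Dd(A)$. Indeed, in each degree $i\leqslant n$ one has $\fil_{(\Hodge,q-1)}^i\Omega_{S/A}\qpower\supseteq(q-1)^i\Omega_{S/A}\qpower\supseteq(q-1)^n\Omega_{S/A}\qpower$, so both vertical maps have fibre canonically $(q-1)^n\Omega_{S/A}\qpower/(q-1)^{n+1}\simeq\Omega_{S/A}$, matched by the identity.

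Now apply $\tau_n^*$ (restricted to $\Fil^{[0,n+1]}\Dd(A)$, where it is the truncation functor; it preserves limits) to the pullback square defining $\fil_{\qHodge,n+1}^{\star\leqslant n+1}\qOmega_{S/A}$: the corner $\tau_{n+1}^*\fil_{(\Hodge,q-1)}^\star\Omega_{S/A}\qpower/(q-1)^{n+1}$ becomes $\tau_n^*\fil_{(\Hodge,q-1)}^\star\Omega_{S/A}\qpower/(q-1)^{n+1}$, into which we substitute the pullback square of the previous paragraph, and then paste pullbacks along the factorisation of structure maps from the first paragraph. This produces a natural equivalence $\tau_n^*\fil_{\qHodge,n+1}^{\star\leqslant n+1}\qOmega_{S/A}\simeq\fil_{\qHodge,n}^{\star\leqslant n}\qOmega_{S/A}$ over $\qOmega_{S/A}$. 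Using $\fil_{\qHodge,n+1}^\star\qOmega_{S/A}=\tau_{n+1,!}(\fil_{\qHodge,n+1}^{\star\leqslant n+1}\qOmega_{S/A})_{(q-1)}^\complete$ together with $\tau_{n+1}^*\tau_{n+1,!}\simeq\id$ and the fact that $\tau_n^*=\tau_n^*\circ\tau_{n+1}^*$ preserves limits, we deduce $\tau_n^*\fil_{\qHodge,n+1}^\star\qOmega_{S/A}\simeq\fil_{\qHodge,n}^{\star\leqslant n}\qOmega_{S/A}$ (which is already $(q-1)$-complete). By the adjunction $\tau_{n,!}\dashv\tau_n^*$ and $(q-1)$-completion this gives a natural map $\fil_{\qHodge,n}^\star\qOmega_{S/A}\rightarrow\fil_{\qHodge,n+1}^\star\qOmega_{S/A}$ over $\qOmega_{S/A}$. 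Both sides are $(q-1)$-complete, and modulo $(q-1)$ (with $(q-1)$ in filtration degree~$1$) both become $\fil_{\Hodge}^\star\Omega_{S/A}$ by the first computation in the proof of \cref{lem:CanonicalqHodgeSmooth}, with this map the identity under those identifications. Hence the map is an equivalence, and all constructions being functorial in $S$, this is the required natural equivalence of functors $\cat{Sm}_{A[(n+1)!^{-1}]}^{\leqslant n}\rightarrow\cat{AniAlg}_A^{\qHodge}$.

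The main obstacle is the bookkeeping of the last two paragraphs: carrying the filtered $(q-1)^\star A\qpower$-module structures through $\tau_n^*$, $\tau_{n,!}$, the completions and the iterated pullbacks (bearing in mind $\tau_{n,!}$ is only \emph{oplax} symmetric monoidal, so one tracks module structures by hand as in \cref{lem:TruncatedFilteredObjects}), the degreewise verification of the pullback square, and checking that all identifications are compatible with the remaining $q$-Hodge structure data.
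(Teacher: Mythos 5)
Your proof is correct and follows essentially the same strategy as the paper: reduce to the underived $q$-de Rham complex, exploit the truncation adjunctions $\tau_{n,!}\dashv\tau_n^*$, relate the two truncated filtrations via the reduction map $\Omega_{S/A}\qpower/(q-1)^{n+1}\to\Omega_{S/A}\qpower/(q-1)^n$, and finish by checking modulo $(q-1)$ using the computation from \cref{lem:CanonicalqHodgeSmooth}.

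The one genuine variation is in the middle step. The paper merely plugs the reduction map into the pullback from~\cref{par:CanonicalqHodgeSmoothI} to obtain a \emph{morphism} $\tau_{n,n+1}^*\fil_{\qHodge,n+1}^{\star\leqslant n+1}\qOmega_{S/A}\rightarrow\fil_{\qHodge,n}^{\star\leqslant n}\qOmega_{S/A}$, then packages everything into a zigzag through the middle term $\tau_{n,!}(\tau_{n,n+1}^*\fil_{\qHodge,n+1}^{\star\leqslant n+1}\qOmega_{S/A})_{(q-1)}^\complete$ and defers \emph{all} equivalence verification to a single reduction-mod-$(q-1)$ check of both legs. You instead prove the intermediate map $\tau_{n,n+1}^*\fil_{\qHodge,n+1}^{\star\leqslant n+1}\qOmega_{S/A}\simeq\fil_{\qHodge,n}^{\star\leqslant n}\qOmega_{S/A}$ is an equivalence \emph{directly}, by exhibiting the ``geometric input'' square as a pullback and pasting. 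This is perfectly valid, makes the underlying geometric content (the fibre matching $(q-1)^n\Omega_{S/A}\qpower/(q-1)^{n+1}$) explicit, and costs you only the checks that $\tau_n^*$ preserves limits (it does: it is the identity in each degree $\leqslant n$ and constant zero above, and $\Fil^{[0,n]}$ is closed under limits in $\Fil^{\geqslant 0}$, all transported to modules since forgetful functors create limits) and that $\tau_{n+1}^*\tau_{n+1,!}\simeq\id$ at the module level (this reduces to free modules by colimit preservation, where it follows from $\tau_{n+1}^*\tau_{n+1,!}^{\IZ}\simeq\id$ and symmetric monoidality of $\tau_{n+1}^*$). You still need the final mod-$(q-1)$ check, because $\tau_n^*$ is not conservative; you correctly include it.

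One small misattribution, harmless to the argument: you preface the geometric-input pullback square with ``Since $\dim(S/A)\leqslant n$ gives $\fil_\Hodge^i\Omega_{S/A}=0$ for $i>n$\dots''. In fact the square is a degreewise pullback for \emph{any} smooth $S$; in each filtration degree $i\leqslant n$ one has $(q-1)^n\Omega_{S/A}\qpower\subseteq\fil_{(\Hodge,q-1)}^i\Omega_{S/A}\qpower$, so both vertical fibres are $(q-1)^n\Omega_{S/A}\qpower/(q-1)^{n+1}$ and the induced map is the identity — no dimension hypothesis required. The constraint $\dim(S/A)\leqslant n$ is used, rather, in the mod-$(q-1)$ check (so that $\tau_{n,!}^{\IZ}\tau_n^*\fil_\Hodge^\star\Omega_{S/A}\simeq\fil_\Hodge^\star\Omega_{S/A}$ as in the proof of \cref{lem:CanonicalqHodgeSmooth}) and in the factorisation of structure maps from your first paragraph, which also requires that all primes $p\leqslant n+1$ are invertible.
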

	\begin{proof}
		First observe that every morphism in $\Fil^{\geqslant 0}\Dd(\IZ)$ that is sent to an equivalence by $\tau_{n+1}^*\colon \Fil^{\geqslant 0}\Dd(\IZ)\rightarrow \Fil^{[0,n]}\Dd(\IZ)$ is also sent to an equivalence by $\tau_n^*$. Since $\tau_{n+1}^*$ is a symmetric monoidal localisation, there exists a unique (up to contractible choice) symmetric monoidal functor $\tau_{n,n+1}^*$ such that
		\begin{equation*}
			\begin{tikzcd}
				\Fil^{\geqslant 0}\Dd(\IZ)\dar["\tau_{n+1}^*"']\rar["\tau_n^*"] & \Fil^{[0,n]}\Dd(\IZ)\\
				\Fil^{[0,n+1]}\Dd(\IZ)\urar[dashed,"\tau_{n,n+1}^*"'] &
			\end{tikzcd}
		\end{equation*}
		commutes. Moreover, arguing as in \cref{lem:TruncatedFilteredObjects}\cref{enum:TruncationLeftAdjoint}, we see that for any filtered $\IE_\infty$-algebra $T\in\CAlg(\Fil^{\geqslant 0}\Dd(\IZ))$, the induced symmetric monoidal functor
		\begin{equation*}
			\tau_{n,n+1}^*\colon \Mod_{\tau_{n+1}^*T}\bigl(\Fil^{[0,n+1]}\Dd(\IZ)\bigr)\longrightarrow \Mod_{\tau_n^*T}\bigl(\Fil^{[0,n]}\Dd(\IZ)\bigr)
		\end{equation*}
		admits an oplax symmetric monoidal left adjoint
		\begin{equation*}
			\tau_{n,n+1,!}\colon  \Mod_{\tau_n^*T}\bigl(\Fil^{[0,n]}\Dd(\IZ)\bigr)\longrightarrow\Mod_{\tau_{n+1}^*T}\bigl(\Fil^{[0,n+1]}\Dd(\IZ)\bigr)\,.
		\end{equation*}
		Let us now apply this in the case where $T=(q-1)^\star A\qpower$. Let $S$ be a smooth $A$-algebra such that $\dim(S/A)\leqslant n$ and all primes $p\leqslant n+1$ are invertible in $S$. Plugging the canonical projection $\fil_{(\Hodge,q-1)}^\star(\Omega_{S/A}\qpower/(q-1)^{n+1})\rightarrow \fil_{(\Hodge,q-1)}^\star(\Omega_{S/A}\qpower/(q-1)^{n})$ into the pullback from \cref{par:CanonicalqHodgeSmoothI}, we obtain a morphism 
		\begin{equation*}
			\tau_{n,n+1}^*\fil_{\qHodge,n+1}^{\star\leqslant n+1}\qOmega_{S/A}\longrightarrow \fil_{\qHodge,n}^{\star\leqslant n}\qOmega_{S/A}\,.
		\end{equation*}
		Applying $\tau_{n,!}(-)_{(q-1)}^\complete$ on both sides and using the counit $\tau_{n,n+1,!}\circ\tau_{n,n+1}^*\Rightarrow \id$, we obtain a canonical zigzag
		\begin{equation*}
			\fil_{\qHodge,n+1}^\star\qOmega_{S/A}\overset{\simeq}{\longleftarrow} \tau_{n,!}\left(\tau_{n,n+1}^*\fil_{\qHodge,n+1}^{\star\leqslant n+1}\qOmega_{S/A}\right)_{(q-1)}^\complete\overset{\simeq}{\longrightarrow} \fil_{\qHodge,n}^\star\qOmega_{S/A}
		\end{equation*}
		It is now straightforward to check that both morphisms are equivalences. Indeed, everything is filtered $(q-1)$-complete, so we may check this after reduction modulo $(q-1)$. For the outer two terms, the reduction is $\fil_{\Hodge}^\star\Omega_{S/A}$ by the calculation in the proof of \cref{lem:CanonicalqHodgeSmooth}. An analogous calculation shows that the inner term also becomes $\fil_{\Hodge}^\star\Omega_{S/A}$ and that the morphisms become the identity.
		
		The zigzag above provides a functorial equivalence $\fil_{\qHodge,n+1}^\star\qOmega_{-/A}\simeq \fil_{\qHodge,n}^\star\qOmega_{-/A}$. Taking the pullback along $\qdeRham_{S/A}\rightarrow \qOmega_{S/A}$, we get what we want.
	\end{proof}
	
	In total we've shown:
	
	\begin{thm}\label{thm:CanonicalqHodgeSmooth}
		Let $A$ be a perfectly covered $\Lambda$-ring and let $S$ be a smooth $A$-algebra such that all primes $p\leqslant \dim(S/A)$ are invertible in $S$. Then $\qdeRham_{S/A}$ admits a canonical $q$-Hodge filtration. More precisely, there exists a functor
		\begin{equation*}
			\bigl(-,\fil_{\qHodge}^\star\qdeRham_{-/A}\bigr)\colon \cat{Sm}_{A[\dim!^{-1}]}\longrightarrow \cat{AniAlg}_A^{\qHodge}
		\end{equation*}
		which is a partial section of the forgetful functor $\cat{AniAlg}_A^{\qHodge}\rightarrow \cat{AniAlg}_A$.
	\end{thm}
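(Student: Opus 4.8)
The plan is to assemble the functors provided by \cref{par:CanonicalqHodgeSmoothII} and \cref{lem:CanonicalqHodgeSmooth} along the dimension stratification, using the pushout decomposition of \cref{lem:SmoothPushout} and the compatibility of \cref{lem:CanonicalqHodgeSmoothFunctorial} at each stage. Recall that \cref{lem:CanonicalqHodgeSmooth} produces, for every $n\geqslant 0$, a functor
\[
  \bigl(-,\fil_{\qHodge,n}^\star\qdeRham_{-/A}\bigr)\colon \cat{Sm}_{A[n!^{-1}]}^{\leqslant n}\longrightarrow \cat{AniAlg}_A^{\qHodge}
\]
which is a partial section of the forgetful functor: by construction $\fil_{\qHodge,n}^0\qdeRham_{S/A}\simeq \qdeRham_{S/A}$ via \cref{def:qHodgeFiltration}\cref{enum:qHodgeFiltrationOnqdR}, and the underlying animated $A$-algebra is $S$. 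The goal is to glue these into a single functor on $\cat{Sm}_{A[\dim!^{-1}]}=\bigcup_{n\geqslant 0}\cat{Sm}_{A[\dim!^{-1}]}^{\leqslant n}$.

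First I would construct, by induction on $n$, a functor $F_n\colon \cat{Sm}_{A[\dim!^{-1}]}^{\leqslant n}\rightarrow \cat{AniAlg}_A^{\qHodge}$ whose restriction to the full subcategory $\cat{Sm}_{A[n!^{-1}]}^{\leqslant n}$ agrees with $(-,\fil_{\qHodge,n}^\star\qdeRham_{-/A})$ and whose restriction to $\cat{Sm}_{A[\dim!^{-1}]}^{\leqslant n-1}$ agrees with $F_{n-1}$. For $n=0$ we have $\cat{Sm}_{A[\dim!^{-1}]}^{\leqslant 0}=\cat{Sm}_{A[0!^{-1}]}^{\leqslant 0}$, so $F_0\coloneqq (-,\fil_{\qHodge,0}^\star\qdeRham_{-/A})$ works. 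For the inductive step, \cref{lem:SmoothPushout} exhibits $\cat{Sm}_{A[\dim!^{-1}]}^{\leqslant n+1}$ as the pushout of $\cat{Sm}_{A[\dim!^{-1}]}^{\leqslant n}$ and $\cat{Sm}_{A[(n+1)!^{-1}]}^{\leqslant n+1}$ along $\cat{Sm}_{A[(n+1)!^{-1}]}^{\leqslant n}$. By the universal property of pushouts in $\Cat_\infty$, a functor out of it amounts to a functor $F_n$ on the first factor, the functor $(-,\fil_{\qHodge,n+1}^\star\qdeRham_{-/A})$ on the second factor (legitimate since its objects have relative dimension $\leqslant n+1$ with all primes $p\leqslant n+1$ invertible, so \cref{lem:CanonicalqHodgeSmooth} applies with the parameter $n+1$), together with an equivalence of their restrictions to $\cat{Sm}_{A[(n+1)!^{-1}]}^{\leqslant n}$. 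On that intersection $F_n$ restricts to $(-,\fil_{\qHodge,n}^\star\qdeRham_{-/A})$ by the inductive hypothesis, using $\cat{Sm}_{A[(n+1)!^{-1}]}^{\leqslant n}\subseteq \cat{Sm}_{A[n!^{-1}]}^{\leqslant n}$, while the second functor restricts to $(-,\fil_{\qHodge,n+1}^\star\qdeRham_{-/A})$; the required equivalence is exactly \cref{lem:CanonicalqHodgeSmoothFunctorial}. This defines $F_{n+1}$ with the claimed restriction properties, and it is again a partial section since both inputs are.

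Finally, since $\cat{Sm}_{A[\dim!^{-1}]}$ is the filtered colimit of the chain of fully faithful inclusions $\cat{Sm}_{A[\dim!^{-1}]}^{\leqslant n}\hookrightarrow \cat{Sm}_{A[\dim!^{-1}]}^{\leqslant n+1}$, and the $F_n$ are compatible under these inclusions by construction, they assemble to a functor $(-,\fil_{\qHodge}^\star\qdeRham_{-/A})\colon \cat{Sm}_{A[\dim!^{-1}]}\rightarrow \cat{AniAlg}_A^{\qHodge}$. Its composite with the forgetful functor $\cat{AniAlg}_A^{\qHodge}\rightarrow \cat{AniAlg}_A$ is the inclusion $\cat{Sm}_{A[\dim!^{-1}]}\hookrightarrow \cat{AniAlg}_A$, because this holds for each $F_n$; hence it is the desired partial section. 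I expect the genuine content to lie entirely in \cref{lem:SmoothPushout} and \cref{lem:CanonicalqHodgeSmoothFunctorial}, already established; at the level of the theorem the only mild subtlety is the bookkeeping needed to keep the gluing equivalences $F_{n+1}\vert_{\leqslant n}\simeq F_n$ coherent across the sequential colimit, which is a standard manipulation of iterated pushouts and filtered colimits in $\Cat_\infty$.
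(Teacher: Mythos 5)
Your proposal is correct and takes essentially the same route as the paper, whose proof of this theorem is a one-line pointer to \cref{par:CanonicalqHodgeSmoothI}--\cref{lem:CanonicalqHodgeSmoothFunctorial}; you have simply spelled out the inductive pushout-assembly that those references are meant to evoke, including the correct use of \cref{lem:SmoothPushout} to reduce to the overlap and \cref{lem:CanonicalqHodgeSmoothFunctorial} to identify the two functors there.
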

	\begin{proof}
		This is the quintessence of \cref{par:CanonicalqHodgeSmoothI}--\labelcref{lem:CanonicalqHodgeSmoothFunctorial}.
	\end{proof}
	
	\begin{numpar}[Monoidality.]\label{par:CanonicalqHodgeSmoothMonoidal}
		We wish to study to what extent the $q$-Hodge filtrations from \cref{par:CanonicalqHodgeSmoothII} can be equipped with multiplicative structures. To this end, it would be nice to equip the functor from \cref{thm:CanonicalqHodgeSmooth} with a symmetric monoidal structure. This is made complicated by the following issue:
		\begin{alphanumerate}\itshape 
			\item[\,!\,] $\cat{Sm}_{A[\dim!^{-1}]}$ is not closed under tensor products in $\cat{Sm}_A$ and we don't see a way of equipping it with a symmetric monoidal structure.
		\end{alphanumerate}
		To address this problem, let $\cat{Sm}_A^{\otimes}\rightarrow \cat{Fin}_*$ be the $\infty$-operad associated with the symmetric monoidal structure on $\cat{Sm}_A$. We define a sub-$\infty$-operad $\cat{Sm}_{A[\dim!^{-1}]}^{\otimes}\subseteq \cat{Sm}_A^{\otimes}$ as follows:
		\begin{alphanumerate}
			\item An object $(S_1,\dotsc,S_i)\in\cat{Sm}_A^i$ in the fibre over $\langle i\rangle \in\cat{Fin}_*$ is contained in $\cat{Sm}_{A[\dim!^{-1}]}$ if and only $S_1,\dotsc,S_i$ are all contained in $\cat{Sm}_{A[\dim!^{-1}]}$.\label{enum:SmOperadA}
			\item A morphism $(S_1,\dotsc,S_i)\rightarrow (S_1',\dotsc,S_{i'}')$ over $\alpha\colon \langle i\rangle \rightarrow \langle i'\rangle$ is contained in $\cat{Sm}_{A[\dim!^{-1}]}$ if and only if both source and target satisfy the condition from \cref{enum:SmOperadA} and the target of a cocartesian lift of $\alpha$ with source $(S_1,\dotsc,S_i)$ also satisfies the condition from \cref{enum:SmOperadA}. Equivalently, we only retain those morphisms that factor through a cocartesian lift of their image in $\cat{Fin}_*$.\label{enum:SmOperadB}
		\end{alphanumerate}
		Let us immediately warn the reader that $\cat{Sm}_{A[\dim!^{-1}]}^\otimes$ is \emph{not} the full sub-$\infty$-operad of $\cat{Sm}_A^{\otimes}$ spanned by the full subcategory $\cat{Sm}_{A[\dim!^{-1}]}\subseteq \cat{Sm}_A$, precisely because the condition from~\cref{enum:SmOperadB} yields a \emph{non-full} sub-$\infty$-operad.
		
		Below we'll sketch how to make the functor from \cref{thm:CanonicalqHodgeSmooth} into a functor of $\infty$-operads (this wouldn't work if we had used the full sub-$\infty$-operad spanned by $\cat{Sm}_{A[\dim!^{-1}]}$). Let us discuss what kind of multiplicative structures this induces on $\fil_{\qHodge}^\star\qdeRham_{S/A}$. In general, any multiplicative structure on $S$ as an object in $\cat{Sm}_{A[\dim!^{-1}]}$ will induce the same kind of multiplicative structure on $\fil_{\qHodge}^\star\qdeRham_{S/A}$. For arbitrary $S\in \cat{Sm}_{A[\dim!^{-1}]}$ there's nothing we can say. But as soon as all primes $p\leqslant 2\dim(S/A)$ are invertible in $S$, the multiplication map $S\otimes_A S\rightarrow S$ is a morphism in $\cat{Sm}_{A[\dim!^{-1}]}$, and so $S$ will have an $\IA_2$-structure in $\cat{Sm}_{A[\dim!^{-1}]}^\otimes$; that is, a homotopy-unital multiplication. If for some $r\geqslant 3$ all primes $p\leqslant r\dim(S/A)$ are invertible in $S$, then the multiplication will be $\IA_r$; that is,  coherently associative for up to $r$ factors. A similar analysis works for commutativity.
	\end{numpar}
	We'll now sketch how to make the functor from \cref{thm:CanonicalqHodgeSmooth} into a functor of $\infty$-operads. Let us temporarily fix $n\geqslant 0$.
	
	\begin{lem}\label{lem:Truncatedq-1Completion}
		Let $\Mod_{\tau_n^*((q-1)^\star A\qpower)}(\Fil^{[0,n]}\Dd(\IZ))$ be as in \cref{par:CanonicalqHodgeSmoothII} and equip the full sub-$\infty$-category of $(q-1)$-complete objects $\Mod_{\tau_n^*((q-1)^\star A\qpower)}(\Fil^{[0,n]}\Dd(\IZ))_{(q-1)}^\complete$ with the $(q-1)$-completed tensor product. Then the functor
		\begin{equation*}
			\fil_{\qHodge,n}^\star\qdeRham_{-/A}\colon \cat{Sm}_{A[n!^{-1}]}\longrightarrow \Mod_{\tau_n^*((q-1)^\star A\qpower)}\bigl(\Fil^{[0,n]}\Dd(\IZ)\bigr)_{(q-1)}^\complete
		\end{equation*}
		from \cref{par:CanonicalqHodgeSmoothII} can be equipped with a symmetric monoidal structure.
	\end{lem}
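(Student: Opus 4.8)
The plan is to realise the functor in question as a pullback of symmetric monoidal functors along symmetric monoidal natural transformations, and then to verify that this particular pullback is compatible with the tensor product on the target. Since the target lands in $\Fil^{[0,n]}\Dd(\IZ)$, it is the $[0,n]$-truncated variant $\fil_{\qHodge,n}^{\star\leqslant n}\qdeRham_{-/A}$ of \cref{par:CanonicalqHodgeSmoothI} (equivalently, $\tau_n^*$ of the functor of \cref{par:CanonicalqHodgeSmoothII}) that is at issue. By definition it is the pullback, in $(q-1)$-complete filtered modules over $\tau_n^*((q-1)^\star A\qpower)$, of $\qdeRham_{-/A}$ (placed in the constant filtration) and $\fil_{(\Hodge,q-1)}^{\star\leqslant n}(\Omega_{-/A}\qpower/(q-1)^n)$ over $\Omega_{-/A}\qpower/(q-1)^n$ (again the constant filtration); I also use here the pullback along $\qdeRham_{-/A}\to\qOmega_{-/A}$ as in \cref{par:CanonicalqHodgeSmoothII}, which is harmless since $\Omega_{S/A}^*\simeq\hatdeRham_{S/A}$ for smooth $S$ and every filtration is the pullback of its completion (cf.\ \cref{rem:CompleteFiltration}).

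The first task is to upgrade the three vertices and two maps to the symmetric monoidal level. The functor $\qdeRham_{-/A}\colon\cat{AniAlg}_A\to\widehat{\Dd}_{(q-1)}(A[q])$ is symmetric monoidal — this is the standard Künneth multiplicativity of derived $q$-de Rham cohomology (cf.\ \cref{appendix:GlobalqDeRham}) — and postcomposing with the symmetric monoidal constant-filtration functor and then with $\tau_n^*$ (symmetric monoidal by \cref{lem:TruncatedFilteredObjects}\cref{enum:TruncationSymmetricMonoidal}) lands it in the target. On $\cat{Sm}_{A[n!^{-1}]}$ the Hodge-filtered de Rham complex $\fil_{\Hodge}^\star\Omega_{-/A}^*$ is symmetric monoidal via the Künneth isomorphism $\Omega_{(S_1\otimes_AS_2)/A}^*\cong\Omega_{S_1/A}^*\otimes_A\Omega_{S_2/A}^*$ of filtered cdgas; applying base change along $(q-1)^\star A\qpower$, reduction modulo $(q-1)^n$, $(q-1)$-completion and $\tau_n^*$ — each symmetric monoidal on the appropriate filtered-module categories — yields $\fil_{(\Hodge,q-1)}^{\star\leqslant n}(\Omega_{-/A}\qpower/(q-1)^n)$ as a symmetric monoidal functor, and postcomposing with the symmetric monoidal endofunctor of $\Fil^{[0,n]}\Dd(\IZ)$ sending a filtered object to the constant filtration on its filtration-degree-zero part recovers the lower-right vertex. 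The bottom map of the square is then the monoidal unit of the resulting symmetric monoidal adjunction, and the right-hand map $\qdeRham_{-/A}\to\Omega_{-/A}\qpower/(q-1)^n$ is, by \cref{par:CanonicalqHodgeSmoothI}, the lift of the symmetric monoidal reduction map through the symmetric monoidal quotient $A\qpower\to A\qpower/(q-1)^n$; the existence of this lift is exactly the content of \cref{par:CanonicalqHodgeSmoothI} and is the unique place where the invertibility of all primes $p\leqslant n$ on $\cat{Sm}_{A[n!^{-1}]}$ (via \cref{lem:RationalisationTechnicalII}) is used. As all factorisations are through symmetric monoidal functors, both structure maps are symmetric monoidal.

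The main obstacle is the final step: a pullback of symmetric monoidal functors need not be symmetric monoidal, because in a stable $\infty$-category the tensor product does not commute with finite limits. I would circumvent this by noting that both sides of the desired Künneth equivalence
\begin{equation*}
\fil_{\qHodge,n}^{\star\leqslant n}\qdeRham_{(S_1\otimes_AS_2)/A}\simeq\bigl(\fil_{\qHodge,n}^{\star\leqslant n}\qdeRham_{S_1/A}\lotimes_{\tau_n^*((q-1)^\star A\qpower)}\fil_{\qHodge,n}^{\star\leqslant n}\qdeRham_{S_2/A}\bigr)_{(q-1)}^\complete
\end{equation*}
are $(q-1)$-complete, so it suffices to compare them modulo $(q-1)$. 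By the computation in the proof of \cref{lem:CanonicalqHodgeSmooth} the pullback defining $\fil_{\qHodge,n}^{\star\leqslant n}\qdeRham_{S/A}$ degenerates modulo $(q-1)$ to $\tau_n^*\fil_{\Hodge}^\star\deRham_{S/A}$, and $\fil_{\Hodge}^\star\deRham_{-/A}$ is symmetric monoidal (being the animation of the symmetric monoidal $\fil_{\Hodge}^\star\Omega_{-/A}^*$), as is $\tau_n^*$; hence the equivalence holds modulo $(q-1)$, and therefore — both sides being $(q-1)$-complete — on the nose, with the coherences assembled in the usual way. Concretely, one packages the diagram into a symmetric monoidal functor from $\cat{Sm}_{A[n!^{-1}]}$ to the $\infty$-category of cospans in $\Mod_{\tau_n^*((q-1)^\star A\qpower)}(\Fil^{[0,n]}\Dd(\IZ))_{(q-1)}^\complete$ with its pointwise symmetric monoidal structure, and observes that the pullback functor, although only lax symmetric monoidal in general, is strictly symmetric monoidal along our particular diagram precisely by the $(q-1)$-complete compatibility just verified. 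Composing yields the asserted symmetric monoidal structure on $\fil_{\qHodge,n}^{\star\leqslant n}\qdeRham_{-/A}$, and unwinding the construction shows the induced monoidal product is the $(q-1)$-completed Day-convolution product of the lemma.
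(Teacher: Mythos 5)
Your proposal is correct and follows essentially the same route as the paper's (terse) proof: obtain a lax symmetric monoidal structure from the defining pullback, then verify strict monoidality modulo $(q-1)$ by reducing to the symmetric monoidality of $\tau_n^*(\fil_{\Hodge}^\star\deRham_{-/A})$. You spell out more of the details — identifying the symmetric monoidal structure on each vertex of the pullback square, noting explicitly that finite limits do not commute with the tensor product, and packaging coherences via a cospan category — but the argument and the key reduction step are the same as in the paper.
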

	\begin{proof}[Proof sketch]
		From the construction it's straightforward to get a lax symmetric monoidal structure. Whether it is symmetric monoidal can be checked modulo~$(q-1)$, where we reduce to the fact that $\tau_n^*(\fil_{\Hodge}^\star\deRham_{-/A})$ is symmetric monoidal.
	\end{proof}
	
	\begin{numpar}[Lax vs.\ oplax symmetric monoidal functors.]
		For every symmetric monoidal $\infty$-category with associated cocartesian fibration $\Cc^\otimes\rightarrow \cat{Fin}_*$, let $(\Cc^\otimes)^\vee\rightarrow\cat{Fin}_*^\op$ denote the dual cartesian fibration. Lax symmetric monoidal functors $\Cc\rightarrow \Dd$ are then encoded as functors $\Cc^\otimes\rightarrow \Dd^\otimes$ in $\Cat_{\infty/\cat{Fin}_*}$ that preserve cocartesian lifts of inert morphisms, whereas oplax symmetric monoidal functors are encoded as functors $(\Cc^\otimes)^\vee\rightarrow (\Dd^\otimes)^\vee$ in $\Cat_{\infty/\cat{Fin}_*^\op}$ that preserve cartesian lifts of inert morphisms.
		
		In general, the dual cartesian fibration $(\Cc^\otimes)^\vee\rightarrow \cat{Fin}_*$ has a very nice description in terms of span $\infty$-categories. This is due to Barwick--Glasman--Nardin; see \cite[\chref{1.2}]{DualFibration}. We will now apply this to the oplax symmetric monoidal structure on
		\begin{equation*}
			\bigl(-,\fil_{\qHodge,n}^\star\qdeRham_{-/A}\bigr)\colon \cat{Sm}_{A[n!^{-1}]}\longrightarrow \cat{AniAlg}_A^{\qHodge}
		\end{equation*}
		that we obtain by composing the symmetric monoidal functor from \cref{lem:Truncatedq-1Completion} with the oplax symmetric monoidal functor $\tau_{n,!}(-)_{(q-1)}^\complete$.
	\end{numpar}
	\begin{lem}\label{lem:CartesianLiftsPreserved}
		If $\varphi\colon (S_1',\dotsc,S_{i'}')\rightarrow (S_1,\dotsc,S_i)$ is a cartesian morphism in $(\cat{Sm}_{A[n!^{-1}]}^{\otimes})^\vee$ such that $S_1',\dotsc,S_{i'}'$ are all of relative dimension $\leqslant n$ over $A$, then $\varphi$ is sent to a cartesian morphism under
		\begin{equation*}
			\bigl(\cat{Sm}_{A[n!^{-1}]}^{\otimes}\bigr)^\vee\longrightarrow \bigl(\cat{AniAlg}_A^{\qHodge,\otimes}\bigr)^\vee\,.
		\end{equation*}
	\end{lem}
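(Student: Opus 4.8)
The plan is to reduce the statement to a claim about the oplax structure maps of the filtered functor $\fil_{\qHodge,n}^\star\qdeRham_{-/A}$, and then to verify those maps after reduction modulo $(q-1)$, where the only genuine input will be the Künneth formula for the Hodge-filtered derived de Rham complex.

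First I would set up the reduction. By \cref{def:qHodgeFiltration}, $\cat{AniAlg}_A^{\qHodge}$ is an iterated pullback of $\cat{AniAlg}_A$ and of several $\infty$-categories of filtered modules, along functors that are symmetric monoidal by \cref{lem:qHodgeSymmetricMonoidal}; passing to dual cartesian fibrations, $\bigl(\cat{AniAlg}_A^{\qHodge,\otimes}\bigr)^\vee$ is the corresponding iterated pullback, so a morphism in it is cartesian as soon as its images under the strong symmetric monoidal forgetful functors out of $\cat{AniAlg}_A^{\qHodge}$ are cartesian. A strong symmetric monoidal functor induces a functor of dual fibrations preserving \emph{all} cartesian edges, so the composite $\cat{Sm}_{A[n!^{-1}]}\to\cat{AniAlg}_A^{\qHodge}\to\cat{AniAlg}_A$ — which is simply the inclusion — automatically sends $\varphi$ to a cartesian edge, and the same holds for the composites to the rationalised factors, which on objects of relative dimension $\leqslant n$ are rationalisations of de Rham complexes and hence strong monoidal by \cref{def:qHodgeFiltration}\cref{enum:qHodgeRational}--\cref{enum:qHodgeRationalpComplete}. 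Thus everything comes down to the composite to filtered modules, i.e.\ to $\fil_{\qHodge,n}^\star\qdeRham_{-/A}$, which is only oplax. Being oplax it preserves cartesian lifts of inert morphisms, so after factoring the morphism $\alpha\colon\langle i\rangle\to\langle i'\rangle$ underlying $\varphi$ into inert morphisms and active maps $\langle k\rangle\to\langle 1\rangle$ (one per block of $\alpha$), I am left to show that for each block $\alpha^{-1}(j)=\{k_1,\dotsc,k_r\}$, with $S'_j\simeq S_{k_1}\lotimes_A\dotsb\lotimes_A S_{k_r}$, the oplax structure map
\begin{equation*}
\fil_{\qHodge,n}^\star\qdeRham_{S'_j/A}\longrightarrow\bigl(\fil_{\qHodge,n}^\star\qdeRham_{S_{k_1}/A}\lotimes_{(q-1)^\star A[q]}\dotsb\lotimes_{(q-1)^\star A[q]}\fil_{\qHodge,n}^\star\qdeRham_{S_{k_r}/A}\bigr)_{(q-1)}^\complete
\end{equation*}
is an equivalence. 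Here the hypothesis $\dim(S'_j/A)\leqslant n$ forces $\dim(S_{k_\ell}/A)\leqslant n$ for every $\ell$ as well, since $\dim(S_{k_\ell}/A)\leqslant\dim(S'_j/A)$.

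Now for the core computation. Both sides of the displayed map are $(q-1)$-complete, so it suffices to check that it becomes an equivalence after $-\lotimes_{(q-1)^\star A[q]}A$, i.e.\ modulo $(q-1)$ in the sense of Convention~\cref{conv:QuotientConvention}. Since all of $S'_j, S_{k_1},\dotsc,S_{k_r}$ have relative dimension $\leqslant n$, \cref{lem:CanonicalqHodgeSmooth} exhibits each of $\fil_{\qHodge,n}^\star\qdeRham_{S'_j/A}$ and $\fil_{\qHodge,n}^\star\qdeRham_{S_{k_\ell}/A}$ as an honest $q$-Hodge filtration, so \cref{def:qHodgeFiltration}\cref{enum:qHodgeModq-1} identifies their reductions modulo $(q-1)$ with $\fil_{\Hodge}^\star\deRham_{S'_j/A}$ and $\fil_{\Hodge}^\star\deRham_{S_{k_\ell}/A}$; moreover reduction modulo $(q-1)$ commutes with the $(q-1)$-completed filtered tensor product, so the right-hand side reduces to $\fil_{\Hodge}^\star\deRham_{S_{k_1}/A}\lotimes_A\dotsb\lotimes_A\fil_{\Hodge}^\star\deRham_{S_{k_r}/A}$. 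Because $\fil_{\Hodge}^\star\deRham_{-/A}\colon\cat{AniAlg}_A\to\Fil\Dd(A)$ is symmetric monoidal — the Künneth formula for derived de Rham cohomology, obtained by animating the evident symmetric monoidality of the Hodge-filtered de Rham complex on $\cat{Poly}_A$ — the canonical comparison map $\fil_{\Hodge}^\star\deRham_{S_{k_1}/A}\lotimes_A\dotsb\lotimes_A\fil_{\Hodge}^\star\deRham_{S_{k_r}/A}\to\fil_{\Hodge}^\star\deRham_{S'_j/A}$ is an equivalence, and by naturality — using the compatibility of the $q$-Hodge data with the specialisation $\qdeRham_{-/A}\to\deRham_{-/A}$ that is part of \cref{def:qHodgeFiltration} — it is exactly the reduction modulo $(q-1)$ of the displayed oplax structure map. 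Hence the displayed map is an equivalence, and unwinding the reduction shows that $\varphi$ is sent to a cartesian morphism.

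The mathematical content above is short; the part I expect to require the most care is the bookkeeping in the second paragraph — matching cartesian edges of the span-style dual fibrations of \cite{DualFibration} across the iterated pullback defining $\cat{AniAlg}_A^{\qHodge}$, checking that a cartesian edge is detected after the forgetful functors, and justifying the factorisation of $\varphi$ into inert pieces and fold maps in a way that makes preservation of each piece imply preservation of $\varphi$. This is routine but delicate about (op)lax conventions and variances. It is also worth keeping in mind that $\fil_{\qHodge,n}^\star\qdeRham_{S/A}$ fails to be an honest $q$-Hodge filtration once $\dim(S/A)>n$, which is precisely why the dimension bound on the $S'_j$ — and hence on all the $S_{k_\ell}$ — cannot be dropped: it is exactly what lets us invoke \cref{lem:CanonicalqHodgeSmooth} and \cref{def:qHodgeFiltration}\cref{enum:qHodgeModq-1}.
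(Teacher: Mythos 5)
Your proposal follows the paper's proof essentially verbatim: the paper likewise reduces the lemma to showing that the oplax structure map
\begin{equation*}
	\fil_{\qHodge,n}^\star\qdeRham_{S_1\otimes_A\dotsb\otimes_AS_i/A}\longrightarrow\left(\fil_{\qHodge,n}^\star\qdeRham_{S_1/A}\lotimes_{(q-1)^\star A\qpower}\dotsb\lotimes_{(q-1)^\star A\qpower}\fil_{\qHodge,n}^\star\qdeRham_{S_i/A}\right)_{(q-1)}^\complete
\end{equation*}
is an equivalence whenever the tensor product has relative dimension $\leqslant n$, and likewise verifies this modulo $(q-1)$ using the symmetric monoidality of $\fil_\Hodge^\star\deRham_{-/A}$; your first paragraph simply spells out the dual-fibration bookkeeping that the paper summarises as \enquote{this essentially reduces to}. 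One inaccuracy worth flagging: your auxiliary claim that $\dim(S_{k_\ell}/A)\leqslant\dim(S'_j/A)$ is not true in general --- if some $S_{k_\ell}$ has a clopen piece supported away from the other factors, that piece may have arbitrary dimension without affecting $\dim(S'_j/A)$ --- so you cannot always invoke \cref{lem:CanonicalqHodgeSmooth} for each $S_{k_\ell}$ separately. This does not affect the conclusion, because any such high-dimensional piece is annihilated by the other tensor factors once you form the $(q-1)$-completed filtered tensor product (the filtration degree~$0$ piece is $\qdeRham_{S'_j/A}$, and the filtration is by submodules thereof), but the clean way to phrase the mod-$(q-1)$ check is the one in the paper's sketch: compare both sides directly to $\fil_\Hodge^\star\deRham_{S'_j/A}$ via symmetric monoidality of the Hodge-filtered derived de Rham functor, rather than asserting that each factor separately specialises to its own Hodge filtration.
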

	\begin{proof}[Proof sketch]
		This essentially reduces to the observation that whenever a tensor product of smooth $A[n!^{-1}]$-algebras $S_1\otimes_A\dotsb\otimes_A S_i$ has relative dimension $\leqslant n$ over $A$, the $q$-Hodge filtration $\fil_{\qHodge,n}^\star \qdeRham_{S_1\otimes_A\dotsb \otimes_AS_i/A}$ will agree with
		\begin{equation*}
			\left(\fil_{\qHodge,n}^\star \qdeRham_{S_1/A}\lotimes_{(q-1)^\star A\qpower}\dotsb \lotimes_{(q-1)^\star A\qpower}\fil_{\qHodge,n}^\star \qdeRham_{S_i/A}\right)_{(q-1)}^\complete\,.
		\end{equation*}
		Indeed, this can be checked modulo $(q-1)$, where the desired claim follows using symmetric monoidality of $\fil_{\Hodge}^\star\deRham_{-/A}$.
	\end{proof}

	\begin{cor}\label{cor:CanonicalqHodgeSmoothMonoidal}
		The functor from \cref{thm:CanonicalqHodgeSmooth} underlies a functor of $\infty$-operads
		\begin{equation*}
			\cat{Sm}_{A[\dim!^{-1}]}^{\otimes}\longrightarrow \cat{AniAlg}_A^{\qHodge,\otimes}\,,
		\end{equation*}		
		which preserves all cocartesian lifts that exist in the source.
	\end{cor}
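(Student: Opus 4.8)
The plan is to first make the construction from \cref{thm:CanonicalqHodgeSmooth} oplax symmetric monoidal at bounded relative dimension, then observe that its oplax structure maps are in fact equivalences on the operations retained in $\cat{Sm}_{A[\dim!^{-1}]}^{\otimes}$, and finally glue over dimensions. Fix $n\geqslant 0$. Since $\cat{Sm}_{A[n!^{-1}]}$ is closed under $\otimes_A$, \cref{lem:Truncatedq-1Completion} gives a genuinely symmetric monoidal functor $\fil_{\qHodge,n}^\star\qdeRham_{-/A}$ on $\cat{Sm}_{A[n!^{-1}]}$ valued in $(q-1)$-complete filtered $\tau_n^*((q-1)^\star A\qpower)$-modules. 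Post-composing with the oplax symmetric monoidal functor $\tau_{n,!}(-)_{(q-1)}^\complete$ of \cref{lem:TruncatedFilteredObjects}\cref{enum:TruncationLeftAdjoint}, and pairing with the symmetric monoidal forgetful functor $\cat{AniAlg}_A^{\qHodge}\rightarrow \cat{AniAlg}_A$ and the symmetric monoidal inclusion $\cat{Sm}_{A[n!^{-1}]}\hookrightarrow \cat{AniAlg}_A$, we obtain an oplax symmetric monoidal functor $(-,\fil_{\qHodge,n}^\star\qdeRham_{-/A})$ which, by \cref{lem:CanonicalqHodgeSmooth}, lands in $\cat{AniAlg}_A^{\qHodge}$ at least on the full subcategory of $S$ with $\dim(S/A)\leqslant n$. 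By the discussion of dual cartesian fibrations above, this is the same datum as a functor $G_n\colon (\cat{Sm}_{A[n!^{-1}]}^{\otimes})^\vee\rightarrow (\cat{AniAlg}_A^{\qHodge,\otimes})^\vee$ over $\cat{Fin}_*^\op$ preserving cartesian lifts of inert morphisms.

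Now let $\cat{Sm}_{A[n!^{-1}]}^{[\leqslant n],\otimes}\subseteq \cat{Sm}_{A[n!^{-1}]}^{\otimes}$ be the non-full sub-$\infty$-operad defined by the recipe of \cref{par:CanonicalqHodgeSmoothMonoidal}\cref{enum:SmOperadA} and \cref{enum:SmOperadB}, with the extra requirement that all objects, and the targets of all retained cocartesian lifts of active morphisms (i.e.\ the relevant tensor products), are of relative dimension $\leqslant n$ over $A$. On this suboperad the oplax functor above takes values in $\cat{AniAlg}_A^{\qHodge}$, and by \cref{lem:CartesianLiftsPreserved} the functor $G_n$ sends every cartesian lift over an active morphism that exists in $(\cat{Sm}_{A[n!^{-1}]}^{[\leqslant n],\otimes})^\vee$ to a cartesian morphism. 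Since every morphism of $\cat{Sm}_{A[n!^{-1}]}^{[\leqslant n],\otimes}$ factors through a cocartesian lift of its image in $\cat{Fin}_*$ followed by a morphism over an inert, the span-category description of the dual cartesian fibration of Barwick--Glasman--Nardin \cite{DualFibration} lets us transpose $G_n$ restricted to $(\cat{Sm}_{A[n!^{-1}]}^{[\leqslant n],\otimes})^\vee$ back into a functor of $\infty$-operads $\Phi_n\colon \cat{Sm}_{A[n!^{-1}]}^{[\leqslant n],\otimes}\rightarrow \cat{AniAlg}_A^{\qHodge,\otimes}$ that preserves all cocartesian lifts present in the source.

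It remains to glue these bounded-dimension operad maps into a functor on all of $\cat{Sm}_{A[\dim!^{-1}]}^{\otimes}$. This is carried out following the argument of \cref{lem:SmoothPushout} on the operadic level: one exhibits $\cat{Sm}_{A[\dim!^{-1}]}^{\otimes}$ as a filtered colimit of suitable non-full bounded suboperads built from pushout squares of $\infty$-operads (the relevant legs being fully faithful, so that Ramzi's observation \cite{RamziPushouts} applies), and uses the equivalences of \cref{lem:CanonicalqHodgeSmoothFunctorial} — which upgrade to equivalences of oplax symmetric monoidal functors, as can be checked modulo $(q-1)$ where everything reduces to the symmetric monoidal functor $\tau_n^*\fil_{\Hodge}^\star\deRham_{-/A}$ — to identify the $\Phi_n$ on the overlaps. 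The resulting functor of $\infty$-operads $\cat{Sm}_{A[\dim!^{-1}]}^{\otimes}\rightarrow \cat{AniAlg}_A^{\qHodge,\otimes}$ underlies the functor from \cref{thm:CanonicalqHodgeSmooth} and preserves all cocartesian lifts present in the source, as this holds at every finite stage.

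The main obstacle is the second step: translating the statement ``$G_n$ preserves the cartesian lifts over active morphisms that exist in $(\cat{Sm}_{A[n!^{-1}]}^{[\leqslant n],\otimes})^\vee$'' into ``$\Phi_n$ is a functor of $\infty$-operads preserving the cocartesian lifts present in its source''. This requires unwinding the span model of the dual cartesian fibration and checking that the non-fullness condition \cref{par:CanonicalqHodgeSmoothMonoidal}\cref{enum:SmOperadB} is matched precisely by the supply of cartesian lifts for which \cref{lem:CartesianLiftsPreserved} guarantees preservation. The remaining verifications — the closure and pushout properties of the non-full bounded suboperads, and the monoidal compatibility of \cref{lem:CanonicalqHodgeSmoothFunctorial} — are routine but somewhat laborious $\infty$-categorical bookkeeping, which we do not spell out.
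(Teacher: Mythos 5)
Your sketch is correct and follows essentially the same route as the paper's proof: build the oplax symmetric monoidal functor from \cref{lem:Truncatedq-1Completion} and $\tau_{n,!}(-)_{(q-1)}^\complete$, restrict to the non-full bounded-dimension suboperad, use \cref{lem:CartesianLiftsPreserved} to see that cartesian lifts over all morphisms (not just inerts) are preserved there, dualise via Barwick--Glasman--Nardin, and then glue across dimensions using \cref{lem:SmoothPushout} and an operadic upgrade of \cref{lem:CanonicalqHodgeSmoothFunctorial}. Your identification of the transposition step as the one requiring the most care matches the point where the paper also only asserts it is "straightforward to check."
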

	\begin{proof}[Proof sketch]
		As in \cref{par:CanonicalqHodgeSmoothMonoidal}, we can define a sub-$\infty$-operad $\cat{Sm}_{A[n!^{-1}]}^{\leqslant n,\otimes}\subseteq \cat{Sm}_{A[n!^{-1}]}^\otimes$ given by those objects whose entries are of dimension $\leqslant n$ and those morphisms that factor through a cocartesian lift of their image in $\cat{Fin}_*$. Analogously, we can define $(\cat{Sm}_{A[n!^{-1}]}^{\leqslant n,\otimes})^\vee\subseteq (\cat{Sm}_{A[n!^{-1}]}^\otimes)^\vee$ given by those objects whose entries are of dimension $\leqslant n$ and those morphisms that factor through a cartesian lift of their image in $\cat{Fin}_*^\op$.
		
		The dualising construction from \cite[\chref{1.2}]{DualFibration} can not only be applied to cartesian fibrations, but also to $(\cat{Sm}_{A[n!^{-1}]}^{\leqslant n,\otimes})^\vee$, and it is straightforward to check that we get back $\cat{Sm}_{A[n!^{-1}]}^{\leqslant n,\otimes}$ in this case. Moreover, by \cref{lem:CartesianLiftsPreserved}, the functor
		\begin{equation*}
			\bigl(\cat{Sm}_{A[n!^{-1}]}^{\leqslant,\otimes}\bigr)^\vee\longrightarrow \bigl(\cat{AniAlg}_A^{\qHodge,\otimes}\bigr)^\vee
		\end{equation*}
		preserves all cartesian lifts that exist in the source. We may thus dualise via \cite[\chref{1.2}]{DualFibration} to obtain a functor
		\begin{equation*}
			\cat{Sm}_{A[n!^{-1}]}^{\leqslant,\otimes}\longrightarrow \cat{AniAlg}_A^{\qHodge,\otimes}\,.
		\end{equation*}
		Now the $\infty$-operad $\cat{Sm}_{A[\dim!^{-1}]}^{\otimes}$ is built from $\cat{Sm}_{A[n!^{-1}]}^{\leqslant,\otimes}$ for all $n\geqslant 0$ via a sequence of pushouts as in \cref{lem:SmoothPushout}. Combining this with a straightforward analogue of \cref{lem:CanonicalqHodgeSmoothFunctorial}, we can inductively construct the desired map of $\infty$-operads.
	\end{proof}

	\subsection{Functorial \texorpdfstring{$q$}{q}-Hodge filtrations for certain quasi-regular quotients}\label{subsec:CanonicalqHodgeQuasiregular}
	
	In this subsection, we'll explain another elementary construction of functorial $q$-Hodge filtrations. To this end, let us first fix a prime~$p$ and work in a $p$-complete setting (at the end of this subsection, we'll get back to the global case). Throughout this subsection, all ($q$-)de Rham complexes or cotangent complexes relative to $p$-complete rings will be implicitly $p$-completed.
	
	\begin{numpar}[Rings of interest.]\label{par:RingsOfInterest}
		Temporarily, $A$ will not be a perfectly covered $\Lambda$-ring, but a \emph{$p$-completely perfectly covered $\delta$-ring}, by which we mean a $p$-complete $\delta$-ring for which the map $A\rightarrow A_\infty$ into its $p$-completed colimit perfection is $p$-completely faithfully flat. Equivalently, the Frobenius $\phi\colon A\rightarrow A$ is $p$-completely flat (as being faithful is automatic). Since perfect $\delta$-rings are $p$-torsion free, it follows that $A$ must be $p$-torsion free too.
		
		Throughout, we will consider \emph{$p$-quasi-lci algebras over $A$}: These are $p$-complete rings $R$ for which the cotangent complex $\L_{R/A}$ (which, by our convention above, we always take to be implicitly $p$-completed) has $p$-complete $\Tor$-amplitude over $R$ concentrated in degree $[0,1]$. Additionally, we'll usually assume that $R/p$ is \emph{relatively semiperfect over $A$}: That is, the relative Frobenius $R/p\otimes_{A,\phi}A\twoheadrightarrow R/p$ is surjective. This forces $\Omega_{R/A}^1/p$ to vanish, so $\L_{R/A}$ will have $p$-complete $\Tor$-amplitude over $R$ concentrated in degree~$1$.
		
		An important special case are $A$-algebras of \emph{perfect-regular presentation}: These are the quotients $R\cong B/J$, where $B$ is a $p$-complete relatively perfect $\delta$-$A$-algebra, by which we mean that the relative Frobenius $\phi_{B/A}\colon (B\otimes_{A,\phi}A)_p^\complete\rightarrow B$ is an isomorphism, and $J\subseteq B$ is an ideal generated by a Koszul-regular sequence. We'll sometimes refer to $B/J$ as a \emph{perfect-regular presentation of $R$}.
	\end{numpar}
	The reason for restricting to rings $R$ as above is the following lemma.
	\begin{lem}\label{lem:RelativelySemiperfect}
		Let $R$ be a $p$-torsion free $A$-algebra such that $\L_{R/A}$ has $p$-complete $\Tor$-amplitude over $R$ concentrated in degree~$1$.
		\begin{alphanumerate}
			\item The de Rham complex $\deRham_{R/A}$, its Hodge-completion $\hatdeRham_{R/A}$, every degree in the completed Hodge filtration $\fil_{\Hodge}^\star\hatdeRham_{R/A}$, and the $q$-de Rham complex $\qdeRham_{R/A}$ are all static and $p$-torsion free.\label{enum:deRhamStatic}
			\item The un-completed Hodge filtration $\fil_{\Hodge}^\star\deRham_{R/A}$ is static in every degree if and only if $R/p$ is relatively semiperfect over $A$.\label{enum:HodgeStatc}
		\end{alphanumerate}
	\end{lem}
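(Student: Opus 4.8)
The plan is to read everything off the Hodge filtration together with its characteristic-$p$ cousin, the conjugate filtration, using the hypothesis only in the form: $\L_{R/A}\simeq \Sigma N$ for a $p$-completely flat $R$-module $N$ (and a $p$-completely flat module is in particular $p$-torsion free, since $N\lotimes_R R/p$ being concentrated in degree $0$ forces $N$ static and $p$-torsion free). The $n$-th Hodge graded piece is $\gr^n_{\Hodge}\deRham_{R/A}\simeq \Sigma^{-n}\wedge^n\L_{R/A}$, and Illusie's décalage $\wedge^n(\Sigma N)\simeq \Sigma^n\Gamma^n N$ (with $\Gamma^n N$ again $p$-completely flat) gives $\gr^n_{\Hodge}\deRham_{R/A}\simeq \Gamma^n N$: static, $p$-torsion free, concentrated in degree $0$. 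The same computation applies to the conjugate graded pieces of $\deRham_{R/A}/p\simeq \deRham_{(R/p)/(A/p)}$, namely $\gr^{\mathrm{conj}}_i\simeq \Sigma^{-i}\wedge^i\L_{(R/p)^{(1)}/(A/p)}\simeq \Gamma^i\bigl((N/p)^{(1)}\bigr)$, where we use that the Frobenius of $A/p$ is flat (this is where the hypothesis on $A$ from \cref{par:RingsOfInterest} enters) so that the relevant base change preserves $\Tor$-amplitude $1$.

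For the completed objects one uses that $\fil^n_{\Hodge}\hatdeRham_{R/A}\simeq \lim_m\bigl(\fil^n_{\Hodge}\deRham_{R/A}/\fil^{n+m}_{\Hodge}\deRham_{R/A}\bigr)$, and for $n=0$ that $\hatdeRham_{R/A}\simeq \lim_m \deRham_{R/A}/\fil^m_{\Hodge}\deRham_{R/A}$. Each finite quotient $\fil^a_{\Hodge}\deRham_{R/A}/\fil^b_{\Hodge}\deRham_{R/A}$ has a length-$(b-a)$ filtration with graded pieces the $\Gamma^j N$, and a one-line long exact sequence shows an extension of two static, $p$-torsion-free modules in degree $0$ is again one; so every finite quotient is static, $p$-torsion free, concentrated in degree $0$, and since the transition maps in the limit have fibres the $\gr^j_{\Hodge}$ (concentrated in degree $0$) they are surjective on $\pi_0$, so $\lim^1$ vanishes and $\fil^n_{\Hodge}\hatdeRham_{R/A}$ (and $\hatdeRham_{R/A}$) is static and $p$-torsion free. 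For $\deRham_{R/A}$ itself: it is derived $p$-complete, and $\deRham_{R/A}/p\simeq \deRham_{(R/p)/(A/p)}$ is a filtered colimit of static modules in degree $0$ by its exhaustive conjugate filtration, hence static; since a derived $p$-complete object with bounded mod-$p$ reduction is bounded with derived $p$-complete homology groups, derived Nakayama promotes this to "$\deRham_{R/A}$ concentrated in degree $0$", and the Bockstein sequence forces $\pi_0$ $p$-torsion free. Finally $\qdeRham_{R/A}$ is $(q-1)$-complete with $\qdeRham_{R/A}/(q-1)^n$ carrying a length-$n$ filtration whose graded pieces are all $\simeq \qdeRham_{R/A}/(q-1)\simeq \deRham_{R/A}$ (now known static, $p$-torsion free); the extension-plus-$\lim^1$ argument then gives $\qdeRham_{R/A}=\lim_n\qdeRham_{R/A}/(q-1)^n$ static and $p$-torsion free, completing (a).

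For (b) I would first reduce to characteristic $p$. Each $\fil^n_{\Hodge}\deRham_{R/A}$ is $p$-complete (it is the fibre of a map of $p$-complete objects, as $\deRham_{R/A}/\fil^n_{\Hodge}$ is a finite extension of the $\Gamma^j N$), and, when static, $p$-torsion free (it embeds into the $p$-torsion-free $\pi_0\deRham_{R/A}$); since $\fil^n_{\Hodge}(-)$ commutes with $-/p$, "derived $p$-complete $+$ bounded mod $p$ $\Rightarrow$ bounded" and derived Nakayama show that "$\fil^\star_{\Hodge}\deRham_{R/A}$ is static in every degree" is equivalent to the same statement for $\deRham_{(R/p)/(A/p)}$. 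In characteristic $p$, $\deRham_{(R/p)/(A/p)}$ and all its quotients $\deRham_{(R/p)/(A/p)}/\fil^n_{\Hodge}$ are static by (a), so the condition becomes surjectivity on $\pi_0$ of $\deRham_{(R/p)/(A/p)}\to \deRham_{(R/p)/(A/p)}/\fil^n_{\Hodge}$ for all $n$. The decisive input is the compatibility of the conjugate and Hodge filtrations: the bottom conjugate step is $\fil^{\mathrm{conj}}_0\deRham_{(R/p)/(A/p)}\simeq (R/p)^{(1)}$, and the composite $(R/p)^{(1)}\hookrightarrow \deRham_{(R/p)/(A/p)}\twoheadrightarrow \gr^0_{\Hodge}\deRham_{(R/p)/(A/p)}=R/p$ is the relative Frobenius $F_{(R/p)/(A/p)}$. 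If $R/p$ is relatively semiperfect this is already surjective, and one propagates surjectivity through the degree-$0$ Hodge graded pieces to hit every $\deRham_{(R/p)/(A/p)}/\fil^n_{\Hodge}$; conversely, if $\fil^1_{\Hodge}\deRham_{(R/p)/(A/p)}$ is static then $\pi_0\deRham_{(R/p)/(A/p)}\twoheadrightarrow R/p$, and since the image of this map is exactly the image of $F_{(R/p)/(A/p)}$ (the higher conjugate graded pieces contributing nothing in $\gr^0_{\Hodge}$), $F_{(R/p)/(A/p)}$ is surjective, i.e. $R/p$ is relatively semiperfect.

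The hard part is this last point — controlling the image of $\pi_0\deRham_{(R/p)/(A/p)}$ inside $\gr^0_{\Hodge}\deRham_{(R/p)/(A/p)}=R/p$, equivalently showing the conjugate graded pieces in degrees $\geqslant 1$ die in $\gr^0_{\Hodge}$. I expect to handle it by passing to the full Hodge–conjugate bifiltration on $\deRham_{(R/p)/(A/p)}$ and invoking the degeneration $\gr^{\mathrm{conj}}_i\gr^0_{\Hodge}\simeq \gr^0_{\Hodge}\gr^{\mathrm{conj}}_i$ together with the Cartier isomorphism in the smooth case (the relevant vanishing being a statement about a functor, hence surviving left Kan extension); alternatively, for $R$ admitting a perfect-regular presentation $R\cong B/J$ one has $\deRham_{(R/p)/(A/p)}\simeq \deRham_{(R/p)/(B/p)}$ equal to the uncompleted divided-power envelope of $J/p$, whose Hodge filtration is visibly by split submodules of a free module, and the general case follows by faithfully flat descent along covers by such presentations.
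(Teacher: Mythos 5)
Your treatment of part~(a) is essentially the paper's own argument: reduce to the graded pieces, identify $\gr^n_{\Hodge}$ with a $p$-completely flat module in degree~$0$ via décalage, handle $\deRham$ itself by passing to the conjugate filtration mod~$p$ and invoking derived $p$-completeness, and finally reduce $\qdeRham$ to $\deRham$ by $(q-1)$-completeness. That is fine. The forward direction of~(b) is also fine; it is exactly the appeal to \cite[Proposition~{\chref{8.14}}]{BMS2} that the paper makes.

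The converse direction of~(b), however, has a genuine gap, and you flag the problematic spot yourself. Your claim is that the image of $\pi_0\deRham_{(R/p)/(A/p)}$ in $\gr^0_{\Hodge}=R/p$ equals $\mathrm{im}(F_{(R/p)/(A/p)})$, ``the higher conjugate graded pieces contributing nothing in $\gr^0_{\Hodge}$''. In the smooth case this is true for a \emph{degree} reason: $\gr_i^{\mathrm{conj}}\simeq\Sigma^{-i}\wedge^i\L^{(1)}$ is concentrated in cohomological degree~$i$, so for $i\geqslant1$ it cannot contribute to $\pi_0$ at all. But in the setting of the lemma, where $\L_{R/A}$ has $\Tor$-amplitude in degree~$1$, $\gr_i^{\mathrm{conj}}\simeq\Gamma^i(N^{(1)})$ lies entirely in degree~$0$, so every conjugate step contributes to $\pi_0\deRham$, and the degree argument gives no information about images in $\gr^0_{\Hodge}$. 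Neither of your two fallback routes closes this:
(i)~the ``degeneration'' $\gr_i^{\mathrm{conj}}\gr^0_{\Hodge}\simeq0$ for $i\geqslant1$ that you want to import via left Kan extension does \emph{not} hold even for polynomial algebras over $\IF_p$ — a direct computation with the stupid and canonical truncations gives $\gr^0_{\Hodge}\gr_1^{\mathrm{conj}}\Omega^*_{S/\IF_p}\simeq dS\neq0$ for $S$ a polynomial ring — so there is no vanishing functor to Kan extend;
(ii)~the proposed faithfully flat descent along perfect-regular presentations is circular for the converse: any ring with a perfect-regular presentation already has surjective Frobenius mod~$p$, and surjectivity of the relative Frobenius descends along faithfully flat maps, so a non-semiperfect $R/p$ admits no such cover. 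In fact, the claim ``$\mathrm{im}(\pi_0\deRham)=\mathrm{im}(F)$'' is equivalent, once you know $\fil^1_{\Hodge}$ static implies $\pi_0\deRham\twoheadrightarrow R/p$, to the converse you are trying to prove, so stating it does not make progress without an independent argument.

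The paper avoids this circle by passing to the derived de Rham--Witt complex $\W\deRham_{\ov R_\infty/\IF_p}$ and its Nygaard filtration. Using the recursion $\fil_\Nn^n\W\deRham/p\,\fil_\Nn^{n-1}\W\deRham\simeq\fil^n_{\Hodge}\deRham$, staticness of all Hodge steps bootstraps to staticness of all Nygaard steps; since $\fil_\Nn^n$ is the fibre of $\W\deRham\xrightarrow{\phi}\W\deRham\to\W\deRham/p^n$, this forces $\phi$ to be surjective on $\W\deRham$, and reducing mod~$p$ and composing with the (already known) surjection $\deRham\twoheadrightarrow\ov R_\infty$ gives surjectivity of the Frobenius on $\ov R_\infty$. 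That extra layer — working with $\W\deRham$ and its Nygaard filtration rather than only with $\deRham$ and its conjugate filtration — is exactly what supplies the surjectivity you were unable to extract directly.
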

	\begin{proof}
		To show that every degree in the completed Hodge filtration is static and $p$-torsion free, just observe that the same is true for the associated graded $\gr_{\Hodge}^*\hatdeRham_{R/A}\simeq \Sigma^{-*}\bigwedge^*\L_{R/A}$, because our assumption on $R$ guarantees that $\Sigma^{-1}\L_{R/A}$ is a $p$-completely flat module over the $p$-torsion free ring $R$. To show that the $(q-1)$-complete object $\qdeRham_{R/A}$ is static and $p$-torsion free, it will be enough to show the same for $\qdeRham_{R/A}/(q-1)\simeq \deRham_{R/A}$. Now all assertions about $\deRham_{R/A}$ and its Hodge filtration can be checked after base change along the $p$-completely faithfully flat map $A\rightarrow A_\infty$.
		
		So let us put $R_\infty\coloneqq (R\otimes_A A_\infty)_p^\complete$ and consider $\deRham_{R_\infty/A_\infty}$ and let $\ov R_\infty\coloneqq R_\infty/p$. Since $A_\infty$ is a perfect $\delta$-ring, $\L_{A_\infty/\IZ_p}\simeq 0$, so we may as well consider $\deRham_{R_\infty/\IZ_p}$. To see that $\deRham_{R_\infty/\IZ_p}$ is static and $p$-torsion free, it suffices to check that its modulo $p$ reduction $\deRham_{R_\infty/\IZ_p}/p\simeq \deRham_{\ov R_\infty/\IF_p}$ is static. The latter admits an ascending exhaustive filtration, the \emph{conjugate filtration}, whose associated graded $\Sigma^{-*}\bigwedge^*\L_{\ov R_\infty/\IF_p}\simeq \Sigma^{-*}\bigwedge^*\L_{R_\infty/\IZ_p}/p$ is static in every degree since $\Sigma^{-1}\L_{R_\infty/\IZ_p}$ is $p$-completely flat over the $p$-torsion free ring $R_\infty$. This shows that $\deRham_{\ov R_\infty/\IF_p}$ is indeed static and we've finished the proof of \cref{enum:deRhamStatic}.
		
		For~\cref{enum:HodgeStatc}, we've already seen that $\deRham_{R_\infty/\IZ_p}$ and the associated graded of the Hodge filtration are static and $p$-torsion free in every degree. Hence $\fil_{\Hodge}^\star\deRham_{R_\infty/\IZ_p}$ is degree-wise static if and only if it consists of sub-modules of $\deRham_{R_\infty/\IZ_p}$, which must be $p$-torsion free too. Thus $\fil_{\Hodge}^\star\deRham_{R_\infty/\IZ_p}$ is degree-wise static if and only if the same is true for $\fil_{\Hodge}^\star\deRham_{R_\infty/\IZ_p}/p\simeq \fil_{\Hodge}^\star\deRham_{\ov R_\infty/\IF_p}$. In the case where $\ov R_\infty$ is semiperfect, this holds by \cite[Proposition~\chref{8.14}]{BMS2}. Conversely, assume $\fil_{\Hodge}^\star\deRham_{\ov R_\infty/\IF_p}$ is degree-wise static. If $\fil_\Nn^\star\W\deRham_{\ov R_\infty/\IF_p}$ denotes the Nygaard filtration on the derived de Rham--Witt complex, then 
		\begin{equation*}
			\fil_\Nn^n\W\deRham_{\ov R_\infty/\IF_p}/p\fil_\Nn^{n-1}\W\deRham_{\ov R_\infty/\IF_p}\simeq \fil_{\Hodge}^n\deRham_{\ov R_\infty/\IF_p}
		\end{equation*}
		holds for all $n$ by deriving \cite[Lemma~\chref{8.3}]{BMS2}. Inductively it follows that $\W\deRham_{R_\infty/\IZ_p}$ and each step in its Nygaard filtration must be static too. By definition, $\fil_\Nn^n\W\deRham_{\ov R_\infty/\IF_p}$ is the fibre of
		\begin{equation*}
			\W\deRham_{\ov R_\infty/\IF_p}\overset{\phi}{\longrightarrow}\W\deRham_{\ov R_\infty/\IF_p}\longrightarrow \W\deRham_{\ov R_\infty/\IF_p}/p^n\,,
		\end{equation*}
		so this composition must be surjective for all $n$. Then $\phi\colon \W\deRham_{\ov R_\infty/\IF_p}\rightarrow\W\deRham_{\ov R_\infty/\IF_p}$ must be surjective as well. Since $\W\deRham_{\ov R_\infty/\IF_p}/p\simeq \deRham_{\ov R_\infty/\IF_p}\rightarrow \ov R_\infty$ is surjective by our assumption that $\fil_{\Hodge}^1\deRham_{\ov R_\infty/\IF_p}$ is static, we conclude that the Frobenius on $\ov R_\infty$ must be surjective too.
	\end{proof}
	\begin{rem}
		In the case where $R\cong B/J$ is of perfect-regular presentation over $A$, everything can be made explicit: $\deRham_{R/A}\simeq D_B(J)$ is the ($p$-completed) PD-envelope of $J$, the Hodge filtration is just the PD-filtration, and the $q$-de Rham complex $\qdeRham_{R/A}$ is the corresponding $q$-PD-envelope in the sense of \cite[Lemma~\chref{16.10}]{Prismatic}.
	\end{rem}
	\begin{rem}
		There exist $p$-complete $\IZ_p$-algebras whose cotangent complex has $p$-complete $\Tor$-amplitude concentrated in degree~$1$, but whose reduction modulo $p$ is not semiperfect. For example, if $p\geqslant 3$, the $\IF_p$-algebra constructed in \cite{SemiperfectCounterexample} can be lifted in a straightforward way to a $p$-complete $\IZ_p$-algebra with this property.
	\end{rem}
	
	Let us now define a $q$-Hodge filtration for rings $R$ as in \cref{lem:RelativelySemiperfect}.
	
	\begin{con}\label{con:qHodge}
		Suppose $R$ is a $p$-torsion free quasi-lci $A$-algebra such that $R/p$ is relatively semiperfect over $A$. By \cref{lem:RationalisedqCrystalline}, after rationalisation, $\deRham_{R/A}$ and $\qdeRham_{R/A}$ are related via a functorial equivalence
		\begin{equation*}
			\qdeRham_{R/A}\bigl[\localise{p}\bigr]_{(q-1)}^\complete\simeq \deRham_{R/A}\bigl[\localise{p}\bigr]\qpower\,.
		\end{equation*}
		By \cref{lem:RelativelySemiperfect}, both sides are static rings. Let us equip the right-hand side with the combined Hodge and $(q-1)$-adic filtration $\fil_{(\Hodge,q-1)}^\star \deRham_{R/A}[1/p]\qpower$ as in \cref{def:qHodgeFiltration}\cref{enum:qHodgeRationalpComplete}. This is a descending filtration by ideals. 
		
		We now construct $\fil_{\qHodge}^\star\qdeRham_{R/A}$ as the $1$-categorical (!) preimage of this filtration under $\qdeRham_{R/A}\rightarrow\deRham_{R/A}[1/p]\qpower$; in other words, as the pullback
		\begin{equation*}
			\begin{tikzcd}
				\fil_{\qHodge}^\star \qdeRham_{R/A}\rar\dar\drar[pullback] & \fil_{(\Hodge,q-1)}^\star \deRham_{R/A}\bigl[\localise{p}\bigr]\qpower\dar\\
				\qdeRham_{R/A}\rar &  \deRham_{R/A}\bigl[\localise{p}\bigr]\qpower
			\end{tikzcd}
		\end{equation*}
		taken in the $1$-category of filtered $(q-1)^\star A\qpower$-modules. We remark that $\fil_{\qHodge}^\star\qdeRham_{R/A}$ will be a descending filtration of ideals in the static ring $\qdeRham_{R/A}$, hence it's automatically a filtered $\IE_\infty$-algebra over $(q-1)^\star A\qpower$.
		
		Let us also remark that the canonical projection $\qdeRham_{R/A}\rightarrow \deRham_{R/A}$ induces a (necessarily unique) filtered map
		\begin{equation*}
			\fil_{\qHodge}^\star \qdeRham_{R/A}\longrightarrow \fil_{\Hodge}^\star \deRham_{R/A}\,.
		\end{equation*}
		Indeed, to see this, we must check that $\fil_{\Hodge}^\star \deRham_{R/A}$ is the preimage of $\fil_{\Hodge}^\star\deRham_{R/A}[1/p]$ under $\deRham_{R/A}\rightarrow \deRham_{R/A}[1/p]$. Since any filtration is the preimage of its completion, we may further replace the Hodge filtration $\fil_{\Hodge}^\star\deRham_{R/A}[1/p]$ by its completion $\fil_{\Hodge}^\star\deRham_{R/A}[1/p]_{\Hodge}^\complete$. To check that $\fil_{\Hodge}^\star\deRham_{R/A}$ is the preimage, it will thus be enough to check that the map on associated gradeds is injective. Now
		\begin{equation*}
			\Sigma^{-n}\bigwedge^n\L_{R/A}\rightarrow \Sigma^{-n}\bigwedge^n \L_{R/A}\bigl[\localise{p}\bigr]
		\end{equation*}
		will be injective for all $n\geqslant 0$, because $\Sigma^{-n}\bigwedge^n\L_{R/A}$ is a $p$-completely flat module over the $p$-torsion free ring $R$ and thus $p$-torsion free itself.
	\end{con}
	In general, the $q$-Hodge filtration from \cref{con:qHodge} will be nonsense. But it does behave as desired in the following cases:
	
	\begin{thm}\label{thm:qHodgeWellBehaved}
		Let $A$ be a $p$-completely perfectly covered $\delta$-ring and let $R$ be a $p$-torsion free quasi-lci $A$-algebra such that $R/p$ is relatively semiperfect over $A$. Suppose that one of the following two additional assumptions is satisfied:
		\begin{alphanumerate}
			\item There exists a perfect-regular presentation $R\cong B/J$, where the ideal $J\subseteq B$ is generated by a Koszul-regular sequence of higher powers, that is, a Koszul-regular sequence $(x_1^{\alpha_1},\dotsc,x_r^{\alpha_r})$ with $\alpha_i\geqslant 2$ for all $i$.\label{enum:RegularSequenceHigherPowers}
			\item The ring $R_\infty\coloneqq (R\otimes_A A_\infty)_p^\complete$ admits a lift to a $p$-complete connective  $\IE_1$-ring spectrum $\IS_{R_\infty}$ satisfying $R_\infty\simeq \IS_{R_\infty}\otimes_{\IS_p}\IZ_p$.\label{enum:qHodgeE1Lift}
		\end{alphanumerate}
		Then $\fil_{\qHodge}^\star\qdeRham_{R/A}$ is a $q$-deformation of $\fil_{\Hodge}^\star\deRham_{R/A}$ in the sense that the canonical map from \cref{con:qHodge} induces an equivalence
		\begin{equation*}
			\fil_{\qHodge}^\star\qdeRham_{R/A}/(q-1)\overset{\simeq}{\longrightarrow} \fil_{\Hodge}^\star\deRham_{R/A}\,.
		\end{equation*}
		Here we take the quotient in filtered $(q-1)^\star A\qpower$-modules, with $(q-1)$ regarded as an element in filtration degree~$1$.
	\end{thm}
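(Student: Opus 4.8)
\emph{Plan of the proof.} The plan hinges on a reduction: in either case, it suffices to exhibit \emph{at least one} $q$-Hodge filtration on $\qdeRham_{R/A}$ at all, because the filtration of \cref{con:qHodge} is then automatically the terminal (largest) $q$-Hodge filtration and is squeezed into being a $q$-deformation. To set this up I would first record, via \cref{lem:RelativelySemiperfect}, that $\qdeRham_{R/A}$ is static and $(q-1)$-torsion free, so each $\fil_{\qHodge}^n\qdeRham_{R/A}$ is an honest ideal and $\fil_{\qHodge}^\star\qdeRham_{R/A}/(q-1)$ is static in each degree, with $n$-th term $\fil_{\qHodge}^n\qdeRham_{R/A}/(q-1)\fil_{\qHodge}^{n-1}\qdeRham_{R/A}$. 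Next I would note that the combined Hodge and $(q-1)$-adic filtration on $\deRham_{R/A}[1/p]\qpower$ is \enquote{strict} for multiplication by $(q-1)$ --- if $(q-1)y$ lies in filtration degree $n$ then $y$ lies in filtration degree $n-1$, which is immediate from $(q-1)$-torsion freeness --- so that the reduction map
\[
\fil_{\qHodge}^n\qdeRham_{R/A}/(q-1)\fil_{\qHodge}^{n-1}\qdeRham_{R/A}\longrightarrow \fil_{\Hodge}^n\deRham_{R/A}
\]
of \cref{con:qHodge} is injective, with image $I_n:=(\fil_{\qHodge}^n\qdeRham_{R/A}+(q-1)\qdeRham_{R/A})/(q-1)\qdeRham_{R/A}\subseteq \fil_{\Hodge}^n\deRham_{R/A}$. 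On the other hand, by condition~\cref{enum:qHodgeRationalpComplete} of \cref{def:qHodgeFiltration} any $q$-Hodge filtration $G^\star$ on $\qdeRham_{R/A}$ has $G^n$ mapping into $\fil_{\qHodge}^n\qdeRham_{R/A}$ (it lands in the combined filtration after rationalisation), while by \cref{def:qHodgeFiltration}\cref{enum:qHodgeModq-1} its reduction modulo $(q-1)$ is the full Hodge filtration; chasing the resulting square of reductions modulo $(q-1)$ forces $\fil_{\Hodge}^n\deRham_{R/A}\subseteq I_n$, hence $I_n=\fil_{\Hodge}^n\deRham_{R/A}$, which is the assertion of the theorem (and one reads off that such a filtration is unique and coincides with \cref{con:qHodge}). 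Finally, using the faithfully flat base change $A\to A_\infty$ and that $A_\infty$ is perfect, I would reduce to working absolutely, i.e.\ to $A=\IZ_p$.

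\emph{Case~\cref{enum:RegularSequenceHigherPowers}.} Here I would build a $q$-Hodge filtration by hand from the $q$-divided power structure. By \cite[Lemma~\chref{16.10}]{Prismatic}, $\deRham_{R/A}\simeq D_B(J)$ is the ($p$-completed) PD-envelope of $J=(x_1^{\alpha_1},\dotsc,x_r^{\alpha_r})$, with Hodge filtration the PD-filtration $J^{[\star]}$, and $\qdeRham_{R/A}$ is the corresponding $q$-PD-envelope, carrying $q$-divided powers $\gamma_{q,m}(x_i^{\alpha_i})$. One takes $G^n\subseteq\qdeRham_{R/A}$ to be the (completed, $(q-1)^\star A\qpower$-submodule) span of the $q$-divided power monomials $\prod_i\gamma_{q,m_i}(x_i^{\alpha_i})$ with $\sum_i m_i\geqslant n$, mirroring $J^{[n]}$. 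Verifying that $G^\star$ is a $q$-Hodge filtration comes down to: \cref{def:qHodgeFiltration}\cref{enum:qHodgeModq-1}, which is the observation $\gamma_{q,m}\equiv\gamma_m\bmod(q-1)$, giving $G^\star/(q-1)\simeq J^{[\star]}=\fil_{\Hodge}^\star\deRham_{R/A}$; and \cref{def:qHodgeFiltration}\cref{enum:qHodgeRational}--\cref{enum:qHodgeRationalpComplete}, which require that rationally $\gamma_{q,m}(x_i^{\alpha_i})=x_i^{\alpha_i m}/[m]_q!$ lie in the combined filtration step $\fil_{(\Hodge,q-1)}^m$. This last point is exactly where $\alpha_i\geqslant 2$ enters: expanding $x_i^{\alpha_i m}/[m]_q!$ in powers of $(q-1)$, the coefficient of $(q-1)^k$ must land in $J^{[m-k]}$, and the surplus powers of $x_i$ coming from $\alpha_i\geqslant 2$ supply precisely this divisibility, via a $q$-factorial estimate in the spirit of \cite[Lemma~\chref{12.6}]{Prismatic} (for $\alpha_i=1$ this fails, matching the obstruction exhibited in the proof of \cref{lem:NoFunctorialqHodgeFiltration}). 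The remaining compatibilities in \cref{def:qHodgeFiltration} are formal. To make the estimate tractable I would reduce, by multiplicativity of $q$-PD-envelopes, to $r=1$, and then, by the perfectoid-descent method of \cite[\S{\chref[section]{12}}, \S{\chref[section]{16}}]{Prismatic} (Andre's lemma), to the universal case $B=\IZ_p\langle x^{1/p^\infty}\rangle$.

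\emph{Case~\cref{enum:qHodgeE1Lift}.} Here I would produce the $q$-Hodge filtration from the $\IE_1$-lift $\IS_{R_\infty}$ by topological means: realise $\qdeRham_{R_\infty/\IZ_p}$, together with a candidate $q$-Hodge filtration, via $\ku$-linear topological Hochschild homology of $\IS_{R_\infty}$ and an even (double-speed Postnikov) filtration --- this is the mechanism of the companion paper \cite{qdeRhamku} --- under which $(q-1)$ corresponds to the map $\ku\to\IZ$ killing the Bott class and the combined Hodge and $(q-1)$-adic filtration corresponds to the even filtration. The point is that this even filtration is defined and functorial for $\IE_1$-rings and commutes with the base changes in play, so the existence of \emph{any} $\IE_1$-lift already produces the filtration; that its reduction along $\ku\to\IZ$ is the HKR filtration on Hochschild homology, i.e.\ the Hodge filtration, yields \cref{def:qHodgeFiltration}\cref{enum:qHodgeModq-1}, while the other axioms follow from the arithmetic-fracture gluing used throughout \cref{sec:HabiroDescent}. \textbf{The main obstacles} are: in Case~\cref{enum:RegularSequenceHigherPowers}, the explicit $q$-factorial divisibility estimate, including tracking how the Hodge and $(q-1)$-adic directions interact inside the $q$-PD-envelope; and in Case~\cref{enum:qHodgeE1Lift}, setting up the $\ku$-linear topological Hochschild homology description precisely enough that \enquote{reduction modulo $(q-1)$} becomes literally \enquote{reduction along $\ku\to\IZ$}, together with confirming that this description does not depend on the chosen lift $\IS_{R_\infty}$.
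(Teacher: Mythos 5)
Your overall architecture tracks the paper's proof closely. The \enquote{terminal filtration} reduction you set up --- that the filtration of \cref{con:qHodge} is terminal among $q$-Hodge filtrations, so exhibiting \emph{any} $q$-Hodge filtration forces \cref{con:qHodge}'s filtration to be a $q$-deformation --- is a valid reframing of the paper's argument, which is: injectivity holds automatically by \cref{lem:qHodgeInjective}, so only surjectivity modulo $(q-1)$ must be checked, and exhibiting lifts of generators does exactly that. Both routes pass through \cref{lem:qHodgeInjective} and the factorisation of any $q$-Hodge filtration through \cref{con:qHodge}'s preimage construction, so they are essentially equivalent. The base change reduction to $A$ perfect (via \cref{lem:qHodgeBaseChange}) and the deferral of case~\cref{enum:qHodgeE1Lift} to the companion paper are likewise what the paper does, and your sketch of the $\ku$-linear THH mechanism is consistent with what \cite{qdeRhamku} is reported to do.

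The gap is in case~\cref{enum:RegularSequenceHigherPowers}. You propose to span $G^n$ by $q$-divided power monomials $\gamma_{q,m}(x_i^{\alpha_i}) = x_i^{\alpha_i m}/[m]_q!$ and to verify~\cref{def:qHodgeFiltration}\cref{enum:qHodgeRationalpComplete} via a \enquote{$q$-factorial estimate} in which the surplus powers of $x_i$ coming from $\alpha_i\geqslant 2$ supply divisibility. But these are not the elements supplied by the $q$-PD structure in the universal case. The $q$-PD operation is $\gamma_q(y)=\phi(y)/[p]_q-\delta(y)$, and $\gamma_q(x^\alpha)$ differs from $x^{\alpha p}/[p]_q!$ precisely by $\delta$-terms (note $[p]_q/p$ is a unit in $\IZ_p\qpower$, so the naive $q$-factorial never obstructs anything rationally: $x^{\alpha m}/[m]_q!$ is a unit multiple of $x^{\alpha m}/m!\in\fil_{\Hodge}^m$). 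As \cref{exm:MainSpecialCase} shows, the genuine obstruction is the term $u^{-2}(q-1)^{p-1}\delta(x^\alpha)/p$: for $\alpha=1$ no modification by $(q-1)\cdot(\text{integral elements})$ kills the non-integral $\delta(x)/p$, while for $\alpha\geqslant 2$ the identity $\delta(x^\alpha)=\alpha x^{p(\alpha-1)}\delta(x)+p\,(\dotsc)$ splits it into a part absorbed by $x^{\alpha(\alpha-1)\cdots}$ and a part with the $p$ cancelled. This $\delta$-decomposition --- not a $[m]_q!$ estimate --- is where $\alpha_i\geqslant 2$ actually does its work. The paper reduces to the universal free $\delta$-ring $B=\IZ_p\{x\}$ (not a perfectoid ring; after base change to $A_\infty$ the $q$-PD envelope still involves the $\delta^n(x)$ as free variables) and cites the explicit modified iterated $q$-divided powers constructed in \cite[Lemma~\chref{3.17}]{qHodge}. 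Your route via Andre's lemma into a perfectoid $B$ changes the relevant $q$-PD envelope into an algebra of genuine $q$-divided powers, where the statement to prove is a different (and trivially true) one; so without a careful descent argument matching the modified elements across the cover, the reduction to the perfectoid case is not a reduction.
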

	\begin{rem}\label{rem:BurklundsMagic}
		For primes $p>2$, \cref{thm:qHodgeWellBehaved}\cref{enum:qHodgeE1Lift} implies \cref{enum:RegularSequenceHigherPowers}. Indeed, if we put $B_\infty\coloneqq (B\otimes_A A_\infty)_p^\complete$, then $B_\infty$ is a perfect $\delta$-ring and so it lifts uniquely to a connective $p$-complete $\IE_\infty$-ring spectrum. We can then use Burklund's theorem on $\IE_n$-structures on quotients \cite[Theorem~\chref{1.5}]{BurklundMooreSpectra} to construct an $\IE_1$-structure on
		\begin{equation*}
			\IS_{R_\infty}\coloneqq\IS_{B_\infty}/\bigl(x_1^{\alpha_1},\dotsc,x_r^{\alpha_r}\bigr)\,.
		\end{equation*}
		More precisely, since $p>2$, each $\IS_{B_\infty}/x_i$ admits a right-unital multiplication (the relevant obstruction $Q_1(x_i)$ is $2$-torsion), and so Burklund's result provides $\IE_1$-structures on $\IS_{B_\infty}/x_i^{\alpha_i}$ in $\Mod_{\IS_{B_\infty}}(\Sp)$, of which we can take the tensor product.
		
		For $p=2$, $\IS_{B_\infty}/x_i^2$ still admits a right-unital multiplication (see \cite[Remark~\chref{5.5}]{BurklundMooreSpectra}) and so the same argument shows that \cref{thm:qHodgeWellBehaved}\cref{enum:qHodgeE1Lift} implies \cref{enum:RegularSequenceHigherPowers} if all $\alpha_i$ are even and $\geqslant 4$. It is somewhat surprising that \cref{thm:qHodgeWellBehaved}\cref{enum:RegularSequenceHigherPowers} is true without this additional restriction at $p=2$.
	\end{rem}
	
	Before we prove \cref{thm:qHodgeWellBehaved}, let us discuss two examples.
	
	\begin{exm}\label{exm:MainSpecialCase}
		Let $A\coloneqq \IZ_p\{x\}_p^\complete$ be the free $p$-complete $\delta$-ring on a generator $x$ and let $R\coloneqq\IZ_p\{x\}_p^\complete/x^\alpha$ for some $\alpha\geqslant 1$. Then \cref{thm:qHodgeWellBehaved}\cref{enum:RegularSequenceHigherPowers} will apply as soon as $\alpha\geqslant2$, but not for $\alpha=1$. So let's see what goes wrong for $\alpha=1$ and how higher powers (or divine intervention?) fix the issue.
		
		In the case at hand, $\deRham_{R/A}$ and $\qdeRham_{R/A}$ are the usual PD-envelope and the $q$-PD envelope
		\begin{equation*}
			D_\alpha\coloneqq \IZ_p\{x\}\left\{\frac{\phi(x^\alpha)}{p}\right\}_p^\complete\quad\text{and}\quad\q D_\alpha\coloneqq \IZ_p\{x\}\qpower\left\{\frac{\phi(x^\alpha)}{[p]_q}\right\}_{(p,q-1)}^\complete\,,
		\end{equation*}
		respectively. If the $q$-Hodge filtration were to be a $q$-deformation of the Hodge filtration, then $\fil_{\qHodge}^p\q D_\alpha$ would need to contain a lift $\widetilde{\gamma}_q(x^\alpha)$  of the divided power $\gamma(x^\alpha)\coloneqq x^{\alpha p}/p\in \fil_{\Hodge}^pD_\alpha$. Certainly, $\q D_\alpha$ itself contains such a lift; namely, the $q$-divided power
		\begin{equation*}
			\gamma_q(x^\alpha)\coloneqq\frac{\phi(x^\alpha)}{[p]_q}-\delta(x^\alpha)\,.
		\end{equation*}
		The problem is that $\gamma_q(x^\alpha)$ is usually not contained in $\fil_{\qHodge}^p\q D_\alpha$. So for the $q$-Hodge filtration to be a $q$-deformation of the Hodge filtration, it must be possible to modify $\gamma_q(x^\alpha)$ by elements from $(q-1)\q D_\alpha$ to get an element in $\fil_{\qHodge}^p\q D_\alpha$. As we'll see momentarily, this is impossible for $\alpha=1$, but it works for $\alpha\geqslant 2$.
		
		By definition, $\fil_{\qHodge}^\star \qdeRham_{R/A}$ is the preimage of the combined Hodge and $(q-1)$-adic filtration on $D_\alpha[1/p]\qpower$. Since every filtration is the preimage of its completion, we may replace the latter by its completion, which is the $(x^\alpha,q-1)$-adic filtration on $\IQ_p\langle \delta(x),\delta^2(x),\dotsc\rangle\llbracket x,q-1\rrbracket$. So our task is to modify $\gamma_q(x^\alpha)$ by elements from the $(q-1)\q D_\alpha$ such that the result is contained in the ideal $(x^\alpha,q-1)^p\subseteq \IQ_p\langle \delta(x),\delta^2(x),\dotsc\rangle\llbracket x,q-1\rrbracket$.
		
		Write $[p]_q=pu+(q-1)^{p-1}$, where $u\equiv 1\mod q-1$. In particular, $u$ is a unit in $\q D_\alpha$. In $\IQ_p\langle \delta(x),\delta^2(x),\dotsc\rangle\llbracket x,q-1\rrbracket$, we can rewrite $\gamma_q(x^\alpha)$ as
		\begin{equation*}
			\frac{x^{\alpha p}}{[p]_q}+\left(\frac{p}{[p]_q}-1\right)\delta(x^\alpha)=\frac{x^{\alpha p}}{[p]_q}+\left((u^{-1}-1)-u^{-2}\frac{(q-1)^{p-1}}{p}+\mathrm{O}\bigl((q-1)^p\bigr)\right)\delta(x^\alpha)\,.
		\end{equation*}
		Here $\mathrm{O}((q-1)^p)$ denotes \enquote{error terms} which are divisible by $(q-1)^p$. Observe that these error terms are contained in $(x^\alpha,q-1)^p$, so we can safely ignore them. Also $x^{\alpha p}/[p]_q$ is clearly contained in $(x^\alpha,q-1)^p$. The term $(u^{-1}-1)\delta(x^\alpha)$ is contained in $(q-1)\q D_\alpha$, so we can just kill it. This leaves the term $u^{-2}(q-1)^{p-1}\delta(x^\alpha)/p$.
		
		If $\alpha=1$, there's nothing we can do: No modification by elements from $(q-1)\q D_\alpha$ will ever get rid of a non-integral multiple of  $\delta(x)$, as $\delta(x)$ is a polynomial variable in $\IZ_p\{x\}$. This shows that for $\alpha=1$, the $q$-Hodge filtration on $\q D_\alpha$ is \emph{not} a $q$-deformation of the Hodge filtration. For $\alpha=2$, however, we have $\delta(x^2)=2x^p\delta(x)+p\delta(x)^2$. Now the term $2x^p\delta(x)u^{-2}(q-1)^{p-1}/p$ is contained in $(x^2,q-1)^p$ and so
		\begin{equation*}
			\widetilde{\gamma}_q(x^\alpha)\coloneqq \gamma_q(x^\alpha)-(u^{-1}-1)\delta(x^2)+u^{-2}(q-1)^{p-1}\delta(x)^2
		\end{equation*}
		is contained in $\fil_{\qHodge}^p\q D_\alpha$ and satisfies $\widetilde{\gamma}_q(x^\alpha)\equiv x^{2p}/p\mod q-1$, as desired. For $\alpha\geqslant 3$, we can similarly decompose $\delta(x^\alpha)$ into a multiple of $x^{p(\alpha-1)}$ and a multiple of $p$.
		
		This explains what goes wrong at $\alpha=1$ and how the objection is resolved for $\alpha\geqslant2$. In the latter case, it is possible to continue the analysis above and construct for all $n\geqslant 1$ a lift of the divided power $x^{\alpha n}/n!$ that lies in in $\fil_{\qHodge}^n\qdeRham_{R/A}$. This is explained in \cite[\S{\chref[subsection]{3.2}}]{qHodge} and leads to an elementary proof of \cref{thm:qHodgeWellBehaved}\cref{enum:RegularSequenceHigherPowers}. 
	\end{exm}
	\begin{exm}
		An example for \cref{thm:qHodgeWellBehaved}\cref{enum:qHodgeE1Lift} that is not covered by \cref{thm:qHodgeWellBehaved}\cref{enum:RegularSequenceHigherPowers} is the case $A\cong \IZ_p[x]_p^\complete$, with $\delta$-structure defined by $\delta(x)=0$, and $R\cong A/(x-1)\cong \IZ_p$. Then $A$ lifts to the $p$-complete $\IE_\infty$-ring spectrum $\IS_p[x]_p^\complete$ and $A\rightarrow R$ lifts to an $\IE_\infty$-map $\IS_p[x]_p^\complete\rightarrow \IS_p$. Base changing along $\IS_p[x]_p^\complete\rightarrow \IS_p[x^{1/p^\infty}]_p^\complete$ yields a lift of $R_\infty$, even as an $\IE_\infty$-ring. In this case, $\qdeRham_{R/A}$ is the $q$-PD envelope
		\begin{equation*}
			\q D\coloneqq \IZ_p[x]\qpower\left\{\frac{x^p-1}{[p]_q}\right\}_{(p,q-1)}^\complete\,.
		\end{equation*}
		It can be shown that this ring contains elements of the form $(x-1)(x-q)\dotsm (x-q^{n-1})/[n]_q!$ for all $n\geqslant 1$ (see \cite[Lemma~\chref{6.11}]{qdeRhamku} for an argument). After completed rationalisation, these elements are visibly contained in the ideal $(x-1,q-1)^n$. Hence they belong to $\fil_{\qHodge}^n \qdeRham_{R/A}$ and lift the usual divided powers.
	\end{exm}
	
	Let us now prove \cref{thm:qHodgeWellBehaved}. We start with a simple observation, which says that only surjectivity is critical.
	
	\begin{lem}\label{lem:qHodgeInjective}
		Let $R$ be a $p$-torsion free $p$-quasi-lci $A$-algebra such that $R/p$ is relatively semiperfect over~$A$. Then the canonical map from \cref{con:qHodge} induces a degree-wise injection
		\begin{equation*}
			\fil_{\qHodge}^\star\qdeRham_{R/A}/(q-1)\longhookrightarrow \fil_{\Hodge}^\star\deRham_{R/A}\,.
		\end{equation*}
	\end{lem}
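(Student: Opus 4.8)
The plan is to reduce the statement to the strictness of the combined Hodge and $(q-1)$-adic filtration under multiplication by $(q-1)$, after a little bookkeeping with the various (non-)derived operations involved. First I would record the relevant staticness facts: by \cref{lem:RelativelySemiperfect} the objects $\qdeRham_{R/A}$, $\deRham_{R/A}$, $\deRham_{R/A}\bigl[\localise{p}\bigr]\qpower$ and each $\fil_{\Hodge}^n\deRham_{R/A}\bigl[\localise{p}\bigr]$ are static, and since $\qdeRham_{R/A}/(q-1)\simeq\deRham_{R/A}$ is static, $(q-1)$ is a non-zero-divisor on $\qdeRham_{R/A}$, hence on the ideal $\fil_{\qHodge}^{n-1}\qdeRham_{R/A}\subseteq\qdeRham_{R/A}$ (recall the $q$-Hodge filtration of \cref{con:qHodge} is a descending filtration of ideals, defined via a \emph{$1$-categorical} pullback, so each step is an honest submodule). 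Using that $\fil_{\qHodge}^\star$ is a module over $(q-1)^\star A\qpower$, so multiplication by $(q-1)$ lands in the next filtration step, it follows that the derived quotient $\fil_{\qHodge}^\star\qdeRham_{R/A}/(q-1)$ of \cref{conv:QuotientConvention} is static, with $n$-th piece the ordinary cokernel $\fil_{\qHodge}^n\qdeRham_{R/A}/(q-1)\fil_{\qHodge}^{n-1}\qdeRham_{R/A}$. So the assertion becomes: if $\xi\in\fil_{\qHodge}^n\qdeRham_{R/A}$ has vanishing image in $\fil_{\Hodge}^n\deRham_{R/A}$, then $\xi\in(q-1)\fil_{\qHodge}^{n-1}\qdeRham_{R/A}$.

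Since $\fil_{\Hodge}^n\deRham_{R/A}\hookrightarrow\deRham_{R/A}$ is injective (this is part of the verification in \cref{con:qHodge} that the canonical filtered map $\fil_{\qHodge}^\star\qdeRham_{R/A}\to\fil_{\Hodge}^\star\deRham_{R/A}$ exists), the image of $\xi$ in $\deRham_{R/A}\simeq\qdeRham_{R/A}/(q-1)$ is zero, so $\xi=(q-1)\eta$ for a unique $\eta\in\qdeRham_{R/A}$. It then remains to show $\eta\in\fil_{\qHodge}^{n-1}\qdeRham_{R/A}$, which by the pullback definition means exactly that the image $\bar\eta$ of $\eta$ in $\deRham_{R/A}\bigl[\localise{p}\bigr]\qpower$ lies in $\fil_{(\Hodge,q-1)}^{n-1}\deRham_{R/A}\bigl[\localise{p}\bigr]\qpower$. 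What we know is that $(q-1)\bar\eta$, being the image of $\xi\in\fil_{\qHodge}^n\qdeRham_{R/A}$, lies in $\fil_{(\Hodge,q-1)}^n\deRham_{R/A}\bigl[\localise{p}\bigr]\qpower$.

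The key input is a strictness statement for the combined filtration: the natural sequence of filtered $(q-1)^\star A\qpower$-modules
\begin{equation*}
0\longrightarrow\fil_{(\Hodge,q-1)}^\star\deRham_{R/A}\bigl[\localise{p}\bigr]\qpower\overset{q-1}{\longrightarrow}\fil_{(\Hodge,q-1)}^\star\deRham_{R/A}\bigl[\localise{p}\bigr]\qpower\longrightarrow\fil_{\Hodge}^\star\deRham_{R/A}\bigl[\localise{p}\bigr]\longrightarrow 0
\end{equation*}
(the first map raising filtration degree by one, the second being reduction modulo $(q-1)$) is short exact in every filtration degree. I would prove this by $(q-1)$-completely tensoring the filtered module $\fil_{\Hodge}^\star\deRham_{R/A}\bigl[\localise{p}\bigr]$ with the evident short exact sequence $0\to(q-1)^\star\widehat{A}_p\bigl[\localise{p}\bigr]\qpower\xrightarrow{q-1}(q-1)^\star\widehat{A}_p\bigl[\localise{p}\bigr]\qpower\to\widehat{A}_p\bigl[\localise{p}\bigr]\to 0$, whose last term, the filtered cofibre of multiplication by $q-1$ on the $(q-1)$-adic filtration, is $\widehat{A}_p\bigl[\localise{p}\bigr]$ placed in filtration degrees $\leqslant 0$. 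Degreewise exactness then follows because the cofibre term $\fil_{\Hodge}^n\deRham_{R/A}\bigl[\localise{p}\bigr]$ is static by \cref{lem:RelativelySemiperfect}, so the associated cofibre sequence has vanishing $\pi_1$ in each degree, whence $(q-1)$ is injective on $\pi_0\fil_{(\Hodge,q-1)}^{n-1}$ and the sequence of $\pi_0$'s is short exact. Granting this, $(q-1)\bar\eta\in\fil_{(\Hodge,q-1)}^n$ has zero image under reduction modulo $(q-1)$, hence lies in the kernel $(q-1)\fil_{(\Hodge,q-1)}^{n-1}$; writing $(q-1)\bar\eta=(q-1)z$ with $z\in\fil_{(\Hodge,q-1)}^{n-1}$ and cancelling $(q-1)$ (a non-zero-divisor on $\deRham_{R/A}\bigl[\localise{p}\bigr]\qpower$), we get $\bar\eta=z\in\fil_{(\Hodge,q-1)}^{n-1}$. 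Therefore $\eta\in\fil_{\qHodge}^{n-1}\qdeRham_{R/A}$ and $\xi=(q-1)\eta\in(q-1)\fil_{\qHodge}^{n-1}\qdeRham_{R/A}$, completing the argument.

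As for difficulty: this lemma is deliberately the easy half of \cref{thm:qHodgeWellBehaved} — only surjectivity of $\fil_{\qHodge}^\star\qdeRham_{R/A}/(q-1)\to\fil_{\Hodge}^\star\deRham_{R/A}$ is delicate and genuinely uses the hypotheses \cref{enum:RegularSequenceHigherPowers}/\cref{enum:qHodgeE1Lift}; injectivity needs no such input. The only two points requiring care are (i) keeping the \emph{derived} quotient by $(q-1)$ distinct from the $1$-categorical pullback that defines $\fil_{\qHodge}^\star$, which the staticness facts from \cref{lem:RelativelySemiperfect} let one collapse, and (ii) the strictness statement for $\fil_{(\Hodge,q-1)}^\star$ above, which is a formal consequence of tensoring a short exact sequence of $(q-1)$-adic filtrations against the (degreewise static) Hodge filtration. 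I do not expect either to present a real obstacle.
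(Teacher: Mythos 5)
Your proof is correct and takes essentially the same route as the paper's: the paper reduces the statement to the identity $(q-1)\fil_{\qHodge}^{n-1}\qdeRham_{R/A}=\fil_{\qHodge}^n\qdeRham_{R/A}\cap (q-1)\qdeRham_{R/A}$, which (via the pullback defining $\fil_{\qHodge}^\star$) is transferred to the same strictness assertion for $\fil_{(\Hodge,q-1)}^\star$ on $\deRham_{R/A}[1/p]\qpower$ that you verify. You have simply spelled out the bookkeeping (the staticness reductions and the strictness of $\fil_{(\Hodge,q-1)}^\star$ under multiplication by $(q-1)$) that the paper compresses into ``immediately reduces'' and ``straightforward to check''.
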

	\begin{proof}
		We need to check 
		\begin{equation*}
			(q-1)\fil_{\qHodge}^{n-1}\qdeRham_{R/A}=\fil_{\qHodge}^n\qdeRham_{R/A}\cap (q-1)\qdeRham_{R/A}
		\end{equation*}
		for all~$n$. This immediately reduces to the analogous assertion for the combined Hodge and $(q-1)$-adic filtration on $\deRham_{R/A}[1/p]\qpower$, which is straightforward to check.
	\end{proof}
	
	The $q$-Hodge filtration from \cref{con:qHodge} enjoys a general flat base change property. This will allow us to reduce the proof of \cref{thm:qHodgeWellBehaved} to the case where $A$ is perfect.
	
	\begin{lem}\label{lem:qHodgeBaseChange}
		Let $R$ be a $p$-torsion free $p$-quasi-lci $A$-algebra such that $R/p$ is relatively semiperfect over~$A$. Let $A\rightarrow A'$ be a $p$-completely flat morphism of $\delta$-rings, where $A'$ is also $p$-completely perfectly covered, and put $R'\coloneqq (R\otimes_AA')_p^\complete$. Then the canonical map
		\begin{equation*}
			\bigl(\fil_{\qHodge}^\star\qdeRham_{R/A}\lotimes_{A}A'\bigr)_{(p,q-1)}^\complete \overset{\simeq}{\longrightarrow} \fil_{\qHodge}^\star\qdeRham_{R'/A'}
		\end{equation*}
		is an equivalence.
	\end{lem}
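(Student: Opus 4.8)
The plan is to prove the base change equivalence by reducing to the well-understood base change properties of $q$-de Rham cohomology and the derived de Rham complex, which are both consequences of the identification with prismatic cohomology. First I would observe that all four corners of the pullback square defining $\fil_{\qHodge}^\star\qdeRham_{R/A}$ in \cref{con:qHodge} behave well under the $p$-completely flat base change $A\rightarrow A'$: indeed $(\qdeRham_{R/A}\lotimes_AA')_{(p,q-1)}^\complete\simeq \qdeRham_{R'/A'}$ and $(\deRham_{R/A}\lotimes_AA')_p^\complete\simeq\deRham_{R'/A'}$ follow from the quasi-syntomic-descent-plus-animation characterisation of $q$-de Rham and de Rham cohomology together with base change for prismatic cohomology along $A\rightarrow A'$. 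Since $A\rightarrow A'$ is $p$-completely flat and $A'$ is $p$-torsion free, $-\lotimes_AA'$ is exact on $p$-complete $p$-torsion-free modules, so it will commute with the completed localisation $(-)[1/p]_{(q-1)}^\complete$ and with the formation of the combined Hodge and $(q-1)$-adic filtration (which is a completed tensor product of the Hodge filtration with $(q-1)^\star A\qpower$).

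The key point then is that base change commutes with the pullback in \cref{con:qHodge}. Here I would be slightly careful, because that pullback was taken in the $1$-category of filtered modules, i.e.\ it is a degreewise \emph{honest} (non-derived) fibre product of static objects. The crucial input is \cref{lem:RelativelySemiperfect}: both $\qdeRham_{R/A}$ and $\qdeRham_{R'/A'}$ are static and $p$-torsion free, as is every step of the combined Hodge and $(q-1)$-adic filtration, so the pullback square is also a derived pullback square, and $-\lotimes_AA'$ — being $t$-exact on the relevant $p$-torsion-free objects after $p$-completion — preserves it. Concretely, in each filtration degree $n$ the square
\begin{equation*}
\begin{tikzcd}[cramped]
\fil_{\qHodge}^n\qdeRham_{R/A}\rar\dar\drar[pullback] & \fil_{(\Hodge,q-1)}^n\deRham_{R/A}\bigl[\localise{p}\bigr]\qpower\dar\\
\qdeRham_{R/A}\rar & \deRham_{R/A}\bigl[\localise{p}\bigr]\qpower
\end{tikzcd}
\end{equation*}
stays a pullback after $(-\lotimes_AA')_{(p,q-1)}^\complete$, and comparing with the analogous square for $R'$ over $A'$ gives the claimed equivalence $\fil_{\qHodge}^n$ for all $n$; assembling over $n$ and remembering the $(q-1)^\star A\qpower$-module structures yields the filtered equivalence.

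The main obstacle I anticipate is not any single deep statement but rather bookkeeping: one must check that $-\lotimes_AA'$ genuinely commutes with the completed localisation at $p$ used in \cref{con:qHodge}, since localisation is a colimit but the $(q-1)$-completion afterwards is a limit. The clean way around this is to argue degreewise modulo $(p,q-1)^k$ for all $k$ — exactly as in the footnote to \cref{lem:NygaardRationalisation} — where everything becomes a finitely-presented statement about static modules over $A[q]/(p,q-1)^k$, so base change along the flat map $A/(p,q-1)^k\rightarrow A'/(p,q-1)^k$ is unproblematic. Once the equivalence is established degreewise, completeness of both filtered objects (each step is $(p,q-1)$-complete by construction) upgrades it to an equivalence of filtered modules, and functoriality is automatic since everything was constructed by universal properties. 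Finally, the $q$-Hodge-filtration structure maps of \cref{def:qHodgeFiltration} on both sides are induced from the corresponding structure on $\qdeRham_{R/A}$ and $\qdeRham_{R'/A'}$ via base change, so the equivalence is compatible with all of them, completing the proof.
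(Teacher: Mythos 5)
There is a genuine gap. Your key assertion is that because all four corners of the pullback square from \cref{con:qHodge} are static and $p$-torsion free, the $1$-categorical pullback is automatically a derived pullback; this is false. The derived pullback $P$ has $\H_0(P)$ equal to the $1$-categorical pullback, but also $\H_{-1}(P)\cong\coker\bigl(\qdeRham_{R/A}\oplus\fil_{(\Hodge,q-1)}^n\deRham_{R/A}[1/p]\qpower\rightarrow\deRham_{R/A}[1/p]\qpower\bigr)$, and this cokernel is nonzero in general: $\qdeRham_{R/A}$ does not surject onto $\deRham_{R/A}[1/p]\qpower/(q-1)^n$ because the target has $p$ inverted. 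Torsion-freeness of the corners says nothing about surjectivity of the comparison map, so nothing forces $\H_{-1}(P)=0$. Your proposed rescue -- reducing degreewise modulo $(p,q-1)^k$ -- does not help here either, because $\deRham_{R/A}[1/p]\qpower/(p,q-1)^k\simeq 0$ (inverting $p$ kills everything mod $p^k$), so the pullback square degenerates and the filtration is lost rather than controlled.

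The paper's actual argument addresses exactly this point, but with a sharper device. Using \cref{rem:qDeRhamRationalisationTechnical}, for a fixed filtration degree $n$ the comparison map already factors through $p^{-N}\deRham_{R/A}\qpower/(q-1)^n$ for some finite $N$, so the $1$-categorical pullback defining $\fil_{\qHodge}^n\qdeRham_{R/A}$ can be re-expressed with the bounded object $p^{-N}\deRham_{R/A}\qpower/(q-1)^n$ in the bottom right corner instead of the completed localisation. For \emph{this} square one does not claim the derived and underived pullbacks coincide; rather one shows $\H_{-1}(P)$ is simultaneously $p^{nN}$-torsion and $(q-1)^n$-torsion (as a quotient of the cokernel of $\qdeRham_{R/A}\to p^{-N}\deRham_{R/A}\qpower/(q-1)^n$). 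Then $p$-complete flatness of $A\to A'$ keeps $\H_{-1}(P)\lotimes_AA'$ static, and bounded torsion means the $(p,q-1)$-completion does nothing to it, which is enough to conclude. So the step your proof is missing is precisely the identification of what the derived/underived discrepancy is and a bound on its torsion, and the technical lemma (bounded denominators modulo $(q-1)^n$) that makes that identification possible.
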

	\begin{proof}
		This is not completely automatic since we have to be careful with completions. Fix~$n$. By \cref{rem:qDeRhamRationalisationTechnical}, the canonical map $\qdeRham_{R/A}\rightarrow (\deRham_{R/A}\otimes_\IZ\IQ)\qpower/(q-1)^n$ already factors through $p^{-N}\deRham_{R/A}\qpower/(q-1)^n$ for sufficiently large $N$. Since $\fil_{\qHodge}^n\qdeRham_{R/A}$ contains $(q-1)^n\qdeRham_{R/A}$, we can also express it as a pullback of $A\qpower$-modules
		\begin{equation*}
			\begin{tikzcd}
				\fil_{\qHodge}^n\qdeRham_{R/A}\rar\dar\drar[pullback] & \qdeRham_{R/A}\dar\\
				p^{-N}\fil_{(\Hodge,q-1)}^n\deRham_{R/A}\qpower/(q-1)^n\rar & p^{-N}\deRham_{R/A}\qpower/(q-1)^n
			\end{tikzcd}
		\end{equation*}
		(here the combined Hodge and $(q-1)$-adic filtration $\fil_{(\Hodge,q-1)}^\star\deRham_{R/A}\qpower/(q-1)^n$ is constructed as in \cref{par:CanonicalqHodgeSmoothI} above).
		
		It will be enough to show that the pullback is preserved $(-\lotimes_{A}A')_{(p,q-1)}^\complete$. To this end, let $P$ denote the derived pullback (that is, the pullback taken in the derived $\infty$-category $\Dd(A\qpower)$) and recall that derived tensor products preserve derived pullbacks. It is then enough to check that $(\H_{-1}(P)\lotimes_{A}A')_{(p,q-1)}^\complete$ is static. We claim that $\H_{-1}(P)$ is $(q-1)^n$-torsion and $p^m$-torsion for sufficiently large $m$. Believing this for the moment, $p$-complete flatness of $A\rightarrow A'$ guarantees that $\H_{-1}(P)\lotimes_AA'$ is static. Since it is also $p^m$- and $(q-1)^n$-torsion, the completion doesn't change anything and we're done.
		
		To prove the claim, observe that the cokernel of $\qdeRham_{R/A}\rightarrow p^{-N}\deRham_{R/A}$ must clearly be $p^{N}$-torsion. Hence the cokernel of the right vertical map
		\begin{equation*}
			\qdeRham_{R/A}\longrightarrow p^{-N}\deRham_{R/A}\qpower/(q-1)^n
		\end{equation*}
		is $p^{nN}$-torsion and also $(q-1)^n$-torsion. Since $\H_{-1}(P)$ is a quotient of that cokernel (explicitly the quotient by the bottom left corner of the pullback diagram), we conclude that $\H_{-1}(P)$ is $p^{nN}$-torsion and $(q-1)^n$-torsion too, as desired.
	\end{proof}
	\begin{proof}[Proof of \cref{thm:qHodgeWellBehaved}]
		By \cref{lem:qHodgeInjective}, we only need to check surjectivity. By \cref{lem:qHodgeBaseChange}, we can check this after the $p$-completely faithfully flat base change $A\rightarrow A_\infty$ and thus assume that $A$ is perfect. Since in this case $\qdeRham_{R/A}\simeq \qdeRham_{R/\IZ_p}$, we may further reduce to the case $A=\IZ_p$. Then part~\cref{enum:qHodgeE1Lift} is a special case of \cite[Theorem~\chref{4.17}]{qdeRhamku}.
		
		To show surjectivity in the case of~\cref{enum:RegularSequenceHigherPowers}, it'll be enough to show that for each of the generators of $J=(x_1^{\alpha_1},\dotsc,x_r^{\alpha_r})$ and all $n\geqslant 0$, the $n$-fold iterated divided power $\gamma^{(n)}(x_i^{\alpha_i})$ admits a lift which lies in the $(p^n)$\textsuperscript{th} step of the $q$-Hodge filtration. This reduces the problem to the universal case $A=\IZ_p\{x\}$ and $R=\IZ_p\{x\}/x^\alpha$ for $\alpha\geqslant 2$. In this case the desired lifts have been constructed in \cite[Lemma~\chref{3.17}]{qHodge}.
	\end{proof}
	
	To finish this subsection, we'll extract a global construction from the above. From now on, we cancel the assumptions from \cref{par:RingsOfInterest} and return to our usual notation, where $A$ is a perfectly covered $\Lambda$-ring.
	
	\begin{con}\label{con:GlobalqHodge}
		Let $R$ be an $A$-algebra such that for all primes $p$, $R$ is $p$-torsion free, the $p$-completion $\widehat{R}_p$ is $p$-quasi-lci over $\widehat{A}_p$, and $R/p$ is relatively semiperfect over $\widehat{A}_p$. We construct $\fil_{\qHodge}^\star\qdeRham_{R/A}$ as the pullback
		\begin{equation*}
			\begin{tikzcd}
				\fil_{\qHodge}^\star\qdeRham_{R/A}\rar\dar\drar[pullback] & \prod_p\fil_{\qHodge}^\star\qdeRham_{\smash{\widehat{R}}_p/\smash{\widehat{A}}_p}\dar\\
				\fil_{(\Hodge,q-1)}^\star\bigl(\deRham_{R/A}\lotimes_\IZ\IQ\bigr)\qpower\rar & \fil_{(\Hodge,q-1)}^\star\biggl(\prod_p\deRham_{\smash{\widehat{R}}_p/\smash{\widehat{A}}_p}\lotimes_\IZ\IQ\biggr)\qpower
			\end{tikzcd}
		\end{equation*}
		taken in the $\infty$-category filtered $\IE_\infty$-algebras over $(q-1)^\star A\qpower$. To see that the right vertical map in the pullback exists, observe that we're dealing with two filtrations by submodules, so there's only a set-level condition to check, which follows directly from the definition of $\fil_{\qHodge}^\star\qdeRham_{\smash{\widehat{S}}_p/\smash{\widehat{A}}_p}$.
	\end{con}
	\begin{thm}\label{thm:CanonicalqHodgeQuasiregular}
		Let $A$ be a perfectly covered $\Lambda$-ring and let $\cat{QReg}_A^{\qHodge}$ be the category of all $A$-algebras $R$ such that for all primes~$p$, $R$ is $p$-torsion free, the $p$-completion $\widehat{R}_p$ is $p$-quasi-lci over $\widehat{A}_p$, $R/p$ is relatively semiperfect over $\widehat{A}_p$, and the canonical morphism from \cref{con:qHodge} induces an equivalence
		\begin{equation*}
			\fil_{\qHodge}^\star\qdeRham_{\smash{\widehat{R}}_p/\smash{\widehat{A}}_p}/(q-1)\overset{\simeq}{\longrightarrow}\fil_{\Hodge}^\star\deRham_{\smash{\widehat{R}}_p/\smash{\widehat{A}}_p}\,.
		\end{equation*}
		Then \cref{con:GlobalqHodge} determines a functor
		\begin{equation*}
			\bigl(-,\fil_{\qHodge}^\star\qdeRham_{-/A}\bigr)\colon \cat{QReg}_{A}^{\qHodge}\longrightarrow \CAlg\bigl(\cat{AniAlg}_A^{\qHodge}\bigr)\,,
		\end{equation*}
		which is a partial section of the forgetful functor $\CAlg(\cat{AniAlg}_A^{\qHodge})\rightarrow \cat{AniAlg}_A$.
	\end{thm}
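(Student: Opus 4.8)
The plan is to begin from the observation that \cref{con:GlobalqHodge} produces its output by a single pullback in the $\infty$-category of filtered $\IE_\infty$-algebras over $(q-1)^\star A\qpower$, all three legs of which --- $\prod_p\fil_{\qHodge}^\star\qdeRham_{\smash{\widehat{R}}_p/\smash{\widehat{A}}_p}$, the combined Hodge and $(q-1)$-adic filtration on the rationalisation, and their gluing term --- are manifestly functorial in $R$. So functoriality of $R\mapsto(R,\fil_{\qHodge}^\star\qdeRham_{R/A})$ is automatic, and since these legs are filtered $\IE_\infty$-algebras and $R$ is an honest commutative ring, the pair is canonically a commutative algebra object in the symmetric monoidal $\cat{AniAlg}_A^{\qHodge}$ of \cref{lem:qHodgeSymmetricMonoidal}, i.e.\ an object of $\CAlg(\cat{AniAlg}_A^{\qHodge})$. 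Granting that the output is a genuine $q$-Hodge filtration, the partial-section statement is then immediate, since the underlying $A$-algebra of the pair is $R$ by construction. Hence the real task will be to equip $\fil_{\qHodge}^\star\qdeRham_{R/A}$ with the data \cref{def:qHodgeFiltration}\cref{enum:qHodgeFiltrationOnqdR}--\cref{enum:qHodgeRationalpComplete} together with the compatibilities.

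The key structural point I would use is that the pullback square of \cref{con:GlobalqHodge} is a filtered refinement of the $(q-1)$-completed arithmetic fracture square for $\qdeRham_{R/A}$. Using the base change $(\qdeRham_{R/A})_p^\complete\simeq\qdeRham_{\smash{\widehat{R}}_p/\smash{\widehat{A}}_p}$ (\cref{con:GlobalqDeRham}), the global rational comparison $(\qdeRham_{R/A}\lotimes_\IZ\IQ)_{(q-1)}^\complete\simeq(\deRham_{R/A}\lotimes_\IZ\IQ)\qpower$ of \cref{thm:qDeRhamGlobal}\cref{enum:GlobalqDeRhamRational}, and the $p$-complete rational comparison $(\qdeRham_{\smash{\widehat{R}}_p/\smash{\widehat{A}}_p})[1/p]_{(q-1)}^\complete\simeq(\deRham_{\smash{\widehat{R}}_p/\smash{\widehat{A}}_p})[1/p]\qpower$ of \cref{lem:RationalisedqCrystalline} (which applies because $\widehat{R}_p$ is $p$-torsion free and $p$-quasi-lci over $\widehat{A}_p$), I would check that in filtration degrees $\leqslant 0$ the defining pullback degenerates exactly to that fracture square, giving $\fil_{\qHodge}^0\qdeRham_{R/A}\simeq\qdeRham_{R/A}$, which is \cref{def:qHodgeFiltration}\cref{enum:qHodgeFiltrationOnqdR}. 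Condition \cref{enum:qHodgeRationalpComplete} would then be essentially a restatement of the construction: one leg of the pullback is literally $\prod_p\fil_{\qHodge}^\star\qdeRham_{\smash{\widehat{R}}_p/\smash{\widehat{A}}_p}$, whose $p$-completed rationalisation is the combined Hodge and $(q-1)$-adic filtration by the very definition in \cref{con:qHodge}. For condition \cref{enum:qHodgeRational} I would apply $(-\lotimes_\IZ\IQ)_{(q-1)}^\complete$ to the pullback; by \cref{lem:RelativelySemiperfect}\cref{enum:deRhamStatic} each $\deRham_{\smash{\widehat{R}}_p/\smash{\widehat{A}}_p}$ is static and $p$-torsion free, so the upper-right and lower-right corners become identified (the map between them being an equivalence by \cref{con:qHodge}), and the pullback collapses to the lower-left corner $\fil_{(\Hodge,q-1)}^\star(\deRham_{R/A}\lotimes_\IZ\IQ)\qpower$, as wanted.

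The one condition that will genuinely invoke the hypothesis $R\in\cat{QReg}_A^{\qHodge}$ is the $q$-deformation condition \cref{def:qHodgeFiltration}\cref{enum:qHodgeModq-1}. I would reduce the defining pullback modulo $(q-1)$ in the sense of Convention~\cref{conv:QuotientConvention}; this is legitimate because $(-)/(q-1)=\cofib((q-1)\colon(-)(1)\to(-))$ is exact, hence commutes both with the pullback and with the product $\prod_p$. The upper-right corner becomes $\prod_p\fil_{\qHodge}^\star\qdeRham_{\smash{\widehat{R}}_p/\smash{\widehat{A}}_p}/(q-1)\simeq\prod_p\fil_{\Hodge}^\star\deRham_{\smash{\widehat{R}}_p/\smash{\widehat{A}}_p}$ --- this is precisely the equivalence built into the definition of $\cat{QReg}_A^{\qHodge}$ --- and the lower-left and lower-right corners become $\fil_{\Hodge}^\star(\deRham_{R/A}\lotimes_\IZ\IQ)$ and $\prod_p\fil_{\Hodge}^\star(\deRham_{\smash{\widehat{R}}_p/\smash{\widehat{A}}_p})[1/p]$, the combined filtrations degenerating to the plain Hodge filtrations. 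Thus the reduced pullback is $\fil_{\Hodge}^\star$ applied degreewise to the arithmetic fracture square relating $\deRham_{R/A}$, $\prod_p(\deRham_{R/A})_p^\complete\simeq\prod_p\deRham_{\smash{\widehat{R}}_p/\smash{\widehat{A}}_p}$, and $\deRham_{R/A}\lotimes_\IZ\IQ$. Using that $\deRham_{R/A}$ is static and $p$-torsion free and that the Hodge filtration is degreewise static --- \cref{lem:RelativelySemiperfect}, applied via the hypothesis that $R/p$ is relatively semiperfect over every $\widehat{A}_p$, together with the evident globalisation --- this arithmetic fracture square holds in every Hodge degree, so the reduced pullback is $\fil_{\Hodge}^\star\deRham_{R/A}$, establishing \cref{enum:qHodgeModq-1}.

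What remains is to supply the compatibility data (and compatibilities between compatibilities) of \cref{def:qHodgeFiltration}. I would obtain these by transporting, along the corner-by-corner identifications above, the standard coherences: those internal to the arithmetic fracture squares for $\qdeRham_{R/A}$ and $\deRham_{R/A}$, those already recorded in the $p$-adic \cref{con:qHodge}, and their Hodge-filtered analogues. Since every corner of the pullback is a static ring carrying its filtrations by ideals, these coherences are in fact forced, so this step is bookkeeping rather than mathematics --- and I expect exactly this bookkeeping to be the main annoyance of the write-up. The only genuinely mathematical input is the fracture-square computation for \cref{enum:qHodgeModq-1}, which hinges on the staticness statements of \cref{lem:RelativelySemiperfect} and, crucially, on the fact that the delicate $q$-deformation property has been absorbed into the very definition of $\cat{QReg}_A^{\qHodge}$; by \cref{thm:qHodgeWellBehaved} this is already a large class of rings.
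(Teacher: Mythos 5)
Your proposal is correct and follows essentially the same route as the paper's own proof sketch: both reduce the defining pullback of Construction~\ref{con:GlobalqHodge} modulo $(q-1)$, after $(-\lotimes_\IZ\IQ)_{(q-1)}^\complete$, and after $p$-completed rationalisation to read off the data of Definition~\ref{def:qHodgeFiltration}, with the hypothesis on $R\in\cat{QReg}_A^{\qHodge}$ doing exactly the work of making the $(q-1)$-reduction collapse to the arithmetic fracture square for $\fil_{\Hodge}^\star\deRham_{R/A}$. (A small slip: after reducing the lower-right corner modulo $(q-1)$, one gets $\fil_{\Hodge}^\star\bigl(\prod_p\deRham_{\smash{\widehat{R}}_p/\smash{\widehat{A}}_p}\lotimes_\IZ\IQ\bigr)$ rather than $\prod_p\fil_{\Hodge}^\star(\deRham_{\smash{\widehat{R}}_p/\smash{\widehat{A}}_p})[1/p]$, but this doesn't affect the argument since the former is what appears in the genuine arithmetic fracture square.)
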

	\begin{proof}[Proof sketch]
		Let us construct the required data from \cref{def:qHodgeFiltration}. In degree~$0$, the pullback square from \cref{con:GlobalqHodge} becomes the one from \cref{con:GlobalqDeRham}, which provides the datum from \cref{def:qHodgeFiltration}\cref{enum:qHodgeFiltrationOnqdR}. If we reduce the pullback from \cref{con:GlobalqHodge} modulo~$(q-1)$, we'll get the arithmetic fracture square for $\fil_{\Hodge}^\star\deRham_{R/A}$ by our assumptions on~$R$. This provides the data from \cref{def:qHodgeFiltration}\cref{enum:qHodgeModq-1}. Similarly, if we apply $(-\lotimes_\IZ\IQ)_{(q-1)}^\complete$ or $(-)_p^\complete[1/p]_{(q-1)}^\complete$ to the pullback, we get the data from \cref{def:qHodgeFiltration}\cref{enum:qHodgeRational} and~\cref{enum:qHodgeRationalpComplete}.
		
		So $(R,\fil_{\qHodge}^\star\qdeRham_{R/A})$ can be made into an object of $\cat{AniAlg}_A^{\qHodge}$. Since all constructions above can also be done on the level of filtered $\IE_\infty$-algebras, it immediately upgrades to an object of $\CAlg(\cat{AniAlg}_A^{\qHodge})$. Finally, all steps of the construction can easily be made functorial in~$R$. To this end, one writes $\CAlg(\cat{AniAlg}_A^{\qHodge})$ as an iterated pullback of $\CAlg(-)$ of various symmetric monoidal $\infty$-categories of filtered objects. We know how to make $\fil_{(\Hodge,q-1)}^\star(\deRham_{R/A}\lotimes_\IZ\IQ)\qpower$ functorial; in the other factors of the iterated pullback, the objects in question will be $1$-categorical in nature, so all functorialities and compatibilities can easily be constructed by hand.
	\end{proof}
	\begin{rem}
		Thanks to \cref{thm:qHodgeWellBehaved}, it's easy to write down objects of $\cat{QReg}_A^{\qHodge}$. For example, it contains the category $\cat{QReg}_A^{\lightning\!}$ of $A$-algebras $R$ which are $p$-torsion free for all primes~$p$ and can be written in the form $R\cong B/J$, where $B$ is a relatively perfect $\Lambda$-$A$-algebra (by which we mean that the relative Adams operations $\psi_{B/A}^m\colon B\otimes_{A,\psi^m }A\rightarrow B$ are isomorphisms) and $J\subseteq B$ is an ideal generated by a Koszul-regular sequence of higher powers, that is, a Koszul-regular sequence $(x_1^{\alpha_1},\dotsc,x_r^{\alpha_r})$ with $\alpha_i\geqslant 2$ for all~$i$.
	\end{rem}
	\begin{rem}\label{rem:qHodgeQuasiregularDAlg}
		We can not only equip $\fil_{\qHodge}^\star\qdeRham_{R/A}$ with a filtered $\IE_\infty$-algebra structure, but even with the structure of a filtered derived commutative $(q-1)^\star A\qpower$-algebra as in \cref{par:qHhodgeDAlg}, and the various compatibilities all respect this structure.
	\end{rem}
	\begin{numpar}[Monoidality.]
		Similar to \cref{par:CanonicalqHodgeSmoothMonoidal}, the functor from \cref{thm:CanonicalqHodgeQuasiregular} can be equipped with an $\infty$-operad structure. To this end, let
		\begin{equation*}
			\cat{QReg}_A^{\qHodge,\otimes}\subseteq \cat{AniAlg}_A^{\otimes}
		\end{equation*}
		be the non-full sub-$\infty$-operad spanned by those objects whose entries are all contained in $\cat{QReg}_A^{\qHodge}$ and those morphisms that factor through a cocartesian lift of its image in $\cat{Fin}_*$ (compare the construction of $\cat{Sm}_{A[\dim!^{-1}]}^{\otimes}$ in \cref{par:CanonicalqHodgeSmoothMonoidal}).
		
		Note that $\cat{QReg}_A^{\qHodge,\otimes}\rightarrow \cat{Fin}_*$ is not a cocartesian fibration, because $\cat{QReg}_A^{\qHodge}$ is not closed under tensor products in $\cat{AniAlg}_A$. The problem is that $R_1\lotimes_A R_2$ might not be static or not $p$-torsion free for some prime~$p$. As we'll see momentarily, this is the only obstruction.
	\end{numpar}
	\begin{lem}\label{lem:qHodgeQuasiregularSymmetricMonoidal}
		Let $R_1,R_2\in\cat{QReg}_A^{\qHodge}$ and put $R\coloneqq R_1\lotimes_AR_2$.
		\begin{alphanumerate}
			\item If $R$ is static and $p$-torsion free for all primes~$p$, then also $R\in\cat{QReg}_A^{\qHodge}$.\label{enum:RStatic}
			\item In the situation from \cref{enum:RStatic} the canonical map\label{enum:qHodgeQuasiregularSymmetricMonoidal}
			\begin{equation*}
				\Bigl(\fil_{\qHodge}^\star\qdeRham_{R_1/A}\lotimes_{(q-1)^\star A\qpower}\fil_{\qHodge}^\star\qdeRham_{R_2/A}\Bigr)_{(q-1)}^\complete\overset{\simeq}{\longrightarrow}\fil_{\qHodge}^\star\qdeRham_{R/A}
			\end{equation*}
			is an equivalence of filtered $\IE_\infty$-algebras over $(q-1)^\star A\qpower$.
		\end{alphanumerate}
	\end{lem}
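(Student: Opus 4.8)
**

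The plan is to deduce both parts from the rational description of the $q$-Hodge filtration in \cref{con:qHodge} together with the base-change result \cref{lem:qHodgeBaseChange}, applied one prime at a time.

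For \cref{enum:RStatic}, I would first check that $R$ is $p$-quasi-lci over $\widehat{A}_p$ and that $R/p$ is relatively semiperfect over $\widehat{A}_p$ for every prime $p$. The cotangent complex satisfies $\L_{R/A}\simeq (\L_{R_1/A}\lotimes_{R_1}R)\oplus (\L_{R_2/A}\lotimes_{R_2}R)$, and since each $\L_{R_i/A}$ has $p$-complete $\Tor$-amplitude in $[0,1]$ over $R_i$ and $R$ is assumed $p$-torsion free, $\L_{R/A}$ again has $p$-complete $\Tor$-amplitude in $[0,1]$. Relative semiperfectness of $R/p$ over $\widehat{A}_p$ follows because the relative Frobenius of $R/p$ is (up to the relative Frobenius of $\widehat{A}_p$) the tensor product of the relative Frobenii of $R_1/p$ and $R_2/p$, hence surjective. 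It remains to see that $\fil_{\qHodge}^\star\qdeRham_{\widehat{R}_p/\widehat{A}_p}/(q-1)\xrightarrow{\simeq}\fil_{\Hodge}^\star\deRham_{\widehat{R}_p/\widehat{A}_p}$, i.e.\ that $R\in\cat{QReg}_A^{\qHodge}$; this will drop out of part~\cref{enum:qHodgeQuasiregularSymmetricMonoidal} by reducing the stated equivalence modulo $(q-1)$ and comparing with the analogous (and well-known) symmetric monoidality of the Hodge-filtered de Rham complex $\fil_{\Hodge}^\star\deRham_{-/A}$.

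For \cref{enum:qHodgeQuasiregularSymmetricMonoidal}, the key point is that by \cref{lem:RelativelySemiperfect}\cref{enum:deRhamStatic} all the objects involved are static and $p$-torsion free in each filtration degree, so the statement becomes an equality of two descending filtrations by ideals inside the static ring $\qdeRham_{\widehat{R}_p/\widehat{A}_p}$ (after $p$-completion) respectively inside $(\deRham_{R/A}\lotimes_\IZ\IQ)\qpower$ (after completed rationalisation). Via the arithmetic fracture square it suffices to treat each of these localisations separately. After completed rationalisation, $\fil_{\qHodge}^\star$ is by definition the combined Hodge and $(q-1)$-adic filtration, and the claim reduces to symmetric monoidality of $\fil_{\Hodge}^\star\deRham_{-/A}$ together with the fact that the $(q-1)$-adic filtration is symmetric monoidal; here one uses that $\qdeRham_{R/A}[1/p]_{(q-1)}^\complete\simeq \deRham_{R/A}[1/p]\qpower$ is compatible with tensor products (\cref{lem:RationalisedqCrystalline}). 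After $p$-completion, the map in question is obtained from the pullback squares defining $\fil_{\qHodge}^\star\qdeRham_{\widehat{R}_i/\widehat{A}_i}$, so the comparison reduces to: (i) symmetric monoidality of $\qdeRham_{-/A}$ itself (which holds since it is an animation of a symmetric monoidal functor on polynomial algebras), and (ii) the already-established rational case governing the other leg of the pullback. The remaining subtlety is that the tensor product of two pullback squares need not be a pullback square in general; one has to verify that the completed derived tensor product still preserves these particular pullbacks. This is exactly the kind of argument carried out in \cref{lem:qHodgeBaseChange}: one identifies the relevant $\H_{-1}$ of the derived pullback as a $p^m$- and $(q-1)^n$-torsion module (using that the cokernels of $\qdeRham_{\widehat{R}_i/\widehat{A}_i}\to p^{-N}\deRham_{\widehat{R}_i/\widehat{A}_i}\qpower/(q-1)^n$ are bounded torsion), so that completion does not introduce derived phenomena and flatness of the tensor factors suffices.

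The main obstacle I anticipate is precisely this last point: controlling the interaction of the two pullback squares with the completed tensor product. Rather than proving a general ``tensor of fracture squares is a fracture square'' statement, the cleanest route is to mimic \cref{lem:qHodgeBaseChange} verbatim — rewrite each $\fil_{\qHodge}^n\qdeRham_{\widehat{R}_i/\widehat{A}_i}$ as a (non-derived) pullback along a map into $p^{-N}\deRham_{\widehat{R}_i/\widehat{A}_i}\qpower/(q-1)^n$ with bounded-torsion cokernel, take derived tensor products, and observe that the resulting $\H_{-1}$-terms remain bounded torsion, hence stay static after $p$-completion. Once this bookkeeping is in place, the equivalence follows degree by degree, and the $\IE_\infty$-statement is automatic because everything in sight is a descending filtration of ideals in a static ring. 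Part~\cref{enum:RStatic} then follows by reading off the mod-$(q-1)$ reduction of~\cref{enum:qHodgeQuasiregularSymmetricMonoidal}.
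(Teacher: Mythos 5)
The paper's proof goes in the opposite order from yours, and this is not a cosmetic choice: it is what makes the argument work. The paper first proves \cref{enum:RStatic} \emph{directly}, using two inputs: (i) by \cref{lem:qHodgeInjective}, the map $\fil_{\qHodge}^\star\qdeRham_{\smash{\widehat{R}}_p/\smash{\widehat{A}}_p}/(q-1)\to\fil_{\Hodge}^\star\deRham_{\smash{\widehat{R}}_p/\smash{\widehat{A}}_p}$ is automatically injective for any $R$ satisfying the basic hypotheses (this holds for $R_1\lotimes_A R_2$ once you have established $p$-torsion-freeness, quasi-lci, and relative semiperfectness, which you do correctly); and (ii) since the Hodge filtration satisfies $\fil_{\Hodge}^\star\deRham_{\smash{\widehat{R}}_p/\smash{\widehat{A}}_p}\simeq(\fil_{\Hodge}^\star\deRham_{\smash{\widehat{R}}_{1,p}/\smash{\widehat{A}}_p}\lotimes_A\fil_{\Hodge}^\star\deRham_{\smash{\widehat{R}}_{2,p}/\smash{\widehat{A}}_p})_p^\complete$, surjectivity is inherited from $R_1$ and $R_2$. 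Once \cref{enum:RStatic} is known, \cref{enum:qHodgeQuasiregularSymmetricMonoidal} is immediate: both sides are $(q-1)$-complete filtered objects, so the equivalence can be checked modulo $(q-1)$, where it collapses to the symmetric monoidality of $\fil_{\Hodge}^\star\deRham_{-/A}$.

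Your proposal has a genuine gap. You want to deduce \cref{enum:RStatic} from \cref{enum:qHodgeQuasiregularSymmetricMonoidal}, but this requires proving a stronger version of \cref{enum:qHodgeQuasiregularSymmetricMonoidal} that does not presuppose $R\in\cat{QReg}_A^{\qHodge}$, and your argument for it does not go through as stated. After $p$-completion, the constructed filtration $\fil_{\qHodge}^\star(\qdeRham_{R/A})_p^\complete$ is by \cref{con:qHodge} a set-theoretic preimage; the tensor-product filtration is contained in it by multiplicativity, and the entire content is the reverse inclusion. That reverse inclusion is not addressed by your ``(i) symmetric monoidality of $\qdeRham_{-/A}$'' plus ``(ii) the rational case'': these control the ambient objects, not the filtrations sitting inside them. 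And mimicking \cref{lem:qHodgeBaseChange} does not help, because that lemma is engineered around $p$-complete flatness of $A\to A'$, which is what ensures $\H_{-1}$ of the derived pullback stays static after base change; no analogous flatness is available for the tensor product $\qdeRham_{R_1/A}\lotimes_A\qdeRham_{R_2/A}$. The missing ingredient is precisely the surjectivity argument via the tensor decomposition of the Hodge filtration, which is what \cref{lem:qHodgeInjective} makes usable.
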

	\begin{proof}
		Let $p$ be any prime. Using $\L_{R/A}\simeq (\L_{R_1/A}\lotimes_A R_2)\oplus (R_1\lotimes_A\L_{R_2/A})$, it's clear that $\widehat{R}_p$ is again $p$-quasi-lci over $\widehat{A}_p$. Similarly, $R/p$ will still be relatively semiperfect over $\widehat{A}_p$. To show $R\in\cat{QReg}_A^{\qHodge}$, it remains to verify that
		\begin{equation*}
			\fil_{\qHodge}^\star\qdeRham_{\smash{\widehat{R}}_p/\smash{\widehat{A}}_p}/(q-1)\overset{\simeq}{\longrightarrow}\fil_{\Hodge}^\star\deRham_{\smash{\widehat{R}}_p/\smash{\widehat{A}}_p}\,.
		\end{equation*}
		is an equivalence. By \cref{lem:qHodgeInjective}, only surjectivity needs to be checked. But since we have $\fil_{\Hodge}^\star \deRham_{\smash{\widehat{R}}_p/\smash{\widehat{A}}_p}\simeq (\fil_{\Hodge}^\star \deRham_{\smash{\widehat{R}}_{1,p}/\smash{\widehat{A}}_p}\lotimes_A \fil_{\Hodge}^\star \deRham_{\smash{\widehat{R}}_{2,p}/\smash{\widehat{A}}_p})_p^\complete$, surjectivity for $R$ follows from the analogous assertions for $R_1$ and $R_2$. This shows~\cref{enum:RStatic}.
		
		To show~\cref{enum:qHodgeQuasiregularSymmetricMonoidal}, we can reduce both sides modulo~$(q-1)$ and then once again reduce to the well-known fact that $\fil_{\Hodge}^\star\deRham_{-/A}$ is symmetric monoidal.
	\end{proof}
	\begin{cor}\label{cor:CanonicalqHodgeQuasiRegularMonoidal}
		The functor from \cref{thm:CanonicalqHodgeQuasiregular} underlies a functor of $\infty$-operads
		\begin{equation*}
			\cat{QReg}_{A}^{\qHodge,\otimes}\longrightarrow \CAlg\bigl(\cat{AniAlg}_A^{\qHodge}\bigr)^{\otimes}\,,
		\end{equation*}		
		which preserves all cocartesian lifts that exist in the source. In particular, when we restrict to the full subcategory $\cat{QReg}_A^{\qHodge,\flat}\subseteq \cat{QReg}_A^{\qHodge}$ spanned by those $R$ that are flat over $A$, the functor from \cref{thm:CanonicalqHodgeQuasiregular} is symmetric monoidal.
	\end{cor}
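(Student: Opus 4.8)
The plan is to follow the same template as the proof of \cref{cor:CanonicalqHodgeSmoothMonoidal}, but the argument is considerably more elementary here: every object of $\cat{QReg}_A^{\qHodge}$ is an honest commutative ring, and the $q$-Hodge filtration produced by \cref{con:GlobalqHodge} is a descending filtration by ideals in a static ring, so all the data and coherences below are $1$-categorical in nature and can be assembled by hand. The entire mathematical content is already isolated in \cref{lem:qHodgeQuasiregularSymmetricMonoidal}: whenever a derived tensor product $R_1\lotimes_A\dotsb\lotimes_AR_i$ of objects in $\cat{QReg}_A^{\qHodge}$ happens to land again in $\cat{QReg}_A^{\qHodge}$ (equivalently, is static and $p$-torsion free for all~$p$), the canonical comparison
\begin{equation*}
	\Bigl(\fil_{\qHodge}^\star\qdeRham_{R_1/A}\lotimes_{(q-1)^\star A\qpower}\dotsb\lotimes_{(q-1)^\star A\qpower}\fil_{\qHodge}^\star\qdeRham_{R_i/A}\Bigr)_{(q-1)}^\complete\overset{\simeq}{\longrightarrow}\fil_{\qHodge}^\star\qdeRham_{R_1\lotimes_A\dotsb\lotimes_AR_i/A}
\end{equation*}
is an equivalence of filtered $\IE_\infty$-algebras over $(q-1)^\star A\qpower$. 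In words: the functor $(-,\fil_{\qHodge}^\star\qdeRham_{-/A})$ from \cref{thm:CanonicalqHodgeQuasiregular} is multiplicative wherever it is defined, and this is exactly the condition that lets it extend from a functor of underlying $\infty$-categories to a functor of $\infty$-operads preserving the cocartesian lifts that exist in $\cat{QReg}_A^{\qHodge,\otimes}$.

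Concretely, first I would recall the non-full sub-$\infty$-operad $\cat{QReg}_A^{\qHodge,\otimes}\subseteq\cat{AniAlg}_A^{\otimes}$ introduced just above: its objects are the tuples with all entries in $\cat{QReg}_A^{\qHodge}$, and its morphisms are those factoring through a cocartesian lift of their image in $\cat{Fin}_*$ whose active legs $\bigotimes_{j\in\alpha^{-1}(k)}R_j\rightarrow R_k'$ have the property that each intermediate tensor product $\bigotimes_{j\in\alpha^{-1}(k)}R_j$ again lies in $\cat{QReg}_A^{\qHodge}$. Then, exactly as in \cref{cor:CanonicalqHodgeSmoothMonoidal}, I would pass to the dual cartesian fibrations, described as span $\infty$-categories via \cite[\chref{1.2}]{DualFibration}, and build the functor $(\cat{QReg}_A^{\qHodge,\otimes})^\vee\rightarrow(\CAlg(\cat{AniAlg}_A^{\qHodge})^{\otimes})^\vee$ that sends a tuple to the corresponding tuple of pairs $(R_j,\fil_{\qHodge}^\star)$ and an active leg to the composite of the multiplication isomorphism above with the map induced by functoriality of \cref{thm:CanonicalqHodgeQuasiregular} applied to $\bigotimes_{j\in\alpha^{-1}(k)}R_j\rightarrow R_k'$. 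By the displayed equivalence this functor preserves cartesian lifts of inert morphisms, so dualising back produces the desired map of $\infty$-operads, and by construction it preserves every cocartesian lift present in the source.

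The main thing to watch is the coherence of this assignment under composition of active morphisms (and its higher analogues), since $\cat{QReg}_A^{\qHodge,\otimes}$ is not a cocartesian fibration and one cannot simply invoke ``extend a symmetric monoidal functor''. Here I would use that all objects in sight are $(q-1)$-complete: every equivalence or commuting square can be checked after reducing modulo $(q-1)$, where \cref{con:GlobalqHodge} turns into the arithmetic fracture square for $\fil_{\Hodge}^\star\deRham_{-/A}$ and the whole bookkeeping collapses to the (well-known) symmetric monoidality of the Hodge-filtered derived de Rham complex; this makes all remaining compatibilities routine. Finally, for the last sentence, I would note that if $R_1,R_2$ are flat over $A$ then $R_1\lotimes_AR_2$ is static, and since $A$ is $p$-torsion free for all~$p$ (being perfectly covered; see \cref{par:Notations}) a flat $A$-module is $p$-torsion free, so $R_1\lotimes_AR_2\in\cat{QReg}_A^{\qHodge}$ by \cref{lem:qHodgeQuasiregularSymmetricMonoidal}\cref{enum:RStatic}. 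Hence on $\cat{QReg}_A^{\qHodge,\flat}$ all cocartesian lifts over $\cat{Fin}_*$ exist, $\cat{QReg}_A^{\qHodge,\flat,\otimes}$ is a genuine symmetric monoidal $\infty$-category, and the functor of \cref{thm:CanonicalqHodgeQuasiregular} restricts to a symmetric monoidal functor there.
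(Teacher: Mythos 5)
Your proof goes by a genuinely different route than the paper's, and there is a gap in the coherence argument that the paper's route is specifically chosen to avoid.

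You model the argument on \cref{cor:CanonicalqHodgeSmoothMonoidal} and pass to dual cartesian fibrations via \cite[\chref{1.2}]{DualFibration}. But that dualising trick was needed in the smooth case because the construction $\tau_{n,!}(-)_{(q-1)}^\complete$ is only \emph{oplax} symmetric monoidal, and oplax structures live naturally on the dual side. In \cref{con:GlobalqHodge} no $\tau_{n,!}$ appears: the construction is a pullback of filtered (module) categories, which is manifestly lax, so the dualisation is an unnecessary detour. More importantly, once you are on the dual side you still have to produce a genuine functor of $\infty$-categories with all its higher coherences, and this is where your argument thins out: ``everything is $(q-1)$-complete, so it can be checked modulo $(q-1)$'' is a \emph{verification} device (two given maps between complete objects agree if they agree modulo $(q-1)$), not a \emph{construction} device --- it does not supply the higher homotopies that a functor of $\infty$-operads requires. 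Likewise, the opening claim that ``all the data and coherences are $1$-categorical'' is not quite right: the $p$-adic pieces of $\fil_{\qHodge}^\star\qdeRham_{R/A}$ are filtrations by honest ideals in a static ring, but the rational leg $\fil_{(\Hodge,q-1)}^\star(\deRham_{R/A}\lotimes_\IZ\IQ)\qpower$ is a bona fide derived object.

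The paper's proof sidesteps both issues at once by writing $\CAlg(\cat{AniAlg}_A^{\qHodge})^\otimes$ as an iterated pullback of $\CAlg(-)^\otimes$ of the various filtered module $\infty$-categories that make up \cref{def:qHodgeFiltration} (exactly as in the proof of \cref{thm:CanonicalqHodgeQuasiregular}). To map into a pullback you only need compatible maps into each factor: into the rational factor you use the known symmetric monoidal structure on $\fil_{\Hodge}^\star\deRham_{-/A}$, and into every other factor the objects really are $1$-categorical and the functor of $\infty$-operads is built by hand, with no higher coherence data to supply. Preservation of existing cocartesian lifts is then exactly \cref{lem:qHodgeQuasiregularSymmetricMonoidal}\cref{enum:qHodgeQuasiregularSymmetricMonoidal} --- you identify this correctly --- and your treatment of the flat case (flat over the $p$-torsion-free ring $A$ forces static and $p$-torsion-free tensor products, so by \cref{lem:qHodgeQuasiregularSymmetricMonoidal}\cref{enum:RStatic} the restricted sub-operad is a cocartesian fibration) matches the paper. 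To repair your argument, drop the dualisation and instead insert the iterated-pullback decomposition; the rest of what you wrote then goes through.
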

	\begin{proof}[Proof sketch]
		To construct the functor of $\infty$-operads, we repeat the argument from the proof of \cref{thm:CanonicalqHodgeQuasiregular}: Write $\CAlg(\cat{AniAlg}_A^{\qHodge})^\otimes$ as an iterated pullback of $\CAlg(-)^{\otimes}$ of various symmetric monoidal $\infty$-categories of filtered objects. For $\fil_{(\Hodge,q-1)}^\star(\deRham_{-/A}\lotimes_\IZ\IQ)\qpower$ we know what to do, for all other factors of the iterated pullbacks the objects in question are $1$-categorical in nature, so everything can be constructed by hand.
		
		That all existing cocartesian lifts are preserved boils down to \cref{lem:qHodgeQuasiregularSymmetricMonoidal}\cref{enum:qHodgeQuasiregularSymmetricMonoidal}. Finally, if $R_1,R_2\in\cat{QReg}_A^{\qHodge}$ are flat over~$A$, then $R_1\lotimes_A R_2$ will be static and $p$-torsion free for all~$p$, so \cref{lem:qHodgeQuasiregularSymmetricMonoidal}\cref{enum:RStatic} implies that the full sub-$\infty$-operad of $\cat{QReg}_A^{\qHodge,\otimes}$ spanned by $\cat{QReg}_A^{\qHodge,\flat}$ will be a cocartesian fibration. Since our map preserves all cocartesian lifts, we deduce that we indeed get a symmetric monoidal functor
		\begin{equation*}
			\bigl(-,\fil_{\qHodge}^\star\qdeRham_{-/A}\bigr)\colon \cat{QReg}_A^{\qHodge,\flat}\longrightarrow \CAlg\bigl(\cat{AniAlg}_A^{\qHodge}\bigr)\,.\qedhere
		\end{equation*}
	\end{proof}

	\newpage
	\appendix
	\renewcommand{\thetheorem}{\thesection.\arabic{theorem}}
	\renewcommand{\SectionPrefix}{Appendix~}
	
	\section{The \texorpdfstring{$q$}{q}-de Rham complex}\label{appendix:GlobalqDeRham}
	Let $p$ be a prime. In \cite[\S{\chref[section]{16}}]{Prismatic}, Bhatt and Scholze construct a functorial $(p,q-1)$-complete $q$-de Rham complex relative to any $q$-PD pair $(D,I)$. This verifies Scholze's conjecture \cite[Conjecture~\href{https://arxiv.org/pdf/1606.01796\#theorem.3.1}{3.1}]{Toulouse} after $p$-completion, but leaves open the global case. There are (at least) two strategies to tackle the global case:
	\begin{alphanumerate}
		\item One can glue the global $q$-de Rham complex from its $p$-completions and its rationalisation using an arithmetic fracture square.\label{enum:StrategyA}
		\item Following Kedlaya \cite[\S\href{https://kskedlaya.org/prismatic/sec_q-global.html}{29}]{Kedlaya}, one can construct the global $q$-de Rham complex as the cohomology of a global $q$-crystalline site.\label{enum:StrategyB}
	\end{alphanumerate}
	Strategy~\cref{enum:StrategyA} is what Bhatt and Scholze originally had in mind, but they never published the argument. It is essentially straightforward, but not entirely trivial. Since all our global constructions proceed similarly by gluing $p$-completions and rationalisations, 
	it will be worthwhile to fill in the missing details of strategy~\cref{enum:StrategyA}. Our goal is to show the following theorem.
	\begin{thm}\label{thm:qDeRhamGlobal}
		Let $A$ be a $\Lambda$-ring that is $p$-torsion free for all primes $p$. Then there exists a functor
		\begin{equation*}
			\qOmega_{-/A}\colon \cat{Sm}_A\longrightarrow \CAlg\left(\widehat{\Dd}_{(q-1)}\bigl(A\qpower\bigr)\right)
		\end{equation*}
		from the $\infty$-category of smooth $A$-algebras into the $\infty$-category of $(q-1)$-complete $\IE_\infty$-algebras over $A\qpower$, satisfying the following properties:
		\begin{alphanumerate}
			\item $\qOmega_{-/A}/(q-1)\simeq \Omega_{-/A}^*$ agrees with the usual de Rham complex functor. In other words, the $q$-de Rham complex $\qOmega_{-/A}$ is a $q$-deformation of the de Rham complex $\Omega_{-/A}^*$.\label{enum:GlobalqDeRhamDeformsDeRham}
			\item For all primes~$p$, the $p$-completion\label{enum:GlobalqDeRhamPrismatic}
			\begin{equation*}
				(\qOmega_{-/A})_p^\complete\simeq \Prism_{(-)^{(p)}[\zeta_p]/\smash{\widehat{A}}_p\qpower}
			\end{equation*}
			agrees with prismatic cohomology relative to the $q$-de Rham prism $(\widehat{A}_p\qpower,[p]_q)$. Here we denote the $p$-adic Frobenius twist by $(-)^{(p)}\coloneqq (-\otimes_{A,\psi^p}A)_p^\complete$.
			\item After rationalisation, $(\qOmega_{-/A}\lotimes_\IZ\IQ)_{(q-1)}^\complete\simeq (\Omega_{-/A}\lotimes_\IZ\IQ)\qpower$ becomes the trivial $q$-deformation.\label{enum:GlobalqDeRhamRational}
			\item For every framed smooth $A$-algebra $(S,\square)$, the underlying object of $\qOmega_{S/A}$ in the derived $\infty$-category of $A\qpower$ can be represented as\label{enum:GlobalqDeRhamCoordinates}
			\begin{equation*}
				\qOmega_{S/A}\simeq \qOmega_{S/A, \square}^*\,,
			\end{equation*}
			where $\qOmega_{S/A, \square}^*$ denotes the coordinate-dependent $q$-de Rham complex as in~\cite[\textup{\S\chref[section]{3}}]{Toulouse}.
		\end{alphanumerate}
		Moreover, if $A\rightarrow A'$ is a map of $\Lambda$-rings such that $A'$ is also $p$-torsion free for all primes~$p$, there's a canonical base change equivalence
		\begin{equation*}
			\bigl(\qOmega_{-/A}\lotimes_{A}A'\bigr)_{(q-1)}^\complete\overset{\simeq}{\longrightarrow}\qOmega_{(-\otimes_AA')/A'}\,.
		\end{equation*}
		Modulo $(q-1)$ this reduces to the usual base change equivalence of the de Rham complex.
	\end{thm}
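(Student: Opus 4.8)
The plan is to build $\qOmega_{-/A}$ via the arithmetic fracture square, following strategy~\cref{enum:StrategyA}. First I would assemble the three "local" pieces: for each prime $p$, the $p$-complete $q$-de Rham complex $\Prism_{(-)^{(p)}[\zeta_p]/\smash{\widehat{A}}_p\qpower}$ from \cite[\S{\chref[section]{16}}]{Prismatic} (which is functorial in $S$ and an $\IE_\infty$-algebra over $\widehat{A}_p\qpower$); and the rational piece $(\Omega_{-/A}\lotimes_\IZ\IQ)\qpower$, viewed as a constant $q$-deformation of the (rationalised, Hodge-completed) de Rham complex. To glue them I need the "overlap" identifications: for each $p$, a functorial equivalence of $\IE_\infty$-$\widehat{A}_p[1/p]\qpower$-algebras between the $p$-complete $q$-de Rham complex rationalised and $(\Omega_{-/A})_p^\complete[1/p]\qpower$. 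This is exactly the kind of statement packaged in \cref{lem:RationalisationTechnicalI}/\cref{lem:RationalisedqCrystalline} referenced in the excerpt (the $p$-adic $q$-crystalline cohomology becomes the usual de Rham complex after inverting $p$, because $[p]_q$ becomes a unit multiple of $(q-1)$ rationally). Feeding these into the arithmetic fracture square for the ring $A\qpower$ (or rather the ideal $(N)$ for $N\to\infty$, as in \cref{rem:GluingCompletions}) produces the desired functor $\qOmega_{-/A}\colon \cat{Sm}_A\rightarrow \CAlg(\widehat{\Dd}_{(q-1)}(A\qpower))$, and properties~\cref{enum:GlobalqDeRhamPrismatic}, \cref{enum:GlobalqDeRhamRational} hold by construction, while \cref{enum:GlobalqDeRhamDeformsDeRham} and \cref{enum:GlobalqDeRhamCoordinates} follow by reducing modulo $(q-1)$ (using that the coordinate-dependent complex $\qOmega_{S/A,\square}^*$ visibly satisfies the same fracture square).

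For the base change statement, the approach is to reduce everything to the three pieces of the fracture square and use that $(-\lotimes_A A')_{(q-1)}^\complete$ commutes with finite limits (hence with the pullback defining the fracture square, up to checking that no derived pathology appears on the overlap terms — this is where I would argue, as in the proof of \cref{lem:qHodgeBaseChange}, that the relevant $\H_{-1}$ of the derived pullback is $p^m$- and $(q-1)^n$-torsion so completion is harmless, using $p$-torsion freeness of $A$ and $A'$). On the $p$-complete piece, base change is the base change property of prismatic cohomology along the map of prisms $(\widehat{A}_p\qpower,[p]_q)\rightarrow (\widehat{A}'_p\qpower,[p]_q)$ combined with the étale base change $S^{(p)}[\zeta_p]\otimes_{\smash{\widehat{A}}_p}\widehat{A}'_p \simeq (S\otimes_A A')^{(p)}[\zeta_p]$ (the Frobenius twist is compatible with $\Lambda$-morphisms, and $p$-completed tensor products of smooth algebras are well-behaved). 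On the rational piece, it is the classical base change of the de Rham complex $(\Omega_{S/A}\lotimes_A A')_{(q-1)}^\complete\simeq \Omega_{(S\otimes_A A')/A'}\qpower$ after $(q-1)$-completion, which reduces modulo $(q-1)$ to the usual de Rham base change $\Omega_{S/A}\lotimes_A A'\simeq \Omega_{(S\otimes_A A')/A'}$. On the overlap, one checks that the two base change equivalences are compatible with the rationalisation identification, which again reduces to a computation with $[p]_q$ being a unit multiple of $(q-1)$ after inverting $p$. Gluing these, the fracture square for $A'\qpower$ receives a map from the base-changed fracture square for $A\qpower$, and one checks it is an equivalence piece by piece.

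The main obstacle I expect is not any single step but the bookkeeping of functoriality and coherence when gluing via the fracture square: one must produce the gluing data as a genuine limit diagram of $\IE_\infty$-algebras (equivalently, as in \cref{cor:CompleteDescent}, exploit that many intersections in the relevant cover are empty so that the cosimplicial diagram collapses to a finite, coherence-free one), and then verify that the base change map is compatible with this gluing. Concretely the subtle point is the compatibility on the triple-overlap-type terms — i.e.\ that the rationalisation equivalence $(\qOmega_{S/A})_p^\complete[1/p]\simeq (\Omega_{S/A})_p^\complete[1/p]\qpower$ is itself functorial and compatible under base change $A\to A'$ — but this is exactly the content of the technical rationalisation lemmas (\cref{lem:RationalisationTechnicalI}, \cref{lem:RationalisationTechnicalII}, \cref{rem:qDeRhamRationalisationTechnical}) invoked elsewhere in the excerpt, so I would cite those rather than reprove them. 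The rest, including \cref{enum:GlobalqDeRhamDeformsDeRham} and \cref{enum:GlobalqDeRhamCoordinates}, follows by reduction modulo $(q-1)$ together with the observation that $(q-1)$-complete objects are detected modulo $(q-1)$.
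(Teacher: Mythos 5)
Your overall strategy matches the paper's: assemble the $p$-complete $q$-de Rham complexes (via $q$-crystalline/prismatic cohomology) and the rational Hodge-completed de Rham complex into a single arithmetic fracture pullback, with the overlap identification supplied by the rationalisation comparison lemma, then verify properties~\cref{enum:GlobalqDeRhamDeformsDeRham}--\cref{enum:GlobalqDeRhamCoordinates} by reducing modulo $(q-1)$ and comparing fracture squares. Two calibration points. First, the real technical crux that your description passes over somewhat quickly is \emph{uniformity across primes}: the overlap term in the pullback is $\bigl(\prod_p\Omega^*_{\smash{\widehat{S}}_p/\smash{\widehat{A}}_p}\lotimes_\IZ\IQ\bigr)\qpower$, and to get the right vertical map from $\prod_p\qOmega_{\smash{\widehat{S}}_p/\smash{\widehat{A}}_p}$ into it as a map of products (rather than just a collection of maps, one per prime, each with its own $p$-power denominators), one needs the extension $D\to\q D/(q-1)^n\otimes\IQ$ to have denominators bounded \emph{independently of $p$}. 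That is precisely what \cref{lem:RationalisationTechnicalII} buys you: for each fixed $n$ the denominator $N_n$ works for all primes at once, which is how the map of products is constructed in \cref{par:AllPrimesAtOnce}. Your appeal to a \cref{cor:CompleteDescent}-style cosimplicial collapse is not what's happening here --- the construction is one pullback square, not a \v Cech gluing, so no simplicial coherence needs to be managed; the subtlety is entirely in making the single overlap map uniform. Second, for base change the paper's argument is shorter than yours: once the map $(\qOmega_{-/A}\lotimes_AA')_{(q-1)}^\complete\to\qOmega_{(-\otimes_AA')/A'}$ is produced from the universal property of the pullback (no commutation of base change with limits needs to be claimed), both sides are $(q-1)$-complete, so it suffices to check the map is an equivalence modulo $(q-1)$, where it becomes the classical de Rham base change. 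Your torsion-of-$\H_{-1}$ argument à la \cref{lem:qHodgeBaseChange} would also work, but it's extra machinery that the derived Nakayama reduction makes unnecessary.
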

	\begin{rem}\label{rem:DerivedCommutativeRingsqDeRham}
		It will be apparent from our proof of \cref{thm:qDeRhamGlobal} (and we'll give a precise argument in \cref{par:DerivedCommutativeLift}) that the $q$-de Rham complex functor lifts canonically to a functor
		\begin{equation*}
			\qOmega_{-/A}\colon \cat{Sm}_A\longrightarrow \bigl(\DAlg_{A\qpower}\bigr)_{(q-1)}^\complete
		\end{equation*}
		into $(q-1)$-complete objects of the \emph{the $\infty$-category of derived commutative $A\qpower$-algebras} $\DAlg_{A\qpower}$ as defined in \cite[Definition~\chref{4.2.22}]{RaksitFilteredCircle}.
	\end{rem}
	\begin{numpar}[Convention.]\label{conv:ImplicitCompletion}
		Throughout \cref{appendix:GlobalqDeRham}, to increase readability and avoid excessive use of completions, all ($q$-)de Rham complexes or cotangent complexes relative to a $p$-complete ring will be implicitly $p$-completed.
	\end{numpar}
	
	\subsection{Rationalised \texorpdfstring{$q$}{q}-crystalline cohomology}\label{subsec:RationalisedqDeRham}
	
	Fix a prime $p$. Then $(\widehat{A}_p\qpower,(q-1))$ is a $q$-PD pair as in \cite[Definition~\chref{16.1}]{Prismatic} and so we can use $q$-crystalline cohomology to construct a functorial $(p,q-1)$-complete $q$-de Rham complex $\qOmega_{S/\smash{\widehat{A}}_p}$ for every $p$-completely smooth $\widehat{A}_p$-algebra $S$. We let $\qdeRham_{-/\smash{\widehat{A}}_p}$ denote its non-abelian derived functor (or \emph{animation}), which is now defined for all $p$-complete animated $\widehat{A}_p$-algebras. Observe that animation leaves the values on $p$-completely smooth $\widehat{A}_p$-algebras unchanged, as can be seen modulo $(p,q-1)$, where it reduces to a well-known fact about derived de Rham cohomology in characteristic~$p$.
	
	Our first goal is to show that after rationalisation derived $q$-de Rham cohomology is just a base change of derived de Rham cohomology relative to $\widehat{A}_p$. In coordinates, such an equivalence was already constructed in \cite[Lemma~\chref{4.1}]{Toulouse} (see \cref{par:RationalisedqDeRham} for a review), but here we need a different argument: We want a coordinate-independent equivalence, so we have to work with the definition of the $q$-de Rham complex via $q$-crystalline cohomology.
	\begin{lem}\label{lem:RationalisedqCrystalline}
		For all $p$-complete animated $\widehat{A}_p$-algebras $R$ there is a functorial equivalence of $\IE_\infty$-$(\widehat{A}_p\otimes_{\IZ}\IQ)\qpower$-algebras
		\begin{equation*}
			\bigl(\qdeRham_{R/\smash{\widehat{A}}_p}\lotimes_{\IZ}\IQ\bigr)_{(q-1)}^\complete\simeq \bigl(\deRham_{R/\smash{\widehat{A}}_p}\lotimes_{\IZ}\IQ\bigr)\qpower\,.
		\end{equation*}
	\end{lem}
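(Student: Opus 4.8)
The plan is to reduce immediately to the case where $R = P$ is a $p$-completely polynomial $\widehat{A}_p$-algebra, since both sides of the claimed equivalence are defined by non-abelian derived functors (animation) of functors on $p$-completely polynomial algebras, and $(-\lotimes_\IZ\IQ)_{(q-1)}^\complete$ commutes with sifted colimits in the appropriate completed sense. Actually, a cleaner route is to produce the equivalence directly on $p$-completely smooth $\widehat{A}_p$-algebras $S$ and then animate; so first I would work with $S$ smooth. The key input is the identification of $q$-crystalline cohomology with the cohomology of the $q$-crystalline site, together with the observation that, after inverting $p$, the element $[p]_q = 1 + q + \dots + q^{p-1}$ becomes a unit times $(q-1)^{p-1}$ plus a unit multiple of $p$; more precisely $[p]_q$ and $p$ generate the same ideal as $(q-1)^{p-1}$ up to units once $p$ is inverted and we complete at $(q-1)$. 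This is what collapses the $q$-divided-power structure to an honest divided-power structure rationally.

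The main steps, in order, are as follows. First, recall from \cite[\S{\chref[section]{16}}]{Prismatic} that $\qOmega_{S/\smash{\widehat{A}}_p}$ is computed by the $q$-crystalline site of $S$ over the $q$-PD pair $(\widehat{A}_p\qpower, (q-1))$, and that a $q$-PD thickening becomes, after $(-\lotimes_\IZ\IQ)_{(q-1)}^\complete$, an ordinary PD thickening over $(\widehat{A}_p\otimes\IQ)\qpower$ with its $(q-1)$-adic (equivalently, trivial mod $(q-1)$) PD structure — this is because $q$-divided powers $\gamma_q^{(n)}(x) = x(x-1)\dots(x-[n-1]_q)/[n]_q!$ rationalise to the usual divided powers up to $(q-1)$-adically small corrections, and $[n]_q!$ differs from $n!$ by a unit rationally. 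Second, use this to identify the rationalised $(q-1)$-completed $q$-crystalline site of $S$ with the crystalline site of $S$ over $(\widehat{A}_p\otimes\IQ)\qpower$, which by the crystalline–de Rham comparison computes $(\deRham_{S/\smash{\widehat{A}}_p}\lotimes\IQ)\qpower$ — here I would invoke, if available in the paper's conventions, the derived de Rham comparison, or else reduce to coordinates via \cref{par:RationalisedqDeRham} / \cite[Lemma~\chref{4.1}]{Toulouse} to check the comparison map is an equivalence after choosing an étale framing. Third, check functoriality: the comparison map is natural in $S$ because both the $q$-crystalline and the crystalline descriptions are functorial, and the $\IE_\infty$-structure (in fact the derived commutative structure, cf.\ \cref{rem:DerivedCommutativeRingsqDeRham}) is automatically respected since the comparison is induced by a morphism of ringed sites. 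Fourth, animate: both $\qdeRham_{-/\smash{\widehat{A}}_p}\lotimes\IQ$ (completed) and $\deRham_{-/\smash{\widehat{A}}_p}\lotimes\IQ$ (completed $q$-power series) are the non-abelian derived functors of their restrictions to polynomial algebras, where the equivalence is already established, so it extends uniquely and functorially to all $p$-complete animated $\widehat{A}_p$-algebras $R$.

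The hard part will be \emph{step two}: making the collapse of the $q$-PD structure to an ordinary PD structure precise at the level of sites (or of the explicit complexes), rather than just pointwise on rings. One must be careful that the $(q-1)$-completion is taken in the right place — a priori $q$-crystalline cohomology is $(p,q-1)$-complete, and after inverting $p$ one needs to argue that completing at $(q-1)$ suffices and that the resulting object is genuinely a $q$-power series algebra over $\deRham_{S/\smash{\widehat{A}}_p}\lotimes\IQ$ rather than some subtler completion. I expect this is best handled by passing to an étale framing $\square\colon \widehat{A}_p\langle x_1,\dots,x_n\rangle \to S$ and comparing the coordinate-dependent $q$-de Rham complex $\qOmega_{S/\smash{\widehat{A}}_p,\square}^*$ with $\Omega_{S/\smash{\widehat{A}}_p,\square}^*\qpower$, using the rescaling automorphism $x^m \mapsto$ (something making $\q\nabla$ into the ordinary $\nabla$ after inverting $[m]_q$, which is a unit rationally) — this is exactly the content of \cite[Lemma~\chref{4.1}]{Toulouse} — and then invoking the coordinate-independence of $q$-crystalline cohomology to conclude that the equivalence is canonical. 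Once the coordinate comparison is in hand, naturality and the monoidal refinement follow formally, and animation finishes the proof.
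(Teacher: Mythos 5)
Your plan shares the paper's overall strategy — reduce to smooth $S$, identify with $q$-crystalline/crystalline cohomology via a $q$-PD envelope $\q D$ and PD envelope $D$, then animate — but the key technical obstacle is not the one you identify, and your proposed workaround is exactly the one the paper explicitly declines. The paper's proof observes that $P\rightarrow \q D\rightarrow (\q D\otimes_\IZ\IQ)_{(q-1)}^\complete$ tautologically factors through the \emph{un-}$p$-completed PD envelope $D^\circ$ (divided powers exist because $p$ is a unit in the target), but the delicate point is extending this over the $p$-completion $D\coloneqq D_P(J)_p^\complete$. Since $p$ has been inverted on the target side, there is no reason a priori for $p$-power series in $D^\circ$ to converge; one needs the quantitative estimate of \cref{lem:RationalisationTechnicalII}, which bounds the $p$-denominators that divided powers introduce against the $(q-1)$-powers they gain, so that for each fixed $n$ the map to $(\q D\otimes_\IZ\IQ)/(q-1)^n$ is $p$-adically continuous. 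Your worry (``does completing at $(q-1)$ suffice after inverting $p$?'') gestures at the right region of the proof but does not pin down this convergence issue, and nothing in your sketch supplies the needed estimate.

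Your fallback — produce the equivalence in coordinates via \cite[Lemma~\chref{4.1}]{Toulouse} and then ``invoke the coordinate-independence of $q$-crystalline cohomology to conclude the equivalence is canonical'' — does not close the gap. Coordinate-independence of $q$-crystalline cohomology gives coherent identifications between the $\qOmega_{S/\smash{\widehat{A}}_p,\square}^*$ for varying $\square$, but it does \emph{not} automatically say that the rationalisation isomorphisms of \cite[Lemma~\chref{4.1}]{Toulouse} are compatible with those identifications; that is a separate compatibility check (this is precisely the content of the paper's \cref{lem:CompatibilityCheck}, which is proved \emph{using} the coordinate-free construction, not in place of it). The paper states up front that the coordinate argument alone is insufficient for this reason. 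A minor but telling slip: the formula you cite for the $q$-divided power, $\gamma_q^{(n)}(x) = x(x-1)\dotsm(x-[n-1]_q)/[n]_q!$, is not the $q$-PD operation in this setting — here $\gamma_q(x)=\phi(x)/[p]_q-\delta(x)$ is defined via the $\delta$-structure, and it is precisely the interaction of $\delta$ and $\gamma_q$ with $(q-1)$-adic and $p$-adic filtrations (\cref{lem:RationalisationTechnicalI}) that makes the convergence argument go through.
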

	\begin{proof}
		By passing to non-abelian derived functors, it's enough to construct such a functorial equivalence for $p$-completely smooth $\widehat{A}_p$-algebras $S$. In this case, we can identify derived ($q$-)de Rham and ($q$-)crystalline cohomology:
		\begin{equation*}
			\qdeRham_{S/\smash{\widehat{A}}_p}\simeq \R\Gamma_\qcrys\bigl(S/\widehat{A}_p\qpower\bigr)\quad\text{and}\quad \deRham_{S/\smash{\widehat{A}}_p}\simeq \R\Gamma_\crys\bigl(S/\widehat{A}_p\bigr)\,.
		\end{equation*}
		To construct the desired identification between $q$-crystalline and crystalline cohomology after rationalisation, let $P\twoheadrightarrow S$ be a surjection from a $p$-completely ind-smooth $\delta$-$\widehat{A}_p$-algebra. Extend the $\delta$-structure on $P$ to $P\qpower$ via $\delta(q)\coloneqq 0$. Let $J$ be the kernel of $P\twoheadrightarrow S$ and let $D\coloneqq D_P(J)$ be its $p$-completed PD-envelope. Finally, let $\q D$ denote the corresponding $q$-PD-envelope as defined in \cite[Lemma~\chref{16.10}]{Prismatic}. It will be enough to construct a functorial equivalence
		\begin{equation*}
			\bigl(\q D\otimes_{\IZ}\IQ\bigr)_{(q-1)}^\complete\simeq \bigl(D\otimes_{\IZ}\IQ\bigr)\qpower\,.
		\end{equation*}
		If $D^\circ$ denotes the un-$p$-completed PD-envelope of $J$, then $P\rightarrow \q D\rightarrow (\q D\otimes_{\IZ}\IQ)_{(q-1)}^\complete$ uniquely factors through $D^\circ \rightarrow (\q D\otimes_{\IZ}\IQ)_{(q-1)}^\complete$. The tricky part is to show that this map extends over the $p$-completion. Since $D^\circ$ is $p$-torsion free, its $p$-completion agrees with $D^\circ\llbracket t\rrbracket/(t-p)$. By \cref{lem:RationalisationTechnicalII} below, for every fixed $n\geqslant 0$, every $p$-power series in $D^\circ$ converges in the $p$-adic topology on $(\q D\otimes_{\IZ}\IQ)/(q-1)^n$, so we indeed get our desired extension $D\rightarrow (\q D\otimes_{\IZ}\IQ)_{(q-1)}^\complete$.
		
		Extending further, we get a map $(D\otimes_{\IZ}\IQ)\qpower\rightarrow (\q D\otimes_{\IZ}\IQ)_{(q-1)}^\complete$ of the desired form. Whether this is an equivalence can be checked modulo $(q-1)$ by the derived Nakayama lemma. Then the base change property from \cite[Lemma~\chref{16.10}(3)]{Prismatic} finishes the proof---up to verifying convergence for $p$-power series in $D^\circ$.
	\end{proof}
	
	To complete the proof of \cref{lem:RationalisedqCrystalline}, we need to prove two technical lemmas about ($q$\nobreakdash-)di\-vided powers. Let's fix the following notation: According to \cite[Lemmas~\chref{2.15} and~\chref{2.17}]{Prismatic}, we may uniquely extend the $\delta$-structure from $\q D$ to $(\q D\otimes_{\IZ}\IQ)_{(q-1)}^\complete$. We still let $\phi$ and $\delta$ denote the extended Frobenius and $\delta$-map. Furthermore, we denote by
	\begin{equation*}
		\gamma(x)=\frac{x^p}{p}\quad\text{and}\quad \gamma_q(x)=\frac{\phi(x)}{[p]_q}-\delta(x)
	\end{equation*}
	the maps defining a PD-structure and a $q$-PD structure, respectively. 
	Note that $\gamma(x)$ and $\gamma_q(x)$ make sense for all $x\in(\q D\otimes_{\IZ}\IQ)_{(q-1)}^\complete$ since $p$ and $[p]_q$ are invertible.
	
	\begin{lem}\label{lem:RationalisationTechnicalI}
		With notation as above, the following is true for the self-maps $\delta$ and $\gamma_q$ of $(\q D\otimes_{\IZ}\IQ)_{(q-1)}^\complete$:
		\begin{alphanumerate}
			\item For all $n\geqslant 1$ and all $\alpha\geqslant 1$, the map $\delta$ sends $(q-1)^n\q D$ into itself, and $p^{-\alpha}(q-1)^n\q D$ into $p^{-(p\alpha+1)}(q-1)^n\q D$.\label{enum:qDdelta}
			\item For all $n\geqslant 1$ and all $\alpha\geqslant 1$, the map $\gamma_q$ sends $(q-1)^n\q D$ into $(q-1)^{n+1}\q D$, and $p^{-\alpha}(q-1)^n\q D$ into $p^{-(p\alpha+1)}(q-1)^{n+1}\q D$.\label{enum:qDgammaq}
		\end{alphanumerate}
	\end{lem}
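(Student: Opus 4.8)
The statement to prove is Lemma \ref{lem:RationalisationTechnicalI}, concerning the behaviour of the maps $\delta$ and $\gamma_q$ on the $(q-1)$-adic and $p$-adic filtrations of $(\q D\otimes_\IZ\IQ)_{(q-1)}^\complete$.

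\textbf{Overall approach.} Both parts are purely algebraic manipulations with the $\delta$-ring identities, so the plan is to reduce everything to the defining formulae $\phi(x) = x^p + p\delta(x)$, $\delta(xy) = x^p\delta(y) + y^p\delta(x) + p\delta(x)\delta(y)$, and $\delta(x+y) = \delta(x) + \delta(y) - \sum_{i=1}^{p-1}\frac{1}{p}\binom{p}{i}x^iy^{p-i}$, together with the expansion $[p]_q = p + (q-1)(\cdots)$, more precisely $[p]_q \equiv p \bmod (q-1)$ and $[p]_q - p$ divisible by $(q-1)$ (in fact $[p]_q = p + \binom{p}{2}(q-1) + \cdots$, so $[p]_q - p \in (q-1)\IZ[q]$). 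Throughout one uses that $p$ and $[p]_q$ are invertible in $(\q D\otimes_\IZ\IQ)_{(q-1)}^\complete$ and that $(q-1)^n\q D$ is an ideal there.

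\textbf{Part \ref{enum:qDdelta}.} First I would handle $\delta$ on $(q-1)^n\q D$: an element of $(q-1)^n\q D$ is a (convergent) sum of terms $(q-1)^n d$ with $d\in\q D$. Using the twisted Leibniz rule for $\delta$ applied to a product $a\cdot b$ with $a = (q-1)^n$ one gets $\delta(ab) = a^p\delta(b) + b^p\delta(a) + p\delta(a)\delta(b)$; since $\delta((q-1)^n)\in\q D$ and $a^p = (q-1)^{np}$, every term is manifestly in $(q-1)^n\q D$ (in fact in $(q-1)^{\min(n,np)}\q D = (q-1)^n\q D$). One must be a little careful that $\delta$ is continuous for the $(q-1)$-adic topology, which follows from its construction via $\phi$ and the fact that $\phi$ is $(q-1)$-adically continuous. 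For the second assertion, write a general element of $p^{-\alpha}(q-1)^n\q D$ as $p^{-\alpha}y$ with $y\in(q-1)^n\q D$; then $\delta(p^{-\alpha}y) = (p^{-\alpha})^p\delta(y) + y^p\delta(p^{-\alpha}) + p\delta(p^{-\alpha})\delta(y)$, and since $\delta(p^{-\alpha}) = \frac{p^{-\alpha} - p^{-\alpha p}}{p}$, all three summands lie in $p^{-(p\alpha+1)}(q-1)^n\q D$ using part one to control $\delta(y)$.

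\textbf{Part \ref{enum:qDgammaq}.} Here the key point is the congruence $\gamma_q(x)\equiv \gamma(x) - \delta(x) + \delta(x) \equiv $ something divisible by one more power of $(q-1)$ when $x$ already lies in $(q-1)^n\q D$. Concretely, $\gamma_q(x) = \frac{\phi(x)}{[p]_q} - \delta(x) = \frac{x^p + p\delta(x)}{[p]_q} - \delta(x) = \frac{x^p}{[p]_q} + \bigl(\frac{p}{[p]_q} - 1\bigr)\delta(x) = \frac{x^p}{[p]_q} - \frac{[p]_q - p}{[p]_q}\delta(x)$. If $x\in(q-1)^n\q D$ then $x^p\in(q-1)^{np}\q D\subseteq(q-1)^{n+1}\q D$ (as $n\ge1$ forces $np\ge n+1$), and $[p]_q - p\in(q-1)\IZ[q]$ while $\delta(x)\in(q-1)^n\q D$ by part \ref{enum:qDdelta}, so the second term lies in $(q-1)^{n+1}\q D$; together with invertibility of $[p]_q$ this gives $\gamma_q(x)\in(q-1)^{n+1}\q D$. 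The scaled version is obtained the same way: for $x = p^{-\alpha}y$ with $y\in(q-1)^n\q D$, the term $\frac{x^p}{[p]_q} = p^{-\alpha p}\frac{y^p}{[p]_q}$ lies in $p^{-\alpha p}(q-1)^{n+1}\q D\subseteq p^{-(p\alpha+1)}(q-1)^{n+1}\q D$, and $\frac{[p]_q-p}{[p]_q}\delta(p^{-\alpha}y)$ lies in $p^{-(p\alpha+1)}(q-1)^{n+1}\q D$ by part \ref{enum:qDdelta}\ and the divisibility of $[p]_q - p$ by $(q-1)$.

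\textbf{Main obstacle.} The only genuinely delicate point is the topological one: all the formulae above are first proved for "finite" elements and then one must check that $\delta$ and $\gamma_q$ extend continuously to the $(q-1)$-completed rationalisation and that the stated inclusions pass to the completion. I would dispatch this by noting that $\delta$ (hence $\gamma_q$) is built from $\phi$ and multiplication/subtraction, all of which are $(q-1)$-adically continuous, and that the ideals $p^{-\alpha}(q-1)^n\q D$ are closed in the relevant topology; modding out by $(q-1)^N$ for each fixed $N$ reduces every claim to a finite computation in $(\q D\otimes_\IZ\IQ)/(q-1)^N$. This is also precisely the form of the statement actually used in the proof of Lemma \ref{lem:RationalisedqCrystalline} (convergence of $p$-power series modulo $(q-1)^n$), so no more than this is needed.
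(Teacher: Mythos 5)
Your part~\cref{enum:qDdelta} is essentially correct, though you take a slightly different route from the paper: you expand $\delta(ab)$ via the twisted Leibniz rule with $a=(q-1)^n$ or $a=p^{-\alpha}$, whereas the paper simply computes $\delta(x)=(\phi(x)-x^p)/p$ directly for $x=p^{-\alpha}(q-1)^ny$ and reads off the denominators. One step you pass over: for the first claim you need $\delta\bigl((q-1)^n\bigr)\in(q-1)^n\q D$, not merely $\delta\bigl((q-1)^n\bigr)\in\q D$; this is true (write $(q^p-1)^n-(q-1)^{np}=(q-1)^n\bigl([p]_q^n-(q-1)^{n(p-1)}\bigr)$ and use $[p]_q=pu+(q-1)^{p-1}$ to see the second factor is divisible by $p$), but it's not \enquote{manifest} from what you wrote.

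Your part~\cref{enum:qDgammaq}, however, has a real gap. The algebraic identity $\gamma_q(x)=\frac{x^p}{[p]_q}-\frac{[p]_q-p}{[p]_q}\delta(x)$ is correct, and you correctly observe that the \emph{numerator} $x^p-([p]_q-p)\delta(x)$ lies in $(q-1)^{n+1}\q D$. But this does \emph{not} yield $\gamma_q(x)\in(q-1)^{n+1}\q D$ \enquote{by invertibility of $[p]_q$}. The individual terms $\frac{x^p}{[p]_q}$ and $\frac{[p]_q-p}{[p]_q}\delta(x)$ are not even elements of $\q D$: since $[p]_q^{-1}\equiv p^{-1}\bmod(q-1)$, multiplication by $[p]_q^{-1}$ takes $(q-1)^{n+1}\q D$ outside of $\q D$, so term-by-term containment in $(q-1)^{n+1}\q D$ is false. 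To repair the argument you would need two extra inputs: first, that $\gamma_q(x)\in\q D$ a priori (this holds because $\frac{\phi(x)}{[p]_q}=(q-1)^n[p]_q^{n-1}\phi(y)\in\q D$ when $x=(q-1)^ny$ and $n\geqslant 1$); and second, that $[p]_q$ is a \emph{non-zerodivisor} on $\q D/(q-1)^{n+1}$ (which follows from flatness of $\q D$ over $\IZ_p\qpower$ and $[p]_q\equiv p\bmod(q-1)$). Only the combination of these with your numerator estimate gives the claim. The paper's proof avoids this difficulty entirely: it first computes $\gamma_q(q-1)$ explicitly and sees that it lands in $(q-1)^2\q D$, then uses the multiplicativity $\gamma_q(xy)=\phi(y)\gamma_q(x)-x^p\delta(y)$ to peel off one factor of $(q-1)$ at a time, so that no division by $[p]_q$ (and hence no non-zerodivisor argument) is ever needed. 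Finally, your closing remarks on topology and continuity are not where the delicacy lies --- these are finitary ideal-membership statements --- and would be better spent on the point above.
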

	\begin{proof}
		Let's prove \cref{enum:qDdelta} first. Let $x=p^{-\alpha}(q-1)^ny$ for some $y\in \q D$. Since $\q D$ is flat over $\IZ_p\qpower$ and thus is $p$-torsion free, we can compute
		\begin{equation*}
			\delta(x)=\frac{\phi(x)-x^p}{p}=\frac{(q^p-1)^n\phi(y)}{p^{\alpha+1}}-\frac{(q-1)^{pn}y^p}{p^{p\alpha+1}}\,.
		\end{equation*}
		As $q^p-1$ is divisible by $q-1$, the right-hand side lies in $p^{-(p\alpha+1)}(q-1)^{n}\q D$. If $\alpha=0$, then the right-hand side must also be contained in $\q D$. But $\q D\cap p^{-1}(q-1)^n\q D=(q-1)^n\q D$ by flatness again. This proves both parts of \cref{enum:qDdelta}. Now for \cref{enum:qDgammaq}, we first compute
		\begin{equation*}
			\gamma_q(q-1)=\frac{\phi(q-1)}{[p]_q}-\delta(q-1)=-(q-1)^2\sum_{i=2}^{p-1}\frac{1}{p}\binom{p}{i}(q-1)^{i-2}\,.
		\end{equation*}
		Hence $\gamma_q(q-1)$ is divisible by $(q-1)^2$. 
		In the following, we'll repeatedly use the relation $\gamma_q(xy)=\phi(y)\gamma_q(x)-x^p\delta(y)$ from \cite[Remark~\chref{16.6}]{Prismatic} repeatedly. First off, it shows that 
		\begin{equation*}
			\gamma_q\bigl((q-1)^nx\bigr)=\phi\bigl((q-1)^{n-1}x\bigr)\gamma_q(q-1)-(q-1)^p\delta\bigl((q-1)^{n-1}x\bigr)\,.
		\end{equation*}
		It follows from \cref{enum:qDdelta} that $\delta((q-1)^{n-1}x)$ and $\phi((q-1)^{n-1}x)$ are divisible by $(q-1)^{n-1}$. Hence $\gamma_q((q-1)^nx)$ is indeed divisible by $(q-1)^{n+1}$. Moreover, we obtain
		\begin{equation*}
			\gamma_q\bigl(p^{-\alpha}(q-1)^nx\bigr)=\phi(p^{-\alpha})\gamma_q\bigl((q-1)^nx\bigr)-(q-1)^{np}x^p\delta(p^{-\alpha})\,.
		\end{equation*}
		Now $\phi(p^{-\alpha})=p^{-\alpha}$ and $\delta(p^{-\alpha})$ is contained in $p^{-(p\alpha+1)}\q D$, hence $\gamma_q(p^{-\alpha}(q-1)^nx)$ is contained in $p^{-(p\alpha+1)}(q-1)^n\q D$. This finishes the proof of \cref{enum:qDgammaq}.
	\end{proof}
	\begin{lem}\label{lem:RationalisationTechnicalII}
		Let $x\in J$. For every $n\geqslant 1$, there are elements $y_0,\dotsc,y_n\in \q D$ such that $y_0$ admits $q$-divided powers in $\q D$ and
		\begin{equation*}
			\gamma^{(n)}(x)=y_0+\sum_{i=1}^np^{-2(p^{i-1}+\dotsb+p+1)}(q-1)^{(p-2)+i}y_i
		\end{equation*}
		holds in $\q D\otimes_{\IZ}\IQ$, where $\gamma^{(n)}=\gamma\circ \dotsb\circ\gamma$ denotes the $n$-fold iteration of $\gamma$.
	\end{lem}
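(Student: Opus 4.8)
The strategy is a double induction, matching the structure already used for \cref{lem:RationalisationTechnicalI}: the outer variable is $n$, and the recursion exploits the defining relation between the ordinary divided power $\gamma$ and the $q$-divided power $\gamma_q$. First I would record the base case $n=1$. Here the relation $\gamma(x)=\gamma_q(x)+\delta(x)$ in $\q D\otimes_\IZ\IQ$ (which follows from $\gamma(x)=x^p/p$, $\gamma_q(x)=\phi(x)/[p]_q-\delta(x)$, and $\phi(x)=x^p+p\delta(x)$) does not immediately fit the claimed shape; the point is to rewrite the discrepancy between $1/p$ and $1/[p]_q$ using $[p]_q=pu+(q-1)^{p-1}$ with $u\equiv 1\bmod(q-1)$ a unit in $\q D$. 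One computes
\begin{equation*}
	\gamma(x)=\frac{x^p}{p}=\frac{x^p}{[p]_q}\cdot\frac{[p]_q}{p}=\gamma_q(x)u+\Bigl(\frac{[p]_q}{p}-1\Bigr)\delta(x)+\bigl(\text{correction}\bigr),
\end{equation*}
and since $[p]_q/p-1=(u-1)+u^{-1}(q-1)^{p-1}/p$ one separates the part that genuinely lives in $\q D$ (namely $\gamma_q(x)u+(u-1)\delta(x)$, which I will take as $y_0$, using that $x\in J$ has $q$-divided powers by \cite[Lemma~\chref{16.10}]{Prismatic} and that $\q D$ is a $\delta$-ring) from the part proportional to $p^{-1}(q-1)^{p-1}\delta(x)$; the latter, using \cref{lem:RationalisationTechnicalI}\cref{enum:qDdelta} to control denominators of $\delta(x)$, lands in $p^{-2}(q-1)^{p-2+1}\q D$, which is exactly the $i=1$ term with exponent $2(p^0)=2$ and $(q-1)$-order $(p-2)+1$.

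For the inductive step, assume the decomposition holds for $n$ and apply $\gamma$ to it. Using $\gamma(a+b)=\gamma(a)+\gamma(b)+\sum_{0<j<p}\frac1p\binom{p}{j}a^jb^{p-j}$ (the additivity defect of the $p$-th divided power), I would distribute $\gamma$ over the sum $y_0+\sum_{i=1}^n p^{-2(p^{i-1}+\dots+1)}(q-1)^{(p-2)+i}y_i$. The term $\gamma(y_0)$ stays in $\q D$ (as $y_0$ has $q$-divided powers, hence also ordinary ones after rationalising). Each term $\gamma$ applied to $p^{-\alpha_i}(q-1)^{(p-2)+i}y_i$, together with all the mixed cross-terms, must be rearranged: here I apply \cref{lem:RationalisationTechnicalI}\cref{enum:qDdelta} once more, now in the form $\gamma(p^{-\alpha}(q-1)^m z)=\phi(p^{-\alpha}(q-1)^m z)/[p]_q-\delta(p^{-\alpha}(q-1)^m z)$, which shows $\gamma$ multiplies the $(q-1)$-order by at least $1$ and sends $p^{-\alpha}$ to at worst $p^{-(p\alpha+1)}$; a short bookkeeping check confirms $p(2(p^{i-2}+\dots+1))+1\le 2(p^{i-1}+\dots+1)$ for $i\ge 2$ and that the exponent $2(p^0)=2$ is reached at $i=1$ (this is where the factor $2$ in the exponent is used — it gives exactly the slack $2p-1\le 2p$ needed in the induction, whereas a factor $1$ would fail). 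The cross-terms $a^j b^{p-j}$ for $a$ a "low" term and $b$ a "higher" one only increase the $(q-1)$-order and the $p$-denominator in a way that is still bounded by the claimed estimates, so they get absorbed into the last summand $i=n+1$ after reindexing.

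The main obstacle I anticipate is precisely this bookkeeping of $p$-denominators across the cross-terms of the non-additivity of $\gamma$: one has to verify simultaneously that every contribution to the $i$-th slot carries $p$-denominator at most $p^{-2(p^{i-1}+\dots+p+1)}$ and $(q-1)$-order at least $(p-2)+i$, and the worst offenders are the terms $\frac1p\binom{p}{j}a^jb^{p-j}$ where $a$ is the $y_0$-part (order $0$ in $(q-1)$, bounded denominator, but $a^j$ can still interact badly with the extra $1/p$). I would handle this by noting $\frac1p\binom{p}{j}\in\IZ_p$ for $0<j<p$, so this extra $1/p$ is harmless, and by observing that $b^{p-j}$ for $p-j\ge 1$ already contributes $(q-1)$-order $\ge(p-2)+1$ and a denominator matching the $i=1$ slot, so its product with the unit-denominator $a^j$ stays within budget; higher $b$ only helps. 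Once the inductive estimate closes, extracting $y_0$ as the sum of all genuinely $p$-integral pieces and the $y_i$ as the coefficients of the remaining rational pieces gives the stated form, and, circling back, this is exactly what is needed in the proof of \cref{lem:RationalisedqCrystalline} to guarantee that $p$-power series in $D^\circ$ converge $p$-adically in $(\q D\otimes_\IZ\IQ)/(q-1)^n$: modulo $(q-1)^n$ only finitely many of the $i\le n$ terms survive, each with a fixed finite $p$-denominator, so the iterated divided powers $\gamma^{(n)}(x)$ form a $p$-adically null sequence there.
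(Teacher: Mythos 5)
There's a genuine gap in your inductive step. You propose to "apply $\gamma$" to $\gamma^{(n)}(x)=y_0+z_1+\dotsb+z_n$ and use the additivity defect of $\gamma$, claiming that $\gamma(y_0)$ stays in $\q D$ "as $y_0$ has $q$-divided powers, hence also ordinary ones after rationalising." This is false and is precisely the point of the lemma: $\gamma(y_0)=y_0^p/p$ has a genuine $1/p$-denominator, and having $q$-divided powers does \emph{not} give ordinary divided powers in $\q D$ (the discrepancy between the two is what the lemma quantifies, and "after rationalising" just means working in $\q D\otimes_\IZ\IQ$, which is vacuous here). The paper's proof never applies $\gamma$ directly to anything; instead it uses the identity
\begin{equation*}
	\gamma^{(n+1)}(x)=\gamma_q\bigl(\gamma^{(n)}(x)\bigr)+\frac{[p]_q-p}{p}\Bigl(\gamma_q\bigl(\gamma^{(n)}(x)\bigr)+\delta\bigl(\gamma^{(n)}(x)\bigr)\Bigr)
\end{equation*}
and then distributes $\gamma_q$ and $\delta$ over the sum $y_0+z_1+\dotsb+z_n$ using \emph{their} additivity defects. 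The whole reason this works is that $\gamma_q$ and $\delta$ do preserve $\q D$-integrality in the needed sense: $\gamma_q(y_0)$ again admits $q$-divided powers by \cite[Lemma~\chref{16.7}]{Prismatic}, and $\delta(y_0)\in\q D$ because $\q D$ is a $\delta$-ring. If you try to push the induction through $\gamma$ itself, the $\gamma(y_0)$ term breaks it.

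There are also some smaller slips worth flagging. Your asserted relation $\gamma(x)=\gamma_q(x)+\delta(x)$ is wrong: from the definitions one gets $\gamma_q(x)+\delta(x)=\phi(x)/[p]_q$, which is not $x^p/p$. The correct base relation is $\gamma(x)=\gamma_q(x)+\frac{[p]_q-p}{p}\bigl(\gamma_q(x)+\delta(x)\bigr)$, which is what the paper uses (and also immediately suggests the form of the inductive step above). Relatedly, $[p]_q/p-1=(u-1)+(q-1)^{p-1}/p$ has no $u^{-1}$, and your "correction" in the display for $\gamma(x)$ is actually the term $(q-1)^{p-1}\gamma_q(x)/p$, which you should write out rather than hide. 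On the other hand, your observations that the extra $1/p\binom{p}{j}$ in the additivity defect is $p$-integral, and that the factor $2$ in the exponents $2(p^{i-1}+\dotsb+1)$ is what makes the recursion close against the bound $p\alpha+1$ from \cref{lem:RationalisationTechnicalI}, are both correct and match the paper's bookkeeping.
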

	\begin{proof}
		We use induction on $n$. For $n=1$, we compute
		\begin{equation*}
			\gamma(x)=\frac{x^p}{p}=\gamma_q(x)+\frac{[p]_q-p}{p}\bigl(\gamma_q(x)+\delta(x)\bigr)\,.
		\end{equation*}
		Note that $x$ admits $q$-divided powers in $\q D$ since we assume $x\in J$. Then $\gamma_q(x)$ admits $q$-divided powers again by \cite[Lemma~\chref{16.7}]{Prismatic}. Moreover, writing $[p]_q=pu+(q-1)^{p-1}$, we find that $([p]_q-p)/p=(u-1)+p^{-1}(q-1)^{p-1}$. Then $(u-1)(\gamma_q(x)+\delta(x))$ admits $q$-divided powers since $u\equiv 1\mod (q-1)$. This settles the case $n=1$. We also remark that the above equation for $\gamma(x)$ remains true without the assumption $x\in J$ as long as the expression $\gamma_q(x)$ makes sense.
		
		Now assume $\gamma^{(n)}$ can be written as above. We put $z_i=p^{-2(p^{i-1}+\dotsb+p+1)}(q-1)^{(p-2)+i}y_i$ for short, so that $\gamma^n(x)=y_0+z_1+\dotsb+z_n$. Recall the relations
		\begin{equation*}
			\gamma_q(a+b)=\gamma_q(a)+\gamma_q(b)+\sum_{i=1}^{p-1}\frac1p\binom pia^ib^{p-i}\,,\ \delta(a+b)=\delta(a)+\delta(b)-\sum_{i=1}^{p-1}\frac1p\binom pia^ib^{p-i}\,.
		\end{equation*}
		The first relation implies that $\gamma_q(y_0+z_1+\dotsb+z_n)$ is equal to $\gamma_q(y_0)+\gamma_q(z_1)+\dotsb+\gamma_q(z_n)$ plus a linear combination of terms of the form $y_0^{\alpha_0}z_1^{\alpha_1}\dotsm z_n^{\alpha_n}$ with $0\leqslant \alpha_i<p$ and $\alpha_0+\dotsb+\alpha_n=p$. Now $\gamma_q(y_0)$ admits $q$-divided powers again. Moreover, \cref{lem:RationalisationTechnicalI}\cref{enum:qDgammaq} makes sure that each $\gamma_q(z_i)$ is contained in $p^{-2(p^i+\dotsb+p+1)}(q-1)^{(p-2)+i+1}\q D$. It remains to consider monomials $y_0^{\alpha_0}z_1^{\alpha_1}\dotsm z_n^{\alpha_n}$. Put $m\coloneqq \max\left\{i\ \middle|\  \alpha_i\neq 0\right\}$. If $\alpha_0=p-1$, then all other $\alpha_i$ must vanish except $\alpha_m=1$. In this case, the monomial is contained in $p^{-2(p^{m-1}+\dotsb+p+1)}(q-1)^{(p-2)+m}\q D$. If $\alpha_0<p-1$, we get at least one more factor $(q-1)$ and the monomial $y_0^{\alpha_0}z_1^{\alpha_1}\dotsm z_n^{\alpha_n}$ is contained in $p^{-2(p^m+\dotsb+p+1)}(q-1)^{(p-2)+m+1}\q D$.
		
		A similar analysis, using the second of the above relations as well as \cref{lem:RationalisationTechnicalI}\cref{enum:qDdelta}, shows that $(u-1)\delta(y_0+z_1+\dotsb+z_n)$ and $p^{-1}(q-1)^{p-1}\delta(y_0+z_1+\dotsb+z_n)$ can be decomposed into a bunch of terms, each of which is either a multiple of $(q-1)$ in $\q D$, so that it admits $q$-divided powers, or contained in $p^{-2(p^i+\dotsb+p+1)}(q-1)^{i+1}\q D$ for some $1\leqslant i\leqslant n+1$. We conclude that
		\begin{equation*}
			\gamma^{(n+1)}(x)=\gamma_q\bigl(\gamma^{(n)}(x)\bigr)+\frac{[p]_q-p}{p}\left(\gamma_q\bigl(\gamma^{(n)}(x)\bigr)+\delta\bigl(\gamma^{(n)}(x)\bigr)\right)
		\end{equation*}
		can be written in the desired form.
	\end{proof}
	The following remark is irrelevant for our proof of \cref{thm:qDeRhamGlobal}, but it is occasionally useful for technical arguments.
	\begin{rem}\label{rem:qDeRhamRationalisationTechnical}
		There's also an analogue of \cref{lem:RationalisationTechnicalII} with the roles of $D$ and $\q D$ reversed. For every $x\in J$ and $n\geqslant 1$, there's an infinite sequence $y_0,y_1,\dotsc,\in D$ such that $y_0$ admits divided powers and
		\begin{equation*}
			\gamma_q^{(n)}(x)=y_0+\sum_{i\geqslant 1}p^{-2(p^{i-1}+\dotsb+1)}(q-1)^{(p-2)+i}y_i
		\end{equation*}
		holds in $(D\otimes_\IZ\IQ)\qpower$. The proof is very similar to \cref{lem:RationalisationTechnicalII}: We write
		\begin{equation*}
			\gamma_q(x)=\left(\gamma(x)+\frac{[p]_q-p}{p}\delta(x)\right)\frac{p}{[p]_q}
		\end{equation*}
		and $[p]_q=pu+(q-1)^{p-1}$. Then we use induction on $n\geqslant 1$. For the inductive step, we first check that the operations $\gamma(-)$, $(u-1)\delta(-)$ and $p^{-1}(q-1)^{p-1}\delta(-)$ all preserve expressions of the desired form. Then we observe that $u$ is a unit in $\IZ_p\qpower$ and so multiplication by $p/[p]_q=u^{-1}\sum_{i\geqslant 0}p^{-i}u^{-i}(q-1)^{(p-1)i}$ also preserves expressions of the desired form.
	\end{rem}
	\begin{numpar}[The equivalence on $q$-de Rham complexes.]\label{par:RationalisedqDeRham}
		Suppose we're given a $p$-completely smooth $\widehat{A}_p$-algebra $S$ together with a $p$-completely étale framing $\square\colon \widehat{A}_p\langle T_1,\dotsc,T_d\rangle\rightarrow S$. In this case, the $q$-crystalline cohomology can be computed as a $q$-de Rham complex
		\begin{equation*}
			\R\Gamma_\qcrys\bigl(S/\widehat{A}_p\qpower\bigr)\simeq \qOmega_{S/\smash{\widehat{A}}_p, \square}^*
		\end{equation*}
		by \cite[Theorem~\chref{16.22}]{Prismatic}. Similarly, it's well-known that the crystalline cohomology is given by the ordinary de Rham complex $\Omega_{S/\smash{\widehat{A}}_p}^*$ (recall that according to Convention~\cref{conv:ImplicitCompletion}, all ($q$-)de Rham complexes of the $p$-complete ring $S$ will implicitly be $p$-completed). In this case, an explicit isomorphism of complexes
		\begin{equation*}
			\bigl(\qOmega_{S/\smash{\widehat{A}}_p, \square}^*\otimes_{\IZ}\IQ\bigr)_{(q-1)}^\complete\overset{\cong}{\longrightarrow} \bigl(\Omega_{S/\smash{\widehat{A}}_p}^*\otimes_{\IZ}\IQ\bigr)\qpower
		\end{equation*}
		can be constructed as explained in \cite[Lemma~\chref{4.1}]{Toulouse}: One first observes that, after rationalisation, the partial $q$-derivatives $\q\partial_i$ can be computed in terms of the usual partial derivative $\partial_i$ via the formula
		\begin{equation*}
			\q\partial_i=\left(\frac{\log(q)}{q-1}+\sum_{n\geqslant 2}\frac{\log(q)^n}{n!(q-1)}(\partial_iT_i)^{(n-1)}\right)\partial_i\,;
		\end{equation*}
		see \cite[Lemma~\chref{12.4}]{BMS1}. Here $\log(q)$ refers to the usual Taylor series for the logarithm around $q=1$. Noticing that the first factor is an invertible automorphism, one can then appeal to the following general fact: If $M$ is an abelian group together with commuting endomorphisms $g_1,\dotsc,g_d$ and commuting automorphisms $h_1,\dotsc,h_d$ such that $h_i$ commutes with $g_j$ for $i\neq j$ one always has an isomorphism $\Kos^*(M,(g_1,\dotsc,g_d))\cong \Kos^*(M,(h_1g_1,\dotsc,h_dg_d))$ of Koszul complexes.\footnote{We don't require $h_i$ to commute with $g_i$ (and it's not true in the case at hand).}
		
		We would like to show that this explicit isomorphism is compatible with the one constructed in \cref{lem:RationalisedqCrystalline}. To this end, let's put ourselves in a slightly more general situation: Instead of a $p$-completely étale framing $\square$ as above, let's assume we're given a surjection $P\twoheadrightarrow S$ from a $p$-completely ind-smooth $\widehat{A}_p$-algebra $P$, which is in turn equipped with a $p$-completely ind-étale framing $\square\colon \widehat{A}_p\langle x_i\ |\ i\in I\rangle\rightarrow P$ for some (possible infinite) set $I$. Then $\widehat{A}_p\langle x_i\ |\ i\in I\rangle$ carries a $\delta$-$\widehat{A}_p$-algebra structure characterised by $\delta(x_i)=0$ for all $i\in I$. By \cite[Lemma~\chref{2.18}]{Prismatic}, this extends uniquely to a $\delta$-$\widehat{A}_p$-algebra structure on $P$. If $J$ denotes the kernel of $P\twoheadrightarrow S$, we can form the usual PD-envelope $D\coloneqq D_P(J)_p^\complete$ and the $q$-PD-envelope $\q D$ as before. Furthermore, we let $\breve{\Omega}_{D/\smash{\widehat{A}}_p}^*$ and $\q\breve{\Omega}_{\q D/\smash{\widehat{A}}_p, \square}^*$ denote the usual PD-de Rham complex and the $q$-PD-de Rham complex from \cite[Construction~\chref{16.20}]{Prismatic}, respectively (both are implicitly $p$-completed).
	\end{numpar}
	\begin{lem}\label{lem:RationalisedqDeRham}
		With notation as above, there is again an explicit isomorphism of complexes
		\begin{equation*}
			\bigl(\q\breve{\Omega}_{\q D/\smash{\widehat{A}}_p, \square}^*\otimes_{\IZ}\IQ\bigr)_{(q-1)}^\complete\overset{\cong}{\longrightarrow} \bigl(\breve{\Omega}_{D/\smash{\widehat{A}}_p}^*\otimes_{\IZ}\IQ\bigr)\qpower\,.
		\end{equation*}
	\end{lem}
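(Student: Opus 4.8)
The plan is to adapt the construction from \cite[Lemma~\chref{4.1}]{Toulouse} (reviewed in \cref{par:RationalisedqDeRham}) to the PD-setting, reusing the self-map estimates from \cref{lem:RationalisationTechnicalI}. Recall that $\q\breve{\Omega}_{\q D/\smash{\widehat{A}}_p, \square}^*$ is the Koszul complex of $\q D$ with respect to the commuting endomorphisms $\q\partial_i$ for $i\in I$, and $\breve{\Omega}_{D/\smash{\widehat{A}}_p}^*$ is the Koszul complex of $D$ with respect to the usual $\partial_i$. After rationalisation and $(q-1)$-completion, both sides are built out of $(\q D\otimes_\IZ\IQ)_{(q-1)}^\complete$ and $(D\otimes_\IZ\IQ)\qpower$, which are identified by \cref{lem:RationalisedqCrystalline}. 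So the task reduces to comparing the two families of Koszul differentials under this identification.

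First I would write, on $(\q D\otimes_\IZ\IQ)_{(q-1)}^\complete$, the factorisation
\begin{equation*}
	\q\partial_i=\left(\frac{\log(q)}{q-1}+\sum_{n\geqslant 2}\frac{\log(q)^n}{n!(q-1)}(\partial_ix_i)^{(n-1)}\right)\partial_i\,,
\end{equation*}
exactly as in \cite[Lemma~\chref{12.4}]{BMS1}, where now $\partial_i$ acts on the PD-envelope and $(\partial_ix_i)^{(n-1)}$ denotes the $(n-1)$-st divided power operator. The key point is convergence: one must check that the series in the first factor defines an endomorphism of $(\q D\otimes_\IZ\IQ)_{(q-1)}^\complete$, and that this endomorphism is an automorphism commuting with all $\partial_j$ for $j\neq i$. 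Convergence in the $p$-adic-times-$(q-1)$-adic topology follows from the divided-power analogue of the estimates in \cref{lem:RationalisationTechnicalI}: divided powers of elements of $J$ lie in $\q D$, and $\log(q)/(q-1)\equiv 1\bmod(q-1)$ together with the factor $\log(q)^n/(n!(q-1))$ being divisible by $(q-1)^{n-1}$ up to a bounded power of $p$ forces the tail to go to zero. Invertibility of the first factor is then automatic since it is $\equiv \mathrm{id}\bmod(q-1)$ and everything is $(q-1)$-complete.

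Once the factorisation is in place, I would invoke the purely formal Koszul-complex lemma recalled in \cref{par:RationalisedqDeRham}: given an abelian group (here $(\q D\otimes_\IZ\IQ)_{(q-1)}^\complete$) with commuting endomorphisms $g_i=\partial_i$ and commuting automorphisms $h_i$ (the first factors above), with $h_i$ commuting with $g_j$ for $i\neq j$, one has a canonical isomorphism $\Kos^*(M,(g_i))\cong\Kos^*(M,(h_ig_i))$. Applying this with $h_ig_i=\q\partial_i$ gives the desired isomorphism of complexes, and it is visibly compatible with \cref{lem:RationalisedqCrystalline} in degree~$0$ by construction. The main obstacle I anticipate is the convergence and well-definedness of the logarithmic correction operator on the $q$-PD-envelope — i.e.\ checking that the infinite sum involving iterated divided powers actually lands back in $(\q D\otimes_\IZ\IQ)_{(q-1)}^\complete$ with the right $(q-1)$-adic behaviour. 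This is where one genuinely needs the technical estimates of \cref{lem:RationalisationTechnicalI}, adapted from $\gamma_q$ and $\delta$ to the divided power operators $\gamma$ and their iterates, rather than anything structurally new.
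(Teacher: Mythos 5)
Your overall strategy — reuse the factorisation $\q\partial_i = h_i\,\partial_i$ from \cite[Lemma~12.4]{BMS1} and then invoke the formal Koszul-complex lemma — matches the paper's. But the crux of the proof is not where you locate it. You spend nearly all your effort on \emph{convergence} of the operator $h_i$ on $(\q D\otimes_\IZ\IQ)_{(q-1)}^\complete$, and then say "once the factorisation is in place". The issue is that writing down a convergent operator $h_i$ does not by itself establish the \emph{identity} $\q\partial_i = h_i\,\partial_i$ on the full $q$-PD-envelope. That identity is a theorem in the framed smooth case (i.e.\ on $(P\otimes_\IZ\IQ)\qpower$), and extending it to the PD-envelopes — where $\partial_i$ and $\q\partial_i$ are defined by PD- and $q$-PD-extension, not by the logarithmic formula — requires an actual argument, not just that both sides make sense. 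The paper's proof consists essentially of exactly this step: it observes that for every fixed $n$, the image of $(P\otimes_\IZ\IQ)\qpower$ is $p$-adically dense in both $(\q D\otimes_\IZ\IQ)/(q-1)^n$ and $(D\otimes_\IZ\IQ)\qpower/(q-1)^n$, and that the formula holds on $(P\otimes_\IZ\IQ)\qpower$ by \cite[Lemma~12.4]{BMS1}, hence extends by continuity. (The continuity/convergence input you want has already been absorbed into the construction of the isomorphism in \cref{lem:RationalisedqCrystalline} via \cref{lem:RationalisationTechnicalII}; there is no further estimate needed at this stage.) Your proposal omits the density argument entirely, which is the genuine content here — and attempting to replace it by re-proving convergence of $h_i$ directly via \cref{lem:RationalisationTechnicalI}-type estimates, without addressing how the identity itself propagates from $P$ to $\q D$, leaves the proof incomplete.

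A smaller point: in the formula, $(\partial_i x_i)^{(n-1)}$ denotes the $(n-1)$-fold \emph{iteration} of the operator $\partial_i x_i$ (this is the convention of \cite[Lemma~12.4]{BMS1}), not the $(n-1)$-st divided power operator. This matters for the shape of any convergence estimate you might try to run, though it becomes moot once you use the density argument.
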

	\begin{proof}
		This follows from the same recipe as in \cref{par:RationalisedqDeRham}, provided we can show that the formula for $\q\partial_i$ in terms of $\partial_i$ remains true under the identification $(\q D\otimes_{\IZ}\IQ)_{(q-1)}^\complete\cong (D\otimes_{\IZ}\IQ)\qpower$ from the proof of \cref{lem:RationalisedqCrystalline}. But for every fixed~$n$, the images of the diagonal maps in the diagram
		\begin{equation*}
			\begin{tikzcd}[column sep=0pt]
				& \left(P\otimes_{\IZ}\IQ\right)\qpower\dlar\drar & \\
				\left(\q D\otimes_{\IZ}\IQ\right)/(q-1)^n\ar[rr,"\cong"] & & \left(D\otimes_{\IZ}\IQ\right)\qpower/(q-1)^n
			\end{tikzcd}
		\end{equation*}
		are dense for the $p$-adic topology and for elements of $(P\otimes_{\IZ} \IQ)\qpower$ the formula is clear.
	\end{proof}
	\begin{lem}\label{lem:CompatibilityCheck}
		With notation as above, the following diagram commutes:
		\begin{equation*}
			\begin{tikzcd}[column sep=huge]
				\left(\R\Gamma_\qcrys\bigl(S/\widehat{A}_p\qpower\bigr)\lotimes_{\IZ}\IQ\right)_{(q-1)}^\complete\rar["\simeq","{(\labelcref{lem:RationalisedqCrystalline})}"']\dar["\simeq"'] & \left(\R\Gamma_\crys\bigl(S/\widehat{A}_p\bigr)\lotimes_{\IZ}\IQ\right)\qpower\dar["\simeq"]\\
				\bigl(\q\breve{\Omega}_{\q D/\smash{\widehat{A}}_p, \square}^*\otimes_{\IZ}\IQ\bigr)_{(q-1)}^\complete \rar["\cong","{(\labelcref{lem:RationalisedqDeRham})}"'] & \bigl(\breve{\Omega}_{D/\smash{\widehat{A}}_p, \square}^*\otimes_{\IZ}\IQ\bigr)\qpower
			\end{tikzcd}
		\end{equation*}
		Here the left vertical arrow is the quasi-isomorphism from \cite[Theorem~\textup{\chref{16.22}}]{Prismatic} and the right vertical arrow is the usual quasi-isomorphism between crystalline cohomology and PD-de Rham complexes.
	\end{lem}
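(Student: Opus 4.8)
The plan is to reduce the whole square to a statement purely about the rationalised $q$-PD-de Rham and PD-de Rham complexes attached to the fixed presentation $P\twoheadrightarrow S$ together with the framing $\square$, and then to observe that there both horizontal equivalences are induced by the \emph{single} ring isomorphism $\iota\colon (D\otimes_\IZ\IQ)\qpower\xrightarrow{\ \cong\ }(\q D\otimes_\IZ\IQ)_{(q-1)}^\complete$ constructed in the proof of \cref{lem:RationalisedqCrystalline}. Indeed, by construction the left vertical arrow of the square is the composite of the comparison $\R\Gamma_\qcrys(S/\widehat{A}_p\qpower)\simeq\q\breve{\Omega}_{\q D/\widehat{A}_p,\square}^*$ of \cite[Theorem~16.22]{Prismatic} with rationalisation and $(q-1)$-completion, and the right vertical arrow is built analogously from the classical comparison $\R\Gamma_\crys(S/\widehat{A}_p)\simeq\breve{\Omega}_{D/\widehat{A}_p}^*$; moreover the top arrow was defined in \cref{lem:RationalisedqCrystalline} precisely by transporting along $\iota$ under these two identifications. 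Hence it suffices to check that the isomorphism of complexes induced by $\iota$ between $(\q\breve{\Omega}_{\q D/\widehat{A}_p,\square}^*\otimes_\IZ\IQ)_{(q-1)}^\complete$ and $(\breve{\Omega}_{D/\widehat{A}_p}^*\otimes_\IZ\IQ)\qpower$ — obtained by transporting the $q$-connection along $\iota$ and using functoriality of PD-de Rham complexes — coincides with the explicit isomorphism of \cref{lem:RationalisedqDeRham}.

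For that, I would first recall that $\iota$ is pinned down by density: the image of $P\otimes_\IZ\IQ$ is $p$-adically dense in every quotient $(\q D\otimes_\IZ\IQ)/(q-1)^n$ and in $(D\otimes_\IZ\IQ)\qpower/(q-1)^n$, and on $P\otimes_\IZ\IQ$ the map $\iota$ is the tautological identification coming from $P\to D$ and $P\to\q D$ — this is exactly how the map was built in \cref{lem:RationalisedqCrystalline} and how the target of the diagonal maps was described in \cref{lem:RationalisedqDeRham}. Next, I would compare the two connections: under $\iota$, the partial $q$-derivative $\q\partial_i$ on $(\q D\otimes_\IZ\IQ)_{(q-1)}^\complete$ corresponds to an $A\qpower$-linear operator on $(D\otimes_\IZ\IQ)\qpower$ which on the dense subalgebra $P\otimes_\IZ\IQ$ — where the computation of \cite[Lemma~12.4]{BMS1} applies verbatim — equals $\psi_i\circ\partial_i$ with $\psi_i=\log(q)/(q-1)+\sum_{n\geqslant 2}(\log(q)^n/n!(q-1))(\partial_iT_i)^{(n-1)}$ an invertible automorphism. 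Here one uses that $\q\partial_i$ and $\partial_i$ extend continuously to the completions, which is the content of \cref{lem:RationalisationTechnicalII} and \cref{rem:qDeRhamRationalisationTechnical} together with the convergence estimates of \cref{lem:RationalisationTechnicalI}. By density this identity of operators then holds on all of $(D\otimes_\IZ\IQ)\qpower$, and feeding it into the general Koszul-complex lemma $\Kos^*(M,(g_j))\cong\Kos^*(M,(h_jg_j))$ recalled in \cref{par:RationalisedqDeRham} produces exactly the isomorphism of \cref{lem:RationalisedqDeRham}. Thus the inner, and hence the outer, square commutes.

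Finally, I would remove the dependence on the chosen presentation: the comparison quasi-isomorphisms of \cite[Theorem~16.22]{Prismatic} and the classical crystalline-versus-PD-de-Rham comparison are compatible with change of the pair $(P,\square)$, and the equivalences of \cref{lem:RationalisedqCrystalline} and \cref{lem:RationalisedqDeRham} were constructed functorially (respectively, shown independent of the choice up to canonical equivalence). Since every $p$-completely smooth $S$ admits such a presentation, the commuting square glues to a canonical one, as asserted.

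The main obstacle I anticipate is the bookkeeping in the second paragraph: one must make sure that the \emph{same} Koszul (Čech–Alexander) model underlies the comparison maps of \cite[Theorem~16.22]{Prismatic} and the classical comparison, so that transporting along $\iota$ literally yields the complex-level isomorphism of \cref{lem:RationalisedqDeRham} and not merely a homotopic one, and that the continuity used to propagate the operator identity from $P\otimes_\IZ\IQ$ to the completion is precisely the one furnished by \cref{lem:RationalisationTechnicalI} and \cref{lem:RationalisationTechnicalII}. Once those identifications are lined up, the rest is a routine, if somewhat tedious, diagram chase.
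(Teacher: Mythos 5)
Your reduction rests on a mischaracterisation of how the top horizontal arrow is constructed. In the proof of \cref{lem:RationalisedqCrystalline}, the equivalence $(\R\Gamma_\qcrys\lotimes_\IZ\IQ)_{(q-1)}^\complete\simeq(\R\Gamma_\crys\lotimes_\IZ\IQ)\qpower$ is obtained by applying the ring isomorphism $\iota\colon (D\lotimes_\IZ\IQ)\qpower\to(\q D\lotimes_\IZ\IQ)_{(q-1)}^\complete$ levelwise to the \v Cech--Alexander cosimplicial objects $\Tot(\q D^\bullet)$ and $\Tot(D^\bullet)$ — i.e.\ to the $0$\textsuperscript{th} \emph{rows} of the bicomplexes — not ``by transporting along $\iota$ under the two identifications'' with the PD-de Rham complexes (the $0$\textsuperscript{th} \emph{columns}). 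Those two computations of crystalline-type cohomology (\v Cech--Alexander vs.\ Poincaré-lemma/de-Rham) are related \emph{only} through the ambient double complex, and this relationship is precisely what the vertical arrows of the square encode. So your ``hence it suffices to check'' step is not a reduction at all; the thing you defer to the bottom row is the entire content of the lemma.

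What the paper actually does is form the cosimplicial complexes $M^{\bullet,*}=\breve{\Omega}_{D^\bullet/\widehat{A}_p}^*$ and $\q M^{\bullet,*}=\q\breve{\Omega}_{\q D^\bullet/\widehat{A}_p,\square}^*$, observe that $\Tot$ of each is quasi-isomorphic both to its $0$\textsuperscript{th} column (Poincaré lemma) and to $\Tot$ of its $0$\textsuperscript{th} row (\v Cech--Alexander), and then note that applying \cref{lem:RationalisedqDeRham} column-by-column gives a single isomorphism of cosimplicial complexes $(\q M^{\bullet,*}\otimes_\IZ\IQ)_{(q-1)}^\complete\cong(M^{\bullet,*}\otimes_\IZ\IQ)\qpower$ that simultaneously recovers the top arrow (on $0$\textsuperscript{th} rows) and the bottom arrow (on $0$\textsuperscript{th} columns). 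That single coherent object is what makes the square commute. You flag exactly this point in your last paragraph — ``make sure that the same Koszul (\v Cech--Alexander) model underlies the comparison maps'' — but there you treat it as routine bookkeeping to be done afterwards, when in fact supplying that common model \emph{is} the proof. Your density argument and the $\log(q)$-series comparison of $\q\partial_i$ with $\partial_i$ are correct and are exactly the inputs used to produce the column-wise isomorphism, so the ingredients are all there; what's missing is the bicomplex scaffolding that turns them into a proof of the commutativity rather than a proof of a different, smaller statement.
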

	\begin{proof}
		Let $P^\bullet$ be the degreewise $p$-completed \v Cech nerve of $\widehat{A}_p\rightarrow P$ and let $J^\bullet\subseteq P^\bullet$ be the kernel of the augmentation $P^\bullet\twoheadrightarrow S$. Let $D^\bullet\coloneqq D_{P^\bullet}(J^\bullet)_p^\complete$ be the PD-envelope and let $\q D^\bullet$ be the corresponding $q$-PD-envelope. Finally, form the cosimplicial complexes
		\begin{equation*}
			M^{\bullet,*}\coloneqq \breve{\Omega}_{D^\bullet/\smash{\widehat{A}}_p}^*\quad\text{and}\quad\q M^{\bullet,*}\coloneqq  \q\breve{\Omega}_{\q D^\bullet/\smash{\widehat{A}}_p, \square}^*\,.
		\end{equation*}
		In the proof of \cite[Theorem~\textup{\chref{16.22}}]{Prismatic} it's shown that the totalisation $\Tot(\q M^{\bullet,*})$ of $\q M^{\bullet,*}$ is quasi-isomorphic to the $0$\textsuperscript{th} column $\q M^{0,*}\cong \q\breve{\Omega}_{\q D/\smash{\widehat{A}}_p, \square}^*$, but also to the totalisation of the $0$\textsuperscript{th} row $\Tot(\q M^{\bullet,0})\cong \Tot(\q D^\bullet)$. This provides the desired quasi-isomorphism
		\begin{equation*}
			\q\breve{\Omega}_{\q D/\smash{\widehat{A}}_p, \square}^*\simeq \Tot(\q M^{\bullet,*})\simeq \Tot(\q D^\bullet)\simeq \R\Gamma_\qcrys\bigl(S/\widehat{A}\qpower\bigr)\,.
		\end{equation*}
		In the exact same way, the quasi-isomorphism $\breve{\Omega}_{D/\smash{\widehat{A}}_p}^*\simeq \R\Gamma_\crys(S/\widehat{A}_p)$ is constructed using the cosimplicial complex $M^{\bullet,*}$ in \cite[\stackstag{07LG}]{Stacks}. Applying \cref{lem:RationalisedqDeRham} column-wise gives an isomorphism of cosimplicial complexes $(\q M^{\bullet,*}\otimes_{\IZ}\IQ)_{(q-1)}^\complete\cong (M^{\bullet,*}\otimes_{\IZ}\IQ)\qpower$. On $0$\textsuperscript{th} columns, this is the isomorphism from \cref{lem:RationalisedqDeRham}, whereas on $0$\textsuperscript{th} rows it is the isomorphism from \cref{lem:RationalisedqCrystalline}. This proves commutativity of the diagram.
	\end{proof}
	
	\subsection{The global \texorpdfstring{$q$}{q}-de Rham complex}
	From now on, we no longer work in a $p$-complete setting, but we keep Convention~\cref{conv:ImplicitCompletion}. 
	
	\begin{numpar}[Doing \cref{subsec:RationalisedqDeRham} for all primes at once.]\label{par:AllPrimesAtOnce}
		Fix $n$ and put $N_n\coloneqq \prod_{\ell\leqslant n}\ell^{2(\ell^{n-1}+\dotsb+\ell+1)}$, where the product is taken over all primes $\ell\leqslant n$. Now fix an arbitrary prime $p$ and let $P$, $D$, and $\q D$ be as in \cref{subsec:RationalisedqDeRham}. We've verified that the map $P\rightarrow \q D\rightarrow \q D/(q-1)^n\otimes_\IZ\IQ$ admits a unique continuous extension
		\begin{equation*}
			\begin{tikzcd}
				P\rar\dar & \q D/(q-1)^n\otimes_\IZ\IQ\\
				D\urar[dashed] &
			\end{tikzcd}
		\end{equation*}
		But in fact, \cref{lem:RationalisationTechnicalII} shows that this extension already factors through $N_n^{-1}\q D/(q-1)^n$, no matter how our implicit prime $p$ is chosen. This observation allows us to construct canonical maps $\dR_{\smash{\widehat{R}}_p/\smash{\widehat{A}}_p}\rightarrow N_n^{-1}\qdeRham_{\smash{\widehat{R}}_p/\smash{\widehat{A}}_p}/(q-1)^n$ for all animated rings $R$ and all $n\geqslant 0$. Taking the product over all $p$ and the limit over all $n$ allows us to construct a map
		\begin{equation*}
			\biggl(\prod_p\qdeRham_{\smash{\widehat{R}}_p/\smash{\widehat{A}}_p}\lotimes_{\IZ}\IQ\biggr)_{(q-1)}^\complete\overset{\simeq}{\longleftarrow} \biggl(\prod_{p}\deRham_{\smash{\widehat{R}}_p/\smash{\widehat{A}}_p}\lotimes_{\IZ}\IQ\biggr)\qpower\,.
		\end{equation*}
		compatible with the one from \cref{lem:RationalisedqCrystalline}. 
		This map is an equivalence as indicated, as one immediately checks modulo $q-1$. 
	\end{numpar}
	\begin{con}\label{con:GlobalqDeRham}
		For all smooth $A$-algebras $S$, we construct the \emph{$q$-de Rham complex of $S$ over $A$} as the pullback
		\begin{equation*}
			\begin{tikzcd}
				\qOmega_{S/A}\rar\dar\drar[pullback] & \prod_{p}\qOmega_{\smash{\widehat{S}}_p/\smash{\widehat{A}}_p}\dar\\
				\bigl(\Omega_{S/A}^*\lotimes_{\IZ}\IQ\bigr)\qpower\rar & \biggl(\prod_{p}\Omega_{{\widehat{S}}_p/{\widehat{A}}_p}^*\lotimes_\IZ\IQ\biggr)\qpower
			\end{tikzcd}
		\end{equation*}
		Here the right vertical map is the one constructed in \cref{par:AllPrimesAtOnce} above.
	\end{con}
	\begin{proof}[Proof of \cref{thm:qDeRhamGlobal}]
		We've constructed $\qOmega_{S/A}$ in \cref{con:GlobalqDeRham}. Functoriality is clear since all constituents of the pullback are functorial and so are the arrows between them. Modulo $(q-1)$, the pullback reduces to the usual arithmetic fracture square for $\Omega_{R/A}$, proving \cref{enum:GlobalqDeRhamDeformsDeRham}. By construction, $(\qOmega_{S/A})_p^\complete\simeq \qOmega_{\smash{\widehat{S}}_p/\smash{\widehat{A}}_p}$, and so \cref{enum:GlobalqDeRhamPrismatic} follows from \cite[Theorem~\chref{16.18}]{Prismatic}. Part~\cref{enum:GlobalqDeRhamRational} follows again from the construction.
		
		For \cref{enum:GlobalqDeRhamCoordinates}, suppose $S$ is equipped with an étale framing $\square\colon A[x_1,\dotsc,x_d]\rightarrow S$. The same argument as in \cref{par:RationalisedqDeRham} provides an isomorphism $(\qOmega_{S/A, \square}^*\otimes_{\IZ}\IQ)_{(q-1)}^\complete\cong (\Omega_{S/A}^*\otimes_{\IZ}\IQ)\qpower$. The compatibility check from \cref{lem:CompatibilityCheck} now allows us to identify the pullback square for $\qOmega_{S/A}$ with the usual arithmetic fracture square for the complex $\qOmega_{S/A, \square}^*$, completed at $(q-1)$. This shows $\qOmega_{S/A}\simeq \qOmega_{S/A, \square}^*$, as desired.
		
		For the additional assertion, it's clear from the construction that a base change morphism 
		\begin{equation*}
			\bigl(\qOmega_{-/A}\lotimes_{A}{A'}\bigr)_{(q-1}^\complete\longrightarrow \qOmega_{(-\otimes_AA')/A'}
		\end{equation*}
		exists and that it reduces modulo $(q-1)$ to the usual base change equivalence for the de Rham complex. In particular, it must be an equivalence as well. This finishes the proof.
	\end{proof}
	\begin{numpar}[Upgrade to derived commutative $A\qpower$-algebras.]\label{par:DerivedCommutativeLift}
		Let us explain how to lift the $q$-de Rham complex to a functor
		\begin{equation*}
			\qOmega_{-/A}\colon \cat{Sm}_A\longrightarrow \bigl(\DAlg_{A\qpower}\bigr)_{(q-1)}^\complete
		\end{equation*}
		into the $\infty$-category of $(q-1)$-complete derived commutative $A\qpower$-algebras. The key observation is that all limits and colimits in derived commutative $A\qpower$-algebras can be computed on the level of underlying $\IE_\infty$-$A\qpower$-algebras by \cite[Proposition~\chref{4.2.27}]{RaksitFilteredCircle}. Thus, by compatibility with pullbacks, it'll be enough to lift the three components of the pullback from \cref{con:GlobalqDeRham} to derived commutative $A\qpower$-algebras. By compatibility with cosimplicial limits, it'll be enough to construct functorial cosimplicial realisations of $\Omega_{S/A}$, $\Omega_{\smash{\widehat{S}}_p/\smash{\widehat{A}}_p}$, and $\qOmega_{\smash{\widehat{S}}_p/\smash{\widehat{A}}_p}$.
		
		For the latter two, the comparison with ($q$-)crystalline cohomology easily provides such realisations. But the same trick works just as well for $\Omega_{S/A}$: Let $P\twoheadrightarrow S$ be any surjection from an ind-smooth-$A$-algebra (which can be chosen functorially; for example, take $P\coloneqq A[\{T_s\}_{s\in S}]$), form the \v Cech nerve $P^\bullet$ of $A\rightarrow P$, let $J^\bullet\subseteq P^\bullet$ be the kernel of the augmentation $P^\bullet\twoheadrightarrow S$, and let $D^\bullet\coloneqq D_{P^\bullet}(J^\bullet)$ be its PD-envelope. Then $\Omega_{S/A}\simeq \Tot D_{P^\bullet}(J^\bullet)$ holds by a straightforward adaptation of the proof of \cite[Theorem~\chref{16.22}]{Prismatic}: Namely, one considers the cosimplicial complex
		\begin{equation*}
			M^{\bullet,*}\coloneqq \breve{\Omega}_{D^\bullet/A}^*
		\end{equation*}
		and checks that each column $M^{i,*}$ is quasi-isomorphic to $M^{0,*}$ (this is the Poincaré lemma) and that each row $M^{\bullet,j}$ for $j>0$ is nullhomotopic (e.g.\ by \cite[Tag~\chref{07L7}]{Stacks} applied to the cosimplicial ring $D^\bullet$).
		
		In fact, this argument can be used to show something even better: Since the de Rham complex $\Omega_{S/A}^*$ and its PD-variants $\breve{\Omega}_{D_{P^\bullet}(J^\bullet)/A}^*$ are commutative differential-graded $A$-algebras, they define elements in Raksit's $\infty$-category $\cat{DG}_-\DAlg_{A}$ \cite[Definition~\chref{5.1.10}]{RaksitFilteredCircle}, which gives another construction of a derived commutative algebra structure on $\Omega_{S/A}$. But the argument above shows that $\Omega_{S/A}\simeq \Tot D_{P^\bullet}(J^\bullet)$ holds true as derived commutative $A$-algebras.
	\end{numpar}
	
	\begin{numpar}[Derived global $q$-de Rham complexes.]\label{par:DerivedGlobalqDeRham}
		We let $\qdeRham_{-/A}$ denote the animation of $\qOmega_{-/A}$. For all animated $A$-algebras $R$, we call $\qdeRham_{R/A}$ the \emph{derived $q$-de Rham complex of $R$ over $A$}. By construction, it sits inside a pullback square
		\begin{equation*}
			\begin{tikzcd}
				\qdeRham_{R/A}\rar\dar\drar[pullback] & \prod_{p}\qdeRham_{\smash{\widehat{R}}_p/\smash{\widehat{A}}_p}\dar\\
				\bigl(\deRham_{R/A}\lotimes_{\IZ}\IQ\bigr)\qpower\rar & \biggl(\prod_{p}\deRham_{\smash{\widehat{R}}_p/\smash{\widehat{A}}_p}\lotimes_\IZ\IQ\biggr)\qpower
			\end{tikzcd}
		\end{equation*}
		where the right vertical map again comes from \cref{par:AllPrimesAtOnce}. It's still true that $\qdeRham_{-/A}/(q-1)\simeq \deRham_{-/A}$ and that $\qdeRham_{-/A}$ lifts canonically to $(q-1)$-complete derived commutative $A\qpower$-algebras (this follows immediately from compatibility with colimits as explained in \cref{par:DerivedCommutativeLift}).
		
		However, in contrast to the $p$-complete situation, it's no longer true that the values on smooth $A$-algebras remain unchanged under animation (only the values on polynomial algebras do). In fact, this already fails for the derived de Rham complex in characteristic~$0$. If $\qdeRham_{R/A}$ can be equipped with a $q$-deformation of the Hodge filtration, this problem can be fixed by considering the $q$-Hodge-completed derived $q$-de Rham complex $\qhatdeRham_{R/A}$ (see \cref{prop:Letaq-1}\cref{enum:qHodgeCompletion}).
	\end{numpar}

	\newpage
	
	\section{Habiro-completion}\label{appendix:HabiroCompletion}
	In this appendix we'll study the \emph{Habiro completion functor} $(-)_\Hh^\complete\coloneqq\limit_{m\in\IN}(-)_{(q^m-1)}^\complete$ and show that it behaves for all practical purposes like completion at a finitely generated ideal. We'll also study Habiro completion in the setting of solid condensed mathematics.
	
	
	In the following, we'll use the notion of \emph{killing an idempotent algebra}, which is nicely reviewed in \cite[Lecture~\href{https://youtu.be/38PzTzCiMow?list=PLx5f8IelFRgGmu6gmL-Kf_Rl_6Mm7juZO&t=5523}{13}]{AnalyticStacks}.
	
	\begin{numpar}[Habiro-complete spectra.]\label{par:HabiroComplete}
		Following Manin \cite[\S{\chpageref[0.2]{3}}]{Manin}, let us denote the localisation $\IZ\left[q^{\pm 1},\{(q^m-1)^{-1}\}_{m\in\IN}\right]$ by $\Rr$ and let $\IS_\Rr\coloneqq \IS\left[q^{\pm 1},\{(q^m-1)^{-1}\}_{m\in\IN}\right]$ be its obvious spherical lift. Then $\IS_\Rr$ is an idempotent algebra over $\IS[q^{\pm 1}]$ and we define the \emph{$\infty$-category of Habiro-complete spectra}
		\begin{equation*}
			\Mod_{\IS_\Hh}(\Sp)_\Hh^\complete\subseteq \Mod_{\IS[q^{\pm 1}]}(\Sp)
		\end{equation*}
		to be the full sub-$\infty$-category obtained by killing the idempotent $\IS_\Rr$. That is, $\Mod_{\IS_\Hh}(\Sp)_\Hh^\complete$ consists of those $M\in \Mod_{\IS[q^{\pm 1}]}(\Sp)$ such that $\Hom_{\IS[q^{\pm 1}]}(\IS_\Rr,M)\simeq 0$.
		
		It'll be apparent from \cref{lem:HabiroComplete} below that the inclusion $\Mod_{\IS_\Hh}(\Sp)_\Hh^\complete\subseteq \Mod_{\IS[q^{\pm 1}]}(\Sp)$ has a left adjoint $(-)_\Hh^\complete\coloneqq\limit_{m\in\IN}(-)_{(q^m-1)}^\complete$ which we call \emph{Habiro-completion}. When applied to the tensor unit, we obtain the \emph{spherical Habiro ring}
		\begin{equation*}
			\IS_\Hh\coloneqq \limit_{m\in\IN}\IS[q]_{(q^m-1)}^\complete\,.
		\end{equation*}
		Note that $q$ is already a unit in $\IS_\Hh$, so it doesn't matter whether we complete $\IS[q]$ or $\IS[q^{\pm 1}]$. We let $-\cotimes_{\IS_\Hh}-$ denote the Habiro-completed tensor product in $\Mod_{\IS_\Hh}(\Sp)_\Hh^\complete$. We also let $\widehat{\Dd}(\Hh)\subseteq \Dd(\IZ[q^{\pm 1}])$ denote the full sub-$\infty$-category of Habiro-complete objects and denote its completed tensor product by $-\clotimes_\Hh-$.
	\end{numpar}
	\begin{lem}\label{lem:HabiroComplete}
		For a $\IS[q^{\pm 1}]$-module spectrum $M$, the following conditions are equivalent.
		\begin{alphanumerate}
			\item $M$ is Habiro-complete.\label{enum:HabiroComplete}
			\item $\Hom_{\IS[q^{\pm 1}]}(\IS_\Rr,M)\simeq 0$.\label{enum:HabiroCompleteKillingSRr}
			\item The canonical $\IS[q^{\pm 1}]$-module morphism
			\begin{equation*}
				M\longrightarrow \limit_{n\geqslant 1}M/(q;q)_n\simeq \limit_{m\in\IN}M_{(q^m-1)}^\complete
			\end{equation*}
			is an equivalence. Here $(a;q)_n\coloneqq (1-a)(1-aq)\dotsm(1-aq^{n-1})$ denotes the $q$-Pochhammer symbol, as usual.\label{enum:HabiroCompletion}
			\item All homotopy groups $\pi_n(M)$, $n\in\IZ$, are Habiro-complete.\label{enum:HabiroCompleteHomotopyGroups}
		\end{alphanumerate}
	\end{lem}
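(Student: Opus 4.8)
The plan is to prove the four conditions equivalent by a short chain, using the idempotent-algebra formalism for the hard directions and elementary homotopy-group bookkeeping for the last one. First I would establish the finite-stage identity
\begin{equation*}
	\cofib\bigl(M\xrightarrow{(q^m-1)^n}M\bigr)\simeq M/(q^m-1)^n
\end{equation*}
and note the well-known factorisation $(q^m-1)^n \mid (q;q)_{mn}$ and conversely $(q;q)_N$ divides a suitable product of $(q^m-1)$'s, so that the two towers $\{M/(q^m-1)^n\}$ and $\{M/(q;q)_n\}$ are pro-isomorphic; this gives the identification of limits appearing in~\cref{enum:HabiroCompletion} and shows that Habiro-completion as defined in~\cref{par:HabiroComplete} really is $\limit_{m}(-)_{(q^m-1)}^\complete$. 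The equivalence \cref{enum:HabiroComplete}$\Leftrightarrow$\cref{enum:HabiroCompleteKillingSRr} is then the definition of killing the idempotent $\IS_\Rr$ together with the standard fact (reviewed in the cited lecture) that the full subcategory obtained by killing an idempotent algebra $E$ over a base $R$ consists exactly of the $M$ with $\Hom_R(E,M)\simeq 0$, and that the localisation onto it is $M\mapsto \fib(M\to M\otimes_R E)$. Here $\IS_\Rr = \IS[q^{\pm1}]\otimes_{\IS[q]}\IS[q,\{(q^m-1)^{-1}\}]$ is filtered-colimit of the localisations, so $M\otimes_{\IS[q^{\pm1}]}\IS_\Rr \simeq \colimit$ over the multiplicative system generated by the $(q^m-1)$, and $\fib$ of $M\to$ this colimit is precisely $\limit_n M/(q;q)_n$ by the usual Milnor-sequence / fracture argument; this simultaneously yields \cref{enum:HabiroCompleteKillingSRr}$\Leftrightarrow$\cref{enum:HabiroCompletion} and the existence of the left adjoint $(-)_\Hh^\complete$ claimed in~\cref{par:HabiroComplete}.

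It remains to fold in \cref{enum:HabiroCompleteHomotopyGroups}. For \cref{enum:HabiroCompletion}$\Rightarrow$\cref{enum:HabiroCompleteHomotopyGroups} I would apply $\pi_*$ to the equivalence $M\simeq \limit_n M/(q;q)_n$ and analyse the resulting $\limit^1$: since each $(q;q)_n$-tower is Mittag--Leffler after one shift (the transition maps on $\pi_*(M/(q;q)_n)$ are eventually surjective modulo the image of the previous stage), the $\limit^1$-term contributes only in one adjacent degree, and a degree-by-degree inspection shows $\pi_n(M)$ sits in its own Habiro-completion, i.e. is Habiro-complete as a $\IZ[q^{\pm1}]$-module — here one uses that the abelian notion ``$\pi_n(M)$ Habiro-complete'' is by definition $\pi_n(M)\xrightarrow{\simeq}\limit_m \pi_n(M)/(q^m-1)^{\text{der}}$ computed in $\Dd(\IZ[q^{\pm1}])$, which for a static module is exactly condition~\cref{enum:HabiroCompletion} applied to the Eilenberg--MacLane spectrum. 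For the converse \cref{enum:HabiroCompleteHomotopyGroups}$\Rightarrow$\cref{enum:HabiroComplete}, I would filter $M$ by its Postnikov tower: each Postnikov section $\tau_{\leqslant n}M$ is built from Eilenberg--MacLane spectra on the $\pi_k(M)$, each of which is Habiro-complete by hypothesis; since Habiro-complete spectra form a full subcategory closed under limits (being the right orthogonal complement of $\IS_\Rr$), finite Postnikov sections are Habiro-complete, and then $M\simeq \limit_n \tau_{\leqslant n}M$ is too — provided the tower is well-behaved, which it is because $M$, being an $\IS[q^{\pm1}]$-module with Habiro-complete homotopy groups, is in particular $\limit$ of its truncations with no lim$^1$ obstruction to completeness.

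The main obstacle I anticipate is the careful handling of the two $\limit^1$-terms: one in passing between the $(q^m-1)^n$-towers and the $(q;q)_n$-tower (to justify that they have the same limit on the nose, not merely up to a bounded error), and one in the Postnikov-tower argument for \cref{enum:HabiroCompleteHomotopyGroups}$\Rightarrow$\cref{enum:HabiroComplete}. Both are controlled by a pro-Mittag--Leffler statement: the key lemma is that for each fixed degree, the inverse system $\{\pi_*(M/(q;q)_n)\}_n$ has surjective (hence ML) transition maps after replacing it by the pro-isomorphic system of $(q^m-1)^n$-quotients, which in turn follows because multiplication by $(q^m-1)$ is injective on the relevant subquotients of a Habiro-complete module — i.e. $\Hh$ has no $(q^m-1)$-torsion, a fact one can extract from~\cref{thm:HabiroqWittComparison} or prove directly from the $\ell$-adic and rational descriptions in~\cref{lem:ComparisonWithGSWZ}. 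Once that torsion-freeness is in hand, all the derived completions collapse to ordinary ones in the static case and the bookkeeping becomes routine.
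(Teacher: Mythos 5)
Your treatment of (a)$\Leftrightarrow$(b)$\Leftrightarrow$(c) is essentially the paper's: the paper writes $\fib(\IS[q^{\pm1}]\to\IS_\Rr)$ as a shifted colimit of the quotients $\IS[q^{\pm1}]/(q;q)_n$ and dualises, which is the same content as your filtered-colimit-of-localisations plus Milnor-sequence argument. The divergence — and the gap — is in your handling of condition~\cref{enum:HabiroCompleteHomotopyGroups}.

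The crucial fact the paper's proof turns on, and which you never use, is that $\Rr$ has projective dimension one over $\IZ[q^{\pm1}]$: there is an explicit two-term resolution by free modules, so $\RHom_{\IZ[q^{\pm1}]}(\Rr,N)$ is concentrated in degrees $[-1,0]$ for \emph{any} $\IZ[q^{\pm1}]$-module $N$. The paper then filters $\Hom_{\IS[q^{\pm1}]}(\IS_\Rr,M)$ by the Postnikov filtration of $M$, checks completeness and exhaustivity of this filtration (the exhaustivity is nontrivial and uses connectivity of $\IS_\Rr$ to control the coconnective cofibres $\tau_{\leqslant n-1}M$), and then the two-term resolution forces the associated graded $\Sigma^n\RHom(\Rr,\pi_n(M))$ into two adjacent degrees, so the spectral sequence collapses and yields short exact sequences
\begin{equation*}
	0\longrightarrow\Ext^1_{\IZ[q^{\pm1}]}\bigl(\Rr,\pi_{n+1}(M)\bigr)\longrightarrow\pi_n\Hom_{\IS[q^{\pm1}]}(\IS_\Rr,M)\longrightarrow\Hom_{\IZ[q^{\pm1}]}\bigl(\Rr,\pi_n(M)\bigr)\longrightarrow 0\,.
\end{equation*}
This immediately gives both implications of (a)$\Leftrightarrow$(d), with no Mittag--Leffler hypothesis and no torsion-freeness input.

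Your proposal has two concrete problems. First, the Mittag--Leffler claim for (c)$\Rightarrow$(d) is unjustified: for a general $\IS[q^{\pm1}]$-module $M$ there is no reason the tower $\{\pi_*(M/(q;q)_n)\}_n$ should be ML ``after one shift'' (the analogous statement already fails for $p$-completion of general spectra), and even granting it you would only control $\Hom(\Rr,\pi_n(M))$ — you never address the vanishing of $\Ext^1(\Rr,\pi_n(M))$, which is also part of what ``$\pi_n(M)$ is Habiro-complete'' means once you unwind it. Second, the Postnikov argument for (d)$\Rightarrow$(a) breaks down when $M$ is not bounded below: if ``finite Postnikov section'' means $\tau_{[k,n]}M$, then reassembling $M$ from these requires a colimit in the $k\to-\infty$ direction, and Habiro-completeness is closed under limits but not colimits. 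The paper's exhaustivity argument, which exploits the connectivity of $\IS_\Rr$ and the increasing coconnectivity of $\tau_{\leqslant n}M$, is precisely what is needed to make this step rigorous. Finally, leaning on the $\ell$-adic or $q$-Witt descriptions of $\Hh$ to establish torsion-freeness would be circular: \cref{thm:HabiroqWittComparison} itself invokes \cref{cor:HabiroCompleteDerivedNakayama}, which is a corollary of the present lemma. The projective-dimension-one observation is the missing idea that makes a self-contained argument possible.
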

	\begin{proof}
		The proof is analogous to \cite[\stackstag{091P}]{Stacks}. Equivalence of~\cref{enum:HabiroComplete} and~\cref{enum:HabiroCompleteKillingSRr} follows by definition of what it means to kill the idempotent $\IS_\Rr$. Condition~\cref{enum:HabiroCompleteKillingSRr} is equivalent to $M\simeq \Hom_{\IS[q^{\pm 1}]}(\fib(\IS[q^{\pm 1}]\rightarrow \IS_\Rr),M)$. Writing
		\begin{equation*}
			\fib\bigl(\IS[q^{\pm 1}]\rightarrow \IS_\Rr\bigr)\simeq \Sigma^{-1}\colimit\left(\IS[q^{\pm 1}]/(q;q)_1\xrightarrow{(1-q^2)}\IS[q^{\pm 1}]/(q;q)_2\xrightarrow{(1-q^3)}\dotsb\right)
		\end{equation*}
		we see that this condition is equivalent to $M\simeq \limit_{n\geqslant 1}M/(q;q)_n$, thus \cref{enum:HabiroCompleteKillingSRr} $\Leftrightarrow$ \cref{enum:HabiroCompletion}. Finally, to show \cref{enum:HabiroComplete} $\Leftrightarrow$ \cref{enum:HabiroCompleteHomotopyGroups}, consider the Postikov filtration $\tau_{\geqslant \star}(M)$. This allows us to define a descending filtration on $\Hom_{\IS[q^{\pm 1}]}(\IS_\Rr,M)$ via
		\begin{equation*}
			\fil^\star \Hom_{\IS[q^{\pm 1}]}(\IS_\Rr,M)\coloneqq \Hom_{\IS[q^{\pm 1}]}\bigl(\IS_\Rr,\tau_{\geqslant \star}(M)\bigr)\,.
		\end{equation*}
		This filtration is complete, because $0\simeq \limit_{n\rightarrow \infty}\tau_{\geqslant n}(M)$ can be pulled into $\Hom_{\IS[q^{\pm 1}]}(\IS_\Rr,-)$. To show that the filtration is exhaustive, we need to check that $M\simeq \colimit_{n\rightarrow -\infty}\tau_{\geqslant n}(M)$  can similarly be pulled into $\Hom_{\IS[q^{\pm 1}]}(\IS_\Rr,-)$. This works because $\IS_\Rr$ is connective, whereas the cofibres $\cofib(\tau_{\geqslant n}(M)\rightarrow M)\simeq \tau_{\leqslant n-1}(M)$ become more and more coconnective as $n\rightarrow -\infty$.
		
		Since each $\pi_n(M)$ is already a $\IZ[q^{\pm 1}]$-module, the associated graded of this filtration is given by
		\begin{equation*}
			\gr^n\Hom_{\IS[q^{\pm 1}]}(\IS_\Rr,M)\simeq \Hom_{\IS[q^{\pm 1}]}\bigl(\IS_\Rr,\Sigma^n\pi_n(M)\bigr)\simeq \Sigma^n\RHom_{\IZ[q^{\pm 1}]}\bigl(\Rr,\pi_n(M)\bigr)\,.
		\end{equation*}
		Now $\Rr$ has a two-term resolution by free $\IZ[q^{\pm 1}]$-modules. For example, take
		\begin{equation*}
			0\longrightarrow \bigoplus_{i\geqslant 0}\IZ[q^{\pm 1}]\longrightarrow \bigoplus_{i\geqslant 0}\IZ[q^{\pm 1}]\longrightarrow \Rr\longrightarrow 0\,,
		\end{equation*}
		where the first arrow sends $(a_i)_{i\geqslant 0}\mapsto (a_i-(q;q)_ia_{i-1})_{i\geqslant 0}$ (with $a_{-1}\coloneqq 0$) and the second arrow sends $(a_i)_{i\geqslant 0}\mapsto\sum_{i\geqslant 0}a_i/(q;q)_i$. It follows that $\Sigma^n\RHom_{\IZ[q^{\pm 1}]}(\Rr,\pi_n(M))$ is concentrated in homological degrees $[n-1,n]$. Combined with the fact that the filtration is complete and exaustive%
		\footnote{Alternatively, observe that the spectral sequence associated to the filtered spectrum $ \fil^\star \Hom_{\IS[q^{\pm 1}]}(\IS_\Rr,M)$ collapses on the $E^2$-page.}%
		, we obtain short exact sequences
		\begin{equation*}
			0\longrightarrow \Ext_{\IZ[q^{\pm 1}]}^1\bigl(\Rr,\pi_{n+1}(M)\bigr)\longrightarrow\pi_n\Hom_{\IS[q^{\pm 1}]}(\IS_\Rr,M)\longrightarrow \Hom_{\IZ[q^{\pm 1}]}\bigl(\Rr,\pi_n(M)\bigr)\longrightarrow 0
		\end{equation*}
		for all $n\in\IZ$. Therefore, $\Hom_{\IS[q^{\pm 1}]}(\IS_\Rr,M)$ vanishes if and only if $\RHom_{\IZ[q^{\pm 1}]}(\Rr,\pi_n(M))$ vanishes for all $n\in\IZ$, which proves that $M$ is Habiro-complete if and only if each $\pi_n(M)$ is.
	\end{proof}
	We have the following \enquote{derived Nakayama lemma}.
	\begin{lem}\label{lem:HabiroCompleteDerivedNakayama}
		Let $M$ be a Habiro-complete spectrum. If $M/\Phi_m(q)\simeq 0$ for all $m\in\IN$, then $M\simeq 0$. If $M$ is an ordinary $\IZ[q^{\pm 1}]$-module, the same conclusion is already true if the quotients are taken in the underived sense.
	\end{lem}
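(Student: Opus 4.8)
The plan is to derive both statements from the characterisation of Habiro-completeness in \cref{lem:HabiroComplete}\cref{enum:HabiroCompletion}, i.e.\ $M\simeq\limit_{n\geqslant 1}M/(q;q)_n$, combined with the cyclotomic factorisation of the $q$-Pochhammer symbol: over $\IZ[q^{\pm 1}]$ one has $q^k-1=\prod_{d\mid k}\Phi_d(q)$, so $(q;q)_n=(-1)^n\prod_{k=1}^n\prod_{d\mid k}\Phi_d(q)$ is a unit times a finite product of the polynomials $\Phi_m(q)$.

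For the derived statement it suffices to show $M/(q;q)_n\simeq 0$ for every $n$, since $M$ is the limit of these. First I would record the elementary fact that for $f,g\in\IZ[q^{\pm 1}]$ there is a cofibre sequence $M/g\to M/(fg)\to M/f$, coming from the octahedral axiom applied to $M\xrightarrow{g}M\xrightarrow{f}M$; in particular $M/f\simeq 0$ and $M/g\simeq 0$ force $M/(fg)\simeq 0$, and modding out by a unit changes nothing. Applying this inductively to the cyclotomic factorisation of $(q;q)_n$, with the hypothesis $M/\Phi_m(q)\simeq 0$ furnishing the base case, gives $M/(q;q)_n\simeq 0$, hence $M\simeq 0$.

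For the underived statement the hypothesis becomes that $\Phi_m(q)\colon M\to M$ is surjective for all $m$, hence so are $q^k-1$ and therefore $(q;q)_n$ for all $k,n$. Since $M$ is static, the derived cofibre $M/(q;q)_n$ then has $\pi_0=0$ and $\pi_1(M/(q;q)_n)=M[(q;q)_n]$, i.e.\ $M/(q;q)_n\simeq\Sigma M[(q;q)_n]$ is concentrated in degree $1$. Feeding this into the Milnor sequence for $M\simeq\limit_n M/(q;q)_n$ identifies $\pi_0(M)=M$ with $\limit^1_n M[(q;q)_n]$, where the transition map $M[(q;q)_{n+1}]\to M[(q;q)_n]$ is multiplication by $1-q^{n+1}$. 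I expect this to be the one step needing an idea: these transition maps are in fact \emph{surjective}. Indeed, given $y\in M[(q;q)_n]$, choose $x\in M$ with $(1-q^{n+1})x=y$ using surjectivity of $1-q^{n+1}$ on $M$; then $(q;q)_{n+1}x=(q;q)_n y=0$, so $x\in M[(q;q)_{n+1}]$. A surjective tower is Mittag--Leffler, so $\limit^1_n M[(q;q)_n]=0$ and therefore $M=0$.

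The main obstacle is precisely this underived case: the naive idea of reducing it to the derived statement fails, because the underived hypothesis only yields $M/\Phi_m(q)\simeq\Sigma M[\Phi_m(q)]$, which need not vanish. One genuinely has to analyse the $\limit^1$-term arising when $M$ is recovered as a derived inverse limit, and the surjectivity of the transition maps in the tower $\bigl(M[(q;q)_n]\bigr)_n$ — an immediate consequence of surjectivity of the individual factors $1-q^{n+1}$ on $M$ — is exactly what makes that $\limit^1$ disappear.
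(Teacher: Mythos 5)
Your proof is correct, and both arguments hinge on the same observation: the hypothesis forces multiplication by $(q;q)_n$ to be surjective on $M$, and one then invokes the characterisation $M\simeq\limit_n M/(q;q)_n$ from \cref{lem:HabiroComplete}. The bookkeeping differs in two small ways. For the derived part, the paper first applies the usual derived Nakayama lemma to get $\widehat M_{\Phi_m(q)}\simeq 0$, hence $\widehat M_{(q^m-1)}\simeq 0$, and then takes the limit over~$m$; your octahedral induction kills each $M/(q;q)_n$ directly, which is the same idea packaged slightly more efficiently and, as a bonus, reuses the cyclotomic factorisation for both halves of the lemma. For the underived part, the paper argues by contradiction: assuming $M\neq 0$, surjectivity of $(q;q)_n$ makes the underived limit of the tower $(M\leftarrow M\leftarrow\dotsb)$ nonzero, hence the derived limit is nonzero, so $\Hom_{\IS[q^{\pm 1}]}(\IS_\Rr,M)\not\simeq 0$ and $M$ fails to be Habiro-complete. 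You instead compute directly that $\pi_0(M)\cong\lim^1_n M[(q;q)_n]$ via the Milnor sequence for $M\simeq\limit_n M/(q;q)_n$, and then show the transition maps $\cdot(1-q^{n+1})$ are surjective to kill the $\lim^1$ by Mittag--Leffler. This is the same phenomenon viewed from the other side — you are showing the $\lim^1$ that feeds $\pi_0(M)$ vanishes, whereas the paper shows the $\lim$ that would obstruct Habiro-completeness survives — and your version is a touch more explicit about exactly which inverse system is under control and why. Both are fully correct.
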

	\begin{proof}
		By the usual derived Nayama lemma, if $M/\Phi_m(q)\simeq 0$, then $M_{\Phi_m(q)}^\complete\simeq 0$, hence $M_{(q^m-1)}^\complete\simeq 0$. By \cref{lem:HabiroComplete}\cref{enum:HabiroCompletion}, this implies $M\simeq 0$. Now suppose $M$ is an ordinary $\IZ[q^{\pm 1}]$-module such that the underived quotients $M/\Phi_m(q)$ vanish for all $m\in\IN$. We argue as in \cite[\stackstag{09B9}]{Stacks}. The assumption implies that multiplication by $(q;q)_n$ is surjective on $M$ for all $n\geqslant 1$. It follows that the underived limit of
		\begin{equation*}
			\left(M\xleftarrow{(q;q)_1}M\xleftarrow{(q;q)_2}M\xleftarrow{(q;q)_3}\dotsb\right)
		\end{equation*}
		is non-zero. Then the derived limit is non-zero as well, which forces $\Hom_{\IS[q^{\pm 1}]}(\IS_\Rr,M)\not\simeq 0$, so $M$ is not Habiro-complete.
	\end{proof}
	\begin{cor}\label{cor:HabiroCompleteDerivedNakayama}
		Let $M$ be a Habiro-complete spectrum and fix $n\in\IZ$. If $\pi_n(M/\Phi_m(q))\cong 0$ for all $m\in\IN$, then already $\pi_n(M)\cong 0$.
	\end{cor}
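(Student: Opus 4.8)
\textbf{Proof plan for \cref{cor:HabiroCompleteDerivedNakayama}.} The idea is to reduce the statement about a single homotopy group to the full derived Nakayama lemma \cref{lem:HabiroCompleteDerivedNakayama} by truncating $M$ appropriately. First I would note that, since $M$ is Habiro-complete, each homotopy group $\pi_n(M)$ is a Habiro-complete $\IZ[q^{\pm 1}]$-module by \cref{lem:HabiroComplete}\cref{enum:HabiroCompleteHomotopyGroups}. So it suffices to show that the ordinary module $\pi_n(M)$ vanishes, and by the second half of \cref{lem:HabiroCompleteDerivedNakayama} this would follow if we knew that the \emph{underived} quotients $\pi_n(M)/\Phi_m(q)$ vanish for all $m\in\IN$.

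The task is therefore to extract information about the underived quotient $\pi_n(M)/\Phi_m(q)$ from the hypothesis $\pi_n(M/\Phi_m(q))\cong 0$, where the latter quotient is derived. The relevant tool is the long exact sequence on homotopy groups associated to the cofibre sequence
\begin{equation*}
	M\xrightarrow{\Phi_m(q)} M\longrightarrow M/\Phi_m(q)\,,
\end{equation*}
which reads $\pi_{n}(M/\Phi_m(q))$ sitting between $\pi_n(M)/\Phi_m(q)\pi_n(M)$ and the $\Phi_m(q)$-torsion submodule of $\pi_{n-1}(M)$. Concretely, there is a short exact sequence
\begin{equation*}
	0\longrightarrow \pi_n(M)/\Phi_m(q)\longrightarrow \pi_n\bigl(M/\Phi_m(q)\bigr)\longrightarrow \pi_{n-1}(M)[\Phi_m(q)]\longrightarrow 0\,,
\end{equation*}
where $[\Phi_m(q)]$ denotes the kernel of multiplication by $\Phi_m(q)$. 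Since by hypothesis the middle term vanishes for every $m\in\IN$, both outer terms vanish; in particular $\pi_n(M)/\Phi_m(q)\cong 0$ for all $m\in\IN$. Applying the second statement of \cref{lem:HabiroCompleteDerivedNakayama} to the Habiro-complete $\IZ[q^{\pm 1}]$-module $\pi_n(M)$ (regarded, if one wishes, as a Habiro-complete spectrum concentrated in degree~$0$), we conclude $\pi_n(M)\cong 0$, as desired.

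I expect the only point requiring a word of care is the identification of $\pi_n(M)$ as a Habiro-complete \emph{module} to which \cref{lem:HabiroCompleteDerivedNakayama} applies: one must check that the underived-quotient version of Nakayama is genuinely available, but this is exactly what \cref{lem:HabiroCompleteDerivedNakayama} provides in its second sentence, so no new input is needed. The short exact sequence above is the standard homotopy-group sequence for the cofibre of multiplication by an element, so there is no real obstacle; the whole argument is a two-line deduction once the right exact sequence is written down.
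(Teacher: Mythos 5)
Your proof is correct and follows essentially the same route as the paper's: both observe that the underived quotient $\pi_n(M)/\Phi_m(q)$ injects into $\pi_n(M/\Phi_m(q))$ (you via the explicit short exact sequence from the cofibre triangle, the paper stating it directly), then apply \cref{lem:HabiroComplete}\cref{enum:HabiroCompleteHomotopyGroups} and the second sentence of \cref{lem:HabiroCompleteDerivedNakayama} to the Habiro-complete module $\pi_n(M)$.
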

	\begin{proof}
		The underived quotient $\pi_n(M)/\Phi_m(q)$ is a sub-$\IZ[q^{\pm 1}]$-module of $\pi_n(M/\Phi_m(q))$, so if $\pi_n(M/\Phi_m(q))$ vanishes, then the underived quotient $\pi_n(M)/\Phi_m(q)$ vanishes as well. If this is happens for all $m\in\IN$, \cref{lem:HabiroCompleteDerivedNakayama} implies $\pi_n(M)\cong 0$, because $\pi_n(M)$ is Habiro-complete by \cref{lem:HabiroComplete}\cref{enum:HabiroCompleteHomotopyGroups}.
	\end{proof}
	\begin{rem}
		In \cref{lem:HabiroCompleteDerivedNakayama} and \cref{cor:HabiroCompleteDerivedNakayama}, we could equally well replace $\{\Phi_m(q)\}_{m\in\IN}$ by $\{(q^m-1)\}_{m\in\IN}$, or $\{(q;q)_n\}_{n\geqslant 1}$, or any set of polynomials in which each $\Phi_m(q)$ occurs as a factor at least once.
	\end{rem}
	
	
	To finish this appendix, we'll show that bounded below Habiro-complete objects are closed under the solid tensor product. To this end, let us first briefly review the (solid) condensed formalism of Clausen--Scholze \cite{AnalyticStacks}.
	
	\begin{numpar}[Solid condensed recollections.]\label{par:CondensedRecollections}
		Let $\Cond(\Sp)$ denote the $\infty$-category of \emph{\embrace{light} condensed spectra}, that is, hypersheaves of spectra on the site of light profinite sets as defined by Clausen and Scholze \cite{AnalyticStacks}. The evaluation at the point $(-)(*)\colon \Cond(\Sp)\rightarrow \Sp$ admits a fully faithful symmetric monoidal left adjoint $(\underline{-})\colon \Sp\rightarrow\Cond(\Sp)$, sending a spectrum $X$ to the \emph{discrete} condensed spectrum $\underline{X}$.
		
		One can develop a theory of \emph{solid condensed spectra} along the lines of \cite[Lectures~\href{https://www.youtube.com/watch?v=bdQ-_CZ5tl8&list=PLx5f8IelFRgGmu6gmL-Kf_Rl_6Mm7juZO}{5}--\href{https://www.youtube.com/watch?v=KKzt6C9ggWA&list=PLx5f8IelFRgGmu6gmL-Kf_Rl_6Mm7juZO}{6}]{AnalyticStacks}. Let $\Null\coloneqq \cofib(\IS[\{\infty\}]\rightarrow\IS[\IN\cup\{\infty\}])$ be the free condensed spectrum on a null sequence. Let $\sigma\colon \Null\rightarrow \Null$ be the endomorphism induced by the shift map $(-)+1\colon \IN\cup\{\infty\}\rightarrow \IN\cup\{\infty\}$. Recall that a condensed spectrum $M$ is called \emph{solid} if
		\begin{equation*}
			1-\sigma^*\colon \Hhom_\IS(\Null,M)\overset{\simeq}{\longrightarrow}\Hhom_\IS(\Null,M)
		\end{equation*}
		is an equivalence, where $\Hhom_\IS$ denotes the internal Hom in $\Cond(\Sp)$. We let $\Sp_\solid\subseteq \Cond(\Sp)$ denote the full sub-$\infty$-category of solid condensed spectra. Then $\Sp_\solid$ is closed under all limits and colimits. This implies that the inclusion $\Sp_\solid\subseteq \Cond(\Sp)$ admits a left adjoint $(-)^\solid\colon \Cond(\Sp)\rightarrow \Sp_\solid$. It satisfies $(M\otimes N)^\solid\simeq (M^\solid\otimes N)^\solid$, which allows us to endow $\Sp_\solid$ with a symmetric monoidal structure, called the \emph{solid tensor product}, via $M\soltimes N\coloneqq (M\otimes N)^\solid$.
	\end{numpar}
	
	\begin{numpar}[Habiro-complete solid condensed spectra.]\label{par:HabiroCompleteCondensed}
		We can also define Habiro-complete objects and Habiro completion inside $\Mod_{\IS[q^{\pm 1}]}(\Sp_\solid)$. To every ordinary Habiro-complete spectrum~$M$, we can associate a Habiro-complete solid condensed spectrum by taking the condensed Habiro-completion of the associated discrete condensed spectrum $\underline{M}$. By abuse of notation, this Habiro-complete solid condensed spectrum will be denoted $M$ again, and then \enquote{$M\mapsto M$} defines a fully faithful functor
		\begin{equation*}
			\Mod_{\IS_\Hh}(\Sp)_\Hh^\complete\longrightarrow \Mod_{\IS_\Hh}(\Sp_\solid)\,,
		\end{equation*}
		which is still fully faithful, since it's straightforward to check that the unit is still an equivalence.
	\end{numpar}
	\begin{lem}\label{lem:SolidTensorProductHabiroComplete}
		The solidified tensor product $-\soltimes_{\IS_\Hh}-$ preserves bounded below Habiro-complete objects. In particular, the fully faithful functor $\Mod_{\IS_\Hh}(\Sp)_\Hh^\complete\rightarrow \Mod_{\IS_\Hh}(\Sp_\solid)$ from \cref{par:HabiroCompleteCondensed} is symmetric monoidal when restricted to bounded below objects.
	\end{lem}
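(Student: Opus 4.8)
The plan is to reduce everything to the behaviour of the solid tensor product on $(q^m-1)$-complete objects for a single $m$, where the analogous statement is already known from the Clausen--Scholze formalism (solid completion at a finitely generated ideal, as in \cite[Lectures~5--6]{AnalyticStacks}). First I would recall from \cref{lem:HabiroComplete}\cref{enum:HabiroCompletion} that for a Habiro-complete spectrum $M$ one has $M\simeq \limit_{m\in\IN}M_{(q^m-1)}^\complete$, and that by the descent principle of \cref{lem:AbstractDescent} (applied to $\Spec \IZ[q^{\pm 1}]$ and the cover by the vanishing loci of the $\Phi_d(q)$, exactly as in \cref{rem:CompleteDescent} and \cref{cor:CompleteDescent}), the $\infty$-category $\widehat{\Dd}_\Hh$ of Habiro-complete objects is the limit of the $(q^m-1)$-complete categories, with transition maps given by further completion. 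The same descent works verbatim in the solid setting, since $\Sp_\solid$ is closed under all limits and colimits and the solidification functor is symmetric monoidal, so the solid Habiro completion is computed as the same limit of solid $(q^m-1)$-completions.

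Next I would check that for each fixed $m$, the solid $(q^m-1)$-completed tensor product of two bounded below $(q^m-1)$-complete objects agrees with the ordinary (underived-completion-of-derived) tensor product and is again bounded below. This is the heart of the matter: over $\IZ[q^{\pm 1}]$ the element $q^m-1$ generates a finitely generated (indeed principal) ideal, and Clausen--Scholze's solid formalism is designed precisely so that solid completion at such an ideal coincides with derived $(q^m-1)$-adic completion on the subcategory of bounded below objects; concretely, $M\soltimes_{\IS[q]} N$ solid-completed at $(q^m-1)$ computes $(M\lotimes_{\IS[q]}N)_{(q^m-1)}^\complete$, and derived completion at a single non-zero-divisor-ish element preserves the property of being bounded below (the Tor-dimension of $\IS[q]/(q^m-1)$ is $1$, so completion shifts connectivity by at most one and one uses boundedness below to control the limit). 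I would phrase this as a lemma and cite the relevant part of \cite{AnalyticStacks} rather than reproving it.

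Then I would assemble: given bounded below Habiro-complete $M,N$, write $M\soltimes_{\IS_\Hh}N \simeq \limit_m (M_{(q^m-1)}^\complete \soltimes_{\IS[q]} N_{(q^m-1)}^\complete)_{(q^m-1)}^\complete$ using the solid descent of the previous paragraph, observe via the single-$m$ lemma that each term is bounded below and identifies with $(M\lotimes_{\IS[q]}N)_{(q^m-1)}^\complete$, and conclude that the limit over $m$ is Habiro-complete by \cref{lem:HabiroComplete}\cref{enum:HabiroCompletion}, and still bounded below because a limit of objects with a uniform lower bound on connectivity — here the bound is $\min\{\mathrm{conn}(M),\mathrm{conn}(N)\}-1$, independent of $m$ — stays bounded below. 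The final ``in particular'' clause then follows formally: the functor $\Mod_{\IS_\Hh}(\Sp)_\Hh^\complete\to\Mod_{\IS_\Hh}(\Sp_\solid)$ of \cref{par:HabiroCompleteCondensed} is already known to be fully faithful, and we have just shown it carries the Habiro-completed tensor product $\cotimes_{\IS_\Hh}$ to the solid tensor product $\soltimes_{\IS_\Hh}$ on bounded below objects, hence is symmetric monoidal there.

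The main obstacle I anticipate is the single-$m$ comparison: making precise that the \emph{solid} completion at $(q^m-1)$ of a tensor product of discrete (or, more generally, bounded below) condensed spectra agrees with the \emph{derived algebraic} $(q^m-1)$-adic completion, and in particular preserves boundedness below. Naively one worries that the solid tensor product can create unbounded negative homotopy (this is why the boundedness hypothesis is essential — over a non-Noetherian-style base the unbounded case genuinely fails), so the argument must use a flat resolution of $\IZ[q^{\pm 1}]/(q^m-1)$ (or of $\Rr$, as in the two-term resolutions appearing in the proof of \cref{lem:HabiroComplete}) together with the fact that solidification of a bounded below complex of ``solid-free'' modules is computed levelwise and stays bounded below. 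I expect this to be a short argument once set up correctly, but it is the step where one genuinely needs the Clausen--Scholze machinery rather than formal nonsense about limits.
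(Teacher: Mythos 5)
Your ingredients are right -- you correctly identify that this is a Clausen--Scholze-style argument in which boundedness below is essential, and that the relevant input is the behaviour of the solid tensor product with respect to $I$-completion at a finitely generated ideal, as developed for $\IZ_p$ in \cite{AnalyticStacks}. But the assembly step has a genuine gap. The lemma asserts that the \emph{uncompleted} solid tensor product $M\soltimes_{\IS_\Hh}N$ of two bounded below Habiro-complete objects is already Habiro-complete; this is what makes the fully faithful functor symmetric monoidal. Your claimed identity
\begin{equation*}
	M\soltimes_{\IS_\Hh}N\simeq \limit_{m\in\IN}\bigl(M_{(q^m-1)}^\complete \soltimes_{\IS[q]} N_{(q^m-1)}^\complete\bigr)_{(q^m-1)}^\complete
\end{equation*}
is not a formal consequence of descent: the descent principle of \cref{lem:AbstractDescent} describes how the Habiro-\emph{completed} tensor product can be glued from the $(q^m-1)$-completed local tensor products, but it says nothing about the uncompleted $\soltimes_{\IS_\Hh}$. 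Writing it as a limit over $m$ amounts to commuting the solid tensor product with the countable limit defining Habiro-completion, which is precisely the non-formal content of the lemma (compare: for $\IZ_p$ the analogous step needs the computation $\IZ_p\soltimes_\IZ\IZ_p\simeq\IZ_p$, not just descent for $p$-complete objects). Moreover, the Habiro completion is not the completion at a single finitely generated ideal but a limit of such, so even granting the single-$m$ lemma you cannot reduce to \cite{AnalyticStacks} by a blanket citation; one really has to redo the compact-generator argument in this new setting.

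The paper's proof does exactly that, in a more direct form that avoids any descent: it first checks that $\IS_\Hh$ is idempotent in $\Mod_{\IS[q^{\pm 1}]}(\Sp_\solid)$, using that $\IS_\Hh\simeq\limit_n\IS[q^{\pm 1}]/(q;q)_n$ is a countable limit of finite free modules and that such limits interact well with $\soltimes$ via the key identity $\prod_\IN\IS\soltimes\prod_\IN\IS\simeq\prod_{\IN\times\IN}\IS$; it then identifies $\prod_\IN\IS_\Hh$ as a compact generator of $\Mod_{\IS_\Hh}(\Sp_\solid)$, reduces via $\omega_1$-filtered colimits to Habiro-completions of countable sums of products of $\IS_\Hh$, and there does an explicit colimit reindexing using divisibility of the $q$-Pochhammer symbols $(q;q)_{f(m)}(q;q)_{g(n)}\mid (q;q)_{f(m)+g(n)}$. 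That divisibility fact, which does not appear in your proposal, is the combinatorial engine that lets the double colimit collapse to a single one and produces the Habiro-completed answer. Without some substitute for this explicit generator computation, the descent framing alone will not close the argument.
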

	\begin{proof}[Proof sketch]
		The proof is analogous to the proof that the solid tensor product preserves bounded below $p$-complete objects (see \cite[Lecture~\href{https://youtu.be/KKzt6C9ggWA?list=PLx5f8IelFRgGmu6gmL-Kf_Rl_6Mm7juZO&t=3288}{6}]{AnalyticStacks} or \cite[Proposition~\chref{A.3}]{BoscoRationalHodge}), but let us still sketch the argument.
		
		First we claim that $\IS_\Hh$ is idempotent in $\Mod_{\IS[q^{\pm 1}]}(\Sp_\solid)$. Indeed, each stage of the limit $\IS_\Hh\simeq \limit_{n\geqslant 1}\IS[q^{\pm 1}]/(q;q)_n$ is a finite direct sum of copies of $\IS$. Limits of this form interact well with the solid tensor product (as $\prod_\IN\IS\soltimes\prod_\IN\IS\simeq\prod_{\IN\times\IN}\IS$) and we obtain
		\begin{equation*}
			\IS_\Hh\soltimes\IS_\Hh\simeq \limit_{m,n\geqslant 1}\Bigl(\IS[q_1^{\pm 1}]/(q_1;q_1)_m\soltimes \IS[q_2^{\pm 1}]/(q_2;q_2)_n\Bigr)\simeq \limit_{m\in\IN}\IS[q_1,q_2]_{(q_1^m-1,q_2^m-1)}^\complete\,.
		\end{equation*}
		Taking the solidified tensor product over $\IS[q^{\pm 1}]$ instead amounts to identifying $q_1$ and $q_2$, which implies $\IS_\Hh\soltimes_{\IS[q^{\pm 1}]}\IS_\Hh\simeq \IS_\Hh$, as desired. A similar argument shows $\prod_\IN\IS\soltimes\IS_\Hh\simeq \prod_\IN\IS_\Hh$, so $\Mod_{\IS_\Hh}(\Sp_\solid)$ is compactly generated by shifts of $\prod_\IN\IS_\Hh$.
		
		Now let $M$ and $N$ be bounded below and Habiro-complete. We wish to show that $M\soltimes_{\IS_\Hh}N$ is Habiro-complete again. Using that Habiro-completion is a countable limit and thus commutes with $\omega_1$-filtered colimits, we can reduce to the case where $M$ and $N$ are the Habiro-completions of countable direct sums of the form $\bigoplus_{n\in\IN}\prod_{I_n}\IS_\Hh$, where each $I_n$ is countable as well. For ease of notation, let us assume $\abs{I_n}=1$ for all $n$; the argument in the general case is exactly the same. The Habiro completion of $\bigoplus_{n\in\IN}\IS_\Hh$ can be written as
		\begin{equation*}
			\biggl(\bigoplus_{n\in\IN}\IS_\Hh\biggr)_\Hh^\complete\simeq \colimit_{\substack{f\colon \IN\rightarrow \IN,\\
					f(n)\rightarrow \infty}}\prod_{n\in\IN}(q;q)_{f(n)}\IS_\Hh\,,
		\end{equation*}
		where the colimit is taken over all functions $f\colon \IN\rightarrow \IN$ such that $f(n)\rightarrow \infty$ as $n\rightarrow\infty$. It follows that
		\begin{equation*}
			M\soltimes_{\IS_\Hh}N\simeq \colimit_{\substack{f,\,g\colon\IN\rightarrow \IN,\\f(n),\,g(n)\rightarrow \infty}}\prod_{(m,n)\in\IN\times\IN}(q;q)_{f(m)}(q;q)_{g(n)}\IS_\Hh\,.		
		\end{equation*}
		Observe that $(q;q)_{f(m)}(q;q)_{g(n)}$ divides $(q;q)_{f(m)+g(n)}$, because $q$-binomial coefficients are polynomials in $\IZ[q]$. Moreover, for every $h\colon \IN\times\IN\rightarrow \IN$ such that $h(m,n)\rightarrow \infty$ as $m+n\rightarrow \infty$ there exist $f,g\colon \IN\rightarrow \IN$ such that $f(n),g(n)\rightarrow \infty$ and $h(m,n)\geqslant f(m)+g(n)$ for all $m$, $n$. By the same argument as for $p$-completions, it follows that the colimit above can be rewritten as
		\begin{equation*}
			M\soltimes_{\IS_\Hh}N\simeq \colimit_{\substack{h\colon \IN\times\IN\rightarrow \IN,\\ h(m,n)\rightarrow \infty}}\prod_{(m,n)\in\IN\times\IN}(q;q)_{h(m,n)}\IS_\Hh\simeq \biggl(\bigoplus_{m\in\IN}\IS_\Hh\soltimes_{\IS_\Hh}\bigoplus_{n\in\IN}\IS_\Hh\biggr)_\Hh^\complete\,.\qedhere
		\end{equation*}
	\end{proof}

	\newpage 
	\renewcommand{\SectionPrefix}{}
	
	\renewcommand{\bibfont}{\small}
	\printbibliography
\end{document}